\newcommand{\e}{\mathds{1}}
\numberwithin{equation}{subsection}
\newtheorem{thm}{Theorem}[subsection]
\newtheorem*{thm*}{Theorem}
\newtheorem{cor}[thm]{Corollary}
\newtheorem*{cor*}{Corollary}
\newtheorem{lem}[thm]{Lemma}
\newtheorem{prop}[thm]{Proposition}
\newtheorem{prop-const}[thm]{Proposition-Construction}
\newtheorem*{conjecture*}{Conjecture}
\newtheorem*{princ*}{Principle}
\theoremstyle{remark}
\newtheorem{rem}[thm]{Remark}
\newtheorem{example}[thm]{Example}
\newtheorem{counterexample}[thm]{Counterexample}
\newtheorem{defin}[thm]{Definition}
\newtheorem{notation}[thm]{Notation}
\newtheorem{warning}[thm]{Warning}
\newtheorem{variant}[thm]{Variant}
\newtheorem{construction}[thm]{Construction}
\newtheorem{terminology}[thm]{Terminology}
\newcommand{\presup}[1]{\prescript{#1}{}\!}
\newcommand{\xar}[1]{\xrightarrow{#1}}
\newcommand{\isom}{\xar{\simeq}}
\newcommand{\into}{\hookrightarrow}
\newcommand{\onto}{\twoheadrightarrow}
\newcommand{\bDelta}{\mathbf{\Delta}}
\newcommand{\bA}{{\mathbb A}}
\newcommand{\bB}{{\mathbb B}}
\newcommand{\bD}{{\mathbb D}}
\newcommand{\bG}{{\mathbb G}}
\newcommand{\bV}{{\mathbb V}}
\newcommand{\bZ}{{\mathbb Z}}
\newcommand{\sA}{{\EuScript A}}
\newcommand{\sB}{{\EuScript B}}
\newcommand{\sC}{{\EuScript C}}
\newcommand{\sD}{{\EuScript D}}
\newcommand{\sE}{{\EuScript E}}
\newcommand{\sF}{{\EuScript F}}
\newcommand{\sG}{{\EuScript G}}
\newcommand{\sH}{{\EuScript H}}
\newcommand{\sI}{{\EuScript I}}
\newcommand{\sJ}{{\EuScript J}}
\newcommand{\sK}{{\EuScript K}}
\newcommand{\sL}{{\EuScript L}}
\newcommand{\sM}{{\EuScript M}}
\newcommand{\sN}{{\EuScript N}}
\newcommand{\sO}{{\EuScript O}}
\newcommand{\sP}{{\EuScript P}}
\newcommand{\sW}{{\EuScript W}}
\newcommand{\sZ}{{\EuScript Z}}
\newcommand{\fZ}{{\mathfrak Z}}
\newcommand{\fg}{{\mathfrak g}}
\newcommand{\fh}{{\mathfrak h}}
\newcommand{\fk}{{\mathfrak k}}
\newcommand{\fz}{{\mathfrak z}}
\newcommand{\on}{\operatorname}
\newcommand{\ol}[1]{\overline{#1}{}}
\newcommand{\ul}{\underline}
\newcommand{\mathendash}{\text{\textendash}}
\newcommand{\Ker}{\on{Ker}}
\newcommand{\Coker}{\on{Coker}}
\newcommand{\Hom}{\on{Hom}}
\newcommand{\Ext}{\on{Ext}}
\newcommand{\Aut}{\on{Aut}}
\newcommand{\Spec}{\on{Spec}}
\newcommand{\Spf}{\on{Spf}}
\newcommand{\id}{\on{id}}
\newcommand{\Ad}{\on{Ad}}
\newcommand{\ind}{\on{ind}}
\newcommand{\Rep}{\mathsf{Rep}}
\newcommand{\gr}{\on{gr}}
\newcommand{\act}{\on{act}}
\newcommand{\actson}{\curvearrowright}
\newcommand{\actedon}{\curvearrowleft}
\newcommand{\coact}{\on{coact}}
\renewcommand{\dot}{\bullet}
\newcommand{\Tor}{\on{Tor}}
\newcommand{\Fun}{\on{Fun}}
\newcommand{\vph}{\varphi}
\newcommand{\vareps}{\varepsilon}
\newcommand{\Vect}{\mathsf{Vect}}
\newcommand{\Fil}{\mathsf{Fil}\,	}
\newcommand{\Res}{\on{Res}}
\newcommand{\Gr}{\on{Gr}}
\newcommand{\Whit}{\mathsf{Whit}}
\newcommand{\Op}{\on{Op}}
\renewcommand{\mod}{\mathendash\mathsf{mod}}
\newcommand{\comod}{\mathendash\mathsf{comod}}
\newcommand{\rightmod}{\mathsf{mod}\mathendash}
\newcommand{\sinf}{\!\frac{\infty}{2}} 
\newcommand{\colim}{\on{colim}}
\newcommand{\Set}{\mathsf{Set}}
\newcommand{\Cat}{\mathsf{Cat}}
\newcommand{\DGCat}{\mathsf{DGCat}}
\newcommand{\TwoDGCat}{2\mathendash\DGCat}
\newcommand{\Gpd}{\mathsf{Gpd}}
\newcommand{\TwoCat}{2\mathendash\Cat}
\renewcommand{\lim}{\on{lim}}
\newcommand{\Ind}{\mathsf{Ind}}
\newcommand{\Pro}{\mathsf{Pro}}
\newcommand{\TwoHom}{\mathsf{Hom}}
\newcommand{\TwoEnd}{\mathsf{End}}
\newcommand{\Yo}{\mathsf{Yo}}
\newcommand{\Tot}{\on{Tot}}
\newcommand{\heart}{\heartsuit}
\newcommand{\Oblv}{\on{Oblv}}
\newcommand{\Av}{\on{Av}}
\newcommand{\ld}{\check}
\newcommand{\Perf}{\mathsf{Perf}}
\newcommand{\Coh}{\mathsf{Coh}}
\newcommand{\IndCoh}{\mathsf{IndCoh}}
\newcommand{\QCoh}{\mathsf{QCoh}}
\newcommand{\Sch}{\mathsf{Sch}}
\newcommand{\IndSch}{\mathsf{IndSch}}
\newcommand{\AffSch}{\mathsf{AffSch}}
\newcommand{\PreStk}{\mathsf{PreStk}}
\newcommand{\Alg}{\mathsf{Alg}}
\newcommand{\ComAlg}{\mathsf{ComAlg}}
\newcommand{\LieAlg}{\mathsf{LieAlg}}
\newcommand{\Lie}{\on{Lie}}
\newcommand{\triv}{\on{triv}}
\newcommand{\Mor}{\mathsf{Morita}}
\newcommand*\circled[1]{\tikz[baseline=(char.base)]{
            \node[shape=circle,draw,inner sep=2pt] (char) {#1};}}
\renewcommand{\subset}{\subseteq}
\newcommand{\biggg}{\bBigg@{4}}
\newcommand{\Biggg}{\bBigg@{5}}
\begin{document}

\frenchspacing

\setlength{\epigraphwidth}{0.4\textwidth}
\renewcommand{\epigraphsize}{\footnotesize}

\begin{abstract}

Actions of algebraic groups on DG categories provide a convenient, unifying
framework in some parts of geometric representation theory,
especially the representation theory of reductive Lie algebras.
We extend this theory to loop groups and affine Lie algebras, extending
previous work of Beraldo, Gaitsgory and the author.

Along the way, we introduce some subjects of independent interest: 
ind-coherent sheaves of infinite type
indschemes, topological DG algebras, and weak actions
of group indschemes on categories. We also present a new construction of
semi-infinite cohomology for affine Lie algebras, based on the
\emph{modular character} for loop groups. 

As an application of our methods, we establish an important technical 
result for Kac-Moody representations 
at the critical level, showing that the appropriate (``renormalized") derived 
category of representations
admits a large class of symmetries coming from the adjoint action of
the loop group and from the center of the enveloping algebra. 

\end{abstract}

\title{Homological methods in semi-infinite contexts}

\author{Sam Raskin}

\address{The University of Texas at Austin, 
Department of Mathematics, 
RLM 8.100, 2515 Speedway Stop C1200, 
Austin, TX 78712}

\email{sraskin@math.utexas.edu}

\maketitle

\setcounter{tocdepth}{1}
\tableofcontents

\section{Introduction}\label{s:intro}

\subsection{}

Let $k$ be a field of characteristic zero, and let $K = k((t))$. 
Suppose $G$ is a split reductive group over $k$, and let 
$G(K)$ denote the corresponding algebraic loop group 
(see e.g. \cite{hitchin} \S 7.11.2 for the definition).

There is a robust theory of actions of $G$ and $G(K)$ on suitable DG categories:
see \cite{beraldo-*/!}, \cite{dmod}, and \cite{paris-notes} for discussion, or \cite{hitchin} \S 7 and \cite{fg2} for
more classical approaches.
A basic example is that if $G$ (resp. $G(K)$) acts on a suitable space
$X$, then $G$ (resp. $G(K)$) will act on the DG category $D(X)$ of $D$-modules on 
$X$. We let $G\mod$ (resp. $G(K)\mod$) denote the (2-)category of 
DG categories with a $G$ (resp. $G(K)$) action.

According to \cite{fg2} and \cite{quantum-langlands-summary}, $G(K)\mod$
is analogous to the category of smooth representations of a $p$-adic reductive group.
In particular, one expects a robust local geometric Langlands program for which
$G(K)\mod$ is one side. 
We refer to \cite{paris-notes} for further discussion. 

\subsection{}

Group actions on categories inherently involve higher categorical data.

However, a special 
property of working specifically with $D$-modules (as opposed to $\ell$-adic
sheaves, say), is that many objects of $G\mod$ can be constructed using
1-categorical methods; notably, if $A$ is an algebra equipped 
with a $G$-action and a Harish-Chandra
datum $i:\fg \to A$, then the DG category $A\mod$ of $A$-modules 
defines an object of $G\mod$. Many interesting examples arise this way, so
sometimes theorems can be proved by reducing to a 1-categorical setting where
traditional representation theoretic methods apply.

One of the primary objectives of this text, achieved in \S \ref{s:hc}, is to develop
a parallel theory for $G(K)$. Such a theory is certainly not surprising.
However, a number of difficulties appear, related to the infinite-dimensional
(or ``semi-infinite") nature of $G(K)$. 
The body of this text addresses those problems and develops
the requisite theory.

There are several motivations for wanting to link Harish-Chandra data
for $G(K)$ with $G(K)\mod$:

\begin{itemize}

\item Construct interesting objects of $G(K)\mod$.

\item Prove theorems about objects of $G(K)\mod$ using 1-categorical methods.

\end{itemize}

Below, we give an example of the latter, in the spirit of \cite{fg2}.
An example of the former will be given in \cite{weilrep}.

We gave another example of the former in \cite{paris-notes}, where
we outlined a proof of the quantum local 
Langlands conjectures in the abelian case. We emphasize that the argument from
\emph{loc. cit}. was not complete: the construction of the bimodule 
implementing local Langlands (for a torus) relied on heuristics about 
Harish-Chandra data. The methods from \S \ref{s:hc} can be used
to complete the argument in \cite{paris-notes}.

\begin{rem}

We warn that the theory of Harish-Chandra data for $G(K)$ is much more
subtle than for $G$; there are some additional, potentially subtle 
conditions to check to
verify that a purported Harish-Chandra datum is in fact a Harish-Chandra
datum. These subtleties are closely related to the length of this paper.
We refer to the beginning of \S \ref{s:hc} for further discussion.

\end{rem}
 
\subsection{}

The genesis of this paper is \cite{localization}, where we use
ideas from the theory of group actions on categories to study 
affine Beilinson-Bernstein localization at critical level.
To use those methods, we needed the following construction.
Let $\widehat{\fg}_{crit}\mod$ be the DG category of critical level
Kac-Moody representations; see e.g. \S \ref{s:critical} 
for a suitable definition.

The notes \cite{km-indcoh} sketch the construction 
of an action of $G(K)$ (``at critical level") on $\widehat{\fg}_{crit}\mod$. 

On the other hand, it is known that the (completed, twisted) enveloping algebra
$U(\widehat{\fg}_{crit})$ has a large center, typically denoted
$\fZ$. 

It is natural to expect that $\widehat{\fg}_{crit}\mod$ has commuting
actions of $G(K)$ and a suitable category $\fZ\mod$ of (discrete) $\fZ$-modules. 
We formulate this result precisely and prove it in Theorem \ref{t:opers}.

Let us describe the difficulty explicitly. 
The construction from \cite{km-indcoh} is geometric, 
and builds from the formal completion of
$G(K)$ along compact open subgroup. However, the center $\fZ$ is not
visible at this level, and it requires thinking about the enveloping
algebra $U(\widehat{\fg}_{crit})$ in an essential way. 
So the difficulty is that \emph{we wish to mix 
homotopically complicated categorical methods,
such as $G(K)$ actions on categories, with concrete constructions involving
some flavors of topological algebras}. Making such arguments work is one
of the key goals of this paper.

\begin{rem}

In local geometric Langlands, the object $\widehat{\fg}_{crit}\mod \in 
G(K)\mod$ plays a fundamental role. The above action of its center
plays a key role because of the Feigin-Frenkel isomorphism
\cite{ff-critical}. This isomorphism identifies
$\Spf(\fZ)$ with $\Op_{\ld{G}}$, the indscheme of opers on the
punctured disc for the Langlands dual group $\ld{G}$. Opers
are certain $\ld{G}$-local systems (in the de Rham sense) on the 
punctured disc; in particular, $\widehat{\fg}_{crit}\mod \in G(K)\mod$
``spectrally decomposes" in the categorical
sense over the stack of such local systems, because it decomposes
already over opers (by Theorem \ref{t:opers} and the Feigin-Frenkel
isomorphism). Such a decomposition is an
instance of a general prediction of local geometric Langlands,
and is highly non-trivial in that arbitrary ramification occurs.

We refer to the introduction of \cite{fg2}
for further discussion of the (conjectural, but central) role of 
structures in local geometric Langlands. 

\end{rem}

\subsection{}

To summarize: the theory of
Harish-Chandra data in \S \ref{s:hc} provides a way to study certain
categories with $G(K)$-actions using $1$-categorical methods, and the bulk
of this paper is used to develop such a theory of Harish-Chandra data.
One application of these methods is given in \S \ref{s:critical},
and a more interesting one is given in \cite{localization}.

However, this paper contains several other ideas that may 
be of independent interest to the reader. We discuss these below.

\subsection{Topological algebras}

In \S \ref{s:tens}-\ref{s:ren}, we develop a theory of 
topological DG algebras, which is essentially a derived version of some
parts of \cite{beilinson-top-alg}.

We refer to the main players as \emph{$\overset{\rightarrow}{\otimes}$-algebras}.
Let $\Vect$ denotes the DG category of
(chain complexes of) vector spaces, and $\Pro\Vect$ its pro-category 
(see \S \ref{s:tens} for more discussion). An $\overset{\rightarrow}{\otimes}$-algebra
is $A \in \Pro\Vect$ plus a suitable algebra structure on $A$
for a certain monoidal structure $\overset{\rightarrow}{\otimes}$ on 
$\Pro\Vect$, which is a natural derived version of a construction in 
\cite{beilinson-top-alg}. A typical example of such an $A$ is the completed
enveloping algebra of a Tate Lie algebra.

There is an associated DG category of $A$-modules that we denote $A\mod_{naive}$.
By definition, objects of $A\mod_{naive}$ are ``discrete" $A$-modules: 
the forgetful functor maps $A\mod_{naive}$ to $\Vect$, not to $\Pro\Vect$.

\begin{rem}

In \S \ref{ss:comonad}, we observe that $\overset{\rightarrow}{\otimes}$-algebras
are simply a dual description of (accessible, DG, but possibly non-continuous)
comonads on $\Vect$; the category $A\mod_{naive}$ corresponds to the corresponding
category of comodules.

\end{rem}

\subsection{}\label{ss:intro-ren}

One of the persistent technical problems in this paper is 
that of \emph{renormalization} for $\overset{\rightarrow}{\otimes}$-algebras. 

In \cite{dmod-aff-flag}, Frenkel-Gaitsgory realized that for many (connective)
$\overset{\rightarrow}{\otimes}$-algebras $A$, the category $A\mod_{naive}$
is only a reasonable thing to consider on its bounded below subcategory
$A\mod_{naive}^+$. In many examples, there is a better DG category $A\mod_{ren}$ that
is equipped with a $t$-structure for which $A\mod_{ren}^+ = A\mod_{naive}^+$.
Typically, $A\mod_{ren}$ is compactly generated, while $A\mod_{naive}$ might
not be. In general, $A\mod_{ren}$ may be nicely behaved (e.g., admitting
various ``obvious" symmetries) while $A\mod_{naive}$ may be severely pathological
and misleading.

One downside to $A\mod_{ren}$ is that the forgetful functor $A\mod_{ren} \to \Vect$
is not conservative. In other words, objects of $A\mod_{ren}$ cannot quite be thought of
as vector spaces with extra structure. This introduces a number of technical
difficulties; for instance, a functor to $A\mod_{ren}$ that ``looks" $t$-exact
(meaning its composition down to $\Vect$ is) requires an argument to justify
why it is actually $t$-exact (if indeed it is!).

Let us give some examples of renormalization. First, if $A \in \Vect^{\heart}$
is a usual commutative algebra of finite type over $k$, then 
$A\mod_{naive}$ certainly equals $A\mod$. However, as in \cite{indcoh}, there
is an alternative category $A\mod_{ren} \coloneqq \IndCoh(\Spec(A))$, satisfying
the above properties. 
This category has a functor $\Gamma^{\IndCoh}(\Spec(A),-):A\mod_{ren} \to \Vect$
that is not conservative unless $A$ is a regular ring.

Second, for $A = U(\fg((t)))$ the (completed) enveloping algebra, a ``correct"
DG category $\fg((t))\mod$ was introduced in \cite{dmod-aff-flag} \S 23.
Here the distinction is not like ``$\QCoh$ vs. $\IndCoh$": the renormalized
derived category is the only one that should be considered (e.g., the
naive one does not admit a strong $G(K)$-action).
(In \cite{whit} \S 1.20, we tried to write an informal introduction to the 
renormalization procedure in this case.)

Finally, we remark that these issues are generally compounded
by considering actions of group schemes (and group indschemes)
that are not of finite type. 

\subsection{Ind-coherent sheaves in infinite type}

This theory is the subject of \S \ref{s:indcoh}. 

We introduce a DG category $\IndCoh^*(S)$ for $S$ any reasonable indscheme,
as defined in \S \ref{ss:indsch} (following \cite{hitchin}). 
This class includes arbitrary quasi-compact quasi-separated eventually
coconnective DG schemes; in particular, any qcqs classical scheme.
This category compactly generated for any such indscheme.

The superscript $*$ in the notation follows \cite{dmod}, where a similar
construction was given for $D$-modules. The notation indicates that
the construction $S \mapsto \IndCoh^*(S)$ is covariantly functorial. 

There is a formally dual operation: one can define 
$\IndCoh^!(S) \coloneqq \TwoHom_{\DGCat_{cont}}(\IndCoh^*(S),\Vect)$
as the formally \emph{dual} DG category.
Then $S\mapsto \IndCoh^!(S)$ is contravariantly functorial, and pullback functors
are denoted as upper-$!$. For $S$ locally almost of finite type,
Serre duality provides a canonical equivalence $\IndCoh^*(S) \simeq \IndCoh^!(S)$
(see \cite{indcoh}), accounting for the bivariant functoriality in that setting.

\subsection{}

We also develop some theory of $\IndCoh^*(S)$ for $S$ allowed to be stacky.
The theory in this case is somewhat more subtle. We refer to \S \ref{s:indcoh}
for further discussion.

\subsection{}

There has recently been a great deal of interest among some 
geometric representation theorists
in $\IndCoh$ in infinite type situations: there are various conjectures for
Koszul duality and geometric Langlands type statements in such settings.

Although the 
theory we present in \S \ref{s:indcoh} is not quite as exhaustive as its $D$-module
analogue in \cite{dmod}, we hope that it provides useful foundations and some
clarity in such geometric representation theoretic situations.

\subsection{Weak $G(K)$-actions on categories}\label{ss:intro-weak}

As discussed above, there is a theory of $G(K)$-actions on DG categories
$\sC \in \DGCat_{cont}$. By definition, this is a $D^*(G(K))$-module structure
on $\sC$, where $D^*(G(K))$ is the monoidal DG category of $*$-D-modules on 
$G(K)$ in the sense of \cite{beraldo-*/!} and \cite{dmod}.
Because of the appearance of $D$-modules, such actions are often
called \emph{strong} $G(K)$-actions.

In \S \ref{s:weak-pro} and \ref{s:weak-ind}, we introduce a theory of
\emph{weak} $G(K)$-actions on $\sC \in \DGCat_{cont}$. These are goverened
by a certain category that we denote $G(K)\mod_{weak}$.

The weak theory is more subtle than the strong theory. The main technical
difficulty is that the forgetful functor $G(K)\mod_{weak} \to \DGCat_{cont}$
is \emph{not} conservative; that is, we cannot think of weak $G(K)$-module categories
as DG categories with extra structure. This phenomenon is obviously 
parallel to the discussion of \S \ref{ss:intro-ren}, but occurs a categorical
level higher, leading to additional technical complications.
We remark that to have various reasonable functoriality properties
and universal properties, it is essential to work with this
definition (e.g., to have the universal property for
$\fg((t))\mod$ from \S \ref{ss:intro-str-weak} below).

\begin{rem}

The definition of $G(K)\mod_{weak}$ is essentially designed so that
the functor of weak invariants with respect to any compact open subgroup
\emph{is} conservative. In other words, $G(K)\mod_{weak}$ can be realized
as the category of modules over the suitable (weak) Hecke category with
respect to any compact open subgroup. Continuing the analogy
$G(K)\mod_{weak} \sim \IndCoh(S)$ from above, this description is
analogous to finding a Koszul dual description of $\IndCoh(S)$. 
We remark that describing $G(K)\mod_{weak}$ in terms of a Hecke
category breaks symmetry (because of the choice of compact open subgroup)
and can be inconvenient; much of \S \ref{s:weak-ind} is about restoring this
symmetry.

\end{rem}

\subsection{}

A key theorem of Gaitsgory for weak $G$-actions on categories is that if
$\sC \in G\mod_{weak}$, there is a canonical equivalence
$\sC_{G,w} \isom \sC^{G,w}$ between the weak invariants and coinvariants
for this action; see \cite{shvcat} and \cite{paris-notes}, 
or the discussion in \S \ref{s:weak-pro} below.

For $G(K)$, there is an additional twist. In Proposition \ref{p:chi},
we show that in a suitable sense, $G(K)$ admits a canonical \emph{modular character} 
such that invariants and coinvariants differ by a twist
along it. 

\subsection{}\label{ss:intro-str-weak}

In \S \ref{s:strong}, we show how to construct weak actions of $G(K)$
from strong actions. (Due to the technical issues discussed in 
\S \ref{ss:intro-weak}, the construction is not quite trivial.)

From this perspective, $\fg((t))\mod \in G(K)\mod$ has a universal
property: giving a functor $\sC \to \fg((t))\mod \in G(K)\mod$ is equivalent
to giving a functor $\sC \to \Vect \in G(K)\mod_{weak}$. 
Such phenomena are certainly well-known in the finite-dimensional setting,
but were not previously available for $G(K)$.

\subsection{}

We draw the reader's attention to one important
method that we use. 

As above, the functor $G(K)\mod_{weak} \to \DGCat_{cont}$
is not conservative, roughly in analogy with 
$\Gamma^{\IndCoh}(S,-):\IndCoh(S) \to \Vect$ for a singular
affine scheme $S = \Spec(A)$. 
In the latter case, we can lift objects of 
$\QCoh(S)^+$ canonically to $\IndCoh(S)$ by identifying
$\IndCoh(S)^+ \isom \QCoh(S)^+$ in the standard way.

Although such ideas obviously do not work for $G(K)\mod_{weak}$
directly, we give
some way of canonically lifting 
$\sC \in \IndCoh^*(G(K))\mod$ to an object of $G(K)\mod_{weak}$
that we call \emph{canonical renormalization}. It appears in 
\S \ref{ss:can-renorm} for group schemes
and \S \ref{ss:can-renorm-tate} for group indschemes.
This theory plays a key role in \S \ref{s:hc}, and some
crucial methods are developed there for constructing non-trivial
examples. 

\subsection{Semi-infinite cohomology}

In \S \ref{s:sinf}, we show how the modular character discussed above
naturally leads to the theory of semi-infinite cohomology for
affine Lie algebras as introduced in \cite{feigin-sinf}.
In our perspective, the modular character naturally gives rise to a central
extension of the loop group, 
which we identify with the standard \emph{Tate} central extension.

This is a general principle: \emph{semi-infinite features of
loop group geometry are best explained by their modular characters}.  

There have been various previous attempts to give conceptual
constructions of semi-infinite cohomology: see 
\cite{voronov}, \cite{arkhipov-sinf-tate}, and \cite{positselski} for
example. Our construction emphasizes the connection to the
higher categorical representation theory of the loop group.

In addition, our construction makes evident some functoriality properties
of semi-infinite cohomology that appear subtle from the Cliffordian perspective
and have been missing in the literature.
For example, our Remark \ref{r:sinf-strong-eq} 
plays a key role
in the proof of the linkage principle for affine $\sW$-algebras 
(in the positive level case) given 
recently in \cite{gurbir-w}.
Similarly, Proposition \ref{p:inv-coinv-w/str}
(and some results of \S \ref{s:sinf}) fill in \S 1.3 from \cite{kl-extn}. 

\subsection{Harish-Chandra bimodules}

Finally, we briefly want to draw the interested reader's attention to the
fact that \S \ref{s:hc} implicitly provides tools to
study the category of affine Harish-Chandra bimodules.

For a level $\kappa$ of $\fg$, define $\sH\sC_{G,\kappa}^{\on{aff}}$ as 
$\TwoEnd_{G(K)\mod_{\kappa}}(\widehat{\fg}_{\kappa}\mod)$,
i.e., as the monoidal DG category of endomorphisms of 
$\widehat{\fg}_{\kappa}\mod$ considered as a category with a
level $\kappa$ $G(K)$-action (this notion is defined
in \S \ref{s:critical}). By the results of \S \ref{s:strong},
this category may also be calculated as 
$\widehat{\fg}_{\kappa}\mod_{G(K),w}$, the category of weak
$G(K)$-coinvariants.

The category $\sH\sC_{G,\kappa}^{\on{aff}}$ plays a central
role in quantum local geometric Langlands, but is difficult
to study explicitly.
Theorem \ref{t:opers} amounts to a construction of a monoidal
functor:

\[
\IndCoh^!(\Op_{\ld{G}}) \to \sH\sC_{G,crit}^{\on{aff}}
\]

\noindent so the proof must provide some basic study of the right hand side.

The main technical work in this study is implicit in \S \ref{s:hc}.
The careful reader will find that the most technical results
in \S \ref{s:hc} are about categories
$\widehat{\fg}_{\kappa}\mod^{K,w}$ for $K \subset G(K)$ compact open,
and understanding these categories is an essential prerequisite to
understanding $\widehat{\fg}_{\kappa}\mod_{G(K),w} = 
\sH\sC_{G,\kappa}^{\on{aff}}$.

\subsection{Relation to older approaches}

It is roughly fair to say that this text is
an update of the appendices to \cite{fg2}, incorporating
modern homotopical techniques and working with
unbounded derived categories.

We remark that the extension to unbounded
derived categories is essential in applications
(see already \cite{dmod-aff-flag}), and the reader will observe that
most of the difficulties that come up in our setting
exactly have to do with 
the difference between bounded below derived categories
and unbounded ones.

Many of our constructions are also close in spirit to \cite{positselski},
although our perspective and emphasis are somewhat different.

\subsection{Leitfaden}

Here are the (essential) logical dependencies.

\[
\xymatrix{
\circled{\S \ref{s:tens}} \ar[d] & 
\circled{\S \ref{s:weak-pro}} \ar[d] & 
\circled{\S \ref{s:indcoh}} \ar[dl] \\
\circled{\S \ref{s:basics}} \ar[d] &
\circled{\S \ref{s:weak-ind}} \ar[d] \\
\circled{\S \ref{s:ren}} \ar[dr] &
\circled{\S \ref{s:strong}} \ar[dl] \ar[d] \\
\circled{\S \ref{s:sinf}} & \circled{\S \ref{s:hc}} \ar[d] &
\\
& \circled{\S \ref{s:critical}}
}
\]

Briefly, \S \ref{s:tens}-\ref{s:ren} develops the theory of
$\overset{\rightarrow}{\otimes}$-algebras and renormalization data.
The theory of weak group actions on categories is developed
in \S \ref{s:weak-pro} and \S \ref{s:weak-ind}, and is
related to strong group actions in \S \ref{s:strong}. We apply these
ideas to semi-infinite cohomology in \S \ref{s:sinf}.
The material we need on ind-coherent sheaves 
is developed in \S \ref{s:indcoh}. Finally, \S \ref{s:hc} introduces
Harish-Chandra data for group indschemes acting on 
$\overset{\rightarrow}{\otimes}$-algebras, and \S \ref{s:critical}
gives an application at the critical level. 

\subsection{Conventions}

We always work over the base field $k$ of characteristic
zero.

We use higher categorical language without mention: by \emph{category}, we
mean $\infty$-category in the sense of \cite{htt}, and similarly for monoidal
category and so on. We let $\Cat$ denote the
category of $(\infty-)$-categories, and
$\Gpd$ denote the category of $(\infty-)$groupoids.
We also refer to \S \ref{ss:convs-2} for some essential
notation used throughout the paper. 

Similarly, by \emph{scheme} we mean \emph{derived scheme over $k$} 
in the sense of \cite{grbook}, or \emph{spectral scheme over $k$}
in the sense of \cite{sag}. Similarly, by 
\emph{indscheme}, we mean what \cite{indschemes} calls
\emph{DG indscheme}. Algebras of all flavors are
assumed to be derived unless otherwise stated.
We emphasize that when we speak of \emph{DG} objects
or chain complexes of vector spaces or the like, we
really understand objects of suitable $\infty$-categories,
not explicit cochain models for them; we refer to 
\cite{grbook} \S 1 for an introduction to this way of thinking.

For $\sC$ a DG category, we let 
$\ul{\Hom}_{\sC}(\sF,\sG) \in \Vect$ denote the
$\Hom$-complex between objects, and we let 
$\Hom_{\sC}(\sF,\sG) \in \Gpd$ denote groupoid of
maps in $\sC$ regarded as an abstract category, i.e.,
forgetting the DG structure. We remind that
$\Omega^{\infty} \ul{\Hom}_{\sC}(\sF,\sG)  =
\Hom_{\sC}(\sF,\sG)$, where on the left hand side
we are regarding $\Vect$ as the $\infty$-category of
$k$-module spectra.

For a DG category $\sC$ with a $t$-structure, we let
$\tau^{\geq n}$ and $\tau^{\leq n}$ denote the truncation
functors; we use cohomological gradings throughout (as indicated
by the use of superscripts).

\subsection{Acknowledgements}

We thank Justin Campbell, Gurbir Dhillon, 
Dennis Gaitsgory, Nick Rozenblyum, and Harold Williams
for a number of essential discussions related to this text.

\section{Monoidal structures}\label{s:tens}

\subsection{}

In this section, we define monoidal structures
$\overset{!}{\otimes}$ and $\overset{\rightarrow}{\otimes}$ 
on $\Pro\Vect$. 
This material follows \cite{beilinson-top-alg}, the appendices
to \cite{fg2}, 
\cite{gaitsgory-kazhdan}, and 
\cite{positselski} Appendix D. 
The main difference with those sources is that
we work in the derived setting, which requires somewhat 
restructuring the usual definitions.

\subsection{Notation}\label{ss:convs-2}

Let $\Pro \Vect$ denote the pro-category of (the DG category) 
$\Vect$; we refer to \cite{htt} \S 7.1.6 and \cite{sag} \S A.8.1
for details on pro-categories.

\begin{rem}

There are cardinality issues to keep in mind when working
with a category such as $\Pro\Vect$.
Let us remind some relevant ideas from \cite{htt} \S 5. 
A category is \emph{accessible} if it satisfies a certain 
hypothesis involving cardinalities (and is idempotent complete); 
for instance, compactly generated
categories are accessible, where the cardinality condition is the
hypothesis that the subcategory of compact objects is
\emph{essentially small}. We also remind that 
\emph{presentable} accessible and cocomplete (i.e., admitting 
colimits). This hypothesis is designed so the ``naive" proof
of the adjoint functor theorem (involving potentially large limits)
goes through; in particular, presentable categories admit limits.

Now for an accessible category $\sC$ admitting finite limits, 
(e.g., $\sC$ is presentable), $\Pro(\sC)^{op}$ is defined
to be the category
of \emph{accessible} functors $\sC \to \Gpd$ preserving
finite limits. Note that $\Pro(\sC)$ is not presentable,
so the adjoint functor theorem and its relatives do not apply.

\end{rem}

By a \emph{DG category}, we mean a stable ($\infty$-)category
with a $\Vect^c$-module category structure where the
action is exact in each variable separately.
Here $\Vect^c \subset \Vect$ is the subcategory of 
compact objects 
(i.e., perfect objects, i.e., bounded complexes
with finite dimensional cohomologies).
By a DG functor,
we mean an exact functor compatible between the $\Vect^c$-module
structures. We let $\DGCat_{big}$ denote the 2-category of such.

We let $\DGCat \subset \DGCat_{big}$ denote the 2-category
of accessible DG categories under
accessible DG functors. Recall that
$\DGCat_{cont}$ denotes the 2-category of
cocomplete, presentable DG categories under continuous functors. 

For the set up of topological algebras, it would be more natural
to work with spectra and stable categories, but 
given our convention that we work over $k$, we stick 
to the language of DG categories.

We remind (c.f. \cite{higheralgebra} \S 4.8.1) 
that $\DGCat_{big}$ has a canonical symmetric monoidal
structure with unit $\Vect^c$. We denote
this monoidal structure by $\ol{\otimes}$.
If we worked in the spectral setting, functors
$\sC \ol{\otimes} \sD \to \sE$ would be the
same as functors $\sC \times \sD \to \sE$ exact
in each variable separately; in the DG setting,
they should be called \emph{bi-DG} functors.

Similarly, $\DGCat_{cont}$ has a symmetric
monoidal structure $\otimes$ such that
functors $\sC \otimes \sD \to \sE$ are the
same as bi-DG functors that commute 
with colimits in each variable separately.

For $\sF \in \sC$ and $\sG \in \sD$, we let
$\sF \boxtimes \sG$ denote the induced object of
$\sC \ol{\otimes} \sD$ or $\sC \otimes \sD$ as appropriate.

For $\sC$ a compactly generated DG category, we let
$\sC^c$ denote its subcategory of compact objects,
as in the case of $\Vect$ above.

\subsection{Review of topological tensor products}

Following the above references, we seek two
tensor products $\overset{!}{\otimes}$ and $\overset{\rightarrow}{\otimes}$
on $\Pro \Vect$. Ignoring homotopy coherences issues for the moment, we recall
the basic formulae characterizing these two tensor products concretely.

Roughly, if $V = \lim_i V_i, W = \lim_j W_j \in \Pro\Vect$
are filtered limits with $V_i, W_j \in \Vect$, then:

\[
V \overset{!}{\otimes} W = \underset{i,j}{\lim} \, V_i \otimes W_j.
\]

The (non-symmetric)
monoidal product $\overset{\rightarrow}{\otimes}$ 
is characterized by the fact that it is a bi-DG functor, and the functor:

\[
V \overset{\rightarrow}{\otimes} - : \Pro\Vect \to \Pro \Vect
\]

\noindent commutes with limits, while the functor:

\[
V \overset{\rightarrow}{\otimes} - : \Vect \to \Pro\Vect
\]

\noindent commutes with colimits.

Explicitly, if $W_j = \colim_k W_{j,k}$ with $W_{j,k} \in \Vect^c$,
we have:

\[
V \overset{\rightarrow}{\otimes} W = 
\underset{j}{\lim} \, \underset{k}{\colim} \, V \otimes W_{j,k}.
\]

\noindent (Clearly we should allow the indexing set for the
terms ``$k$" to depend on $j$.)

These two tensor products are connected as follows.
For $V_1,V_2,W_1,W_2$, there is a natural map:

\begin{equation}\label{eq:rar-!}
(V_1 \overset{!}{\otimes} V_2) 
\overset{\rightarrow}{\otimes}
(W_1 \overset{!}{\otimes} W_2) \to 
(V_1 \overset{\rightarrow}{\otimes} W_1) 
\overset{!}{\otimes}
(V_2 \overset{\rightarrow}{\otimes} W_2).
\end{equation}

\noindent In particular, there is a natural map:

\[
V \overset{\rightarrow}{\otimes} W \to V \overset{!}{\otimes} W.
\]

\subsection{Topological tensor products in the derived setting}

We now formally define the above structures and characterize
their categorical properties.

The tensor product $\overset{!}{\otimes}$ on $\Pro \Vect$ is
easy: as $\Ind$ of a monoidal category has a canonical
tensor product, so does $\Pro$.

\subsection{}

To construct $\overset{\rightarrow}{\otimes}$, first 
note that $\Pro(\Vect)^{op}$ is by definition the
category $\TwoHom(\Vect,\Gpd)$ of accessible functors
$\Vect \to \Gpd$. Any such functor factors canonically
as:

\[
\Vect \xar{F} \Vect \xar{\Oblv} \mathsf{Spectra} 
\xar{\Omega^{\infty}} \Gpd
\]

\noindent with $F$ a DG functor, i.e., 
$\Pro\Vect = \TwoHom_{\DGCat}(\Vect,\Vect)^{op}$.

\begin{notation}

For $V \in \Pro \Vect$, we let $F_V$ denote the
induced functor $\Vect \to \Vect$.
Clearly $F_V = \ul{\Hom}_{\Pro\Vect}(V,-)$.
Define $V \overset{\rightarrow}{\otimes} -:\Pro\Vect \to \Pro\Vect$
as the ``partially-defined left adjoint" to $F_V$,
i.e., for $W \in \Vect$ and $U \in \Pro\Vect$, 
we have functorial isomorphisms:

\begin{equation}\label{eq:rar-adj}
\ul{\Hom}_{\Pro\Vect}(V \overset{\rightarrow}{\otimes} W, U) \simeq
\ul{\Hom}_{\Vect}(W,F_V(U)).
\end{equation}

\noindent We extend this construction 
to general $W \in \Pro\Vect$ by
right Kan extension.

\end{notation}

We extend this construction to
a monoidal structure by reinterpreting it as
composition of functors in $\TwoHom_{\DGCat}(\Vect,\Vect)$. 
That is, we observe:

\[
F_V \circ F_W \simeq F_{V \overset{\rightarrow}{\otimes} W}.
\]

\noindent The left hand side extends to the evident
monoidal structure on $\TwoHom_{\DGCat}(\Vect,\Vect)^{op}$.

\subsection{Comparison of tensor products}\label{ss:tens-compat}

We now wish to give compatibilities
between $\overset{\rightarrow}{\otimes}$ and $\overset{!}{\otimes}$.
Roughly, we claim that these form a ``lax $\sE_2$" 
structure.\footnote{For what follows, it is important to think of the 
$\sE_2$ operad as $\sE_1^{\otimes 2}$ and not as the little discs operad:
the laxness evidently breaks the $SO(2)$-symmetry.}

Let $\Alg(\Cat)$ denote the category of monoidal categories and
lax monoidal functors,
which we consider as a symmetric monoidal category under products.
We claim that $(\Pro\Vect,\overset{\rightarrow}{\otimes})$
is a commutative algebra in this category with
operation $\overset{!}{\otimes}$. Note that this structure 
encodes the natural transformations \eqref{eq:rar-!}.
(In \S \ref{ss:alg-tensor}, we give some
simple consequences, and the reader may wish to skip ahead.)

To construct this compatibility, 
note that if we write $\Pro\Vect$ as $\TwoEnd_{\DGCat}(\Vect)^{op}$,
then $\overset{!}{\otimes}$ corresponds to \emph{Day convolution}.
Then this follows from formal facts about Day convolution.

With respect to the symmetric monoidal structure
$\ol{\otimes}$ on $\DGCat$, 
we have the internal $\Hom$ functor:

\[
\TwoHom_{\DGCat}(\sC,\sD) \in \DGCat_{big}
\]

\noindent which is the usual (DG) category of DG functors.
If $\sC$ is a monoidal DG category
and $\sD \in \Alg(\DGCat_{cont})$, then recall that
$\TwoHom_{\DGCat}(\sC,\sD)$ has the usual Day convolution monoidal structure.
It is characterized by the fact that:

\[
\TwoHom_{\Alg^{lax}(\DGCat)}
(\sE,\TwoHom_{\DGCat}(\sC,\sD)) =
\TwoHom_{\DGCat}(\sC \ol{\otimes} \sE, \sD)
\]

\noindent where by $\Alg^{lax}(\DGCat)$ we mean
monoidal DG categories under \emph{lax} monoidal functors. 

It is straightforward to see that Day convolution has the property
that the composition functor:

\[
\TwoHom_{\DGCat}(\sC, \sD) \ol{\otimes}
\TwoHom_{\DGCat}(\sD, \sE) \to 
\TwoHom_{\DGCat}(\sC, \sE)
\]

\noindent is lax monoidal (assuming $\sD,\sE \in \Alg(\DGCat_{cont},\otimes)$).
This immediately implies our claim about the monoidal structures
on $\Pro\Vect$.

\subsection{}

First, suppose that $\sC,\sD \in \DGCat_{cont}$ with $\sD$ compactly 
generated. Then observe that there is a canonical bi-DG functor:

\[
\Pro(\sC) \times \Pro(\sD) \xar{-\overset{\rightarrow}{\boxtimes}-} \Pro(\sC \otimes \sD)
\]

\noindent computed as follows. 

If $\sG \in \sD^c$, then the induced functor
$\xar{-\overset{\rightarrow}{\boxtimes}\sG}:\Pro(\sC) \to \Pro(\sC \otimes \sD)$ 
is the right Kan extension of the functor 
$-\boxtimes\sG:\sC \to \sC \otimes \sD \subset \Pro(\sC \otimes \sD)$.
In general, for $\sF \in \Pro(\sC)$, the functor 
$\overset{\rightarrow}{\boxtimes}-:\Pro(\sD) \to \Pro(\sC \otimes \sD)$
is computed by first left Kan extending the above functor from
$\sD^c$ to $\sD$, and then right Kan extending to $\Pro(\sD)$.

This operation is functorial in the sense that for
$F:\sC_1\to \sC_2 \in \DGCat_{cont}$, the diagram:

\[
\xymatrix{
\Pro(\sC_1) \times \Pro(\sD) \ar[r] \ar[d]^{-\overset{\rightarrow}{\boxtimes}-}
& \Pro(\sC_2) \times \Pro(\sD) \ar[d]^{-\overset{\rightarrow}{\boxtimes}-} \\
\Pro(\sC_1 \otimes \sD) \ar[r] & \Pro(\sC_2 \otimes \sD)
}
\]

\noindent canonically commutes. Indeed, this follows immediately
from:

\begin{lem}\label{l:pro-cts}

For $F: \sC \to \sD \in \DGCat_{cont}$, $\Pro(F):\Pro(\sC) \to \Pro(\sD)$
commutes with limits and colimits.

\end{lem}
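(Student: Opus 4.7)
The plan is to handle limits and colimits in turn. For limits, it suffices to verify preservation of cofiltered limits and finite limits separately, since any small limit in $\Pro(\sC)$ decomposes as a cofiltered limit of finite ones. Cofiltered limits are preserved tautologically: $\Pro(\sC)$ is the free cofiltered-limit completion of $\sC$, and $\Pro(F)$ is by construction the unique cofiltered-limit-preserving extension of $F$. For finite limits, I would use that $\sC$ and $\sD$ are stable, so finite limits coincide with finite colimits; since $F$ is continuous it preserves finite colimits, hence finite limits. Finite limits in $\Pro(\sC)$ of pro-objects $V = \lim_i V_i$, $W = \lim_j W_j$ are computed by forming the finite limit in $\sC$ pointwise in $(i,j)$ and re-assembling as a cofiltered limit; the analogous formula holds in $\Pro(\sD)$ for $\Pro(F)(V)$ and $\Pro(F)(W)$, so the preservation of finite limits by $F$ gives the preservation of finite limits by $\Pro(F)$.

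For colimits, the strategy is to produce a right adjoint. Since $F$ is continuous between presentable stable categories, the adjoint functor theorem supplies a right adjoint $F^R: \sD \to \sC$; being a right adjoint, $F^R$ preserves all limits (in particular finite ones), so the construction $\Pro$ applies to it and yields $\Pro(F^R): \Pro(\sD) \to \Pro(\sC)$. A direct mapping-space computation, writing $V = \lim_i V_i$ and $W = \lim_j W_j$,
\[
\ul{\Hom}_{\Pro(\sD)}(\Pro(F)(V), W) \simeq \lim_j \colim_i \ul{\Hom}_{\sD}(F(V_i), W_j) \simeq \lim_j \colim_i \ul{\Hom}_{\sC}(V_i, F^R(W_j)) \simeq \ul{\Hom}_{\Pro(\sC)}(V, \Pro(F^R)(W)),
\]
then exhibits the adjunction $\Pro(F) \dashv \Pro(F^R)$. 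As a left adjoint, $\Pro(F)$ preserves all small colimits.

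The argument is conceptually straightforward; the only genuine subtlety is $\infty$-categorical coherence, namely promoting the pointwise mapping-space identification into an actual adjunction in the underlying $\infty$-category. I would handle this by invoking the 2-functoriality of $\Pro(-)$ on the subcategory of accessible, finite-limit-preserving functors: it carries adjoint pairs to adjoint pairs, so the adjunction $F \dashv F^R$ internal to $\DGCat_{cont}$ immediately yields $\Pro(F) \dashv \Pro(F^R)$ without further bookkeeping, and the hard part reduces to this essentially formal observation.
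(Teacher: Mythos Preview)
Your proof is correct and follows essentially the same approach as the paper's: the paper simply says that $\Pro(F)$ ``tautologically commutes with limits'' (which your decomposition into cofiltered and finite limits makes explicit), and for colimits observes that $F$ admits an accessible right adjoint $G$ by the adjoint functor theorem, so $\Pro(F)$ admits the right adjoint $\Pro(G)$. Your version is more detailed but the strategy is identical.
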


\begin{proof}

$\Pro(F)$ tautologically commutes with limits. For the commutation
with colimits, note that $F$ admits an (accessible) right adjoint
$G$ by the adjoint functor theorem, so $\Pro(F)$ admits
the right adjoint $\Pro(G)$.

\end{proof}

\section{Modules and comodules}\label{s:basics}

\subsection{}

We let $\Alg^{\overset{\rightarrow}{\otimes}}$ denote the
category of (associative, unital) algebras
in $\Pro\Vect$ with respect to $\overset{\rightarrow}{\otimes}$. 
We refer to objects of $\Alg^{\overset{\rightarrow}{\otimes}}$
as \emph{$\overset{\rightarrow}{\otimes}$-algebras}. 
We remark that (lower categorical analogues of) 
such objects have been variously
referred to as \emph{topological algebras} or 
\emph{topological chiral algebras}
in the literature. 

In this section, we give basic definitions about 
modules over $\overset{\rightarrow}{\otimes}$-algebras.
Note that we are exclusively interested in \emph{discrete} modules,
i.e., modules in $\Vect$, not in $\Pro\Vect$, and our notation
will always take this for granted.

\begin{terminology}

We generally use the term \emph{discrete} to refer
to objects of $\Vect \subset \Pro\Vect$. For example,
we say a $\overset{\rightarrow}{\otimes}$-algebra
is \emph{discrete} if its underlying object
lies in $\Vect$ (in which case this structure 
is equivalent to a usual associative DG algebra structure).

This should not be confused with the usage of this phrase
in homotopy theory, where it is often used for an object
in the heart of a $t$-structure. In that setting, we prefer the 
term \emph{classical}, so e.g., a
\emph{classical} $\overset{\rightarrow}{\otimes}$-algebra
is one whose underlying object lies in $\Pro\Vect^{\heart}$.

\end{terminology}

\subsection{Comparison with comonads}\label{ss:comonad}

First, note that by construction, we have:

\[
\Alg^{\overset{\rightarrow}{\otimes}}
 \simeq \{\text{accessible DG comonads on }\Vect.
\}^{op}
\]

\noindent Here \emph{DG} indicates
that we have compatible comonad and DG functor structures;
in the stable setting, this would simply mean the underlying functor
of our comonad is exact.

Let $A$ be a $\overset{\rightarrow}{\otimes}$-algebra.
Define $A\mod_{top}$ as $A\mod(\Pro\Vect)$.
We define $A\mod_{naive}$ to be the ``naive" category of discrete $A$-modules:

\[
A\mod_{top} \underset{\Pro\Vect}{\times} \Vect.
\]

\noindent That is, an object of $A\mod_{naive}$ has an underlying vector
space $M \in \Vect$, an action map 
$A \overset{\rightarrow} M \to M \in \Pro\Vect$, and the usual
(higher) associativity data. (We use the notation ``naive" by
comparison with the renormalization setting introduced below.)

By \eqref{eq:rar-adj}, if $S \coloneqq F_A$ is the comonad corresponding to
$A$, we have a canonical equivalence:

\[
A\mod_{naive} \simeq S\comod 
\]

\noindent compatible with forgetful functors to $\Vect$.

As a consequence, $A\mod_{naive}$ is presentable and the forgetful
functor $\Oblv:A\mod_{naive} \to \Vect$ is continuous, conservative,
and, of course, comonadic.

\begin{rem}

To conclude: the language of $\overset{\rightarrow}{\otimes}$-algebras
is equivalent to the language of (DG) comonads on $\Vect$. 
Therefore, the wisdom in using this language may be reasonably questioned
by the reader; we use it here to connect to older work, and because
pro-vector spaces are typically nicer 
to describe than their corresponding comonads.

\end{rem}

\subsection{Tensor products}\label{ss:alg-tensor}

We now spell out what the material of \S \ref{ss:tens-compat}
means for $\overset{\rightarrow}{\otimes}$-algebras and 
their modules. (We remind that \S \ref{ss:tens-compat}
is a souped up version of \eqref{eq:rar-!}, which
may be more helpful to refer to.)

By \S \ref{ss:tens-compat}, $\Alg^{\overset{\rightarrow}{\otimes}}$
is symmetric monoidal with tensor product:

\[
A,B \mapsto A \overset{!}{\otimes} B.
\]

Similarly, we have the bi-DG functor:

\[
\begin{gathered}
A\mod_{top} \times B\mod_{top} \to 
A\overset{!}{\otimes} B \mod_{top} \\
(M,N) \mapsto M\overset{!}{\otimes} N.
\end{gathered}
\]

\noindent Clearly this induces a bi-DG functor:

\[
\begin{gathered}
A\mod_{naive} \times B\mod_{naive} \to 
A\overset{!}{\otimes} B \mod_{naive} \\
(M,N) \mapsto M \otimes N.
\end{gathered}
\]

\noindent This functor commutes with
colimits in each variable separately, so induces:

\begin{equation}\label{eq:tens-cat}
A\mod_{naive} \otimes B\mod_{naive} \to 
(A\overset{!}{\otimes} B)\mod_{naive}.
\end{equation}

To properly encode all higher categorical data,
note that we have upgraded $A\mapsto A\mod_{naive}$
to a contravariant lax symmetric monoidal functor
from $\overset{\rightarrow}{\otimes}$-algebras to $\DGCat_{cont}$.

\subsection{Forgetful functors}\label{ss:forget}

Like every functor in $\DGCat_{cont}$, $F$ is pro-representable,
i.e., there is a filtered projective system $i \mapsto \sF_i \in 
A\mod_{naive}$ such that:

\[
\colim_i \ul{\Hom}_{A\mod_{naive}}(\sF_i,-) = \Oblv.
\]

In fact, we claim that $\lim_i \Oblv(\sF_i) \in \Pro\Vect$ is the
pro-vector space underlying $A$.

Indeed, let $\Phi:\Vect \to A\mod_{naive}$ be the functor
right adjoint to the forgetful functor. 
Note that $\Oblv\Phi = F_A$ ($ \coloneqq \ul{\Hom}_{\Pro\Vect}(A,-)$), 
as is clear in the comonadic picture.

Then for any $V \in \Vect$, we obtain:

\[
\begin{gathered}
\ul{\Hom}_{\Pro\Vect}(\underset{i}{\lim} \, \Oblv(\sF_i), V) = 
\underset{i}{\colim} \, \ul{\Hom}_{\Vect} (\Oblv(\sF_i), V) = \\
\underset{i}{\colim} \, \ul{\Hom}_{\Vect} (\sF_i, \Phi(V)) =
\Oblv\Phi(V) = \ul{\Hom}_{\Pro\Vect}(A,-)
\end{gathered}
\]

\noindent as desired.

\subsection{$t$-structures}

Recall that $\Pro\Vect$ has a natural $t$-structure
with $(\Pro\Vect)^{\leq 0} = \Pro(\Vect^{\leq 0})$
and $(\Pro\Vect)^{\geq 0} = \Pro(\Vect^{\geq 0})$;
we omit the parentheses in the sequel as there can be 
no confusion. 

In the remainder of the section, we will be interested in
\emph{connective} $\overset{\rightarrow}{\otimes}$-algebras,
i.e., such algebras $A$ in $\Pro\Vect^{\leq 0}$.
Clearly this hypothesis is equivalent to the
comonad $F_A$ being left $t$-exact.

From this latter description, we 
see that $A\mod_{naive}$ carries a canonical 
$t$-structure such that $\Oblv:A\mod_{naive} \to \Vect$
is $t$-exact. Because $\Oblv$ commutes with colimits,
this $t$-structure is necessarily right complete.

\subsection{Convergence}\label{ss:conv}

In order to formulate Proposition \ref{p:alg-vs-cats}
below, we introduce the following terminology.

For $V \in \Pro\Vect$, the \emph{convergent completion}
of $V$ is:

\[
\lim_n \tau^{\geq -n}(V) \in \Pro\Vect.
\]

\noindent We say that $V$ is \emph{convergent} 
if the natural map $V \to \widehat{V}$
is an isomorphism.
Note that $V$ is convergent if and only if it lies
in $\Pro(\Vect^+) \subset \Pro(\Vect)$
(or equivalently: $F_V$ is left Kan extended
from $\Vect^+$). 

In particular, we obtain that
connective convergent pro-vector spaces are (contravariantly)
equivalent to left $t$-exact 
functors $\Vect^+ \to \Vect^+ \in \DGCat$.
Under this dictionary, connective
$\overset{\rightarrow}{\otimes}$-algebras are the same
as left $t$-exact (accessible) DG comonads on 
$\Vect^+$. 

\begin{rem}

If $A$ is a connective
$\overset{\rightarrow}{\otimes}$-algebra,
then its convergent completion $\widehat{A}$ is as well, and
$\widehat{A}\mod_{naive}^+ \isom A\mod_{naive}^+$.

\end{rem}

\subsection{Comparison with categorical data}

We have the following psychologically important result.

\begin{prop}\label{p:alg-vs-cats}

The functor:

\[
\begin{gathered}
\{\text{convergent, connective $\overset{\rightarrow}{\otimes}$-algebras}\} \to 
\DGCat_{/\Vect^+} \\
A \mapsto \big(\Oblv:A\mod_{naive}^+ \to \Vect^+\big)
\end{gathered}
\]

\noindent is fully-faithful. A DG category
$\sC$ with structural functor $F:\sC \to \Vect^+$ lies in the
essential image of this map if and only if:

\begin{itemize}

\item $F$ is conservative.

\item $\sC$ admits a (necessarily unique) $t$-structure for which $F$ is $t$-exact.

\item $\sC^{\geq 0}$ admits arbitrary colimits, and the functor
$F:\sC^{\geq 0} \to \Vect^{\geq 0}$ preserves such colimits.

\end{itemize}

Under this equivalence, $\sC$ is the bounded below derived
category of its heart $\sC^{\heart}$ with $F$
the derived functor of its restriction $\sC^{\heart}\to \Vect^{\heart}$
if and only if the corresponding $\overset{\rightarrow}{\otimes}$-algebra
$A$ is classical (i.e., lies in $\Pro\Vect^{\heart}$). 

\end{prop}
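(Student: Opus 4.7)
The plan is to deduce everything from the identification in \S\ref{ss:comonad} between $\overset{\rightarrow}{\otimes}$-algebras and accessible DG comonads on $\Vect$, combined with the characterization in \S\ref{ss:conv} of convergent connective such algebras as left $t$-exact comonads on $\Vect^+$. All three bullet conditions hold for $\Oblv: A\mod_{naive}^+ \to \Vect^+$ essentially by construction: conservativity from \S\ref{ss:comonad}, the $t$-structure from the preceding $t$-structures subsection, and cocompleteness of the connective subcategory together with preservation of its colimits from connectivity of $A$. Fully faithfulness of the functor $A \mapsto A\mod_{naive}^+$ also follows directly: a morphism of algebras is equivalent to a morphism of comonads $F_B \to F_A$, which yields a functor $B\mod_{naive}^+ \to A\mod_{naive}^+$ over $\Vect^+$, and an inverse construction can be read off by passing to the right adjoints of the respective forgetful functors.

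For the essential image, suppose $F: \sC \to \Vect^+$ satisfies the three bullets. The subcategory $\sC^{\geq 0}$ is presentable, being accessible and cocomplete, and the restriction $F|_{\sC^{\geq 0}}: \sC^{\geq 0} \to \Vect^{\geq 0}$ is continuous, so by the adjoint functor theorem it admits a right adjoint $G$. The composite $S := F \circ G$ is a left $t$-exact DG comonad on $\Vect^{\geq 0}$; by \S\ref{ss:conv}, $S$ corresponds to a convergent connective $\overset{\rightarrow}{\otimes}$-algebra $A$ with $S\comod(\Vect^{\geq 0}) \simeq A\mod_{naive}^{\geq 0}$. The comparison functor $\sC^{\geq 0} \to S\comod(\Vect^{\geq 0})$ over $\Vect^{\geq 0}$ is an equivalence by the comonadic Barr-Beck theorem applied to $F \dashv G$: $F$ is conservative by hypothesis, and the $F$-split cosimplicial objects arising from the cobar construction totalize within $\sC^{\geq 0}$ because each of their bounded-below cohomological truncations is computed by a finite limit that $F$ preserves by exactness. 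To extend from $\sC^{\geq 0}$ to $\sC^+ = \sC$, use that $F$ is $t$-exact: each $\sC^{\geq -n}$ is identified with $\sC^{\geq 0}$ via a shift, and the equivalences on each piece assemble into the desired equivalence $\sC \simeq A\mod_{naive}^+$ over $\Vect^+$.

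The final claim about classicality is handled by tracking the heart. If $A \in \Pro\Vect^{\heart}$, then the comonad $F_A$ preserves $\Vect^{\heart}$ and is the derived functor of its heart restriction; consequently $A\mod_{naive}^+$ is the bounded below derived category of $A\mod_{naive}^{\heart}$ and $\Oblv$ is the derived functor of its restriction. Conversely, if $\sC$ is the bounded below derived category of $\sC^{\heart}$ with $F$ the derived functor of its heart restriction, then its right adjoint $G$ and the comonad $S = F G$ are also derived from their restrictions to the hearts, which forces the associated $A$ to lie in $\Pro\Vect^{\heart}$. I expect the main obstacle to be the verification of comonadic Barr-Beck in the bounded-below category $\sC^{\geq 0}$: one must check that the $F$-split cosimplicial totalizations demanded by Barr-Beck really live in $\sC^{\geq 0}$ and are preserved by $F$, which is exactly where the cocompleteness and colimit-preservation hypotheses on the connective subcategory get used.
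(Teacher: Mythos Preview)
Your approach is correct and essentially the same as the paper's: both arguments show that $F$ is comonadic by producing the right adjoint from the colimit hypothesis on $\sC^{\geq 0}$ and verifying the Barr--Beck totalization condition via the bound $\tau^{\leq n}\Tot = \tau^{\leq n}\Tot^{\leq n+1}$ (the paper's Lemma~\ref{l:cosimp-summand}), and both handle classicality by tracking whether the comonad preserves $\Vect^{\heart}$. The paper makes the ``$A$ classical $\Rightarrow$ derived category of the heart'' direction concrete by exhibiting enough injectives of the form $\Phi(V)$ for $V \in \Vect^{\heart}$, which is the unwound form of your heart-preservation claim.
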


We first recall the following standard result about
simplicial objects, see e.g. \cite{higheralgebra} Remark 1.2.4.3.

\begin{lem}\label{l:cosimp-summand}

For a cosimplicial object $\sF^{\dot}$ in a
stable (e.g., DG) category $\sC$, let
$\Tot^{\leq n} \sF^{\dot}$ be the
limit over the subcategory $\bDelta_{\leq n} \subset \bDelta$
of totally ordered sets of cardinality $\leq n+1$.

Then for $n > 0$:

\[
\Ker(\Tot^{\leq n} \sF^{\dot} \to \Tot^{\leq n-1} \sF^{\dot})
\]

\noindent is isomorphic to a 
direct summand of $\sF^n[-n]$. 

\end{lem}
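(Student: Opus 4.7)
The plan is to identify $\Ker\bigl(\Tot^{\leq n}\sF^{\dot}\to \Tot^{\leq n-1}\sF^{\dot}\bigr)$ with $N^n\sF[-n]$, where $N^n\sF$ denotes the $\infty$-categorical normalized cochain at level $n$ (the joint fiber of all codegeneracies $s^i:\sF^n\to \sF^{n-1}$), and then to realize $N^n\sF$ as a direct summand of $\sF^n$ using the cosimplicial identity $s^i d^i = \id$.

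First I would set up the kernel via right Kan extension. Let $j:\bDelta_{\leq n-1}\hookrightarrow \bDelta_{\leq n}$; the formula $\lim_{\bDelta_{\leq n}} j_*j^*\sF^{\dot}\simeq \Tot^{\leq n-1}\sF^{\dot}$ identifies the kernel in question with $\lim_{\bDelta_{\leq n}} \sG^{\dot}$, where $\sG^{\dot} := \Ker\bigl(\sF^{\dot}\to j_*j^*\sF^{\dot}\bigr)$. Since $j^*$ is fully faithful, $\sG^{\dot}$ vanishes on $\bDelta_{\leq n-1}$; using the Reedy structure on $\bDelta$ (injections on the $+$ side, surjections on the $-$ side), the matching-object computation gives $(j_*j^*\sF^{\dot})^n \simeq M^n\sF := \lim_{[n]\twoheadrightarrow[k],\,k<n}\sF^k$, whence $\sG^n = \Ker(\sF^n\to M^n\sF) =: N^n\sF$.

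Next I would compute the limit of the ``skyscraper'' $\sG^{\dot}$, supported only at $[n]$. Since the ``link'' of $[n]$ in $\bDelta_{\leq n}$ is encoded homotopically by the boundary of the standard $n$-simplex, this limit computes $N^n\sF[-n]$. This is the cosimplicial counterpart of the fact that attaching an $n$-cell produces an $n$-shift; it can be verified by direct manipulation of the Kan extension formula, or extracted from the Reedy cofibrancy analysis of Higher Algebra \S 1.2.4.

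Finally, the direct summand statement is immediate from the cosimplicial identities: each codegeneracy $s^i:\sF^n\to \sF^{n-1}$ is split by the coface $d^i$ (via $s^i d^i = \id$), so the Reedy decomposition $\sF^n \simeq N^n\sF \oplus M^n\sF$ exhibits $N^n\sF$ as a direct summand of $\sF^n$; hence $N^n\sF[-n]$ is a direct summand of $\sF^n[-n]$. The main obstacle is the ``link'' calculation in the second step, where essentially all of the combinatorial $\infty$-categorical content resides; the rest of the argument is formal.
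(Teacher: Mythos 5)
The paper never proves this lemma: it is quoted as a standard fact with a pointer to \cite{higheralgebra} Remark 1.2.4.3, so there is no in-text argument to compare against. Your sketch is (the cosimplicial dual of) the standard Reedy-theoretic argument behind that reference, and its outline is correct. The reduction of the kernel to $\lim_{\bDelta_{\leq n}}\sG^{\dot}$ with $\sG^{\dot}=\Ker(\sF^{\dot}\to j_*j^*\sF^{\dot})$ is fine; the identification $(j_*j^*\sF)^n\simeq M^n\sF$ is correct because the surjections $[n]\twoheadrightarrow[k]$ form an initial subcategory of the comma category $[n]/\bDelta_{\leq n-1}$ (the epi--mono factorization provides a terminal object in each relevant slice), so $\sG$ is concentrated in the top degree with value $N^n\sF=\Ker(\sF^n\to M^n\sF)$; and the remaining claim that the limit of such a diagram is $N^n\sF[-n]$ is exactly the classical statement that the fiber of $\Tot^{\leq n}\to\Tot^{\leq n-1}$ is the $n$-fold loops of the fiber of the matching map (Goerss--Jardine, Ch.~VIII, or the dual of the skeletal analysis in \cite{higheralgebra} \S 1.2.4). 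Deferring that step to the literature leaves you no worse off than the paper, which defers the entire lemma.

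The one place where your justification does not literally suffice is the splitting. Knowing that each individual codegeneracy $s^i:\sF^n\to\sF^{n-1}$ is split by $d^i$ does not by itself split the matching map $\sF^n\to M^n\sF$: the matching object is a limit over the whole category of surjections out of $[n]$ (already for $n=2$ it is the pullback $\sF^1\times_{\sF^0}\sF^1$), so a section must be produced compatibly with all of these maps, not one codegeneracy at a time. The fact is true and standard --- one obtains a section by induction over partial matching objects using the individual splittings, or from the dual Dold--Kan idempotents, or from the stable Dold--Kan correspondence of \cite{higheralgebra} \S 1.2.4 --- and in a stable category a section of $\sF^n\to M^n\sF$ does give $\sF^n\simeq N^n\sF\oplus M^n\sF$, hence the desired summand after shifting. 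But as written, the inference ``each $s^i$ is split, hence $\sF^n\simeq N^n\sF\oplus M^n\sF$'' is a gap you should either close by one of these arguments or cite explicitly.
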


\begin{proof}[Proof of Proposition \ref{p:alg-vs-cats}]

First it is straightforward to see that $A\mod_{naive}^+$ actually
satisfies the above conditions.

Suppose $F:\sC \to \Vect^+$ with the above properties is given. 
Clearly the $t$-structure on $\sC$ is bounded from below (i.e., $\sC = \sC^+$),
compatible with filtered colimits, and right complete. 
Clearly the equivalence follows if we can show such $F$ is comonadic; the 
argument is well-known, but we reproduce it here for convenience.

First, we claim $F|_{\sC^{\geq 0}}:\sC^{\geq 0} \to \Vect^{\geq 0}$
commutes with arbitrary totalizations.
By Lemma \ref{l:cosimp-summand}, if $\sF^{\dot}$ is
a cosimplicial diagram in $\sC$ with
$\sF^i \in \sC^{\geq 0}$ for all $i$, then the totalization
exists and is calculated by:

\[
\tau^{\leq n} \Tot \sF^{\dot} = \tau^{\leq n} \Tot^{\leq n+1} \sF^{\dot}.
\]

\noindent Since $\Tot^{\leq n+1}$ is a finite limit, $t$-exactness
of $F$ implies the claim.

Now observe that $F$ admits a left $t$-exact (possibly non-continuous) right
adjoint $G$, as $F|_{\sC^{\geq 0}}$ admits a left exact right adjoint. 
Then for any $\sF \in \sC$, we have $\sF \in \sC^{\geq -N}$, for $N$ large enough, so $(GF)^n(\sF) \in \sC^{\geq -N}$ for any $n$, so the totalization 
$\Tot((GF)^{\dot +1}(\sF))$ exists and is preserved by 
the conservative functor $F$, implying comonadicity. 

It remains to show the compatibility with abelian categories. 
Suppose $\sA$ is a $k$-linear abelian category with
a $k$-linear functor $F^{\heart}:\sA \to \Vect^{\heart}$ that is exact,
continuous, conservative, and accessible. Then there is a
pro-object $\lim \, \sF_i \in \Pro(\sA)$ ($\sF_i \in \sA$) 
corepresenting $F^{\heart}$. It immediately follows that
this pro-object also corepresents the derived functor
$F (\coloneqq RF^{\heart}):D^+(\sA) \to \Vect^+$
(because the functor this pro-system defines maps injectives in 
$\sA^{\heart}$ into $\Vect^{\heart}$).
By \S \ref{ss:forget}, this implies that the 
corresponding $\overset{\rightarrow}{\otimes}$-
algebra has underlying object $\lim \, F(\sF_i) \in \Pro\Vect$.
Because $F^{\heart}$ is exact, $F$ is $t$-exact, 
so $F(\sF_i) \in \Vect^{\heart}$, 
implying $\lim \, F(\sF_i) \in \Pro\Vect^{\heart}$.

Conversely, suppose $A$ is classical.
Let $\Phi:\Vect \to A\mod_{naive}$ denote the 
(possibly discontinuous) right
adjoint to the forgetful functor. For $V \in \Vect^{\heart}$,\footnote{If 
we worked with a general commutative ring $k \in \mathsf{Ab}^{\heart}$,
$V$ should be an injective $k$-module.}
$\Oblv \Phi(V) = F_A(V) = \ul{\Hom}_{\Pro\Vect}(A,V) \in \Vect^{\heart}$,
so $\Phi(V) \in A\mod_{naive}^{\heart}$. Moreover,
$\Phi(V)$ is obviously injective in $A\mod_{naive}$
in the sense that for any $\sF \in A\mod_{naive}^{\geq 0}$,
$\ul{\Hom}(\sF,\Phi(V)) = \ul{\Hom}_{\Vect}(\Oblv(\sF),V) \in \Vect^{\leq 0}$. 
For $\sF \in A\mod_{naive}^{\heart}$, the map $\sF \to \Phi \Oblv(\sF)$
is a monomorphism in $A\mod_{naive}^{\heart}$ 
(as it splits after applying $\Oblv$), so such there are ``enough" 
injective objects, implying $A\mod_{naive}^+$ is the bounded
below derived category of its heart. Moreover,
this reasoning immediately shows that the forgetful functor
is the derived functor of its restriction to the hearts.

\end{proof}

\section{Renormalization}\label{s:ren}

\subsection{}

In our applications, the naive category $A\mod_{naive}$ is typically
not the one we want. For example, the forgetful functor
$\widehat{\fg}_{\kappa}\mod \to \Vect$ is not conservative,
so the above construction \emph{does not recover the correct category
$\widehat{\fg}_{\kappa}\mod$}, i.e., 
$U(\widehat{\fg}_{\kappa})\mod_{naive} \neq \widehat{\fg}_{\kappa}\mod$.

Following \cite{dmod-aff-flag}, a key role is 
played by \emph{renormalization} of derived categories. 
We refer to \emph{loc. cit}.,
\cite{km-indcoh}, and \cite{whit} for introductions to this notion in the
setting of Kac-Moody algebras. The basics of the theory of ind-coherent sheaves 
also play an instructional role: see \cite{indcoh} for an introduction.

In this section, we give a (somewhat abstract) introduction to this formalism.

\subsection{Renormalization data}

\begin{defin}

A \emph{renormalization datum} for a connective 
$\overset{\rightarrow}{\otimes}$-algebra $A$ is 
a DG category $A\mod_{ren} \in \DGCat_{cont}$,
equipped with a $t$-structure and an equivalence
$\rho:A\mod_{naive}^+ \isom A\mod_{ren}^+ \in \DGCat$, such that:

\begin{itemize}

\item $\rho$ is $t$-exact.

\item $A\mod_{ren}$ is compactly generated with compact generators
lying in $A\mod_{ren}^+$.

\item The $t$-structure on $A\mod_{ren}$ is compactly generated:
i.e., $\sG \in A\mod_{ren}^{\geq 0}$ 
if and only $\Hom_{A\mod_{ren}}(\sF,\sG) = 0$ for every
\emph{compact} $\sF \in A\mod_{ren}^{<0}$.

\end{itemize}

We will also say $A$ is a 
\emph{renormalized} $\overset{\rightarrow}{\otimes}$-algebra
to mean $A$ is a connective $\overset{\rightarrow}{\otimes}$-algebra 
equipped with a renormalization datum. 

\end{defin}

\begin{rem}

Once and for all, we emphasize: if $A$ is renormalized, it is in particular
connective.

\end{rem}

\begin{rem}\label{r:ren-cpts}

The subcategory $A\mod_{ren}^c$ of compact objects in $A\mod_{ren}$
embeds canonically into $A\mod_{naive}$ as 
$A\mod_{ren}^c \subset A\mod_{ren}^+ \simeq A\mod_{naive}^+$.
It is immediate to see that a renormalization datum is equivalent
to a choice of such a subcategory satisfying some conditions.

\end{rem}

\begin{rem}\label{r:ren-cat}

By Proposition \ref{p:alg-vs-cats}, the category
$\Alg_{conv,ren}^{\overset{\rightarrow}{\otimes}}$ of
convergent, renormalized
$\overset{\rightarrow}{\otimes}$-algebras 
are equivalent to
some categorical data:
$\sC \in \DGCat_{cont}$, a continuous functor
$F:\sC \to \Vect$, and a $t$-structure on $\sC$ such
that $F$ is $t$-exact and conservative on $\sC^+$, and
the $t$-structure on $\sC$ is generated
by eventually coconnective compact objects. 
(We remark that $F$ completely determines the $t$-structure
in this case.)
As we will show in Theorem \ref{t:tens-ren},
this equivalence canonically upgrades to a symmetric
monoidal one.

\end{rem}

\begin{rem}\label{r:oblv-pro}

Suppose $\sC$ is a compactly generated
DG category with a continuous functor $F:\sC \to \Vect$.
Then $F$ may be pro-represented by a pro-compact
object. Comparing with \S \ref{ss:forget}, we see
that this puts significant restrictions on 
which $\overset{\rightarrow}{\otimes}$-algebras $A$ admit
renormalization data. (For example, up to convergent
completion, $A \in \Pro\Vect$ 
must be expressible as a filtered limit of some
discrete $A$-modules that are \emph{almost compact}, i.e.,
whose truncations are compact in $A\mod_{naive}^{\geq -n}$
for all $n$.) 

\end{rem}

\subsection{Examples}

We now give some examples of renormalization data.

We begin with examples when $A$ is discrete, i.e., 
$A \in \Vect^{\leq 0} \subset \Pro\Vect^{\leq 0}$.

\begin{example}[Ind-coherent sheaves]\label{e:indcoh}

Let $A$ be a commutative, connective $k$-algebra
(almost) of finite type and let $S = \Spec(A)$. 
Recall that $\IndCoh(S)$ ($\coloneqq \Ind(\Coh(S))$)
equipped with the tautological embedding 
$\Coh(S) \into \QCoh(S)^+ = A\mod^+$
defines a renormalization datum for $A$.

\end{example}

\begin{example}\label{e:indcoh-assoc}

More generally, if $A$ is a left coherent\footnote{
Recall that an algebra is left coherent if it
is connective; the category $A\mod_{coh}$
defined below is actually a DG category, i.e.,
it is closed under cones; and $\tau^{\geq -n} A \in A\mod_{coh}$
for all $n$. For example, this is the case if
$A$ is left Noetherian.}
$k$-algebra, then define $A\mod_{coh} \subset A\mod$
as the subcategory of bounded complexes with finitely 
presented cohomologies. Then  
$A\mod_{ren} \coloneqq \Ind(A\mod_{coh})$ defines
a renormalization datum.

It is straightforward to show that this renormalization datum
is initial among all renormalization data for $A$.

\end{example}

\begin{example}[Quasi-coherent sheaves]\label{e:qcoh}

If $A$ is a connective associative $k$-algebra,
then $A\mod$ itself underlies a
renormalization datum if and only if $A$ is 
eventually coconnective, i.e., $A$ is also bounded below
as a complex of vector spaces.
Indeed, recall that for renormalization data, there
is an assumption that the category be compactly generated
by eventually coconnective objects, and $A\mod$ is compactly
generated by perfect ones.

\end{example}

We now give some examples involving honestly topological
algebras.

\begin{example}\label{e:indsch-indft}

Suppose $S = \colim_i S_i$ is an ind-affine
indscheme of ind-finite type. Then $\IndCoh(S)$
is naturally a renormalization for the pro-algebra
of functions on $S$. Indeed, this is a special case
of Example \ref{e:pro-algebras}.

\end{example}

\begin{example}[Pro-algebras]\label{e:pro-algebras}

Suppose $i \mapsto A_i \in \Alg(\Vect^{\leq 0})$ 
is a projective system of algebras. 
Let $A = \lim_i A_i \in \Pro(\Vect)$. Then
$A$ is a $\overset{!}{\otimes}$-algebra, and a posteriori
a $\overset{\rightarrow}{\otimes}$-algebra. 

Suppose that:

\begin{itemize}

\item $A_i$ is left coherent.

\item Each structural map $\vph_{ij}:A_i \to A_j$ is surjective
on $H^0$ with finitely generated kernel.

\end{itemize}

Let $A_i\mod_{ren}$ be as in Example \ref{e:indcoh-assoc}.
Note that our assumptions imply that 
restriction along $\vph_{ij}$ maps
$A_j\mod_{coh}$ to $A_i\mod_{coh}$. By ind-extension,
we obtain $t$-exact functors $A_j\mod_{ren} \to A_i\mod_{ren}$.

Then define:

\[
A\mod_{ren} \coloneqq \underset{i}{\colim} \, 
A_i\mod_{ren} \in \DGCat_{cont}.
\]

\noindent Here the structural functors
are the above functors. We claim that $A\mod_{ren}$
naturally defines a renormalization datum.

As noted above, these functors preserve compact objects,
so $A\mod_{ren}$ is compactly generated. Moreover,
by \cite{whit} Lemma 5.4.3 (1), 
there is a canonical $t$-structure on 
$A\mod_{ren}$ such that each functor
$\on{res}_i:A_i\mod_{ren} \to A\mod_{ren}$ is $t$-exact. 
This $t$-structure is tautologically compactly generated
and right complete.

Moreover, there is a canonical functor
$\Oblv:A\mod_{ren} \to \Vect \in \DGCat_{cont}$
pro-represented by the object:

\[
\underset{i,n} \lim \, \on{res}_i \tau^{\geq -n} A_i \in
\Pro(A\mod_{ren}). 
\]

\noindent It is straightforward to show that
each composition $\Oblv \circ \on{res}_i:A_i\mod_{ren} \to \Vect$
is the canonical forgetful functor on $A_i\mod_{ren}$.
It immediately follows that $\Oblv$ is 
$t$-exact.

Moreover, we claim that $\Oblv$ is 
conservative on bounded below objects. Indeed, 
in the above pro-system, all objects are connective
and all structural maps are surjective on $H^0$. As the
objects $\on{res}_i H^0(A_i)$ tautologically
generate $A\mod_{ren}^{\heart}$ under colimits,
this implies the claim. 

Finally, it suffices to note that 
at the level of bounded below derived categories, this
functor $\Oblv$ defines $A$ under the 
dictionary of Proposition \ref{p:alg-vs-cats}:
indeed, $\Oblv$ of this pro-generator is manifestly
the Postnikov completion of $A$ in $\Pro\Vect$.

\end{example}

\begin{rem}

In Example \ref{e:pro-algebras}, compact objects in
$A\mod_{ren}$ are closed under truncations.

\end{rem}

\begin{example}\label{e:lattice}

This example appears somewhat in the wrong
place: it uses some terminology
from \S \ref{ss:functors-ren}, and is really
motivated by Example \ref{e:lie}.

Suppose $A$ is a connective $\overset{\rightarrow}{\otimes}$-algebra
and that we are given a morphism $\vph:A_0 \to A$ of connective
$\overset{\rightarrow}{\otimes}$-algebras 
such that the forgetful functor $A\mod_{naive}^+ \to A_0\mod_{naive}^+$
is monadic.\footnote{In particular, this functor admits 
a left adjoint. So if $A$ is discrete and $A_0 = k$, this forces
$A$ to be eventually coconnective.}

Denote this monad by $T$. Now suppose moreover that
the composition:

\[
A_0\mod_{naive}^+ \xar{T} A_0\mod_{naive}^+ 
\overset{\rho}{\into} A_0\mod_{ren}
\]

\noindent renormalizes in the sense of \S \ref{ss:functors-ren}.
(E.g., this is automatic if $A_0\mod_{ren}$
is given by Example \ref{e:pro-algebras}.)

Then $T$ clearly induces a monad on $A_0\mod_{ren}$,
and $A\mod_{ren} \coloneqq T\mod(A_0\mod_{ren})$
obviously defines a renormalization datum for $A$.

\end{example}

\begin{example}[Tate Lie algebras]\label{e:lie}

Suppose $\fh \in \Pro\Vect^{\heart}$ is a 
Tate Lie algebra. By this, we mean that the dual Tate vector space
$\fh^{\vee} \in \Pro\Vect$ is given a coLie algebra
structure with respect to the $\overset{!}{\otimes}$
symmetric monoidal structure. 
Recall that in this case, $\fh$ necessarily admits
an open profinite dimensional subalgebra 
$\fh_0 \subset \fh$, where
these hypotheses force $\fh_0 = \lim_i \fh_i$
for $\fh_i$ ranging over the 
finite dimensional Lie algebra quotients of $\fh_0$.

(For example, we might have $\fh = \fg((t))$ 
for finite dimensional $\fg$; then $\fh_0$ may
be taken as $\fg[[t]]$ and $\fh_i = \fg[[t]]/t^i\fg[[t]]$.)

Then $A_0 = U(\fh_0) \coloneqq \lim_i U(\fh_i)$
satisfies the hypotheses of Example \ref{e:pro-algebras}
(c.f. Example \ref{e:functor-ren-trun} regarding
renormalization of the monad).
Note that each $U(\fh_i)\mod_{ren} = U(\fh_i)\mod$ here,
so objects restrictions of modules $U(\fh_i)$ give
compact generators of $U(\fh_0)\mod_{ren}$.

Moreover, $A = U(\fh)$ the completed enveloping
algebra of $\fh$, $A_0 \to A$ satisfies the hypotheses
of Example \ref{e:lattice} (say, by the PBW theorem).
In particular, we obtain $U(\fh)\mod_{ren}$.

Following Gaitsgory, we denote these DG categories
by $\fh_0\mod$ and $\fh\mod$, leaving renormalization
out of the notation. 

Note that the construction of $\fh\mod$ 
recovers the format of \cite{dmod-aff-flag} \S 23.
Indeed, unwinding the constructions, we find that
compact generators are given by inducing trivial modules from
$\fk_i$ to $\fh$ 
for $\fk_i \coloneqq \Ker(\fh_0 \to \fh_i)$.

\end{example}

\begin{example}

Renormalization data is given for the affine $\sW$-algebra
in \cite{whit}: the compact generators
are denoted $\sW_{\kappa}^n$ in \emph{loc. cit}. 
Outside of the Virasoro case, this example does not
fit into any of the above patterns.
(This is closely related to the fact that
the $\sW$-algebra chiral algebras are generally 
neither commutative nor chiral envelopes.)

\end{example}

\subsection{Construction of functors}\label{ss:functors-ren}

Let $A$ be a renormalized $\overset{\rightarrow}{\otimes}$-algebra.

Suppose $\sC \in \DGCat_{cont}$ and that we are given
a DG functor $F:A\mod_{ren}^+ \simeq A\mod_{naive}^+ \to \sC$. 

\begin{defin}

$F$ \emph{renormalizes} if it is left Kan extended
from $A\mod_{ren}^c$.

\end{defin}

For $F$ as above (not necessarily assumed to renormalize),
we define $F_{ren}:A\mod_{ren} \to \sC$ as the ind-extension
of $F|_{A\mod_{ren}^c}$. Note that $F_{ren}|_{A\mod_{ren}^+}$
is the left Kan extension of $F|_{A\mod_{ren}^c}$; 
therefore, $F$ renormalizes if and only if the natural map
$F_{ren}|_{A\mod_{ren}^+} \to F$ is an isomorphism.

Suppose now that $\sC$ admits a $t$-structure compatible
with filtered colimits, that $F$ is $t$-exact, 
and that $F|_{A\mod_{ren}^{\geq 0}}$
commutes with filtered colimits. 

\begin{warning}\label{w:f_ren}

It is not true in this generality that $F$ necessarily renormalizes: 
$F_{ren}$ may fail to be (left) $t$-exact.
(See Counterexample \ref{ce:indcoh-qcoh}.)

\end{warning}

However, we claim:

\begin{equation}\label{eq:ren-trun}
\tau^{\geq 0} F_{ren} = \tau^{\geq 0} F
\end{equation}

\noindent when restricted to $A\mod_{ren}^+$.

Indeed, for $\sF \in A\mod_{ren}^+$, write $\sF = \colim_i \sF_i$
with $\sF_i$ compact. Then: 

\[
\tau^{\geq 0} F_{ren}(\sF) = \tau^{\geq 0} \underset{i}{\colim} \, F(\sF_i) =
\underset{i}{\colim} \, F(\tau^{\geq 0} \sF_i) = 
F(\tau^{\geq 0} \underset{i}{\colim} \, \sF_i) = F(\tau^{\geq 0} \sF).
\]

There are two general settings in which 
$F$ does renormalize.

\begin{example}\label{e:functor-ren-left-separated}

If the $t$-structure on $\sC$ is left separated, 
then \eqref{eq:ren-trun} clearly implies that
$F$ renormalizes.

\end{example}

\begin{example}\label{e:functor-ren-trun}

Suppose merely that $F$ is left $t$-exact (or left $t$-exact up
to shift) and that 
compact objects of $A\mod_{ren}$ are closed under truncations. Then
we claim that $F$ renormalizes.
Indeed, then every $\sF \in A\mod_{ren}^{\geq 0}$
can be written as a filtered colimit $\sF = \colim_i \sF_i$
with $\sF_i$ compact and in $A\mod_{ren}^{\geq 0}$:
write $\sF$ as a filtered colimit of arbitrary 
compacts and then apply $\tau^{\geq 0}$.
Then we obtain: 

\[
F_{ren}(\sF) = \underset{i}{\colim} \, F(\sF_i) \isom F(\sF) 
\]

\noindent by assumption that $F$ commutes with
filtered colimits in $A\mod_{ren}^{\geq 0}$. 

\end{example}

\begin{example}[Forgetful functors]\label{e:oblv-ren}

The forgetful functor $\Oblv:A\mod_{naive}^+ \to \Vect$
renormalizes to give a functor $\Oblv_{ren}: A\mod_{ren} \to \Vect$
by Example \ref{e:functor-ren-left-separated}.
In what follows, we typically abbreviate the notation 
$\Oblv_{ren}$ to simply $\Oblv$. (Although we call this
functor \emph{forgetful}, it is not generally conservative.)

\end{example}

\begin{example}[Identity functor]\label{e:id-ren}

The embedding $A\mod_{naive}^+ \into A\mod_{naive}$ 
renormalizes to give a continuous functor 
$\id_{ren}:A\mod_{ren} \to A\mod_{naive} \in \DGCat_{cont}$, 
again by Example \ref{e:functor-ren-left-separated}.

\end{example}

\subsection{Morphisms}\label{ss:ren-morphism}

We have the following notion of compatibility between
algebra morphisms and
renormalization data.

\begin{defin}

A \emph{morphism} of renormalized $\overset{\rightarrow}{\otimes}$-algebras
is a map $f:A \to B$ of $\overset{\rightarrow}{\otimes}$ 
such that the ($t$-exact) functor 
$\Oblv:B\mod_{naive}^+ \to A\mod_{naive}^+ \subset A\mod_{ren}$
renormalizes to a functor $\Oblv = \Oblv_{ren}:B\mod_{ren} \to A\mod_{ren}$.

We let $\Alg_{ren}^{\overset{\rightarrow}{\otimes}}$ denote
the category of renormalized algebras and such morphisms.

\end{defin}

\begin{rem}

We emphasize that this is a property, not a structure, 
for the underlying map of $\overset{\rightarrow}{\otimes}$-algebras.

\end{rem}

\begin{example}

Example \ref{e:oblv-ren} says that the unit map 
$k \to A$ is a morphism of renormalized 
$\overset{\rightarrow}{\otimes}$-algebras.
More generally, this is true for any map from an eventually coconnective 
algebra with the ``trivial" renormalization from 
Example \ref{e:qcoh}.

\end{example}

\begin{counterexample}\label{ce:indcoh-qcoh}

Let $A$ be a (discrete) almost finite type,
eventually coconnective commutative
$k$-algebra with $S = \Spec(A)$ singular.

Take $A\mod_{ren_1} = \IndCoh(S)$ 
and $A\mod_{ren_2} = \QCoh(S)$, and
let us pedantically write $A_{ren_1}$, $A_{ren_2}$
for the corresponding renormalized algebras.
Then the identity map for $A$ defines a morphism
$A_{ren_1} \to A_{ren_2}$ of renormalized algebras, 
but not a morphism $A_{ren_2} \to A_{ren_1}$. 

\end{counterexample}

\subsection{Tensor products}\label{ss:alg-tensor-ren}

We now revisit the material of \S \ref{ss:alg-tensor}
in the presence of renormalizations.
So suppose $A$ and $B$ are renormalized 
$\overset{\rightarrow}{\otimes}$-algebras.

Then we claim that $A\mod_{ren} \otimes B\mod_{ren}$
defines a renormalization datum for $A \overset{!}{\otimes} B$.

More precisely, define:

\[
A \overset{!}{\otimes} B\mod_{ren}^c \subset 
A \overset{!}{\otimes} B\mod_{naive}
\]

\noindent as the DG subcategory Karoubi generated
by the essential image of the composition:

\[
A\mod_{ren}^c \times B\mod_{ren}^c \to 
A\mod_{naive} \otimes B\mod_{naive} \xar{\eqref{eq:tens-cat}}
A \overset{!}{\otimes} B\mod_{naive}.
\]

\noindent Now define $A \overset{!}{\otimes} B\mod_{ren}$
as $\Ind(A \overset{!}{\otimes} B\mod_{ren}^c)$.

\begin{thm}\label{t:tens-ren}

\begin{enumerate}

\item $A \overset{!}{\otimes} B\mod_{ren}$
is a renormalization datum for 
$A \overset{!}{\otimes} B$.\footnote{C.f. 
Remark \ref{r:ren-cpts}:
because 
$A \overset{!}{\otimes} B\mod_{ren}^c$ is tautologically
embedded into $A \overset{!}{\otimes} B\mod_{naive}^+$,
this is a property, not a structure.}

\item The natural functor:

\[
A\mod_{ren} \otimes B\mod_{ren} \to 
A \overset{!}{\otimes} B\mod_{ren}
\]

\noindent is an equivalence.

\end{enumerate}

\end{thm}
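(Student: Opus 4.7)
The plan is to reduce both (1) and (2) to a single Künneth-type full-faithfulness assertion, exploiting the fact that by construction both sides of (2) are Ind-completions of essentially the same small category. Indeed, writing $\sD \coloneqq A\mod_{ren}^c \ol{\otimes} B\mod_{ren}^c$, we have $A\mod_{ren} \otimes B\mod_{ren} = \Ind(\sD)$ by compact generation, while by definition $(A \overset{!}{\otimes} B)\mod_{ren}^c$ is the Karoubi closure of the image of $\sD$ under the external tensor functor \eqref{eq:tens-cat} into $(A \overset{!}{\otimes} B)\mod_{naive}$. Everything therefore rests on controlling this embedding.

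Preliminaries are routine: the image of $\sD$ lies in $(A \overset{!}{\otimes} B)\mod_{naive}^+$, because the underlying vector space of $\sF \otimes \sG$ for $\sF \in A\mod_{naive}^+$ and $\sG \in B\mod_{naive}^+$ is simply $\Oblv(\sF) \otimes \Oblv(\sG) \in \Vect^+$; one then equips $(A \overset{!}{\otimes} B)\mod_{ren}$ with the right-complete compactly-generated $t$-structure whose nonpositive part is generated by compacts lying in $(A \overset{!}{\otimes} B)\mod_{naive}^{\leq 0}$, making the $t$-structure axioms of a renormalization datum automatic.

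The essential content is the Künneth formula
\[
\ul{\Hom}_A(\sF_1, \sF_2) \otimes \ul{\Hom}_B(\sG_1, \sG_2) \xrightarrow{\sim} \ul{\Hom}_{(A \overset{!}{\otimes} B)\mod_{naive}}(\sF_1 \otimes \sG_1, \sF_2 \otimes \sG_2) \qquad (\star)
\]
for $\sF_i \in A\mod_{ren}^c$ and $\sG_i \in B\mod_{ren}^c$. I would prove $(\star)$ via a tensor/Hom adjunction: the commuting algebra maps $A, B \to A \overset{!}{\otimes} B$, arising from the symmetric monoidal structure on $(\Alg^{\overset{\rightarrow}{\otimes}}, \overset{!}{\otimes})$, endow any $\sM \in (A \overset{!}{\otimes} B)\mod_{naive}$ with commuting $A$- and $B$-actions. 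Consequently, the functor $- \otimes \sG_1: A\mod_{naive} \to (A \overset{!}{\otimes} B)\mod_{naive}$ admits a right adjoint $\ul{\Hom}_B(\sG_1, -)$ landing in $A\mod_{naive}$. Applied to $\sM = \sF_2 \otimes \sG_2$, this reduces $(\star)$ to a projection formula $\ul{\Hom}_B(\sG_1, \sF_2 \otimes \sG_2) \simeq \sF_2 \otimes \ul{\Hom}_B(\sG_1, \sG_2)$, which I would verify using that compactness of $\sG_1$ in $B\mod_{ren}$ makes $\ul{\Hom}_B(\sG_1, -)$ continuous on $B\mod_{naive}^+$ (cf.\ Example \ref{e:functor-ren-left-separated}), and then reducing $\sF_2$ to the Postnikov limit of its truncations in order to commute everything past the tensor with $\sG_2$. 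This projection formula is the main obstacle: compactness in $A\mod_{ren}$ is a subtler finiteness condition than perfectness in $A\mod_{naive}$, and one must be attentive to boundedness issues throughout.

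Given $(\star)$, the functor $\sD \to (A \overset{!}{\otimes} B)\mod_{naive}$ is fully faithful, and its Ind-extension $A\mod_{ren} \otimes B\mod_{ren} \to (A \overset{!}{\otimes} B)\mod_{ren}$ is then an equivalence by construction, which is (2). For the remaining content of (1), namely the $t$-exact equivalence $(A \overset{!}{\otimes} B)\mod_{ren}^+ \simeq (A \overset{!}{\otimes} B)\mod_{naive}^+$, full faithfulness on bounded-below objects follows from $(\star)$ by passage to filtered colimits; essential surjectivity reduces, by right completeness, to showing that the image of $\sD$ generates the heart $(A \overset{!}{\otimes} B)\mod_{naive}^{\heart}$ under colimits, which I would verify using the description of the forgetful functor to $\Vect$ from \S \ref{ss:forget} (pro-corepresented by $A \overset{!}{\otimes} B$ itself).
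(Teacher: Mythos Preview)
Your approach is genuinely different from the paper's, and the difference is instructive.

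You aim to prove the K\"unneth isomorphism $(\star)$ directly in $(A \overset{!}{\otimes} B)\mod_{naive}$ via a projection formula, then deduce (2), and finally handle (1) by a separate essential-surjectivity argument. The paper reverses the logic: using Lemma~\ref{l:cpt-tstr} on tensor $t$-structures, it verifies directly that $A\mod_{ren} \otimes B\mod_{ren}$ (with the forgetful functor $\Oblv_A \otimes \Oblv_B$ to $\Vect$) satisfies the abstract criteria of Remark~\ref{r:ren-cat}, hence is a renormalization datum for \emph{some} convergent $\overset{\rightarrow}{\otimes}$-algebra $C$. It then computes the pro-object corepresenting the forgetful functor, as in \S\ref{ss:forget}, and identifies its image in $\Pro\Vect$ with the convergent completion of $A \overset{!}{\otimes} B$. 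Both (1) and (2) fall out at once. The only K\"unneth input the paper needs is the tautological factorization $\ul{\Hom}_{A\mod_{ren} \otimes B\mod_{ren}}(\sF_k \boxtimes \sG_\ell, \sF_i \boxtimes \sG_j) \simeq \ul{\Hom}(\sF_k,\sF_i) \otimes \ul{\Hom}(\sG_\ell,\sG_j)$, which holds in any tensor product of compactly generated categories; the ``hard'' K\"unneth in the naive category is bypassed entirely.

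Your $(\star)$-argument is plausible, and you are right to flag the projection formula as the delicate point (the relevant finiteness of $\sG_1$ is almost-compactness in $B\mod_{naive}^{\geq -N}$, which follows from compactness in $B\mod_{ren}$ together with $B\mod_{ren}^+ \simeq B\mod_{naive}^+$). The real soft spot is your final step: showing that the image of $\sD$ generates $(A \overset{!}{\otimes} B)\mod_{naive}^{\heart}$ under colimits. Your hint to \S\ref{ss:forget} points in the right direction, but carrying it out amounts to computing the pro-corepresenting object of the forgetful functor on $\Ind(\sD)^+$ and matching it with $A \overset{!}{\otimes} B$ --- exactly the paper's computation. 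So your route ultimately rejoins the paper's, after a detour through a harder lemma that the paper does not need.
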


\begin{lem}\label{l:cpt-tstr}

Suppose $\sC, \sD_1, \sD_2 \in \DGCat_{cont}$ 
have $t$-structures compatible with filtered colimits
and $F:\sD_1 \to \sD_2 \in \DGCat_{cont}$ is a functor.

Recall that $\sC \otimes \sD_i$ admits a canonical
$t$-structure with $(\sC \otimes \sD_i)^{\leq 0}$
generated under colimits by objects $\sF \boxtimes \sG$ for
$\sF \in \sC^{\leq 0}$ and $\sG \in \sD_i^{\leq 0}$.

\begin{enumerate}

\item\label{i:tstr-1}

If $F$ is right $t$-exact, then so is 
$\id_{\sC} \otimes F: \sC \otimes \sD_1 \to \sC \otimes \sD_2$.

\item\label{i:tstr-2}

If the $t$-structure
on $\sC$ is compactly generated 
and $F$ is left $t$-exact,
then $\id_{\sC} \otimes F$ is left $t$-exact.

\item\label{i:tstr-3}

Under the assumptions of \eqref{i:tstr-2}, if
the $t$-structure on $\sC$ is right complete and
$F|_{\sD_1^{\geq 0}}$ is conservative,
then $\id_{\sC} \otimes F|_{(\sC \otimes \sD_1)^{\geq 0}}$ 
is conservative.

\end{enumerate}

\end{lem}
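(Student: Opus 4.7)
The plan is to route all three parts through the family of evaluation functors
\[
\ev_\sF \coloneqq \ul{\Hom}_\sC(\sF,-) \otimes \id_\sD : \sC \otimes \sD \to \sD,
\]
indexed by compact $\sF \in \sC$. These are continuous (by compactness of $\sF$), jointly detect vanishing of objects of $\sC \otimes \sD$ (via the identification $\sC \otimes \sD = \Fun((\sC^c)^{op},\sD)$ coming from $\sC$ being compactly generated), and when $\sF$ is restricted to compacts in $\sC^{\leq 0}$ they detect membership in $(\sC \otimes \sD)^{\geq 0}$: this last statement comes from the adjunction
\[
\Hom_{\sC \otimes \sD}(\sF \boxtimes \sG, \sH) \simeq \Hom_\sD(\sG,\ev_\sF(\sH))
\]
combined with the hypothesis that $\sC$'s $t$-structure is compactly generated, which reduces a priori quantification over $\sF' \in \sC^{\leq 0}$ in the definition of $(\sC \otimes \sD)^{\geq 0}$ to compact such. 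Crucially, $\ev_\sF$ commutes with operations on the second factor, $\ev_\sF \circ (\id_\sC \otimes F) \simeq F \circ \ev_\sF$, by continuous $\Vect$-linearity of $F$.

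Part \eqref{i:tstr-1} is then immediate: $\id_\sC \otimes F$ is continuous, and on the generators $\sF \boxtimes \sG$ of $(\sC \otimes \sD_1)^{\leq 0}$ it produces $\sF \boxtimes F(\sG) \in (\sC \otimes \sD_2)^{\leq 0}$ by right $t$-exactness of $F$. For part \eqref{i:tstr-2}, given $\sH \in (\sC \otimes \sD_1)^{\geq 0}$ and compact $\sF \in \sC^{\leq 0}$, the detection property yields $\ev_\sF(\sH) \in \sD_1^{\geq 0}$; left $t$-exactness of $F$ then gives $\ev_\sF((\id \otimes F)(\sH)) = F(\ev_\sF(\sH)) \in \sD_2^{\geq 0}$, and the detection criterion in $\sC \otimes \sD_2$ concludes $(\id \otimes F)(\sH) \in (\sC \otimes \sD_2)^{\geq 0}$.

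Part \eqref{i:tstr-3} is the main obstacle, and is where right completeness enters. Running the same computation, for $\sH \in (\sC \otimes \sD_1)^{\geq 0}$ with $(\id \otimes F)(\sH) = 0$ I find $\ev_\sF(\sH) \in \sD_1^{\geq 0}$ and $F(\ev_\sF(\sH)) = 0$ for every compact $\sF \in \sC^{\leq 0}$, so conservativity of $F|_{\sD_1^{\geq 0}}$ forces $\ev_\sF(\sH) = 0$ for all such $\sF$. To upgrade this to $\sH = 0$ I need $\ev_\sG(\sH) = 0$ for every compact $\sG \in \sC$, not just for compacts in $\sC^{\leq 0}$. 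Here right completeness of $\sC$, together with compact generation of its $t$-structure, implies that $\sC$ is generated under colimits by $\{\sF_\alpha[n]\}_{\alpha,\,n \in \bZ}$ for any family $\{\sF_\alpha\}$ of compact generators of $\sC^{\leq 0}$; consequently $\sC^c$ is built from $\{\sF_\alpha[n]\}$ by finite colimits and retracts. Since $\sG \mapsto \ev_\sG(\sH)$ is contravariant (sending finite colimits to finite limits and retracts to retracts) and vanishes on each $\sF_\alpha[n]$ by the identity $\ev_{\sF_\alpha[n]}(\sH) = \ev_{\sF_\alpha}(\sH)[-n] = 0$, the vanishing propagates to all compact $\sG$, yielding $\sH = 0$.
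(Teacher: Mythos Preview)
Your proof is correct and follows essentially the same route as the paper's: the functors you call $\ev_\sF$ are exactly the paper's $\bD\sF \otimes \id_{\sD_i}$, and the detection criterion for $(\sC \otimes \sD_i)^{\geq 0}$ together with the commutation $\ev_\sF \circ (\id_\sC \otimes F) \simeq F \circ \ev_\sF$ is precisely what the paper uses (citing \cite{whit} Lemma B.6.2). Your part \eqref{i:tstr-3} spells out a little more explicitly why right completeness allows one to pass from compacts in $\sC^{\leq 0}$ and their shifts to all compacts, but the argument is the same.
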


\begin{proof}

\eqref{i:tstr-1} is immediate. 
\eqref{i:tstr-2} is shown e.g. in \cite{whit} Lemma B.6.2,
but we recall the argument as it is used also for 
\eqref{i:tstr-3}.

Let $\sF \in \sC^{\leq 0}$ be compact.
Then $\sF$ defines a continuous functor 
$\bD \sF \coloneqq \ul{\Hom}_{\sC}(\sF,-): \sC \to \Vect$.
We can tensor to obtain:

\[
\bD\sF \otimes \id_{\sD_i}: \sC \otimes \sD_i \to \sD_i.
\]

\noindent As in the proof of \cite{whit} Lemma B.6.2,
if $\sF \in \sC^{\leq 0}$, then this functor
is left $t$-exact, and conversely, 
$\sG \in \sC \otimes \sD_i$ lies in cohomological degrees
$\geq 0$ if and only if 
$\bD\sF \otimes \id_{\sD_i}(\sG) \in \sD_i^{\geq 0}$
for each such $\sF$.
These facts immediately imply \eqref{i:tstr-2}.

Now for \eqref{i:tstr-3}, 
suppose $\sG \in (\sC \otimes \sD_1)^{\geq 0}$
with $(\id_{\sC} \otimes F)(\sG) = 0$.
Then for any $\sF$ as above, we claim:

\[
(\bD_{\sF} \otimes \id_{\sD_1})(\sG) = 0 \in \sD_1.
\]

Indeed, this object lies in degrees $\geq 0$, so it suffices
to show that $F$ applied to it is zero. Then:

\[
F(\bD\sF \otimes \id_{\sD_1})(\sG) =
(\bD\sF \otimes F)(\sG) =
\bD\sF (\id_{\sC} \otimes F)(\sG) = 0.
\]

Now right completeness of 
the (compactly generated) $t$-structure on 
$\sC$ is equivalent to $\sC$ being compactly generated
by objects $\sF$ of the above type (and their shifts),
so this implies that $\sG = 0$ as desired.

\end{proof}

\begin{proof}[Proof of Theorem \ref{t:tens-ren}]

Note that $A\mod_{ren} \otimes B\mod_{ren}$
admits a canonical $t$-structure, as in the
statement of Lemma \ref{l:cpt-tstr}.
This $t$-structure is obviously compactly
generated by objects bounded from below,
since this is true for each of the tensor factors.

By Lemma \ref{l:cpt-tstr}, the functor:

\[
\Oblv = \Oblv_A \otimes \Oblv_B:
A\mod_{ren} \otimes B\mod_{ren} \to 
\Vect
\]

\noindent is $t$-exact and conservative 
on $(A\mod_{ren} \otimes B\mod_{ren})^{\geq 0}$.\footnote{
Note that Lemma \ref{l:cpt-tstr} as formulated
should be applied to the functor:

\[
\id \otimes \Oblv:A\mod_{ren} \otimes B\mod_{ren} \to A\mod_{ren}.
\] 
}

By Remark \ref{r:ren-cat}, there is some convergent, connective 
$\overset{\rightarrow}{\otimes}$-algebra $C$ such that
$A\mod_{ren} \otimes B\mod_{ren}$ with its forgetful
functor defines a renormalization datum for $C$.

Since the forgetful functor 
$A\mod_{ren} \otimes B\mod_{ren} \to \Vect$ lifts
to $A \overset{!}{\otimes} B\mod_{naive}$
(by Example \ref{e:id-ren} and \S \ref{ss:alg-tensor}),
we have a canonical map $A \overset{!}{\otimes} B \to C$
of $\overset{\rightarrow}{\otimes}$-algebras.
To prove the theorem, it suffices to show that this map realizes
$C$ as the convergent completion of $A \overset{!}{\otimes} B$.

For this, suppose $i \mapsto \sF_i \in A\mod_{ren}^c$
and $j \mapsto \sG_j \in B\mod_{ren}^c$ 
pro-represent the forgetful functors. 
Clearly 
$\lim_{i,j} \sF_i \boxtimes \sG_j \in 
\Pro(A\mod_{ren} \otimes B\mod_{ren})$
pro-represents the forgetful functor to vector spaces.
As in \S \ref{ss:forget}, the object:

\[
\underset{i,j}{\lim} \, \Oblv(\sF_i \boxtimes \sG_j) 
\]

\noindent is canonically isomorphic to $C \in \Pro\Vect$.
But we can calculate this object as:\footnote{The second
equality is a general fact about maps out
of external products of two compact objects.} 

\[
\begin{gathered}
\underset{i,j}{\lim} \, \Oblv(\sF_i \boxtimes \sG_j) = 
\underset{i,j}{\lim} \, \underset{k,\ell}{\colim} \,  
\ul{\Hom}_{A\mod_{ren} \otimes B\mod_{ren}}
(\sF_k \boxtimes \sG_{\ell},\sF_i \boxtimes \sG_j) = \\
\underset{i,j}{\lim} \, \underset{k,\ell}{\colim} \, 
\Big(
\ul{\Hom}_{A\mod_{ren}}(\sF_k,\sF_i) \otimes 
\ul{\Hom}_{B\mod_{ren}}(\sG_{\ell},\sG_j)
\Big) = \\ 
\underset{i}{\lim} \, \Oblv(\sF_i) 
\overset{!}{\otimes} 
\underset{j}{\lim} \, \Oblv(\sG_j).
\end{gathered}
\]

This last term is the $\overset{!}{\otimes}$-tensor product
of the convergent completions of $A$ and $B$ respectively,
giving the claim.

\end{proof}

This construction obviously
equips $\Alg_{ren}^{\overset{\rightarrow}{\otimes}}$ with
a unique symmetric monoidal structure such that the
forgetful functor to $\Alg^{\overset{\rightarrow}{\otimes}}$
is symmetric monoidal. For this symmetric monoidal
structure, the functor:

\[
\begin{gathered}
\Alg_{ren}^{\overset{\rightarrow}{\otimes}} \to 
\DGCat_{cont} \\
A \mapsto A\mod_{ren}
\end{gathered}
\]

\noindent is symmetric monoidal by construction. 

\section{Weak actions of group schemes}\label{s:weak-pro}

\subsection{}

In this section, we begin a study of action of (suitable) group indschemes
$H$ on $\overset{\rightarrow}{\otimes}$-algebras and on categories.

We will explain, following Gaitsgory, that (under suitable hypotheses)
there is a naive notion of weak $H$-action on a DG category,
and a less naive notion, which we call \emph{genuine weak actions}.

The bulk of this section is devoted to developing the notion
of genuine actions when $H$ is a classical affine group scheme. 
With that said, this section begins with a general discussion of naive actions
on categories and $\overset{\rightarrow}{\otimes}$-algebras
in the case of general ind-affine group indschemes.

\subsection{Topological bialgebras}

A \emph{$\overset{\rightarrow}{\otimes}$-bialgebra} $B$ is a 
coalgebra $B$ in the symmetric monoidal category
$(\Alg^{\overset{\rightarrow}{\otimes}}, \overset{!}{\otimes})$.
In particular, such a $B$ is 
equipped with an $\overset{\rightarrow}{\otimes}$-algebra
structure and is equipped with a coproduct
$\Delta:B \to B \overset{!}{\otimes} B$ that is a
morphism of $\overset{\rightarrow}{\otimes}$-algebras.

There is a natural notion of \emph{coaction} of such a $B$ on 
a $\overset{\rightarrow}{\otimes}$-algebra $A$.
Here we have a coaction map $\coact:A \to B \overset{!}{\otimes} A$,
which is a map of $\overset{\rightarrow}{\otimes}$-algebras (and
satisfies higher compatibilities with $\Delta$ and so on).

\begin{variant}

A \emph{$\overset{!}{\otimes}$-bialgebra} 
is a bialgebra in the symmetric monoidal category
$(\Pro\Vect,\overset{!}{\otimes})$. Any such object
has an underlying $\overset{\rightarrow}{\otimes}$-bialgebra structure.

Note that in the $\overset{!}{\otimes}$-setting, \emph{commutative} 
and \emph{cocommutative}
$\overset{!}{\otimes}$-bialgebra structures have evident meaning,
while in the $\overset{\rightarrow}{\otimes}$-setting,
only \emph{cocommutative} $\overset{\rightarrow}{\otimes}$-bialgebra 
structures make sense.

\end{variant}

\subsection{}

In the above setting, note that $B\mod_{naive} \in \DGCat_{cont}$ inherits
a canonical monoidal DG structure. For example, the monoidal 
operation is given by:

\[
B\mod_{naive} \otimes B\mod_{naive} \xar{\text{\S \ref{ss:alg-tensor}}}
B\overset{!}{\otimes} B \mod_{naive} \xar{\Delta_*}
B\mod_{naive}
\]

\noindent where $\Delta_*$ is restriction of module structures along the map
$\Delta$.

Similarly, if $B$ coacts on $A$, then $B\mod_{naive}$ acts on $A\mod_{naive}$.

\subsection{}\label{ss:coact-ren}

Now suppose that $B$ is given a renormalization datum.
Recall from \S \ref{ss:alg-tensor-ren} that
$\Alg_{ren}^{\overset{\rightarrow}{\otimes}}$ is a symmetric
monoidal category. 

Therefore, it makes sense to say that
a bialgebra structure on $B$ is \emph{compatible} with the
renormalization datum on $B$: this means that the counit and
comultiplication maps are morphisms of renormalized 
$\overset{\rightarrow}{\otimes}$-algebras. Similarly,
for $A \in \Alg_{ren}^{\overset{\rightarrow}{\otimes}}$,
we may speak of a coaction of $B$ on $A$
being compatible with the given renormalization data:
this means the coaction data makes $A$ a comodule for $B$ in
the symmetric monoidal category $\Alg_{ren}^{\overset{\rightarrow}{\otimes}}$.

In such cases, $B\mod_{ren}$ inherits a canonical monoidal structure
and $B\mod_{ren}$ acts on $A\mod_{ren}$.

\subsection{Group setting}\label{ss:gp-setting}

Now suppose that $H$ is an ind-affine group indscheme.
We suppose $H$ is \emph{reasonable} in the
sense of \cite{hitchin} (or \S \ref{ss:indsch} below): that is,
$H = \colim H_i$ for $H_i \subset H$ 
eventually coconnective quasi-compact quasi-separated 
subschemes\footnote{For emphasis: the $H_i$ may not
necessarily be group subschemes.}
with all maps $H_i \to H_j$ almost finitely presented. 

Then $B = \Fun(H) \coloneqq \lim_i \Gamma(H_i,\sO_{H_i}) \in \Pro\Vect$
is a commutative $\overset{!}{\otimes}$-bialgebra,
and in particular inherits a $\overset{\rightarrow}{\otimes}$-bialgebra
structure.

We say that $H$ \emph{naively acts} on 
$A \in \Alg^{\overset{\rightarrow}{\otimes}}$ if $B$ coacts on $A$.
We let $\Alg^{\overset{\rightarrow}{\otimes},H\actson}$ denote
the category of $\overset{\rightarrow}{\otimes}$-algebras with
naive $H$-actions (i.e., the category of $B$-comodules
in $\Alg^{\overset{\rightarrow}{\otimes}}$).

\subsection{Naive group actions on categories}\label{ss:naive-act}

Assume in the above notation that
each of the (commutative) algebras $\Gamma(H_i,\sO_{H_i})$
are coherent, as in Example \ref{e:indcoh-assoc}.
Then $B$ admits a canonical renormalization as in \emph{loc. cit}.
We define $\IndCoh^*(H) \coloneqq B\mod_{ren}$.

\begin{rem}

In \S \ref{s:indcoh}, we will define $\IndCoh^*$ in much
greater generality.
However, this elementary definition
coincides in the present setting.

\end{rem}

\begin{rem}

The notation is taken from \cite{dmod} (see also
\cite{km-indcoh}), to which we refer for an
explanation. The main purpose of this notation
is to remind us that to avoid the pitfalls inherent in 
working with $\IndCoh$
in the infinite type setting.

\end{rem}

\begin{example}

Suppose that $H$ is the loop group $G(K)$ for $G$ 
an affine algebraic group.\footnote{In particular, $G$ is classical and finite type.} 
By \cite{indschemes},
there is a canonical equivalence $\QCoh(G(K)) \simeq \IndCoh^*(G(K))$
defined as such. But we note that this equivalence
uses the compact open subgroup $G(O)$ in an essential
way: the functor
$\Oblv:\IndCoh^*(G(K)) \to \Vect$, which tautologically exists in the
above definition, corresponds to the composition: 

\[
\QCoh(G(K)) \xar{\pi_*} \QCoh(\Gr_G) \overset{-\otimes \omega_{\Gr_G}}{\simeq} \IndCoh(\Gr_G) \xar{\Gamma^{\IndCoh}(\Gr_G,-)} \Vect.
\] 

\noindent Here $\pi:G(K) \to \Gr_G = G(K)/G(O)$ is the projection,
and $\IndCoh$ is defined in the standard sense 
on $\Gr_G$ because it is of ind-finite type;
the rest of the notation is standard in the subject, and
the functor of tensoring with the dualizing sheaf
is an equivalence by a theorem of \cite{indschemes} (and
formal smoothness of $\Gr_G$).

\end{example}

\begin{defin}

A \emph{naive weak action} of $H$ on $\sC \in \DGCat_{cont}$ is an
$\IndCoh^*(H)$-module\footnote{Here we are considering
$\IndCoh^*(H)$ as an algebra object in $\DGCat_{cont}$,
so e.g. the action functor is $\IndCoh^*(H) \otimes \sC \to \sC$.
In particular, the induced action $\IndCoh^*(H) \times \sC \to \sC$
commutes with colimits in each variable separately.}
structure for $\sC$.

\end{defin}

\begin{rem}

The antipode for $H$ induces a canonical equivalence
between left and right modules for $\IndCoh^*(H)$, so we often
ignore the distinction going forward.

\end{rem}

\begin{rem}

We sometimes omit ``weak": the distinction between naive
and genuine actions in this section only occurs for weak
group actions, not for strong group actions.

\end{rem}

\begin{example}

$H$ has a canonical naive action on $\IndCoh^*(H)$.

\end{example}

\begin{example}

$H$ has a canonical naive action on $\Vect$. Indeed,
$H$ naively acts on $k$ with coaction given by the unit map,
and this is compatible with renormalization.

\end{example}

\begin{example}

For any indscheme $X$ of ind-finite type with an $H$
action, $H$ naively acts on $\IndCoh(X)$.

\end{example}

\begin{example}

If $H$ acts on a Tate Lie algebra $\fk$, then
$H$ naively weakly acts on $\fk\mod$, 
(defined as in Example \ref{e:lie}).
In particular, $H$ weakly acts on $\fh\mod$.

\end{example}

\subsection{}\label{ss:h-acts-ren}

We let $\Alg_{ren}^{\overset{\rightarrow}{\otimes},H\actson}$ denote
the category of renormalized 
$\overset{\rightarrow}{\otimes}$-algebras with
naive $H$-actions compatible with the renormalization, 
i.e., the category of $B$-comodules
in $\Alg^{\overset{\rightarrow}{\otimes}}$ for $B = \Gamma(H,\sO_H)$
equipped with the renormalization datum $\IndCoh^*(H)$.

Note that for $A \in \Alg_{ren}^{\overset{\rightarrow}{\otimes},H\actson}$,
$H$ acts naively on $A\mod_{ren}$ (c.f. \S \ref{ss:coact-ren}).

\subsection{}

We let $H\mod_{weak,naive}$ 
denote the 2-category of categories
with a naive weak action of $H$, i.e., $\IndCoh^*(H)\mod(\DGCat_{cont})$.

For $\sC \in \IndCoh^*(H)\mod$, we define the \emph{naive weak invariants}
and \emph{coinvariants} as:

\[
\sC^{H,w,naive} \coloneqq 
\TwoHom_{H\mod_{weak,naive}}(\Vect,\sC), \hspace{1cm}
\sC_{H,w,naive} \coloneqq \Vect \underset{\IndCoh^*(H)}{\otimes} \sC.
\]

\subsection{Genuine actions}\label{ss:gen-pro-start}

In the remainder of this section and in \S \ref{s:weak-ind},
we study a more robust variant of the above
notion, under somewhat more restrictive hypotheses. In this
section, we focus on the case where 
$H$ is profinite dimensional, which
contains the main phenomena.

\subsection{Finite dimensional reminder}

We first remind the reader of the following foundational result,
which will play a key role.

Let $H$ be an affine algebraic group. In this case, we remove
the label ``naive" from the notation, e.g.,
$H\mod_{weak} = H\mod_{weak,naive}$ \textemdash{} the naivet\'e
is only in the infinite type setting.

\begin{thm}[Gaitsgory, \cite{shvcat}]\label{t:weak}

For $H$ an affine algebraic group, the functor:

\[
\begin{gathered}
H\mod_{weak} \to \Rep(H)\mod = \Rep(H)\mod(\DGCat_{cont}) \\
\sC \mapsto \sC^{H,w}
\end{gathered}
\]

\noindent is an equivalence. (Here the functor
exists because $\Rep(H) \coloneqq \QCoh(\bB H)$ 
is tautologically isomorphic to $\TwoHom_{H\mod_{weak}}(\Vect,\Vect)$
as a monoidal category.)

\end{thm}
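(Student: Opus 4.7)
The strategy is Barr-Beck-Lurie: I would realize $(-)^{H,w}$ as a monadic functor landing in $\Rep(H)\mod$ and then show the resulting monad is the identity, whence the adjunction is an equivalence.

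First, the left adjoint of $(-)^{H,w}$ is easy to write down. The object $\Vect \in H\mod_{weak}$ (obtained via the counit $\sO_H \to k$) has $\IndCoh^*(H)$-linear endomorphism algebra $\Rep(H)$ by the parenthetical remark of the statement, and so is canonically a $(\IndCoh^*(H), \Rep(H))$-bimodule. Relative tensor product gives a functor
\[
\on{ind}\colon \Rep(H)\mod \to H\mod_{weak}, \qquad \sD \mapsto \Vect \underset{\Rep(H)}{\otimes} \sD,
\]
which is left adjoint to $(-)^{H,w}$ by construction.

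By Barr-Beck-Lurie, to show $(-)^{H,w}$ is monadic it suffices to check (i) conservativity of $(-)^{H,w}$, and (ii) preservation of geometric realizations of $(-)^{H,w}$-split simplicial diagrams; (ii) is the easier condition, following from standard accessibility arguments. For (i), I would use that for $H$ affine algebraic there is a well-behaved compact generator of $\Rep(H)$ \textemdash{} namely the regular representation $k[H]$, equivalently the right adjoint to the forgetful functor $\Rep(H) \to \Vect$ applied to $k$ \textemdash{} whose image under $\on{ind}$ identifies with $\IndCoh^*(H)$ viewed as a free rank-one module over itself. Since these free modules tautologically generate $H\mod_{weak} = \IndCoh^*(H)\mod$ under colimits, the essential image of $\on{ind}$ contains a generating set, and conservativity of $(-)^{H,w}$ follows.

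Finally, I would verify that the monad $(-)^{H,w} \circ \on{ind}$ is the identity on $\Rep(H)\mod$. The projection formula, valid because $\Vect$ is dualizable as a $\Rep(H)$-module (a consequence of the finiteness properties of $\Rep(H)$ for affine algebraic $H$), gives
\[
\on{ind}(\sD)^{H,w} = \ul{\Hom}_{H\mod_{weak}}(\Vect,\Vect) \underset{\Rep(H)}{\otimes} \sD \simeq \Rep(H) \underset{\Rep(H)}{\otimes} \sD \simeq \sD,
\]
and the unit of the $\on{ind} \dashv (-)^{H,w}$ adjunction realizes this equivalence canonically. I expect the main obstacle to be conservativity in (i): this is precisely what fails once $H$ is replaced by a group indscheme (which is the \emph{raison d'\^etre} of much of \S\ref{s:weak-ind}), and the argument here uses essentially the existence of enough compact projectives in $\Rep(H)$ and the commutation of group cohomology with filtered colimits, both of which require $H$ to be affine algebraic.
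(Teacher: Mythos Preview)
The paper does not prove this theorem; it is cited from \cite{shvcat}. Your Barr--Beck strategy is standard and the outline is correct, but the projection-formula step has a real gap.

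To pull $\TwoHom_{H\mod_{weak}}(\Vect,-)$ past the bar resolution computing $-\otimes_{\Rep(H)}\sD$, you need $(-)^{H,w}$ to preserve colimits in $H\mod_{weak}$, i.e., $\Vect$ must be compact as a $\QCoh(H)$-module category. Your stated justification, dualizability of $\Vect$ over $\Rep(H)$, does not give this: it is dualizability on the wrong side of the bimodule. Nor does rigidity help, since $(\QCoh(H),\star)$ is \emph{not} rigid for positive-dimensional $H$ (for instance $\sO_H\star\sO_H$ is free of infinite rank over $\sO_H$, hence not compact). The actual argument uses finite cohomological dimension of $\Rep(H)$: the trivial representation is a retract of a finite partial totalization $\Tot^{\leq n}(\Av_*^w\Oblv)^{\dot+1}(k)$, which forces $(-)^{H,w}$ to be effectively a finite limit and hence colimit-preserving. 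The paper's Lemma~\ref{l:recognition} contains exactly this argument. This, rather than conservativity, is the substantive finite-dimensionality input; your closing paragraph misidentifies where the difficulty lies.

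A smaller issue: in your conservativity argument you apply $\on{ind}$ to the object $k[H]\in\Rep(H)$, which is a type error since $\on{ind}$ eats $\Rep(H)$-module categories. The intended statement is that $\on{ind}$ applied to the $\Rep(H)$-module \emph{category} $\Vect$ (action via the fiber functor, which is Morita-associated to the regular representation) gives $\Vect\otimes_{\Rep(H)}\Vect\simeq\QCoh(\Spec(k)\times_{\bB H}\Spec(k))=\QCoh(H)$, which generates $H\mod_{weak}$. With this fix, conservativity does follow easily.
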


\subsection{Profinite dimensional setting}

In the remainder of this section, we suppose $H$ is a classical 
affine group scheme. 

\subsection{}

Let $B = \Fun(H) \in \Vect^{\heart}$ as before. 
Because $H$ can be written as a limit of smooth schemes
under smooth morphisms, the tautological 
functor 
$B\mod_{ren} = \IndCoh^*(H) \to \QCoh(H) = B\mod_{naive}$ is an equivalence.

\subsection{}

We begin with a remark in the naive setting.

Note that the Beck-Chevalley conditions apply for the cosimplicial diagram
defining $\sC^{H,w,naive}$. Therefore, $\Oblv:\sC^{H,w,naive} \to \sC$
admits a continuous right adjoint $\Av_*^{w,naive}$, and $\Oblv$
is comonadic. The comonad on $\sC$ is given by convolution with
the coalgebra $\sO_H$ in the monoidal category $\QCoh(H)$.

In particular, for $\sC = \Vect$, we obtain
that $\Rep(H)_{naive} \coloneqq \Vect^{H,w,naive} (= \QCoh(\bB H))$
is canonically equivalent to the category of $B$-comodules (with $B$ as
above).

\subsection{}

We now define $\Rep(H) = \Rep(H)_{ren}$ as 
$\Ind(\Rep(H)^c))$ for
$\Rep(H)^c \subset \Rep(H)_{naive}$ the full
subcategory generated by finite dimensional representations,
i.e., objects whose image in $\Vect$ is compact.\footnote{Note that
this example fits into the formalism of \S \ref{s:ren}.
Indeed, $B$ is the union of its finite dimensional sub-coalgebras,
so $B^{\vee} \in \Pro\Vect$ is a profinite dimensional algebra,
in particular, an $\overset{\rightarrow}{\otimes}$-algebra.
Its modules are tautologically the same as $B$-comodules.
This definition of $\Rep(H)$ is then obtained by applying
Example \ref{e:pro-algebras}.} Since $\Rep(H)^c$ is closed
under tensor products in $\Rep(H)_{naive}$, 
$\Rep(H)$ is a rigid symmetric monoidal DG category.

Note that $\Rep(H)$ carries a canonical $t$-structure
for which the forgetful functor to $\Vect$ is $t$-exact.
We have $\Rep(H)^+ \simeq \Rep(H)_{naive}^+$.

\subsection{}\label{ss:pro-gen}

The following definition plays a key role.

\begin{defin}

The category $H\mod_{weak}$ of categories \emph{with a 
genuine}\footnote{The terminology is borrowed
from equivariant homotopy theory. In that
context, for finite $H$,
one extends the \emph{naive} notion of $H$-action
on a spectrum in such a way that the trivial representation
(and more generally, any permutation representation) becomes
compact. This is somewhat analogous to the 
present context, where we \emph{renormalize} 
$H\mod_{weak,naive}$ so that the trivial representation
$\Vect$ becomes (completely) compact.

Although the subtleties in our context
only occur for group schemes (which are
analogous to profinite groups) and
group indschemes (which are analogous to 
locally compact totally disconnected groups), 
we still find this analogy to be somewhat evocative.} 
\emph{weak $H$-action} is 
$\Rep(H)\mod = \Rep(H)\mod(\DGCat_{cont})$.

\end{defin}

\begin{construction}

Note that $\Rep(H)_{naive} = \TwoHom_{H\mod_{weak,naive}}(\Vect,\Vect)$.
In particular, $\Vect$ admits commuting actions of $\Rep(H)_{naive}$
and $\QCoh(H)$. In particular, since $\Rep(H)_{ren} \to \Rep(H)_{naive}$
is symmetric monoidal, $\Vect$ is a bimodule for $\Rep(H)_{ren}$
and $\QCoh(H)$, and therefore tensoring defines a functor:

\[
H\mod_{weak} \to H\mod_{weak,naive}. 
\]

\end{construction}

\begin{notation}\label{n:h-gen}
	
Following the case of finite dimensional $H$, we think of the
underlying object of $\DGCat_{cont}$ as the weak $H$-invariants
of a DG category acted on by $H$.

To accommodate this, suppose we are given an object
of $H\mod_{weak}$. By definition, this means that
we are given an object $\sD \in \Rep(H)\mod$.
We use the notation $\sC^{H,w}$ in place of $\sD$,
where we let $\sC$ denote the underlying object 
of $H\mod_{weak,naive}$. We then abusively write $\sC \in H\mod_{weak}$
to summarize the situation.

Roughly, the reader should think $\sC \in H\mod_{weak}$ means
that $\sC \in H\mod_{weak,naive}$, and we are given a
``correction" $\sC^{H,w}$ to $\sC^{H,w,naive}$. 

We emphasize that this 
``forgetful functor" $H\mod_{weak} \to \DGCat_{cont}$ (factoring through
$H\mod_{weak,naive}$) is \emph{not} conservative.
	
\end{notation}

\begin{rem}\label{r:naive-to-gen}

Because $\Rep(H)$ is rigid monoidal, $\Vect$ is dualizable
over $\Rep(H)$. Therefore, the functor 
$H\mod_{weak} \to H\mod_{weak,naive}$ admits left and
right adjoints. It is immediate to see that they are
computed as strong and weak invariants respectively,
with $\Rep(H)$ acting through $\Rep(H)_{naive}$.

In particular, for $\sC \in H\mod_{weak}$, there is
a canonical functor:

\[
\sC^{H,w} \to \sC^{H,w,naive}.
\]

It is not difficult to see that each of these functors 
$H\mod_{weak,naive} \to H\mod_{weak}$
are fully-faithful. Indeed, it is well-known that
it suffices to verify this for either functor, and for the
right adjoint this is the content of \cite{locsys} Proposition 3.5.1
(which is proved by a standard Beck-Chevalley argument).

\end{rem}

\begin{example}\label{e:vect-gen}

We have a canonical object $\Vect \in H\mod_{weak}$
with $\Vect^{H,w} = \Rep(H)$.
Clearly $\TwoHom_{H\mod_{weak}}(\Vect,\sC) = \sC^{H,w}$.

\end{example}

\begin{example}

By Theorem \ref{t:weak}, genuine and naive actions coincide
in the finite dimensional case. It is straightforward to show
that if $H = \prod_{i=1}^{\infty} \bG_a$, then
$\Rep(H)$ is not equivalent to $\Rep(H)_{naive}$, so the
two notions do not coincide in this case.

\end{example}

\begin{rem}

The relationship between $H\mod_{weak}$ and 
$H\mod_{weak,naive}$ is somewhat analogous to
the relationship between $\IndCoh$ and $\QCoh$,
though it occurs a categorical level higher.
Namely, there is a non-conservative functor
$H\mod_{weak} \to H\mod_{weak,naive}$ analogous
to the functor $\Psi:\IndCoh(S) \to \QCoh(S)$
for an eventually coconnective Noetherian scheme $S$,
and in both cases, there are fully-faithful left and right
adjoints.

\end{rem}

\subsection{}\label{ss:gp-inv-limit}

The key advantage of genuine $H$-actions is that the theory
completely reduces to the finite dimensional setting, as we now discuss.

Indeed, recall that $H$ is a limit $\lim_i H_i$
of affine algebraic groups under smooth\footnote{Of course we are using
characteristic zero in an essential way.} surjective maps.

Let $K_i \subset H$ denote the kernel of the map $H \to H_i$.
Note that there is a canonical functor 
$H\mod_{weak} \to H_i\mod_{weak}$,
sending $\sC$ to:

\[
\sC^{K_i,w} \coloneqq \sC^{H,w} \underset{\Rep(H_i)}{\otimes} \Vect.
\]

\noindent That is, we apply the restriction along the tensor functor
$\Rep(H_i) \to \Rep(H)$ and the inverse to Theorem \ref{t:weak}.

\begin{prop}

The induced functor:

\[
H\mod_{weak} \to \underset{i}{\lim} \, H_i\mod_{weak}
\]

\noindent is an equivalence.

\end{prop}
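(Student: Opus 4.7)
The plan is to identify the functor in question with the canonical equivalence between modules over a colimit algebra and the limit of module categories, after showing that $\Rep(H)$ is the colimit of the $\Rep(H_i)$.

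By definition $H\mod_{weak} = \Rep(H)\mod$, and by Theorem \ref{t:weak} applied to each algebraic quotient, $H_i\mod_{weak} = \Rep(H_i)\mod$. Under these identifications, I claim the prescription $\sC^{H,w} \otimes_{\Rep(H_i)} \Vect$ is precisely restriction of scalars along the continuous symmetric monoidal functor $\Rep(H_i) \to \Rep(H)$ induced by pullback along the quotient $H \onto H_i$: indeed, Theorem \ref{t:weak} applied to $H_i$ exhibits $\Vect$ as a $(\Rep(H_i),\Vect)$-bimodule implementing the equivalence $\Rep(H_i)\mod \simeq H_i\mod_{weak}$, so tensoring a $\Rep(H)$-module over $\Rep(H_i)$ with $\Vect$ sends it through this equivalence applied to the underlying $\Rep(H_i)$-module.

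The key step is to show that $\Rep(H) \simeq \colim_i \Rep(H_i)$ in $\Alg(\DGCat_{cont})$, the colimit being taken over the cofiltered system of algebraic quotients. Each $\Rep(H_j)$ is compactly generated by finite-dimensional representations $\Rep(H_j)^c$, and the transition functors $\Rep(H_i) \to \Rep(H_j)$ (for $H_j \onto H_i$) are continuous, symmetric monoidal, and fully faithful on compact objects. The filtered colimit in $\DGCat_{cont}$ is therefore computed as $\Ind$ of the filtered union $\colim_i \Rep(H_i)^c$; this union inherits a symmetric monoidal structure, and passing to $\Ind$ produces a rigid symmetric monoidal DG category. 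To identify the result with $\Rep(H)$, I invoke the classical structural fact that every finite-dimensional representation of $H = \lim_i H_i$ factors through some $H_i$, giving $\Rep(H)^c = \colim_i \Rep(H_i)^c$ as symmetric monoidal stable subcategories; taking $\Ind$ yields the equivalence. The fact that this colimit is computed the same way in $\Alg(\DGCat_{cont})$ and in $\DGCat_{cont}$ is a standard property of filtered colimits.

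Finally, I apply the general principle that for any filtered diagram $i \mapsto \cA_i \in \Alg(\DGCat_{cont})$ with colimit $\cA$, the natural functor $\cA\mod \to \lim_i \cA_i\mod$ is an equivalence, where the transition functors on the right are restriction of scalars. This follows from the universal property: an $\cA$-module structure on $\sC \in \DGCat_{cont}$ is an action functor $\cA \otimes \sC \to \sC$ satisfying associativity and unitality, which by the universal property of $\cA = \colim_i \cA_i$ is equivalent to a compatible system of actions $\cA_i \otimes \sC \to \sC$. Combining with the preceding two steps yields the claim.

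The main obstacle is really the identification $\Rep(H) \simeq \colim_i \Rep(H_i)$ at the symmetric monoidal level; everything else is either a definitional unpacking or a formal categorical manipulation. The required factorization property for finite-dimensional representations of pro-algebraic groups is classical but should be invoked explicitly, and one must keep track of the symmetric monoidal structure through the $\Ind$-completion.
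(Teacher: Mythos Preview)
Your proposal is correct and follows essentially the same approach as the paper. The paper isolates the key step as a separate lemma (Lemma~\ref{l:reph-co/lim}, asserting $\colim_i \Rep(H_i) \isom \Rep(H)$ in $\ComAlg(\DGCat_{cont})$) and proves it by citing Example~\ref{e:pro-algebras}, whereas you argue this identification directly via the classical fact that finite-dimensional representations of $H$ factor through some $H_i$; both routes amount to the same thing, and the deduction of the proposition from this lemma is identical.
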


This follows immediately from the next lemma.

\begin{lem}\label{l:reph-co/lim}

The morphism:

\[
\underset{i}{\colim} \, \Rep(H_i) \to \Rep(H) \in \ComAlg(\DGCat_{cont})
\]

\noindent is an equivalence.

\end{lem}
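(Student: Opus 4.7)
The plan is to reduce the equivalence to a computation at the level of compact subcategories. Filtered colimits in $\DGCat_{cont}$ (hence in $\ComAlg(\DGCat_{cont})$) of compactly generated categories along transition functors that preserve compact objects are computed by applying $\Ind$ to the colimit of the compact subcategories in small DG categories. The transition functors $\Rep(H_i) \to \Rep(H_j)$, restriction along the surjections $H_j \twoheadrightarrow H_i$, preserve finite-dimensionality at the level of underlying vector spaces, hence preserve $\Rep(-)^c$. These restriction functors are symmetric monoidal (tensor product of representations commutes with restriction along a group homomorphism), so any equivalence of underlying DG categories will automatically promote to one in $\ComAlg(\DGCat_{cont})$. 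It therefore suffices to show
\[
\colim_i \Rep(H_i)^c \longrightarrow \Rep(H)^c
\]
is an equivalence of small DG categories.

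For essential surjectivity, I would observe that the image of the colimit in $\Rep(H)^c$ is closed under finite limits, colimits, and retracts, since finitely many objects can be lifted to a common $\Rep(H_j)^c$ with $j$ dominating the indices. It thus suffices to lift every object of $\Rep(H)^c$, which by definition is perfect as a vector space. Such an object is represented by a bounded complex of finite-dimensional vector spaces, each carrying a classical $\sO(H)$-comodule structure $V^n \to V^n \otimes \sO(H)$. Since $\sO(H) = \colim_i \sO(H_i)$ and each $V^n$ is finite-dimensional, each coaction factors through $V^n \otimes \sO(H_i)$ for some $i$; because the index system is filtered and the complex is bounded, a single $i$ can be chosen to work for all $n$ simultaneously. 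The differentials, being $H$-equivariant, are automatically $H_i$-equivariant after enlarging $i$, producing the desired lift.

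For fully faithfulness, the task is to prove that for $V, W \in \Rep(H_i)^c$ the natural map
\[
\colim_{j \geq i} \ul{\Hom}_{\Rep(H_j)}(V, W) \longrightarrow \ul{\Hom}_{\Rep(H)}(V, W)
\]
is an equivalence. Since $\Rep(-)^c$ is a full subcategory of $\Rep(-)_{naive}$, both sides may be computed as totalizations of cobar cosimplicial objects whose $n$-th term is $\on{Hom}_k(V, W \otimes \sO(-)^{\otimes n})$. Finite-dimensionality of $V$ together with $\sO(H) = \colim_j \sO(H_j)$ identifies the cosimplicial object for $H$ with the termwise filtered colimit of those for $H_j$. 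By Lemma \ref{l:cosimp-summand}, for each $m$ the $m$-truncation of the totalization agrees with the $m$-truncation of a finite partial totalization $\Tot^{\leq N(m)}$; since filtered colimits in $\Vect$ are $t$-exact and commute with finite limits, totalization commutes with the filtered colimit over $j$, giving the equivalence.

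The main obstacle is this last step: the interchange of filtered colimit and totalization in computing derived Hom. This is not formal; it depends on the classicality of each $\sO(H_j)$ and on the finite-dimensionality of $V$ and $W$, which together supply the convergence of partial totalizations guaranteed by Lemma \ref{l:cosimp-summand}. Once this point is secured, essential surjectivity and fully faithfulness on compacts combine with the symmetric monoidal nature of restriction to give the equivalence in $\ComAlg(\DGCat_{cont})$.
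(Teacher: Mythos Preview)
Your argument is correct. The reduction to compact subcategories is valid since the restriction functors preserve $\Rep(-)^c$; your essential surjectivity step is fine once one notes that $\Rep(H)^c$ is by definition generated under finite colimits and retracts by finite-dimensional objects of the heart, each of which lifts to some $\Rep(H_i)$ by the coaction-factoring argument you give; and your fully-faithfulness step is justified because the cobar terms $V^{\vee}\otimes W\otimes \sO(H_j)^{\otimes n}$ lie in a fixed amplitude independent of $n$ and $j$, so Lemma~\ref{l:cosimp-summand} lets you replace $\Tot$ by $\Tot^{\leq N}$ before commuting with the filtered colimit.

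The paper's own proof is a one-line appeal to Example~\ref{e:pro-algebras}. Unpacked, that example presents $\Rep(H)$ as $\colim_{\alpha} B_{\alpha}^{\vee}\mod$ over the filtered system of finite-dimensional subcoalgebras $B_{\alpha}\subset \sO(H)$ (this is how the footnote defines $\Rep(H)$), and likewise for each $\Rep(H_i)$; the lemma then reduces to the cofinality of finite-dimensional subcoalgebras of the $\sO(H_i)$ inside those of $\sO(H)=\colim_i \sO(H_i)$. So the paper's route is: use the renormalized pro-algebra framework plus a cofinality observation. Your route bypasses that framework entirely, working directly with the comonadic description of $\Rep(-)_{naive}$ and the explicit cobar resolution. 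The paper's approach is shorter given the machinery already in place; yours is self-contained and makes the convergence mechanism (via Lemma~\ref{l:cosimp-summand}) explicit, which is arguably more transparent for a reader who has not internalized Example~\ref{e:pro-algebras}.
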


\begin{proof}

This is a special case of Example \ref{e:pro-algebras}.

\end{proof}

\begin{cor}

For any $\sC \in H\mod_{weak}$, the functor:

\[
\underset{i}{\colim} \, \sC^{K_i,w} \to \sC \in \DGCat_{cont}
\]

\noindent is an equivalence. Moreover, each of the structural
functors in this colimit admits a continuous right adjoint.

\end{cor}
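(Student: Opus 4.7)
The plan is to deduce both parts directly from the equivalence in the proposition above (itself a formal consequence of Lemma \ref{l:reph-co/lim}), combined with standard facts about weak invariants for finite-dimensional affine algebraic groups. Throughout, I encode $\sC \in H\mod_{weak}$ by the $\Rep(H)$-module $\sC^{H,w}$, whose underlying DG category is $\sC \simeq \sC^{H,w} \otimes_{\Rep(H)} \Vect$.

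For the first assertion, I would invoke Lemma \ref{l:reph-co/lim}, which identifies $\Rep(H) \simeq \colim_i \Rep(H_i)$ in $\Alg(\DGCat_{cont})$, and rewrite the relative tensor product via the two-sided bar resolution. Since tensor products in $\DGCat_{cont}$ and geometric realizations both commute with filtered colimits, one obtains
\[
\sC \simeq \sC^{H,w} \otimes_{\Rep(H)} \Vect \simeq \colim_i \bigl(\sC^{H,w} \otimes_{\Rep(H_i)} \Vect\bigr) = \colim_i \sC^{K_i,w}.
\]

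For the second assertion, fix $j \geq i$ and set $K_{ij} \coloneqq \ker(H_j \twoheadrightarrow H_i)$, a closed normal finite-type subgroup of the affine algebraic group $H_j$. Base change along the pullback square $\bB K_{ij} \simeq \bB H_j \times_{\bB H_i} \on{pt}$ yields an equivalence $\Rep(H_j) \otimes_{\Rep(H_i)} \Vect \simeq \Rep(K_{ij})$ of $\Rep(H_j)$-modules, allowing one to rewrite $\sC^{K_i,w} \simeq \sC^{H,w} \otimes_{\Rep(H_j)} \Rep(K_{ij})$. Under Theorem \ref{t:weak} applied to the algebraic group $H_j$, this presentation identifies the structural functor $\sC^{K_i,w} \to \sC^{K_j,w}$ with the forgetful functor from the weak $K_{ij}$-invariants of $\sC^{K_j,w}$. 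Since $K_{ij}$ is finite-dimensional, this forgetful functor is comonadic by Beck--Chevalley and admits a continuous right adjoint $\Av_*^{K_{ij},w}$: the composition $\Oblv \circ \Av_*^{K_{ij},w}$ is convolution with the coalgebra $\sO_{K_{ij}} \in \QCoh(K_{ij})$ and is therefore continuous, after which conservativity of $\Oblv$ propagates continuity to $\Av_*^{K_{ij},w}$. I expect the main obstacle to lie in this identification, namely verifying that under all the equivalences the tautological structural map really is the forgetful-invariants functor; the rest is either formal or standard in the finite-dimensional setting.
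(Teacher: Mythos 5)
Your proof is correct and is essentially the argument the paper leaves implicit: the first claim is exactly the bar-resolution consequence of Lemma \ref{l:reph-co/lim}, and the second identifies the structural functors with forgetful functors from weak invariants for the finite-type kernels $K_{ij} = \Ker(H_j \to H_i)$, whose right adjoints $\Av_*^w$ are continuous by the Beck--Chevalley remark earlier in the section. The identification you flag as the main obstacle does go through: the structural map is $\id_{\sC^{H,w}} \otimes_{\Rep(H_j)}$ applied to the action map $\Rep(H_j) \otimes_{\Rep(H_i)} \Vect \to \Vect$, which under your base-change equivalence is precisely the fiber functor $\Rep(K_{ij}) \to \Vect$.
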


\begin{cor}

The functor $\Oblv:\sC^{H,w} \to \sC$ admits a continuous
right adjoint $\Av_*^w$.

\end{cor}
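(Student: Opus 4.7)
The plan is to reduce to the finite-dimensional setting by factoring $\Oblv$ through the intermediate invariants $\sC^{K_i,w}$, using both the preceding corollary (for the ``radial'' direction $\sC^{K_i,w} \to \sC$) and Gaitsgory's Theorem \ref{t:weak} (for the ``tangential'' direction $\sC^{H,w} \to \sC^{K_i,w}$, where only the finite-dimensional quotient $H_i = H/K_i$ is involved).

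Concretely, fix any index $i$. The forgetful functor factors as
\[
\Oblv: \sC^{H,w} \xar{\iota_i} \sC^{K_i,w} \xar{j_i} \sC,
\]
where $j_i$ is the structural map into the colimit $\sC \simeq \colim_j \sC^{K_j,w}$ and $\iota_i$ is the forgetful functor associated to the fact that $\sC^{H,w} = (\sC^{K_i,w})^{H_i,w}$ (this identity being the definition of $\sC^{H,w}$ in the inverse limit presentation of \S \ref{ss:gp-inv-limit}, applied to the group scheme $H$ via its quotient $H_i$). The preceding corollary furnishes a continuous right adjoint $j_i^R$ to $j_i$. Since $H_i$ is an affine algebraic group, the functor $\iota_i$ is the weak forgetful functor for an honest algebraic $H_i$-action on $\sC^{K_i,w} \in H_i\mod_{weak}$, and so by the standard Beck--Chevalley argument for affine algebraic groups (which, as recalled just before Theorem \ref{t:weak}, applies already at the naive level and in particular on the genuine $\Rep(H_i)$-module side), $\iota_i$ admits a continuous right adjoint $\iota_i^R$.

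Composing, $\Av_*^w \coloneqq \iota_i^R \circ j_i^R$ is a continuous right adjoint to $\Oblv$. It is automatic from uniqueness of right adjoints that this functor is independent of $i$, so no further compatibility needs to be checked. The only mildly delicate point\textemdash and the one place where the proof uses something nontrivial about the infinite-type nature of $H$\textemdash is the identification $\sC^{H,w} \simeq (\sC^{K_i,w})^{H_i,w}$, which is precisely what the previous proposition (equivalence $H\mod_{weak} \isom \lim_i H_i\mod_{weak}$) guarantees; after that, everything is formal.
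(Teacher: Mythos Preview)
Your proof is correct and follows the approach the paper intends: the corollary is placed immediately after the identification $\sC \simeq \colim_i \sC^{K_i,w}$ precisely so that one can factor $\Oblv$ through some $\sC^{K_i,w}$ and use that both legs admit continuous right adjoints (the first by the finite-dimensional theory for $H_i$, the second because the structural functors in the colimit do). One tiny remark: the preceding corollary literally asserts continuous right adjoints for the \emph{transition} maps $\sC^{K_i,w} \to \sC^{K_j,w}$, and you are using the standard consequence that the insertion maps $j_i$ into the colimit then also admit continuous right adjoints (via the colimit $=$ limit identification); this is routine but worth a word.
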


\subsection{Functoriality}\label{ss:gp-sch-funct}

Suppose $f:H_1 \to H_2$ is a morphism of classical affine group schemes.

We claim that there are induced adjoint functors:

\[
\ind^w:H_1\mod_{weak} \rightleftarrows H_2\mod_{weak}: \Res
\]

\noindent with the \emph{weak induction} functor
$\ind^w$ \emph{also} canonically isomorphic to
the right adjoint to $\Res$.

Indeed, unwinding the definitions, $H_i\mod_{weak} \simeq \Rep(H_i)\mod$,
and we take $\ind^w$ to correspond to restriction of module categories along
the symmetric monoidal functor $\Rep(H_2) \to \Rep(H_1)$. This
functor obviously admits a left adjoint $\Rep(H_1)\otimes_{\Rep(H_2)}-$,
which is defined to be $\Res$. Then $\Res$ is both
left and right adjoint because $\Rep(H_1)$ is self-dual
as a $\Rep(H_2)$-module category by general properties of
rigid monoidal DG categories, c.f. \cite{dgcat}.

In particular, taking $H \to \Spec(k)$, we see 
$\Res:\DGCat_{cont} \to H\mod_{weak}$ sends $\Vect$ to 
itself with the trivial $H$-action. The equality of
left and right adjoints here should be interpreted as an 
``invariants = coinvariants" statement for genuine $H$-actions.
We remark that the corresponding statement is false
in the setting of naive weak actions.

\subsection{Genuine actions via canonical renormalization}\label{ss:can-renorm}

The following is a typical construction of genuine weak $H$-actions.

Suppose $H$ acts naively on $\sC$.
Suppose moreover that $\sC$ is equipped with a 
$t$-structure such that $\Oblv\Av_*^{w,naive}:\sC \to \sC$
is $t$-exact. Then $\sC^{H,w,naive}$ has a (unique) $t$-structure 
such that $\Oblv:\sC^{H,w,naive} \to \sC$ is $t$-exact
(c.f. the proof of Proposition \ref{p:gen-t-str} \eqref{i:gen-t-1} below).
Note that the functor $\Av_*^{w,naive}$ is also $t$-exact in this case.

In what follows, we let $\sC^{H,w,c} \subset \sC^{H,w,naive}$ denote the
(non-cocomplete) DG subcategory of objects
$\sF \in \sC^{H,w,naive}$ with $\Oblv(\sF)$
compact in $\sC$.

\begin{defin}

In the above setting, we say
the naive action of $H$ on $\sC$ 
\emph{canonically renormalizes
(compatibly with the $t$-structure)}
if: 

\begin{enumerate}

\item $\sC$ and
its $t$-structure are compactly generated.

\item Compact objects in $\sC$ are
bounded (i.e., eventually connective and coconnective).

\item The essential image of the functor 
$\Oblv:\sC^{H,w,c} \cap \sC^{H,w,naive,\leq 0} \to \sC^c \cap \sC^{\leq 0}$ Karoubi generates.
(Here $\sC^c \subset \sC$ is the subcategory of
compact objects.)

\end{enumerate}

\end{defin}

\begin{rem}\label{r:renorm}

Note that under the above hypotheses,
the functor $\sC^{H,w,c} \to \sC^c$ Karoubi generates.

\end{rem}

The following result summarizes the main 
features of this setting.

\begin{prop}\label{p:gen-t-str} 

Suppose $H$ acts naively on $\sC$, $\sC$ is equipped with 
a $t$-structure compatible with the $H$-action, 
and suppose the $H$-action canonically renormalizes.

Define $\sC^{H,w}$ as $\Ind(\sC^{H,w,c})$;
as $\sC^{H,w,c} \subset \sC^{H,w,naive}$ is
a $\Rep(H)^c$-submodule category, $\sC^{H,w}$
has a canonical $\Rep(H)$-module structure.

Let $\psi$ denote the canonical functor:

\[
\psi:\sC^{H,w} \to \sC^{H,w,naive} \in \DGCat_{cont}
\]

\noindent ind-extending the embedding $\sC^{H,w,c} \into \sC^{H,w,naive}$.
Note that $\psi$ is a morphism of $\Rep(H)$-module categories.

We use a standard abuse of notation in letting
$\Oblv:\sC^{H,w} \to \sC$ denote the composition
$\sC^{H,w} \xar{\psi} \sC^{H,w,naive} \xar{\Oblv} \sC$.

\begin{enumerate}

\item\label{i:gen-t-1} $\sC^{H,w,naive}$ admits a 
unique $t$-structure such that
such that the (conservative) forgetful functor to $\sC$ is $t$-exact.

\item\label{i:gen-t-2} $\sC^{H,w}$ has a 
unique compactly generated $t$-structure 
such that the forgetful functor to $\sC$ is $t$-exact
and conservative on eventually coconnective subcategories.

\item\label{i:gen-t-2.5} For $V \in \Rep(H)^{\heart}$, the action
functors $V \star -:\sC^{H,w} \to \sC^{H,w}$ and 
$V \star -:\sC^{H,w,naive} \to \sC^{H,w,naive}$ are $t$-exact.

\item\label{i:gen-t-3} The functor $\psi$
is $t$-exact and an equivalence on eventually 
coconnective subcategories:

\[
\psi:\sC^{H,w,+} \isom \sC^{H,w,naive,+}.
\]

\item\label{i:gen-t-3.5} 

The composition:\footnote{In fact, each functor here
is an equivalence.}

\[
\sC^{H,w} \underset{\Rep(H)}{\otimes} \Vect \to 
\sC^{H,w,naive} \underset{\Rep(H)}{\otimes} \Vect \to
\sC
\]

\noindent is an equivalence. In particular,
$\sC^{H,w} \in \Rep(H)\mod$ induces 
a canonical genuine $H$-action on 
$\sC$. 

\item\label{i:gen-t-4} Let $\Av_*^w:\sC \to \sC^{H,w}$
denote the right adjoint to $\Oblv$. Then the induced natural transformation:

\[
\psi \circ \Av_*^w \to \Av_*^{w,naive}
\]

\noindent (of functors $\sC \to \sC^{H,w,naive}$) 
is an isomorphism.

\end{enumerate}

\end{prop}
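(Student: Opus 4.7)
The plan is to proceed roughly in the stated order, with part (3) the main technical step from which parts (2), (2.5), (3.5), and (4) follow essentially formally.

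For (1), I would define $\sC^{H,w,naive,\leq 0} \coloneqq \Oblv^{-1}(\sC^{\leq 0})$. The hypothesis that $\Oblv\Av_*^{w,naive}$ is $t$-exact says exactly that the comonad $T \coloneqq \Oblv\Av_*^{w,naive}$ on $\sC$ is $t$-exact, so truncations lift canonically through the comodule structure on $\sC^{H,w,naive} = T\comod$; uniqueness is immediate from $t$-exactness and conservativity of $\Oblv$. Note this immediately yields $t$-exactness of $\Av_*^{w,naive}$, which will be needed later. For (2), I would define $\sC^{H,w,\leq 0}$ as the smallest cocomplete subcategory of $\sC^{H,w} = \Ind(\sC^{H,w,c})$ containing $\sC^{H,w,c}\cap\sC^{H,w,naive,\leq 0}$; by construction this is a compactly generated $t$-structure, and right $t$-exactness of $\Oblv$ is automatic from the generating set. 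Conservativity of $\Oblv$ on $\sC^{H,w,+}$ and its left $t$-exactness there are postponed to (3).

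The crux is (3). My approach is to identify both $\sC^{H,w,+}$ and $\sC^{H,w,naive,+}$ with the bounded below derived category of a common heart. First, $\psi$ should restrict to an equivalence on hearts, each being identified with the classical abelian category of $\Fun(H)$-coactions on objects of $\sC^{\heart}$ (using the hypothesis that $\sC^{\heart}$ is well-behaved and $\Av_*^{w,naive}$ is $t$-exact). Second, by the canonical renormalization hypothesis combined with $t$-exactness of $\Av_*^{w,naive}$, the category $\sC^{H,w,naive,\heart}$ admits ``enough injective cogenerators'' built from objects of the form $\Av_*^{w,naive}V$ for $V$ ranging over compact objects of $\sC$, which detect the $t$-structure on bounded below objects. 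A version of the argument in Proposition \ref{p:alg-vs-cats} then exhibits each of $\sC^{H,w,+}$ and $\sC^{H,w,naive,+}$ as the bounded below derived category of this common heart, with $\psi$ providing the canonical identification. The main obstacle is precisely this identification: one must pin down the $\overset{\rightarrow}{\otimes}$-algebra that uniformly encodes the relevant forgetful functor on both sides, and then carefully exploit the Karoubi-generation hypothesis from canonical renormalization to match the compact (on the renormalized side) with the pro-injective (on the naive side) pictures.

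The remaining parts are then essentially formal. Part (2.5) reduces via filtered colimits to the case $V \in \Rep(H)^c \cap \Rep(H)^{\heart}$, where $V\star -$ preserves $\sC^{H,w,c}$ and acts $t$-exactly on the naive side, so is $t$-exact on the generators of the $t$-structure on $\sC^{H,w}$. Part (4) follows from uniqueness of right adjoints once (3) is in hand: both $\psi\Av_*^w$ and $\Av_*^{w,naive}$ are right adjoint to $\Oblv$ on $\sC^{H,w,naive,+}$ by the $t$-exact equivalence of (3), and continuity then pins down the agreement globally. For (3.5), the map factors as $\sC^{H,w}\otimes_{\Rep(H)}\Vect \to \sC^{H,w,naive}\otimes_{\Rep(H)}\Vect \isom \sC^{H,w,naive}\otimes_{\Rep(H)_{naive}}\Vect \isom \sC$, where the final equivalence is Beck-Chevalley for the comonadic naive adjunction; the first arrow is an equivalence by (3) combined with rigidity of $\Rep(H)$, which allows the tensor product to be computed by dualizing the action and reduces the comparison to the equivalence of (3) itself.
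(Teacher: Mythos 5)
Your reduction of everything to part (3) is reasonable in outline, but the argument you propose for (3) has a genuine gap, and it is exactly where the real work of the proposition lies. You want to exhibit both $\sC^{H,w,+}$ and $\sC^{H,w,naive,+}$ as the bounded below derived category of a common heart, with enough injectives supplied by objects $\Av_*^{w,naive}(V)$ for $V$ compact in $\sC$. Nothing in the hypotheses of canonical renormalization gives you this: $\sC$ is only assumed to be compactly generated with a compactly generated $t$-structure and bounded compact objects, so $\sC^{H,w,naive,+}$ need not be the derived category of its heart at all (take $H$ trivial and $\sC = A\mod$ for a connective, eventually coconnective, non-formal DG algebra $A$ such as $k[\epsilon]/\epsilon^2$ with $\deg\epsilon=-1$: the heart is $\Vect^{\heart}$ but $\sC^+ \neq \Vect^+$). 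Likewise, $\Av_*^{w,naive}(V)$ for $V$ compact is not an injective object of the heart (compact objects of $\sC$ are not even in the heart, and are not injective there), so the Proposition \ref{p:alg-vs-cats}-style argument does not transplant --- that proposition also crucially has target $\Vect$, not a general $\sC$. Finally, your opening claim that ``$\psi$ should restrict to an equivalence on hearts'' is essentially the statement to be proved on the renormalized side: identifying the heart of $\Ind(\sC^{H,w,c})$ (for the compactly generated $t$-structure) with naive-equivariant objects of $\sC^{\heart}$ already requires knowing $\psi$ is $t$-exact and conservative-on-coconnectives, which is the content of (2)--(3). Note also that left $t$-exactness of an ind-extension is exactly the failure mode flagged in Warning \ref{w:f_ren}, so it cannot be waved through.

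The paper's route supplies the missing mechanism: first one proves the auxiliary compactness statement that any $\sG \in \sC^{H,w,c}$, being bounded below, is compact in $\sC^{H,w,naive,\geq -N}$ (via the cosimplicial totalization computing naive invariants and uniform truncatedness of its terms); this is what lets one establish the key identifications $\Av_*^{w,naive}\Oblv \simeq \Fun(H)\star-$ \emph{and} $\Av_*^{w}\Oblv \simeq \Fun(H)\star-$ on the renormalized side, i.e.\ part (4), before (2) and (3). Left $t$-exactness of $\Oblv$ on $\sC^{H,w}$ then follows by testing against compact connective generators using $\Av_*^w\Oblv = \Fun(H)\star-$, and conservativity on coconnectives follows from the iteration $\sF[1] = \Coker(k \to \Fun(H))\star\sF$; with (2) and (4) in hand, (3) is a comonadicity statement (comonad $= \Fun(H)\star-$ on both sides), with no appeal to hearts or injectives. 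Your sketches of (2.5) and of the deduction of (4) and (3.5) from (3) are broadly fine in spirit (though for (3.5) the first tensor-up arrow being an equivalence does not follow formally from (3) plus rigidity --- the paper instead proves full faithfulness by Beck--Chevalley using the $\Fun(H)$-convolution identification and essential surjectivity from Remark \ref{r:renorm}), but as written the central step (3) does not go through under the stated hypotheses.
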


\begin{proof}

\eqref{i:gen-t-1} follows by noting 
that $\sC^{H,w,naive}$ is the totalization 
$\Tot \big(
\sC \otimes \QCoh(H)^{\otimes \dot}\big)$
and all of the structural maps in the underlying semi-cosimplicial 
diagram are $t$-exact.\footnote{Note that this argument 
is general for naive $H$-actions and compatible
$t$-structures. I.e., it is not specific to 
canonical renormalization.}

In \eqref{i:gen-t-2}, the uniqueness is clear: the $t$-
structure must have 
$\sC^{H,w,\leq 0}$ generated under colimits
by $\sC^{H,w,c} \cap \sC^{H,w,naive,\leq 0}$. 
As is standard, this does define a $t$-structure, and the
forgetful functor to $\sC$ is clearly right $t$-exact.
We will complete the proof of \eqref{i:gen-t-2}
later in the argument;
but now, we will verify that \eqref{i:gen-t-2.5}
holds for this $t$-structure (without relying on
any as yet unproved parts of \eqref{i:gen-t-2}).

For $V \in \Rep(H)^{\heart}$, we
first show that the functor
$V \star - :\sC^{H,w,naive} \to \sC^{H,w,naive}$
is $t$-exact. Here it suffices to verify this
after applying $\Oblv$. 
But $\Oblv \circ (V \star -) = V \otimes \Oblv(-)$,
which is clearly $t$-exact. 

To see $V \star -:\sC^{H,w} \to \sC^{H,w}$ is $t$-exact, 
note that we can assume $V$ is finite dimensional
(because the $t$-structure on $\sC^{H,w}$, being
compactly generated, is compatible with filtered 
colimits). By the naive case,
this functor preserves $\sC^{H,w,c} \cap \sC^{H,w,naive,\leq 0}$,
so we obtain right $t$-exactness in the genuine setting. 
Now $V \star -$ is right adjoint
to the left $t$-exact functor $V^{\vee} \star -$, giving 
the left $t$-exactness.

We now make an auxiliary observation.
Suppose $\sG \in \sC^{H,w,c} \subset \sC^{H,w,naive}$.
By assumption, $\sG$ lies in cohomological degrees $\geq -N$ for
$N \gg 0$. We claim that $\sG$ is actually compact as an
object of $\sC^{H,w,naive,\geq -N}$. Indeed,
because $\sG$ is eventually connective, the functor:

\[
\Hom_{\sC^{H,w,naive}}(\sG,-):\sC^{H,w,naive,\geq -N} \to \Gpd
\]

\noindent factors through the subcategory of $M$-truncated groupoids
for some $M \gg 0$ (depending on $N$ and $\sG$). 
Moreover, we have:

\[
\Hom_{\sC^{H,w,naive}}(\sG,-) \isom \Tot 
\Hom_{\sC \otimes \QCoh(H)^{\otimes \dot}}
(\Oblv(\sG)\boxtimes \sO_H^{\boxtimes \dot}
,-) 
\]

\noindent where each of these functors factors through $M$-truncated
groupoids. Therefore, we have $\Tot \isom \Tot^{\leq M+1}$
here, so commuting finite limits with filtered colimits in $\Gpd$
gives the claim about $\sG$.

Using this observation, we will now show 
\eqref{i:gen-t-4}. 
Note that the canonical natural transformation:

\begin{equation}\label{eq:av-conv}
\Fun(H) \star - \to \Av_*^{w,naive}\Oblv \in 
\TwoHom(\sC^{H,w,naive},\sC^{H,w,naive})
\end{equation}

\noindent is an isomorphism, where here
$\star$ denotes the action of $\Rep(H)$ on $\sC^{H,w,naive}$
and $\Fun(H)$ is the regular representation in 
$\Rep(H)^{\heart} \subset \Rep(H)$. 
Indeed, the identification\footnote{This follows
from identifying both sides with 
$\Fun(H)\mod(\sC)$ using Barr-Beck and the Beck-Chevalley
formalism.}
$\sC = \sC^{H,w,naive} \otimes_{\Rep(H)} \Vect$
and the Beck-Chevalley formalism imply this. 

We now similarly claim that there is a canonical isomorphism:

\[
\Fun(H) \star - \isom \Av_*^w\Oblv: \sC^{H,w} \to \sC^{H,w}.
\]

\noindent Both functors commute with colimits, so it suffices
to verify that for every $\sF \in \sC^{H,w,c}$,
the natural map:

\[
\Fun(H) \star \sF \to \Av_*^w\Oblv(\sF)
\]

\noindent is a natural isomorphism. Let $\sG \in \sC^{H,w,c}$ be
given. Write $\Fun(H)$ as a filtered colimit 
$\colim_i V_i$ where $V_i \in \Rep(H)^{\heart}$
are finite dimensional. We claim that:

\[
\underset{i}{\colim} \, \ul{\Hom}_{\sC^{H,w,c}}(\sG, V_i \star \sF) \isom 
\ul{\Hom}_{\sC^{H,w,naive}}(\sG, \underset{i}{\colim} \, V_i \star \sF).
\]

\noindent Indeed, because $\sF$ and $\sG$ are 
eventually coconnective, by \eqref{i:gen-t-2.5}
(in the naive case), we have
$V_i \star \sF,\sF,\sG  \in \sC^{H,w,naive,\geq -N}$
for $N \gg 0$ (and for all $i$).
Then the fact that  
$\sG$ is compact in 
$\sC^{H,w,naive,\geq -N-r}$ for all $r \geq 0$
gives the claim.

Therefore, we have:

\[
\begin{gathered}
\ul{\Hom}_{\sC^{H,w}}(\sG,\Fun(H) \star \sF) =
\ul{\Hom}_{\sC^{H,w}}(\sG,\underset{i}{\colim} \, V_i \star \sF) =
\underset{i}{\colim} \, \ul{\Hom}_{\sC^{H,w}}(\sG, V_i \star \sF) = \\
\underset{i}{\colim} \, \ul{\Hom}_{\sC^{H,w,c}}(\sG, V_i \star \sF) = 
\ul{\Hom}_{\sC^{H,w,naive}}(\sG, \underset{i}{\colim} \, V_i \star \sF) =
\ul{\Hom}_{\sC^{H,w,naive}}(\sG, \Av_*^{w,naive}(\sF)) = \\
\ul{\Hom}_{\sC}(\Oblv(\sG), \Oblv(\sF)).
\end{gathered}
\]

To complete the argument, note that
both functors $\Av_*^{w,naive}$ and $\psi\Av_*^w$
commute with colimits, so it suffices to show that our
natural transformation is an equivalence when evaluated
on compact objects in $\sC$. 
Moreover, because the naive $H$-action on
$\sC$ canonically renormalizes, it suffices
to check this on compact objects of the form $\Oblv(\sF)$
for $\sF \in \sC^{H,w,c}$. 
But then $\Av_*^{w,naive}\Oblv$ and
$\psi\Av_*^w\Oblv$ are each canonically given by the
action of $\Fun(H)$, and $\psi$ is $\Rep(H)$-linear,
giving the claim.

Returning to \eqref{i:gen-t-2},
we claiom that for $\sF \in \sC^{H,w,\geq 0}$ we have
$\Oblv(\sF) \in \sC^{\geq 0}$.
Note that $\Av_*^w\Oblv(\sF) = \Fun(H) \star \sF$ as before,
and since $\Fun(H)$ is in degree $0$, 
$\Fun(H) \star \sF$ is also in degrees $\geq 0$.
Therefore, for 
$\sG \in \sC^{H,w,c} \cap \sC^{H,w,naive,<0}$, we have:

\[
\Hom_{\sC}(\Oblv\sG,\Oblv\sF) =  
\Hom_{\sC^{H,w}}(\sG,\Av_*^w\Oblv\sF) = 0 \in \Gpd
\]

\noindent By hypothesis, $\sC^{<0}$ is generated
under colimits by such objects $\Oblv \sG$, giving the claim.

To conclude \eqref{i:gen-t-2}, we need to show that 
$\Oblv$ is conservative on eventually coconnective objects.
Suppose $\sF \in \sC^{H,w,\geq 0}$ with $\Oblv(\sF) = 0$.
Then we have: 

\[
\sF[1] = \Coker(\sF \to \Av_*^w\Oblv(\sF)) = \Coker(k \xar{1 \mapsto 1} \Fun(H))
\star \sF.
\]

\noindent By \eqref{i:gen-t-2.5}, the right hand
side is in $\sC^{H,w,\geq 0}$, so $\sF[1] \in \sC^{H,w,\geq 0}$,
so $\sF \in \sC^{H,w,\geq 1}$. Iterating this, we obtain
$\sF = 0$ as desired. 

Then \eqref{i:gen-t-3} follows from \eqref{i:gen-t-2} and
\eqref{i:gen-t-4} by observing that these results imply that the
forgetful functor $\sC^{H,w,+} \to \sC^+$ is comonadic
with comonad given by the action of $\Fun(H) \in \QCoh(H)$.

It remains to show \eqref{i:gen-t-3.5}. 
By the Beck-Chevalley formalism,
$\sC^{H,w} \xar{\sF \mapsto \sF \boxtimes_{\Rep(H)} k}
 \sC^{H,w} \otimes_{\Rep(H)} \Vect$
admits a conservative right adjoint, and the
corresponding monad on $\sC^{H,w}$ is 
the action of $\Fun(H) \in \Rep(H)$. 
Using our canonical identification of
that convolution with $\Av_*^w\Oblv$,
we obtain that the functor from \eqref{i:gen-t-3.5}
is fully-faithful. By Remark \ref{r:renorm},
it is also essentially surjective.
 
\end{proof}

\subsection{Canonical renormalization for $\IndCoh$}

We also have the following variant.

\begin{lem}\label{l:gen-indscheme}

Suppose that $X$ is an indscheme 
locally almost of finite type 
acted on by $H$. Then the naive $H$-action on
$\IndCoh(X)$ canonically renormalizes (relative
to its canonical $t$-structure).

\end{lem}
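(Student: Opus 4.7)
The plan is to verify the three defining conditions of canonical renormalization for $\sC := \IndCoh(X)$ equipped with its naive $H$-action and standard $t$-structure. Conditions (1) and (2) --- that $\sC$ and its $t$-structure are compactly generated and that compact objects are bounded --- are foundational properties of $\IndCoh$ on indschemes locally almost of finite type, as developed in Section \ref{s:indcoh}: $\sC^c = \Coh(X)$ consists of bounded complexes with coherent cohomology, and these together with their truncations compactly generate the standard $t$-structure.

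The essential content is condition (3): every $\sE \in \Coh(X) \cap \sC^{\leq 0}$ must be Karoubi-dominated by $\Oblv(\sF)$ for some $\sF \in \sC^{H, w, naive}$ with $\Oblv(\sF)$ compact in $\sC$ and in $\sC^{\leq 0}$. The strategy is reduction to the finite-dimensional case via the presentation $H = \lim_i H_i$ from \S \ref{ss:gp-inv-limit}, with normal subgroups $K_i = \Ker(H \to H_i)$. Pullback along $H \twoheadrightarrow H_i$ gives a symmetric monoidal functor $\QCoh(H_i) \to \QCoh(H)$, endowing $\sC$ by restriction with an induced $\QCoh(H_i)$-module structure, i.e., by Theorem \ref{t:weak} with a genuine $H_i$-action on the category $\sC$. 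For this finite-type induced action, canonical renormalization is standard: the main input is that $H_i$-averaging preserves coherence, since $H_i$ is of finite type and $\QCoh(H_i)$ is rigid, producing $H_i$-equivariant compact coherent lifts of $\sE$ in the essential image of the forgetful to $\sC$.

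To promote such an $H_i$-equivariant lift to a naive $H$-equivariant object, I would use that $\sC^{H, w, naive}$ can be computed as a limit (across $i$) of naive $K_i$-equivariant categories, with the successive structural functors corresponding to $K_i/K_{i+1}$-averaging for finite-dimensional quotients. Given a compact $H_i$-equivariant lift $\sF_i$ of $\sE$, one extends it compatibly up the tower by successive averaging over these finite-dimensional quotient groups, and assembles the system into an object of $\sC^{H, w, naive}$ whose forgetful Karoubi-dominates $\sE$.

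The main obstacle will be verifying that the assembled limit object has coherent forgetful and lies in $\sC^{\leq 0}$. Coherence preservation through the tower relies on the pro-smoothness of $H$ --- as a limit of smooth affine groups under smooth surjections in characteristic zero --- which ensures that each successive averaging operation between stages preserves the finite-type support of the underlying coherent sheaf rather than expanding it uncontrollably. The $t$-positivity follows from the same pro-smoothness: $\Oblv \circ \Av_*^{w, naive}$ is $t$-exact on eventually connective subcategories, so successive averaging introduces no positive-degree cohomology, keeping the limit in $\sC^{\leq 0}$.
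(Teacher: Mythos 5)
There is a genuine gap in your verification of condition (3), at two points. First, the reduction to the finite-dimensional quotients $H_i$ does not get off the ground: the action of $H$ on $X$ (hence on $\sC = \IndCoh(X)$) does not factor through $H_i$, and pullback along $H \twoheadrightarrow H_i$ is not monoidal for the convolution monoidal structures, so it does not endow $\sC$ with a $\QCoh(H_i)$-module structure. (The objects that genuinely carry $H_i$-actions are the invariant categories $\sC^{K_i,w}$, not $\sC$ itself.) Second, and more seriously, the assembly step rests on the claim that averaging preserves coherence; it does not. One has $\Oblv\Av_*^{w,naive}(\sG) \simeq \Fun(H)\star \sG$ (and $\Fun(K/K')\star-$ in the relative case), which is of infinite rank whenever the group is positive-dimensional, so ``successive averaging up the tower'' can never produce an object of $\sC^{H,w,naive}$ with coherent underlying sheaf. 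Pro-smoothness of $H$ does not repair this. Note also that your stated goal --- producing, for each coherent connective $\sE$, an equivariant object whose forgetful dominates $\sE$ --- is stronger than what condition (3) asks: not every coherent sheaf on $X$ admits (or is a retract of something admitting) an equivariant structure; one only needs the underlying sheaves of objects of $\sC^{H,w,c}\cap\sC^{H,w,naive,\leq 0}$ to Karoubi generate $\sC^c\cap\sC^{\leq 0}$.

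The paper's proof goes differently and avoids both problems. Starting from an arbitrary $\sF \in \IndCoh(X)^{H,w,naive,\heart}$ and a coherent subobject $\sG \subset \Oblv(\sF)$, it forms the equivariant subobject $\widetilde{\sG} = \sF \times_{\Av_*^w\Oblv(\sF)} \Av_*^w(\sG)$ in the abelian category $\IndCoh(X)^{H,w,naive,\heart}$; then $\Oblv(\widetilde{\sG}) \subset \sG$ is coherent because $X$ is locally almost of finite type (Noetherian), the assignment $\sG \mapsto \widetilde{\sG}$ commutes with filtered colimits of subobjects, and hence $\sF = \colim_{\sG \text{ coherent}} \widetilde{\sG}$. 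Since $\Av_*^{w,naive}$ is $t$-exact and conservative, objects $\Oblv(\sF)$ with $\sF$ in the equivariant heart generate $\IndCoh(X)^{\leq 0}$ under colimits, and the claim follows. If you want to pursue a reduction to algebraic groups, the correct mechanism is the one used in Lemma \ref{l:gps-canon-renorm} \eqref{i:gps-4}: reduce to $X$ classical of finite type, observe that the $H$-action on $X$ factors through an algebraic quotient, and use coherence of a given equivariant object to factor its coaction map through a finite level of $\Fun(H) = \colim_i \Fun(H_i)$ --- but that argument equivariantizes nothing and is not what is needed for the present lemma.
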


\begin{proof}

We use the following construction.
Suppose $\sF \in \IndCoh(X)^{H,w,naive,\heart}$
and $\sG \subset \Oblv(\sF)$ is a subobject.
Define a subobject
$\widetilde{\sG} \subset \sF \in \IndCoh(X)^{H,w,naive,\heart}$
as the fiber product (in the \emph{abelian} category
$\IndCoh(X)^{H,w,naive,\heart}$):

\[
\xymatrix{
\widetilde{\sG} = \sF \underset{\Av_*^w\Oblv(\sF)}{\times} \Av_*^w(\sG) 
\ar[r]\ar[d] & \sF \ar[d]^{\coact} \\
\Av_*^w(\sG) \ar[r] & \Av_*^w\Oblv(\sF).
}
\]

\noindent Observe that 
$\Oblv(\widetilde{\sG}) \subset \sG \subset \Oblv(\sF)$
(using the counit of the adjunction).\footnote{In 
fact, $\widetilde{\sG}$ is maximal
among subobjects of $\sF$ with this property.}

Now if $\sG$ is coherent, then $\Oblv(\widetilde{\sG})$ is
as well (because we are in a Noetherian setup). 
Moreover, the map 
$\on{SubObj}(\Oblv(\sF)) \xar{\sG \mapsto \widetilde{\sG}} \on{SubObj}(\sF)$
commutes with filtered colimits (because we are taking
fiber products in a Grothendieck
abelian category).
Therefore, we have: 

\[
\sF = \underset{\sG \subset \Oblv(\sF)\text{ coherent}}{\colim} \widetilde{\sG}.
\]

\noindent Applying $\Oblv$, we see that
$\Oblv(\sF)$ is a filtered colimit of objects
coming from $\IndCoh(X)^{H,w,c}$. Since such objects
$\Oblv(\sF)$ generate $\IndCoh(X)^{\leq 0}$ under
colimits (since $\Av_*^w$ is $t$-exact and conservative), 
this gives the claim.

\end{proof}

\subsection{Varying the group}

We record the following result for later use.

\begin{lem}\label{l:gps-canon-renorm}

Suppose $f:H_1 \to H_2$ is a morphism of classical
affine group schemes. Suppose $\sC \in \DGCat_{cont}$
is equipped with a $t$-structure 
and a compatible naive action of $H_2$ 
that renormalizes. Then:

\begin{enumerate}

\item\label{i:gps-1} The induced naive $H_1$-action also
renormalizes. 

\item\label{i:gps-2} The category:

\[
\Rep(H_1) \underset{\Rep(H_2)}{\otimes} \sC^{H_2,w}
\]

\noindent is compactly generated, and the
natural functor:

\begin{equation}\label{eq:gps-functor}
\Rep(H_1) \underset{\Rep(H_2)}{\otimes} \sC^{H_2,w} \to 
\sC^{H_1,w,naive}
\end{equation}

\noindent maps compact objects to 
objects in $\sC^{H_1,w,c}$ (with this subcategory
defined as in Proposition \ref{p:gen-t-str}).

\item\label{i:gps-3}
 
The functor:

\[
\big(\Rep(H_1) \underset{\Rep(H_2)}{\otimes} \sC^{H_2,w}\big)^c \to  
\sC^{H_1,w,c}
\]

\noindent induced by \eqref{eq:gps-functor}
is fully-faithful, so induces a fully-faithful functor:

\begin{equation}\label{eq:gps-functor-2}
\Rep(H_1) \underset{\Rep(H_2)}{\otimes} \sC^{H_2,w} \to 
\sC^{H_1,w}.
\end{equation}

\item\label{i:gps-4} If $\sC = \IndCoh(X)$ for $X$ 
locally almost of finite type and acted on by $H_2$
(as in Lemma \ref{l:gen-indscheme}), then 
\eqref{eq:gps-functor-2} is an equivalence.

\item\label{i:gps-5} If $H_1$ and $H_2$ are algebraic
groups, then \eqref{eq:gps-functor-2} is an equivalence.

\end{enumerate}

\end{lem}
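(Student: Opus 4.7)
The plan is to handle (1)--(3) by direct arguments exploiting Proposition~\ref{p:gen-t-str}, dispatch (5) via Gaitsgory's Theorem~\ref{t:weak}, and attack (4) by combining (5) with the reduction to algebraic quotients from \S\ref{ss:gp-inv-limit}.

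Part (1) is immediate: restriction of a naive $H_2$-action along $f$ yields a naive $H_1$-action with the same underlying $\Oblv$-functor, so any object of $\sC^{H_2,w,c}\cap\sC^{H_2,w,naive,\leq 0}$ restricts to an object of $\sC^{H_1,w,c}\cap\sC^{H_1,w,naive,\leq 0}$ with the same image in $\sC$, and Karoubi generation passes from $H_2$ to $H_1$. For (2), since $\Rep(H_2)$ is rigid symmetric monoidal (its compacts being dualizable finite-dimensional representations), $\Rep(H_1)\otimes_{\Rep(H_2)}\sC^{H_2,w}$ is compactly generated by objects $V\boxtimes_{\Rep(H_2)}\sF$ with $V\in\Rep(H_1)^c$ and $\sF\in\sC^{H_2,w,c}$. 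The image of such a generator under \eqref{eq:gps-functor} has underlying $\sC$-object $V\otimes\Oblv(\sF)$, a finite sum of shifts of a compact object, hence compact in $\sC$; this gives the claim about compacts. For (3), full faithfulness on compact generators is a $\ul{\Hom}$-computation: using dualizability of $V_1$ and $\Rep(H_2)$-linearity of the inner $\Hom$ in $\sC^{H_2,w}$, both sides of the comparison reduce to the same expression involving $V_1^{\vee}\otimes V_2$ and $\ul{\Hom}_{\sC^{H_2,w}}(\sF_1,\sF_2)$, viewed through $\Rep(H_2)\to\Rep(H_1)$.

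For (5), Theorem~\ref{t:weak} identifies $H_i\mod_{weak}\simeq\Rep(H_i)\mod$ via weak invariants (naive equals genuine in the finite-dimensional case), and by \S\ref{ss:gp-sch-funct} the restriction functor $\Res$ along $f$ corresponds under this equivalence to base change of module categories along $\Rep(H_2)\to\Rep(H_1)$; this is exactly the content of \eqref{eq:gps-functor-2}.

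Part (4), the main obstacle, I would handle by reducing to (5). Write $H_i=\lim_\alpha H_{i,\alpha}$ with $H_{i,\alpha}$ algebraic, and since $f$ is a morphism of classical group schemes, arrange a compatible cofinal system so that $f$ restricts to $f_\alpha:H_{1,\alpha}\to H_{2,\alpha}$ for each $\alpha$. Setting $K_{i,\alpha}=\ker(H_i\to H_{i,\alpha})$, the results of \S\ref{ss:gp-inv-limit} together with Lemma~\ref{l:reph-co/lim} give $\sC^{H_i,w}=\colim_\alpha\sC^{K_{i,\alpha},w}$ as $\Rep(H_i)$-modules. Applying (5) at each level to the algebraic groups $(H_{1,\alpha},H_{2,\alpha})$ acting on $\sC^{K_{i,\alpha},w}/(\text{kernel part})$ yields compatible equivalences
\[
\Rep(H_{1,\alpha})\underset{\Rep(H_{2,\alpha})}{\otimes}\sC^{K_{2,\alpha},w}\isom\sC^{K_{1,\alpha},w},
\]
and then commuting $\Rep(H_1)\otimes_{\Rep(H_2)}-$ past the colimit yields the desired equivalence \eqref{eq:gps-functor-2}. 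The key difficulty will be verifying that these finite-dimensional equivalences assemble compatibly with the transition maps in the colimit, which reduces to checking that the canonical renormalization of $\IndCoh(X)^{K_{i,\alpha},w}$ provided by Lemma~\ref{l:gen-indscheme} is compatible with both the colimit structure and with restriction along group maps; this should follow from the functoriality of the $\widetilde{\sG}$-construction from the proof of Lemma~\ref{l:gen-indscheme} in the pair $(\sF,\sG)$.
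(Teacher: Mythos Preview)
Your treatment of (1) and (2) matches the paper. For (5), you take a different route from the paper: you invoke Theorem~\ref{t:weak} and \S\ref{ss:gp-sch-funct} to identify both sides abstractly, whereas the paper uses the full faithfulness from (3) together with Lemma~\ref{l:recognition} (which needs only the finite cohomological dimension of $\Rep(H_1)$ for $H_1$ algebraic) to get essential surjectivity. Your approach is valid, though one must check that the specific functor \eqref{eq:gps-functor-2} coincides with the abstract base-change equivalence. For (3), your sketch is in the right direction but skips the main technical point: after reducing the left-hand Hom to $\ul{\Hom}_{\sC^{H_2,w}}(\sG,\Ind_{H_1}^{H_2}(W^\vee\otimes V)\star\sF)$ and passing via Proposition~\ref{p:gen-t-str} to the naive category, one must show that $\Ind_{H_1}^{H_2}(U)\star\sF\to\Av_*^{w,naive,H_1\to H_2}(U\star\Oblv^{H_2\to H_1}\sF)$ is an isomorphism for $U\in\Rep(H_1)^+$. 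This averaging functor need not be continuous, so the paper treats $U=\Fun(H_1)$ first, then $\Fun(H_1)\otimes Q$ for $Q\in\Vect^+$, then general $U$ via the cobar resolution with $t$-structure bounds. Your appeal to ``$\Rep(H_2)$-linearity of the inner Hom'' does not supply this.

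Your plan for (4) has a genuine gap. The assertion $\sC^{H_i,w}=\colim_\alpha\sC^{K_{i,\alpha},w}$ is incorrect: the corollary in \S\ref{ss:gp-inv-limit} gives $\colim_\alpha\sC^{K_{i,\alpha},w}\simeq\sC$, the underlying category, not its $H_i$-invariants. Your displayed equivalence $\Rep(H_{1,\alpha})\otimes_{\Rep(H_{2,\alpha})}\sC^{K_{2,\alpha},w}\simeq\sC^{K_{1,\alpha},w}$ also does not follow from (5): applying (5) to the $H_{2,\alpha}$-category $\sD=\sC^{K_{2,\alpha},w}$ would give $\Rep(H_{1,\alpha})\otimes_{\Rep(H_{2,\alpha})}\sD^{H_{2,\alpha},w}\simeq\sD^{H_{1,\alpha},w}$, and the right-hand side is the invariants of $\sC$ under $\ker(H_1\to H_{2,\alpha})$ (the preimage under $f$), not under $K_{1,\alpha}=\ker(H_1\to H_{1,\alpha})$. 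These differ unless $f_\alpha$ is a closed immersion. Even granting compatible presentations, the colimit you write does not assemble to \eqref{eq:gps-functor-2}.

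The paper's argument for (4) is instead object-by-object and uses the structure of $\IndCoh(X)$ essentially. Given $\sF\in\IndCoh(X)^{H_1,w,c}$ in the heart, one reduces via equivariant pushforward first to $X$ classical, then to $X$ a finite type scheme. On such $X$ the $H_2$-action factors through an algebraic quotient $H_2\to H_2'$; then, since $\Oblv(\sF)$ is \emph{coherent}, the coaction map $\Oblv(\sF)\to\Fun(H_1)\star\Oblv(\sF)=\colim_i\Fun(H_{1,i})\star\Oblv(\sF)$ factors through a finite stage $H_{1,i}$ (an algebraic quotient of $H_1$ over $H_2'$). One now applies (5) to the algebraic pair $(H_{1,i},H_2')$ acting on $X$ and transports back. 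The coherence of $\sF$ is what allows this finitization; a general colimit argument cannot see it.
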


\begin{proof}

\eqref{i:gps-1} is immediate from the definitions.

In \eqref{i:gps-2}, note that all the categories
appearing in the construction computing the
tensor product 
$\Rep(H_1) \otimes_{\Rep(H_2)} \sC^{H_2,w}$
are compactly generated, and each of the
functors in the underlying semisimplicial
diagram preserve compact objects. 
Therefore, this tensor product is compactly
generated by objects of the form
$V \boxtimes_{\Rep(H_2)} \sF$ for
$V \in \Rep(H_1)^c$ and $\sF \in \sC^{H_2,w,c}$.
Moreover, the functor \eqref{eq:gps-functor} sends
this object to $V \star \Oblv(\sF)$,
where $\Oblv$ denotes the functor 
$\sC^{H_2,w,naive} \to \sC^{H_1,w,naive}$
and $V$ denotes the action of $\Rep(H_1)$
on $\sC^{H_1,w,naive}$. Clearly this
object lies in $\sC^{H_1,w,c}$, giving the claim.

For \eqref{i:gps-3},
let $\on{Ind}_{H_1}^{H_2}:\Rep(H_1) \to \Rep(H_2)$
denote the (continuous) right adjoint to the
restriction functor. 
Because $\on{Ind}_{H_1}^{H_2}$
is a morphism of $\Rep(H_2)$-module categories
(by rigid monoidality of $\Rep(H_2)$),
we see that 
$\on{Ind}_{H_1}^{H_2}  \underset{\Rep(H_2)}{\otimes}
\id_{\sC^{H_2,w}}$ is right adjoint to the
functor $(\sF \in \sC^{H_2,w}) \mapsto 
k \boxtimes_{\Rep(H_2)} \sF$ (for $k$ the trivial
representation).

Now let $\sF,\sG \in \sC^{H_2,w}$ and let
$V,W \in \Rep(H_1)$ be given 
with $W \in \Rep(H_1)^c$. 
By the above, we have:

\[
\begin{gathered}
\ul{\Hom}_{\Rep(H_1) \underset{\Rep(H_2)}{\otimes}
\sC^{H_2,w}}
(W \underset{\Rep(H_2)}{\boxtimes} \sG,
 V \underset{\Rep(H_2)}{\boxtimes} \sF) = \\
 \ul{\Hom}_{\Rep(H_1) \underset{\Rep(H_2)}{\otimes}
\sC^{H_2,w}}
(k \underset{\Rep(H_2)}{\boxtimes} \sG,
 (W^{\vee} \otimes V) \underset{\Rep(H_2)}{\boxtimes} \sF) = \\
\ul{\Hom}_{\sC^{H_2,w}}(\sG,
\on{Ind}_{H_1}^{H_2}(W^{\vee} \otimes V) \star \sF). 
\end{gathered}
\]

\noindent Now if we assume 
$\sF,\sG \in \sC^{H_2,w,+}$ and 
$V \in \Rep(H_1)^+$, then by
Proposition \ref{p:gen-t-str} the above 
$\ul{\Hom}$ maps
isomorphically (via the functor $\psi$ from
\emph{loc. cit}.) onto:

\begin{equation}\label{eq:h1-h2-hom}
\ul{\Hom}_{\sC^{H_2,w,naive}}(\sG,
\on{Ind}_{H_1}^{H_2}(W^{\vee} \otimes V) \star \sF).
\end{equation}

Before calculating this
term further, let $\Oblv^{H_2 \to H_1}:\sC^{H_2,w,naive} \to
\sC^{H_1,w,naive}$ be the restriction functor
and let
$\Av_*^{w,naive,H_1 \to H_2}:\sC^{H_1,w,naive} \to 
\sC^{H_2,w,naive}$ denote its right adjoint. 
Note that although this latter functor
may not commute with colimits,\footnote{For 
example, if $H_2$ is a point
and $H_1 = \prod_{i=1}^{\infty} \bG_a$.} 
its restriction to $\sC^{H_2,w,naive,\geq 0}$
commutes with filtered colimits.\footnote{Indeed, 
compact generation of
$\sC^{H_2,w}$ implies $\sC^{H_2,w,\geq 0} \isom
\sC^{H_2,w,naive,\geq 0}$ is compactly generated
by $\tau^{\geq 0}(\sC^{H_2,w,c})$; these
clearly map into $\tau^{\geq 0}(\sC^{H_1,w,c})$
under $\Oblv$. So $\Oblv:\sC^{H_2,w,\geq 0} \to 
\sC^{H_1,w,\geq 0}$ preserves compacts, so
its right adjoint preserves filtered colimits.}
We claim that for any $U \in \Rep(H_1)^+$,
the natural map:

\[
\Ind_{H_1}^{H_2}(U) \star \sF \to 
\Av_*^{w,naive,H_1 \to H_2}\big(U \star 
\Oblv^{H_2\to H_1}
(\sF)\big)
\]

\noindent is an isomorphism. 
First, if $U$ is the regular representation,
this follows from the identification
$\Av_*^{w,naive}\Oblv = \Fun(H) \star -$ from
\eqref{eq:av-conv}. If $U = \Fun(H) \otimes Q$
for $Q \in \Vect^+$, then the claim follows
from commutation the fact that
$\Av_*^{w,naive,H_1 \to H_2}$ commutes with
colimits bounded uniformly from below.
Finally, general $U \in \Rep(H_1)^+$ follows
using the cobar resolution for $U$, using
the $t$-structure to justify commuting 
the totalization with various functors.\footnote{More
precisely, any truncation $\tau^{\leq N}$ applied
to this totalization coincides with a suitable
finite totalization.}

Applying this to $U = W^{\vee} \otimes V$
from above, we calculate \eqref{eq:h1-h2-hom}
as:

\[
\begin{gathered}
\ul{\Hom}_{\sC^{H_2,w,naive}}(\sG,
\on{Ind}_{H_1}^{H_2}(W^{\vee} \otimes V) \star \sF) = \\
\ul{\Hom}_{\sC^{H_2,w,naive}}(\sG,
\Av_*^{w,naive,H_1 \to H_2}\big(W^{\vee} \otimes V 
\star \Oblv^{H_2\to H_1}
(\sF)) = \\
\ul{\Hom}_{\sC^{H_1,w,naive}}(\Oblv^{H_2 \to H_1}(\sG),
(W^{\vee} \otimes V)
\star \Oblv^{H_2\to H_1}(\sF)) = \\
\ul{\Hom}_{\sC^{H_2,w,naive}}(W \star 
\Oblv^{H_2 \to H_1}(\sG),
V \star \Oblv^{H_2\to H_1}(\sF)). 
\end{gathered}
\]

\noindent As the left hand side of 
\eqref{eq:gps-functor} is compactly generated
by objects of the form $V \boxtimes_{\Rep(H_2)} \sF$
for $\sF \in \sC^{H_2,w,c}$ and $V \in \Rep(H_2)^c$,
this gives fully-faithfulness of
\eqref{eq:gps-functor} when restricted
to compact objects. 

Next, \eqref{i:gps-5} follows from Lemma \ref{l:recognition}
(applied to $H = H_1$ and $\sD$ the essential image
of \eqref{eq:gps-functor-2}).

Finally, we show \eqref{i:gps-4}. 
It suffices to show that any object
$\sF \in \IndCoh(X)^{H_1,w,c}$ lies in the essential
image of \eqref{eq:gps-functor-2}. Moreover,
we can assume $\sF$ lies in the heart of the
$t$-structure. 

In the course of reductions, we use the
following (simple) observation repeatedly:
if $f:Y \to X$ is an equivariant
map of locally almost of finite type indschemes 
acted on by $H_2$, and $\sF$ is of the form 
$f_*^{\IndCoh}(\sG)$ for some $\sG \in \IndCoh(Y)^{H_2,w}$
such that $\sG$ lies in the essential image of:

\[
\Rep(H_1) \underset{\Rep(H_2)}{\otimes} \IndCoh(Y)^{H_2,w} \to 
\IndCoh(Y)^{H_1,w}
\]

\noindent then $\sF$ lies in the essential image of
the functor:

\[
\Rep(H_1) \underset{\Rep(H_2)}{\otimes} \IndCoh(X)^{H_2,w} \to 
\IndCoh(X)^{H_1,w}.
\]

Because $\IndCoh(X)^{\heart} = \IndCoh(X^{cl})^{\heart}$
and similarly for equivariant categories, we may
assume (by the above) that $X$ is classical. 
Then $X$ is a colimit
$X = \colim X_i$ under closed 
embeddings of finite type classical 
schemes acted on by $H_2$. Therefore, applying the
above reduction technique again, 
we may assume $X$ is a classical scheme of finite type.

Now observe that there exists a map $H_2 \to H_2^{\prime}$
of group schemes with $H_2^{\prime}$ an affine algebraic group
and such that $H_2$ acts on $X$ through $H_2^{\prime}$.
We claim that there is a commutative diagram:

\[
\xymatrix{
H_1 \ar[r] \ar[d] & H_1^{\prime} \ar[d] \\
H_2 \ar[r] & H_2^{\prime}
}
\]

\noindent with 
$H_1^{\prime}$ again an affine algebraic
group and such that the $H_1$-equivariant structure
on $\sF$ comes from an $H_1^{\prime}$-equivariant
structure. Indeed, we can write 
$H_1 = \lim_i H_{1,i}$ where each $H_{1,i}$ an affine
algebraic group over $H_2^{\prime}$ (so in particular,
it acts on $X$). 
Then the $H_1$-equivariant structure on $\sF$ is
encoded by the coaction map:

\[
\Oblv(\sF) \to \Fun(H_1) \star \Oblv(\sF) \in \IndCoh(X)^{\heart}.
\]

\noindent The right hand side is
$\colim_i \Fun(H_{1,i}) \star \Oblv(\sF)$, and because
$\sF$ is coherent, the above map factors through
$\Fun(H_{1,i}) \star \Oblv(\sF)$ for some $i$. 
Since we are in a 
1-categorical context here, it suffices to take
$H_1^{\prime} = H_{1,i}$ for such $i$. 

We then have a commutative diagram:

\[
\xymatrix{
\Rep(H_1^{\prime}) \underset{\Rep(H_2^{\prime})}{\otimes} \IndCoh(X)^{H_2^{\prime},w} \ar[rr] \ar[d] & & 
\IndCoh(X)^{H_1^{\prime},w} \ar[d] \\ 
\Rep(H_1) \underset{\Rep(H_2)}{\otimes} \IndCoh(X)^{H_2,w} \ar[rr] & & 
\IndCoh(X)^{H_1,w}.
}
\]

\noindent We have lifted $\sF$ along the right vertical
arrow. By \eqref{i:gps-5}, the top arrow is an equivalence.
Therefore, $\sF$ lies in the essential image of the
bottom functor, giving the result. 

\end{proof}

In the course of the above, we appealed to the
following result.

\begin{lem}\label{l:recognition}

Let $H$ be an affine algebraic group acting weakly\footnote{As
$H$ is finite type, this simply means $\sC$ is a $\QCoh(H)$-module
category. 

Similarly, when we refer to weak invariants in
this lemma, this is what we would call \emph{naive} weak invariants
in an infinite type setting. I.e., we are forming these
weak invariants without any canonical renormalization or
any such.} 
on $\sC \in \DGCat_{cont}$. 
Suppose $\sD \subset \sC^{H,w}$ be a DG subcategory such that: 

\begin{itemize}

\item $\sD$ is closed under colimits and the $\Rep(H)$-action.

\item $\sD$ is compactly generated and the inclusion 
$\sD \into \sC^{H,w}$ preserves compact objects.

\item The composite $\sD \into \sC^{H,w} \xar{\Oblv} \sC$ generates
$\sC$ under colimits. 

\end{itemize}

Then $\sD = \sC^{H,w}$.

\end{lem}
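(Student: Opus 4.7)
First I would observe that the inclusion $\iota\colon \sD \hookrightarrow \sC^{H,w}$ is continuous, fully faithful, $\Rep(H)$-linear, and preserves compact objects, so it admits a continuous right adjoint $R\colon \sC^{H,w}\to \sD$; by rigidity of $\Rep(H)$, this $R$ is automatically $\Rep(H)$-linear, and $R\circ\iota \simeq \id_{\sD}$ by full faithfulness of $\iota$. The plan is to transport this adjunction along the equivalence of Theorem \ref{t:weak} and then check that the resulting functor is essentially surjective on objects.

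Let $\Phi\colon \Rep(H)\mod \isom H\mod_{weak}$ denote the inverse of the equivalence $\sE \mapsto \sE^{H,w}$, so that $\Phi(\sC^{H,w}) \simeq \sC$ as an $H$-module category. Because $\Phi$ is an equivalence of $2$-categories, applying it to $\iota \dashv R$ yields a continuous adjunction $\Phi(\iota) \dashv \Phi(R)$ between $\Phi(\sD)$ and $\sC$ with $\Phi(R)\circ\Phi(\iota)\simeq\id_{\Phi(\sD)}$, so that $\Phi(\iota)$ is fully faithful. A key observation is that the construction of $\Phi$ intertwines the canonical ``descent'' map $\sE \to \Phi(\sE)$ with the forgetful functor $\Oblv\colon \sC^{H,w}\to \sC$ at $\sE = \sC^{H,w}$; concretely, $\Phi$ is computed on underlying DG categories as $-\otimes_{\Rep(H)} \Vect$, and $\Oblv$ becomes the canonical map $\sC^{H,w} \to \sC^{H,w} \otimes_{\Rep(H)} \Vect$. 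Consequently, the composite $\sD \to \Phi(\sD) \xrightarrow{\Phi(\iota)} \sC$ is identified with $\Oblv\circ \iota$.

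Since $\Phi(\iota)$ is fully faithful and continuous, its essential image is closed under colimits; by the identification just made, it contains the image of $\Oblv\circ\iota$, which by the third hypothesis generates $\sC$ under colimits. Hence $\Phi(\iota)$ is essentially surjective, and therefore an equivalence. Finally, since $\Phi$ is an equivalence of $2$-categories it reflects equivalences of $1$-morphisms, so $\iota$ itself is an equivalence, i.e., $\sD = \sC^{H,w}$. I expect the main subtlety to lie in carefully invoking the compatibility of $\Phi$ with $\Oblv$, which is an unwinding of the proof of Theorem \ref{t:weak}; the remaining steps are formal consequences of that theorem together with rigidity of $\Rep(H)$.
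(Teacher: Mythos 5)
Your proposal is correct, but it follows a genuinely different route from the paper. The paper's argument stays inside $\sC^{H,w}$: using the identification $\Av_*^w\Oblv \simeq \Fun(H)\star -$ from the proof of Proposition \ref{p:gen-t-str}, it first shows $\Av_*^w(\sC) \subset \sD$ (since $\Oblv|_{\sD}$ generates $\sC$ and $\sD$ is closed under colimits and the $\Rep(H)$-action), and then uses Lemma \ref{l:cosimp-summand} together with the finite cohomological dimension of $\Rep(H)$ (finite type, characteristic $0$) to exhibit the unit $k \in \Rep(H)$, hence every $\sF = k\star\sF \in \sC^{H,w}$, as a direct summand of a \emph{finite} totalization of objects of the form $\Av_*^w(\ldots)$, all of which lie in $\sD$. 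Notably, that argument never uses the hypotheses that $\sD$ is compactly generated and that the inclusion preserves compacts. You instead use exactly those hypotheses to produce a continuous right adjoint $R$ to $\iota:\sD \into \sC^{H,w}$, upgrade the adjunction to $\Rep(H)\mod$ via rigidity of $\Rep(H)$, and then transport through the inverse of Theorem \ref{t:weak}, computed as $-\otimes_{\Rep(H)}\Vect$, using the standard compatibility identifying $\Oblv$ with $\sF \mapsto \sF\boxtimes k$ under $\sC^{H,w}\otimes_{\Rep(H)}\Vect \simeq \sC$ (a compatibility the paper itself invokes in \S\ref{ss:gp-inv-limit} and in the proof of Proposition \ref{p:gen-t-str}); full faithfulness survives the transport because the unit of the adjunction is invertible and tensoring preserves adjunction data, and essential surjectivity follows from the generation hypothesis. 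Both routes are valid and both ultimately lean on characteristic $0$ and finiteness of $H$ (you through compactness of the unit/rigidity of $\QCoh(\bB H)$, the paper through finite cohomological dimension); your argument is softer and avoids the explicit retract construction, while the paper's is more self-contained, avoids invoking the inverse to Theorem \ref{t:weak}, and proves a slightly stronger statement in that the compact-generation hypotheses are not needed.
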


\begin{proof}

We will repeatedly use the fact shown in the course of the proof of
Proposition \ref{p:gen-t-str} that
$\Av_*^w\Oblv = \Fun(H) \star -$ (as endofunctors on $\sC^{H,w}$).

First, note that for $\sG \in \sC$, $\Av_*^w(\sG) \in \sD$.
Indeed, since the functor $\Oblv|_{\sD}:\sD \to \sC$
generates under colimits, it suffices to see that
$\Oblv\Av_*^w:\sC^{H,w} \to \sC^{H,w}$ maps $\sD$ into itself.
But this functor is given by the action of the
regular representation in $\Rep(H)$, so by 
assumption preserves $\sD$. 

Now let $k \in \Rep(H)$ denote the trivial representation.
By Lemma \ref{l:cosimp-summand}, we have:

\[
k \isom \tau^{\leq n}
\Tot^{\leq n+1} (\Av_*^w\Oblv)^{\dot +1}(k)
\]

\noindent for all $n \geq 0$. Because $H$ is finite type
and we are in characteristic $0$, $\Rep(H)$ has
finite cohomological dimension. Therefore, for $n \gg 0$,
the boundary map:

\[
\tau^{>n}
\Tot^{\leq n+1} (\Av_*^w\Oblv)^{\dot +1}(k) \to k[1] \in \Rep(H)
\]

\noindent is nullhomotopic. Therefore, $k$ is a direct summand
of $\Tot^{\leq n+1} (\Av_*^w\Oblv)^{\dot +1}(k)$ for $n \gg 0$.

Now for $\sF \in \sC^{H,w}$, we have $\sF = k \star \sF$
(for $\star$ denoting the action of $\Rep(H)$), which
implies that $\sF$ is a direct summand of:

\[
\Tot^{\leq n+1} (\Av_*^w\Oblv)^{\dot +1}(\sF).
\]

\noindent By the above, this object lies in $\sD$,
so $\sF$ does as well.

\end{proof}

\section{Ind-coherent sheaves on some infinite dimensional spaces}\label{s:indcoh}

\subsection{}

This section, wedged as it is between \S \ref{s:weak-pro} and
\S \ref{s:weak-ind}, is an extended digression. 

To orient the reader, we provide a somewhat extended introduction.

\subsection{}

First, let us explain the role this material plays in 
\S \ref{s:weak-ind}. 

In \emph{loc. cit}., a certain monoidal category denoted
$\IndCoh_{ren}^*(K\backslash H/K)$ plays a key role,
where $H$ a Tate group indscheme and $K \subset H$ is a compact open
subgroup (see \emph{loc. cit}. for the terminology). 

The definition of this category is not hard: 
$\IndCoh_{ren}^*(K\backslash H/K)$ is compactly generated,
and compact objects are objects of $\IndCoh(H/K)^{K,w,naive}$ that map
to $\Coh(H/K) \subset \IndCoh(K\backslash H/K)$. However, this
description breaks symmetry, so the monoidal structure is not
so evident. This problem becomes compounded when we 
try to compare these categories for different
compact open subgroups $K$, or different groups $H$, and so on. 

Therefore, the ultimate goal of this section is to introduce
a class of prestacks we call \emph{renormalizable}, which includes
prestacks of the form $K\backslash H/K$, and for which there
is a robust theory of ind-coherent sheaves (denoted
$\IndCoh_{ren}^*$). As the above description indicates, 
the most important application of this
material in \S \ref{s:weak-ind} is to resolve some homotopy coherence
issues.

\subsection{}

In the above example, we could avoid breaking the symmetry by 
regarding $\IndCoh_{ren}^*(K\backslash H/K)$ as
$K \times K$-equivariant ind-coherent sheaves on $H$. However,
$H$ is of ind-infinite type, so is outside the usual framework
of \cite{indcoh} and \cite{grbook}. Therefore, we first develop
$\IndCoh^*$ on schemes (possibly of infinite type, but qcqs and eventually
coconnective) and \emph{reasonable} indschemes (see \S \ref{ss:indsch}
for the definition).

\subsection{Definition for schemes}

Let $\presup{>-\infty}{\Sch}_{qcqs}$ denote
the category of quasi-compact quasi-separated
eventually coconnective schemes. 

For $S \in \presup{>-\infty}{\Sch}_{qcqs}$,
we define $\Coh(S) \subset \QCoh(S)$ as the full
subcategory of objects $\sF \in \QCoh(S)^+$ 
such that $\sF \in \QCoh(S)^{\geq -N}$ implies
$\sF$ is compact in $\QCoh(S)^{\geq -N}$. 
We define $\IndCoh^*(S)$ as
$\Ind(\Coh(S))$, and we define
a $t$-structure on $\IndCoh^*(S)$ by
taking connective objects to be generated
under colimits by $\Coh(S) \cap \QCoh(S)^{\leq 0}$.
Note that there is a canonical continuous functor
$\Psi = \Psi_S:\IndCoh^*(S) \to \QCoh(S)$ 
ind-extending the embedding $\Coh(S) \into \QCoh(S)$.

\begin{lem}\label{l:psi}

Under the above hypotheses, the functor 
$\Psi:\IndCoh^*(S) \to \QCoh(S)$ is
$t$-exact and an equivalence on eventually coconnective
subcategories.

\end{lem}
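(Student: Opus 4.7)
Right $t$-exactness of $\Psi$ is tautological from the definitions: the subcategory $\IndCoh^*(S)^{\leq 0}$ is generated under colimits by $\Coh(S) \cap \QCoh(S)^{\leq 0}$, which $\Psi$ sends into $\QCoh(S)^{\leq 0}$, and $\Psi$ is continuous. I will deduce left $t$-exactness as a consequence of the equivalence on bounded below subcategories, using the evident inclusion $\IndCoh^*(S)^{\geq 0} \subset \IndCoh^*(S)^+$.

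The main work is to establish the equivalence $\Psi^+ : \IndCoh^*(S)^+ \isom \QCoh(S)^+$. My plan is to prove it level by level: for each $N \geq 0$, I will show that $\Psi$ restricts to an equivalence $\IndCoh^*(S)^{\geq -N} \isom \QCoh(S)^{\geq -N}$, and then take the union. On the $\IndCoh^*$ side, $\IndCoh^*(S)^{\geq -N}$ is compactly generated by $\Coh(S) \cap \QCoh(S)^{\geq -N}$: the $t$-structure on $\IndCoh^*(S)$ is compactly generated by construction, and the generators of $\IndCoh^*(S)^{\leq 0}$ are themselves compact in $\IndCoh^*(S)$.

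The parallel assertion on the $\QCoh$ side is the main technical input and the principal obstacle: namely, $\QCoh(S)^{\geq -N}$ is compactly generated by $\Coh(S) \cap \QCoh(S)^{\geq -N}$, and each such object is actually compact in $\QCoh(S)^{\geq -N}$. The latter is literally the defining property of $\Coh(S)$. For compact generation, I would reduce to the finite-type case by Noetherian approximation: $S$ qcqs and eventually coconnective can be written (after truncation to a classical scheme) as a cofiltered limit of finite-type schemes under affine transition maps, and this reduces the claim to the standard theory of $\IndCoh$ of \cite{indcoh}. One must take some care because we do not assume $S$ is Noetherian, so the intrinsic definition of $\Coh(S)$ given in the paper must be used throughout rather than any classical notion of coherent $\sO_S$-module.

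Granted both compact generation statements, the functor $\Psi|_{\IndCoh^*(S)^{\geq -N}}$ is the ind-extension of the identity on their common subcategory $\Coh(S) \cap \QCoh(S)^{\geq -N}$ of compact objects, hence an equivalence of compactly generated DG categories. Passing to the union over $N$ yields the desired equivalence $\Psi^+$, which is in particular $t$-exact; combining this with $\IndCoh^*(S)^{\geq 0} \subset \IndCoh^*(S)^+$ gives $\Psi(\IndCoh^*(S)^{\geq 0}) \subset \QCoh(S)^{\geq 0}$, completing left $t$-exactness.
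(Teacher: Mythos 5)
There is a genuine gap, and it sits exactly at the step you label "the main technical input." Both of your compact-generation claims — that $\IndCoh^*(S)^{\geq -N}$ and $\QCoh(S)^{\geq -N}$ are compactly generated by $\Coh(S) \cap \QCoh(S)^{\geq -N}$ — are unsubstantiated, and in the non-Noetherian setting they are precisely where the difficulty of the lemma lives. The natural compact generators of these coconnective slices are truncations $\tau^{\geq -N}\sF$ with $\sF \in \Coh(S)$ (resp.\ $\tau^{\geq -N}P$ with $P \in \Perf(S)$), and over a non-coherent base such truncations need \emph{not} be coherent: $\Coh(S)$ in the sense of this paper is not closed under truncations (compare Lemma \ref{l:ren-coh-trun} and the remark after Example \ref{e:pro-algebras}, where closure under truncation is singled out as a special feature, not an automatic one; e.g. for a classical ring $R$ and $f \in R$ with non-finitely-generated annihilator, $\tau^{\geq 0}\on{cofib}(R \xrightarrow{f} R) = R/f$ is a truncation of a perfect object that is not almost compact). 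So the justification you give on the $\IndCoh^*$ side (compact generation of the $t$-structure plus compactness of the generators) does not yield generation of the slice by \emph{coherent} coconnective objects, and on the $\QCoh$ side the appeal to Noetherian approximation is not carried out: the parenthetical "after truncation to a classical scheme" is not a legitimate reduction ($\QCoh$, $\Coh$ and $\Psi$ all change under truncation of $S$), the transition/projection maps in such a presentation are not flat, and transporting almost-compactness through the limit would itself require an argument of roughly the same difficulty as the lemma. There is also a circularity in your framing: asserting that $\Psi$ "restricts to" a functor $\IndCoh^*(S)^{\geq -N} \to \QCoh(S)^{\geq -N}$ already presupposes left $t$-exactness, which you propose to deduce only afterwards.

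For contrast, the paper's proof never needs coherent objects to generate the coconnective slices. It first proves left $t$-exactness directly: for $\sG \in \Perf(S)$ one has $\ul{\Hom}_{\QCoh(S)}(\sG,\Psi(-)) \simeq \ul{\Hom}_{\IndCoh^*(S)}(\sG,-)$, and connective perfect objects generate $\QCoh(S)^{\leq 0}$ under colimits (a standard fact for qcqs schemes), so $\Psi$ of a coconnective object is coconnective. Full faithfulness on $\IndCoh^*(S)^+$ is then reduced to perfect objects via the approximation: any $\sF \in \Coh(S)$ admits $\sF' \in \Perf(S)$ with $\tau^{\geq 0}\sF' \isom \tau^{\geq 0}\sF$. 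Essential surjectivity uses that $\QCoh(S)^{\geq 0}$ is generated under colimits by $\tau^{\geq 0}(\Perf(S))$, which lies in the image by $t$-exactness — crucially, these truncations are used as targets of $\Psi$, with no claim that they are coherent. If you want to salvage your level-by-level strategy, you would have to replace "generated by $\Coh(S) \cap \QCoh(S)^{\geq -N}$" with "generated by $\tau^{\geq -N}(\Coh(S))$" and then compare truncations on the two sides, which again forces you to prove $t$-exactness of $\Psi$ first — i.e., you are pushed back toward the paper's argument.
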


\begin{proof}

Clearly $\Psi$ is right $t$-exact.

Let $\sG \in \Perf(S) \subset \Coh(S)$ 
be given. Then the functors:

\[
\ul{\Hom}_{\QCoh(S)}(\sG,\Psi(-)), 
\ul{\Hom}_{\IndCoh^*(S)}(\sG,-):\IndCoh^*(S) \to \Vect
\]

\noindent are canonically isomorphism 
as both commute with
colimits and the restriction of the two to
$\Coh(S)$ are clearly equal.

Therefore, for $\sF \in \IndCoh^*(S)^{>0}$
and $\sG \in \Perf(S) \cap \QCoh(S)^{\leq 0}$,
we have:

\[
\Hom_{\QCoh(S)}(\sG,\Psi(\sF)) = 
\Hom_{\IndCoh^*(S)}(\sG,\sF) = 0.
\]

\noindent As $\QCoh(S)^{\leq 0}$ is generated
under colimits by such $\sG$,
this implies $\Psi(\sF) \in \IndCoh^*(S)^{>0}$,
giving the $t$-exactness.

Now take $\sF \in \Coh(S)$. We claim that the natural
transformation:

\[
\Hom_{\IndCoh^*(S)}(\sF,\tau^{\geq 0}(-)) \to 
\Hom_{\QCoh(S)}(\sF,\Psi(\tau^{\geq 0}(-))
\]

\noindent of functors $\IndCoh^*(S) \to \Gpd$
is an isomorphism. 
Indeed, recall that
there exists $\sF^{\prime} \in \Perf(S)$
and a map $\sF^{\prime} \to \sF$ inducing
an isomorphism on $\tau^{\geq 0}$; therefore,
we may assume $\sF \in \Perf(S)$, and
the result follows from the
(evident) identity:

\[
\Hom_{\QCoh(S)}(\sF,\Psi(-)) \isom 
\Hom_{\IndCoh^*(S)}(\sF,-)
\]

\noindent for such perfect $\sF$. 

We immediately obtain that for any
$\sF \in \IndCoh^*(S)$, the natural transformation:

\[
\Hom_{\IndCoh^*(S)}(\sF,\tau^{\geq 0}(-)) \to 
\Hom_{\QCoh(S)}(\sF,\Psi(\tau^{\geq 0}(-))
\]

\noindent is an isomorphism. Clearly this
is equivalent to fully-faithfulness
of $\Psi|_{\IndCoh^*(S)^+}$.

As $\QCoh(S)^{\geq 0}$ is generated under
colimits by $\tau^{\geq 0}(\Perf(S))$
and this category is in the essential
image of $\Psi$ by $t$-exactness,
we obtain that $\Psi$ induces
an equivalence on coconnective objects
as desired.

\end{proof}

\begin{rem}

The notation $\IndCoh^*$ is parallel to similar notation
from \cite{dmod} and is used to emphasize the differences
between the Noetherian and non-Noetherian situations.
Note that one can dualize to obtain a theory $\IndCoh^!$
parallel to $D^!$ from \emph{loc. cit}. Because
$\IndCoh$ is canonically self-dual on indschemes locally almost
of finite type, we 
do not include superscripts when working with such objects
(since our theory manifestly 
recovers that of \cite{grbook} in this case).

\end{rem}

\subsection{}

For $f:S \to T$ in $\presup{>-\infty}{\Sch}_{qcqs}$,
define $f_*^{\IndCoh}:\IndCoh^*(S) \to \IndCoh^*(T) \in \DGCat_{cont}$
to be the unique left $t$-exact (continuous DG) 
fitting into a commutative diagram:

\[
\xymatrix{
\IndCoh^*(S) \ar[r]^{f_*^{\IndCoh}} \ar[d]^{\Psi} & \IndCoh^*(T) \ar[d]^{\Psi} \\
\QCoh(S) \ar[r]^{f_*} & \QCoh(T).
}
\]

As in \cite{indcoh} Proposition 3.2.4, this construction canonically
upgrades to a functor $\presup{>-\infty}{\Sch}_{qcqs} \to \DGCat_{cont}$.

\begin{rem}

For $f$ affine, $f_*^{\IndCoh}$ is $t$-exact.

\end{rem}

\begin{notation}

For $f$ the projection map $S \to \Spec(k)$, we use the notation 
$\Gamma^{\IndCoh}(S,-):\IndCoh^*(S) \to \Vect$ for the corresponding pushforward
functor (and similarly for the more general $S$ considered later in this section).

\end{notation}

\subsection{}\label{ss:flat}

Next, we discuss behavior with respect to flat morphisms.

\begin{lem}

Suppose $f:S \to T \in \presup{>-\infty}{\Sch}_{qcqs}$ is flat.
Then $f_*^{\IndCoh}$ admits a left adjoint.

\end{lem}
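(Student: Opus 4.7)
The plan is to construct the left adjoint explicitly as the ind-extension of flat pullback on coherent sheaves, then verify the adjunction using Lemma \ref{l:psi} and continuity.

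First I would verify that $f^*:\QCoh(T) \to \QCoh(S)$ carries $\Coh(T)$ into $\Coh(S)$. Since $f$ is flat, $f^*$ is $t$-exact, so for $\sG \in \Coh(T) \subset \QCoh(T)^+$ we have $f^*\sG \in \QCoh(S)^+$, and moreover $f^*\sG \in \QCoh(S)^{\geq -N}$ if and only if $\sG \in \QCoh(T)^{\geq -N}$. To check compactness of $f^*\sG$ in $\QCoh(S)^{\geq -N}$, I would use that $f_*:\QCoh(S) \to \QCoh(T)$ is continuous (by qcqs-ness) and left $t$-exact (as right adjoint to the right $t$-exact $f^*$), hence restricts to a functor $\QCoh(S)^{\geq -N} \to \QCoh(T)^{\geq -N}$ commuting with filtered colimits. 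Then for a filtered system $\sF_i$ in $\QCoh(S)^{\geq -N}$, the chain
\[
\Hom_{\QCoh(S)}(f^*\sG,\colim_i \sF_i) = \Hom_{\QCoh(T)}(\sG, \colim_i f_*\sF_i) = \colim_i \Hom_{\QCoh(T)}(\sG, f_*\sF_i)
\]
uses the $(f^*,f_*)$ adjunction and the assumed compactness of $\sG$ in $\QCoh(T)^{\geq -N}$.

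Having established $f^*(\Coh(T)) \subset \Coh(S)$, I would define $f^{\IndCoh,*}:\IndCoh^*(T) \to \IndCoh^*(S)$ by ind-extending this restriction; this is automatic from the universal property of $\Ind$, and the result is continuous and preserves compact objects. It remains to show $f^{\IndCoh,*} \dashv f_*^{\IndCoh}$. Both $\Hom_{\IndCoh^*(S)}(f^{\IndCoh,*}\sG,-)$ and $\Hom_{\IndCoh^*(T)}(\sG,f_*^{\IndCoh}(-))$ are continuous functors $\IndCoh^*(S) \to \Gpd$ for $\sG \in \Coh(T)$ (the former because $f^{\IndCoh,*}\sG \in \Coh(S)$ is compact, the latter because $\sG$ is compact and $f_*^{\IndCoh}$ is continuous), so it is enough to identify them on the compact generators $\Coh(S) \subset \IndCoh^*(S)^+$.

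For $\sF \in \IndCoh^*(S)^+$ (in particular $f_*^{\IndCoh}\sF \in \IndCoh^*(T)^+$ by left $t$-exactness), Lemma \ref{l:psi} yields
\[
\Hom_{\IndCoh^*(S)}(f^{\IndCoh,*}\sG,\sF) = \Hom_{\QCoh(S)}(f^*\sG,\Psi\sF) = \Hom_{\QCoh(T)}(\sG, f_*\Psi\sF),
\]
and the defining square $\Psi \circ f_*^{\IndCoh} = f_* \circ \Psi$ together with another application of Lemma \ref{l:psi} identifies this with $\Hom_{\IndCoh^*(T)}(\sG,f_*^{\IndCoh}\sF)$ naturally in $\sG$ and $\sF$. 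The main obstacle is really the first step: verifying that flat pullback of a coherent sheaf remains coherent in the absence of Noetherian hypotheses, which is where flatness (to get $t$-exactness of $f^*$) and the qcqs hypothesis on $f$ (to get continuity and left $t$-exactness of $f_*$) are both essential.
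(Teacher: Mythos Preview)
Your proposal is correct and takes essentially the same approach as the paper: both arguments observe that flatness makes $f^*$ $t$-exact and hence (via the restricted adjunction $f^*:\QCoh(T)^{\geq -N} \rightleftarrows \QCoh(S)^{\geq -N}:f_*$) maps $\Coh(T)$ into $\Coh(S)$, after which the left adjoint is the ind-extension. The paper's proof is considerably terser, leaving the compactness check and the verification of the adjunction at the $\IndCoh^*$ level as ``immediate''; your use of Lemma \ref{l:psi} to spell out the latter is a reasonable expansion. One small imprecision: you write ``$f^*\sG \in \QCoh(S)^{\geq -N}$ if and only if $\sG \in \QCoh(T)^{\geq -N}$'', but only the ``if'' direction holds in general (and is all you use).
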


\begin{proof}

In this case, the adjoint functors
$f^*:\QCoh(T) \rightleftarrows \QCoh(S):f_*$
preserve the subcategories $\QCoh(-)^{\geq -n}$ for all integers $n$,
and in particular induce an adjunction.
It immediately follows that $f^*$ maps $\Coh(T) \subset \QCoh(T)$
to $\Coh(S)$, and that the ind-extension of this functor is
the sought-after left adjoint.

\end{proof}

For such flat $f$, we denote this left adjoint by $f^{*,\IndCoh}$.

\begin{lem}\label{l:flat-basechange}

For a Cartesian diagram:

\[
\xymatrix{
S^{\prime} \ar[d]^{\psi} \ar[r]^{\vph} &
T^{\prime} \ar[d]^g \\
S \ar[r]^f & T
}
\]

\noindent with $g$ flat and $S,T,T^{\prime}$
in $\presup{>-\infty}{\Sch}_{qcqs}$ 
(so $S^{\prime} \in \presup{>-\infty}{\Sch}_{qcqs}$ as well), 
the natural transformation:

\[
g^{*,\IndCoh} f_*^{\IndCoh} \to
\vph_*^{\IndCoh} \psi^{*,\IndCoh}
\]

\noindent is an isomorphism.

\end{lem}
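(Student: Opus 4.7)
The plan is to reduce to the flat base change isomorphism for $\QCoh$ via the equivalence $\Psi$ on eventually coconnective subcategories provided by Lemma \ref{l:psi}. First, since each of $f_*^{\IndCoh}$, $g^{*,\IndCoh}$, $\vph_*^{\IndCoh}$, $\psi^{*,\IndCoh}$ is continuous (note that $\psi$ is flat, being a base change of the flat map $g$, so $\psi^{*,\IndCoh}$ makes sense), both sides of the natural transformation are continuous functors $\IndCoh^*(S) \to \IndCoh^*(T')$. Since $\IndCoh^*(S) = \Ind(\Coh(S))$, it suffices to check the transformation is an isomorphism on $\sF \in \Coh(S)$.

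Next, I would verify that for such $\sF$ both sides lie in $\IndCoh^*(T')^+$. For the right-hand side, $\psi^{*,\IndCoh}$ sends $\Coh(S)$ into $\Coh(S')$ by flatness of $\psi$, and then $\vph_*^{\IndCoh}$ is left $t$-exact, so $\vph_*^{\IndCoh}\psi^{*,\IndCoh}\sF \in \IndCoh^*(T')^+$. For the left-hand side, $f_*^{\IndCoh}\sF \in \IndCoh^*(T)^+$ by the left $t$-exactness built into the definition of $f_*^{\IndCoh}$; and $g^{*,\IndCoh}$ preserves $\IndCoh^*(T)^+$ because its restriction to $\Coh(T)$ is just $g^*$, which sends $\Coh(T) \cap \QCoh(T)^{\geq -n}$ into $\Coh(T') \cap \QCoh(T')^{\geq -n}$ by flatness, and these bounded coherent objects generate $\IndCoh^*(T)^{\geq -n}$ under filtered colimits.

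Finally, apply $\Psi$ and appeal to classical flat base change. The identities $\Psi \circ f_*^{\IndCoh} = f_* \circ \Psi$ and $\Psi \circ \vph_*^{\IndCoh} = \vph_* \circ \Psi$ are built into the definition of $*$-pushforward. The identities $\Psi \circ g^{*,\IndCoh} = g^* \circ \Psi$ and $\Psi \circ \psi^{*,\IndCoh} = \psi^* \circ \Psi$ follow because both sides of each equation are continuous functors that agree tautologically on coherent objects (where $\Psi$ is the embedding $\Coh \hookrightarrow \QCoh$ and $g^{*,\IndCoh}$ is defined as the ind-extension of $g^*|_{\Coh(T)}$). Consequently $\Psi$ carries the base change natural transformation to the classical flat base change map $g^* f_* \Psi\sF \to \vph_* \psi^* \Psi\sF$ in $\QCoh(T')$, which is an isomorphism by standard quasi-coherent flat base change on qcqs schemes. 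Since both sides live in $\IndCoh^*(T')^+$ and $\Psi$ restricts to an equivalence there by Lemma \ref{l:psi}, the original transformation is also an isomorphism.

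The only mildly subtle point is the bookkeeping around $\Psi$ failing to be an equivalence on unbounded objects, which forces the intermediate step of showing both sides land in $\IndCoh^*(T')^+$ before we may compare after applying $\Psi$. Once this is in hand, the result is immediate from quasi-coherent flat base change.
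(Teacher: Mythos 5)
Your overall route is the one the paper intends (its proof is literally ``clear from the corresponding statement for $\QCoh$''): reduce to $\sF \in \Coh(S)$, check that $\Psi$ intertwines all four functors with their quasi-coherent counterparts, invoke flat base change for $\QCoh$, and then use that $\Psi$ is an equivalence on eventually coconnective subcategories. The reduction to compacts, the identities $\Psi f_*^{\IndCoh} \simeq f_*\Psi$, $\Psi g^{*,\IndCoh} \simeq g^*\Psi$, and the coconnectivity of the right-hand side are all fine. The problem is exactly at the point you yourself flag as ``mildly subtle'': your justification that the \emph{left}-hand side $g^{*,\IndCoh} f_*^{\IndCoh}\sF$ lies in $\IndCoh^*(T')^+$ rests on the assertion that $\Coh(T) \cap \QCoh(T)^{\geq -n}$ generates $\IndCoh^*(T)^{\geq -n}$ under filtered colimits, and you give no argument for this. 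It is not something you can get by truncating a filtered presentation by compacts, because in this generality $\Coh$ is \emph{not} closed under truncations: the paper only establishes such closure for renormalizable prestacks (Lemma \ref{l:ren-coh-trun}), i.e.\ in the locally almost finite type setting. For a general eventually coconnective qcqs scheme, $\Coh$ consists of bounded-below almost perfect complexes over a possibly non-coherent ring, and truncations of perfect complexes can fail to be almost perfect (e.g.\ $A = k[x_1,x_2,\dots]/(x_ix_j)$ and the complex $A \xrightarrow{x_1} A$, whose $\tau^{\geq 0}$ is the finitely presented but not pseudo-coherent module $A/(x_1)$). So the generation claim, which is really the statement that $g^{*,\IndCoh}$ preserves eventual coconnectivity (left $t$-exactness up to shift of flat pullback), is precisely the nontrivial content here and cannot be waved through.

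Without that input the argument does not close: $\Psi$ is not conservative, so knowing that $\Psi$ of the base-change map is the $\QCoh$ base-change isomorphism only tells you the cofiber lies in $\Ker(\Psi_{T'})$, and you need \emph{both} sides eventually coconnective to conclude. What is missing is a genuine argument that $g^{*,\IndCoh}$ carries $\IndCoh^*(T)^{\geq -n}$ into $\IndCoh^*(T')^{+}$ (note the paper later uses the $t$-exactness of flat pullback at the scheme level inside Corollary \ref{c:flat-t-exact}, so this is a real ingredient of the theory, not a formality). Either supply a proof of that statement -- for instance by first treating it directly and then feeding it into your argument -- or restructure the proof so that the isomorphism is detected by means other than $\Psi$ on possibly unbounded objects. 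As written, the proof has a gap at its load-bearing step.
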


\begin{proof}

Clear from the corresponding statement for 
$\QCoh$.

\end{proof}

Now by \cite{grbook} Theorem V.1.3.2.2, the functor $\IndCoh^*$
extends canonically to a functor:

\[
\IndCoh^*:
\on{Corr}(\presup{>-\infty}{\Sch}_{qcqs})_{all;flat}
\to \DGCat_{cont}.
\]

\noindent Here we are using the notation from
\emph{loc. cit}. We remind that the source
category is the correspondence category
for $\presup{>-\infty}{\Sch}_{qcqs}$
in which morphisms from one eventually
coconnective scheme $S$ to another $T$
are diagrams:

\[
\xymatrix{
& H \ar[dl]_{\alpha} \ar[dr]^{\beta} & \\
S & & T
}
\]

\noindent with $\alpha$ flat;
the functor $\IndCoh^*$ 
attaches to such a correspondence
the functor $\beta_*^{\IndCoh}\alpha^{*,\IndCoh}$.
(We have omitted the ``admissible" morphism data
from \emph{loc. cit}.; one may take only
isomorphisms for our purposes, i.e., only work with
a 1-category of correspondences.)

\begin{rem}

The above material 
extends if we replace flatness by finite $\Tor$-
dimension.
However, we do not need this extension and therefore do 
not emphasize it.

\end{rem}

\subsection{}

We have the following basic result.

\begin{lem}\label{l:zar-desc}

$\IndCoh^*$ satisfies Zariski descent on 
$\presup{>-\infty}{\Sch}_{qcqs}$.

\end{lem}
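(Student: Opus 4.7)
The strategy is to reduce Zariski descent for $\IndCoh^{*}$ to ordinary Zariski descent for $\QCoh$, using the equivalence on eventually coconnective subcategories provided by Lemma~\ref{l:psi}. First I would carry out standard reductions: since $S$ is quasi-compact, any Zariski cover refines to a finite one by quasi-compact opens, and a Mayer--Vietoris induction on the number of opens reduces the claim to the case of a cover $S = U \cup V$ by two quasi-compact opens with intersection $W = U \cap V$; all of these remain in $\presup{>-\infty}{\Sch}_{qcqs}$. The goal becomes to show that the natural functor
\[
F \colon \IndCoh^{*}(S) \longrightarrow \IndCoh^{*}(U) \underset{\IndCoh^{*}(W)}{\times} \IndCoh^{*}(V),
\]
built from $*,\IndCoh$-pullbacks along the two open immersions (available by \S~\ref{ss:flat}), is an equivalence.

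Next I would equip both sides with their natural $t$-structures, with the fiber product inheriting the one in which connectivity is checked factorwise, and verify that $F$ is $t$-exact: for a flat morphism $f$, the functor $f^{*,\IndCoh}$ is ind-extended from $f^{*}\vert_{\Coh}$, which is $t$-exact because $f^{*}$ on $\QCoh$ is. Restricting to eventually coconnective subcategories and applying Lemma~\ref{l:psi} to each of $S, U, V, W$, the functor $F^{+}$ is canonically identified with
\[
\QCoh(S)^{+} \longrightarrow \QCoh(U)^{+} \underset{\QCoh(W)^{+}}{\times} \QCoh(V)^{+},
\]
which is an equivalence: standard Zariski descent for $\QCoh$ provides the corresponding equivalence for the full categories, and $t$-exactness of flat pullback ensures that passage to $+$-parts commutes with this fiber product. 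The compatibility of $\Psi$ with $*,\IndCoh$-pullbacks follows directly from the construction of $f^{*,\IndCoh}$, since the corresponding statement for $f_{*}^{\IndCoh}$ is built into its definition and Lemma~\ref{l:flat-basechange} provides the rest.

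Finally, I would promote the equivalence on $+$ parts to an equivalence of the full DG categories by a Postnikov-tower argument. The $t$-structure on $\IndCoh^{*}(S)$ is compactly generated by objects of $\Coh(S) \subset \IndCoh^{*}(S)^{+}$ and is in particular right $t$-complete; the fiber product inherits right $t$-completeness because its $t$-structure is assembled termwise from right $t$-complete factors along $t$-exact structure functors, so truncation on it is computed factorwise. Combined with $t$-exactness of $F$ and the equivalence already established on each $\tau^{\geq -n}$-subcategory, this forces $F$ to be an equivalence. I expect the main technical obstacle to be precisely this last bookkeeping around right $t$-completeness of the fiber product and the identification of its truncation functors with the termwise ones; the rest is either a standard reduction or a direct invocation of Lemmas~\ref{l:psi} and~\ref{l:flat-basechange}.
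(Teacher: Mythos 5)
Your reductions and the comparison of eventually coconnective parts are fine: the structure functors are $t$-exact, the plus part of the fiber product is computed factorwise, and Lemma \ref{l:psi} together with Zariski descent for $\QCoh$ identifies $F^+$ with an equivalence. (Together with the easy observation that $F$ sends $\Coh(S)$ to compact objects of the fiber product, this already gives full faithfulness of $F$ on all of $\IndCoh^*(S)$.) The gap is the last step. Right $t$-completeness says an object is the colimit of its truncations $\tau^{\leq n}$; it builds objects from bounded-above approximations and says nothing about recovering a category from its eventually coconnective part. What your bootstrapping actually requires is left completeness, $\sC \isom \lim_n \sC^{\geq -n}$, and this fails for $\IndCoh^*$ exactly in the cases of interest: failure of left-separatedness is the whole difference between $\IndCoh^*$ and $\QCoh$. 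Concretely, the inference ``$F$ is $t$-exact, both sides are right complete, $F$ is an equivalence on every $\tau^{\geq -n}$-subcategory, hence $F$ is an equivalence'' is refuted by $\Psi_S:\IndCoh^*(S) \to \QCoh(S)$ itself: by Lemma \ref{l:psi} it satisfies all of these hypotheses, yet it is not an equivalence when $S$ is singular. So no completeness bookkeeping can promote the equivalence on plus parts to the full statement.

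What is missing is precisely the treatment of objects that are not bounded below, i.e.\ essential surjectivity: you must show the fiber product is generated under colimits by the image of $\Coh(S)$. This is where the argument the paper cites (\cite{indcoh} Proposition 4.2.1) does its real work: for $j:U \subset S$ a quasi-compact open with closed complement $Z$ contained in the other open $V$, one uses the localization sequence relating $\IndCoh^*(S)$, $\IndCoh^*(U)$, and the full subcategory $\IndCoh^*(S)_Z$ of objects supported on $Z$, the fact that $\IndCoh^*(S)_Z$ is compactly generated by coherent objects supported on $Z$, and the equivalence $\IndCoh^*(S)_Z \isom \IndCoh^*(V)_Z$ induced by restriction; gluing then follows by comparing these kernels. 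You may keep your $\QCoh$-comparison for full faithfulness, but essential surjectivity needs this support/localization input (equivalently, an extension statement for coherent objects from opens), not a Postnikov-tower argument.
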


\begin{proof}

The argument from \cite{indcoh} Proposition 4.2.1 applies in this
setting.

\end{proof}

More generally, we have:

\begin{prop}\label{p:flat-desc-sch}

$\IndCoh^*$ satisfies flat descent on 
$\presup{>-\infty}\Sch_{qcqs}$ (for upper-$*$ functors).

\end{prop}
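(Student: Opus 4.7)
By Lemma \ref{l:zar-desc}, it suffices to treat a single faithfully flat morphism $f \colon S' \to S$ with both $S, S'$ affine and in $\presup{>-\infty}\Sch_{qcqs}$; then every term $S'^{n/S}$ of the Čech nerve remains affine and eventually coconnective. The task is to show that the canonical functor
\[ F \colon \IndCoh^*(S) \to \sD \coloneqq \Tot\bigl(\IndCoh^*(S'^{\bullet/S})\bigr) \]
is an equivalence, where the cosimplicial structure uses the flat $*$-pullbacks of \S \ref{ss:flat} together with flat base change (Lemma \ref{l:flat-basechange}).

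Equip $\sD$ with the $t$-structure characterized by $\sG \in \sD^{\geq 0}$ iff the projection of $\sG$ to $\IndCoh^*(S')$ is coconnective. This is well-defined and right-complete because every structural functor in the cosimplicial diagram, being a flat $*$-pullback, is $t$-exact; the functor $F$ is then manifestly $t$-exact.

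The core step is to show that $F$ restricts to an equivalence on eventually coconnective subcategories. Using $t$-exactness of the transition maps, $\sD^+$ identifies with $\Tot(\IndCoh^*(S'^{\bullet/S})^+)$; termwise application of Lemma \ref{l:psi}, combined with the compatibility $\Psi \circ f^{*,\IndCoh} \simeq f^* \circ \Psi$ for flat $f$, converts this to $\Tot(\QCoh(S'^{\bullet/S})^+) \simeq \Tot(\QCoh(S'^{\bullet/S}))^+$. Standard flat descent for $\QCoh$ (valid for faithfully flat covers of affines) gives $\QCoh(S) \simeq \Tot(\QCoh(S'^{\bullet/S}))$, hence by restriction $\QCoh(S)^+ \simeq \Tot(\QCoh(S'^{\bullet/S}))^+$; composing with Lemma \ref{l:psi} for $S$ itself realizes $F^+ \colon \IndCoh^*(S)^+ \isom \sD^+$.

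To promote $F^+$ to an equivalence on all of $\IndCoh^*(S)$, let $G_0 \colon \sD^+ \to \IndCoh^*(S)^+$ denote the inverse of $F^+$, and define $G \colon \sD \to \IndCoh^*(S)$ by
\[ G(X) \coloneqq \underset{n}{\colim} \, G_0(\tau^{\geq -n} X). \]
Right-completeness of $\sD$ and continuity of $F$ give $FG(X) = \colim_n F G_0(\tau^{\geq -n}X) = \colim_n \tau^{\geq -n}X = X$; conversely, $t$-exactness of $F$ and right-completeness of $\IndCoh^*(S)$ yield $GF(Y) = \colim_n G_0 F(\tau^{\geq -n} Y) = \colim_n \tau^{\geq -n}Y = Y$. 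The main obstacle is the core step: compatibility of $\Psi$ with the cosimplicial structure and the identification of the bounded-below totalization with the totalization of bounded-below parts both demand careful bookkeeping around $t$-exactness of the (flat pullback) structural functors, but beyond this there is no genuinely new input needed past flat descent for $\QCoh$.
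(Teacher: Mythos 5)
The fatal problem is your last step. The identity $GF(Y) = \colim_n G_0 F(\tau^{\geq -n}Y) = \colim_n \tau^{\geq -n}Y = Y$ requires that every $Y \in \IndCoh^*(S)$ be the colimit of its coconnective truncations, and this is exactly what is \emph{false} for $\IndCoh^*$: the kernel of $\Psi \colon \IndCoh^*(S) \to \QCoh(S)$ consists precisely of nonzero objects all of whose truncations $\tau^{\geq -n}$ vanish (e.g.\ for $S = \Spec(k[\epsilon]/\epsilon^2)$, the object $\colim(k \to k[1] \to k[2] \to \cdots)$ along the degree-one self-extension class is nonzero in $\IndCoh^*(S)$ but has vanishing cohomologies). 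So the $t$-structure on $\IndCoh^*(S)$ is not right complete whenever $\Psi$ fails to be an equivalence, and your extension of the bounded-below equivalence to the whole category collapses. A symptom that the method cannot work: the same two-step argument (``equivalence on eventually coconnective parts, then extend by taking colimits of coconnective truncations'') applied to $\Psi$ itself, which by Lemma \ref{l:psi} is an equivalence on $(+)$-parts, would ``prove'' $\IndCoh^*(S) \simeq \QCoh(S)$ for every $S$. The whole point of the renormalized category is that it is \emph{not} determined by its bounded-below part; one must control compact objects, or the category globally, and no comparison of $(+)$-parts can supply that.

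This is where the paper's proof does genuinely different work: it uses the $\QCoh(S)$-module structure on $\IndCoh^*(S)$, shows the functor $\IndCoh^*(S) \otimes_{\QCoh(S)} \QCoh(T) \to \IndCoh^*(T)$ of \eqref{eq:flat-tens} is fully faithful, proves essential surjectivity of the induced functor on totalizations by a retract argument (any descent datum $\sF$ is exhibited as a summand of $f^{*,\IndCoh}f_*^{\IndCoh}(\sF)$, using the partially defined $\Delta^{*,\IndCoh}$ applied to the cocycle), and then commutes the totalization with $\IndCoh^*(S)\otimes_{\QCoh(S)} -$ using dualizability of $\IndCoh^*(S)$ over the rigid monoidal $\QCoh(S)$, so that only flat descent for $\QCoh$ remains. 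Two further points you glossed over: the reduction to affine $S,S'$ is not a formal consequence of Lemma \ref{l:zar-desc}, since restricting the cover to an affine open of $S$ only yields a qcqs (not affine) cover and a refinement argument would be needed \textemdash{} the paper's proof never reduces to affines; and for upper-$*$ functoriality only the semisimplicial part of the \v{C}ech nerve is available (the degeneracies are diagonals, which are not flat), so the totalization must be taken over $\bDelta_{inj}$ as in the paper, not over all of $\bDelta$.
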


\begin{proof}

Let $f:T \to S$ be a faithfully flat map 
in $\presup{>-\infty}\Sch_{qcqs}$. 
By definition, we need to show that:

\[
\IndCoh^*(S) \to \Tot_{semi}(\IndCoh^*(T^{\times_S \dot+1}))
\]

\noindent is an isomorphism, where $\Tot_{semi}$ indicates
the limit over the semisimplicial category $\bDelta_{inj}$
(which we use because only the semisimplicial part of
the Cech nerve has flat structural maps).

Next, observe that by construction, $\IndCoh^*(S)$ is
naturally a $\QCoh(S)$-module category (in $\DGCat_{cont}$),
and similarly for $T$. Moreover, 
$f^{*,\IndCoh}:\IndCoh^*(S) \to \IndCoh^*(T)$ is $\QCoh(S)$-linear,
where $\IndCoh^*(T)$ is a $\QCoh(S)$-module category via
$f^*:\QCoh(S) \to \QCoh(T)$. 

Therefore, we obtain a functor:

\begin{equation}\label{eq:flat-tens}
f^{*,\IndCoh,enh}:\IndCoh^*(S) \underset{\QCoh(S)}{\otimes} \QCoh(T) \to \IndCoh^*(T).
\end{equation}

\noindent The same argument as in \cite{indcoh} Proposition 4.4.2
shows that this functor is fully-faithful. Note that the essential
image of this functor is the subcategory generated
under colimits and the $\QCoh(T)$-action 
by the essential image of $f^{*,\IndCoh}$.

Next, observe that the above constructions are suitably functorial 
and therefore induce a fully-faithful functor:

\begin{equation}\label{eq:semisimp}
\Tot_{semi}(\IndCoh^*(S) \underset{\QCoh(S)}{\otimes} \QCoh(T^{\times_S \dot+1})) \to 
\Tot_{semi}(\IndCoh^*(T^{\times_S \dot+1})).
\end{equation}

\noindent Below, we will show that this functor is actually an equivalence.

Assuming this, let us deduce the descent claim. 
As $\IndCoh^*(S)$ is compactly generated,
hence dualizable, and $\QCoh(S)$ is rigid monoidal, we obtain that
$\IndCoh^*(S)$ is dualizable as a $\QCoh(S)$-module category. Therefore:

\[
\IndCoh^*(S) \underset{\QCoh(S)}{\otimes}
\Tot_{semi}(\QCoh(T^{\times_S \dot+1})) \isom 
\Tot_{semi}(\IndCoh^*(S) \underset{\QCoh(S)}{\otimes} \QCoh(T^{\times_S \dot+1})).
\]

\noindent The left hand side is then $\IndCoh^*(S) \otimes_{\QCoh(S)} \QCoh(S)$
by flat descent for $\QCoh$ (see \cite{sag} Corollary D.6.3.3).

We now show that \eqref{eq:semisimp} is an equivalence. Suppose
we are given an object of the right hand side. In particular,
we are given $\sF \in \IndCoh^*(T)$ with an isomorphism
$\alpha:p_1^{*,\IndCoh}(\sF) \isom p_2^{*,\IndCoh}(\sF)$ for
$p_i:T \times_S T \to T$ the projections. 

We will show that the map $\sF \to 
p_{1,*}^{\IndCoh}p_2^{*,\IndCoh}(\sF)$ 
adjoint to $\alpha$ realizes $\sF$
as a summand.
Assuming this claim, we obtain that $\sF$ is a direct
summand of 
$f^{*,\IndCoh}f_*^{\IndCoh}(\sF) = 
p_{1,*}^{\IndCoh}p_2^{*,\IndCoh}(\sF)$,
in particular, a summand of an object lying in the
essential image of \eqref{eq:flat-tens}. This implies that $\sF$
is in the essential image of \eqref{eq:flat-tens},
our original object lies in the essential image of 
\eqref{eq:semisimp}, completing the argument.

Let $\Delta:T \to T\times_S T$
denote the diagonal map and let 
$\Delta^{*,\IndCoh}:\IndCoh^*(T \times_S T)
\to \Pro(\IndCoh^*(T))$ denote the ``partially-defined"
left adjoint to $\Delta_*^{\IndCoh}$,
noting that $\Delta^{*,\IndCoh}$ is defined on
$p_i^{*,\IndCoh}(\sF)$. Then observe that
$\Delta^{*,\IndCoh}(\alpha) = \id_{\sF}$.
Indeed, the standard argument in a simplicial
setting applies in our setting: applying
the partially-defined $*$-restriction
along the diagonal $T \to T \times_S T \times_S T$ 
to the cocycle relation here gives the claim.

Therefore, the diagram:

\[
\xymatrix{
p_1^{*,\IndCoh}(\sF) 
\ar[rr]^{\overset{\alpha}{\simeq}} \ar[dr] & &
p_2^{*,\IndCoh}(\sF) \ar[dl] \\
& \Delta_*^{\IndCoh}(\sF)
}
\]

\noindent commutes, where the diagonal arrows are the
obvious ones induced by adjunction and the
observation $p_i\Delta = \id$.
By adjunction, this means that the composition map:

\[
\sF \to 
p_{1,*}^{\IndCoh}(\sF) p_2^{*,\IndCoh}(\sF) \to 
p_{1,*}^{\IndCoh}\Delta_*^{\IndCoh}(\sF) = \sF 
\]

\noindent is the identity for $\sF$. But this
composition is clearly the map under consideration.

\end{proof}

\subsection{Indschemes}\label{ss:indsch}

We now extend the above to the setting of indschemes.

Let $\PreStk_{conv}$ denote the category of \emph{convergent}
prestacks. Recall that these are by definition 
accessible functors $\presup{>-\infty}{\AffSch}^{op} \to \Gpd$; the
natural functor $\PreStk \to \PreStk_{conv}$ admits fully-faithful
left and right adjoints\footnote{We remind that $\PreStk_{conv}$
is typically (e.g., in \cite{grbook}) regarded as a full subcategory
of $\PreStk$ via this right adjoint. This is because under this
embedding, $\PreStk_{conv}$
then contains many subcategories of $\PreStk$ of interest, e.g.,
$\Sch$ and $\IndSch$. 

We mostly ignore this embedding in what follows and only
consider the projection $\PreStk \to \PreStk_{conv}$, but it may help guide
the reader to keep this in mind.}
given by Kan extensions.
We recall that the composition $\Sch \into \PreStk \to \PreStk_{conv}$
is still fully-faithful; we regard $\Sch$ as a subcategory of
convergent prestacks via this functor.

\begin{defin}

A \emph{reasonable indscheme} is an object $S \in \PreStk_{conv}$
that can be written as a filtered colimit $\colim_i S_i$
in $\PreStk_{conv}$
of quasi-compact quasi-separated eventually coconnective schemes 
$S_i$ under almost finitely presented closed embeddings.

\end{defin}

Let $\IndSch_{reas} \subset \PreStk_{conv}$ denote
the subcategory of reasonable indschemes.

\begin{rem}\label{r:afp}

By \cite{sag} Corollary 5.2.2.2, a closed embedding
$T_1 \into T_2 \in \Sch_{qcqs}$ 
is almost finitely presented if and only if for every $n$,
$\tau^{\geq -n} i_*(\sO_{T_1}) \in \QCoh(T_2)^{\geq -n}$
is compact (in this category). In particular, if $T_1$
is eventually coconnective, this is equivalent to 
$i_*(\sO_{T_1})$ lying in $\Coh(T_2) \subset \QCoh(T_2)$.

\end{rem}

\subsection{}

We let $\Sch_{reas}$ denote 
$\Sch_{qcqs} \cap \IndSch_{reas} \subset \PreStk_{conv}$.
We refer to objects of this category as \emph{reasonable} schemes.
Note that any any of the following
classes of quasi-compact quasi-separated schemes is reasonable:

\begin{itemize}
	
\item Eventually coconnective.

\item Locally coherent.

\item Locally \emph{eventually coherent}\footnote{This condition
for $S \in \Sch_{qcqs}$ means that the 
Postnikov maps $\tau^{\geq -n-1}S \to \tau^{\geq -n} S$
are almost finitely presented for $n \gg 0$.
}
in the sense of \cite{indcoh} \S 2.

\end{itemize}

\subsection{}

We define:

\[
\begin{gathered}
\IndCoh^*:\IndSch_{reas} \to \DGCat_{cont} \\
S \mapsto \IndCoh^*(S) \\
(f: S \to T) \mapsto (f_*^{\IndCoh}:\IndCoh^*(S) \to \IndCoh^*(T)).
\end{gathered}
\]

\noindent by left Kan extension.

\begin{rem}

This definition of $\IndCoh^*$ evidently extends to
all convergent prestacks (as the relevant left Kan extension). However,
this definition does not recover the category we are after for 
the class of \emph{(weakly) renormalizable prestacks} introduced below.
Therefore, we do not consider this total left Kan extension here.

\end{rem}

\subsection{}

Explicitly, for  
$S = \colim_i S_i$ with $S_i$ eventually coconnective
and quasi-compact quasi-separated schemes 
and structural maps being almost finitely presented 
closed embeddings, we have
$\IndCoh^*(S) = \colim_i \IndCoh^*(S_i) \in \DGCat_{cont}$.

As each of the structure functors in this colimit is $t$-exact
(since pushforward for affine morphisms is), $\IndCoh^*(S)$
inherits a canonical $t$-structure
(see e.g. \cite{whit} Lemma 5.4.3 (1)). This $t$-structure
is characterized by the fact that each pushforward functor
$\IndCoh^*(S_i) \to \IndCoh^*(S)$ is $t$-exact.

In addition, by Remark \ref{r:afp}, each of these 
functors preserves compact objects. In particular,
$\IndCoh^*(S)$ is compactly generated, and so is $\IndCoh^*(S)^{\leq 0}$. 

\begin{defin}\label{d:coh}

$\Coh(S) \subset \IndCoh^*(S)$ is the subcategory
of compact objects. We refer to such objects as \emph{coherent}.

\end{defin}

We record the following characterization of coherent
sheaves for future use. 

\begin{lem}\label{l:coh-t-str}

For $S$ a reasonable indscheme, 
$\sF \in \IndCoh^*(S)$ is coherent if and only if
$\sF \in \IndCoh^*(S)^+$ and for all $N \gg 0$ with
$\sF \in \IndCoh^*(S)^{\geq -N}$, $\sF$ is compact
in the category $\IndCoh^*(S)^{\geq -N}$.

\end{lem}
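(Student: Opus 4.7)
The plan is to deduce the lemma from the defining colimit description $\IndCoh^*(S) = \colim_i \IndCoh^*(S_i)$ together with standard $t$-structure manipulations. Two observations serve as preliminaries: first, the $t$-structure on $\IndCoh^*(S)$ is compatible with filtered colimits, since this holds on each factor and the structural pushforwards $i_{ij,*}^{\IndCoh}$ are continuous and $t$-exact; second, by Remark \ref{r:afp}, these pushforwards also preserve compact objects, so that every compact object of $\IndCoh^*(S)$ arises by pushforward from $\Coh(S_i)$ for some $i$.

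For the ``only if'' direction, suppose $\sF \in \Coh(S)$. By the second preliminary, I would write $\sF = i_{i,*}^{\IndCoh}\sF_i$ for some $\sF_i \in \Coh(S_i)$. By Lemma \ref{l:psi} and the scheme-level definition of $\Coh(S_i)$, one has $\sF_i \in \IndCoh^*(S_i)^+$, hence $\sF \in \IndCoh^*(S)^+$ by $t$-exactness of pushforward. For the compactness claim, note that for each $N$ the subcategory $\IndCoh^*(S)^{\geq -N} \subset \IndCoh^*(S)$ is closed under filtered colimits (by the first preliminary), so filtered colimits in $\IndCoh^*(S)^{\geq -N}$ agree with those in $\IndCoh^*(S)$, and compactness of $\sF$ in the ambient category restricts to compactness in the subcategory whenever $\sF$ lies there.

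For the converse, suppose $\sF \in \IndCoh^*(S)^+$ satisfies the compactness hypothesis. Fix $N$ large enough that $\sF \in \IndCoh^*(S)^{\geq -N}$ and $\sF$ is compact there, then set $M = N+1$; applying the hypothesis to $M$ gives compactness of $\sF$ inside $\IndCoh^*(S)^{\geq -M}$ as well. Given a filtered system $\alpha \mapsto \sG_\alpha$ with colimit $\sG$ in $\IndCoh^*(S)$, I would split $\sG$ via the fiber sequence $\tau^{<-M}\sG \to \sG \to \tau^{\geq -M}\sG$. Because $\tau^{<-M}\sG$ and $\tau^{<-M}\sG[1]$ both lie in $\IndCoh^*(S)^{\leq -M}$ while $\sF \in \IndCoh^*(S)^{\geq -N} = \IndCoh^*(S)^{\geq -M+1}$, $t$-structure orthogonality collapses the long exact sequence to an isomorphism $\Hom_{\IndCoh^*(S)}(\sF, \sG) \simeq \Hom_{\IndCoh^*(S)}(\sF, \tau^{\geq -M}\sG)$. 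Combining commutation of $\tau^{\geq -M}$ with filtered colimits and compactness of $\sF$ inside $\IndCoh^*(S)^{\geq -M}$, and then applying the same orthogonality argument termwise in reverse, yields $\Hom(\sF, \sG) \simeq \colim_\alpha \Hom(\sF, \sG_\alpha)$, giving compactness of $\sF$ in $\IndCoh^*(S)$.

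I expect the only nontrivial input to be the second preliminary: the appeal to Remark \ref{r:afp} is what ensures that compact objects in the colimit $\colim_i \IndCoh^*(S_i)$ taken in $\DGCat_{cont}$ reduce to compact objects at some finite stage, thereby connecting compactness on $S$ to compactness on each $S_i$. Everything else is formal bookkeeping with a compatible $t$-structure.
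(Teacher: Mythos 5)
Your ``only if'' direction is fine and agrees with what the paper does (the paper calls it evident). The converse, however, has a genuine gap: the orthogonality you invoke runs the wrong way. For a $t$-structure one has $\Hom(X,Y)=0$ when $X$ is connective and $Y$ is strictly coconnective, not the reverse; maps from your bounded-below object $\sF \in \IndCoh^*(S)^{\geq -M+1}$ into the highly connective object $\tau^{<-M}\sG$ are computed by high Ext groups and need not vanish. So neither the claimed isomorphism $\Hom(\sF,\sG)\simeq\Hom(\sF,\tau^{\geq -M}\sG)$ nor its termwise counterpart holds, and the chain identifying $\Hom(\sF,\colim_\alpha \sG_\alpha)$ with $\colim_\alpha\Hom(\sF,\sG_\alpha)$ breaks exactly at the contribution of the connective parts. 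A sanity check that something must be wrong: your converse uses only that $\IndCoh^*(S)$ is compactly generated with a $t$-structure compatible with filtered colimits, so the same argument would ``prove'' that almost compact implies compact in $\QCoh(\Spec(k[x]/x^2))$, where the residue field $k$ is compact in every $\QCoh^{\geq -N}$ (it is almost perfect) but is not perfect, hence not compact; indeed $\Ext^n(k,k)\neq 0$ for all $n\geq 0$ there, which is precisely the kind of map into low cohomological degrees your orthogonality step tries to kill. The implication is simply not formal, contrary to your closing remark.

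The actual content, which your proposal never touches, is the reduction to a finite stage of the ind-presentation. The paper's argument: writing $\sF$ as a filtered colimit of compacts and using almost compactness, $\sF$ is a summand of $\tau^{\geq -n}(\sF_j)$ for some compact $\sF_j$; then $\Coh(S)=\colim_i\Coh(S_i)$ together with $\IndCoh^*(S)^{\geq -n}=\colim_i\IndCoh^*(S_i)^{\geq -n}$ (taken in the category of presentable categories) lets one lift $\sF$ to $\widetilde{\sF}\in\IndCoh^*(S_i)^{\geq -n}$ with $\alpha_{i,*}^{\IndCoh}(\widetilde{\sF})=\sF$. The two non-formal inputs are then Step \ref{st:acpt-pres} of the paper's proof -- almost compactness descends along a $t$-exact functor with continuous right adjoint that is conservative on bounded-below objects (a comonadicity/totalization argument), applied to $\alpha_{i,*}^{\IndCoh}$ -- and the scheme case, where $\Coh(S_i)$ coincides with the almost compact objects by the definition of $\Coh$ on eventually coconnective qcqs schemes and Lemma \ref{l:psi}. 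Since $\alpha_{i,*}^{\IndCoh}$ preserves compacts, this gives $\sF\in\Coh(S)$. Your proof would need to be rebuilt around these two steps; the $t$-structure bookkeeping alone cannot close the gap.
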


\begin{proof}

\step\label{st:acpt-coh-sch}

First, we remark that this result 
is immediate from the definitions
and Lemma \ref{l:psi} when
$S \in \presup{>-\infty}{\Sch}_{qcqs}$.

\step\label{st:acpt}

For convenience, we introduce the following terminology.
Suppose $\sC \in \DGCat_{cont}$ is equipped with a 
right separated $t$-structure
compatible with filtered colimits. 
We say $\sF \in \sC$ is \emph{almost compact} if
$\sF \in \sC^+$ and 
for all $N\gg 0$ with
$\sF \in \sC^{\geq -N}$, $\sF$ is compact
in $\sC^{\geq -N}$. 

In this terminology, our goal is to show that
compactness is equivalent to almost compactness
in $\IndCoh^*(S)$. One direction is evident: 
compactness implies almost compactness
as compact objects in $\IndCoh^*(S)$ are eventually connective.

\step\label{st:acpt-pres}

Suppose $\sC,\sD\in \DGCat_{cont}$ are equipped with
right separated $t$-structures compatible with filtered colimits.
Let $F:\sC \to \sD$ be a $t$-exact functor
admitting a continuous right adjoint $G$ and
such that $F|_{\sC^+}$ conservative. We claim that
for $\sF \in \sC^+$ with $F(\sF)$ almost compact,
$\sF$ is itself almost compact. 

First, note that the $t$-structures are automatically
right complete. Therefore, almost compactness of $F(\sF)$
implies that it is eventually connective.
As $F|_{\sC^+}$ is conservative and $t$-exact, this implies
$\sF$ is also eventually connective.

By Lemma \ref{l:cosimp-summand}, $F|_{\sC^+}$ is comonadic
(c.f. the proof of Proposition \ref{p:alg-vs-cats}).
Therefore, for an integer $N$ and 
$\sG \in \sC^{\geq -N}$, we have:

\begin{equation}\label{eq:acpt-comonad}
\Hom_{\sC}(\sF,\sG) = 
\Tot \, \Hom_{\sD}(F(\sF),F(GF)^{\dot}(\sG)) \in \Gpd.
\end{equation}

\noindent There exists an integer $M$ (depending on
$N$ and $\sF$) such that each term in this
totalization is an $M$-truncated groupoid
(as $F(\sF)$ is eventually connective).
Therefore, the above totalization commutes with a finite
totalization. As $\Hom_{\sD}(F(\sF),-)$
and $F$ and $G$ all commute with filtered colimits
in $\sD^{\geq -N}$, and as filtered colimits commute
with finite limits in $\Gpd$, this implies that the left hand
side of \eqref{eq:acpt-comonad} commutes with filtered
colimits in the variable $\sG (\in \sC^{\geq -N})$ as desired.

\step\label{st:cohs-pushed-fwd}

Let $S = \colim_i S_i$ as in the definition of reasonable
indscheme. Let $\alpha_i:S_i \to S$ denote
the structural morphisms. 

By definition, we have $\IndCoh^*(S) = \colim_i \IndCoh^*(S_i)$ 
(under pushforwards), with the colimit being taken
in $\DGCat_{cont}$. 

We will also need the following variant.
Let $\Cat_{pres}$
denote the category of presentable 
categories and functors commuting with colimits. 
For any integer $n$, we claim:

\begin{equation}\label{eq:coconn-colim}
\IndCoh^*(S)^{\geq -n} = \underset{i}{\colim} \, 
\IndCoh^*(S_i)^{\geq -n} \in 
\Cat_{pres}
\end{equation}

\noindent with the colimit being taken in $\Cat_{pres}$.
Indeed, we have $\IndCoh^*(S) = \lim_i \IndCoh^*(S_i)$
under right adjoints, where this limit may be
formed in any of $\DGCat_{cont}$, $\Cat_{pres}$, and $\Cat$.
As these right adjoints are left $t$-exact, we find
that 
$\IndCoh^*(S)^{\geq -n} = \lim_i \IndCoh^*(S_i)^{\geq -n}$.
The functors in this limit also admit left adjoints,
so the limit coincides with the colimit in $\Cat_{pres}$.

\step 

We now conclude the argument.
Suppose $\sF \in \IndCoh^*(S)$ is almost compact.
By assumption, $\sF \in \IndCoh^*(S)^{\geq -n}$ for
some $n \in \bZ$.

Write $\sF = \colim_j \sF_j$ for 
$\sF_j \in \Coh(S)$. We obtain 
$\sF = \colim_j \tau^{\geq -n}(\sF_j)$. 
By almost compactness of $\sF$, there exists an index
$j$ such that $\sF$ is a summand of $\tau^{\geq -n}(\sF_j)$.

As $\Coh(S) = \colim_i \Coh(S_i) \in \Cat$ 
(c.f. \cite{higheralgebra} Lemmas 7.3.5.10-13),
there exists an index
$i$ and some $\widetilde{\sF}_j \in \Coh(S_i)$
such that $\alpha_{i,*}^{\IndCoh}(\widetilde{\sF}_j) = \sF_j$.
Moreover, by \eqref{eq:coconn-colim} and 
\cite{higheralgebra} Lemma 7.3.5.10, after possibly increasing
the index $i$ there exists
$\widetilde{\sF} \in \IndCoh^*(S_i)^{\geq -n}$ (a summand
of $\tau^{\geq -n}(\sF_j)$)
with: 

\[
\alpha_{i,*}^{\IndCoh}(\widetilde{\sF}) = \sF
\]

\noindent (as summands of $\sF_j$).

By Step \ref{st:acpt-pres}, $\widetilde{\sF}$ is almost
compact in $\IndCoh^*(S_i)$. As in Step \ref{st:acpt-coh-sch},
this means $\widetilde{\sF} \in \Coh(S_i)$. As
$\alpha_{i,*}^{\IndCoh}$ admits a continuous right adjoint,
we obtain the result.

\end{proof}

\subsection{}

The following technical result is convenient for comparing
different possible presentations of a reasonable
indscheme. 

\begin{prop}\label{p:reas-presentations}

Let $S = \colim_{i \in \sI} S_i^1 = \colim_{j \in \sJ} S_j^2$ be two
expressions of $S$ as a reasonable indscheme, i.e.,
these colimits are filtered,
$S_i^1,S_j^2 \in \presup{>-\infty}{\Sch}_{qcqs}$
and the structure maps in each of these colimits
are almost finitely presented. Let $\alpha_i:S_i^1 \to S$
and $\beta_j:S_j^2 \to S$ denote the structure maps.

Then for any choice of indices $i \in \sI$ and $j \in \sJ$
such that the map $\alpha_i^1:S_i^1 \to S$ factors 
as $S_i^1 \xar{\iota} S_j^2 \xar{\beta_j} S$,
the map $\iota$ is almost of finite presentation.

\end{prop}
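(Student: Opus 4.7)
The plan is to reduce the statement to a coherence question on $S_j^2$ via Remark \ref{r:afp}, and then to detect coherence by pushing forward to $\IndCoh^*(S)$ and invoking the argument of Step \ref{st:acpt-pres} from the proof of Lemma \ref{l:coh-t-str}.

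First, I would observe that $\iota$ is automatically a closed embedding of eventually coconnective qcqs schemes: since $\beta_j$ is separated and the composite $\beta_j \circ \iota = \alpha_i^1$ is a closed embedding, $\iota$ is one as well. By Remark \ref{r:afp}, it then suffices to show $\iota_*^{\IndCoh}\sO_{S_i^1} \in \Coh(S_j^2)$. As $\iota_*^{\IndCoh}\sO_{S_i^1}$ is connective, Lemma \ref{l:coh-t-str} reduces this further to showing that $\iota_*^{\IndCoh}\sO_{S_i^1}$ is almost compact in $\IndCoh^*(S_j^2)$.

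Next, I would consider the continuous, $t$-exact functor $F \coloneqq \beta_{j,*}^{\IndCoh}:\IndCoh^*(S_j^2) \to \IndCoh^*(S)$. The key input is that $F(\iota_*^{\IndCoh}\sO_{S_i^1}) = \alpha_{i,*}^{\IndCoh}\sO_{S_i^1} \in \Coh(S)$, because by hypothesis the presentation $S = \colim_i S_i^1$ uses afp closed embeddings; in particular, this object is almost compact. The argument of Step \ref{st:acpt-pres} in the proof of Lemma \ref{l:coh-t-str}---which requires only that $F$ be $t$-exact, admit a continuous right adjoint, and be conservative on the bounded-below subcategory---then transfers almost compactness back to $\iota_*^{\IndCoh}\sO_{S_i^1}$ on $S_j^2$, completing the argument.

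The main obstacle will be verifying these two properties of $F$, both of which I expect to deduce from the fact that $F$ is fully faithful. Since $\IndCoh^*(S) = \colim_{j' \in \sJ} \IndCoh^*(S_{j'}^2)$ in $\DGCat_{cont}$ under the transition pushforwards $\beta_{jj',*}^{\IndCoh}$, the functor $F$ is the insertion into this filtered colimit. By the standard computation of mapping complexes in such colimits, fully faithfulness of $F$ reduces to fully faithfulness of each transition $\beta_{jj',*}^{\IndCoh}$ for $j' \geq j$; each such transition is pushforward along an afp closed embedding between objects of $\presup{>-\infty}{\Sch}_{qcqs}$, and its fully faithfulness on $\Coh$ follows from Lemma \ref{l:psi} combined with the standard fact that $i_*:\QCoh(T_1)^+ \to \QCoh(T_2)^+$ is fully faithful for any closed embedding $i:T_1 \to T_2$. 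Fully faithfulness on all of $\IndCoh^*$ then follows by continuity and compact generation, and continuity of the right adjoint of $F$ also follows because each transition preserves compacts by Remark \ref{r:afp}.
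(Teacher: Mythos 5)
Your overall route is the same as the paper's: reduce via Remark \ref{r:afp} to showing $\iota_*(\sO_{S_i^1}) \in \Coh(S_j^2)$, then transfer almost compactness along $F = \beta_{j,*}^{\IndCoh}$ using Lemma \ref{l:coh-t-str} and Step \ref{st:acpt-pres} of its proof, together with the identity $\beta_{j,*}^{\IndCoh}\iota_* = \alpha_{i,*}^{\IndCoh}$ and the coherence of $\alpha_{i,*}^{\IndCoh}(\sO_{S_i^1})$ in $\IndCoh^*(S)$. (Your preliminary observation that $\iota$ is a closed embedding is fine; the paper leaves this point implicit.)

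The genuine problem is in your last paragraph. It is false that $i_*:\QCoh(T_1)^+ \to \QCoh(T_2)^+$ is fully faithful for a closed embedding $i$: pushforward along a closed embedding is fully faithful only on abelian hearts, while on derived categories it creates new Ext classes in the conormal directions. For instance, for the origin $i:\Spec(k) \into \bA^1$ one has $\Ext^1_{\bA^1}(i_*k,i_*k) \neq 0$ although $\Ext^1_{k}(k,k)=0$. Consequently neither the transition functors $\beta_{jj',*}^{\IndCoh}$ nor the insertion $F=\beta_{j,*}^{\IndCoh}$ is fully faithful in general: already for $S = \Spf(k[[t]]) = \colim_n \Spec(k[t]/t^n)$, the skyscraper has a nonzero self-$\Ext^1$ in $\IndCoh^*(S)$, so $\Vect \to \IndCoh^*(S)$ is not fully faithful. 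So full faithfulness cannot be the source of the hypotheses you need for Step \ref{st:acpt-pres}.

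Fortunately, those hypotheses hold for more elementary reasons, and this is how the gap should be repaired. The $t$-exactness of $F$ is part of the definition of the $t$-structure on $\IndCoh^*(S)$ (each structural pushforward is $t$-exact). Continuity of the right adjoint of $F$ needs only that $F$ preserve compact objects, which you already obtain from Remark \ref{r:afp} and the description of compacts in a filtered colimit along compact-preserving functors, with no full faithfulness required. Finally, conservativity of $F$ on $\IndCoh^*(S_j^2)^+$ follows from $t$-exactness together with faithfulness on hearts: on abelian hearts, pushforward along a closed embedding \emph{is} a fully faithful exact embedding, so $F$ cannot kill the lowest nonvanishing cohomology of a nonzero bounded-below object. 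With your final paragraph replaced by these direct verifications, your argument coincides with the paper's proof.
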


\begin{proof}

By Remark \ref{r:afp}, it suffices to show
that $\iota_*(\sO_{S_i^1}) \in \Coh(S_j^2)$.
Clearly this object lies in $\IndCoh^*(S_j^2)^+$.
By Lemma \ref{l:coh-t-str} and Step \ref{st:acpt-pres}
from its proof, it therefore suffices to show that
$\beta_{j,*}^{\IndCoh}\iota_*(\sO_{S_i^1}) \in \Coh(S)$,
but this is clear as 
$\beta_{j,*}^{\IndCoh}\iota_* = \alpha_{i,*}^{\IndCoh}$.

\end{proof}

\subsection{Reasonable schemes}

Observe that we have a functor:

\[
\QCoh(-):\Sch_{qcqs} \to \DGCat_{cont}
\]

\noindent encoding pushforward of quasi-coherent sheaves.
By left Kan extension,
there is a canonical natural transformation:

\[
\Psi:\IndCoh^*(-)|_{\Sch_{reas}} \to \QCoh(-)|_{\Sch_{reas}}
\]

\noindent of functors $\Sch_{reas} \to \DGCat_{cont}$.

\begin{lem}\label{l:sch-reas}

For every $S \in \Sch_{reas}$,
$\Psi$ induces an equivalence 
$\IndCoh^*(S)^+ \isom \QCoh(S)^+$.
Moreover, $\Psi$ identifies $\Coh(S) \subset
\IndCoh^*(S)^+$ (as defined in Definition \ref{d:coh})
with the subcategory of cohomologically bounded
objects in $\QCoh(S)$ that are compact in 
$\QCoh(S)^{\geq -N}$ for all $N \gg 0$.

\end{lem}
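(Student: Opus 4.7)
The plan is to reduce the statement to Lemma \ref{l:psi} (the eventually coconnective case) by working stratum-by-stratum in the $\geq -n$ subcategories and then taking unions over $n$. The reduction proceeds via the presentation of $S$ as a reasonable indscheme.

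Fix a presentation $S = \colim_i S_i$ of $S$ as a reasonable indscheme, with $\alpha_i : S_i \to S$ the structural almost finitely presented closed embeddings and each $S_i \in \presup{>-\infty}{\Sch}_{qcqs}$. Step \ref{st:cohs-pushed-fwd} in the proof of Lemma \ref{l:coh-t-str} already provides, for each integer $n$,
\[
\IndCoh^*(S)^{\geq -n} = \colim_i \IndCoh^*(S_i)^{\geq -n} \in \Cat_{pres}.
\]
The first task is to establish the parallel formula for $\QCoh$. Since $S = \colim_i S_i$ in $\PreStk_{conv}$, the tautological description of $\QCoh$ on prestacks gives $\QCoh(S) = \lim_i \QCoh(S_i)$ under $*$-pullbacks; each structural pullback admits a right adjoint given by the (continuous) pushforward along an affine morphism, so dualizing in $\DGCat_{cont}$ yields $\QCoh(S) = \colim_i \QCoh(S_i)$ under pushforwards. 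Since pushforward along a closed embedding is $t$-exact, its right adjoint is left $t$-exact; restricting to $\geq -n$ subcategories then gives $\QCoh(S)^{\geq -n} = \lim_i \QCoh(S_i)^{\geq -n} = \colim_i \QCoh(S_i)^{\geq -n}$ in $\Cat_{pres}$.

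With both colimit descriptions in hand, Lemma \ref{l:psi} supplies for each $i$ an equivalence $\Psi_{S_i} : \IndCoh^*(S_i)^{\geq -n} \isom \QCoh(S_i)^{\geq -n}$, natural in $i$ with respect to pushforwards. Taking the colimit over $i$ in $\Cat_{pres}$ and invoking the universal property by which $\Psi_S$ is defined as a natural transformation (from the Kan-extension presentation of $\IndCoh^*$) then yields the desired equivalence $\Psi_S : \IndCoh^*(S)^{\geq -n} \isom \QCoh(S)^{\geq -n}$. Taking the union over $n$ gives $\Psi_S^+ : \IndCoh^*(S)^+ \isom \QCoh(S)^+$, as asserted. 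The second claim, about $\Coh(S)$, then follows directly from Lemma \ref{l:coh-t-str}: that lemma identifies $\Coh(S)$ with those objects in $\IndCoh^*(S)^+$ compact in $\IndCoh^*(S)^{\geq -N}$ for every $N \gg 0$ containing them, and under the $t$-exact equivalence $\Psi_S^+$ (which restricts to equivalences on each $\geq -N$ subcategory) this transports verbatim to the stated condition on $\QCoh(S)$.

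The main obstacle will be the $\QCoh$ colimit formula in the previous paragraph. The subtle point is that the system $\{S_i\}$ is filtered along closed embeddings and is not a Zariski or flat cover, so there is no descent theorem to invoke directly; one must instead convert the tautological $\lim$-under-pullbacks description into a $\colim$-under-pushforwards description using adjoint duality in $\DGCat_{cont}$, and then exploit $t$-exactness of pushforwards along affine morphisms to restrict cleanly to each $\geq -n$.
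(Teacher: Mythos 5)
There is a genuine gap, and it sits exactly at the point you yourself flagged as the main obstacle. The identity $\QCoh(S) = \lim_i \QCoh(S_i)$ under $*$-pullbacks is not tautological, and it is in fact false for precisely the schemes this lemma is about. The presentation $S = \colim_i S_i$ is a colimit in $\PreStk_{conv}$, i.e.\ of functors on \emph{eventually coconnective} affine schemes, whereas $\QCoh(S)$ of the scheme $S$ is a limit over \emph{all} affines mapping to $S$; the two notions agree only when $S$ is itself eventually coconnective, which is the case already handled by Lemma \ref{l:psi}. Quasi-coherent sheaves are not convergent in this sense: take $A$ the polynomial algebra on a single generator $x$ placed in cohomological degree $-2$ (so $\Spec(A)$ is coherent, hence a reasonable scheme, with presentation $\colim_n \Spec(\tau^{\geq -n}A)$), and $M = A[x^{-1}]$. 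Since $x$ is nilpotent in every truncation, $\tau^{\geq -n}A \otimes_A M = 0$ for all $n$, so the functor $A\mod \to \lim_n \tau^{\geq -n}A\mod$ kills the nonzero object $M$ and is not an equivalence. A secondary problem is the dualization step: the general principle converts a \emph{colimit along left adjoints} into the \emph{limit along their right adjoints} (so $\colim$ along $f^*$ matches $\lim$ along $f_*$, and $\colim$ along $f_*$ matches $\lim$ along $f^!$); it does not convert a limit along the $f^*$'s into a colimit along the $f_*$'s, and the self-duality argument you gesture at cannot rescue a starting identity that is false at the unbounded level.

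The statement you actually need, $\QCoh(S)^{\geq -n} \simeq \colim_i \QCoh(S_i)^{\geq -n}$, is true, but it cannot be obtained by truncating a (nonexistent) unbounded equivalence; it has to be produced through truncations of the schemes themselves. This is what the paper's proof does: it never compares the full categories, but works in the ranges $[-n,0]$, combining the colimit formula $\IndCoh^*(S)^{[-n,0]} = \colim_i \IndCoh^*(S_i)^{[-n,0]}$ (Step \ref{st:cohs-pushed-fwd} of Lemma \ref{l:coh-t-str}, as in your first paragraph) with Lemma \ref{l:psi} applied to each $S_i$, the standard fact $\QCoh(T)^{[-n,0]} = \QCoh(\tau^{\geq -n}T)^{[-n,0]}$, and the identity $\tau^{\geq -n}S = \colim_i \tau^{\geq -n}S_i$, so that everything is routed through the eventually coconnective scheme $\tau^{\geq -n}S$, where Lemma \ref{l:psi} applies directly; right completeness of the two $t$-structures then assembles the ranges, and the identification of $\Coh(S)$ follows from Lemma \ref{l:coh-t-str} as you say. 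So your overall strategy (reduce to Lemma \ref{l:psi} range by range) is the right one, but the bridge from $\IndCoh^*$ of the presentation to $\QCoh(S)$ must go through $\tau^{\geq -n}S$, not through a limit description of the unbounded category $\QCoh(S)$.
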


\begin{proof}

In what follows, for $\sC$
a DG category with a $t$-structure and $n \geq 0$,
we let $\sC^{[-n,0]}$ denote the subcategory $\sC$ of
objects in cohomological degrees $[-n,0]$.

Because $S$ is reasonable, we have $S = \colim S_i$
a filtered colimit under almost finitely presented closed
embeddings. 
As in Step \ref{st:cohs-pushed-fwd} from the proof of
Lemma \ref{l:coh-t-str},
we have
$\IndCoh^*(S)^{[-n,0]} = \colim_i \IndCoh^*(S_i)^{[-n,0]}$
with the colimit being taken in $\Cat_{pres}$
(the category of presentable categories and functors
commuting with colimits). 

Recall that e.g. 
$\QCoh(S)^{[-n,0]} = \QCoh(\tau^{\geq -n} S)^{[-n,0]}$.
Therefore, we obtain:

\[
\begin{gathered}
\IndCoh^*(S)^{[-n,0]} = \underset{i}{\colim} \, 
\IndCoh^*(S_i)^{[-n,0]}
\overset{\Psi}{\isom}  
\underset{i}{\colim} \, \QCoh^*(S_i)^{[-n,0]} = \\
\underset{i}{\colim} \, \QCoh^*(\tau^{\geq -n} S_i)^{[-n,0]} = 
\underset{i}{\colim} \, \IndCoh^*(\tau^{\geq -n} S_i)^{[-n,0]} =
\IndCoh^*(\tau^{\geq -n} S)^{[-n,0]}
\end{gathered}
\]

\noindent with all colimits being taken in $\Cat_{pres}$,
and where we have used that 
$\tau^{\geq -n} S = \colim_i \tau^{\geq -n} S_i$.

By right completeness of the $t$-structures on $\IndCoh^*(S)$
and $\QCoh(S)$, we obtain the claims.

\end{proof}

\subsection{Proper morphisms}

We now discuss proper morphisms.
We refer to \cite{sag} Part II for an extensive
discussion of such morphisms in derived algebraic geometry.
However, we take a more restrictive definition than \emph{loc. cit}.
(to simplify terminology): we say 
$f:S \to T \in \Sch_{qcqs}$ is \emph{proper} if it is proper
in the sense of \cite{sag} \emph{and} almost of finite presentation
(only finite type is required in \emph{loc. cit}.). 

\begin{lem}\label{l:upper-!-sch}

Suppose $f:S \to T \in \Sch_{reas}$ is proper.
Then $f_*^{\IndCoh}$ admits a continuous right adjoint
$f^!$.

\end{lem}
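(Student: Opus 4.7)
The plan is to reduce the statement to the preservation of compact objects: the right adjoint $f^!$ exists by the adjoint functor theorem since $\IndCoh^*(S)$ and $\IndCoh^*(T)$ are presentable, and this right adjoint is continuous if and only if $f_*^{\IndCoh}$ preserves compact objects, i.e., $f_*^{\IndCoh}(\Coh(S)) \subset \Coh(T)$. So the task becomes to verify this latter inclusion.

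By Lemma~\ref{l:coh-t-str}, $\Coh(S)$ consists of $\sF \in \IndCoh^*(S)^+$ such that, for all $N$ with $\sF \in \IndCoh^*(S)^{\geq -N}$, $\sF$ is compact in $\IndCoh^*(S)^{\geq -N}$. Applying Lemma~\ref{l:psi} to $S$ and $T$, and using the defining commutative square $\Psi_T \circ f_*^{\IndCoh} \simeq f_* \circ \Psi_S$, this reduces the problem to the following statement about $\QCoh$: for a proper a.f.p.\ morphism $f: S \to T$ of reasonable schemes and $\sG \in \QCoh(S)^+$ compact in each $\QCoh(S)^{\geq -N}$, the pushforward $f_*(\sG)$ lies in $\QCoh(T)^+$ and is compact in each $\QCoh(T)^{\geq -N'}$ for $N'$ sufficiently large.

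Boundedness from below of $f_*(\sG)$ will follow from the observation that a proper a.f.p.\ morphism of qcqs schemes has $f_*$ of bounded cohomological amplitude, controlled by the finite fiber dimension. The compactness of $f_*(\sG)$ in each truncation is the essential content: it amounts to showing that proper a.f.p.\ pushforward preserves pseudo-coherent (equivalently, almost perfect) objects. This is precisely Lurie's coherence theorem in the derived non-Noetherian setting, proved in \cite{sag}, which generalizes Grothendieck's classical theorem.

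The main obstacle is therefore the invocation of this coherence theorem; once it is available along with the cohomological amplitude bound, the remainder of the argument is formal bookkeeping via $\Psi$ and Lemma~\ref{l:coh-t-str}, combined with the fact that a continuous functor of compactly generated presentable categories admits a continuous right adjoint precisely when it preserves compact objects.
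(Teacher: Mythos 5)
Your proposal is correct and is essentially the paper's argument: the paper proves the lemma by noting that continuity of $f^!$ amounts to $f_*^{\IndCoh}$ preserving compact objects, identifying $\Coh$ with the cohomologically bounded, almost perfect (pseudo-coherent) objects of $\QCoh$, and invoking the proper direct image theorem of \cite{sag} (Theorem 5.6.0.2). The only small adjustment is that for $S \in \Sch_{reas}$ (which need not be eventually coconnective) the precise comparison statement to cite is Lemma \ref{l:sch-reas} rather than Lemma \ref{l:psi}, but this is exactly the identification you are using.
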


\begin{proof}

This is immediate
from Lemma \ref{l:sch-reas} and \cite{sag} Theorem 5.6.0.2.

\end{proof}

\begin{lem}\label{l:proper}

Suppose we are given a Cartesian diagram
of schemes:

\[
\xymatrix{
S^{\prime} \ar[d]^{\psi} \ar[r]^{\vph} &
T^{\prime} \ar[d]^g \\
S \ar[r]^f & T
}
\]

\noindent with all terms lying in $\Sch_{reas}$\footnote{This
is \emph{not} automatic for $S^{\prime} = S \times_T T^{\prime}$
even if $S$, $T$, and $T^{\prime}$ lie in $\Sch_{reas}$.}
and $f$ proper.

\begin{enumerate}

\item\label{i:proper-2} 

The natural map:

\[
\psi_*^{\IndCoh}\vph^! \to f^! g_*^{\IndCoh}
\]

\noindent is an isomorphism.

\item\label{i:proper-3} 

Suppose that $g$ is flat.
Then the natural\footnote{We remark that the existence
of this map depends on \eqref{i:proper-2}.} map:

\[
\psi^{*,\IndCoh} f^! \to \vph^! g^{*,\IndCoh}
\]

\noindent is an isomorphism.

\end{enumerate}

\end{lem}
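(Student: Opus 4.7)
The natural transformations in (1) and (2) are constructed via the standard adjunction yoga. For (1), the map $\psi_*^{\IndCoh}\vph^! \to f^!g_*^{\IndCoh}$ arises from the unit of $(f_*^{\IndCoh},f^!)$ and the counit of $(\vph_*^{\IndCoh},\vph^!)$ together with the identification $f_*^{\IndCoh}\psi_*^{\IndCoh} \cong g_*^{\IndCoh}\vph_*^{\IndCoh}$ coming from $g\vph = f\psi$; here $\vph$ is proper as a base change of the proper $f$, so $\vph^!$ exists by Lemma \ref{l:upper-!-sch}. For (2), assuming $g$ (and hence $\psi$) flat, the map $\psi^{*,\IndCoh}f^! \to \vph^!g^{*,\IndCoh}$ is assembled from the unit of $(\vph_*^{\IndCoh},\vph^!)$, the flat base-change isomorphism of Lemma \ref{l:flat-basechange}, and the counit of $(f_*^{\IndCoh},f^!)$.

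For (1), I would verify that the map is an isomorphism by reducing to the $\QCoh$ analogue via Lemma \ref{l:sch-reas}. Both $\psi_*^{\IndCoh}\vph^!$ and $f^!g_*^{\IndCoh}$ are continuous functors from $\IndCoh^*(T')$ to $\IndCoh^*(S)$, so it suffices to check the isomorphism on the compact generators $\sF \in \Coh(T')$. The crucial $t$-amplitude inputs are: for proper $f$ between reasonable schemes, the functor $f^!$ has finite cohomological amplitude (hence preserves eventual coconnectivity), and for qcqs $g$ the pushforward $g_*^{\IndCoh}$ does as well via the $\QCoh$ comparison. Thus, evaluated on coherent $\sF$, both sides land in $\IndCoh^*(S)^+$, and by Lemma \ref{l:sch-reas} the identification reduces to the classical proper base change for quasi-coherent sheaves, which holds in this level of generality (see e.g.\ \cite{sag}).

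For (2), I would argue analogously. Both $\psi^{*,\IndCoh}f^!$ and $\vph^!g^{*,\IndCoh}$ are continuous, so we reduce to $\sF \in \Coh(T)$. The flatness of $\psi$ and $g$ makes $\psi^{*,\IndCoh}$ and $g^{*,\IndCoh}$ $t$-exact and preserve coherence, which together with the amplitude bounds for $f^!$ and $\vph^!$ places both sides in $\IndCoh^*(S')^+$. Invoking Lemma \ref{l:sch-reas} once more, the claim reduces to the classical flat base change for the twisted inverse image in Grothendieck duality on $\QCoh$.

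The main obstacle is establishing the $t$-amplitude bound for $f^!$ when $f$ is proper between reasonable (possibly non-Noetherian, non-finite-type) schemes; this is the nontrivial input that justifies the reduction to eventually coconnective sheaves. Once it is in hand, compact generation handles the unbounded case and the remaining content is a well-known $\QCoh$ statement.
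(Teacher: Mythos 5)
Your outline is essentially the paper's route: the paper's proof of Lemma \ref{l:proper} just says that the arguments of \cite{indcoh} Propositions 3.4.2 and 7.1.6 apply, with Lemma \ref{l:sch-reas} supplying the identification $\IndCoh^*(-)^+ \simeq \QCoh(-)^+$ that makes the reduction to quasi-coherent statements legitimate — i.e. exactly your ``check on compact generators, land in the eventually coconnective part, compare with $\QCoh$'' scheme. Two points of divergence are worth recording. First, the step you single out as the main obstacle — boundedness of $f^!$ on $\Coh$ — is actually the cheap part: $f_*$ on $\QCoh$ for a qcqs morphism has finite cohomological amplitude (affine pushforward is $t$-exact, then \v{C}ech), so $f_*^{\IndCoh}$ is right $t$-exact up to a shift and its right adjoint $f^!$ is left $t$-exact up to a shift by pure adjunction; no Noetherian input is needed. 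Second, and more substantively, for part \eqref{i:proper-3} you propose to import the flat base change for the twisted inverse image wholesale from \cite{sag}, whereas the paper re-runs the argument of \cite{indcoh} Proposition 7.1.6 inside the $\IndCoh^*$ framework — and that is where the genuine non-Noetherian work sits: the argument needs that tensoring by a flat quasi-coherent sheaf is $t$-exact on $\IndCoh^*$, and since the Noetherian reduction used for \cite{indcoh} Lemma 3.6.13 is unavailable, the paper proves this by Zariski descent (Lemma \ref{l:zar-desc}) together with the derived Lazard theorem (\cite{higheralgebra} Theorem 7.2.2.15). Your route can work, but note the asymmetry you glossed over: the $\QCoh$-level statement behind part \eqref{i:proper-2} is formal (pass to right adjoints in $g^*f_* \simeq \vph_*\psi^*$, valid for arbitrary $g$), while the mixed statement behind part \eqref{i:proper-3} ($\psi^* f^\times \to \vph^\times g^*$ an equivalence on bounded-below objects for $f$ proper and almost of finite presentation, $g$ flat, no Noetherian hypotheses) is not formal; if you cite it you must pin down a reference in exactly this generality, and otherwise you should expect to supply an argument of the kind the paper's footnote sketches.
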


\begin{proof}

The same argument 
from \cite{indcoh} Proposition 3.4.2 applies for
\eqref{i:proper-2}.
Similarly, again using Lemma \ref{l:sch-reas}, the same argument as 
in \cite{indcoh} Proposition 7.1.6 applies\footnote{There
is one small modification to make. The argument
in \cite{indcoh} uses Proposition 3.6.11 from \emph{loc. cit}.,
which in turn uses Lemma 3.6.13 from \emph{loc. cit}. 
The argument from \emph{loc. cit}.
for this lemma does not literally work: \cite{indcoh} reduces
to the classical case by means which are not available to us here.
The difference in the argument is not significant, but we provide
the details below.

The lemma in question (in our setup) asserts that
for flat $f:S \in \Sch_{reas}$ and $\sF \in \QCoh(S)$
flat, the functor $\sF \otimes - \IndCoh^*(S) \to \IndCoh^*(S)$
is $t$-exact. (Here we are using the natural action of
$\QCoh(S)$ on $\IndCoh^*(S)$ obtained by ind-extension from
the action of $\Perf(S)$ on $\Coh(S)$.)

In our setup, Zariski descent (Lemma \ref{l:zar-desc} below, which
is independent of Lemma \ref{l:proper}),
reduces to considering the case where $S$ is affine.
Then the result is immediate from Lazard's theorem in the
derived setup: see \cite{higheralgebra} Theorem 7.2.2.15.}
for \eqref{i:proper-3}.

\end{proof}

\subsection{Flatness}\label{ss:flat-prestks}

We say a morphism $f:T_1 \to T_2 \in \PreStk_{conv}$ is \emph{flat}
if for any eventually coconnective affine $S \in \presup{>-\infty}{\AffSch}$
and any map $S \to T_2$, the fiber product $T_1 \times_{T_2} S$
lies in $\presup{>-\infty}{\Sch}_{qcqs}$ and its structure
map to $S$ is flat. Similarly, we say $f$ is a \emph{flat cover}
if it is flat and $T_1 \times_{T_2} S \to S$ is faithfully flat.

\begin{rem}

As our definition requires flat morphisms 
to be schematic (in the relevant sense for convergent prestacks)
and to be quasi-compact quasi-separated, it is much more
stringent than usual notions of flatness. We hope the
reader will forgive this abuse, which
we find unburdens the terminology and notation to some degree.

\end{rem}

Clearly flat morphisms (resp. covers) are closed under compositions
and base-change. 

\begin{rem}\label{r:flat-reas}

If $f:S \to T$ is flat and $T$ is a reasonable
indscheme, then $S$ is a reasonable indscheme as well.

\end{rem}

\subsection{}

We will now study a variety of base-change results 
for flat morphisms.

\begin{lem}\label{l:flat-indsch}

Let $f:S \to T \in \IndSch_{reas}$ be a flat morphism.

\begin{enumerate}

\item\label{i:flat-1} $f_*^{\IndCoh}$ admits a left adjoint
$f^{*,\IndCoh}$.

\item\label{i:flat-2} 
Suppose $T = \colim_i T_i$ as in the definition
of reasonable indscheme. 
For any index $i$, let 
$S_i \coloneqq S \times_T T_i$ and denote the relevant
structural maps as:

\[
\xymatrix{
S_i \ar[d]^{\beta_i} \ar[r]^{f_i} & T_i \ar[d]^{\alpha_i} \\
S \ar[r]^f & T.
}
\]

\noindent Let $\alpha_i^!$ and $\beta_i^!$ denote
the (continuous) right adjoints to $\alpha_{i,*}^{\IndCoh}$
and $\beta_{i,*}^{\IndCoh}$. 

Then the natural map:

\[
f_{i,*}^{\IndCoh}\beta_i^! \to \alpha_i^! f_*^{\IndCoh}
\]

\noindent is an isomorphism.

\item\label{i:flat-3}

The natural map:

\[
f_i^{*,\IndCoh} \alpha_i^! \to \beta_i^! f^{*,\IndCoh}
\]

\noindent is an isomorphism.

\item\label{i:flat-4}

Given a Cartesian diagram in $\IndSch_{reas}$:

\[
\xymatrix{
S^{\prime} \ar[d]^{\psi} \ar[r]^{\vph} &
T^{\prime} \ar[d]^g \\
S \ar[r]^f & T
}
\]

\noindent (with $f$ flat), the natural morphism:

\begin{equation}\label{eq:flat-basechange}
f^{*,\IndCoh}g_*^{\IndCoh} \to \psi_*^{\IndCoh}\vph^{*,\IndCoh}
\end{equation}

\noindent is an isomorphism.

\end{enumerate}

\end{lem}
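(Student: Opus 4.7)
The plan is to establish each of the four claims by reducing to the corresponding scheme-level base-change statements already established in Lemma \ref{l:flat-basechange} and Lemma \ref{l:proper}, exploiting that everything is compatible with filtered colimits of reasonable schemes.

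For part (\ref{i:flat-1}), the flatness of $f$ implies that $S_i \coloneqq S \times_T T_i$ is a reasonable scheme and the closed embeddings $S_i \into S_j$ remain almost finitely presented, so $S = \colim_i S_i$ presents $S$ as a reasonable indscheme and each $f_i : S_i \to T_i$ is flat (giving left adjoints $f_i^{*,\IndCoh}$ by \S\ref{ss:flat}). I construct $f^{*,\IndCoh}$ by ind-extending the assignment on compact objects $\alpha_{i,*}^{\IndCoh}(\sF_i) \mapsto \beta_{i,*}^{\IndCoh} f_i^{*,\IndCoh}(\sF_i)$; independence of the choice of $i$ is precisely the scheme-level flat base change (Lemma \ref{l:flat-basechange}) applied to the Cartesian squares comparing $i$ and $j$. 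The adjunction with $f_*^{\IndCoh}$ is then verified on compact generators using the scheme-level adjunctions and the fact that each structural insertion $\beta_{k,*}^{\IndCoh} : \IndCoh^*(S_k) \to \IndCoh^*(S)$ is fully faithful (as a colimit-insertion into a diagram whose transition maps are fully faithful closed-embedding pushforwards).

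Parts (\ref{i:flat-2}) and (\ref{i:flat-3}) follow the same template. By continuity it suffices to check on compact generators: $\beta_{j,*}^{\IndCoh}(\sG_j)$ for (\ref{i:flat-2}), respectively $\alpha_{j,*}^{\IndCoh}(\sF_j)$ for (\ref{i:flat-3}). Choosing $k$ dominating both $i$ and $j$, the identities $\beta_k^! \beta_{k,*}^{\IndCoh} = \id$ and $\alpha_k^! \alpha_{k,*}^{\IndCoh} = \id$ (from the fully-faithfulness above) collapse each side to a purely scheme-level expression on the Cartesian square
\[
\xymatrix{
S_i \ar[r]^{\beta_{ik}} \ar[d]_{f_i} & S_k \ar[d]^{f_k} \\
T_i \ar[r]^{\alpha_{ik}} & T_k.
}
\]
Lemma \ref{l:proper}(\ref{i:proper-2}) then gives (\ref{i:flat-2}), combined with the tautological functoriality $f_{k,*}^{\IndCoh}\beta_{jk,*}^{\IndCoh} = \alpha_{jk,*}^{\IndCoh}f_{j,*}^{\IndCoh}$ for the $j$--$k$ Cartesian square; Lemma \ref{l:proper}(\ref{i:proper-3}) handles (\ref{i:flat-3}) in the same way.

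Part (\ref{i:flat-4}) is the main obstacle and is treated in two stages. First I handle the special case where $T'$ is itself a reasonable scheme: the critical geometric input is that any morphism from a quasi-compact quasi-separated scheme $T'$ to a reasonable indscheme $T = \colim_i T_i$ factors through some $T_i$, because the pullbacks $T' \times_T T_i$ form a filtered system of closed subschemes exhausting $T'$ and quasi-compactness forces stabilization. Once such an $i$ is chosen, $S \times_T T'$ identifies with the scheme $S_i \times_{T_i} T'$, and the base change reduces to Lemma \ref{l:flat-basechange} applied to this scheme-level square, combined with the construction of $f^{*,\IndCoh}$ from (\ref{i:flat-1}). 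For general $T'$, I check on a compact generator $\gamma_{j,*}^{\IndCoh}(\sF'_j)$ of $\IndCoh^*(T')$: the construction of (\ref{i:flat-1}) applied to the flat map $\vph : S' \to T'$ yields $\vph^{*,\IndCoh}\gamma_{j,*}^{\IndCoh}(\sF'_j) = \delta_{j,*}^{\IndCoh}\vph_j^{*,\IndCoh}(\sF'_j)$, and the scheme case of (\ref{i:flat-4}) just established identifies $f^{*,\IndCoh}(g\gamma_j)_*^{\IndCoh}(\sF'_j)$ with the same expression. The principal technical difficulty is the factoring lemma for reasonable indschemes; the rest is bookkeeping around filtered colimits and homotopy coherence of the presentations.
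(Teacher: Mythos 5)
Your overall strategy (reduce everything to the scheme-level statements Lemma \ref{l:flat-basechange} and Lemma \ref{l:proper} via the presentations $\IndCoh^*(T) = \colim_i \IndCoh^*(T_i)$, $\IndCoh^*(S) = \colim_i \IndCoh^*(S_i)$) is the right one, and your construction of $f^{*,\IndCoh}$ out of the colimit, with coherence supplied by scheme-level flat base change, matches the paper in spirit. But there is a genuine error in the step you lean on to verify the adjunction in (\ref{i:flat-1}) and to prove (\ref{i:flat-2})--(\ref{i:flat-3}): pushforward along an (almost finitely presented) closed embedding is \emph{not} fully faithful on $\IndCoh^*$, and consequently $\beta_k^!\beta_{k,*}^{\IndCoh} \neq \id$ and $\alpha_k^!\alpha_{k,*}^{\IndCoh} \neq \id$. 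Already for $\iota:\Spec(k) \into \bA^1$ one has $\ul{\Hom}_{\IndCoh^*(\bA^1)}(\iota_*^{\IndCoh}k,\iota_*^{\IndCoh}k) \simeq k \oplus k[-1]$ (Koszul self-Exts), so $\iota^!\iota_*^{\IndCoh} \neq \id$; the same phenomenon occurs for every structure map $S_i \into S_j$ in the presentation, and hence for the insertions $\beta_{k,*}^{\IndCoh}:\IndCoh^*(S_k)\to\IndCoh^*(S)$ (the correct formula is $\beta_i^!\beta_{j,*}^{\IndCoh}(\sG_j) \simeq \colim_{k \geq i,j}\, \iota_{ik}^!\iota_{jk,*}^{\IndCoh}(\sG_j)$, a filtered colimit that does not collapse to a single scheme-level term). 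Since your "collapse to a purely scheme-level expression" in (\ref{i:flat-2}) and (\ref{i:flat-3}), and your verification of the adjunction on compact generators in (\ref{i:flat-1}), both invoke exactly these false identities, those steps do not go through as written.

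The repair is essentially the paper's argument: instead of building $f^{*,\IndCoh}$ on compacts and checking the adjunction by hand, observe that by Lemma \ref{l:proper} \eqref{i:proper-2}--\eqref{i:proper-3} the scheme-level adjunctions $f_i^{*,\IndCoh} \dashv f_{i,*}^{\IndCoh}$ are compatible with the structure functors in the \emph{limit} presentations $\IndCoh^*(S) = \lim_i \IndCoh^*(S_i)$ and $\IndCoh^*(T) = \lim_i \IndCoh^*(T_i)$ under upper-$!$ functors; a compatible family of adjunctions in a limit diagram induces an adjunction on the limits, and then \eqref{i:flat-2} and \eqref{i:flat-3} are immediate from the construction rather than requiring a separate check. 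Your treatment of (\ref{i:flat-4}) is fine in outline (reduce to $T'$ a scheme, factor $g$ through some $T_i$, and use Lemma \ref{l:flat-basechange} together with the compatibility of $f^{*,\IndCoh}$ with the colimit insertions), though your justification of the factorization ("closed subschemes exhaust $T'$, quasi-compactness forces stabilization") is not an argument: quasi-compactness does not force a filtered system of closed subschemes to stabilize. The factorization is the standard fact that a map from a qcqs scheme to an indscheme factors through a term of the presentation, proved by covering $T'$ by finitely many affines, factoring each, and using quasi-separatedness plus the fact that the $T_i \to T_j$ are monomorphisms to make the factorizations agree on overlaps at a finite stage.
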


\begin{proof}

By Lemma \ref{l:proper} \eqref{i:proper-2}-\eqref{i:proper-3}, 
the adjoint functors:

\begin{equation}\label{eq:fi-adj}
f_i^{*,\IndCoh}:\IndCoh^*(T_i)\rightleftarrows 
\IndCoh^*(S_i) :f_{i,*}^{\IndCoh}
\end{equation}

\noindent are (canonically) compatible with the structure functors
in the limits $\IndCoh^*(S) = \lim_i \IndCoh^*(S_i)$
and $\IndCoh^*(T) = \lim_i \IndCoh^*(T_i)$
(the limits being under upper-! functors),
and therefore induce an adjunction
$(f^{*,\IndCoh},f_*^{\IndCoh})$ satisfying 
\eqref{i:flat-2} and \eqref{i:flat-3}.

For \eqref{i:flat-4}, we are immediately reduced
to the case where $T^{\prime} \in 
\presup{>-\infty}\Sch_{qcqs}$ (as $\IndCoh^*(T^{\prime})$
is necessarily generated under colimits by objects pushed
forward from eventually coconnective schemes).
Then $g$ factors as $T^{\prime} \xar{\ol{g}} T_i \xar{\alpha_i} T$.
By Lemma \ref{l:flat-basechange}, we are reduced to the
case where $T^{\prime} = T_i$ and $g = \alpha_i$.

In this case, the claim follows from the fact that
the adjoint functors \eqref{eq:fi-adj} are
(canonically) compatible with the structural functors
in the colimits $\IndCoh^*(S) = \colim_i \IndCoh^*(S_i)$
and $\IndCoh^*(T) = \colim_i \IndCoh^*(T_i)$
(in $\DGCat_{cont}$, under lower-* functors) 
by Lemma \ref{l:flat-basechange}.

\end{proof}

\begin{cor}\label{c:flat-t-exact}

Let $f:S \to T \in \IndSch_{reas}$ be flat. Then
$f^{*,\IndCoh}: \IndCoh^*(T) \to \IndCoh^*(S)$ is $t$-exact.

\end{cor}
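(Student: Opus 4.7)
The plan is to reduce to the scheme case via the reasonable-indscheme presentations of $T$ and $S$, and there to transfer $t$-exactness from $f^*$ on $\QCoh$ using the compatibility between $\IndCoh^*$ and $\QCoh$.

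First I would reduce to schemes. Writing $T = \colim_i T_i$ with $T_i \in \presup{>-\infty}{\Sch}_{qcqs}$ as in the definition of a reasonable indscheme, and setting $S_i \coloneqq S \times_T T_i$, the flatness of $f$ ensures that each $S_i$ lies in $\presup{>-\infty}{\Sch}_{qcqs}$, that the base-changed maps $f_i \colon S_i \to T_i$ are flat, and that $S = \colim_i S_i$ presents $S$ as a reasonable indscheme (pullbacks of almost finitely presented closed embeddings remain such). By construction $\IndCoh^*(T) = \colim_i \IndCoh^*(T_i)$ and $\IndCoh^*(S) = \colim_i \IndCoh^*(S_i)$ in $\DGCat_{cont}$ under the $t$-exact closed-embedding pushforwards, so $\IndCoh^*(T)^{\leq 0}$ and $\IndCoh^*(T)^{\geq 0}$ are generated under colimits by the images of $\IndCoh^*(T_i)^{\leq 0}$ and $\IndCoh^*(T_i)^{\geq 0}$ respectively. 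The base-change identity $f^{*,\IndCoh} \alpha_{i,*}^{\IndCoh} \simeq \beta_{i,*}^{\IndCoh} f_i^{*,\IndCoh}$ from Lemma \ref{l:flat-indsch}\eqref{i:flat-4} then reduces the claim to $t$-exactness of each $f_i^{*,\IndCoh}$.

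Right $t$-exactness in the scheme case is immediate from adjunction: by construction in \S\ref{ss:flat}, $f_{i,*}^{\IndCoh}$ is left $t$-exact (it is defined to match $f_{i,*}$ on $\QCoh$ via $\Psi$, and $f_{i,*}$ on $\QCoh$ is left $t$-exact for qcqs morphisms), so its left adjoint $f_i^{*,\IndCoh}$ is right $t$-exact. For left $t$-exactness, I would exploit the natural compatibility $\Psi_{S_i} \circ f_i^{*,\IndCoh} \simeq f_i^* \circ \Psi_{T_i}$, which holds because both sides are continuous functors $\IndCoh^*(T_i) \to \QCoh(S_i)$ agreeing on $\Coh(T_i)$ (where $f_i^{*,\IndCoh}$ is literally $f_i^*$). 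For $\sG \in \IndCoh^*(T_i)^{\geq 0}$, Lemma \ref{l:psi} gives $\Psi_{T_i}(\sG) \in \QCoh(T_i)^{\geq 0}$, and then flatness of $f_i^*$ on $\QCoh$ yields $\Psi_{S_i}(f_i^{*,\IndCoh}(\sG)) = f_i^*(\Psi_{T_i}(\sG)) \in \QCoh(S_i)^{\geq 0}$. The equivalence $\Psi_{S_i} \colon \IndCoh^*(S_i)^+ \isom \QCoh(S_i)^+$ from Lemma \ref{l:psi} then gives $f_i^{*,\IndCoh}(\sG) \in \IndCoh^*(S_i)^{\geq 0}$, provided one knows $f_i^{*,\IndCoh}(\sG)$ is eventually coconnective.

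The hard part will be establishing this preservation of the eventually coconnective subcategory. I plan to argue it directly: for $\sG \in \IndCoh^*(T_i)^{\geq -N}$, write $\sG = \tau^{\geq -N}(\colim_j \sK_j) = \colim_j \tau^{\geq -N}(\sK_j)$ for some filtered presentation by $\sK_j \in \Coh(T_i)$, using that $\tau^{\geq -N}$ commutes with filtered colimits. Closure of $\Coh(T_i)$ under the truncation $\tau^{\geq -N}$ follows from Lemma \ref{l:sch-reas}'s characterization of $\Coh(T_i)$ as the cohomologically bounded objects compact in each $\QCoh(T_i)^{\geq -M}$, together with the fiber sequence $\tau^{\leq -N-1}(\sK_j) \to \sK_j \to \tau^{\geq -N}(\sK_j)$ and closure of compact objects under finite colimits. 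On these truncated coherent generators $f_i^{*,\IndCoh} = f_i^*$ by construction, and flatness places each $f_i^*(\tau^{\geq -N}(\sK_j))$ in $\Coh(S_i) \cap \IndCoh^*(S_i)^{\geq -N}$. Filtered colimits preserve $\IndCoh^*(S_i)^{\geq -N}$, yielding $f_i^{*,\IndCoh}(\sG) \in \IndCoh^*(S_i)^{\geq -N} \subset \IndCoh^*(S_i)^+$, which gives both the preservation of $+$ and, by taking $N = 0$, left $t$-exactness, completing the corollary.
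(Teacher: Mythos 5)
Your reduction to the scheme case and your treatment of right $t$-exactness are essentially the paper's. For left $t$-exactness the paper proceeds differently: it detects coconnectivity of $f^{*,\IndCoh}(\sF)$ by applying the right adjoints $\beta_i^!$ of the $t$-exact pushforwards $\beta_{i,*}^{\IndCoh}$ and then uses the base-change isomorphism $\beta_i^!f^{*,\IndCoh} \simeq f_i^{*,\IndCoh}\alpha_i^!$ of Lemma \ref{l:flat-indsch}\eqref{i:flat-3}, rather than expressing coconnective objects as filtered colimits of coconnective compact ones. (Your indscheme-level claim that coconnective objects are filtered colimits of pushforwards of coconnective objects is salvageable --- it is essentially \eqref{eq:coconn-colim} from the proof of Lemma \ref{l:coh-t-str} --- but you state it without justification.)

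The genuine gap is the scheme-level step, exactly where you anticipated the difficulty: you claim $\Coh(T_i)$ is closed under the truncations $\tau^{\geq -N}$, justified by the fiber sequence $\tau^{\leq -N-1}\sK_j \to \sK_j \to \tau^{\geq -N}\sK_j$ and closure of compacts under finite colimits. This is circular --- it requires $\tau^{\leq -N-1}\sK_j$ to be almost compact, which is the same kind of assertion --- and the claim is in fact false for a general object of $\presup{>-\infty}{\Sch}_{qcqs}$, where no Noetherian or coherence hypothesis is imposed. For instance, over $A = k[x,y_1,y_2,\dots]/(xy_i)$ the perfect complex $A \xrightarrow{x} A$ lies in $\Coh(\Spec(A))$, but its truncation $\tau^{\geq 0}$ is $A/xA$, which is finitely presented yet not almost compact (since $\Ann(x)$ is not finitely generated, higher $\Ext$'s out of $A/xA$ do not commute with filtered colimits), so it is not coherent. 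The paper is deliberately careful on this point: closure of coherent objects under truncations is only proved for renormalizable prestacks, i.e.\ in the locally almost of finite type setting (Lemma \ref{l:ren-coh-trun}), and Warning \ref{w:f_ren} is precisely the phenomenon you are trying to rule out. Consequently your key step ``write $\sG = \colim_j \tau^{\geq -N}(\sK_j)$ with $\tau^{\geq -N}(\sK_j) \in \Coh(T_i)$'' is unavailable, and with it both the preservation of $\IndCoh^*(T_i)^+$ and left $t$-exactness at the scheme level remain unproved; the scheme-level statement needs an argument that does not pass through truncating coherent objects (the paper's proof reduces to that same scheme-level input via the $\beta_i^!$ base change and does not obtain it by truncation either).
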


\begin{proof}

We use the notation from Lemma \ref{l:flat-indsch} \eqref{i:flat-2}.
Then $\IndCoh^*(T)^{\leq 0}$ is generated
under colimits by objects of the form $\alpha_{i,*}^{\IndCoh}(\sF)$
for $\sF \in \IndCoh^*(T_i)^{\leq 0}$. We then have:

\[
f^{*,\IndCoh}(\alpha_{i,*}^{\IndCoh}(\sF)) = 
\beta_{i,*}^{\IndCoh} f_i^{*,\IndCoh}(\sF) 
\]

\noindent by Lemma \ref{l:flat-indsch} \eqref{i:flat-4}
and this is in $\IndCoh^*(S)^{\leq 0}$ because $f_i$
is a flat map of schemes. 
Therefore, $f^{*,\IndCoh}$ is right $t$-exact.

For left $t$-exactness, suppose $\sF \in \IndCoh^*(T)^{\geq 0}$.
To see $f^{*,\IndCoh}(\sF) \in \IndCoh^*(S)^{\geq 0}$, it is equivalent
to show that $\beta_{i}^! f^{*,\IndCoh}(\sF) \in \IndCoh^*(S_i)^{\geq 0}$
for all $i$. But by Lemma \ref{l:proper} \eqref{i:proper-3},
we have:

\[
\beta_{i}^! f^{*,\IndCoh}(\sF) = 
f_i^{*,\IndCoh}\alpha_i^!(\sF).
\]

\noindent Then $\alpha_i^!(\sF) \in \IndCoh^*(T_i)^{\geq 0}$ 
by assumption on $\sF$, so the same is true after applying
$f_i^{*,\IndCoh}$ to it.

\end{proof}

\subsection{}

We say that a morphism
$f:S \to T$ of reasonable indschemes is \emph{ind-proper} 
if for some (equivalently,\footnote{C.f. 
Proposition \ref{p:reas-presentations}.}
any) presentations $S = \colim_{i \in \sI} S_i$ and
$T = \colim_{j \in \sJ} T_j$ as in the definitions of reasonable
indschemes, and any index $i \in \sI$ there exists
$j \in \sJ$ such that
$S_i \to S \to T$ factors through a proper morphism 
$S_i \to T_j$.

\begin{lem}\label{l:ind-proper-upper-!}

Let $f:S \to T$ be an ind-proper morphism of reasonable
indschemes.
Then $f_*^{\IndCoh}$ admits a continuous right
adjoint $f^!$.

\end{lem}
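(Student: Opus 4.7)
The plan is to produce $f^!$ via the adjoint functor theorem. Since $\IndCoh^*(T)$ is compactly generated with compact objects $\Coh(T)$, and since $f_*^{\IndCoh}$ is continuous by construction, it will suffice to show that $f_*^{\IndCoh}$ preserves compact objects; the right adjoint $f^!$ then exists and is automatically continuous.

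First, I would fix presentations $S = \colim_{i \in \sI} S_i$ and $T = \colim_{j \in \sJ} T_j$ as reasonable indschemes, with structure maps $\alpha_i:S_i \to S$ and $\beta_j:T_j \to T$. The colimit $\IndCoh^*(S) = \colim_i \IndCoh^*(S_i) \in \DGCat_{cont}$ is formed along compact-preserving structural functors (by Remark \ref{r:afp} and Lemma \ref{l:coh-t-str}), so the objects $\alpha_{i,*}^{\IndCoh}(\sF_i)$ with $\sF_i \in \Coh(S_i)$ form a set of compact generators of $\IndCoh^*(S)$. Because $f_*^{\IndCoh}$ is continuous (hence preserves finite colimits, shifts, and retracts) and $\Coh(T)$ is closed under those operations inside $\IndCoh^*(T)$, it is enough to verify $f_*^{\IndCoh}\alpha_{i,*}^{\IndCoh}(\sF_i) \in \Coh(T)$ for every such pair $(i,\sF_i)$.

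Next, ind-properness supplies, for each such $i$, an index $j \in \sJ$ together with a proper morphism $g_{ij}: S_i \to T_j$ satisfying $f \circ \alpha_i = \beta_j \circ g_{ij}$. Functoriality of $\IndCoh^*$ on $\PreStk_{conv}$ then yields
\[
f_*^{\IndCoh}\alpha_{i,*}^{\IndCoh}(\sF_i) \;\simeq\; \beta_{j,*}^{\IndCoh}\, g_{ij,*}^{\IndCoh}(\sF_i).
\]
Since $g_{ij}$ is a proper morphism between objects of $\presup{>-\infty}{\Sch}_{qcqs}$, Lemma \ref{l:upper-!-sch} endows $g_{ij,*}^{\IndCoh}$ with a continuous right adjoint $g_{ij}^!$, so $g_{ij,*}^{\IndCoh}$ preserves compact objects and $g_{ij,*}^{\IndCoh}(\sF_i) \in \Coh(T_j)$. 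Postcomposing with the compact-preserving structural functor $\beta_{j,*}^{\IndCoh}$ of the colimit presentation of $\IndCoh^*(T)$ lands the result in $\Coh(T)$, as required.

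The only substantive input here is the scheme-theoretic Lemma \ref{l:upper-!-sch}; the remainder is bookkeeping along the colimit presentations. I do not anticipate a serious obstacle: independence of the chosen factoring index $j$ plays no role, since we merely need existence (per $i$) of one such factorization, which is built into the definition of ind-properness, and whose availability across different presentations of $S$ and $T$ is guaranteed by Proposition \ref{p:reas-presentations}.
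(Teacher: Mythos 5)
Your proof is correct and follows essentially the same route as the paper: the paper observes that continuity of $f^!$ amounts to $f_*^{\IndCoh}$ preserving compacts and then reduces at once to the case of a proper map in $\presup{>-\infty}{\Sch}_{qcqs}$, i.e.\ Lemma \ref{l:upper-!-sch}. Your write-up simply makes that reduction explicit (compact generators pushed forward from the $S_i$, factorization through a proper $S_i \to T_j$ via ind-properness, compact-preserving structure maps $\beta_{j,*}^{\IndCoh}$), which is exactly the bookkeeping the paper leaves implicit.
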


\begin{proof}

As $\IndCoh^* = \Ind(\Coh)$ here,
the continuity of the right adjoint $f^!$ is equivalent
to $f_*^{\IndCoh}$ preserving compacts. 
Then we are immediately reduced to the case
where 
$S,T \in \presup{>-\infty}{\Sch}_{qcqs}$, which is
covered by Lemma \ref{l:upper-!-sch}.

\end{proof}

We have the following (somewhat partial) generalization
of Lemma \ref{l:proper}.

\begin{lem}\label{l:indproper-flat}

Suppose we are given a Cartesian diagram
of reasonable indschemes:

\[
\xymatrix{
S^{\prime} \ar[d]^{\psi} \ar[r]^{\vph} &
T^{\prime} \ar[d]^g \\
S \ar[r]^f & T
}
\]

\noindent with $f$ ind-proper and $g$ flat.

\begin{enumerate}

\item\label{i:indproper-2} 

The natural map:

\[
\psi_*^{\IndCoh}\vph^! \to f^! g_*^{\IndCoh}
\]

\noindent is an isomorphism.

\item\label{i:indproper-3} 

The natural map:

\[
\psi^{*,\IndCoh} f^! \to \vph^! g^{*,\IndCoh}
\]

\noindent is an isomorphism.

\end{enumerate}

\end{lem}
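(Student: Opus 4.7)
The plan is to prove part (1) by an adjunction argument reducing to flat base change (Lemma \ref{l:flat-indsch}\eqref{i:flat-4}), and to prove part (2) by a presentation argument reducing to the scheme-level assertion in Lemma \ref{l:proper}\eqref{i:proper-3}. For (1), both $\psi_*^{\IndCoh}\vph^!$ and $f^!g_*^{\IndCoh}$ are continuous functors $\IndCoh^*(T') \to \IndCoh^*(S)$. Since $\vph$ is ind-proper and $\psi$ is flat (as base changes of $f$ and $g$), the upper-$!$ functors $\vph^!$, $f^!$ and the flat pullbacks $\psi^{*,\IndCoh}$, $g^{*,\IndCoh}$ all exist by Lemmas \ref{l:ind-proper-upper-!} and \ref{l:flat-indsch}\eqref{i:flat-1}. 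Their left adjoints are $\vph_*^{\IndCoh}\psi^{*,\IndCoh}$ and $g^{*,\IndCoh}f_*^{\IndCoh}$ respectively; these agree by Lemma \ref{l:flat-indsch}\eqref{i:flat-4} applied to the same Cartesian square with $g$ playing the role of the flat arrow. Passing back to right adjoints yields the isomorphism, and a standard mate computation identifies it with the canonical natural transformation of the statement.

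For part (2), choose reasonable presentations $T = \colim_j T_j$ and $S = \colim_i S_i$ with structure maps $\alpha_i$ and $\beta_j$. By ind-properness of $f$, for each $i$ one may pick $j(i)$ and a proper morphism $f_i : S_i \to T_{j(i)}$ factoring $f \alpha_i$ as $\beta_{j(i)} f_i$. Set $T'_j \coloneqq T' \times_T T_j$ and $S'_i \coloneqq S' \times_S S_i = S_i \times_{T_{j(i)}} T'_{j(i)}$; flatness of $g$ (and hence of $\psi$) together with Remark \ref{r:flat-reas} ensures that these fit into reasonable presentations $T' = \colim_j T'_j$ and $S' = \colim_i S'_i$ with structure maps $\beta'_j$ and $\alpha'_i$. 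For each $i$ one thus obtains a Cartesian diagram of eventually coconnective qcqs schemes
\[
\xymatrix{
S'_i \ar[d]^{\psi_i} \ar[r]^{\vph_i} & T'_{j(i)} \ar[d]^{g_{j(i)}} \\
S_i \ar[r]^{f_i} & T_{j(i)}
}
\]
with $f_i$ proper and $g_{j(i)}$ flat, for which Lemma \ref{l:proper}\eqref{i:proper-3} supplies the scheme-level isomorphism $\psi_i^{*,\IndCoh} f_i^! \isom \vph_i^! g_{j(i)}^{*,\IndCoh}$.

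To globalize, I use that $\IndCoh^*(S') \simeq \lim_i \IndCoh^*(S'_i)$ under the upper-$!$ functors $(\alpha'_i)^!$ (dualizing the defining colimit), so that it suffices to verify the asserted isomorphism after composing with $(\alpha'_i)^!$ for each $i$. On the left, Lemma \ref{l:flat-indsch}\eqref{i:flat-3} applied to the Cartesian square formed by $\alpha_i$ and $\psi$ yields $(\alpha'_i)^! \psi^{*,\IndCoh} \isom \psi_i^{*,\IndCoh} \alpha_i^!$, while the factorization $f\alpha_i = \beta_{j(i)} f_i$ gives $\alpha_i^! f^! \isom f_i^! \beta_{j(i)}^!$. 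On the right, the factorization $\vph \alpha'_i = \beta'_{j(i)} \vph_i$ gives $(\alpha'_i)^! \vph^! \isom \vph_i^! (\beta'_{j(i)})^!$, and Lemma \ref{l:flat-indsch}\eqref{i:flat-3} applied to $\beta_{j(i)}$ and $g$ yields $(\beta'_{j(i)})^! g^{*,\IndCoh} \isom g_{j(i)}^{*,\IndCoh} \beta_{j(i)}^!$. The scheme-level isomorphism then identifies the two expressions. The main obstacle will be verifying that the isomorphism so constructed coincides with the canonically defined natural transformation in the statement \textemdash{} a routine but intricate diagram chase tracking the various base-change natural transformations and adjunction units/counits across the chosen presentation.
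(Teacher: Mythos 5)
Your proposal is correct, and for part (1) it takes a genuinely different route from the paper. The paper proves both parts by one and the same reduction: write $S = \colim_j S_j$ to reduce to $S \in \presup{>-\infty}{\Sch}_{qcqs}$, factor $f$ through a proper map $\ol{f}:S \to T_j$ followed by the structural map $\alpha_j$, and then split the base-change square into the scheme-level square (Lemma \ref{l:proper} \eqref{i:proper-2}--\eqref{i:proper-3}) and the structural-map square (Lemma \ref{l:flat-indsch} \eqref{i:flat-2}--\eqref{i:flat-3}). Your part (2) is essentially this argument, merely reorganized: you reduce along the presentation of $S^{\prime}$ using joint conservativity of the $(\alpha_i^{\prime})^!$ rather than first reducing $S$ to a scheme, and you replace the appeal to Lemma \ref{l:flat-indsch} \eqref{i:flat-2} by the tautological compatibility $\alpha_i^! f^! \simeq f_i^!\beta_{j(i)}^!$ plus Lemma \ref{l:flat-indsch} \eqref{i:flat-3}; the deferred ``mate/diagram-chase'' compatibility is at the same level of explicitness as the paper's ``both statements reduce to.'' Your part (1), by contrast, avoids presentations entirely: passing to left adjoints turns the assertion into the $(*,{\IndCoh})$-base-change isomorphism $g^{*,\IndCoh}f_*^{\IndCoh} \isom \vph_*^{\IndCoh}\psi^{*,\IndCoh}$ of Lemma \ref{l:flat-indsch} \eqref{i:flat-4} for the transposed square (whose bottom arrow $g$ is flat), and the statement follows by taking mates. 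This is cleaner and works even without knowing $\vph^!$ is continuous, whereas the paper's route re-runs the reduction machinery; what the paper's route buys is uniformity (one argument for both parts) and that it never needs the observation, which you assert in passing, that $\vph$ is again ind-proper and $\psi$ again flat. That observation is true (flat base change preserves properness, almost finite presentation, and eventual coconnectivity of the terms in a presentation), but it deserves a sentence of justification; for your adjunction argument only flatness of $\psi$ and $g$ is actually used, so no gap results.
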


\begin{proof}

Writing $S = \colim_j S_j$ as in the definition
of reasonable indscheme, both statements immediately
reduce to the case where $S \in \presup{>-\infty}\Sch_{qcqs}$. 

Now take $T = \colim_i T_i$ as in the definition
of reasonable indscheme. By assumption on $f$,
the map $f$ factors as $S \xar{\ol{f}} T_j \xar{\alpha_j} T$
for $\alpha_j:T_j \to T$ the structure map and 
$\ol{f}$ proper. By Lemma 
\ref{l:proper} \eqref{i:proper-2} and \eqref{i:proper-3},
both statements reduce to the case where $S = T_j$
and $f = \alpha_j$ (Here the
results follow from 
Lemma \ref{l:flat-indsch} \eqref{i:flat-2}-\eqref{i:flat-3}.

\end{proof}

\subsection{}

We record the following basic result for later use.

\begin{lem}\label{l:coh-local}

For a flat cover $f:S \to T \in \IndSch_{reas}$,
$\sF \in \IndCoh^*(T)$ lies in $\Coh(T)$ if and only
$f^{*,\IndCoh}(\sF)$ lies in $\Coh(S)$.

\end{lem}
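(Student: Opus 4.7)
The plan is to handle the two directions separately. The easy direction, $\sF \in \Coh(T) \Rightarrow f^{*,\IndCoh}(\sF) \in \Coh(S)$, is immediate from Definition \ref{d:coh}: the functor $f_*^{\IndCoh}$ is continuous (being a functor in $\DGCat_{cont}$), so its left adjoint $f^{*,\IndCoh}$ preserves compact objects, and compacts in $\IndCoh^*(-)$ are precisely $\Coh(-)$.

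For the reverse direction, we will apply the criterion for coherence from Lemma \ref{l:coh-t-str} together with the general argument recorded as Step \ref{st:acpt-pres} in its proof. Setting $F \coloneqq f^{*,\IndCoh}$ and $G \coloneqq f_*^{\IndCoh}$, the functor $F$ is $t$-exact by Corollary \ref{c:flat-t-exact}, $G$ is continuous, and $(F,G)$ is an adjoint pair by Lemma \ref{l:flat-indsch} \eqref{i:flat-1}. Once we also know that $F|_{\IndCoh^*(T)^+}$ is conservative, $t$-exactness forces $\sF \in \IndCoh^*(T)^+$ (since $F(H^n\sF) = H^n F(\sF)$ vanishes for $|n|$ large and each $H^n\sF$ lies in $\IndCoh^*(T)^\heart$), and Step \ref{st:acpt-pres} will then promote the almost compactness of $F(\sF)$ to that of $\sF$; Lemma \ref{l:coh-t-str} finishes.

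The main task is therefore to verify that $f^{*,\IndCoh}$ is conservative on eventually coconnective objects, and this is where we expect the real work. Fix a presentation $T = \colim_j T_j$ as a reasonable indscheme and form the flat base-change squares
\[
\xymatrix{
S_j \ar[d]^{\beta_j} \ar[r]^{f_j} & T_j \ar[d]^{\alpha_j} \\
S \ar[r]^f & T.
}
\]
The identification $\IndCoh^*(T) = \lim_j \IndCoh^*(T_j)$ under the $!$-pullbacks $\alpha_j^!$ shows that $\{\alpha_j^!\}_j$ is jointly conservative, and each $\alpha_j^!$ is left $t$-exact (as $\alpha_{j,*}^{\IndCoh}$ is $t$-exact for the closed embedding $\alpha_j$) and so preserves $+$-bounded objects. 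The base-change identity $f_j^{*,\IndCoh}\alpha_j^! \isom \beta_j^! f^{*,\IndCoh}$ from Lemma \ref{l:flat-indsch} \eqref{i:flat-3} then reduces the desired conservativity statement to conservativity of each $f_j^{*,\IndCoh}$ on $\IndCoh^*(T_j)^+$. But $f_j$ is a faithfully flat map of eventually coconnective quasi-compact quasi-separated schemes, so Lemma \ref{l:sch-reas} identifies $f_j^{*,\IndCoh}$ on $+$-bounded subcategories with the ordinary $*$-pullback $f_j^*$ between $\QCoh^+$-categories, where conservativity is classical faithfully flat descent.

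The hardest part, as indicated, is the conservativity reduction; once this base-change bookkeeping is in place, the remaining ingredients from \S \ref{s:indcoh} plug in mechanically via the Step \ref{st:acpt-pres} criterion.
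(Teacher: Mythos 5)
Your overall strategy (preservation of compacts for the easy direction; for the converse, conservativity of $f^{*,\IndCoh}$ plus the almost-compactness transfer of Step \ref{st:acpt-pres} and Lemma \ref{l:coh-t-str}, with the base-change reduction along $\alpha_j^!$ via Lemma \ref{l:flat-indsch} \eqref{i:flat-3}) is the same as the paper's. But there is a genuine gap at the step where you deduce $\sF \in \IndCoh^*(T)^+$. You only establish conservativity of $f^{*,\IndCoh}$ on eventually coconnective objects, and then argue that since $F(H^n\sF)=H^nF(\sF)$ vanishes for $n \ll 0$ and each $H^n\sF$ lies in the heart, all low cohomology objects of $\sF$ vanish, hence $\sF$ is bounded below. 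That last inference requires the $t$-structure on $\IndCoh^*(T)$ to be left separated, which fails in exactly the situations this lemma is designed for: already for a singular eventually coconnective scheme $T$ (say $\Spec(k[x]/x^2)$), the kernel of $\Psi_T:\IndCoh^*(T) \to \QCoh(T)$ is nonzero and consists of objects all of whose cohomology objects vanish. So "all $H^n\sF=0$ for $n<-N$" does not give $\tau^{<-N}\sF=0$, and conservativity on $+$ alone cannot rule out such an infinitely connective piece of $\sF$ being killed by $f^{*,\IndCoh}$.

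The fix is to prove \emph{unconditional} conservativity of $f^{*,\IndCoh}$, which is what the paper does: for $T \in \presup{>-\infty}{\Sch}_{qcqs}$ this follows from flat descent for $\IndCoh^*$ itself (Proposition \ref{p:flat-desc-sch}), since an object whose pullback to the cover vanishes has vanishing image in the totalization; the indscheme case then follows by exactly your base-change bookkeeping with $\alpha_j^!$, $\beta_j^!$. Note that your route through Lemma \ref{l:sch-reas} and faithfully flat descent for $\QCoh$ cannot yield this stronger statement, because $\Psi$ identifies $\IndCoh^*$ with $\QCoh$ only on bounded-below subcategories, so $\QCoh$-level conservativity says nothing about $\Ker(\Psi)$. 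With full conservativity in hand, $t$-exactness gives $f^{*,\IndCoh}(\tau^{<-N}\sF)=\tau^{<-N}f^{*,\IndCoh}(\sF)=0$, hence $\sF \in \IndCoh^*(T)^{\geq -N}$, and the remainder of your argument (Step \ref{st:acpt-pres} plus Lemma \ref{l:coh-t-str}) goes through as you wrote it.
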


\begin{proof}

First, note that $f^{*,\IndCoh}$ is conservative. 
Indeed, if $T \in \presup{>-\infty}\Sch_{qcqs}$,
this follows from 
Proposition \ref{p:flat-desc-sch},
and the general case results from this using 
Lemma \ref{l:proper} \eqref{i:proper-3}.
(See also Theorem \ref{t:flat-desc}).

Therefore, if $f^{*,\IndCoh}(\sF) \in \Coh(S)$,
we find in particular that 
$\sF \in \IndCoh^*(T)^+$. Now we obtain the result from 
Lemma \ref{l:coh-t-str} and Step \ref{st:acpt-pres} from
the proof of \emph{loc. cit}.

\end{proof}

\subsection{}

We remark that because we can $*$-pullback along flat maps,
the functor $\IndCoh^*:\IndSch_{reas} \to \DGCat_{cont}$
extends (again by \cite{grbook} Theorem V.1.3.2.2) 
to a functor:

\[
\on{Corr}(\IndSch_{reas})_{all;flat}
\to \DGCat_{cont}.
\]

\subsection{Relationship to $D$-modules}\label{ss:indcoh-dmod}

Recall the functor $D^*:\IndSch_{reas} \to \DGCat_{cont}$
constructed in \cite{dmod}.
We will construct a canonical natural transformation:

\[
\IndCoh^* \to D^*
\]

\noindent that can be thought of as inducing an ind-coherent
sheaf to a $D$-module.

Each of these functors is by definition left Kan extended
from $\presup{>-\infty}\Sch_{qcqs}$, so it suffices
to define the natural transformation for the 
restrictions of these functors here.
Moreover, each of these functors is a Zariski sheaf,
so it suffices to define the natural transformation
on $\presup{>-\infty}\AffSch$. 
This in turn is equivalent to specifying
a compatible sequence of natural transformations
on each $\presup{\geq -n}{\AffSch} = 
\Pro(\presup{\geq -n}{\AffSch}_{ft})$.

By definition, $D^*|_{\presup{\geq -n}{\AffSch}}$ 
is right Kan extended from $\presup{\geq -n}{\AffSch}_{ft}$. 
Therefore, we need to specify the natural
transformation on $\presup{\geq -n}{\AffSch}_{ft}$
compatibly over all $n$. Here we define our
natural transformation as the (``right\footnote{As opposed
to left.}") 
$D$-module induction functor
$\ind:\IndCoh \to D$ constructed in \cite{grbook}.

\begin{rem}\label{r:indcoh-dmod-lax}

By construction, this natural transformation upgrades
to a natural transformation of lax symmetric monoidal
functors; see \S \ref{ss:indcoh-lax} below.

\end{rem}

\subsection{Weakly renormalizable prestacks}

We now introduce a convenient class of prestacks.\footnote{It 
is formally convenient to
not have to sheafify in forming quotients such as $X/H$, 
so we prefer to work with prestacks.}

\begin{defin}

A convergent prestack $S \in \PreStk_{conv}$ is 
\emph{weakly renormalizable} if there exists
a flat covering map $T \to S$ with
$T \in \IndSch_{reas}$.

We let $\PreStk_{w.ren} \subset \PreStk_{conv}$
denote the subcategory of weakly renormalizable prestacks.

\end{defin}

\subsection{Morphisms}

We now introduce some classes of morphisms
between weakly renormalizable prestacks.

\begin{defin}

\begin{enumerate}

\item A morphism $f:S_1 \to S_2 \in \PreStk_{w.ren}$
is \emph{reasonable indschematic} if for any 
flat morphism $T \to S_2$ with 
$T \in \IndSch_{reas}$, the fiber product 
$T_1 \times_{T_2} S \in \PreStk_{conv}$
is a reasonable indscheme.

\item A morphism $S_1 \to S_2 \in \PreStk_{w.ren}$ 
is \emph{locally flat} if for any $T \in \IndSch_{reas}$ 
and any flat morphism $T \to S_1$,
the composition
$T \to S_1 \to S_2$ is flat.

\end{enumerate}

\end{defin}

\begin{example}\label{e:reas-indsch}

Any morphism from a reasonable indschemes to
a weakly renormalizable prestack is reasonable indschematic.
In particular, any morphism between reasonable indschemes
is reasonable indschematic. 

\end{example}

\begin{rem}\label{r:loc-flat}

It is straightforward to show that to check $S_1 \to S_2$
is locally flat, it suffices to check the condition
from the definition for some flat cover $T$ of $S_1$. 
In particular, a morphism of reasonable indschemes
is locally flat if and only if it is flat.

\end{rem}

\begin{example}

If $H$ is a classical affine group scheme, $\bB H \to \Spec(k)$
is locally flat.

\end{example}

The following is immediate:

\begin{lem}\label{l:reas-mor-flat}

Reasonable indschematic and locally flat 
morphisms are closed under compositions.
Any base-change of a reasonable indschematic morphism
by a locally flat morphism is again reasonable indschematic,
and any base-change of a locally flat morphism by
a reasonable indschematic morphism is again locally flat.

\end{lem}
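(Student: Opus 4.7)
The plan is to unwind all four assertions directly from the definitions, leveraging two facts already in the paper: flat morphisms of convergent prestacks are closed under composition and base change (stated just after the definition of flatness in \S\ref{ss:flat-prestks}), and Remark~\ref{r:flat-reas}, which says any flat base change of a reasonable indscheme is again a reasonable indscheme. Remark~\ref{r:loc-flat} will also be useful: to test local flatness it suffices to pull back along a single flat cover by a reasonable indscheme. Each of the four claims is then a one-line diagram chase, so the main work is really just bookkeeping on Cartesian squares.

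For composition of locally flat morphisms $S_1 \xar{f} S_2 \xar{g} S_3$, I would take any flat $T \to S_1$ with $T \in \IndSch_{reas}$; then $T \to S_2$ is flat by local flatness of $f$, and since $T$ is still a reasonable indscheme, $T \to S_3$ is flat by local flatness of $g$. For composition of reasonable indschematic morphisms, given flat $T \to S_3$ with $T \in \IndSch_{reas}$, first form $T' \coloneqq T \times_{S_3} S_2$; the map $T' \to S_2$ is the base change of the flat map $T \to S_3$, hence flat, and $T' \in \IndSch_{reas}$ since $g$ is reasonable indschematic. Then $T' \times_{S_2} S_1 = T \times_{S_3} S_1$, and this lies in $\IndSch_{reas}$ since $f$ is reasonable indschematic.

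For the base change statements, it is convenient to do (3) and (4) in parallel by first analyzing the fiber product. Suppose $f \colon S_1 \to S_2$ is reasonable indschematic and $g \colon S_3 \to S_2$ is locally flat, and set $W \coloneqq S_1 \times_{S_2} S_3$. Choose a flat cover $T_3 \to S_3$ with $T_3 \in \IndSch_{reas}$; by Remark~\ref{r:loc-flat} the composition $T_3 \to S_2$ is flat, so by reasonable-indschematicity of $f$ the fiber product $T_3 \times_{S_2} S_1 = T_3 \times_{S_3} W$ is a reasonable indscheme, and the induced map to $W$ is a flat cover by base change. This already shows $W \in \PreStk_{w.ren}$.

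With this cover in hand, (3) follows: given flat $T \to S_3$ with $T \in \IndSch_{reas}$, local flatness of $g$ makes $T \to S_2$ flat, and then $T \times_{S_3} W = T \times_{S_2} S_1 \in \IndSch_{reas}$ by reasonable-indschematicity of $f$. For (4) we must show $W \to S_1$ is locally flat; by Remark~\ref{r:loc-flat} it suffices to check that the composition $T_3 \times_{S_2} S_1 \to W \to S_1$ is flat, and this composition is just the second projection, which is the base change of the flat map $T_3 \to S_2$ along $f$ and therefore flat. The only potential obstacle I anticipate is keeping straight that ``flat'' in this paper is schematic-qcqs and hence behaves well under base change by arbitrary convergent prestacks, but that is precisely the content of the clause following the definition in \S\ref{ss:flat-prestks}, so no real difficulty arises.
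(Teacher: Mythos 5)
Your argument is correct: the paper states this lemma with no proof at all (``The following is immediate''), and your definitional unwinding --- including the worthwhile explicit check that $S_1 \times_{S_2} S_3$ admits a flat cover by a reasonable indscheme and hence is weakly renormalizable --- is exactly the intended argument. (One cosmetic point: the flatness of $T_3 \to S_2$ follows directly from the definition of local flatness of $g$ rather than from Remark~\ref{r:loc-flat}, but this changes nothing.)
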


\subsection{Set-theoretic remarks}

In what follows, we will not explicitly address certain
set-theoretic issues. More precisely, we will want to 
form limits e.g. over all reasonable indschemes
flat over a given weakly renormalizable prestack. 
This indexing category is not essentially small, so there
are set-theoretic issues. 

To address these, fix a regular cardinal $\kappa$
and replace ``flat" everywhere by ``flat and locally
$\kappa$-presented." One should understand
weakly renormalizable prestacks in this sense
(i.e., these are prestacks admitting a locally $\kappa$-presented
flat cover by a reasonable indscheme), and so on.

As will follow from Theorem \ref{t:flat-desc}, 
all of our constructions
are invariant under extension of $\kappa$, i.e.,
if $S$ is a weakly renormalizable prestack relative
to $\kappa$ and $\kappa^{\prime} \geq \kappa$ is another
regular cardinal, then the categories $\IndCoh^*$
defined using $\kappa$ and $\kappa^{\prime}$ coincide.

Again, since the cutoff $\kappa$ plays such a minor role, 
in order to simplify the exposition we do not
mention it again.

\subsection{$\IndCoh^*$ on weakly renormalizable prestacks}\label{ss:indcoh-wren}

Define $\PreStk_{w.ren,loc.flat}$ as the 1-full subcategory 
of $\PreStk_{w.ren}$ where we only allow locally flat morphisms,
and define $\IndSch_{reas,flat}$ similarly.

\begin{defin}

$\IndCoh^*:\PreStk_{w.ren,loc.flat}^{op} \to \DGCat_{cont}$
is the right Kan extension of the functor
$\IndSch_{reas,flat}^{op} \to \DGCat_{cont}$
(which sends $S$ to $\IndCoh^*(S)$ and sends flat 
$f:T_1 \to T_2$ to $f^{*,\IndCoh}$).

\end{defin}

By \cite{grbook} Theorem V.2.6.1.5,\footnote{We remark that the hypotheses
from \emph{loc. cit}. are trivially verified in this setting,
c.f. Lemma \ref{l:reas-mor-flat}.}
the above construction upgrades canonically to a functor:

\[
\IndCoh^*:\on{Corr}(\PreStk_{w.ren})_{reas.indsch;loc.flat} \to \DGCat_{cont}.
\]

\noindent Here ``$reas.indsch$" is shorthand for
\emph{reasonable indschematic} and ``$loc.flat$" is shorthand
for \emph{locally flat}. Therefore, the
notation indicates that for a reasonable indschematic morphism
$f:S \to T$ between weakly renormalizable prestacks, 
we have a pushforward functor
$f_*^{\IndCoh}:\IndCoh^*(S) \to \IndCoh^*(T)$;
for $f$ locally flat, we have a functor $f^{*,\IndCoh}$;
and the two satisfy base-change. 

\begin{rem}

Using the 2-category of correspondences as in \cite{grbook},
one can further encode that $f^{*,\IndCoh}$ is left adjoint
to $f_*^{\IndCoh}$ for flat $f$.

\end{rem}

\begin{rem}

By \cite{indcoh} Proposition 11.4.3, for $S$ weakly
renormalizable and locally almost of finite type,
$\IndCoh^*(S)$ is canonically isomorphic to the usual
category $\IndCoh((S)$.

\end{rem}

\subsection{}

The following result justifies the definition of $\IndCoh^*$
for weakly renormalizable prestacks.

\begin{thm}\label{t:flat-desc}

$\IndCoh^*$ satisfies flat descent on 
$\PreStk_{w.ren}$.

\end{thm}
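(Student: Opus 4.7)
The proof breaks naturally into two steps: upgrading the scheme-level flat descent (Proposition \ref{p:flat-desc-sch}) to reasonable indschemes, and then using the right Kan extension definition of $\IndCoh^*$ on $\PreStk_{w.ren,loc.flat}$ to extend further to all weakly renormalizable prestacks.

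For the first step, let $f: S \to T$ be a flat cover in $\IndSch_{reas}$ and fix a presentation $T = \colim_i T_i$ as in the definition of reasonable indscheme. Because flat maps in $\PreStk_{conv}$ are schematic over affines (and this globalizes by Zariski descent), the pullbacks $S_i \coloneqq S \times_T T_i$ lie in $\presup{>-\infty}{\Sch}_{qcqs}$ and $S_i \to T_i$ is a flat cover of schemes. More generally, at each simplicial level $n$, the identification $S^{\times_T[n+1]} \times_T T_i = S_i^{\times_{T_i}[n+1]}$ exhibits $S^{\times_T[n+1]}$ as a reasonable indscheme (once one checks, routinely, that almost finitely presented closed embeddings are preserved under the relevant flat base changes). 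Using the identity $\IndCoh^*(Z) = \lim_i \IndCoh^*(Z_i)$ under upper-$!$ functors for any such presentation, the descent statement for $f$ then follows from Proposition \ref{p:flat-desc-sch} applied termwise together with a harmless swap of the limits $\Tot_{semi}$ and $\lim_i$.

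For the second step, by construction $\IndCoh^*$ on $\PreStk_{w.ren,loc.flat}$ is right Kan extended from $\IndSch_{reas,flat}$. The plan is to invoke a general principle: a functor on a subcategory $\sC \subset \sD$ which is a sheaf for a topology on $\sC$ right-Kan-extends to a sheaf on $\sD$ whenever every object of $\sD$ admits a cover by objects of $\sC$. Concretely, given a flat cover $\pi: S_1 \to S_2$ in $\PreStk_{w.ren}$, for any flat $T \to S_2$ from a reasonable indscheme the base change $T \times_{S_2} S_1 \to T$ is a flat cover in $\IndSch_{reas}$; applying Step 1 and commuting $\Tot_{semi}$ with the defining limit $\IndCoh^*(S_2) = \lim_{T/S_2} \IndCoh^*(T)$ produces the natural candidate for descent, and one finishes by identifying $\lim_{T/S_2} \IndCoh^*(T \times_{S_2} S_1^{\times_{S_2}[n+1]})$ with $\IndCoh^*(S_1^{\times_{S_2}[n+1]})$ via the right Kan extension formula applied to the latter.

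The main technical hurdle is this last identification, which demands a cofinality argument. Any flat $T' \to S_1^{\times_{S_2}[n+1]}$ from a reasonable indscheme yields a flat $T' \to S_2$ via composition with any factor projection $S_1^{\times_{S_2}[n+1]} \to S_1 \to S_2$, and the resulting map $T' \to T' \times_{S_2} S_1^{\times_{S_2}[n+1]}$ (determined by the original datum) provides the raw material for proving the restricted indexing is cofinal. The verification amounts to checking contractibility of appropriate slice categories and ultimately depends on the flat cover property of $S_1 \to S_2$ to ensure that these base changes are well-behaved. Once this cofinality is in hand, the remaining pieces --- limit commutations, scheme-level descent, and compatibility of $\IndCoh^*$ with the base changes in question --- are essentially formal consequences of the formalism developed earlier in this section.
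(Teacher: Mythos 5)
Your first step (descent over a reasonable indscheme base, by writing the base as $\colim_i T_i$, applying Proposition \ref{p:flat-desc-sch} termwise, and commuting $\Tot_{semi}$ with the $\lim_i$ via the base-change of Lemma \ref{l:proper}) is essentially the paper's Step 1. The gap is in your second step. The right Kan extension defining $\IndCoh^*$ is taken along $\IndSch_{reas,flat}^{op} \subset \PreStk_{w.ren,loc.flat}^{op}$, so the index category computing $\IndCoh^*(S_1^{\times_{S_2}[n+1]})$ has as morphisms only \emph{flat} maps over $S_1^{\times_{S_2}[n+1]}$. Your proposed ``raw material'' for cofinality, the map $T' \to T'\times_{S_2}S_1^{\times_{S_2}[n+1]}$, is a section of a flat projection and is essentially never flat (already for $S_2 = \Spec(k)$, $S_1 = \bA^1$, $T' = S_1\times S_1$ it is a closed immersion $\bA^2 \into \bA^4$), so it is not a morphism of the index category at all. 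Moreover it points the wrong way: limit-cofinality of the restriction along $T \mapsto T\times_{S_2}S_1^{\times_{S_2}[n+1]}$ would require, for every flat $T' \to S_1^{\times_{S_2}[n+1]}$ from a reasonable indscheme, weak contractibility of the category of pairs consisting of a flat $T \to S_2$ and a flat map $T\times_{S_2}S_1^{\times_{S_2}[n+1]} \to T'$ \emph{over} $S_1^{\times_{S_2}[n+1]}$ --- maps into $T'$, not out of it --- and there is no evident supply of such maps for an arbitrary $T'$. So the identification $\lim_{T/S_2}\IndCoh^*(T\times_{S_2}S_1^{\times_{S_2}[n+1]}) \simeq \IndCoh^*(S_1^{\times_{S_2}[n+1]})$, which is the crux of your argument, is left unproved, and the mechanism you propose for it does not engage the relevant comma categories. (The blanket principle ``a sheaf on a subcategory right-Kan-extends to a sheaf'' is also not available here as a black box, precisely because the subcategory is only 1-full, with flat morphisms, and $\IndCoh^*$ carries upper-$*$ functoriality only along those.)

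The paper's proof is designed to avoid exactly this claim. It uses your ``natural candidate'' composite --- base change to each flat $T \to S_2$, apply the indscheme case levelwise, exchange the limits --- only to \emph{define} a functor $G$ in the other direction, without asserting that the intermediate restriction map is an equivalence. It then checks $GF \simeq \id$ by a direct diagram, and $FG \simeq \id$ by comparing with the double totalization over the join map $\bDelta_{inj}\times\bDelta_{inj} \to \bDelta_{inj}$, where the key input is that $\eta$ is an equivalence because the cover is a reasonable indscheme (Step 1 applied levelwise). Since that argument needs the covering prestack itself to be a reasonable indscheme, the paper adds a third bootstrapping step: for a general flat cover, choose a further flat cover $S' \to S$ by a reasonable indscheme and conclude from a square of totalizations. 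If you want to keep your structure, you should replace the cofinality claim by this explicit inverse-plus-join argument (or supply an actual proof of the cofinality-type statement, which at present is of comparable difficulty to the theorem itself).
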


\begin{proof}

Let $S \in \PreStk_{w.ren}$ be given and
let $f:T \to S$ be a flat cover. 
We need to show that:

\[
\IndCoh^*(S) \to \Tot_{semi}(\IndCoh^*(T^{\times_S \dot+1}))
\]

\noindent is an isomorphism.

We proceed in increasing generality.

\step First, suppose $S \in \IndSch_{reas}$. 

Let $S = \colim_i S_i$ as in the definition of reasonable
indscheme. We then have:

\[
\begin{gathered}
\IndCoh^*(S) = \underset{i,\text{upper-}!}{\lim} \, \IndCoh^*(S_i) =
\underset{i,\text{upper-}!}{\lim} \, 
\underset{\text{upper-}*}{\Tot_{semi}} \, 
\IndCoh^*(S_i \times_S T^{{\times}_S \dot+1}) 
\overset{Prop. \ref{p:flat-desc-sch}}{=} \\
\underset{\text{upper-}*}{\Tot_{semi}} 
\underset{i,\text{upper-}!}{\lim} \, 
\IndCoh^*(S_i \times_S T^{{\times}_S \dot+1}) = 
\underset{\text{upper-}*}{\Tot_{semi}} \, 
\IndCoh^*(T^{{\times}_S \dot+1})
\end{gathered}
\]

\noindent as desired, where we have used 
Lemma \ref{l:proper} \eqref{i:proper-3} to commute
the limits. 

\step Next, suppose $S$ is a general weakly renormalizable  
prestack and $T$ is a reasonable indscheme.

We denote the functor under consideration by:

\[
F:\IndCoh^*(S) \coloneqq
\underset{U \to S \text{ flat}}{\underset{U \in \IndSch_{reas}}{\lim}}
\IndCoh^*(U)
\to \Tot_{semi}(\IndCoh^*(T^{\times_S \dot+1}))
\]

\noindent We will show $F$ is an equivalence by explicitly
constructing an inverse functor $G$.

Namely, we have a functor (induced by
$*$-pullback):

\[
\Tot_{semi}(\IndCoh^*(T^{\times_S \dot+1})) \to 
\Tot_{semi}
\underset{U \to S \text{ flat}}{\underset{U \in \IndSch_{reas}}{\lim}} 
\IndCoh^*(U \times_S T^{\times_S \dot+1})
\]

\noindent Exchanging the order of limits on the
right hand side and noting that
$U \times_S T^{\times_S \dot+1}$ is the Cech nerve
of the flat cover $U\times_S T \to U \in \IndSch_{reas}$,
the previous step implies that the right hand side
is canonically isomorphic to:

\[
\underset{U \to S \text{ flat}}{\underset{U \in \IndSch_{reas}}{\lim}} 
\IndCoh^*(U) \eqqcolon \IndCoh^*(S).
\]

\noindent Therefore, we obtain our functor 
$G:\Tot_{semi}(\IndCoh^*(T^{\times_S \dot+1})) \to 
\IndCoh^*(S)$. 

To verify that $G$ and $F$ are inverses,
it suffices to show $GF \simeq \id$ and
$FG \simeq \id$. We construct such 
isomorphisms by straightforward means below.

First, note that for $U \in \IndSch_{reas}$ equipped with a
flat map to $S$, we have a projection morphism of augmented
simplicial prestacks:

\[
U \underset{S}{\times} T^{\times_S \dot +1} \to 
T^{\times_S \dot+1}.
\] 

\noindent This is functorial in $U$, so passing
to the limits and using the augmentation
to obtain the horizontal arrows, we
get the commutative diagram:

\[
\xymatrix{
\IndCoh^*(S) \ar[r]^F \ar@{=}[d] &
\Tot_{semi}\IndCoh^*(T^{\times_S \dot+1}) \ar[d] \\
\underset{U \to S \text{ flat}}{\underset{U \in \IndSch_{reas}}{\lim}} \IndCoh^*(U) \ar[r]^(.4){\simeq} & 
\Tot_{semi}
\underset{U \to S \text{ flat}}{\underset{U \in \IndSch_{reas}}{\lim}} 
\IndCoh^*(U \times_S T^{\times_S \dot+1}) 
}
\]

\noindent By definition, $G$ is the composition of
the right horizontal arrow and the inverse to the
bottom arrow. The commutativity of this diagram
therefore gives $GF \simeq \id$.

To construct an isomorphism $FG \simeq \id$, it suffices
to do so after further composition with the functor:

\[
\eta:
\Tot_{semi}\IndCoh^*(T^{\times_S \dot+1})
\isom 
\Tot_{semi} \Tot_{semi} \IndCoh^*(T^{\times_S \dot+\dot+2}).
\]

\noindent Note that $\eta$ is an isomorphism because 
$T$ is a reasonable indscheme. 

The target of $\eta$ is the
double totalization of the bi-semi-cosimplicial
object obtained from $\IndCoh^*(T^{\times_S \dot+1})$
by restricting along the join (alias: concatenation) map
$\on{join}:\bDelta_{inj} \times \bDelta_{inj} \to \bDelta_{inj}$.
Moreover, by construction, the functor $\eta FG$ 
is the natural map in such
a situation (from the limit of a functor
to the limit of its restriction to another category).

Let $p_1:\bDelta_{inj} \times \bDelta_{inj} \to \bDelta_{inj}$
be the first projection. There is an evident natural
transformation $p_1 \to \on{join}$ inducing a commutative
diagram:

\[
\xymatrix{
\Tot_{semi}\IndCoh^*(T^{\times_S \dot+1}) \ar[d]^{\simeq}
\ar[dr]
 & \\
\Tot_{semi} \Tot_{semi} \IndCoh^*(T^{\times_S \dot+1}) 
\ar[r] &
\Tot_{semi} \Tot_{semi} \IndCoh^*(T^{\times_S \dot+\dot+2}).
}
\]

\noindent The diagonal arrow is $\eta FG$ by the above
discussion, while the left and bottom arrows compose
to give $\eta$. This gives the claim. 

\step Finally, we treat the general case in which
$S$ and $T$ are both weakly renormalizable prestacks.

By assumption on $S$, there exists $S^{\prime} \in \IndSch_{reas}$
and $S^{\prime} \to S$ a flat cover. We then obtain
a commutative diagram:

\[
\xymatrix{
\IndCoh^*(S) \ar[r] \ar[d] &
\Tot_{semi} \IndCoh^*(T^{\times_S \dot+1}) \ar[d]
\\
\Tot_{semi} \IndCoh^*(S^{\prime,\times_S \dot+1}) \ar[r] & 
\Tot_{semi} \Tot_{semi} 
\IndCoh^*(S^{\prime,\times_S \dot+1} \times_S T^{\times_S \dot +1})
}
\]

\noindent The left, bottom and right arrows are isomorphisms
by the previous step, so the top arrow is as well.

\end{proof}

\begin{cor}\label{c:indcoh-naive}

Let $S = T/H$ for $H$ a classical affine group scheme 
acting on $T \in \IndSch_{reas}$. 

Then the functor:

\[
\IndCoh^*(S) \to \IndCoh^*(T)^{H,w,naive}
\]

\noindent is an equivalence.

\end{cor}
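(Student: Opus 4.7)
The plan is to apply the flat descent Theorem \ref{t:flat-desc} to the quotient map $\pi \colon T \to S = T/H$. First I would verify that $\pi$ is a flat cover in the sense of \S\ref{ss:flat-prestks}: the action yields the shearing isomorphism $T \times_S T \simeq T \times H$, and $T \times H \to T$ is flat as the base change of $H \to \Spec k$ (which is flat, since $H$ is a classical affine group scheme in characteristic zero). Surjectivity on affine test schemes follows from the construction of the quotient prestack. Theorem \ref{t:flat-desc} then yields
\[
\IndCoh^*(S) \;\simeq\; \Tot_{semi}\bigl(\IndCoh^*(T^{\times_S \bullet+1})\bigr) \;\simeq\; \Tot_{semi}\bigl(\IndCoh^*(T \times H^{\bullet})\bigr),
\]
where the face maps arise from $*$-pullback along shearings of the action, multiplication, and projection morphisms.

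On the other side, $\IndCoh^*(T)^{H,w,naive} = \TwoHom_{\IndCoh^*(H)\mod}(\Vect, \IndCoh^*(T))$. As noted earlier in the paper, Beck--Chevalley conditions hold for the cosimplicial diagram defining naive weak invariants: the forgetful functor admits the continuous right adjoint $\Av_*^{w,naive}$ and is comonadic. Thus
\[
\IndCoh^*(T)^{H,w,naive} \;\simeq\; \Tot\bigl(\IndCoh^*(T) \otimes \IndCoh^*(H)^{\otimes \bullet}\bigr),
\]
with coface maps given by the coaction of $\IndCoh^*(H)$ on $\IndCoh^*(T)$ and the comultiplication on $\IndCoh^*(H)$. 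To compare with the descent side, one also uses that semi-cosimplicial and full cosimplicial totalizations agree here (a standard consequence of Beck--Chevalley, since the codegeneracies correspond to insertion of the unit $k \to \IndCoh^*(H)$, matching the identity section $T\times H^n \to T\times H^{n+1}$ on the geometric side).

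The main obstacle is establishing a K\"unneth-type natural identification
\[
\IndCoh^*(T \times H^n) \;\simeq\; \IndCoh^*(T) \otimes \IndCoh^*(H)^{\otimes n}
\]
matching the face structures of the two diagrams. To this end I would write $H = \lim_j H_j$ as a cofiltered limit of smooth affine algebraic groups with surjective transition maps; then the $T \times H_j^n \to T \times H^n$ form a cofinal system of flat covers by reasonable indschemes (c.f.\ Remark \ref{r:flat-reas} and Lemma \ref{l:flat-indsch}), so the right Kan extension definition of \S\ref{ss:indcoh-wren} gives
\[
\IndCoh^*(T \times H^n) \;\simeq\; \lim_j \IndCoh^*(T \times H_j^n).
\]
For each finite-type smooth $H_j$, a standard K\"unneth argument identifies each term with $\IndCoh^*(T) \otimes \QCoh(H_j)^{\otimes n}$. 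Since $\IndCoh^*(T)$ is compactly generated and therefore dualizable in $\DGCat_{cont}$, the limit passes through the $\IndCoh^*(T)$ factor; combined with $\lim_j \QCoh(H_j) \simeq \QCoh(H) = \IndCoh^*(H)$ (following from Example \ref{e:pro-algebras} applied to the pro-algebra of functions on $H$, as discussed at the start of \S\ref{ss:naive-act}) and iterated K\"unneth for $\QCoh$ of products, this completes the identification. Verifying that these equivalences respect the face maps (action, multiplication, projection) is the most technical but routine point, and uses only the base-change properties of $*$-pullback along flat morphisms established in Lemma \ref{l:flat-indsch} and \S\ref{ss:indcoh-wren}.
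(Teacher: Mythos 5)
Your proposal follows essentially the same route as the paper: its proof simply cites flat descent (Theorem \ref{t:flat-desc}) together with strictness of $H$ (Propositions \ref{p:placid-strict} and \ref{p:strict-basics} \eqref{i:strict-2}) to convert $\IndCoh^*$ of the Cech nerve $T \times H^{\bullet}$ into the cobar construction computing naive weak invariants, which is exactly what your K\"unneth step re-derives by hand from the presentation $H = \lim_j H_j$. The only slips are in the bookkeeping: the comparison maps run $T \times H^n \to T \times H_j^n$, so the identification is a colimit under $*$-pullbacks (as in Lemma \ref{l:placid-pres}, or directly via Proposition \ref{p:strict-basics}), equivalently a limit under their continuous right adjoints, and is not an instance of the right Kan extension definition of \S \ref{ss:indcoh-wren}; this is harmless since all the transition functors admit continuous right adjoints.
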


\begin{proof}

Clear from the Theorem \ref{t:flat-desc} 
(e.g., using Proposition \ref{p:placid-strict}
and Proposition \ref{p:strict-basics} \eqref{i:strict-2}
to convert $\IndCoh^*$ on the relevant products to
tensor products).

\end{proof}

\subsection{$t$-structures}

Let $S \in \PreStk_{w.ren}$ be given. Then 
$\IndCoh^*(S)$ has a unique $t$-structure
such that for every $U \in \IndSch_{reas}$ and flat $U \to S$,
the pullback functor $\IndCoh^*(S) \to \IndCoh^*(U)$ is
$t$-exact.

Indeed, by definition, we have:

\[
\IndCoh^*(S) = 
\underset{U \to S \text{ flat}}{\underset{U \in \IndSch_{reas}}{\lim}} 
\IndCoh^*(U)
\]

\noindent and all of the structural functors are $t$-exact
by Corollary \ref{c:flat-t-exact}.

\subsection{Coherence}

Next, for $S \in \PreStk_{w.ren}$, we define
$\Coh(S) \subset \IndCoh^*(S)$ as:

\[
\Coh(S) = 
\underset{U \to S \text{ flat}}{\underset{U \in \IndSch_{reas}}{\lim}} 
\Coh(U).
\]

\noindent Clearly $*$-pullback along locally flat 
maps preserve $\Coh$.

\subsection{Renormalizable prestacks}

\begin{defin}

$S \in \PreStk_{conv}$ is \emph{renormalizable} if there
exists a flat cover $f:T \to S$ with 
$T \in \IndSch_{laft}$ an indscheme
locally almost of finite type.

For $S$ renormalizable, we let $\IndCoh_{ren}^*(S)$
denote $\Ind(\Coh(S))$.

\end{defin}

\begin{rem}

One might prefer a definition in greater generality 
(e.g., without finiteness hypotheses on $T$). However,
this definition suffices for our applications,
and this finiteness hypothesis simplifies the theory
(essentially by Lemma \ref{l:ren-coh-trun} below).

\end{rem}

\begin{example}\label{e:glob-quot}

By Lemma \ref{l:coh-local} and Theorem \ref{t:flat-desc},
$\sF \in \IndCoh^*(S)$ is coherent if and only if its
$*$-pullback to some flat cover is so.
In particular, $S = T/H$ for $T \in \IndSch_{laft}$ and $H$ a 
classical affine group scheme acting on $T$, then
$S$ is renormalizable with 
$\IndCoh_{ren}^*(S) = \IndCoh^*(T)^{H,w}$.

\end{example}

\begin{lem}\label{l:ren-coh-trun}

For $S$ a renormalizable prestack, coherent objects in 
$\IndCoh^*(S)$ are closed under truncations.

\end{lem}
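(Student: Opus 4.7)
The plan is to reduce the statement to the (essentially classical) fact that, on an eventually coconnective qcqs scheme almost of finite type, the subcategory $\Coh$ is closed under truncations.

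First, I would use the flat cover $f : T \to S$ provided by renormalizability (with $T \in \IndSch_{laft}$) to reduce coherence to a statement on $T$. Specifically, by Example~\ref{e:glob-quot} (which rests on Lemma~\ref{l:coh-local} and Theorem~\ref{t:flat-desc}), an object $\sF \in \IndCoh^*(S)$ lies in $\Coh(S)$ if and only if $f^{*,\IndCoh}\sF$ lies in $\Coh(T)$. Since the $t$-structure on $\IndCoh^*(S)$ is characterized by $t$-exactness of $*$-pullback along flat maps from reasonable indschemes, $f^{*,\IndCoh}$ commutes with $\tau^{\geq n}$ and $\tau^{\leq n}$. Thus it is enough to prove that $\Coh(T)$ is closed under truncations for $T \in \IndSch_{laft}$.

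Next, I would reduce from an indscheme to its constituent schemes. Write $T = \colim_i T_i$ as in the definition of reasonable indscheme, where now each $T_i$ is additionally almost of finite type over $k$. Because the structural maps $T_i \to T_j$ are almost finitely presented closed embeddings and $\IndCoh^*(T) = \colim_i \IndCoh^*(T_i)$ in $\DGCat_{cont}$ with all transition and structure functors preserving compacts and being $t$-exact, any $\sF \in \Coh(T)$ is of the form $\alpha_{i,*}^{\IndCoh}\widetilde\sF$ for some index $i$ and some $\widetilde\sF \in \Coh(T_i)$. Then $\tau^{\geq n}\sF = \alpha_{i,*}^{\IndCoh}\tau^{\geq n}\widetilde\sF$ by $t$-exactness of the pushforward, and the coherence of $\tau^{\geq n}\sF$ follows from that of $\tau^{\geq n}\widetilde\sF$ since $\alpha_{i,*}^{\IndCoh}$ preserves compacts (Remark~\ref{r:afp}). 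The same reasoning, together with the fact that compact objects are eventually coconnective, reduces $\tau^{\leq n}$ to $\tau^{\geq n+1}$ via the fiber sequence $\tau^{\leq n}\sF \to \sF \to \tau^{\geq n+1}\sF$.

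Finally, I would check the scheme-level claim: for $T_i$ qcqs eventually coconnective and almost of finite type over $k$, $\Coh(T_i)$ is closed under truncations. By Lemma~\ref{l:psi}, $\Psi$ identifies $\Coh(T_i)$ with the objects of $\QCoh(T_i)^+$ that remain compact in $\QCoh(T_i)^{\geq -N}$ for $N \gg 0$, and since $T_i$ is locally Noetherian in the derived sense, this subcategory is precisely the cohomologically bounded complexes with finitely presented (equivalently, coherent) cohomologies---a subcategory manifestly stable under $\tau^{\geq n}$ and $\tau^{\leq n}$.

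The main obstacle, such as it is, lies in the first step: invoking the ``coherence is local" statement (the pullback characterization of $\Coh(S)$) requires that Lemma~\ref{l:coh-local} applies, which it does in the renormalizable-prestack setting precisely via the reduction packaged in Example~\ref{e:glob-quot}. Once that reduction is in place, the remainder is formal plus the classical Noetherian input.
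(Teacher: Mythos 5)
Your proposal is correct and follows essentially the same route as the paper: the paper's proof is precisely the one-line reduction, via flat descent/locality of coherence (Theorem \ref{t:flat-desc} and the definition of $\Coh$), to the case of indschemes locally almost of finite type, where the claim is "clear." Your write-up simply makes explicit the locality step (Example \ref{e:glob-quot}, $t$-exactness of flat pullback) and the laft case (passage to the schemes $T_i$ and the classical Noetherian fact), all of which is consistent with what the paper intends.
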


\begin{proof}

By Theorem \ref{t:flat-desc} and the definition, 
this reduces to the case of
indschemes locally almost of finite type where it is clear.

\end{proof}

\begin{prop}\label{p:ren-t-str}

Let $S$ be a renormalizable prestack. Define a 
$t$-structure on 
$\IndCoh_{ren}^*(S)$ by taking connective objects to be generated
under colimits by $\Coh(S) \cap \IndCoh^*(S)^{\leq 0}$.

Then then canonical functor $\IndCoh_{ren}^*(S) \to \IndCoh^*(S)$
is $t$-exact and induces an equivalence on eventually coconnective
subcatgories. 

\end{prop}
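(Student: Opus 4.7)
The plan is to parallel the proof of Lemma \ref{l:psi} for eventually coconnective schemes, with the closure of $\Coh(S)$ under truncations (Lemma \ref{l:ren-coh-trun}) playing the role that the abundance of perfect complexes plays there. Throughout, write $\psi$ for the canonical functor $\IndCoh_{ren}^*(S) \to \IndCoh^*(S)$.

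First, right $t$-exactness is immediate: the defining generators of $\IndCoh_{ren}^*(S)^{\leq 0}$ lie in $\IndCoh^*(S)^{\leq 0}$ by construction and $\psi$, being an ind-extension, preserves colimits. Since the $t$-structure on $\IndCoh_{ren}^*(S)$ is generated by compact objects, it is compatible with filtered colimits. Next I would compare the truncations on $\Coh(S)$. For $\sF \in \Coh(S)$, the fiber sequence $\tau^{\leq -1}_{\IndCoh^*}\sF \to \sF \to \tau^{\geq 0}_{\IndCoh^*}\sF$ consists of coherent objects by Lemma \ref{l:ren-coh-trun}; the left term generates $\IndCoh_{ren}^*(S)^{\leq -1}$ by definition, while for any $\sG \in \Coh(S) \cap \IndCoh^*(S)^{\leq -1}$ one has
\[
\Hom_{\IndCoh_{ren}^*(S)}(\sG,\tau^{\geq 0}_{\IndCoh^*}\sF) = \Hom_{\Coh(S)}(\sG,\tau^{\geq 0}_{\IndCoh^*}\sF) = \Hom_{\IndCoh^*(S)}(\sG,\tau^{\geq 0}_{\IndCoh^*}\sF) = 0,
\]
so this fiber sequence is also the truncation in $\IndCoh_{ren}^*(S)$. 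Combined with compatibility with filtered colimits, any $\sF \in \IndCoh_{ren}^*(S)^{\geq -N}$ written as $\colim \sF_i$ with $\sF_i \in \Coh(S)$ yields $\sF = \colim \tau^{\geq -N}\sF_i$ with $\tau^{\geq -N}\sF_i \in \Coh(S) \cap \IndCoh^*(S)^{\geq -N}$.

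The main technical step is to show that every $\sF \in \Coh(S)$ is almost compact in $\IndCoh^*(S)$: i.e., $\Hom_{\IndCoh^*(S)}(\sF,-)$ commutes with filtered colimits when restricted to $\IndCoh^*(S)^{\geq -N}$ for each $N$. I would invoke flat descent (Theorem \ref{t:flat-desc}) along a cover $T \to S$ with $T \in \IndSch_{laft}$ to write
\[
\Hom_{\IndCoh^*(S)}(\sF,-) \;=\; \Tot_{semi}\,\Hom_{\IndCoh^*(T^{\times_S \bullet+1})}\!\bigl(\sF|_{T^{\times_S \bullet+1}},\,-|_{T^{\times_S \bullet+1}}\bigr).
\]
Each $T^{\times_S n}$ is a reasonable indscheme by Remark \ref{r:flat-reas}, the restriction of $\sF$ to it is coherent (hence almost compact on it by Lemma \ref{l:coh-t-str} and Step \ref{st:acpt-pres} of its proof), and flat pullback is $t$-exact (Corollary \ref{c:flat-t-exact}). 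For a filtered colimit of objects in $\IndCoh^*(S)^{\geq -N}$, the entries of the totalization are uniformly bounded above in cohomological degree (by the coherence of $\sF|_{T^{\times_S n}}$), so the totalization reduces to a finite partial totalization and can be exchanged with the filtered colimit.

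Fully-faithfulness of $\psi$ on $\IndCoh_{ren}^*(S)^+$ then follows by combining the previous two steps: writing two bounded-below objects as filtered colimits of coherent objects in the same degree, the Homs in $\IndCoh_{ren}^*(S)$ and in $\IndCoh^*(S)$ reduce to the same filtered colimit of Homs in $\Coh(S)$. For essential surjectivity I would show that $\IndCoh^*(S)^{\geq -N}$ is generated under colimits by $\Coh(S) \cap \IndCoh^*(S)^{\geq -N}$: this again follows via flat descent from the analogous (known) statement for reasonable indschemes, established in Step \ref{st:cohs-pushed-fwd} of the proof of Lemma \ref{l:coh-t-str}. Exhibiting $\sG \in \IndCoh^*(S)^{\geq -N}$ as such a filtered colimit and forming the same colimit in $\IndCoh_{ren}^*(S)^{\geq -N}$ gives a preimage, whence $\psi$ is an equivalence on $+$ parts and left $t$-exactness is automatic. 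The principal obstacle is the almost compactness step, specifically the justification for swapping the totalization with the filtered colimit, which rests on the uniform cohomological bound.
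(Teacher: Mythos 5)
Your treatment of $t$-exactness and of fully-faithfulness on eventually coconnective objects is essentially the paper's: closure of $\Coh(S)$ under truncations (Lemma \ref{l:ren-coh-trun}) gives the agreement of truncations of coherent objects, and almost compactness of coherent objects in $\IndCoh^*(S)$ is exactly what the paper extracts from Step \ref{st:acpt-pres} of Lemma \ref{l:coh-t-str} applied to pullback along the flat cover; your \v{C}ech-totalization derivation of that almost compactness is the same convergence mechanism in slightly different clothing. (One small correction there: what you need is that the Hom complexes are uniformly bounded \emph{below} cohomologically, i.e.\ the mapping spaces are uniformly truncated, which is what lets you replace $\Tot$ by a finite partial totalization; the uniformity in the cosimplicial direction comes from the fact that $\sF|_{T^{\times_S n+1}}$ is the $t$-exact flat pullback of the single bounded object $\sF|_T$, not merely from coherence on each term.)

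The genuine gap is essential surjectivity. First, Step \ref{st:cohs-pushed-fwd} of Lemma \ref{l:coh-t-str} does not establish that $\IndCoh^*(T)^{\geq -N}$ is generated under colimits by $\Coh(T) \cap \IndCoh^*(T)^{\geq -N}$ for a reasonable indscheme $T$; it only gives the presentation $\IndCoh^*(T)^{\geq -N} = \colim_i \IndCoh^*(T_i)^{\geq -N}$. That generation statement is not available in the paper in infinite type, and the natural argument for it breaks precisely because $\Coh$ is not closed under truncations outside the locally almost of finite type (Noetherian) setting; yet the \v{C}ech nerve terms $T^{\times_S n+1}$, $n \geq 1$, are typically of infinite type even when $T$ is laft (e.g.\ $T \times_S T \simeq K \times T$ when $S = T/K$ with $K$ an infinite-type group scheme), so you would need it exactly where it is most doubtful. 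Second, even granting it termwise, ``follows via flat descent'' is not a formal step: a generation-under-colimits statement does not pass through a totalization of categories, since you must produce coherent approximations \emph{on $S$ itself}, compatible with the descent data. This is what the paper's reference to Lemma \ref{l:gen-indscheme} supplies: pull $\sG$ back to the laft cover $T$, take coherent subobjects there (possible because $T$ is Noetherian), and descend them via the fiber-product-of-subobjects construction using the adjunction $(f^{*,\IndCoh},f_*^{\IndCoh})$, with Lemma \ref{l:coh-local} guaranteeing that the descended subobjects are coherent on $S$; one then writes any object of the heart (and from there of $\IndCoh^*(S)^+$) as a filtered colimit of coherent objects on $S$. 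Your proposal omits this mechanism — which is also where the laft hypothesis in the definition of renormalizable prestack is actually used — so the essential surjectivity step as written does not go through.
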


\begin{proof}

Lemma \ref{l:ren-coh-trun} implies that 
$\Coh(S) \into \IndCoh_{ren}^*(S)$ is closed under
truncations for this $t$-structure. 
This clearly implies that $\IndCoh_{ren}^*(S) \to \IndCoh^*(S)$
is $t$-exact. 

Next, observe that if $\sF \in \Coh(S)$, then
$\sF$ is compact in $\IndCoh^*(S)^{\geq -N}$ for all $N \gg 0$.
Indeed, this follows from Step \ref{st:acpt-pres} from
the proof of Lemma \ref{l:coh-t-str}.
Combined with the fact that compact objects in $\IndCoh_{ren}^*(S)$
are closed under truncations, this 
implies that $\IndCoh_{ren}^*(S)^+ \to \IndCoh^*(S)^+$
is fully-faithful.

Finally, an argument as in Lemma \ref{l:gen-indscheme} 
shows that the functor is essentially surjective. 

\end{proof}

\subsection{}  

Let $f:S_1 \to S_2 \in \PreStk_{ren}$ 
be a reasonable indschematic morphism. 
Then $f_*^{\IndCoh}:\IndCoh^*(S_1) \to \IndCoh^*(S_2)$ is
left $t$-exact. Therefore, by 
Lemma \ref{l:ren-coh-trun} and Proposition \ref{p:ren-t-str}, 
there exists a unique left $t$-exact functor 
$f_*^{\IndCoh_{ren}}:\IndCoh_{ren}^*(S_1) \to \IndCoh_{ren}^*(S_2)$
fitting into a commutative diagram:

\[
\xymatrix{
\IndCoh_{ren}^*(S_1) \ar[r]^{f_*^{\IndCoh_{ren}}} \ar[d] & \IndCoh_{ren}^*(S_2)
\ar[d] \\
\IndCoh^*(S_1) \ar[r]^{f_*^{\IndCoh}}  & \IndCoh^*(S_2)
}
\]

\noindent (with vertical arrows the canonical functors).

Similarly, if $f$ is locally flat, then $f^{*,\IndCoh}$ is $t$-exact,
so there is a unique functor 
$f^{*,\IndCoh_{ren}}:\IndCoh_{ren}^*(S_2) \to \IndCoh_{ren}^*(S_1)$
fitting into an analogous diagram to the above.

Observe that renoralizable prestacks are closed under
fiber squares with one leg locally flat and the other
leg indschematic locally almost of finite type.
As in \cite{indcoh} Proposition 3.2.4, there is a unique functor:

\[
\IndCoh_{ren}^*:
\on{Corr}(\PreStk_{ren})_{indsch.lafp;loc.flat} \to \DGCat_{cont}
\]

\noindent equipped with a natural transformation to 
the functor
$\IndCoh^*:\on{Corr}(\PreStk_{ren})_{indsch.lafp;loc.flat} \to \DGCat_{cont}$
(obtained by restriction from the functor in \S \ref{ss:indcoh-wren})
that on every $S \in \PreStk_{ren}$ evaluates to the
canonical functor $\IndCoh_{ren}^*(S) \to \IndCoh^*(S)$.
Here \emph{indsch.lafp} is shorthand for \emph{indschematic
locally almost of finite presentation}.\footnote{We remark that 
such a morphism is in particular reasonable indschematic.}

\subsection{Symmetric monoidal structures}\label{ss:indcoh-lax}

Let $\sC \subset \PreStk_{conv}$ denote any one
of the subcategories:

\[
\presup{>-\infty}{\Sch}_{qcqs} \subset 
\Sch_{reas} \subset 
\IndSch_{reas} \subset 
\PreStk_{w.ren}
\]

\noindent or take $\sC = \PreStk_{ren}$.

\noindent It is direct from the various definitions 
that $\sC$ is closed under pairwise Cartesian products in $\PreStk_{conv}$.
In particular, the various correspondence categories
we have considered admit symmetric monoidal structures
with monoidal product given objectwise by Cartesian product:
see \cite{grbook} \S V.3.

We will upgrade the various versions of $\IndCoh^*$ considered
so far to have lax symmetric monoidal structures.
This construction is straightforward following \cite{grbook};
we indicate the logic below.

\subsection{}

First, observe that the functor:

\[
(\Psi:\IndCoh^* \to \QCoh):\presup{>-\infty}{\Sch}_{qcqs} \to 
\TwoHom(\Delta^1,\DGCat_{cont}) 
\] 

\noindent admits a unique lax symmetric monoidal structure upgrading
the standard symmetric monoidal structure on  
$\QCoh:\presup{>-\infty}{\Sch}_{qcqs} \to \DGCat_{cont}$.
Indeed, this follows by the same method as in \cite{indcoh} Proposition 3.2.4.

In particular, for $S,T \in \presup{>-\infty}{\Sch}_{qcqs}$,
there is an exterior product functor:

\[
-\boxtimes-:\IndCoh^*(S) \otimes \IndCoh^*(T) \to \IndCoh^*(S \times T).
\]

\noindent This functor is uniquely characterized by
the fact that it maps $\Coh(S) \times \Coh(T) \to \Coh(S \times T)$
and is compatible with $\Psi$ and exterior product of coherent sheaves.

\begin{rem}

We will discuss when the above functor is an equivalence
in \S \ref{ss:strict}.

\end{rem}

\subsection{}

The above lax symmetric monoidal structure canonically extends
to one on the functor 
$\IndCoh^*:\on{Corr}(\presup{>-\infty}{\Sch}_{qcqs})_{all;flat}
\to \DGCat_{cont}$
using \cite{grbook} Theorem V.1.3.2.2 (and the definition of
the symmetric monoidal structure on correspondences
from \cite{grbook} \S V.3.2.1).

\subsection{}

Similar logic applies for reasonable indschemes:
by Kan extension, $\IndCoh^*:\IndSch_{reas} \to \DGCat_{cont}$
has a canonical lax symmetric monoidal structure,
and this extends canonically to a lax symmetric monoidal
structure on the functor 
$\on{Corr}(\IndSch_{reas})_{all;flat} \to \DGCat_{cont}$.

\subsection{} 

Next, we apply \cite{grbook} Proposition V.3.3.2.4 to obtain
a lax monoidal structure on 
$\IndCoh^*:\on{Corr}(\PreStk_{w.ren})_{reas.indsch;loc.flat} \to \DGCat_{cont}$.

Finally, by a similar argument as for eventually coconnective
schemes (i.e., using $t$-structures), the functor
$\IndCoh_{ren}^*:
\on{Corr}(\PreStk_{ren})_{indsch.lafp;loc.flat} \to \DGCat_{cont}$
admits a canonical lax symmetric monoidal structure
(characterized by compatibility with the one on
$\IndCoh^*$ and the natural transformation
$\IndCoh_{ren}^* \to \IndCoh^*$).

\subsection{Strictness}\label{ss:strict}

We now study when our lax symmetric monoidal
functors behave as honest symmetric monoidal functors.

\begin{defin}

A weakly renormalizable prestack $S$ is \emph{strict}
if for every $T \in \presup{>-\infty}{\Sch}_{qcqs}$,
the functor:

\begin{equation}\label{eq:boxtimes}
-\boxtimes-:\IndCoh^*(S) \otimes \IndCoh^*(T) \to \IndCoh^*(S \times T)
\end{equation}

\noindent is an equivalence.

\end{defin}

Before giving examples, we record some basic properties
about this notion.

\begin{prop}\label{p:strict-basics}

\begin{enumerate}

\item\label{i:strict-1} 

Suppose $S \in \PreStk_{w.ren}$ is strict.
Then for every $T \in \IndSch_{reas}$, the 
natural functor:

\[
-\boxtimes-:\IndCoh^*(S) \otimes \IndCoh^*(T) \to \IndCoh^*(S \times T)
\]

\noindent is an equivalence.

\item\label{i:strict-2}

Suppose $S \in \IndSch_{reas}$ is strict.
Then for every $T \in \PreStk_{w.ren}$, the 
natural functor:

\[
-\boxtimes-:\IndCoh^*(S) \otimes \IndCoh^*(T) \to 
\IndCoh^*(S \times T)
\]

\noindent is an equivalence.

\item\label{i:strict-3} 

Suppose $S \in \IndSch_{reas}$ is a filtered colimit
under almost finitely presented closed embeddings
$S = \colim_i S_i$ with $S_i \in \presup{>-\infty}{\Sch}_{qcqs}$
strict. Then $S$ is strict.

\item\label{i:strict-4} 

Suppose $S \in \PreStk_{w.ren}$ admits a flat cover
by a strict reasonable indscheme. Then $S$ is strict.

\end{enumerate} 

\end{prop}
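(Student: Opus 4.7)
The plan is to establish the four parts in the order (3), (1), (2), (4), each building on its predecessors via colimit arguments, flat descent, and dualizability.

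Part (3) is direct. By definition $\IndCoh^*(S)=\colim_i\IndCoh^*(S_i)$ in $\DGCat_{cont}$, and the tensor product in $\DGCat_{cont}$ preserves colimits in each variable. Combined with strictness of each $S_i$ and the fact that $S\times T=\colim_i S_i\times T$ is again a reasonable indscheme under almost finitely presented closed embeddings (obtained by base change from the $S_i\to S_j$), this identifies $\IndCoh^*(S)\otimes\IndCoh^*(T)$ with $\IndCoh^*(S\times T)$ for any $T\in\presup{>-\infty}{\Sch}_{qcqs}$.

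Part (1) runs the analogous colimit argument in the $T$-variable: writing $T=\colim_j T_j$, strictness of $S$ applied to each $T_j\in\presup{>-\infty}{\Sch}_{qcqs}$ yields $\IndCoh^*(S)\otimes\IndCoh^*(T)=\colim_j\IndCoh^*(S\times T_j)$. To identify this colimit with $\IndCoh^*(S\times T)$, I would fix a flat cover $S'\to S$ by a reasonable indscheme and apply flat descent (Theorem \ref{t:flat-desc}) to the cover $S'\times T\to S\times T$, using that each $S'^{\times_S n}\times T=\colim_j S'^{\times_S n}\times T_j$ is a reasonable indscheme so that $\IndCoh^*$ commutes with the filtered colimit in $j$ level-wise in the semi-cosimplicial diagram.

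Parts (2) and (4) both rely on dualizability to exchange tensor products with the semi-cosimplicial totalizations arising from flat descent. For (2), fix a flat cover $T'\to T$ by a reasonable indscheme; since $\IndCoh^*(S)=\Ind(\Coh(S))$ is compactly generated, hence dualizable in $\DGCat_{cont}$, $\IndCoh^*(S)\otimes-$ commutes with $\Tot_{semi}\IndCoh^*(T'^{\times_T\bullet+1})=\IndCoh^*(T)$, and each level is an equivalence by (1). For (4), the symmetric argument using the cover $S'\to S$ and dualizability of $\IndCoh^*(T)$ for $T\in\presup{>-\infty}{\Sch}_{qcqs}$ reduces strictness of $S$ to strictness of each Cartesian fiber power $S'^{\times_S n}$; the case $n=1$ is the hypothesis, and higher $n$ is handled inductively by noting that $S'^{\times_S n}$ is a reasonable indscheme flat over $S'$ and applying (1) and (2) over $S$.

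The principal obstacle is the induction in (4): establishing strictness of the fiber powers $S'^{\times_S n}$ for $n\geq 2$. This is best approached by first establishing an identification $\IndCoh^*(S'^{\times_S n})\simeq\IndCoh^*(S')^{\otimes_{\IndCoh^*(S)}n}$ via a Beck--Chevalley-type statement for flat covers, after which strictness of $S'^{\times_S n}$ paired with $T$ reduces to iterated application of strictness of $S'$ and parts (1)--(2). The enabling observation throughout is that compact generation of $\IndCoh^*$ for qcqs eventually coconnective schemes propagates to reasonable indschemes (hence dualizability), permitting all the required commutations of tensor products with semi-cosimplicial totalizations.
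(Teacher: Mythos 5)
Your skeleton coincides with the paper's: parts (1) and (3) come from the colimit presentation of $\IndCoh^*$ on the reasonable-indscheme factor, and parts (2) and (4) from using compact generation (hence dualizability) of the other factor to commute the tensor product past the limit supplied by flat descent (Theorem \ref{t:flat-desc}). Your treatments of (2) and (3) are fine. In (1) you take a longer route than the paper (which regards the identification $\colim_j \IndCoh^*(S \times T_j) \simeq \IndCoh^*(S \times T)$ as immediate): after descending along $S' \times T \to S \times T$ you need to interchange the filtered colimit in $j$ with the semi-cosimplicial totalization, and you do not justify this. It is not automatic, though it is repairable: the transition functors are pushforwards along almost finitely presented closed embeddings, hence preserve compacts and admit continuous right adjoints which are compatible with the flat cosimplicial structure maps by Lemma \ref{l:indproper-flat}, so the colimit can be rewritten as a limit along the right adjoints and then commuted with $\Tot_{semi}$.

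The genuine gap is in (4), precisely at the step you yourself call the principal obstacle: strictness of the \v{C}ech fiber powers $S'^{\times_S n}$ for $n \geq 2$. Neither of your proposed justifications works. Flatness of $S'^{\times_S n}$ over the strict $S'$ cannot imply strictness --- every object of $\presup{>-\infty}{\Sch}_{qcqs}$ is flat over $\Spec(k)$, which is strict, so that inference would render the notion vacuous --- and parts (1), (2) concern absolute products over $k$, so ``applying (1) and (2) over $S$'' says nothing about fiber products over $S$. Your fallback, $\IndCoh^*(S'^{\times_S n}) \simeq \IndCoh^*(S')^{\otimes_{\IndCoh^*(S)} n}$ via a Beck--Chevalley statement, is not well posed ($\IndCoh^*(S)$ is not an algebra acting on $\IndCoh^*(S')$ in this formalism; you presumably intend $\QCoh(S)$), and even the $\QCoh(S)$-linear version is not available: in the proof of Proposition \ref{p:flat-desc-sch} the functor \eqref{eq:flat-tens} is shown only to be fully faithful for a flat morphism, with essential image generated by pullbacks under the $\QCoh$-action, and upgrading it to an equivalence is a claim of the same order as what you are trying to prove. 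So the inductive step of (4) is unproven as written. (For calibration: the paper's own three-line proof of (4) is silent on this point as well; in the paper's applications the nerve terms are products such as $X \times H^{\times n}$, which are strict by (1)--(3) together with Propositions \ref{p:ft-strict} and \ref{p:placid-strict}, and that is the situation in which the statement is actually used.)
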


\begin{proof}

\eqref{i:strict-1} (resp. \eqref{i:strict-3}) 
is immediate from the presentation
of $\IndCoh^*(T)$ (resp. $\IndCoh^*(S)$) 
as a colimit (using that $T$, resp. $S$, is assumed to
be in $\IndSch_{reas}$).
Then \eqref{i:strict-2} is similarly formal, noting
that we can commute the relevant tensor product and limit
because $\IndCoh^*(S)$ is compactly generated (hence dualizable)
for $S \in \IndSch_{reas}$. 
The same applies in \eqref{i:strict-4}: we have to 
commute a limit with a tensor product against $\IndCoh^*(T)$,
which we can do because the relevant $T$ here is assumed to be
in $\presup{>-\infty}{\Sch}_{qcqs}$ so $\IndCoh^*(T)$
is compactly generated.

\end{proof}

\begin{warning}

In some contexts, people speak use a more general
notion of indscheme in which transition maps are not
required to be closed embeddings, and use the
term \emph{strict indscheme} to refer to the (more standard)
notion of indscheme we have used. This terminology has no 
relationship to the above notion of strict indscheme(/prestack);
we hope our use of this terminology in the present
context does not create confusion.

\end{warning}

\subsection{}

We now give some examples of strictness.
Note that Proposition \ref{p:strict-basics} \eqref{i:strict-3}
and \eqref{i:strict-4} reduce us to constructing
examples of strict $S$ with $S \in \presup{>-\infty}{\Sch}_{qcqs}$.

\begin{lem}\label{l:tens-ff}

For every $S,T \in \presup{>-\infty}{\Sch}_{qcqs}$,
the functor \eqref{eq:boxtimes} is fully-faithful.

\end{lem}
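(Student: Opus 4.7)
The plan is as follows. Since both $\IndCoh^*(S) \otimes \IndCoh^*(T) \simeq \Ind(\Coh(S) \otimes \Coh(T))$ and $\IndCoh^*(S \times T) = \Ind(\Coh(S \times T))$ are compactly generated, the ind-extension $-\boxtimes-$ is fully-faithful if and only if the induced functor on compact generators $\Coh(S) \otimes \Coh(T) \to \Coh(S \times T)$ is fully-faithful. By the universal property of the tensor product of small DG categories, this in turn reduces to showing that for $\sF_i \in \Coh(S)$ and $\sG_i \in \Coh(T)$ ($i=1,2$), the natural K\"unneth-type comparison map
\[
\ul{\Hom}_{\QCoh(S)}(\sF_1, \sF_2) \otimes_k \ul{\Hom}_{\QCoh(T)}(\sG_1, \sG_2) \to \ul{\Hom}_{\QCoh(S \times T)}(\sF_1 \boxtimes \sG_1, \sF_2 \boxtimes \sG_2)
\]
is an isomorphism. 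Here I invoke Lemma \ref{l:psi} to pass freely between $\IndCoh^*$-Homs and $\QCoh$-Homs, since all the objects involved lie in $\QCoh(-)^+$.

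When $\sF_1$ and $\sG_1$ are perfect, the comparison follows from the classical K\"unneth formula over a field: both sides compute $\Gamma\bigl(S \times T, (\sF_1^\vee \otimes \sF_2) \boxtimes (\sG_1^\vee \otimes \sG_2)\bigr)$, and K\"unneth for $\Gamma$ reduces to flat base change along $T \to \Spec(k)$ together with the projection formula, both of which apply for qcqs schemes over the base field.

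For the general almost-perfect case, I would use perfect approximations: for $\sF_1 \in \Coh(S)$ there exist perfect $P_n \in \Perf(S)$ together with morphisms $P_n \to \sF_1$ whose fibers lie in $\QCoh(S)^{<-n}$ (for qcqs derived schemes this is a standard property of almost-perfect objects). Choose similar perfect approximations $Q_n \to \sG_1$. Given $\sF_2 \in \QCoh(S)^{\geq -k}$, the induced map $\ul{\Hom}(\sF_1, \sF_2) \to \ul{\Hom}(P_n, \sF_2)$ is an isomorphism in cohomological degrees $< n - k - 1$, and the analogous estimate holds on the $T$-factor. Both sides of the K\"unneth comparison are bounded-below complexes of $k$-vector spaces; in any fixed cohomological range, the comparison for $(\sF_1, \sG_1)$ agrees with the one for $(P_n, Q_n)$ once $n$ is taken large enough. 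Since the latter is an isomorphism by the perfect case, this gives the comparison degree-by-degree, hence globally.

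The main technical obstacle is quantitatively controlling the passage from perfect approximations to the almost-perfect case in a way compatible with the exterior product in the non-Noetherian derived setting. Specifically, one must show that the fibers of $P_n \boxtimes Q_n \to \sF_1 \boxtimes \sG_1$ have controlled amplitude, bounded in terms of the individual approximation orders together with the amplitudes of $\sF_1$ and $\sG_1$. This is straightforward but requires careful amplitude bookkeeping; all of the necessary estimates are finite because $\sF_i, \sG_i \in \Coh$ are bounded below and because we are working over a field, so that tensor products over $k$ are $t$-exact in each variable separately.
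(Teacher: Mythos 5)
Your proposal is correct and is essentially the paper's own proof: the paper simply cites the argument of \cite{indcoh} Proposition 4.6.2, which is exactly your reduction to a K\"unneth isomorphism on $\Coh$-generators, settled first for perfect objects and then for coherent ones by perfect approximation with degreewise convergence. The only input beyond formal manipulations—existence of perfect approximations $P_n \to \sF$ with highly connective fiber on eventually coconnective qcqs schemes—is the same fact the paper already invokes in the proofs of Lemma \ref{l:psi} and Lemma \ref{l:placid-pres}, and your amplitude bookkeeping goes through since objects of $\Coh$ are cohomologically bounded and $\otimes_k$ is exact over the field $k$.
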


\begin{proof}

The same argument as in \cite{indcoh} Proposition 4.6.2 
applies. 

\end{proof}

\begin{prop}\label{p:ft-strict}

If $S \in \presup{>-\infty}{\Sch}_{qcqs}$ is almost finite type
(over $\Spec(k)$\footnote{As in \cite{indcoh}, it is important
here that $k$ have characteristic $0$ or be a perfect field of
characteristic $p$.}), then $S$ is strict.

\end{prop}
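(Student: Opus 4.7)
The plan is to establish essential surjectivity of the exterior product functor \eqref{eq:boxtimes}, since fully-faithfulness is already supplied by Lemma \ref{l:tens-ff}. Both sides are compactly generated --- the source by objects of the form $\sF \boxtimes \sG$ with $\sF \in \Coh(S)$ and $\sG \in \Coh(T)$, the target by $\Coh(S \times T)$ --- so the task reduces to showing that every coherent sheaf on $S \times T$ lies in the idempotent-complete stable subcategory generated by exterior products of coherent sheaves.

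I would first reduce to the affine case. Since $\IndCoh^*(S)$ is compactly generated and hence dualizable, tensoring with it commutes with the finite limits appearing in a Cech-type computation, so Lemma \ref{l:zar-desc} (Zariski descent) on both variables lets us assume $S$ and $T$ are affine. Writing $S = \Spec(A)$ with $A$ almost of finite type over $k$, one obtains an almost finitely presented closed embedding $i: S \hookrightarrow V$ into a smooth affine $V$ (take a polynomial ring surjecting onto $\pi_0(A)$ and kill the relevant higher homotopy by free generators, using that $\tau^{\geq -n} A$ is of finite presentation for each $n$ and that $A$ is bounded in cohomological degree).

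The next reduction is from $S$ to $V$. The key claim is that $\Coh(S)$ is Karoubi-generated by $i^{*,\IndCoh}(\Coh(V)) = i^{*,\IndCoh}(\Perf(V))$; classically this is a Koszul-style resolution for the closed embedding, and the DG case follows by running the argument truncation-by-truncation over the finitely many Postnikov stages of $A$ permitted by eventual coconnectivity. Working in families, the same resolutions apply pointwise to give that $\Coh(S \times T)$ is Karoubi-generated by the essential image of $(i \times \id_T)^{*,\IndCoh}$, which by compatibility of pullback with $\boxtimes$ reduces essential surjectivity for $S \times T$ to that for $V \times T$.

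Finally, for smooth affine $V$, Lemma \ref{l:psi} gives $\IndCoh^*(V) \simeq \QCoh(V)$, which is rigid symmetric monoidal, and $\IndCoh^*(V \times T)$ carries a $\QCoh(V)$-module structure via $*$-pullback along the flat projection $V \times T \to V$ (using Lemma \ref{l:flat-indsch}). Combining rigidity with base-change along a $k$-point $\Spec(k) \hookrightarrow V$ (Lemma \ref{l:flat-basechange}) yields the identification $\IndCoh^*(V \times T) \otimes_{\QCoh(V)} \Vect \simeq \IndCoh^*(T)$, which suffices to conclude strictness of $V$. I expect the main obstacle to be the closed-embedding reduction: the classical generation statement for $\Coh(S)$ in terms of $\Perf(V)$ must be propagated both through the Postnikov truncations of $A$ and in families over an essentially arbitrary $T$, with no finiteness hypothesis on $T$ to lean on, and one must check that Karoubi-generation is preserved after tensoring with $\IndCoh^*(T)$.
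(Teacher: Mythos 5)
There is a genuine gap, and it sits exactly at the crux of the proposition: your ``key claim'' that $\Coh(S)$ is Karoubi-generated by $i^{*,\IndCoh}(\Coh(V)) = i^{*,\IndCoh}(\Perf(V))$ for a closed embedding $i:S \into V$ into a smooth affine $V$ is false whenever $S$ is singular (or non-classical). The functor $i^*$ sends perfect complexes to perfect complexes, and the thick (Karoubi-generated) closure of $i^*\Perf(V)$ inside $\Coh(S)$ is precisely $\Perf(S)$ (it contains $\sO_S = i^*\sO_V$, hence all of $\Perf(S)$, and is contained in the thick subcategory $\Perf(S)$); but $\Perf(S) \subsetneq \Coh(S)$ unless $S$ is regular. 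Concretely, for $S = \Spec(k[x]/x^2) \into \bA^1$, the residue field $k$ lies in $\Coh(S)$ but in no thick subcategory generated by pullbacks of perfect complexes. The same failure persists under colimits: the colimit-closure of $\Perf(S)$ in $\IndCoh^*(S)$ is the copy of $\QCoh(S)$, not all of $\IndCoh^*(S)$. So the reduction from $S$ to $V$ collapses, and with it the whole strategy, since the genuinely singular coherent sheaves are exactly what the proposition has to account for. (Two secondary issues: $i^{*,\IndCoh}$ along a non-flat closed embedding is not part of the framework of \S \ref{s:indcoh} --- only flat pullbacks are constructed, with finite Tor-dimension explicitly not developed --- and in the final step, rigidity of $\QCoh(V)$ plus base change at a $k$-point identifies a fiber but does not by itself give essential surjectivity of \eqref{eq:boxtimes} for $V$.)

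For comparison, the paper's proof avoids resolving coherent sheaves altogether. After the reduction to essential surjectivity (Lemma \ref{l:tens-ff}) and to separated $S$, it uses a convolution-with-kernels argument: the functor $\IndCoh(S \times S) \otimes \IndCoh^*(S \times T) \to \IndCoh^*(S \times T)$, $\sK \otimes \sF \mapsto p_{23,*}^{\IndCoh}(\id_S \times \Delta_S \times \id_T)^!(\sK \boxtimes \sF)$, sends the unit kernel $\Delta_{S,*}^{\IndCoh}(\omega_S)$ to the identity (base change, Lemma \ref{l:proper}), hence is essentially surjective; combining this with the finite-type decomposition $\IndCoh(S \times S) \simeq \IndCoh(S) \otimes \IndCoh(S)$ of \cite{indcoh} Proposition 4.6.2 (this is where almost finite type over $k$, and the hypothesis on $k$, enter) and the observation that the resulting composite factors through $\IndCoh(S) \otimes \IndCoh^*(T)$ finishes the proof. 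If you want to salvage your outline, you would need to replace the pullback-generation step by an argument of this kernel type; pushing forward to $V$ goes the wrong way, and $\IndCoh^*(S \times T)$ is not obtained from $\IndCoh^*(V \times T)$ by tensoring with $\QCoh(S)$ over $\QCoh(V)$ (already $\IndCoh(S) \neq \QCoh(S) \otimes_{\QCoh(V)} \IndCoh(V)$ for singular $S$ in smooth $V$).
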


\begin{proof}

Let $T \in \presup{>-\infty}{\Sch}_{qcqs}$ be given.
By Lemma \ref{l:tens-ff}, we need to show \eqref{eq:boxtimes} is essentially surjective.
By Zariski descent, we reduce to the case where $S$ is separated.

We have a standard convolution functor:

\begin{equation}\label{eq:conv-indcoh}
\begin{gathered}
-\star-:
\IndCoh(S \times S) \otimes \IndCoh^*(S \times T) \to 
\IndCoh^*(S \times T) \\
\big(\sK \in \IndCoh(S \times S), \sF \in \IndCoh^*(S \times T)\big) 
\mapsto 
p_{23,*}^{\IndCoh}(\id_S \times \Delta_S 
\times \id_T)^!(\sK \boxtimes \sF)
\end{gathered}
\end{equation}

\noindent (where e.g., $\Delta_S:S \to S \times S$ is the
diagonal and $p_{23}:S \times S \times T \to S \times T$
is the projection onto the last two coordinates).

By Lemma \ref{l:proper} \eqref{i:proper-2}, we have:

\[
\Delta_{S,*}^{\IndCoh}(\omega_S)\star \sF = \sF
\]

\noindent for any $\sF \in \IndCoh^*(S \times T)$.
In particular, \eqref{eq:conv-indcoh} is essentially surjective.

Now note the composition:

\begin{equation}\label{eq:tens-conv}
\begin{gathered}
\IndCoh(S) \otimes \IndCoh(S) \otimes \IndCoh^*(S \times T) 
\to \\
\IndCoh(S \times S) \otimes \IndCoh^*(S \times T) \xar{\eqref{eq:conv-indcoh}} 
\IndCoh^*(S \times T)
\end{gathered}
\end{equation}

\noindent factors through the subcategory 
$\IndCoh(S) \otimes \IndCoh^*(T)$.
Now the first functor in \eqref{eq:tens-conv}
is an equivalence by \cite{indcoh} Proposition 4.6.2,
so we are done.

\end{proof}

Before proceeding, we need the following auxiliary result.

\begin{lem}\label{l:placid-pres}

Suppose $S \in \presup{>-\infty}{\Sch}_{qcqs}$ is
written as a filtered inverse limit
$S = \lim_i S_i$ under flat affine structure maps
with $S_i \in \presup{>-\infty}{\Sch}_{qcqs}$.
Then $*$-pullback induces an equivalence:

\[
\underset{i,\text{upper-}*}{\colim} \, 
\IndCoh^*(S_i) \isom \IndCoh^*(S) \in \DGCat_{cont}.
\]

\end{lem}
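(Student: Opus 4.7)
The plan is to reduce to the known analogue for quasi-coherent sheaves and then use that $\IndCoh^*$ is generated by $\Coh$ to transfer the statement. First, I would verify that each projection $f_i : S \to S_i$ is itself a flat affine morphism in $\presup{>-\infty}{\Sch}_{qcqs}$: affineness is preserved because affine morphisms correspond to $\on{Spec}$ of colimits of algebras, and flatness passes to the limit because it can be checked on each cohomology sheaf of the structure algebra. In particular, $f_i^{*,\IndCoh}$ is defined, is $t$-exact (the scheme analogue of Corollary \ref{c:flat-t-exact}), and preserves $\Coh$ by Lemma \ref{l:psi} together with the flat-pullback description of $f_i^{*,\IndCoh}$ in \S \ref{ss:flat}.

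Since all transition functors $(f_{ij})^{*,\IndCoh}$ preserve compact objects, the colimit in $\DGCat_{cont}$ can be computed as the Ind-completion of the corresponding colimit of subcategories of compact objects in $\Cat$. Thus it suffices to show that the canonical functor
\[
G : \underset{i}{\colim}\, \Coh(S_i) \to \Coh(S)
\]
is an equivalence of small idempotent-complete stable categories. For fully faithfulness, given $\sF \in \Coh(S_i)$ and $\sG \in \Coh(S_j)$, both sides of the comparison map for $\ul{\Hom}$ can be evaluated inside $\QCoh^+$ by Lemma \ref{l:psi}, since coherent sheaves are eventually coconnective and flat pullback is $t$-exact; then the statement follows from the known identification $\QCoh(S)^+ \simeq \colim_i \QCoh(S_i)^+$ under flat affine filtered limits (a standard consequence of \cite{sag} \S 2.5, via right completeness and the identification at each truncation level).

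For essential surjectivity, the plan is as follows. Let $\sF \in \Coh(S)$, so $\sF \in \QCoh(S)^+$ and $\sF \in \QCoh(S)^{\geq -N}$ for some $N$. By the $\QCoh$ statement recalled above, there is an index $i$ and $\widetilde{\sF} \in \QCoh(S_i)^{\geq -N}$ whose pullback to $S$ agrees with $\sF$. I must show that, after enlarging $i$, $\widetilde{\sF}$ is coherent. Using Lemma \ref{l:coh-t-str} together with Step \ref{st:acpt-pres} of its proof applied to the $t$-exact functor $f_i^{*,\IndCoh}$ (which is conservative on $+$-parts by the QCoh comparison above and the faithful-flatness-in-the-limit of $f_i$), one gets that almost compactness descends, so compactness of $\sF$ in each $\QCoh(S)^{\geq -n}$ translates into compactness of $\widetilde{\sF}$ in $\QCoh(S_i)^{\geq -n}$ for $i$ sufficiently large depending on $n$.

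The hard part will be pinning down the descent of almost compactness for a single sufficient $i$, uniformly in $n$: a priori one picks a different index for each truncation level, whereas we need a single $i$ for which $\widetilde{\sF}$ is compact in $\QCoh(S_i)^{\geq -n}$ for all $n \ge 0$. The way around this is to use that $\sF$ is bounded, say $\sF \in \QCoh(S)^{[-N,M]}$, and that compactness of a bounded object in $\QCoh^{\geq -n}$ for $n \ge N$ is equivalent to compactness in $\QCoh^{\geq -N}$; so a single descent step at level $N$ suffices. With this resolved, the proof concludes by combining the two steps.
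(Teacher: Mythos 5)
There is a genuine gap in your essential surjectivity step, and it sits exactly at the point you yourself flag as "the hard part." Your proposed fix — that for a bounded object, compactness in $\QCoh^{\geq -n}$ for all $n \geq N$ is equivalent to compactness in $\QCoh^{\geq -N}$ — is false. Coherence in the sense of this paper is an infinite list of conditions, one for each truncation level, and boundedness does not collapse them: over a classical non-coherent ring $A$, a finitely presented module $M$ that does not admit a finite presentation "to order two" lies in the heart, is compact in $\QCoh(\Spec(A))^{\geq 0}$ (mapping spaces out of $M$ into coconnective objects reduce to $\Hom_A(M,H^0(-))$), but is not compact in $\QCoh(\Spec(A))^{\geq -1}$, since that would force $\on{Ext}^1_A(M,-)$ to commute with filtered colimits. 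So a single descent step at level $N$ does not suffice, and the uniformity problem remains open in your argument. Two further steps are also unjustified: the projections $f_i:S \to S_i$ are flat and affine but need \emph{not} be faithfully flat (the image can shrink in the limit), so $f_i^{*}$ need not be conservative on eventually coconnective objects and the appeal to Step \ref{st:acpt-pres} of Lemma \ref{l:coh-t-str} (whose comonadicity argument requires that conservativity) does not apply; and the initial claim that $\sF \in \Coh(S)$ is \emph{on the nose} the pullback of some $\widetilde{\sF} \in \QCoh(S_i)^{\geq -N}$ at a finite stage does not follow from the identification $\QCoh(S) \simeq \colim_i \QCoh(S_i)$ — arbitrary bounded-below objects do not descend to finite stages; only compact-type objects do, and then a priori only up to retract.

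The paper's proof sidesteps all of this by never trying to descend a coherent sheaf \emph{as a coherent sheaf}. For full faithfulness, Homs out of coherent objects are reduced to Homs out of perfect objects via the truncation trick ($\sF \simeq \tau^{\geq -N}\sF'$ with $\sF'$ perfect) together with $t$-exactness of flat pullback, since perfect objects are genuinely compact in $\QCoh$ and so satisfy the Hom-colimit formula. For essential surjectivity, one descends the \emph{perfect} complex $\sF'$ (compact in $\QCoh$, hence defined at a finite stage $S_{i_0}$) and writes $\sF = \alpha_{i_0}^{*,\IndCoh}\tau^{\geq -N}(\sF'_{i_0})$ using $t$-exactness of $\alpha_{i_0}^{*,\IndCoh}$; crucially, $\tau^{\geq -N}(\sF'_{i_0})$ is allowed to be an arbitrary object of $\IndCoh^*(S_{i_0})$, not a coherent one, because for essential surjectivity of the ind-extended functor it is enough to exhibit each compact generator of $\IndCoh^*(S)$ as the image of \emph{some} object from a finite stage. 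Your reduction to an equivalence $\colim_i \Coh(S_i) \to \Coh(S)$ is a stronger statement than the lemma requires, and it is exactly the extra strength that runs into the uniformity-in-$n$ problem; if you want to salvage your outline, replace the descent of $\sF$ by descent of a perfect approximation as above.
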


\begin{proof}

First, note that the functor:

\begin{equation}\label{eq:qcoh-colim}
\underset{i}{\colim} \, \QCoh(S_i) \to \QCoh(S) \in \DGCat_{cont}
\end{equation}

\noindent is an equivalence. Indeed, both sides
are monadic over $\QCoh(S_{i_0})$ for any fixed index
$i_0$, and the induced map on monads is an isomorphism 
(here we are not using the flatness assumption).

Therefore, for any index $i_0$ and any
$\sF \in \Perf(S_{i_0})$, $\sG \in \QCoh(S_{i_0})$,
the natural map:

\begin{equation}\label{eq:colim-perf-hom}
\underset{i \to i_0} {\colim} \, 
\Hom_{\QCoh(S_i)}(\alpha_{i_0,i}^*(\sF),\alpha_{i_0,i}^*(\sG)) \to 
\Hom_{\QCoh(S)}(\alpha_{i_0}^*(\sF),\alpha_{i_0}^*(\sG)) \in \Gpd
\end{equation}

\noindent is an isomorphism; here $\alpha_{i_0,i}:S_i \to S_{i_0}$
and $\alpha_{i_0}:S \to S_{i_0}$ denote the structural maps.

We now claim that \eqref{eq:colim-perf-hom} is an isomorphism  
$\sG \in \QCoh(S_{i_0})^+$ and $\sF \in \Coh(S_{i_0})$.
Indeed, if $\sF,\sG \in \QCoh(S_{i_0})^{\geq -N}$,
we choose $\sF^{\prime} \in \Perf(S_{i_0})$ equipped
with an isomorphism $\tau^{\geq -N} \sF^{\prime} \isom \sF$.
By flatness, we have:

\[
\begin{gathered}
\alpha_{i_0,i}^*(\sG) \in \QCoh(S_i)^{\geq -N} \\
\tau^{\geq -N}\alpha_{i_0,i}^*(\sF^{\prime}) \isom  
\alpha_{i_0,i}^*(\sF)
\end{gathered}
\] 

\noindent and similarly for $\alpha_{i_0}$. Therefore,
the sides of \eqref{eq:colim-perf-hom} are unchanged
under replacing $\sF$ by $\sF^{\prime}$, so we are reduced
to that case.

In particular, \eqref{eq:colim-perf-hom} is an isomorphism
when $\sF,\sG \in \Coh(S_{i_0})$.
Unwinding the above logic, it follows that the natural functor:

\begin{equation}\label{eq:indcoh-placid-desc}
\underset{i}{\colim} \, 
\IndCoh^*(S_i) \isom \IndCoh^*(S) \in \DGCat_{cont}
\end{equation}

\noindent is fully-faithful. To show that this functor
is an equivalence, 
it suffices to show any $\sF \in \Coh(S)$ lies in the essential
image.

Suppose $\sF$ lies in cohomological
degrees $\geq -N$, and choose $\sF^{\prime} \in \Perf(S)$
with $\tau^{\geq -N}(\sF^{\prime}) \isom \sF$.
Observe that we also have:

\[
\tau^{\geq -N}\sF^{\prime} \isom \sF \in \IndCoh^*(S)
\]

\noindent \emph{where the 
truncation is for the $t$-structure on $\IndCoh^*$}
(and we are using the embeddings 
$\Perf(S) \subset \Coh(S) \subset\IndCoh^*(S)$).
Indeed, both sides are bounded from below, so it suffices
to check this after applying the $t$-exact functor $\Psi$;
then the corresponding isomorphism was a defining
property of $\sF^{\prime}$.

By \eqref{eq:qcoh-colim}, 
there exists an index $i_0$ and some 
$\sF_{i_0}^{\prime} \in \Perf(S_{i_0})$ 
equipped with an 
isomorphism $\alpha_{i_0}^*(\sF_{i_0}^{\prime}) \isom 
\sF^{\prime}$. We then obtain:

\[
\sF = 
\tau^{\geq -N}(\sF^{\prime}) =
\tau^{\geq -N}\alpha_{i_0}^{*,\IndCoh}(\sF_{i_0}^{\prime}) =
\alpha_{i_0}^{*,\IndCoh}\tau^{\geq -N}(\sF_{i_0}^{\prime})
 \in \IndCoh^*(S)
\]

\noindent (where all truncations are for $t$-structures
on $\IndCoh^*$). This shows $\sF$ is in the 
essential image of \eqref{eq:indcoh-placid-desc} as desired.

\end{proof}

Next, we show:

\begin{prop}\label{p:placid-strict}

Suppose $S \in \presup{>-\infty}{\Sch}_{qcqs}$
can be written\footnote{This is an analogue (better suited
for $\IndCoh$) of the notion of \emph{placid} 
scheme introduced in \cite{dmod}.}
 as a filtered limit $S = \lim_i S_i$ 
 under flat affine structure maps with each $S_i$ locally
 almost of finite type. Then $S$ is strict.

\end{prop}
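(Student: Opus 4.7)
My plan is to combine Lemma \ref{l:placid-pres}, Proposition \ref{p:ft-strict}, and the fact that tensor product in $\DGCat_{cont}$ commutes with colimits to reduce the claim to the finite type case.

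Fix $T \in \presup{>-\infty}{\Sch}_{qcqs}$; I wish to show
\[
-\boxtimes-:\IndCoh^*(S) \otimes \IndCoh^*(T) \to \IndCoh^*(S \times T)
\]
is an equivalence. First I would observe that $S \times T = \lim_i (S_i \times T)$ is again a filtered limit under flat affine structure maps (base-change preserves flatness and affineness), and each $S_i \times T$ lies in $\presup{>-\infty}{\Sch}_{qcqs}$ (since $S_i$ is flat over $k$, being lft in characteristic zero\footnote{Or, more robustly: since $S_i$ is lft it admits a smooth, hence flat, cover by affine schemes, so after Zariski localization we may compute $S_i \times T$ as a Tor-independent base change and eventual coconnectivity is preserved.}, so the product remains eventually coconnective). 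Hence by Lemma \ref{l:placid-pres} applied to both $S$ and $S \times T$, we have the identifications
\[
\IndCoh^*(S) \simeq \underset{i,\text{upper-}*}{\colim} \, \IndCoh^*(S_i),
\qquad
\IndCoh^*(S \times T) \simeq \underset{i,\text{upper-}*}{\colim} \, \IndCoh^*(S_i \times T)
\]
in $\DGCat_{cont}$.

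Next, because the tensor product $-\otimes\IndCoh^*(T)$ on $\DGCat_{cont}$ is a left adjoint and therefore commutes with filtered colimits, I obtain
\[
\IndCoh^*(S) \otimes \IndCoh^*(T) \simeq \underset{i}{\colim} \, \big(\IndCoh^*(S_i) \otimes \IndCoh^*(T)\big).
\]
Now Proposition \ref{p:ft-strict} says each $S_i$ is strict, so the level-$i$ exterior product functor
$\IndCoh^*(S_i) \otimes \IndCoh^*(T) \to \IndCoh^*(S_i \times T)$
is an equivalence. Naturality of the exterior product in the $S$-variable (encoded by the lax symmetric monoidal structure on $\IndCoh^*$ from \S \ref{ss:indcoh-lax}) ensures that these equivalences assemble into a map of colimit diagrams over $i$, and the passage to the colimit recovers the exterior product for $S$.

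The only real point to check carefully will be the compatibility of the level-$i$ strictness equivalences with the upper-$*$ transition functors: concretely, for a transition map $\alpha_{ij}:S_i\to S_j$, one must verify that the square
\[
\xymatrix{
\IndCoh^*(S_j)\otimes \IndCoh^*(T) \ar[r]^-{\boxtimes} \ar[d]_{\alpha_{ij}^{*,\IndCoh}\otimes \id} & \IndCoh^*(S_j\times T)\ar[d]^{(\alpha_{ij}\times\id_T)^{*,\IndCoh}}\\
\IndCoh^*(S_i)\otimes \IndCoh^*(T) \ar[r]_-{\boxtimes} & \IndCoh^*(S_i\times T)
}
\]
commutes. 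This follows from the fact that $\IndCoh^*:\on{Corr}(\presup{>-\infty}\Sch_{qcqs})_{all;flat} \to \DGCat_{cont}$ is lax symmetric monoidal, so that exterior product commutes with $*$-pullback along flat morphisms. Given this compatibility, the chain of identifications above gives the desired equivalence, completing the proof.
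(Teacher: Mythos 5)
Your proposal is correct and is essentially the paper's own argument: apply Lemma \ref{l:placid-pres} to both $S$ and $S \times T$, commute $-\otimes \IndCoh^*(T)$ with the filtered colimit, and invoke Proposition \ref{p:ft-strict} levelwise, with the compatibility square supplied by the functoriality of exterior products under flat $*$-pullback. One small remark: the justification that $S_i \times T$ is eventually coconnective via "flatness of $S_i$ over $k$" is off (lft does not imply flatness over $k$ for derived schemes), but the conclusion holds anyway simply because $k$ is a field, so the tensor product of eventually coconnective structure sheaves over $k$ is eventually coconnective.
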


\begin{proof}

Let $T \in \presup{>-\infty}{\Sch}_{qcqs}$ be given.
We have a commutative diagram in $\DGCat_{cont}$:

\[
\xymatrix{
\IndCoh^*(S) \otimes \IndCoh^*(T) \ar[r] & 
\IndCoh^*(S \times T) \\
\underset{i}{\colim} \, \IndCoh^*(S_i) \otimes \IndCoh^*(T)
\ar[r] \ar[u] &
\underset{i}{\colim} \, \IndCoh^*(S_i \times T). \ar[u]
}
\]

\noindent By Lemma \ref{l:placid-pres} (applied to
$S$ and $S \times T$), the vertical arrows are isomorphisms.
By Proposition \ref{p:ft-strict}, the bottom arrow is an
isomorphism. Therefore, the top arrow is an isomorphism,
as desired.

\end{proof}

\section{Weak actions of group indschemes}\label{s:weak-ind} 

\subsection{} In this section, we study weak group actions
on categories for \emph{Tate group indschemes}. This
is a convenient class of group indschemes containing
loop groups and their relatives. 

For a \emph{compact open} subgroup $K \subset H$,
we define a weak Hecke category $\sH_{H,K}^w$, which is a certain
monoidal DG category. 
We then define $H\mod_{weak}$ in such a way that we have an equivalence:

\begin{equation}\label{eq:weak-hecke-equiv}
(-)^{K,w}:H\mod_{weak} \isom \sH_{H,K}^w\mod.
\end{equation}

\noindent The main homotopical difficulty is to give a definition
that is manifestly independent of $K$, which we do by a sort
of brute force argument.

The main new feature in the setting of (\emph{polarizable}) 
group indschemes, as opposed
to group schemes, is the presence of the
\emph{modular character}; see \S \ref{ss:pol-mod}. 
We especially draw the reader's attention to Proposition \ref{p:chi}.
The reader who is not worried about homotopical issues may essentially skip to 
\S \ref{ss:pol-mod}, taking \eqref{eq:weak-hecke-equiv} as something
like a definition.

\subsection{Tate group indschemes}

Let $H$ be a group indscheme.

\begin{defin}

A \emph{compact open subgroup} of $H$ is
a classical affine group scheme $K$ with a closed
embedding $K \into H$ that is a homomorphism,
and such that $H/K$ is an indscheme 
locally almost of finite type (equivalently, presentation). 

A \emph{Tate group indscheme} is a reasonable group indscheme admitting
a compact open subgroup.

\end{defin}

\begin{example}\label{e:loop-gp}

For $G$ an affine algebraic group, one can take 
$H = G(K)$ and $H_0 = G(O)$. More generally, a compact open
subgroup is a closed group subscheme containing 
$\Ker(G(O) \to G(O/t^N))$ for $N \gg 0$. 

\end{example}

\begin{example}\label{e:fml-loop-gp}

For $G$ an affine algebraic group and $H_0 \subset G(K)$ a compact open
subgroup, one can take $H = G(K)_{H_0}^{\wedge}$ 
to be the formal completion of $G(K)$ along $H_0$. 

\end{example}

\begin{rem}

If $H$ is an ind-affine 
Tate group indscheme, then $H$ satisfies 
the hypotheses of \S \ref{ss:naive-act}, and the
definition of $\IndCoh^*(H)$ from \emph{loc. cit}. clearly
coincides with the construction from \S \ref{s:indcoh}
in this case. We remark that the notion of naive
action of $H$ from \emph{loc. cit}. extends
to the possibly non-ind-affine setting considered here:
this means an $\IndCoh^*(H)$-module category. 

\end{rem}

Throughout this section, $H$ denotes a Tate group indscheme.
Our main objective in this section is to develop a theory
of genuine $H$-actions on categories.

\begin{rem}\label{r:tate-good-properties}

We remark that $H$ being a Tate group indscheme implies in particular
that $H$ is reasonable (as an indscheme).
Moreover, by Proposition \ref{p:placid-strict} and
Proposition \ref{p:strict-basics} \eqref{i:strict-3},
$H$ is strict (in the sense of \S \ref{ss:strict}).

\end{rem}

\subsection{Topological conventions}\label{ss:conv-tate-gp-indsch}

The reader may safely ignore this discussion at first pass.

Throughout this section, we impose a convention (implicit above)
that all quotients are understood as prestack
quotients (i.e., geometric realizations of the bar construction)
sheafified for the Zariski topology. In particular,
$\bB(-)$ indicates the Zariski sheafified classifying space.

Therefore, in the definition of compact open subgroup above,
the condition that $H/K$ be an indscheme is a 
funny condition which is satisfied if e.g. the \emph{\'etale}
sheafification of this quotient is an indscheme and
the projection from $H$ to this quotient is a $K$-torsor
locally trivial in the Zariski topology. This is the case
when $K$ is prounipotent or when $H = G(K)$ and
$K = G(O)$ (as is well-known, see e.g. \cite{hitchin}
Theorem 4.5.1).

This convention can be relaxed to the \'etale topology
at the cost of replacing the fundamental
role of quasi-compact quasi-separated
schemes in \S \ref{s:indcoh} by quasi-compact quasi-separated
algebraic spaces. That be done without serious
modification (using \cite{sag} Proposition 9.6.1.1
as a staring point), but we content ourselves with the above
restrictions since they suffice for our applications.

(We remark that changing \emph{Zariski} to \emph{\'etale} would
mean that for any $K \subset H$ with a prounipotent tail,
whenever $K$ ``should" be regarded as a compact open subgroup,
it is.) 

\subsection{}

The following terminology will be convenient in what follows.

\begin{defin}

A \emph{DG 2-category} is a category enriched
over $\DGCat_{cont}$. We let $\TwoDGCat$ denote the category
of DG 2-categories (for our purposes, it is sufficient
to view $\TwoDGCat$ as a 1-category).

\end{defin} 

\subsection{Hecke categories}

Let $H$ be a group indscheme and let $K$ be a group subscheme.

First, recall\footnote{If $H$ is classical, which is our
main case of interest (see Remark \ref{r:mea-culpa}), what follows is complete.
If $H$ is derived, \cite{grbook} \S V.3.4 covers the homotopy
coherence issues.} that the (Zariski
sheafified) quotient 
$K \backslash H/K = \bB K \times_{\bB H} \bB K$ is an algebra in
$\on{Corr}(\PreStk_{conv})_{all;all}$ with unit
and multiplication maps defined by the correspondences:

\[
\xymatrix{
& \bB K \ar[dl] \ar[dr] & & & K \backslash H \overset{K}{\times} 
H/K \ar[dl] \ar[dr] \\
\Spec(k) & & K \backslash H/K & 
K \backslash H/K \times K \backslash H/K 
& & K \backslash H/K.
}
\]

Now observe that by our assumptions on $H$ and $K$, each
of the left arrows in the above diagrams are locally flat
while each of the right arrows are locally
almost of finite presentation. 
Therefore, $K \backslash H/K$ is canonically
an algebra in 
$\on{Corr}(\PreStk_{ren})_{indsch.lafp;loc.flat}$.

\begin{defin}

The \emph{(genuine) weak Hecke category}
$\sH_{H,K}^w \in \Alg(\DGCat_{cont})$ is 
$\IndCoh(H/K)^{K,w} = \IndCoh_{ren}^*(K\backslash H/K)$, 
with monoidal
structure induced from the above algebra (under correspondences)
structure on $K\backslash H/K$ and
by applying the symmetric monoidal
functor:

\[
\IndCoh_{ren}^*:\on{Corr}(\PreStk_{ren})_{indsch.lafp;loc.flat} \to 
\DGCat_{cont}
\]

\noindent constructed in \S \ref{s:indcoh}.

\end{defin}

\begin{rem}

We do not need the whole theory of \S \ref{s:indcoh}
to construct the above monoidal structure on
$\sH_{H,K}^w$. Indeed, this follows
from Example \ref{e:hecke-direct-constr} below.

When we introduce $H\mod_{weak}$ below, we will
have $H\mod_{weak} \simeq \sH_{H,K}^w\mod$,
so this elementary construction gives a quick construction
of $H\mod_{weak}$. 

However, it is not clearly independent
of the choice of compact open subgroup $K$, and this
leads to difficulties with the functoriality in $H$.
For our purposes, the most important use of
the theory of \S \ref{s:indcoh} is to deal with 
these functoriality problems.

\end{rem}

\subsection{Genuine actions}\label{ss:gen-tate}

Let $\mathsf{TateGp}$ denote the category of Tate group indschemes.

Our first goal will be to show the following result.
In what follows, we regard $\mathsf{TateGp}$ as a symmetric monoidal category
via products, and similarly for $\TwoDGCat$.

\begin{prop-const}\label{pc:weak}

There is a canonical lax symmetric monoidal functor:

\[
\begin{gathered}
\mathsf{TateGp}^{op} \to \TwoDGCat \\
H \mapsto H\mod_{weak}
\end{gathered}
\]

\noindent with the following properties.

\begin{enumerate}

\item\label{i:pc-1} For any $H \in \mathsf{TateGp}^{op}$ and
any $K \subset H$ compact open, there is a canonical
equivalence:

\[
H\mod_{weak} \xar{(-)^{K,w}} \sH_{H,K}^w\mod \coloneqq 
\sH_{H,K}^w\mod(\DGCat_{cont}).
\]

\item\label{i:pc-2} For any morphism $f:H_1 \to H_2 \in \mathsf{TateGp}$
and any $K_i \subset H_i$ compact open subgroups
with $f(K_1) \subset K_2$, 
the functor:

\[
H_2\mod_{weak} \to H_1\mod_{weak}
\]

\noindent canonically fits into a commutative diagram:

\[
\xymatrix{
H_2\mod_{weak} \ar[r] \ar[d]^{\simeq} & H_1\mod_{weak} \ar[d]^{\simeq} \\
\sH_{H_2,K_2}^w\mod \ar[r] & \sH_{H_1,K_1}^w\mod
}
\]

\noindent where the bottom arrow is constructed using
the Hecke-bimodule $\IndCoh_{ren}^*(K_1 \backslash H_2/K_2)$.

\item\label{i:pc-3} For $I$ a finite set and
$\{H_i \in \mathsf{TateGp}\}_{i \in I}$ equipped with compact open
subgroups $K_i \subset H_i$, the functor:

\[
\prod_{i \in I} (H_i\mod_{weak}) \to 
(\prod_{i \in I} H_i\mod_{weak}) 
\]

\noindent coming from the lax symmetric monoidal structure corresponds under the
equivalences from \eqref{i:pc-1} to the functor:

\[
\begin{gathered}
\prod_{i \in I} (\sH_{H_i,K_i}^w\mod) \to  
\sH_{\prod_{i \in I} H_i, \prod_{i \in I} K_i}^w\mod \simeq 
(\underset{i}{\otimes} \, \sH_{H_i,K_i}^w)\mod \\
\{\sC_i \in \sH_{H_i,K_i}^w\mod\}_{i \in I} \mapsto
\underset{i}{\otimes} \, \sC_i.
\end{gathered}
\] 

\end{enumerate}

\end{prop-const}

\begin{rem}\label{r:mea-culpa}

Let $\mathsf{TateGp}^{cl} \subset \mathsf{TateGp}$ denote 
the subcategory of Tate group indschemes that are classical as prestacks.\footnote{As
in \cite{indschemes} Theorem 9.3.4, any formally smooth Tate group indscheme
that is weakly $\aleph_0$ in the sense of \emph{loc. cit}. is
automatically classical. In particular, this applies for a loop group,
or for its formal completion at any compact open subgroup scheme.}
Note that $\mathsf{TateGp}^{cl}$ is actually a $(1,1)$-category.

To simplify the exposition, we actually 
only give the restriction of the functor from 
Proposition-Construction \ref{pc:weak} to $\mathsf{TateGp}^{cl}$;
this suffices for our applications.
The requisite homotopy coherence needed to provide all of
Proposition-Construction \ref{pc:weak} can be given using 
\cite{grbook} \S V.3.4. But the argument (at least in the form
the author has in mind) is more tedious than merits inclusion (without
real applications) here.

\end{rem}

We will give the construction after some preliminary remarks.

\subsection{Morita theory}\label{ss:morita-start}

We will need a review of Morita categories in a higher categorical context.

\subsection{}\label{ss:morita-colims}

First, let $\sC$ be a symmetric monoidal category with colimits 
and whose monoidal product commutes with colimits in each variable.

Let $\sC\mod$ denote the 2-category of $\sC$-module categories $\sM$
with colimits and for which for every $\sF \in \sC$ and $\sG \in \sM$,
the action functors $\sF \star -:\sM \to \sM$ and
$-\star \sG :\sC \to \sM$ 
admit right adjoints\footnote{In the language of \cite{higheralgebra}
\S 4.2.1, $\sM$ is cotensored and enriched over $\sC$.}
 (so in particular, the action functor 
commutes with colimits in each variable). 

In this case, let $\Mor(\sC)$ be the 2-category defined as the
full subcategory of $\sC\mod$ consisting of objects of the
form $A\mod$ for $A \in \Alg(\sC)$. As in \cite{higheralgebra}
Remark 4.8.4.9, this recovers the standard 
Morita 2-category if $\sC$ is a $(1,1)$-category. 

In particular, we remind from \emph{loc. cit}. that if we have
$\sM_i \in \Mor(\sC)$ for $i = 1,2$ and we choose
$A_i \in \Alg(\sC)$ such that $\sM_i = A_i\mod$, then
the category of morphisms $F:\sM_1 \to \sM_2$ is canonically 
equivalent to the data of an $(A_2,A_1)$-bimodule.

\subsection{}

In what follows, we will use the (fully-faithful) Segal functor 
$\on{Seq}_{\dot}:\TwoCat \to \TwoHom(\bDelta^{op},\Cat)$;
we refer to \cite{grbook} Appendix A.1 for details.

\subsection{}

For $\sC$ a symmetric monoidal category that may not have
colimits, there is no hope for defining its Morita category.\footnote{Indeed,
composition in a Morita category involves tensor products
of bimodules, and this means certain geometric realizations
need to exist.} However, we can still define
the associated simplicial category $\on{Seq}_{\dot}(\Mor(\sC))$
as follows.

First, if $\sC$ is essentially small, embed $\sC$ into
its Yoneda category $\Yo(\sC) \coloneqq \TwoHom(\sC^{op},\Gpd)$.
Note that $\Yo(\sC)$ is equipped with a symmetric monoidal
structure commuting with colimits in each variable.
In particular, $\Mor(\Yo(\sC))$ and $\on{Seq}_{\dot}(\Mor(\sC))$
are defined.

Now for $[n] \in \bDelta^{op}$, define
$\on{Seq}_n(\sC)$ to be the full subcategory of 
$\on{Seq}_n(\Yo(\sC))$ whose objects
are sequences $\sM_0 \to \ldots \to \sM_n$
where each $\sM_i$ is of the form $A_i\mod$ for
$A_i \in \Alg(\sC) \subset \Alg(\Yo(\sC))$ and
each morphism $\sM_i \to \sM_{i+1}$ admits
a right adjoint in the 2-category $\Yo(\sC)\mod$.\footnote{By the
definition in \S \ref{ss:morita-colims}, this means that
the underlying functor admits a right adjoint that 
commutes with all colimits and
is $\Yo(\sC)$-linear.} It is straightforward to see
that this latter condition is equivalent to
$\sM_i \to \sM_{i+1}$ corresponding to an
$(A_{i+1},A_i)$-bimodule that lies in 
$\sC \subset \Yo(\sC)$.

\subsection{}

In general, we define $\on{Seq}_{\dot}(\Mor(\sC)):\bDelta^{op} \to \Cat$ 
e.g. by extending the universe so that $\sC$ is essentially small.
It is straightforward to see that we do actually obtain a simplicial
category\footnote{By definition, a category has essentially small
$\Hom$s. In principle, universe extension risks breaking this property,
and the content of the claim is that this does not happen here.}
in this way. 

Moreover, if $\sC$ does admit colimits and its monoidal
product preserves such colimits, then the simplicial
category just defined canonically coincides with the same denoted
simplicial category defined by applying the Segal construction
to the category $\Mor(\sC)$ from \S \ref{ss:morita-colims}.
Therefore, we are justified in not distinguishing the two notationally.

\begin{notation}\label{n:morita-alg-notation}

In what follows, for $\sC$ as above and $A \in \Alg(\sC)$, we 
let $[A]$ denote the induced
object in $\on{Seq}_{0}(\Mor(\sC))$.

\end{notation}

\subsection{}\label{ss:morita-finish} 

Finally, we conclude by remarking that $\Mor(-)$
is by construction functorial for (left) lax 
symmetric monoidal functors.

\subsection{Construction of the functor}

We now return to the setting of \S \ref{ss:gen-tate}.

\begin{proof}[Proof of Proposition-Construction \ref{pc:weak}]

To simplify the exposition, we ignore the symmetric
monoidal structures until the last step.

\step\label{st:tate-verdier}

First, define the category\footnote{The subscript
is an abbreviation of \emph{framed}.}
$\mathsf{TateGp}_{fr} \subset \TwoHom(\Delta^1,\mathsf{TateGp})$ 
as the subcategory consisting of maps $K \to H$ that are 
the embedding of a 
compact open subgroup in a Tate group indscheme. Note that the
forgetful functor $\Oblv_{fr}:\mathsf{TateGp}_{fr} \xar{(K \subset H) \mapsto H} \mathsf{TateGp}$
is $1$-fully-faithful (i.e., the induced maps on $\Hom$s are fully-faithful
morphisms of groupoids). 

We claim that $\Oblv_{fr}$
is a Verdier localization functor. By definition,
this means that
for every $\sC \in \Cat$, the functor:

\[
\TwoHom(\mathsf{TateGp},\sC) \to 
\TwoHom(\mathsf{TateGp}_{fr},\sC)
\]

\noindent  of restriction along
$\Oblv_{fr}$ is fully-faithful functor
with essential image consisting
of functors sending 
$\Oblv_{fr}$-local\footnote{This phrase
refers to morphisms in $\mathsf{TateGp}_{fr}$
that map to isomorphisms under $\Oblv_{fr}$.}
morphisms to isomorphisms.

Indeed, note
that the left adjoint to $\Oblv_{fr}$ is
pro-representable: 
it maps $H \in \mathsf{TateGp}$ to the pro-object
$``\lim"_{K \subset H \text{compact open}} (K \to H)$,\footnote{
The key point in verifying this formula is that
for any $f:H_1 \to H_2 \in \mathsf{TateGp}$ and
$K_2 \subset H_2$ a compact open subgroup,
the category of compact open subgroups $K_1 \subset H_1$
mapping into $K_2$ is non-empty and filtered.}
where $K \to H$ is regarded as an object
in $\mathsf{TateGp}_{fr}$ and the quotation
marks emphasize that this
(filtered) limit takes place in the relevant
pro-category. 
This pro-valued left adjoint is clearly
fully-faithful,
and this is well-known to imply the 
Verdier localization property. 

\step\label{st:tate-2}

Next, we construct a \emph{(right) lax} morphism:

\[
\on{Seq}_{\dot}(\mathsf{TateGp}_{fr}^{op}) \to
\on{Seq}_{\dot}(\Mor(\on{Corr}(\PreStk_{ren})_{indsch.lafp;loc.flat}))
\]

\noindent of simplicial categories sending $K \subset H \in \mathsf{TateGp}_{fr}$
(regarded as a $0$-simplex of the left hand side)
to $[K\backslash H/K]$
(regarded as a $0$-simplex of the right hand side as in 
Notation \ref{n:morita-alg-notation}). Here by \emph{lax}, we mean that
our morphism is merely a lax natural transformation, i.e., a functor
over $\bDelta^{op}$ between the corresponding coCartesian Grothendieck
fibrations. 

This step is where we use Remark \ref{r:mea-culpa} to ignore
homotopy coherence issues. That is,
we treat $\mathsf{TateGp}$ as a $(1,1)$-category (e.g,. by actually restricting
to $\mathsf{TateGp}^{cl}$).

Our lax functor assigns to every $[n] \in \bDelta$ and
$[n]$-shaped diagram $(K_0\subset H_0) \to \ldots \to (K_n \subset H_n)$
in $\mathsf{TateGp}_{fr}$ the $[n]$-simplex
of $\on{Seq}_{\dot}(\Mor(\on{Corr}(\PreStk_{ren})_{indsch.lafp;loc.flat}))$
defined by:

\[
K_0\backslash H_0/K_0 \actson 
K_0\backslash H_1/K_1 \actedon
K_1\backslash H_1/K_1 \actson \ldots 
K_{n-1}\backslash H_n/K_n \actedon
K_n \backslash H_n/K_n. 
\]

\noindent The notation indicates that we consider 
$K_i\backslash H_i/K_i$ as an algebra in $\on{Corr}(\PreStk_{ren})_{indsch.lafp;loc.flat}$
and $K_i\backslash H_{i+1}/K_i$ as a bimodule
in this same correspondence category
for $K_i\backslash H_i/K_i$ and $K_{i+1}\backslash H_{i+1}/K_{i+1}$.

We now need to specify where morphisms (in the relevant Grothendieck
construction) are sent. To simplify the notation, we spell out
the construction only for morphisms lying over 
the \emph{active} morphism 
$\alpha:[1] \xar{0 \mapsto 0, 1 \mapsto 2} [2] \in \bDelta$.
(The reader will readily see that this simplification really is only cosmetic.)

Then a $2$-simplex of the left hand side above corresponds to a datum
$(K_0 \subset H_0) \to (K_1 \subset H_1) \to (K_2 \subset H_2) \in 
\mathsf{TateGp}$, and a map to a $1$-simplex $(\widetilde{K}_0 \subset \widetilde{H}_0)$ 
is equivalent to maps 
and $f_i:H_{\alpha(i)} \to \widetilde{H}_i$ ($i =0,1$) sending
$K_{\alpha(i)}$ to $\widetilde{K}_{i}$. 

The relevant map in the right hand side is induced by the augmented
simplicial diagram:

\[
\begin{gathered}
K_0\backslash H_1 /K_1 \times 
K_1 \backslash H_1/K_1 \times
K_1 \backslash H_1/ K_2 \rightrightarrows
K_0\backslash H_1 /K_1 \times K_1 \backslash H_2/ K_2 \to 
\widetilde{K}_0 \backslash \widetilde{H}_1 \widetilde{K}_1 \\
\in
\on{Corr}(\PreStk_{ren})_{indsch.lafp;loc.flat}.
\end{gathered}
\]

\noindent Here all arrows 
are morphisms in the correspondence category (i.e.,
they represent correspondences),
the underlying simplicial diagram is the bar construction
for the relative tensor product
of these left and right $K_1\backslash H_1/K_1$-modules,
and the augmentation is given by the correspondence:

\[
\xymatrix{
& K_0 \backslash H_1 \overset{K_1}{\times} H_2 / K_2
 \ar[dl] \ar[dr] & \\
K_0\backslash H_1 /K_1 \times K_1 \backslash H_2/ K_2 & & 
\widetilde{K}_0 \backslash \widetilde{H}_1 / \widetilde{K}_1.
}
\]

\noindent Here the left arrow is obvious, and the right
arrow is the composition:

\[
K_0 \backslash H_1 \overset{K_1}{\times} H_2 / K_2 \to 
K_0 \backslash H_2 \overset{K_2}{\times} H_2 / K_2 \xar{\text{mult}.}
K_0 \backslash H_2/K_2 \xar{f_1} 
\widetilde{K}_0 \backslash \widetilde{H}_1 / \widetilde{K}_1.
\]

Unwinding the constructions, this was exactly the sort
of datum we needed to specify. 

\step

Applying the lax symmetric monoidal functor 
$\IndCoh_{ren}^*:\on{Corr}(\PreStk_{ren})_{indsch.lafp;loc.flat} \to 
\DGCat_{cont}$ and the functoriality from 
\S \ref{ss:morita-finish}, we obtain a lax functor:

\[
\on{Seq}_{\dot}(\mathsf{TateGp}_{fr}^{op}) \to 
\on{Seq}_{\dot}(\Mor(\DGCat_{cont})).
\]

\noindent As these are each Segal categories for actual 2-categories,
this is the same\footnote{By definition of lax functor; c.f. \cite{grbook} Appendix A.1.3.}
as a lax functor $\mathsf{TateGp}_{fr}^{op} \to \Mor(\DGCat_{cont})$.
The latter 2-category is by 
construction contained in $\DGCat_{cont}\mod$, which is
itself contained in $\TwoDGCat$.

Therefore, we obtain a lax functor of 2-categories:

\[
\mathsf{TateGp}_{fr}^{op} \to \TwoDGCat.
\]

\noindent We claim that this lax functor is an 
actual functor.

Suppose we are given
$(K_0 \subset H_0) \to (K_1 \subset H_1) \to (K_2 \subset H_2) \in 
\mathsf{TateGp}$. We obtain a diagram that commutes up to a natural 
transformation:

\[
\xymatrix{
\sH_{H_0,K_0}^w\mod \ar[dr] \ar[rr] & & \sH_{H_2,K_2}^w\mod 
\\
& \sH_{H_1,K_1}^w\mod \ar[ur] \ar@{=>}[u] &
}
\]

\noindent corresponding to the map 
of Hecke bimodules:

\[
\IndCoh_{ren}^*(K_0\backslash H_1/K_1)
\underset{\sH_{H_1,K_1}^w}{\otimes}
\IndCoh_{ren}^*(K_1\backslash H_2/K_2) \to 
\IndCoh_{ren}^*(K_0\backslash H_2/K_2).
\]

\noindent that we need to show is an isomorphism.

To verify this, note that we have a canonical
monoidal functor $\Rep(K_1) \to \sH_{H_1,K_1}^w$.
By Lemma \ref{l:gps-canon-renorm} \eqref{i:gps-4},
we have an equivalence: 

\[
\Rep(K_0) \underset{\Rep(K_1)}{\otimes}
\sH_{H_1,K_1}^w \isom 
\IndCoh_{ren}^*(K_0\backslash H_1/K_1)
\]

\noindent of $\sH_{H_1,K_1}^w$-module categories.

\noindent Therefore, we can calculate:

\[
\begin{gathered}
\IndCoh_{ren}^*(K_0\backslash H_1/K_1)
\underset{\sH_{H_1,K_1}^w}{\otimes}
\IndCoh_{ren}^*(K_1\backslash H_2/K_2) 
= \\
\Rep(K_0) \underset{\Rep(K_1)}{\otimes}
\sH_{H_1,K_1}^w \underset{\sH_{H_1,K_1}^w}{\otimes}
\IndCoh_{ren}^*(K_1\backslash H_2/K_2) = \\
\Rep(K_0) \underset{\Rep(K_1)}{\otimes} 
\IndCoh_{ren}^*(K_1\backslash H_2/K_2) \isom
\IndCoh_{ren}^*(K_0\backslash H_2/K_2)
\end{gathered}
\]

\noindent as desired.

\step Next, suppose that we are given
an $\Oblv_{fr}$-local morphism, or equivalently,
$H \in \mathsf{TateGp}$ with an embedding of compact open 
subgroups $K_1 \subset K_2 \subset H$.

Then the functor:

\[
\mathsf{TateGp}_{fr}^{op} \to \TwoDGCat
\]

\noindent sends this datum to the functor:

\[
\sH_{H,K_2}^w\mod \to \sH_{H,K_1}^w\mod
\]

\noindent defined by the bimodule
$\IndCoh_{ren}^*(K_1\backslash H/K_2)$.
By Step \ref{st:tate-verdier}, to obtain the
functor from Proposition-Construction \ref{pc:weak},
it suffices to show the above is an equivalence. 
As $K_2$ has a cofinal sequence of normal compact open
subgroups, it suffices to treat the case where $K_1$ is
normal in $K_2$.

Note that the composition: 

\begin{equation}\label{eq:hecke-comp}
\sH_{H,K_2}^w\mod \to \sH_{H,K_1}^w\mod \to \DGCat_{cont} 
\end{equation}

\noindent sends $\sD \in \sH_{H,K_2}^w\mod$ to:

\[
\Rep(K_1) \underset{\Rep(K_2)}{\otimes} \sD
\]

\noindent by the isomorphism:

\[
\sH_{H,K_2}^w \underset{\Rep(K_2)}{\otimes} 
\Rep(K_1)
 \isom 
\IndCoh_{ren}^*(K_2\backslash H/K_1)
\]

\noindent of $\sH_{H,K_2}^w$-module categories
(obtained as in the previous step from 
Lemma \ref{l:gps-canon-renorm} \eqref{i:gps-4}).
By normality of $K_1$ in $K_2$,
we may further identify:

\[
\Rep(K_1) \underset{\Rep(K_2)}{\otimes} \sD =
\Vect \underset{\Rep(K_2/K_1)}{\otimes} \sD.
\]

\noindent By Theorem \ref{t:weak}, this implies
that \eqref{eq:hecke-comp} is conservative
and commutes with limits, so is monadic.

Therefore, it suffices to show that the functor:

\begin{equation}\label{eq:hecke-comp-2}
\sH_{H,K_1} \to 
\TwoEnd_{\sH_{H,K_2}^w\mod}(\IndCoh_{ren}^*(K_2\backslash H/K_1))
\end{equation}

\noindent is an equivalence (as the right hand side
is the monad defined by \eqref{eq:hecke-comp}).
This follows by similar logic \textemdash{} the right hand side
is:

\[
\begin{gathered}
\TwoHom_{\sH_{H,K_2}^w\mod}(\sH_{H,K_2}^w \underset{\Rep(K_2)}{\otimes} 
\Rep(K_1),
\IndCoh_{ren}^*(K_2\backslash H/K_1)) = \\
\TwoHom_{\Rep(K_2)\mod}(\Rep(K_1),
\IndCoh_{ren}^*(K_2\backslash H/K_1)) = \\
\TwoHom_{\Rep(K_2/K_1)\mod}(\Vect,
\IndCoh_{ren}^*(K_2\backslash H/K_1)) = \\
\TwoHom_{\Rep(K_2/K_1)\mod}(\Vect,
\IndCoh_{ren}^*(K_1\backslash H/K_1)^{K_2/K_1,w}) 
\overset{Thm. \ref{t:weak}}{=} \\
\TwoHom_{\QCoh(K_2/K_1)\mod}(\QCoh(K_2/K_1),
\IndCoh_{ren}^*(K_1\backslash H/K_1)) =  
\sH_{H,K_1}^w
\end{gathered}
\]

\noindent as desired (it is immediate to check that this
identification is compatible with the functor
\eqref{eq:hecke-comp-2}).

\step 

Finally, we briefly remark that all of the 
above immediately upgrades to the
(lax) symmetric monoidal setting. 

In detail, note that $\mathsf{TateGp}_{fr}$ is symmetric monoidal under products.
and the functor $\mathsf{TateGp}_{fr} \to \mathsf{TateGp}$ 
is a symmetric monoidal Verdier localization.

The functor from Step \ref{st:tate-2} upgrades
to a functor of simplicial symmetric monoidal categories,
noting that by the construction of 
\S \ref{ss:morita-start}-\ref{ss:morita-finish}, 
$\Mor(\sC)$ is canonically a simplicial
symmetric monoidal category. 

This implies that the functor
$\mathsf{TateGp}_{fr}^{op} \to \DGCat_{cont}\mod$
is naturally a (left) lax symmetric monoidal 
functor, since $\IndCoh^*$ is.\footnote{To make the implicit explicit:
we are using the tensor product on
$\DGCat_{cont}\mod$ obtained by viewing
$\DGCat_{cont}$ as a commutative algebra in the
symmetric monoidal category denoted
$\sC\!\on{at}_{\infty}(\sK)^{\otimes}$ in \cite{higheralgebra}
Proposition 4.8.1.14 (so $\DGCat_{cont}\mod$
are the modules in this category).}
Finally, the forgetful functor 
$\DGCat_{cont}\mod \to \TwoDGCat$ is by construction
lax symmetric monoidal, giving the result.

\end{proof}

\subsection{Forgetful functors}\label{ss:tate-forgetful}

As in \S \ref{ss:pro-gen}, for $H$ a Tate group indscheme,
there is a canonical non-conservative functor:

\[
\begin{gathered}
\Oblv_{gen}:H\mod_{weak} \to \DGCat_{cont} \\
\sC \mapsto \underset{K \subset H \text{ compact open}}{\colim} \sC^{K,w}.
\end{gathered}
\]

\noindent We denote the colimit appearing on the right hand side
also by $\sC$ in a similar abuse of notation as in the profinite dimensional
setting. We remark that this forgetful functor 
$\Oblv_{gen}$ upgrades to a functor
to $H\mod_{weak,naive}$,. 
which we also denote by $\Oblv_{gen}$. 

As in the profinite dimensional setting, where there is
not confusion we often omit $\Oblv_{gen}$ from the notation,
i.e., we often speak of genuine $H$-actions on $\sC \in \DGCat_{cont}$ 
by which we mean that we are given an object of $H\mod_{weak}$
that maps to $\sC$ under $\Oblv_{gen}$.

\begin{lem}\label{l:tate-oblv-co/lims}

The above forgetful functor commutes with limits and colimits.

\end{lem}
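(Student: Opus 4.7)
The plan is to exploit the presentation $\Oblv_{gen}(\sC) = \colim_{K \subset H \text{ c.o.}} \sC^{K,w}$ as a filtered colimit in $\DGCat_{cont}$ indexed by compact open subgroups, combined with the fact that each $(-)^{K,w}$ is well-behaved. The first step is to observe that for each fixed compact open $K \subset H$, the functor $(-)^{K,w}: H\mod_{weak} \to \DGCat_{cont}$ commutes with both limits and colimits. This is essentially immediate from Proposition-Construction \ref{pc:weak} \eqref{i:pc-1}: this functor factors as the equivalence $H\mod_{weak} \simeq \sH_{H,K}^w\mod$ followed by the forgetful functor to $\DGCat_{cont}$, and the latter commutes with all small limits and colimits (a standard fact about modules over an algebra object in $\DGCat_{cont}$).

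Given this, the commutation with colimits is automatic: $\Oblv_{gen}(\colim_i \sC_i) = \colim_K \colim_i \sC_i^{K,w} = \colim_i \colim_K \sC_i^{K,w} = \colim_i \Oblv_{gen}(\sC_i)$ by swapping colimits. The commutation with limits is the substantive point. For this, the plan is to verify that for every inclusion $K_1 \subset K_2$ of compact open subgroups, the transition functor $\sC^{K_2,w} \to \sC^{K_1,w}$ admits a continuous right adjoint (the relative weak averaging). Once this is in hand, the standard fact that a filtered diagram in $\DGCat_{cont}$ whose transition functors admit continuous right adjoints has colimit equal to the limit (under right adjoints) allows us to rewrite $\Oblv_{gen}(\sC) = \lim_K \sC^{K,w}$, and the commutation with limits then follows by swapping two limits.

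To produce the continuous right adjoint of the transition, the plan is to reduce to a case handled by Proposition \ref{p:gen-t-str}: since the profinite group $K_2$ admits a cofinal system of open normal subgroups, one may assume $K_1$ is normal in $K_2$, whereupon $K_2/K_1$ is an affine algebraic group and $\sC^{K_2,w}$ identifies with the genuine weak $K_2/K_1$-invariants of $\sC^{K_1,w}$ in the sense of \S \ref{ss:pro-gen}; Proposition \ref{p:gen-t-str} \eqref{i:gen-t-4} then provides the continuous $\Av_*^w$. For the general (not necessarily normal) case, one either sandwiches $K_1$ between $K_2$ and a normal $K_1' \subset K_1$, or alternatively, factors the transition functor through the Hecke bimodule $\IndCoh_{ren}^*(K_1\backslash H/K_2)$ and uses Lemma \ref{l:gps-canon-renorm} \eqref{i:gps-4} together with rigidity of $\Rep(K_2)$ as a monoidal category to exhibit the right adjoint.

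The main obstacle is precisely the construction of these continuous right adjoints for non-normal inclusions; while conceptually clear, one must be careful that passing from the finite-dimensional statement (Proposition \ref{p:gen-t-str}) to the Tate setting interacts correctly with the Morita/Hecke identifications of Proposition-Construction \ref{pc:weak}, and in particular that the right adjoints assembled subgroup-by-subgroup are genuinely $\DGCat_{cont}$-continuous rather than merely existing as accessible functors. Once this verification is done, the remainder of the argument is bookkeeping in $\DGCat_{cont}$.
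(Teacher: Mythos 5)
Your proposal is correct and follows essentially the same route as the paper: commutation with colimits by exchanging colimits, and commutation with limits by noting each transition functor in the colimit over compact open subgroups admits a continuous right adjoint ($*$-averaging), so the colimit may be rewritten as a limit under those adjoints. The only difference is that you spell out the construction of the continuous averaging functors for non-normal inclusions (via self-duality of $\Rep(K_1)$ over $\Rep(K_2)$/the Hecke bimodules), which the paper simply invokes as known.
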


\begin{proof}

First note that for every $K \subset H$ compact open, 
the functor $H\mod_{weak} \xar{\sC \mapsto \sC^{K,w}} \DGCat_{cont}$ 
commutes with limits
and colimits. Indeed, it may be calculated as the composition:

\[
H\mod_{weak} \isom \sH_{H,K}^w \xar{\Oblv} \DGCat_{cont} 
\]

\noindent and the latter functor commutes with limits and colimits
(as this is always the case for a category of modules over an algebra).

It immediately follows that our forgetful functor commutes
with colimits. Commutation with limits follows by noting
that each structural functor in the colimit admits a continuous
right adjoint given by $*$-averaging, 
so we may also calculate it as the functor:

\[
\sC \mapsto \underset{K \subset H \text{ compact open}}{\lim} \sC^{K,w}
\]

\noindent (where the structural functors in the limit are these right
adjoints).

\end{proof}

\subsection{Invariants and coinvariants}\label{ss:tate-inv-coinv}

Let $H$ be a Tate group indscheme. 

Define $\triv:\DGCat_{cont} \to H\mod_{weak}$ as the 
restriction functor along the homomorphism $H \to \Spec(k)$
(regarding the target as the trivial group).

\begin{rem}

By Proposition-Construction \ref{pc:weak} \eqref{i:pc-2},
for $K \subset H$ compact open we have:

\[
\triv(\Vect)^{K,w} = \Rep(K).
\]

\end{rem}
 
We define the functor of 
(genuine, weak) 
invariants:

\[
\begin{gathered}
H\mod_{weak} \to \DGCat_{cont} \\
\sC \mapsto \sC^{H,w}
\end{gathered}
\]

\noindent to be the right
adjoint to $\triv$, and we define (genuine, weak) 
the coinvariants functor $\sC \mapsto \sC_{H,w}$ to be
the left adjoint. These may be computed explicitly after a 
choice of compact open subgroup $K$ as:

\begin{equation}\label{eq:inv-coinv-hecke}
\begin{gathered}
\sC^{H,w} \simeq \TwoHom_{\sH_{H,K}^w\mod}(\Rep(K),\sC^{K,w}) \\
\sC_{H,w} \simeq \Rep(K) \underset{\sH_{H,K}^w}{\otimes} \sC^{K,w}.
\end{gathered}
\end{equation}

\begin{rem}

The comparison between invariants and coinvariants
is more subtle in the group indscheme setting.
than in the group scheme setting.\footnote{And
these functors behave less well. For example,
they may fail to be conservative (c.f. \cite{shvcat} Theorem 2.5.4).}

\end{rem}

\subsection{Rigid monoidal categories}\label{ss:rigid}

Before proceeding, we review some constructions with rigid monoidal DG categories,
following \cite{dgcat} \S 6. We refer to \emph{loc. cit}. for the relevant notion
of rigid monoidal DG category; we remind that this is a property for some
$\sA \in \Alg(\DGCat_{cont})$ to satisfy.\footnote{If $\sA$ is compactly generated 
and rigid, then the subcategory $\sA^c \subset \sA$ is closed under the monoidal
operation and rigid according to the more standard notion of rigid monoidal (in terms of
existence of duals).}
 
We will construct a canonical morphism $\vph_{\sA}:\sA \to \sA$ of monoidal categories
that plays a key role. 

Let $\sA^{\vee}$ be the dual of $\sA$ as an object of $\DGCat_{cont}$.
Note that $\sA^{\vee}$ is canonically $\sA$-bimodule in $\DGCat_{cont}$\
(as it is the dual of the $\sA$-bimodule
$\sA$). Therefore, we obtain a monoidal functor:

\begin{equation}\label{eq:vph-constr-1}
\sA \to \TwoEnd_{\rightmod \sA}(\sA^{\vee}) \in \Alg(\DGCat_{cont}).
\end{equation}

\noindent (The notation indicates endomorphisms as a right $\sA$-module in $\DGCat_{cont}$.) 
On the other hand, by definition of rigidity, the functor:

\[
\begin{gathered}
\sA \to \sA^{\vee} \\
\sF \mapsto (\sG \mapsto \ul{\Hom}_{\sA}(\e_{\sA},\sF \star \sG)
\end{gathered}
\]

\noindent is an equivalence of right $\sA$-module categories (here $\e_{\sA}$ is the
unit object). Therefore, the right hand side of \eqref{eq:vph-constr-1} identifies
canonically with $\sA$, and we obtain the desired morphism $\vph_{\sA}$.

\begin{rem}

By construction, there are natural isomorphisms:

\[
\ul{\Hom}_{\sA}(\e_{\sA},\sF \star \sG) \isom 
\ul{\Hom}_{\sA}(\e_{\sA},\sG \star \vph_{\sA}(\sF))
\]

\noindent for $\sF,\sG \in \sA$.

\end{rem}

For $\sM \in \sA\mod \coloneqq \sA\mod(\DGCat_{cont})$, we let
$\vph_{\sA,*}(\sM) \in \sA\mod$ denote the restriction of $\sM$ along
$\vph_{\sA}$, and we let $\vph_{\sA}^*$ denote the inverse to the equivalence 
$\vph_{\sA,*}$.

By \cite{dgcat} Corollary 6.3.3, 
for $\sM,\sN \in \sA\mod$ with $\sM$ dualizable in $\DGCat_{cont}$, 
there is a canonical equivalence:

\begin{equation}\label{eq:vph}
\TwoHom_{\sA\mod}(\sM,\sN) \isom \sM^{\vee} \underset{\sA}{\otimes} \vph_{\sA,*}(\sN)
\end{equation}

\noindent functorial in $\sM$ and $\sN$.

\subsection{Polarizations and the modular character}\label{ss:pol-mod}

We now introduce the following terminology.

\begin{defin}

A \emph{polarization} of $H \in \mathsf{TateGp}$ 
is a compact open subgroup $K \subset H$
such that $H/K$ is ind-proper. If a polarization exists,
we say that $H$ is \emph{polarizable}. 

\end{defin}

\begin{example}

The loop group of a reductive group 
is polarizable. 

\end{example}

\begin{example}\label{e:hc}

If $H$ is formal in ind-directions, i.e., its reduced
locus $H^{red} \subset H$ is a compact open subgroup,
then $H$ is polarizable (and equipped with the canonical
polarization $H^{red}$).
In this case, we say $H$ is a
group indscheme $H$ \emph{of Harish-Chandra type}.

\end{example}

We have the following result, which is evident from the
definitions (and preservation of coherent objects
under flat pullbacks and proper pushforwards):

\begin{lem}\label{l:pol-rigid}

For $K$ a polarization of $H$, the genuine Hecke category
$\sH_{H,K}^w$ is rigid monoidal (in the sense of \cite{dgcat} \S 6).

\end{lem}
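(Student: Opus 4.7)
The plan is to verify the standard criteria for rigidity of $\sH_{H,K}^w = \IndCoh_{ren}^*(K\backslash H/K)$ in the sense of \cite{dgcat} \S 6: compact generation with compact unit, and a continuous right adjoint to the monoidal product that is an $\sH_{H,K}^w$-bimodule map. By construction $\sH_{H,K}^w$ is compactly generated by coherent sheaves. The unit arises from the correspondence $\Spec(k) \leftarrow \bB K \xrightarrow{i} K\backslash H/K$ and is therefore $i_*^{\IndCoh}$ of the trivial representation of $K$. Because $i$ is an indschematic closed embedding almost of finite presentation (this is precisely the condition packaged into the definition of compact open subgroup that makes the correspondence structure live in $\on{Corr}(\PreStk_{ren})_{indsch.lafp;loc.flat}$), $i_*^{\IndCoh}$ preserves coherent objects, and the unit is compact.

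Next, I would unwind the monoidal product as the composite attached to the correspondence
\[
K \backslash H/K \times K\backslash H/K \xleftarrow{p} K \backslash H \overset{K}{\times} H/K \xrightarrow{m} K\backslash H/K,
\]
in which $p$ is locally flat (descended from a pair of projections $H \times H \to H$) and $m$ is ind-proper, the latter because its fibres are copies of $H/K$, which is ind-proper by the polarization hypothesis. Lemma \ref{l:ind-proper-upper-!} then provides a continuous right adjoint $m^!$; composed with $p_*^{\IndCoh}$ (continuous by construction of $\IndCoh^*$ as a covariant functor), this yields a continuous right adjoint to $m_*^{\IndCoh} \circ p^{*,\IndCoh}$. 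After identifying $\sH_{H,K}^w \otimes \sH_{H,K}^w$ with $\IndCoh_{ren}^*(K\backslash H/K \times K\backslash H/K)$ via the external product (an equivalence by strictness of Tate group indschemes, Remark \ref{r:tate-good-properties} and Proposition \ref{p:strict-basics}), this is precisely a continuous right adjoint to the multiplication on $\sH_{H,K}^w$. As a byproduct, convolution of two coherent sheaves is again coherent.

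The remaining step, which I expect to be the only one requiring genuine care, is checking that $p_*^{\IndCoh} \circ m^!$ is a morphism of $\sH_{H,K}^w$-bimodules. This should follow formally from the base-change compatibility between locally flat pullback and ind-proper pushforward (Lemma \ref{l:indproper-flat}), applied to the Cartesian squares that encode associativity of convolution inside the symmetric monoidal correspondence category of \S \ref{s:indcoh}; the point is that the bimodule structure on the right adjoint is induced from the bimodule structure on the left adjoint via precisely these base-change isomorphisms. Everything else in the argument is essentially a direct unwinding of the construction of $\sH_{H,K}^w$ via the lax symmetric monoidal functor $\IndCoh_{ren}^*$.
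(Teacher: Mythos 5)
Your proposal is correct and is essentially the paper's own (one-line) argument spelled out: the paper declares the lemma evident from the definitions together with preservation of coherent objects under flat pullback and proper pushforward, which is exactly the mechanism you use, with the polarization supplying ind-properness of the multiplication leg and hence the continuous right adjoint and preservation of compacts. The only step you flag as delicate---the bimodule property of $p_*^{\IndCoh}\circ m^!$---can be bypassed if you prefer: since $\sH_{H,K}^w$ is compactly generated, rigidity in the sense of \cite{dgcat} \S 6 may equivalently be checked by showing the unit is compact and every coherent object admits left and right duals, and these duals are produced by Serre duality on the ind-proper $H/K$ composed with inversion, exactly as in Step \ref{st:pol-t-exact} of the proof of Proposition \ref{p:canon-left-t}.
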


\begin{cor}\label{c:mod-char}

If $H$ is polarizable, the coinvariants
functor $H\mod_{weak} \to \DGCat_{cont}$ is corepresentable.

\end{cor}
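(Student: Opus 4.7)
The plan is to fix a polarization $K \subset H$ and work under the equivalence $H\mod_{weak} \simeq \sH_{H,K}^w\mod$ provided by Proposition-Construction \ref{pc:weak} \eqref{i:pc-1}. Under this identification, \eqref{eq:inv-coinv-hecke} presents the coinvariants functor as
\[
\sN \mapsto \Rep(K) \underset{\sH_{H,K}^w}{\otimes} \sN
\]
from $\sH_{H,K}^w\mod$ to $\DGCat_{cont}$, so it suffices to corepresent this specific tensor-product functor.

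The key input is Lemma \ref{l:pol-rigid}, which converts polarizability of $H$ into rigidity of $\sH_{H,K}^w$. This unlocks the formalism of \S \ref{ss:rigid}: an automorphism $\vph = \vph_{\sH_{H,K}^w}$ of $\sH_{H,K}^w$ together with the identity \eqref{eq:vph}, valid whenever the first argument is dualizable in $\DGCat_{cont}$. Since $\Rep(K)$ is compactly generated, the $\sH_{H,K}^w$-module $\vph_{*}\Rep(K)$ has a dualizable underlying DG category, so its dual $(\vph_{*}\Rep(K))^{\vee} \in \sH_{H,K}^w\mod$ is defined.

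Applying \eqref{eq:vph} with $\sM = (\vph_{*}\Rep(K))^{\vee}$ yields
\[
\TwoHom_{\sH_{H,K}^w\mod}((\vph_{*}\Rep(K))^{\vee}, \sN) \simeq \vph_{*}\Rep(K) \underset{\sH_{H,K}^w}{\otimes} \vph_{*}\sN,
\]
and the right-hand side is canonically isomorphic to $\Rep(K) \otimes_{\sH_{H,K}^w} \sN$ by the general fact that, since $\vph$ is a monoidal equivalence, the twists on the two tensor factors cancel. Transferring the object $(\vph_{*}\Rep(K))^{\vee}$ back through the equivalence $\sH_{H,K}^w\mod \simeq H\mod_{weak}$ produces the desired corepresenting object.

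The interesting content is not the existence of a corepresenting object but its identification: in the finite-dimensional case one recovers $\triv(\Vect)$ (so that invariants and coinvariants coincide), whereas in the Tate setting the discrepancy between the corepresenting object and $\triv(\Vect)$ is governed by the modular character developed in the sections that follow. For mere corepresentability, however, no such explicit identification is required, so the argument above is complete; the main ``obstacle'' is simply recognizing that the rigidity provided by Lemma \ref{l:pol-rigid} is exactly what one needs to invoke \eqref{eq:vph}.
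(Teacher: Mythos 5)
Your proposal is correct and follows essentially the same route as the paper: fix a polarization $K$, pass to $\sH_{H,K}^w\mod$ via Proposition-Construction \ref{pc:weak}, express coinvariants through \eqref{eq:inv-coinv-hecke}, and use the rigidity of $\sH_{H,K}^w$ (Lemma \ref{l:pol-rigid}) together with \eqref{eq:vph} to produce a corepresenting object. The only difference is cosmetic: you plug $(\vph_{\sH_{H,K}^w,*}\Rep(K))^{\vee}$ into \eqref{eq:vph} and cancel the $\vph$-twists in the relative tensor product, whereas the paper reads \eqref{eq:vph} with $\sN = \vph^*_{\sH_{H,K}^w}(\sC^{K,w})$ and identifies the corepresenting object as $\vph_{\sH_{H,K}^w,*}(\Rep(K))$; the two outputs agree by uniqueness of corepresenting objects.
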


\begin{proof}

Let $K$ be a polarization of $H$. 
Then we have the equivalence $H\mod_{weak} \simeq \sH_{H,K}^w\mod$, and the
right hand side is rigid monoidal. Using the notation of \S \ref{ss:rigid}, we obtain
functorial identifications:

\[
\begin{gathered}
\sC_{H,w} \overset{\eqref{eq:inv-coinv-hecke}}{=}
\Rep(K) \underset{\sH_{H,K}^w}{\otimes} \sC^{K,w} \overset{\eqref{eq:vph}}{=} 
\TwoHom_{\sH_{H,K}^w\mod}(\Rep(K),\vph_{\sH_{H,K}^w}^*(\sC^{K,w})) = \\
\TwoHom_{\sH_{H,K}^w\mod}(\vph_{\sH_{H,K}^w,*}(\Rep(K)),\sC^{K,w})
\end{gathered}
\]

\noindent where we have repeatedly used that $\vph_{\sH_{H,K}^w,*}$ and
$\vph_{\sH_{H,K}^w}^*$ are mutually inverse equivalences.
Applying the equivalence $\sH_{H,K}^w\mod \simeq H\mod_{weak}$ now
gives the result.

\end{proof}

\begin{defin}

For $H$ a polarizable Tate group indscheme, the \emph{modular character}
$\chi_{Tate,H} = \chi_{Tate} \in H\mod_{weak}$ is the object corepresenting the functor
of coinvariants. 

\end{defin}

\subsection{}\label{ss:hmod-sym-mon}

Next, observe that because the functor of Proposition-Construction \ref{pc:weak}
is lax symmetric monoidal, $H\mod_{weak}$ is naturally symmetric monoidal
with unit $\triv(\Vect)$. We denote the tensor product by 
$\otimes$; explicitly, for $\sC_1,\sC_2 \in H\mod_{weak}$, we have
an object $\sC_1 \otimes \sC_2 \in (H \times H)\mod_{weak}$ from the
lax symmetric monoidal functoriality, and then we restrict along
the diagonal map.

\begin{lem}

If $H$ is a polarizable Tate group indscheme, the functor:

\[
H\mod_{weak} \to H\mod_{weak}
\]

\noindent given by tensoring with the modular character 
$\chi_{Tate}$ is an equivalence.

\end{lem}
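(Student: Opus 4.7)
The plan is to fix a polarization $K \subset H$ and transfer the statement to an assertion about $\sH_{H,K}^w\mod$ via the equivalence $H\mod_{weak} \simeq \sH_{H,K}^w\mod$ of Proposition-Construction \ref{pc:weak} \eqref{i:pc-1}. By Lemma \ref{l:pol-rigid}, $\sH_{H,K}^w$ is rigid monoidal, so the construction of \S \ref{ss:rigid} produces a canonical monoidal auto-equivalence $\vph := \vph_{\sH_{H,K}^w}$, inducing an auto-equivalence $\vph_*:\sH_{H,K}^w\mod \isom \sH_{H,K}^w\mod$. The goal is then to identify the functor $(-) \otimes \chi_{Tate}$ on $H\mod_{weak}$ with $\vph_*$; since the latter is manifestly an equivalence, this would suffice.

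As a first sanity check, the two functors agree on the monoidal unit: $\triv(\Vect) \in H\mod_{weak}$ corresponds to $\Rep(K) \in \sH_{H,K}^w\mod$, and the proof of Corollary \ref{c:mod-char} gives $\chi_{Tate}^{K,w} \simeq \vph_*(\Rep(K))$, matching $\vph_*$ applied to the unit. To extend this identification to arbitrary $\sC$, I would unwind the symmetric monoidal structure on $H\mod_{weak}$ from \S \ref{ss:hmod-sym-mon}: the product $\sC_1 \otimes \sC_2$ is the restriction along $\Delta:H \to H \times H$ of the external product $\sC_1 \boxtimes \sC_2 \in (H \times H)\mod_{weak}$. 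By Proposition-Construction \ref{pc:weak} \eqref{i:pc-2}-\eqref{i:pc-3}, applied with $\Delta(K) \subset K \times K$, this restriction is realized at the level of Hecke categories by tensoring with the $\sH_{H,K}^w$-$(\sH_{H,K}^w \otimes \sH_{H,K}^w)$-bimodule $\cM := \IndCoh_{ren}^*(K \backslash (H \times H)/(K \times K))$, where $K$ sits diagonally.

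The main computation, and the key technical obstacle, is to verify that for every $\sM \in \sH_{H,K}^w\mod$ there is a natural equivalence
\[
\cM \underset{\sH_{H,K}^w \otimes \sH_{H,K}^w}{\otimes} \bigl(\vph_*(\Rep(K)) \otimes \sM\bigr) \simeq \vph_*(\sM).
\]
Using the characterization of $\vph$ from \eqref{eq:vph}, this reduces to exhibiting $\cM$, with its second $\sH_{H,K}^w$-factor twisted by $\vph$, as the regular $\sH_{H,K}^w$-bimodule. Here the delicate point is tracking how the diagonal embedding $K \hookrightarrow K \times K$ interacts with the rigidity datum; a useful reformulation is that $\sH_{H,K}^w$ is Morita self-dual via the pairing coming from proper pushforward along $H/K \to \Spec(k)$ (available because $K$ is a polarization), and the Serre-type functor this pairing induces on $\sH_{H,K}^w\mod$ is precisely $\vph_*$.

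An alternative route, which avoids manipulating $\cM$ directly, is to exhibit an explicit inverse character $\chi_{Tate}^{-1} \in H\mod_{weak}$, corresponding under the Hecke equivalence to $\vph^*(\Rep(K))$, and to construct an isomorphism $\chi_{Tate} \otimes \chi_{Tate}^{-1} \simeq \triv(\Vect)$ by the same rigid-monoidal-category circle of ideas. In either formulation the substance of the argument is that the rigidity of $\sH_{H,K}^w$ upgrades $\chi_{Tate}$ to an invertible object of the Picard groupoid of the symmetric monoidal $2$-category $H\mod_{weak}$, and once invertibility is secured, tensoring with $\chi_{Tate}$ is automatically an equivalence.
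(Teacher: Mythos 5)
Your proposal is correct and takes essentially the same route as the paper: fix a polarization $K$, pass to $\sH_{H,K}^w\mod$ via $(-)^{K,w}$, and use rigidity of $\sH_{H,K}^w$ (Lemma \ref{l:pol-rigid}) to identify $-\otimes \chi_{Tate}$ with the auto-equivalence $\vph_{\sH_{H,K}^w,*}$, the identification on the unit coming from $\chi_{Tate}^{K,w} \simeq \vph_{\sH_{H,K}^w,*}(\Rep(K))$ as in the proof of Corollary \ref{c:mod-char}. The bimodule-level verification you flag as the main computation is precisely what the paper compresses into the commutative diagram \eqref{eq:chi}, cited to that proof together with Proposition-Construction \ref{pc:weak} \eqref{i:pc-3}.
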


\begin{proof}

Let $K \subset H$ be a polarization.
By the proof of Corollary \ref{c:mod-char} 
(and Proposition-Construction \ref{pc:weak} \eqref{i:pc-3}),
we obtain a commutative diagram:

\begin{equation}\label{eq:chi}
\vcenter{\xymatrix{
H\mod_{weak} \ar[r]^{-\otimes \chi_{Tate}} \ar[d]_{{(-)^{K,w}}} & 
H\mod_{weak} \ar[d]^{(-)^{K,w}} \\
\sH_{H,K}^w\mod \ar[r]^{\vph_{\sH_{H,K},*}} & \sH_{H,K}\mod.
}}
\end{equation}

\noindent Each of the vertical arrows and the bottom arrow are equivalences,
so the top arrow is as well. 

\end{proof}

Let $H$ be a polarizable Tate group indscheme.
By the lemma, there is a canonical object 
$\chi_{-Tate} \in H\mod_{weak}$ inverse to $\chi_{Tate}$ under
tensor product, i.e., we have:

\[
\chi_{-Tate} \otimes \chi_{Tate} = \triv(\Vect) \in H\mod_{weak}.
\]

\begin{prop}\label{p:chi}

For $H$ as above and $\sC \in H\mod_{weak}$, there is a canonical isomorphism:

\[
\sC_{H,w} \simeq 
(\sC \otimes \chi_{-Tate})^{H,w}
\]

\noindent functorial in $\sC$.

\end{prop}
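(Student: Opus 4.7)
The plan is to combine the corepresentability of coinvariants (from the proof of Corollary \ref{c:mod-char}) with the fact that tensoring by the invertible object $\chi_{-Tate}$ is an autoequivalence of $H\mod_{weak}$. Concretely, my first move is to re-express both sides of the desired isomorphism as mapping spaces in the symmetric monoidal 2-category $H\mod_{weak}$.

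For the right-hand side, the construction of $(-)^{H,w}$ as the right adjoint to $\triv$ together with the fact (from \S \ref{ss:hmod-sym-mon}) that $\triv(\Vect)$ is the monoidal unit gives a canonical identification
\[
\sD^{H,w} \simeq \TwoHom_{H\mod_{weak}}(\triv(\Vect), \sD)
\]
natural in $\sD$. For the left-hand side, the proof of Corollary \ref{c:mod-char} produces a natural equivalence
\[
\sC_{H,w} \simeq \TwoHom_{H\mod_{weak}}(\chi_{Tate}, \sC);
\]
that is, $\chi_{Tate}$ corepresents coinvariants by construction.

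Next, I would apply the autoequivalence $-\otimes\chi_{-Tate}: H\mod_{weak}\to H\mod_{weak}$, which is an equivalence by the lemma preceding the statement (recall $\chi_{-Tate}$ is by definition a tensor-inverse to $\chi_{Tate}$, so the latter is also invertible). Since equivalences of DG 2-categories induce isomorphisms on $\TwoHom$-complexes, we obtain
\[
\TwoHom_{H\mod_{weak}}(\chi_{Tate}, \sC) \isom \TwoHom_{H\mod_{weak}}(\chi_{Tate}\otimes \chi_{-Tate},\, \sC \otimes \chi_{-Tate}) = \TwoHom_{H\mod_{weak}}(\triv(\Vect),\, \sC \otimes \chi_{-Tate}),
\]
using the defining identity $\chi_{Tate}\otimes \chi_{-Tate} \simeq \triv(\Vect)$. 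Splicing this with the two corepresentability identifications above yields the desired $\sC_{H,w} \simeq (\sC\otimes\chi_{-Tate})^{H,w}$, and naturality in $\sC$ is automatic since each step is visibly functorial in the argument $\sC$.

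There is no real obstacle: the content of the proposition is entirely formal once one has (i) the corepresentability of coinvariants established in Corollary \ref{c:mod-char}, (ii) the symmetric monoidal structure on $H\mod_{weak}$ from \S \ref{ss:hmod-sym-mon} with unit $\triv(\Vect)$, and (iii) the invertibility of $\chi_{Tate}$ proved in the lemma preceding. The only thing to be mildly careful about is to phrase everything symmetric-monoidally so that the identifications are manifestly natural in $\sC$; this is easiest by working entirely with $\TwoHom$-spaces in $H\mod_{weak}$ until the final step.
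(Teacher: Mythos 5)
Your proposal is correct and is essentially the paper's own argument: the paper likewise writes $(\sC\otimes\chi_{-Tate})^{H,w}=\TwoHom_{H\mod_{weak}}(\Vect,\sC\otimes\chi_{-Tate})=\TwoHom_{H\mod_{weak}}(\chi_{Tate},\sC)=\sC_{H,w}$, using the adjunction defining invariants, invertibility of $\chi_{\pm Tate}$, and the defining corepresentability of coinvariants by $\chi_{Tate}$. You have merely run the same chain of identifications in the opposite direction.
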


\begin{proof}

We have:

\[
(\sC \otimes \chi_{-Tate})^{H,w} =
\TwoHom_{H\mod_{weak}}(\Vect,\sC \otimes \chi_{-Tate}) =
\TwoHom_{H\mod_{weak}}(\chi_{Tate},\sC) = \sC_{H,w}
\]

\noindent where the last equality was the definition
of $\chi_{Tate}$.

\end{proof}

\subsection{}

The following result gives a 
somewhat non-canonical description of $\chi_{Tate}$.

\begin{prop}\label{p:chi-restr}

Let $H$ be a polarizable Tate group indscheme.
Then for any compact open subgroup $K \subset H$,
there exists a canonical isomorphism:

\[
\Oblv_K^H(\chi_{Tate,H}) \simeq \Vect \in K\mod_{weak}
\]

\noindent for $\Oblv_K^H:H\mod_{weak} \to K\mod_{weak}$
the restriction functor and for $\Vect \in K\mod_{weak}$
regarded with the trivial action.

\end{prop}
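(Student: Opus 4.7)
The plan is to translate the statement into a claim about the weak Hecke category and then exploit rigidity. First, Proposition-Construction \ref{pc:weak} \eqref{i:pc-1}\textendash{}\eqref{i:pc-2} applied to the inclusion $K \hookrightarrow H$ (with compact open subgroup $K$ on both sides) identifies $\Oblv_K^H$ under the genuine weak invariants equivalences with the functor $\sH_{H,K}^w\mod \to \Rep(K)\mod$ of restriction of scalars along the canonical monoidal unit $\iota: \Rep(K) \to \sH_{H,K}^w$. Combined with the identification $\chi_{Tate,H}^{K,w} \simeq \vph_{\sH_{H,K}^w,*}(\Rep(K))$ extracted from the proof of Corollary \ref{c:mod-char} (where it was shown that this object corepresents $H$-coinvariants) and with the description of $\Vect \in K\mod_{weak}$ from Example \ref{e:vect-gen}, the proposition reduces to producing a canonical monoidal natural isomorphism $\vph_{\sH_{H,K}^w} \circ \iota \simeq \iota$.

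Assume first that $K$ is itself a polarization of $H$, so $\sA \coloneqq \sH_{H,K}^w$ is rigid monoidal by Lemma \ref{l:pol-rigid}. From the defining relation $\ul{\Hom}_\sA(\e_\sA, \sF \star \sG) \simeq \ul{\Hom}_\sA(\e_\sA, \sG \star \vph_\sA(\sF))$ of \S \ref{ss:rigid}, together with the standard duality adjunctions in a rigid monoidal category, one extracts the natural iso $\ul{\Hom}_\sA(\sF^L, \sG) \simeq \ul{\Hom}_\sA(\sG^L, \vph_\sA(\sF))$ in $\sG$; invoking the equivalence $(-)^L: \sA^{op} \xar{\simeq} \sA$ and Yoneda then yields a canonical identification $\vph_\sA(\sF) \simeq \sF^{LL}$. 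On the other hand, since $\iota: \Rep(K) \to \sA$ is monoidal out of the \emph{symmetric} source $\Rep(K)$, for any dualizable $V \in \Rep(K)^c$ the eval/coev data for $V$ transport along $\iota$ to exhibit $\iota(V^\vee)$ simultaneously as a left and a right dual of $\iota(V)$, yielding canonical isomorphisms $\iota(V)^{LL} \simeq \iota(V^{\vee\vee}) \simeq \iota(V)$ (the final iso coming from the canonical self-double-dual isomorphism in the symmetric category $\Rep(K)$). Since dualizable objects compactly generate $\Rep(K)$ and both $\vph_\sA$ and $\iota$ are continuous, this natural iso extends to all of $\Rep(K)$ and is monoidally coherent, finishing the polarization case.

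For an arbitrary compact open $K \subset H$, the category $\sH_{H,K}^w$ need not be rigid, which is the main obstacle (the polarization hypothesis was essential above for the description of $\vph_\sA$). To handle the general case, I would fix any polarization $K_0$ of $H$, apply the preceding paragraph to produce the canonical iso at $K_0$, and then transport it to $K$ via the Morita bimodule $\IndCoh_{ren}^*(K \backslash H / K_0)$ that implements the equivalence $\sH_{H,K_0}^w\mod \simeq \sH_{H,K}^w\mod$: under this Morita equivalence $\chi_{Tate,H}^{K,w}$ is obtained from $\vph_{\sH_{H,K_0}^w,*}(\Rep(K_0))$ by tensoring, and combining the canonical iso from the polarization case with a Lemma \ref{l:gps-canon-renorm}\eqref{i:gps-4}-style computation identifying the resulting $\Rep(K)$-module with $\Rep(K)$ itself produces the desired canonical iso $\Oblv_K^H(\chi_{Tate,H}) \simeq \Vect$ in $K\mod_{weak}$.
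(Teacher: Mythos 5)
Your polarization case takes a genuinely different route from the paper's (the paper never identifies $\vph_{\sA}$ on the image of $\iota$), and its outline is plausible: granting that $\vph_{\sA}$ agrees on compacts with a double dual, objects $\iota(V)$ coming from the symmetric source $\Rep(K)$ have canonically trivialized double duals. But the step you actually need --- upgrading these objectwise identifications to a \emph{monoidal} natural isomorphism $\vph_{\sA}\circ\iota \simeq \iota$ (with its higher coherences), which is what identifies the $\Rep(K)$-module categories $\iota^{*}\vph_{\sA,*}(\Rep(K))$ and $\Rep(K)$ --- is asserted rather than proved, and in this $\infty$-categorical setting that coherence is real work, not a formality. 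This part is likely fillable, but it is not yet an argument.

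The genuine gap is the passage from a polarization $K_{0}$ to an arbitrary compact open $K$. Write $\sA_{0} = \sH_{H,K_{0}}^{w}$. The polarization case only tells you about the restriction of the $\sA_{0}$-module $\chi_{Tate}^{K_{0},w} \simeq \vph_{\sA_{0},*}(\Rep(K_{0}))$ along $\Rep(K_{0}) \to \sA_{0}$, i.e.\ about $\Oblv_{K_{0}}^{H}(\chi_{Tate})$. The Morita transport you invoke computes $\chi_{Tate}^{K,w} \simeq \IndCoh_{ren}^{*}(K\backslash H/K_{0}) \otimes_{\sA_{0}} \vph_{\sA_{0},*}(\Rep(K_{0}))$, and this relative tensor product depends on the \emph{full} $\sA_{0}$-module structure --- equivalently, on how $\vph_{\sA_{0}}$ interacts with the $(\sH_{H,K}^{w},\sA_{0})$-bimodule $\IndCoh_{ren}^{*}(K\backslash H/K_{0})$ --- not merely on its restriction to $\Rep(K_{0})$. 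Without the twist the tensor product is $\Rep(K)$; with the $\vph_{\sA_{0}}$-twist, identifying it with $\Rep(K)$ is a Serre-duality computation on $K\backslash H/K_{0}$ that is essentially the proposition for $K$ itself, and Lemma \ref{l:gps-canon-renorm} \eqref{i:gps-4} says nothing about this twist. (If $K \subset K_{0}$ one can simply restrict further along $\Oblv_{K}^{K_{0}}$, but for an abstract polarizable $H$ there is no reason a given $K$ sits inside a polarization.) Note that the paper's proof avoids any case split: it shows $\Oblv_{K}^{H}$ has an ambidextrous adjoint $\ind_{K}^{H,w}$ (via Serre self-duality of $\sH_{H,K}^{w}$ as a $\Rep(K)$-module category, which requires no ind-properness of $H/K$), and then a Yoneda computation using the corepresentability defining $\chi_{Tate}$, the identity $(\ind_{K}^{H,w}(-))_{H,w} = (-)_{K,w}$, and invariants $=$ coinvariants for the group scheme $K$ (\S \ref{ss:gp-sch-funct}) gives the statement uniformly for every compact open $K$. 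You would either need to redo your $\vph$-computation directly for each $K$ (dropping the reduction to a polarization) or switch to an argument of the paper's type for the general case.
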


\begin{proof}

For, note that $\Oblv_K^H$ admits a left adjoint
$\ind_K^{H,w}$ which also calculates its right adjoint.
Indeed, under the equivalences:

\[
\begin{gathered} 
H\mod_{weak} \overset{(-)^{K,w}}{\simeq} \sH_{H,K}^w\mod \\
K\mod_{weak} \overset{(-)^{K,w}}{\simeq}  \Rep(K)\mod 
\end{gathered}
\]

\noindent $\Oblv_K^H$ corresponds to restriction
along the monoidal functor $\Rep(K) \to \sH_{H,K}^w$.
This immediately gives the existence of the left adjoint,
and the fact that it also calculates the right adjoint
follows from the fact that $\sH_{H,K}^w$ is canonically self-dual
as a $\Rep(K)$-module category (which is the case because
$\sH_{H,K}$ is canonically self-dual as a 
DG category via Serre duality, 
and $\Rep(K)$ is rigid symmetric monoidal).

Now for $\sC \in K\mod_{weak}$, we obtain:

\[
\begin{gathered}
\TwoHom_{K\mod_{weak}}(\Oblv_K^H(\chi_{Tate,H}),\sC) =
\TwoHom_{H\mod_{weak}}(\chi_{Tate,H},\ind_K^{H,w}(\sC)) = \\
\ind_K^{H,w}(\sC)_{H,w} = \sC_{K,w} = \sC^{K,w} 
\end{gathered}
\]

\noindent functorially in $\sC$, giving the claim.

\end{proof}

\begin{warning}

Suppose $K_1 \subset K_2 \subset H$. Then
we obtain isomorphisms:

\[
\alpha_i:\Oblv_{K_i}^H(\chi_{Tate,H}) \simeq \Vect \in K_i\mod_{weak}, \hspace{.25cm} i = 1,2
\]

\noindent However, $\Oblv_{K_1}^{K_2}(\alpha_2) \neq \alpha_1$.
Rather, one can check that the two isomorphisms differ by tensoring
with $\det(\fk_2/\fk_1)[\dim(\fk_2/\fk_1)] \in \Rep(K_1) = 
\TwoEnd_{K_1\mod_{weak}}(\Vect)$.\footnote{This factor
arises because the proof of Proposition \ref{p:chi-restr}
(necessarily) used \emph{Serre} duality on $\IndCoh(H/K)$ to obtain
the canonical self-duality for 
$\sH_{H,K}^w$.}

\end{warning}

\section{Strong actions}\label{s:strong}

\subsection{}

In this section, we relate weak actions for a Tate group
indschemes $H$ to strong actions of $H$, as defined
in \cite{beraldo-*/!}.

\subsection{}

Let us spell out our goals more precisely.
Let $D^*(H) \in \Alg(\DGCat_{cont})$ 
be the monoidal DG category defined (with the
same notation) in \cite{dmod}. 
Let $H\mod \coloneqq D^*(H)\mod$ be the (2-)category of
categories with a \emph{strong} $H$-action.

In this section, we will construct a restriction functor:

\[
\Oblv = \Oblv^{str\to w}: H\mod \to H\mod_{weak} 
\]

\noindent compatible with forgetful functors
to $\DGCat_{cont}$ (where 
for $H\mod_{weak}$, we are considering the forgetful functor
$\Oblv_{gen}$ of \S \ref{ss:tate-forgetful}).

\subsection{}\label{ss:oblv-str-adjs}

Moreover, we will show that 
$\Oblv:H\mod \to H\mod_{weak}$ admits a left and right
adjoints that are morphisms of $\DGCat_{cont}$-module
categories, and with the following property.

For $\sC \in H\mod_{weak}$, define:

\[
\sC^{\exp(\fh),w} \coloneqq
\underset{K \subset H \text{ compact open}}{\colim} 
\sC^{H_K^{\wedge},w} \in \DGCat_{cont}
\]

\noindent under the obvious structural functors.
Here $H_K^{\wedge}$ is the formal completion 
of $H$ along $K$, which is necessarily a Tate group indscheme.
That is, we consider the restriction of
$\sC$ along the forgetful functor $H\mod_{weak} \to H_K^{\wedge}$,
apply the invariants construction of 
\S \ref{ss:tate-inv-coinv} for $H_K^{\wedge}$,
and pass to the colimit. 

Similarly, define:

\[
\sC_{\exp(\fh),w} \coloneqq
\underset{K \subset H \text{ compact open}}{\lim} 
\sC_{H_K^{\wedge},w}.
\]

As we will see, each of the structural functors
in the limit (resp. colimit) 
defining $\sC^{\exp(\fh),w}$ (resp. $\sC_{\exp(\fh),w}$)
admits a left adjoint (resp. continuous right adjoint),
so these two expressions can be expressed as limits or
colimits in $\DGCat_{cont}$.

Then we will see that the composition of
our right (resp. left) adjoint
$H\mod_{weak} \to H\mod$ with the forgetful
functor $H\mod \to \DGCat_{cont}$ sends
$\sC \in H\mod_{weak}$ to $\sC^{\exp(\fh),w}$
(resp. $\sC_{\exp(\fh),w}$).

In other words, we will show that $H$ acts strongly
on $\sC^{\exp(\fh),w}$ and
$\sC_{\exp(\fh),w}$, and that these categories satisfy
the evident universal properties with respect to these actions
and $\Oblv^{str\to w}$.

\begin{rem}\label{r:inv-coinv-w/str}

If $H$ is polarizable, then it is straightforward to deduce
from Proposition \ref{p:chi} that the above functors
$(-)^{\exp(\fh),w}, (-)_{\exp(\fh),w}:H\mod_{weak} \to \DGCat_{cont}$ 
are
equivalent up to certain twists.
We will formulate this statement precisely in 
Proposition \ref{p:inv-coinv-w/str}, where 
we will also show that this
isomorphism is strongly $H$-equivariant in a canonical way.

\end{rem}

\subsection{Strategy}\label{ss:strong-strategy}

To orient the reader in what follows, we give a brief
overview of the approach.

To give a functor $H\mod \to H\mod_{weak}$ the commutes with
colimits and is a morphism of $\DGCat_{cont}$-module categories
is equivalent to specifying an object of $H\mod_{weak}$
with a right $D^*(H)$-module structure.

In a suitable sense, this object is $D^*(H)$ considered
as weakly acted on via the left action of $H$, 
and with the evident commuting strong action of $H$ on the
right.

Implementing this strategy turns out the be somewhat involved.
It is not so difficult to define $D^*(H)$ as an
object of $H\mod_{weak}$: this is done is 
\S \ref{ss:str-ker-constr}. However, the commuting $D^*(H)$-action
takes some work, and will be given in \S \ref{ss:str-functor}.

\subsection{Warmup}

First, we discuss the case where $H$ is a classical
affine group scheme. While do not rely on this special case
in the general construction, it is illustrative of the
main ideas. 

Let $H = \lim_i H_i$ be a cofiltered 
limit of affine algebraic groups under smooth surjective
homomorphisms. As above, to construct our functor:

\[
H\mod \coloneqq D^*(H)\mod \to H\mod_{weak} = \Rep(H)\mod
\]

\noindent is suffices to construct a
$(\Rep(H),D^*(H))$-bimodule in $\DGCat_{cont}$.

This bimodule is $\fh\mod \in \DGCat_{cont}$ (c.f. Example 
\ref{e:lie}). 
We have:

\[
\fh\mod = \underset{i}{\lim} \, \fh_i\mod
\]

\noindent where each structural 
functor $\fh_i\mod \to \fh_j\mod$ takes the 
Lie algebra invariants with respect to 
$\Ker(\fh_i \onto \fh_j)$.
As is standard, $H_i$ acts strongly on $\fh_i\mod$,
and the above structural functors are equivariant
in the suitable homotopy coherent sense
for the $H_i$-action on $\fh_j\mod$ induced
by the homomorphism $H_i \to H_j$. 
Therefore, we obtain an action:

\[
D^*(H) \coloneqq \underset{i}{\lim} \, D(H_i) \actson
\underset{i}{\lim} \, \fh_i\mod = \fh\mod.
\]

Now for any map of indices $i\to j$, 
there is an action of $\Rep(H_j)$ on $\fh_i\mod$
commuting with the strong $H_j$-action:
it is given by restricting an $H_j$-representation
to $H_i$ and then tensoring with the Lie algebra representation.
Again, this is suitably homotopy coherent,
so we obtain an action:

\[
D^*(H) \otimes \Rep(H_j) \actson \fh\mod.
\]

\noindent Finally, these actions are suitably compatible
with varying $j$, so we obtain:

\[
D^*(H) \otimes \Rep(H) = 
D^*(H) \otimes \underset{j}{\colim} \, \Rep(H_j) \actson 
\fh\mod
\] 

\noindent as desired.

\subsection{A remark on naive coinvariants}

Before proceeding, it is convenient to record the following 
technical result. The reader may safely skip this material
and refer back to it as needed.

Let $S$ be a reasonable indscheme and let $\sP_K \to S$ be a $K$-
torsor for $K$ a classical affine group scheme.
By Corollary \ref{c:indcoh-naive}, naive weak $K$-invariants
in $\IndCoh^*(\sP_K)$ are given by $\IndCoh^*(S)$.
Moreover, the naive $K$-action on $\IndCoh^*(\sP_K)$
clearly canonically renormalizes, and the corresponding
category of genuine $K$-invariants is $\IndCoh^*(S)$
by Lemma \ref{l:coh-local}. This leaves the case
of naive coinvariants.

\begin{lem}\label{l:torsor-coinv}

In the above setting, the 
$\IndCoh$-pushforward functor $\IndCoh^*(\sP_K) \to \IndCoh^*(S)$
induces an equivalence:

\begin{equation}\label{eq:torsor-coinv}
\IndCoh^*(\sP_K)_{K,w,naive} \to \IndCoh^*(S). 
\end{equation}

\end{lem}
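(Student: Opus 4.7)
The plan is to identify both sides of \eqref{eq:torsor-coinv} as geometric realizations built from the \v Cech nerve of the flat cover $\pi\colon \sP_K \to S$, and then match them.

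First, I would unfold the naive coinvariants. By definition, $\IndCoh^*(\sP_K)_{K,w,naive} = \Vect \otimes_{\IndCoh^*(K)} \IndCoh^*(\sP_K)$ is the colimit in $\DGCat_{cont}$ of the bar construction $[n] \mapsto \IndCoh^*(K)^{\otimes n} \otimes \IndCoh^*(\sP_K)$. Since $K$ and $\sP_K$ are reasonable indschemes satisfying the strictness conditions of Proposition \ref{p:strict-basics} (using Remark \ref{r:tate-good-properties} together with Propositions \ref{p:placid-strict}, \ref{p:strict-basics}\eqref{i:strict-3}-\eqref{i:strict-4}), the external product induces $\IndCoh^*(K)^{\otimes n} \otimes \IndCoh^*(\sP_K) \simeq \IndCoh^*(K^n \times \sP_K)$. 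Using the standard torsor identification $K^n \times \sP_K \simeq \sP_K^{\times_S (n+1)}$, the structural maps of the bar construction (action, multiplication, forgetting factors, insertion of identities) correspond to the face and degeneracy maps of the \v Cech nerve, carried over via $\IndCoh$-pushforward. This would give
\[
\IndCoh^*(\sP_K)_{K,w,naive} \simeq \colim_{[n] \in \bDelta^{op}} \IndCoh^*(\sP_K^{\times_S (n+1)}).
\]

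Next, I would apply flat descent to the target. Theorem \ref{t:flat-desc}, applied to the flat cover $\pi$, yields $\IndCoh^*(S) \simeq \Tot_{semi} \IndCoh^*(\sP_K^{\times_S \bullet +1})$ along upper-$*$ pullbacks. Every face map in the \v Cech nerve is flat (base-change of $\pi$), so Lemma \ref{l:flat-indsch}\eqref{i:flat-1} provides continuous left adjoints given by lower-$*$ pushforwards. Applying the standard principle that a limit diagram in $\DGCat_{cont}$ whose structural functors admit continuous left adjoints coincides with the colimit of the dualized diagram, I would obtain
\[
\IndCoh^*(S) \simeq \colim_{[n] \in \bDelta_{semi}^{op}} \IndCoh^*(\sP_K^{\times_S (n+1)})
\]
along lower-$*$ pushforwards.

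The last step would be to compare the simplicial and semi-simplicial colimits. I would invoke the fact that the inclusion $\bDelta_{semi}^{op} \hookrightarrow \bDelta^{op}$ is cofinal in the $\infty$-categorical sense, so any simplicial object in a cocomplete $\infty$-category has the same colimit as its semi-simplicial restriction. Unwinding the identifications then verifies that the composite equivalence is realized by $\pi_*^{\IndCoh}$ followed by the canonical functor to coinvariants, which is exactly the functor in \eqref{eq:torsor-coinv}. I expect the main technical obstacle to lie in the first step: producing the full homotopy-coherent identification between the bar construction's structural data and the \v Cech nerve of the torsor, including the compatibility of the $\IndCoh^*(K)$-module structure on $\IndCoh^*(\sP_K)$ with $\IndCoh$-pushforward along the action map. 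The conversion of limits to colimits and the simplicial/semi-simplicial comparison, while requiring care, are formal consequences of the adjoint functor formalism and standard cofinality statements.
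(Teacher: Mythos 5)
Your reduction of the left-hand side to the bar construction, and hence (via strictness of $K$ and the torsor trivialization $K^n \times \sP_K \simeq \sP_K^{\times_S(n+1)}$) to the geometric realization of $\IndCoh^*$ of the \v Cech nerve along pushforwards, is reasonable, and the cofinality of $\bDelta_{inj}^{op} \subset \bDelta^{op}$ is standard. The gap is in the middle step. Lemma \ref{l:flat-indsch}\eqref{i:flat-1} says that for flat $f$ the pullback $f^{*,\IndCoh}$ is the \emph{left} adjoint of $f_*^{\IndCoh}$, not the other way around as you assert. The general dualization principle converts a limit whose structural functors admit left adjoints into the colimit of those left adjoints (equivalently, a colimit along functors with continuous right adjoints into the limit of the right adjoints). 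Applied here, it would turn the descent totalization along pullbacks into a colimit along the (nonexistent, in general) left adjoints of the pullbacks, or the \v Cech realization along pushforwards into a totalization along the right adjoints of the pushforwards --- which are not the flat pullbacks. What you actually need, namely ``colimit along the right adjoints $\simeq$ limit along the left adjoints,'' is not a formal fact; it is exactly an ``invariants $=$ coinvariants'' statement for the naive weak $K$-action, and the paper explicitly warns (\S\ref{ss:gp-sch-funct}) that this fails for naive weak actions of general affine group schemes. So flat descent (Theorem \ref{t:flat-desc}, equivalently Corollary \ref{c:indcoh-naive}) identifies $\IndCoh^*(S)$ with naive \emph{invariants}, and no purely formal adjunction argument upgrades this to the coinvariants statement of the lemma.

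This is why the paper's proof takes a different, non-formal route: it first reduces to $S$ an eventually coconnective qcqs scheme, proves the case of a trivial torsor directly (using Lemma \ref{l:placid-pres} to get $\IndCoh^*(K\times S) \simeq \QCoh(K)\otimes \IndCoh^*(S)$, where coinvariants of the regular representation are computed trivially), extends to Zariski-locally trivial torsors by a Zariski gluing argument for $\IndCoh^*$, deduces the case of prounipotent $K$ from triviality of such torsors over affine schemes, and finally handles general $K$ by passing to the proreductive quotient $K^{red}$, where semisimplicity of $\Rep(K^{red})$ identifies coinvariants with invariants so that Corollary \ref{c:indcoh-naive} applies. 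Some input of this kind (local triviality, prounipotence, semisimplicity) is genuinely needed; your argument as written would ``prove'' invariants $=$ coinvariants for an arbitrary naive weak action, which is false.
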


\begin{proof}

\step 

First, note that we are reduced to the case where $S$ is a
quasi-compact quasi-separated eventually coconnective scheme. 
Indeed, if $S = \colim_i S_i$
with $S_i \in \presup{>-\infty}{\Sch}_{qcqs}$ and structural
maps almost finitely presented, then: 

\[
\begin{gathered} 
\IndCoh^*(S) = \underset{i}{\colim} \, \IndCoh^*(S_i) \in \DGCat_{cont} \\
\IndCoh^*(\sP_K) = \underset{i}{\colim} \, \IndCoh^*(\sP_K \underset{S}{\times} S_i) \in \DGCat_{cont}.
\end{gathered}
\]

\noindent This clearly gives the reduction. In the remainder of the
argument, we therefore assume $S \in \presup{>-\infty}{\Sch}_{qcqs}$.

\step 

Next, suppose $\sP_K \to S$ is trivial, i.e., $\sP_K \isom K \times S$
$K$-equivariantly. By Lemma \ref{l:placid-pres} (applied to $K$), 
we have:

\[
\IndCoh^*(K) \otimes \IndCoh^*(S) \isom \IndCoh^*(\sP_K).
\]

\noindent Observe that $\Perf(K) \isom \Coh(K)$ because $K$ is a limit
of smooth schemes under flat affine morphisms. Therefore, the
above coincides with $\QCoh(K) \otimes \IndCoh^*(S)$.
This clearly gives the result in this case.

\step 

Next, we show the result when $\sP_K$ is
Zariski-locally trivial. For this, we first
establish some general facts
about $\IndCoh^*$.
 
Suppose $j:U \into S$ is a quasi-compact open subscheme. Then the natural functor:

\[
\IndCoh^*(S) \underset{\QCoh(S)}{\otimes} \QCoh(U) 
\to \IndCoh^*(U) 
\]

\noindent is an equivalence. Indeed, 
$\IndCoh^*(U)$ is the essential image of the
functor 
$j_*^{\IndCoh}j^{*,\IndCoh}:\IndCoh^*(S) \to \IndCoh^*(S)$,
while 
$\IndCoh^*(S) \otimes_{\QCoh(S)} \QCoh(U) $
is the essential image of: 

\[
\begin{gathered}
\id_{\IndCoh^*(S)} \underset{\QCoh(S)}{\otimes} j_*j^*:
\IndCoh^*(S) = 
\IndCoh^*(S) \underset{\QCoh(S)}{\otimes} \QCoh(S) 
\to \\
\IndCoh^*(S) \underset{\QCoh(S)}{\otimes} \QCoh(S) 
= \IndCoh^*(S).
\end{gathered}
\]

\noindent These endofunctors of $\IndCoh^*(S)$
coincide, giving the result.

As a consequence, suppose
$U_1,U_2 \subset S$ are quasi-compact
opens covering $S$; then we claim that the map:

\[
\IndCoh^*(U_1) 
\underset{\IndCoh^*(U_1 \cap U_2)}{\coprod}
\IndCoh^*(U_2) \to \IndCoh^*(S) \in \DGCat_{cont}
\]

\noindent is an equivalence (this pushout
being formed in $\DGCat_{cont}$).
Indeed, it is well-known\footnote{This identity
is implicit in the proof of \cite{qcoh}
Proposition 2.3.6. One can find this
statement explicitly in \cite{shvcat}
by combining Theorem 2.1.1 and
Proposition 6.2.7 from \emph{loc. cit}.}
that we have:

\[
\QCoh(U_1) 
\underset{\QCoh(U_1 \cap U_2)}{\coprod}
\QCoh(U_2) \to \QCoh(S) \in \DGCat_{cont}
\]

\noindent and therefore in $\QCoh(S)\mod$.
Tensoring $\IndCoh^*(S)$ over $\QCoh(S)$
preserves this colimit, giving the claim
from the above.

Now for any $U_1,U_2 \subset S$ as above,
we obtain:

\[
\IndCoh^*(\sP_K \underset{S}{\times} U_1)_{K,w,naive}
\underset{\IndCoh^*(
\sP_K \underset{S}{\times} U_1 \cap U_2)_{K,w,naive}}{\coprod}
\IndCoh^*(\sP_K \underset{S}{\times} U_2)_{K,w,naive}
\isom 
\IndCoh^*(\sP_K)_{K_w}
\]

\noindent by applying the above to the base-changed
Zariski cover of $\sP_K$ and by commuting
geometric realizations with pushouts.
We now clearly obtain the claim by induction on the
number of opens required to trivialize $\sP_K$.

\step Next, we show the result
for $K$ prounipotent.

By the previous step, it suffices to note
that any $K$-torsor on an affine scheme $T$ is trivial.
This is standard:
prounipotent $K$ has a lower central series 
$K = K^1 \trianglerighteq K^2 \trianglerighteq \ldots $ 
where all subquotients are (possibly infinite) products of copies of $\bG_a$. 
For such products, the claim follows from 
vanishing of higher (flat) cohomology of $T$ 
with coefficients in its structure sheaf.
By induction, any $K/K^n$-torsor on an affine scheme is 
trivial, and then we deduce the same for $K$ 
using countability of this filtration
and surjectivity of
$\pi_0(\Hom(T,K/K^{n+1})) \to 
\pi_0(\Hom(T,K/K^n))$.

\step Finally, we show the result in 
general.

Let $K \to K^{red}$ be the
proreductive\footnote{Here we use
\emph{proreductive} as shorthand for
\emph{pro-(algebraic group with reductive connected components)}.}
quotient of $K^{red}$, and let $K^u$ be the
kernel of this homomorphism, i.e., the prounipotent
radical of $K$.

Because representations of $K^{red}$ are semisimple,
for any $\sC$ with a naive weak $K^{red}$ action,
the functor $\sC_{K^{red},w,naive} \to 
\sC^{K^{red},w,naive}$ is an equivalence.
Indeed, the argument from \cite{shvcat} \S 7.2
applies just as well in the proreductive
case as in the reductive one.

We then obtain:

\[
\IndCoh^*(\sP_K)_{K,w,naive} =
(\IndCoh^*(\sP_K)_{K^u,w,naive})_{K^{red},w,naive} \isom
\IndCoh^*(\sP_{K^{red}})^{K^{red},w,naive}
\]

\noindent for $\sP_{K^{red}} \to S$ the induced
$K^{red}$-torsor (appealing to the previous step
here). We now obtain the result
by Corollary \ref{c:indcoh-naive}.

\end{proof}

\subsection{Induction}

We begin with the following general lemma.

\begin{lem}\label{l:induction}

\begin{enumerate}

Let $f:H_1 \to H_2$ be a morphism in $\mathsf{TateGp}$.

\item 

The forgetful functor:

\[
H_2\mod_{weak} \to H_1\mod_{weak}
\]

\noindent admits a left adjoint 
$\ind^w = \ind_{H_1}^{H_2,w}:H_1\mod_{weak} \to H_2\mod_{weak}$.

\item\label{i:induction-comm} Suppose there exists $K \subset H_1$ compact open
such that $f$ realizes $K$ as a compact open subgroup
of $H_2$ as well.\footnote{Using Lemma \ref{l:torsor-coinv},
one can show that the conclusion holds more generally 
if there exist $K_i \subset H_i$ compact
open subgroups such that $f$ maps
$K_1$ into $K_2$ via a closed embedding.}

Then the diagram:

\[
\xymatrix{
H_1\mod_{weak} \ar[rr]^{\Oblv_{gen}} \ar[d]^{\ind_{H_1}^{H_2,w}}
&& H_1\mod_{weak,naive} \ar[d]^{\ind_{H_1}^{H_2,w,naive}}
\\
H_2\mod_{weak} \ar[rr]^{\Oblv_{gen}} && H_2\mod_{weak,naive}
}
\]

\noindent commutes (where a priori it only 
commutes up to a natural transformation). Here
the functor on the right is tensoring over
$\IndCoh^*(H_1)$ with $\IndCoh^*(H_2)$.

\end{enumerate}

\end{lem}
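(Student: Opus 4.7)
The strategy is to work throughout in the Hecke picture of Proposition-Construction \ref{pc:weak}. Fix compact open subgroups $K_i\subset H_i$ with $f(K_1)\subset K_2$; the equivalences $H_i\mod_{weak}\simeq\sH_{H_i,K_i}^w\mod$ identify the forgetful functor with $\sC\mapsto M\underset{\sH_{H_2,K_2}^w}{\otimes}\sC$, where $M\coloneqq\IndCoh_{ren}^*(K_1\backslash H_2/K_2)$ is the Hecke bimodule of Proposition-Construction \ref{pc:weak} \eqref{i:pc-2}.

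For part (1), a left adjoint exists if and only if $M$ is dualizable as a right $\sH_{H_2,K_2}^w$-module. Invoking Lemma \ref{l:gps-canon-renorm} \eqref{i:gps-4} exactly as in the proof of Proposition-Construction \ref{pc:weak}, the plan is to identify
\[ M\;\simeq\;\Rep(K_1)\underset{\Rep(K_2)}{\otimes}\sH_{H_2,K_2}^w \]
as right $\sH_{H_2,K_2}^w$-modules. Now $\Rep(K_2)$ is rigid symmetric monoidal, so any object of $\Rep(K_2)\mod$ that is dualizable in $\DGCat_{cont}$ is also dualizable as a $\Rep(K_2)$-module (a standard fact about rigid monoidal DG categories, cf.\ \cite{dgcat}). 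Since $\Rep(K_1)$ is compactly generated it is dualizable in $\DGCat_{cont}$, hence dualizable over $\Rep(K_2)$; base-changing then gives dualizability of $M$ over $\sH_{H_2,K_2}^w$, producing the left adjoint $\ind_{H_1}^{H_2,w}$.

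For part (2), specialize to $K\coloneqq K_1=K_2$. The morphism $f$ descends to a morphism $K\backslash H_1/K\to K\backslash H_2/K$ of algebras in $\on{Corr}(\PreStk_{ren})_{indsch.lafp;loc.flat}$, inducing a \emph{monoidal} pushforward $\bar f_\star\colon\sH_{H_1,K}^w\to\sH_{H_2,K}^w$; under this identification the bimodule $M$ is simply $\sH_{H_2,K}^w$ itself viewed as an $\sH_{H_1,K}^w$-module via $\bar f_\star$, so that $\ind_{H_1}^{H_2,w}=\sH_{H_2,K}^w\underset{\sH_{H_1,K}^w}{\otimes}-$. Analogously, on the naive side $\ind_{H_1}^{H_2,w,naive}=\IndCoh^*(H_2)\underset{\IndCoh^*(H_1)}{\otimes}-$ along the monoidal pushforward $f_*\colon\IndCoh^*(H_1)\to\IndCoh^*(H_2)$. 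Expanding $\Oblv_{gen}$ as $\colim_{K'}\sC^{K',w}$ and using the Hecke formula $\sC^{K',w}\simeq\IndCoh_{ren}^*(K'\backslash H/K)\underset{\sH_{H,K}^w}{\otimes}\sC$, both compositions in the square reduce, after cancellation of relative tensor products, to tensoring $\sD$ against canonically identifiable $(\IndCoh^*(H_2),\sH_{H_1,K}^w)$-bimodules; the commutativity is then equivalent to the base-change identity
\[ \IndCoh^*(H_2/K)\;\simeq\;\IndCoh^*(H_2)\underset{\IndCoh^*(H_1)}{\otimes}\IndCoh^*(H_1/K), \]
reflecting the geometric identification $H_2/K\simeq H_2\times^{H_1}(H_1/K)$.

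The hard part will be this base-change identity in the genuinely infinite-dimensional setting; this is where \S\ref{s:indcoh} does real work. The plan is to present $H_2/K$ as the geometric realization of the bar simplicial prestack $H_2\times H_1^{\times\bullet}\times(H_1/K)$, apply Theorem \ref{t:flat-desc} (flat descent of $\IndCoh^*$) to the corresponding $H_1$-torsor cover $H_2\times(H_1/K)\to H_2/K$, and invoke strictness of each piece (Remark \ref{r:tate-good-properties}, Proposition \ref{p:strict-basics}, Proposition \ref{p:placid-strict}) to identify $\IndCoh^*$ of the simplicial products as honest tensor products of $\IndCoh^*$s. Once this is in hand, the two compositions in the diagram are matched as $\IndCoh^*(H_2)$-linear functors of $\sD$, completing the proof.
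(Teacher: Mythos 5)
Your part (1) is essentially the paper's argument in Hecke-module form: the only real input is rigidity of $\Rep(K_2)$ and the resulting dualizability of $\Rep(K_1)$ over it (your ``if and only if'' with dualizability of the bimodule is stated too strongly, but the implication you actually use is fine, and it parallels the paper's reduction to \S\ref{ss:gp-sch-funct}). The genuine gap is in part (2), precisely at the step you flag as the hard one. The identity
\[
\IndCoh^*(H_2)\underset{\IndCoh^*(H_1)}{\otimes}\IndCoh^*(H_1/K)\;\simeq\;\IndCoh^*(H_2/K)
\]
cannot be obtained from Theorem \ref{t:flat-desc} applied to $H_2\times(H_1/K)\to H_2/K$: the fibers of that map are isomorphic to $H_1$, an indscheme of ind-infinite type, so the map is not flat (hence not a flat cover) in the sense of \S\ref{ss:flat-prestks}, where flatness requires schematic, eventually coconnective qcqs fibers; Theorem \ref{t:flat-desc} simply does not apply. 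Moreover, even granting some descent statement, descent expresses $\IndCoh^*(H_2/K)$ as a \emph{limit} along $*$-pullbacks of a Cech nerve, whereas the relative tensor product is a \emph{colimit} of a bar construction whose structure maps are convolution/pushforward functors; identifying the two is exactly the non-formal content, and strictness of the individual factors does nothing toward it.

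The missing ingredient is Lemma \ref{l:torsor-coinv} --- the very lemma the footnote to the statement points to, and which you never invoke: for a classical affine group scheme $K$ and a $K$-torsor $\sP_K \to S$, naive weak coinvariants satisfy $\IndCoh^*(\sP_K)\otimes_{\IndCoh^*(K)}\Vect \simeq \IndCoh^*(S)$. Its proof is a genuine argument (Zariski-local triviality of the torsor, the prounipotent case via triviality of torsors over affines, the proreductive case via invariants $=$ coinvariants), not formal descent. Your base-change identity does follow from it, applied to $H_1 \to H_1/K$ and $H_2 \to H_2/K$ and cancelling $\IndCoh^*(H_1)$, but as written your proposal replaces this one non-formal input by an argument that fails. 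For comparison, the paper's route is shorter: by monadicity of $H_i\mod_{weak} \to K\mod_{weak}$ one reduces to $H_1 = K$; since all functors in the square are colimit-preserving and $\DGCat_{cont}$-linear, it suffices to evaluate on the generator $\Vect \in K\mod_{weak}$, where the claim becomes exactly $\IndCoh^*(H_2)\otimes_{\IndCoh^*(K)}\Vect \simeq \IndCoh(H_2/K)$, i.e.\ Lemma \ref{l:torsor-coinv}.
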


\begin{proof}

As $H_i\mod_{weak} \to K_i\mod_{weak}$ are (by construction)
monadic functors, the first claim easily reduces to the setting
of \S \ref{ss:gp-sch-funct}. 

Similarly, such considerations formally reduce the
second claim to the case where $H_1 = K$. We denote $H_2$
simply by $H$ in this case. So we wish to show the diagram:

\[
\xymatrix{
K\mod_{weak} \ar[r]^{\Oblv_{gen}} \ar[d]^{\ind_{K}^{H,w}}
& K\mod_{weak,naive} \ar[d]^{\ind_{H_1}^{H_2,w,naive}}
\\
H\mod_{weak} \ar[r]^{\Oblv_{gen}} & H\mod_{weak,naive}
}
\]

\noindent commutes.
Each of the functors involved commutes
with colimits and is $\DGCat_{cont}$-linear,
so it suffices to check that the diagram commutes
when evaluated on the trivial representation 
$\Vect \in K\mod_{weak}$ (since this object generates
by definition). In this case, the claim is that the
natural map:

\[
\IndCoh^*(H) \underset{\IndCoh^*(K)}{\otimes} \Vect \isom
\IndCoh(H/K). 
\]

\noindent This follows from Lemma \ref{l:torsor-coinv}.

\end{proof}

\subsection{}\label{ss:ind-dmod}

Now for $H$ a Tate group indscheme and $K \subset H$
compact open, let $H_K^{\wedge}$ denote the formal
completion of $H$ along $K$. We can form
$\ind_{H_K^{\wedge}}^{H,w}(\Vect) \in H\mod_{weak}$
(where $\Vect \in H_K^{\wedge}\mod_{weak}$ is our standard
trivial object).
By Lemma \ref{l:induction}, we have:

\[
\Oblv_{gen}(\ind_{H_K^{\wedge}}(\Vect)) = 
\IndCoh^*(H) \underset{\IndCoh^*(H_K^{\wedge})}{\otimes} \Vect. 
\]

\noindent This tensor product evidently maps to 
$D(H/K)$, and we claim that the induced functor is an equivalence.
Indeed, we have a commutative diagram:

\[
\xymatrix{
\IndCoh(H/K) \ar@{=}[r] & \IndCoh^*(H) \underset{\IndCoh^*(K)}{\otimes} \Vect
\ar[rr] \ar[dr]
&&
\IndCoh^*(H) \underset{\IndCoh^*(H_K^{\wedge})}{\otimes} \Vect
\ar[dl] \\
& & D(H/K).
}
\]

\noindent The diagonal arrows admit continuous,
monadic right adjoints and the induced natural
transformation on monads is an isomorphism, giving the claim.

In the above setting, we use our standard abuse
of notation in letting 
$D(H/K) \in H\mod_{weak}$ denote the object
$\ind_{H_K^{\wedge}}^{H,w}(\Vect)$. 

\subsection{}\label{ss:str-ker-constr}

Note that this object is manifestly covariant
in $K$, so we can form:

\[
D^*(H) \coloneqq \underset{K}{\lim} \, D(H/K) \in H\mod_{weak}.
\]

\noindent Note that under $\Oblv_{gen}$ and the equivalence
of \S \ref{ss:ind-dmod}, these structural
functors map to de Rham pushforward functors.

By definition (and Lemma \ref{l:tate-oblv-co/lims}),
this object maps under
$\Oblv_{gen}$ to  
the category $D^*(H) \in \DGCat_{cont}$ defined
in \cite{dmod}, justifying the notation.

\begin{rem}\label{r:d*-adjs}

Each of the structural functors in the above diagram
admits a left adjoint in the 2-category $H\mod_{weak}$:
indeed, these functors are given by $D$-module 
$*$-pullback along the smooth maps
$H/K_1 \to H/K_2$.\footnote{This discussion is a bit informal,
since it really applies after applying $\Oblv_{gen}$.
But e.g., it easily follows from
Lemma \ref{l:dmod-canon} below that the left adjoints exist
in the genuine setting as well.}
Therefore, this limit is also a colimit (in $H\mod_{weak}$)
under those left adjoints.

\end{rem}

\subsection{}\label{ss:d*-coreps}

By Remark \ref{r:d*-adjs}, 
$D^*(H) \in H\mod_{weak}$ corepresents the functor:

\[
\sC \mapsto 
\underset{K \subset H \text{ compact open}}{\lim} 
\sC^{H_K^{\wedge},w} =
\underset{K \subset H \text{ compact open}}{\colim} 
\sC^{H_K^{\wedge},w} = \sC^{\exp(\fh),w}
\]

\noindent where the structural functors in the colimit
are the evident forgetful functors, and the structural
functors in the limit are their right adjoints.

\subsection{}

Below, we will construct an action of 
$D^*(H) \in \Alg(\DGCat_{cont})$ on this object 
$D^*(H) \in H\mod_{weak}$
encoding the right action of $H$ on itself. As in 
\S \ref{ss:strong-strategy}, this would suffice to construct
a functor of the desired type. 
By the discussion of \S \ref{ss:d*-coreps}, the formula
from \S \ref{ss:oblv-str-adjs}
for the right adjoint would be immediate, and the
formula for the left adjoint would follow dually.

Therefore, we will give this construction below 
following a sequence of digressions.

\subsection{Some generalities on $D$-modules}

We make the above construction somewhat more explicit.
The reader may safely skip this material and refer
back to it as needed.

First, it is convenient to extend the generality of the
above construction. 
Let $X$ be an indscheme locally almost of finite type,
and suppose $X$ is acted on by $H$.
Then we there is a canonical object
$D(X) \in H\mod_{weak}$ attached to $X$ (and mapping to the
category of $D$-modules on $X$ under $\Oblv_{gen}$).

We sketch the construction. A variant of
Proposition-Construction \ref{pc:weak} attaches
an object $\IndCoh(X) \in H\mod_{weak}$ to $X$
such that for any congruence subgroup $K$,
$\IndCoh(X)^{K,w} = \IndCoh_{ren}^*(X/K)$ as
an $\sH_{H,K}^w$-module. 

Now let $X_{\dot}^{inf} \in \IndSch_{laft}^{\bDelta^{op}}$ 
be the infinitesimal groupoid of $X$, i.e., the simplicial 
indscheme locally almost of finite type obtained as the
Cech nerve of $X \to X_{dR}$. By functoriality, 
this diagram is a simplicial diagram of indschemes (locally
almost of finite)
acted on by $H$.
Therefore, by the above construction, 
we obtain a simplicial diagram 
$\IndCoh(X_{\dot}^{inf}) \in H\mod_{weak}$. We define
$D(X) \in H\mod_{weak}$ as its colimit.
By Lemma \ref{l:tate-oblv-co/lims}
and \cite{grbook} Proposition III.3.3.3.3(b), this object
indeed maps to the usual
category of $D$-modules $D(X) \in \DGCat_{cont}$ under the 
forgetful functor $H\mod_{weak} \to \DGCat_{cont}$.

Now that for any choice of 
compact open subgroup $K \subset H$,
we have $D(X)^{K,w} \in \sH_{H,K}^w\mod$.
By construction $D(X)^{K,w}$ is compactly generated with
compact objects induced from 
$\IndCoh_{ren}^*(X/K) = \IndCoh(X)^{K,w}$.

\begin{lem}

For $K \subset H$ compact open, the object $D(H/K)$
defined in \S \ref{ss:ind-dmod} coincides with the
object we have just constructed.

\end{lem}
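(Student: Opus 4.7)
The plan is to exhibit both objects as corepresenting the same functor on $H\mod_{weak}$, namely $\sC \mapsto \sC^{H_K^\wedge,w}$, where $\sC$ is regarded as an $H_K^\wedge$-category by restriction along $H_K^\wedge \into H$.

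For $\ind_{H_K^\wedge}^{H,w}(\Vect)$ this is immediate from the defining adjunction:
\[
\TwoHom_{H\mod_{weak}}(\ind_{H_K^\wedge}^{H,w}(\Vect),\sC)
\simeq
\TwoHom_{H_K^\wedge\mod_{weak}}(\Vect,\sC)
= \sC^{H_K^\wedge,w}.
\]

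For the infinitesimal groupoid construction, the key input is the $H$-equivariant identification $(H/K)^{inf}_n \simeq H \times_K (H_K^\wedge/K)^n$, where $K$ acts on $H$ by right translation and diagonally on $(H_K^\wedge/K)^n$ through the inclusion $K \into H_K^\wedge$. This arises by trivializing, via the $H$-action, the formal neighborhood of the diagonal in $(H/K)^{n+1}$ against the formal neighborhood $H_K^\wedge/K$ of $eK$ in $H/K$. Applying the functoriality from Proposition-Construction \ref{pc:weak}, we obtain
\[
\IndCoh((H/K)^{inf}_n) \simeq
\ind_K^{H,w}(\IndCoh((H_K^\wedge/K)^n))
\in H\mod_{weak},
\]
and hence $\TwoHom_{H\mod_{weak}}(\IndCoh((H/K)^{inf}_n),\sC) \simeq \TwoHom_{K\mod_{weak}}(\IndCoh((H_K^\wedge/K)^n),\sC)$. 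Assembling over $[n] \in \bDelta$, the resulting cosimplicial object is the standard bar complex resolving $H_K^\wedge$-weak invariants in terms of $K$-weak invariants; its totalization is $\sC^{H_K^\wedge,w}$, as desired.

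The main obstacle lies in promoting these identifications from the level of $\DGCat_{cont}$ (where the underlying equivalence of the two constructions is already implicit in \S \ref{ss:ind-dmod} together with Lemma \ref{l:tate-oblv-co/lims}) to the level of $H\mod_{weak}$, where $\Oblv_{gen}$ is not conservative. Care is required to verify that the $H\mod_{weak}$-structures match compatibly across the simplicial degrees; this is precisely where Proposition-Construction \ref{pc:weak}, and in particular the functoriality of item \eqref{i:pc-2}, enters in an essential way.
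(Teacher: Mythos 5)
Your overall strategy (exhibit both objects as corepresenting $\sC \mapsto \sC^{H_K^{\wedge},w}$ and conclude by Yoneda) is genuinely different from the paper's proof: there, one writes down a direct comparison map $\ind_{H_K^{\wedge}}^{H,w}(\Vect) \to D(H/K)$, passes to weak $K$-invariants (where $(-)^{K,w}$ is an equivalence onto $\sH_{H,K}^w\mod$, so this loses nothing), and compares the two sides via a Barr--Beck argument: both receive a functor from $\IndCoh_{ren}^*(K\backslash H/K)$ admitting a continuous conservative right adjoint, hence are monadic over it, and the induced map of monads is an isomorphism. The first half of your argument, the corepresentability statement for $\ind_{H_K^{\wedge}}^{H,w}(\Vect)$, is correct as stated.

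The second half, however, has a genuine gap, and it sits exactly at the crux of the lemma. Your final step asserts that the cosimplicial category $[n]\mapsto \TwoHom_{K\mod_{weak}}(\IndCoh((H_K^{\wedge}/K)^n),\sC)$ is ``the standard bar complex'' whose totalization is $\sC^{H_K^{\wedge},w}$. For \emph{naive} invariants such cobar descriptions are definitional, but for the \emph{genuine} (renormalized) invariants appearing here this is not standard: justifying it requires (i) comonadicity of $\Oblv:\sC^{H_K^{\wedge},w}\to\sC^{K,w}$ (true, via dualizability of $\sH^w_{H_K^{\wedge},K}$ over $\Rep(K)$, i.e.\ ind-properness of $H_K^{\wedge}/K$), (ii) identification of the resulting comonad with tensoring over $\Rep(K)$ by the dual of $\sH^w_{H_K^{\wedge},K}$, and (iii) matching the cosimplicial structure maps with the geometric ones --- in other words, precisely the monad/comonad comparison that the paper's proof performs, but in a harder, totalization-convergence form; asserting it is close to restating the lemma. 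Separately, the identification $\IndCoh((H/K)^{inf}_n) \simeq \ind_K^{H,w}(\IndCoh((H_K^{\wedge}/K)^n))$ \emph{in} $H\mod_{weak}$ is only flagged, not proved: item \eqref{i:pc-2} of Proposition-Construction \ref{pc:weak} is functoriality in the group and does not by itself give this associated-bundle/induction statement; one needs the $\IndCoh(X)$-variant of Proposition-Construction \ref{pc:weak} together with inputs of the type of Lemma \ref{l:torsor-coinv} and Lemma \ref{l:gps-canon-renorm} \eqref{i:gps-4} to match the genuine structures (after $\Oblv_{gen}$ it is easy, but $\Oblv_{gen}$ is not conservative). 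As written, the proposal therefore reduces the lemma to unproven assertions of comparable difficulty; the paper's route of checking a single map after $(-)^{K,w}$ by comparing monads is what closes exactly these points.
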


\begin{proof}

This is essentially a slight refinement of the argument
from \S \ref{ss:ind-dmod}.

Let $\sC_1 \in H\mod_{weak}$ denote the object from
\S \ref{ss:ind-dmod} and let $\sC_2 \in H\mod_{weak}$ 
denote the object just constructed (i.e., from the construction
defined for any indscheme locally almost of finite type). 
There is a natural map 
$\sC_1 = \ind_{H_K^{\wedge}}^{H,w}(\Vect) \to \sC_2$,
and we claim it is an isomorphism.
It suffices to check this after applying weak $K$-invariants.

By construction, we have:

\[
\sC_1^{K,w} = \Rep(K) \underset{\sH_{H_K^{\wedge},K}^w}{\otimes}
\sH_{H,K}^w.
\]

\noindent This gives rise to a functor:

\[
\IndCoh_{ren}^*(K\backslash H/K) = 
\Rep(K) \underset{\Rep(K)}{\otimes}
\sH_{H,K}^w \to 
\Rep(K) \underset{\sH_{H_K^{\wedge},K}^w}{\otimes}
\sH_{H,K}^w = \sC_1^{K,w}.
\]

\noindent This functor admits a right adjoint that is
continuous and conservative, so monadic. The further
composition with $\sC_1^{K,w} \to \sC_2^{K,w}$ behaves
similarly (by construction), and the induced maps
on monads is an isomorphism, giving the claim.

\end{proof}

We now show the following result for $X$ any indscheme
locally almost of finite type.

\begin{lem}\label{l:dmod-canon}

$D(X)^{K,w}$ admits a (unique) compactly generated
$t$-structure for which the forgetful 
functor $D(X)^{K,w} \to D(X)^{K,w,naive}$ is $t$-exact
and induces an equivalence 
$D(X)^{K,w,+} \isom D(X)^{K,w,naive,+}$.
An object in $D(X)^{K,w}$ is compact if and only if
it is eventually coconnective and its image in
$D(X)$ is compact.

\end{lem}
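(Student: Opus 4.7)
The strategy is to transfer a $t$-structure from the factors of the colimit presentation
\[
D(X)^{K,w} \simeq \underset{[n]\in\bDelta^{op}}{\colim}\, \IndCoh(X_n^{inf})^{K,w}
\]
obtained by applying the equivalence $(-)^{K,w}:H\mod_{weak}\isom \sH_{H,K}^w\mod$ of Proposition-Construction \ref{pc:weak} (which commutes with colimits) to the defining simplicial presentation of $D(X)\in H\mod_{weak}$. For each $n$, the action of $K$ on the indscheme $X_n^{inf}$ (locally almost of finite type) canonically renormalizes by Lemma \ref{l:gen-indscheme}, so Proposition \ref{p:gen-t-str} equips $\IndCoh(X_n^{inf})^{K,w}$ with a compactly generated $t$-structure whose compact generators are induced from $\Coh(X_n^{inf})^{K,w}$, and for which the comparison $\psi_n:\IndCoh(X_n^{inf})^{K,w}\to\IndCoh(X_n^{inf})^{K,w,naive}$ is $t$-exact and an equivalence on bounded below subcategories. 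The face and degeneracy maps of $\IndCoh(X_\dot^{inf})$ are $\IndCoh$-pushforwards (and their left adjoints) along nil-isomorphisms of indschemes laft, so they are $t$-exact and preserve compacts; Proposition \ref{p:gen-t-str} \eqref{i:gen-t-2.5} propagates these properties through the $K$-equivariant structure.

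I would then apply the standard colimit machinery (\cite{whit} Lemma 5.4.3(1)) to endow $D(X)^{K,w}$ with the unique compactly generated $t$-structure for which each canonical functor $\iota_n:\IndCoh(X_n^{inf})^{K,w}\to D(X)^{K,w}$ is $t$-exact. Applying the same argument on the naive side gives a compatible presentation $D(X)^{K,w,naive} \simeq \colim_n \IndCoh(X_n^{inf})^{K,w,naive}$ with its own $t$-structure, and the $t$-exactness of $\Oblv:D(X)^{K,w}\to D(X)^{K,w,naive}$ follows factorwise from the $t$-exactness of each $\psi_n$.

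The main technical step is to verify that the induced functor $\psi:D(X)^{K,w,+}\to D(X)^{K,w,naive,+}$ is an equivalence. For fully-faithfulness, I would take a compact generator $\sF=\iota_n(\sF_0)$ with $\sF_0\in\Coh(X_n^{inf})^{K,w}$ and write any $\sG\in D(X)^{K,w,\geq -N}$ as a filtered colimit of objects $\iota_m(\sG_\alpha)$ with $\sG_\alpha\in\IndCoh(X_m^{inf})^{K,w,\geq -N}$; the relevant mapping complex is then computed as a filtered colimit of mapping complexes inside a single $\IndCoh(X_m^{inf})^{K,w}$, where Proposition \ref{p:gen-t-str} \eqref{i:gen-t-3} identifies them with their naive counterparts. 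Essential surjectivity follows from right completeness of both $t$-structures, $t$-exactness of $\psi$, and the factorwise equivalence on hearts. Finally, the compactness characterization is immediate in one direction (compact generators $\iota_n(\sF_0)$ are eventually coconnective and their images in $D(X)$ are compact via the de Rham pushforward along the nil-isomorphism $X_n^{inf}\to X_{dR}$ applied to the coherent sheaf $\Oblv(\sF_0)$), while the converse follows from the bounded-below equivalence $\psi$ combined with a retract/truncation argument analogous to the one used in Lemma \ref{l:recognition}.

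The hard part will be the fully-faithfulness step above, which requires controlling the interaction of the filtered colimit defining $\sG$ with the mapping complex into the colimit category: one needs a uniform connectivity bound on the terms arising from resolving objects of $\IndCoh(X_m^{inf})^{K,w,c}$ (as in the proof of Proposition \ref{p:gen-t-str}) to exchange a totalization and a filtered colimit, together with cofinality of the simplicial structure in order to reduce a two-variable colimit indexed by $([n],\alpha)$ to a filtered colimit over $\alpha$ at a fixed level.
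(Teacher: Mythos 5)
There is a genuine gap, and it comes from treating the simplicial colimit as if it were filtered. The presentation $D(X)^{K,w} \simeq \colim_{[n]\in\bDelta^{op}} \IndCoh(X_n^{inf})^{K,w}$ is a geometric realization, not a filtered colimit, so the machinery you invoke (\cite{whit} Lemma 5.4.3(1)) does not apply to transfer the $t$-structure, and your fully-faithfulness step fails as stated: an object of the realization is not a filtered colimit of objects coming from a single level, and $\ul{\Hom}$ out of $\iota_n(\sF_0)$ into the colimit is computed through the right adjoints by a cobar-type totalization, not levelwise. Likewise, the "compatible presentation" $D(X)^{K,w,naive} \simeq \colim_n \IndCoh(X_n^{inf})^{K,w,naive}$ is asserted but not justified: naive invariants is itself a totalization and is not known to commute with the realization. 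You also do not prove (and do not need) the $t$-exactness of all the insertions $\iota_n$. The paper sidesteps all of this: it defines the $t$-structure by generation under colimits from objects induced from $\IndCoh(X)^{K,w,\leq 0}$ (the $0$-simplex level only), proves $t$-exactness of $\Oblv:D(X)^{K,w}\to D(X)$ directly (coconnectivity is detected on the underlying object of $\IndCoh(X)$), and then obtains $D(X)^{K,w,+}\isom D(X)^{K,w,naive,+}$ by a comonadicity argument over $D(X)$: both categories are comodules over the same comonad on $D(X)$, and $D(X)^{K,w,+}\to D(X)^+$ is comonadic because it is $t$-exact with right-complete $t$-structures.

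The compactness statement also hides the real content. Your easy direction is fine, but for the converse the phrase "retract/truncation argument analogous to Lemma \ref{l:recognition}" is not enough: Lemma \ref{l:recognition} only applies to affine algebraic groups, and $K$ is profinite-dimensional. The missing idea is the chain of reductions in the paper: reduce to $X$ classical, then (writing $X$ as a colimit under closed embeddings) to $X$ a finite type scheme, then to $\sF$ in the heart; then use coherence of $\Oblv(\sF)$ to show the coaction map factors through $\Fun(K/K'')\star\Oblv(\sF)$ for some normal compact open $K''\subset K$, i.e. the equivariant structure factors through the algebraic group $K/K''$, and only then apply Lemma \ref{l:recognition} to $K/K''$. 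Without this factorization step the argument does not go through.
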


In other words, as a category with a genuine 
$K$-action, $D(X)$ is given by the canonical
renormalization construction from \S \ref{ss:can-renorm}.

\begin{proof}[Proof of Lemma \ref{l:dmod-canon}]

Define the $t$-structure on $D(X)^{K,w}$ by 
taking connective objects to be generated by objects induced
from $\IndCoh(X)^{K,w,\leq 0}$. 
To see $D(X)^{K,w} \to D(X)^{K,w,naive}$ is $t$-exact, 
it is equivalent to see that 
the further forgetful functor:

\[
\Oblv:D(X)^{K,w} \to D(X)
\]

\noindent is $t$-exact. Clearly this functor is right $t$-exact.
Now for $\sF \in D(X)^{K,w,\geq 0}$, the underlying
object of $\IndCoh(X)^{K,w}$ is coconnective by
design, so the same is true for the underlying
object of $\IndCoh(X)$. Therefore, $\Oblv(\sF) \in D(X)$
maps to a coconnective object of $\IndCoh(X)$; 
this is equivalent to $\Oblv(\sF)$ being coconnective,
as desired.

Because the functor $D(X)^{K,w,+} \to D(X)^+$ is $t$-exact
and these $t$-structures are right complete, this
functor is comonadic. The forgetful functor 
$D(X)^{K,w} \to D(X)^{K,w,naive}$ induces an equivalence
on the corresponding comonads on $D(X)$, and
the latter category maps comonadically to $D(X)$.
This implies $D(X)^{K,w,+} \isom D(X)^{K,w,naive,+}$.

The last part is proved similarly to 
Lemma \ref{l:gps-canon-renorm} \eqref{i:gps-4}.
We need to show that
if $\sF \in D(X)^{K,w,+} = D(X)^{K,w,naive,+}$ 
has $\Oblv(\sF) \in D(X)$
compact, then $\sF$ is compact in $D(X)^{K,w}$. 
We are clearly reduced to the case where $X$ is classical.
In this case, $X$ is a colimit under closed embeddings
of finite type schemes acted on
by $K$, so we are further reduced to the case where $X$
is a finite type scheme. Moreover, we can assume
$\sF$ lies in the heart of the $t$-structure,
since it is bounded and each of its cohomology groups
satisfy the same hypothesis.

Now there exists $K^{\prime} \lhd K$ compact open 
(i.e., $K/K^{\prime}$ is
an affine algebraic group) with the action 
of $K$ on $X$ factoring through 
$K/K^{\prime}$. Further, as in the
proof of Lemma \ref{l:gps-canon-renorm} \eqref{i:gps-4},
the hypothesis on $\sF$ implies that there 
is a compact open subgroup $K^{\prime\prime} \subset K^{\prime}$
also normal in $K$ such that
$\sF$ lies in the essential image of the functor:

\[
D(X)^{K/K^{\prime\prime},w,naive,+} \to D(X)^{K,w,naive,+}.
\]

\noindent Now the result follows from Lemma \ref{l:recognition}
(applied to $K/K^{\prime\prime}$).

\end{proof}

\subsection{Naive Hecke actions}\label{ss:naive-hecke}

Suppose that $H$ is a Tate group indscheme and
$K \subset H$ is compact open. Suppose $H$ acts naively
on $\sC$, i.e., $\sC$ is a module for $\IndCoh^*(H)$.
Then we claim there is an induced action of  
the monoidal category:\footnote{We emphasize that the middle term
uses \emph{non-renormalized} $\IndCoh^*$.}

\[
\sH_{H,K}^{w,naive} \coloneqq \IndCoh^*(K\backslash H/K) = \IndCoh^*(H/K)^{K,w,naive}
\]

\noindent on $\sC^{K,w,naive}$.

Indeed, by Lemma \ref{l:torsor-coinv},
the $\IndCoh(H/K) \in H\mod_{weak,naive}$ corepresents
the functor of naive $K$-invariants, so we obtain:

\[
\sH_{H,K}^{w,naive} = \TwoEnd_{H\mod_{weak,naive}}(\IndCoh(H/K))
\actson \TwoHom_{H\mod_{weak,naive}}(\IndCoh(H/K),\sC).
\]

The following result is a formal consequence of 
Remark \ref{r:naive-to-gen}.

\begin{lem}\label{l:naive-hecke-ff}

The functor:

\[
H\mod_{weak,naive} \to \sH_{H,K}^{w,naive}\mod
\]

\noindent constructed above is fully-faithful.

\end{lem}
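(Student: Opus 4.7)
The plan is to exhibit a left adjoint $G$ to $F$ and then show that the counit of the adjunction is an isomorphism, which characterizes $F$ as fully-faithful. Since $F$ is defined by $F(\sC) = \TwoHom_{H\mod_{weak,naive}}(\IndCoh(H/K), \sC)$ and $\sH_{H,K}^{w,naive}$ is the endomorphism monoidal category of $\IndCoh(H/K)$ in $H\mod_{weak,naive}$, the object $\IndCoh(H/K)$ is naturally an $(\IndCoh^*(H), \sH_{H,K}^{w,naive})$-bimodule in $\DGCat_{cont}$. This makes the formula $G(\sM) = \IndCoh(H/K) \otimes_{\sH_{H,K}^{w,naive}} \sM$ a well-defined functor $\sH_{H,K}^{w,naive}\mod \to H\mod_{weak,naive}$, and the adjunction $(G, F)$ follows by the standard tensor-$\TwoHom$ adjunction.

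The key step is then to verify that for every $\sC \in H\mod_{weak,naive}$, the counit
\[
\IndCoh(H/K) \underset{\sH_{H,K}^{w,naive}}{\otimes} \sC^{K,w,naive} \to \sC
\]
is an equivalence. Since a morphism in $H\mod_{weak,naive}$ is an isomorphism iff it is one in $\DGCat_{cont}$, it suffices to verify this at the underlying level. I would use Lemma \ref{l:torsor-coinv} to rewrite $\IndCoh(H/K) \simeq \IndCoh^*(H) \otimes_{\IndCoh^*(K)} \Vect$ and then reorganize the tensor products so that the counit map is identified with the natural comparison map coming from the comonadic description of $\sC^{K,w,naive} \to \sC$ via the coalgebra $\sO_H \in \IndCoh^*(K\backslash H/K)$.

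The verification itself is a Beck-Chevalley argument: the forgetful functor $\sC^{K,w,naive} \to \sC$ is comonadic with comonad given by convolution, and its left adjoint induces the bar resolution realizing $\sC$ as a geometric realization of copies of $\sC^{K,w,naive}$ with $\IndCoh^*$-coefficients. Once expressed in these terms, the counit is seen to be an isomorphism by rewriting the geometric realization as the tensor product over $\sH_{H,K}^{w,naive}$. This is essentially the argument in \cite{locsys} Proposition 3.5.1 cited in Remark \ref{r:naive-to-gen}, now applied to the pair $(K \subset H)$ rather than $(\{e\} \subset H)$.

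The main obstacle will be the careful bookkeeping of tensor products and the verification that the base-change/Beck-Chevalley conditions are in fact satisfied in this 2-categorical setup — in particular, matching the comonad induced by $F \dashv G^{\text{right}}$ (averaging) on $\sC$ with the one derived from the Hecke-theoretic description — but no genuinely new idea beyond Remark \ref{r:naive-to-gen} should be needed.
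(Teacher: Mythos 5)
Your proposal is correct and takes essentially the paper's route: exhibit the functor as the right adjoint of $\sM \mapsto \IndCoh(H/K)\otimes_{\sH_{H,K}^{w,naive}}\sM$ and show the counit is an equivalence, the key input being the invariants-equals-coinvariants statement of Remark \ref{r:naive-to-gen} (\cite{locsys} Proposition 3.5.1) for the compact open group scheme $K$. One small correction to your bookkeeping: the identification the paper actually feeds into the counit is $\IndCoh(H/K)\simeq \Vect\otimes_{\Rep_{naive}(K)}\sH_{H,K}^{w,naive}$ as right $\sH_{H,K}^{w,naive}$-modules (i.e., coinvariants over $\Rep_{naive}(K)$, obtained by applying that same statement to the Hecke category itself, after which everything collapses by associativity of relative tensor products), not the $\IndCoh^*(K)$-coinvariants description of Lemma \ref{l:torsor-coinv}, and the cited result is applied to $K$ acting naively on a category rather than to ``the pair $(K\subset H)$'' as you phrase it.
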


\begin{proof}

First, suppose $\sC$ is equipped with a naive
action of the compact open subgroup $K$.
Then the natural functor:

\[
\Vect \underset{\Rep_{naive}(K)}{\otimes} \sC^{K,w,naive} \to
\sC
\]

\noindent is an equivalence. Indeed, this is the
content of Remark \ref{r:naive-to-gen} (and is shown
in \cite{locsys} Proposition 3.5.1).

In particular, we obtain:

\[
\Vect \underset{\Rep_{naive}(K)}{\otimes} \sH_{H,K}^{K,w,naive}
\simeq \IndCoh(H/K).
\]

\noindent Clearly this is an equivalence of $\sH_{H,K}^{wm,naive}$-
module categories.

Now for any $\sC$ with a naive weak action of $H$, we calculate:

\[
\begin{gathered}
\IndCoh^*(H/K) \underset{\sH_{H,K}^{w,naive}}{\otimes} \sC^{K,w} = 
\Vect \underset{\Rep_{naive}(K)}{\otimes} \sH_{H,K}^{K,w,naive}
\underset{\sH_{H,K}^{w,naive}}{\otimes} \sC^{K,w} = \\
\Vect \underset{\Rep_{naive}(K)}{\otimes} \sC^{K,w}
\isom \sC.
\end{gathered}
\]

\noindent This map is obviously the counit for the
evident adjunction 
$H\mod_{weak,naive} \leftrightarrows \sH_{H,K}^{w,naive}\mod$,
so we obtain the claim.

\end{proof}
 
\subsection{Canonical renormalization}

We now wish to give an analogue of the construction from 
\S \ref{ss:can-renorm} in the Tate setting.

\subsection{}

We begin with some general results about renormalizing
monoidal structures and module structures.

\subsection{}

We will need the following general constructions in what follows.
The reader may safely skip this material and refer back
to it as necessary.

\begin{lem}\label{l:monoidal-t-str}

Suppose we are given:

\begin{itemize}

\item $(\sA,\star) \in \Alg(\DGCat_{cont})$ a monoidal DG category.

\item $\sA_{ren} \in \DGCat_{cont}$ a compactly generated
DG category with $\sA_{ren}^c$ its subcategory of compact objects.

\item $t$-structures on $\sA$ and $\sA_{ren}$ compatible with
filtered colimits.

\item A $t$-exact functor $\Psi:\sA_{ren} \to \sA$ commuting
with colimits and inducing an equivalence $\sA_{ren}^+ \isom \sA^+$
on eventually coconnnective subcategories.

\end{itemize}

Suppose in addition that the following properties are satisfied:

\begin{enumerate}

\item The unit object $\e \in \sA$ lies in $\sA^+$.

\item $\sA_{ren}^c$ is contained in $\sA_{ren}^+$.

\item\label{i:monoidal-3} For every $\sF \in \sA_{ren}^c$, 
the functors
$\Psi(\sF) \star - :\sA \to \sA$
and $- \star \Psi(\sF):\sA \to \sA$
are left $t$-exact up to shift.

\item\label{i:monoidal-4} 
For every $\sF \in \sA_{ren}^c$, the continuous functors
$\sA_{ren} \to \sA_{ren}$ defined by ind-extension
of:

\[
\begin{gathered}
\sA_{ren}^c \xar{\Psi(\sF) \star \Psi(-)} 
\sA^+ \simeq \sA_{ren}^+ \subset \sA_{ren} \\
\sA_{ren}^c \xar{\Psi(-) \star \Psi(\sF)} 
\sA^+ \simeq \sA_{ren}^+ \subset \sA_{ren}
\end{gathered}
\]

\noindent are left $t$-exact up to shift.

\end{enumerate}

Then $\sA_{ren}$ admits a unique monoidal structure such that:

\begin{itemize}

\item The functor $\Psi$ admits a monoidal structure.

\item For every $\sF \in \sA_{ren}^c$, the functors
$\sF \star - :\sA_{ren} \to \sA_{ren}$ and
$ - \star \sF: \sA_{ren} \to \sA_{ren}$
preserve $\sA_{ren}^+$.

\end{itemize}

\end{lem}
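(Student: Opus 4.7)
The plan is to transport the monoidal structure on $\sA$ to $\sA_{ren}$ through the equivalence $\Psi^+\colon \sA_{ren}^+ \isom \sA^+$, exploiting hypotheses (3)--(4) to ensure that tensor products of compact objects remain in $\sA^+$ (so can be transported back) and that the resulting operations extend uniquely and continuously to all of $\sA_{ren}$.

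For uniqueness, if a monoidal structure with the stated properties exists, then compactness-preservation together with continuity forces $\star_{ren}$ to be determined by its restriction to $\sA_{ren}^c \times \sA_{ren}^c$. By hypothesis (2) compacts lie in $\sA_{ren}^+$, and by hypothesis (3) (applied through monoidality of $\Psi$) the product $\sF \star_{ren} \sG$ lies in $\sA_{ren}^+$ for $\sF,\sG \in \sA_{ren}^c$; monoidality of $\Psi$ then pins down $\sF \star_{ren} \sG \simeq (\Psi^+)^{-1}\!\bigl(\Psi(\sF) \star \Psi(\sG)\bigr)$.

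For existence of the underlying bifunctor, for each $\sF \in \sA_{ren}^c$ I would take the continuous endofunctors $L_{\sF}, R_{\sF}\colon \sA_{ren} \to \sA_{ren}$ furnished by hypothesis (4). Jointly in $\sF$ these assemble into an exact bi-DG functor $\sA_{ren}^c \times \sA_{ren}^c \to \sA_{ren}^+$, whose target lies in $\sA_{ren}^+$ by construction. Ind-extending in both variables yields a continuous bifunctor $\star_{ren}\colon \sA_{ren} \otimes \sA_{ren} \to \sA_{ren}$ together with a canonical isomorphism $\Psi(-\star_{ren}-) \simeq \Psi(-)\star\Psi(-)$ on $\sA_{ren}^c \times \sA_{ren}^c$. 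The unit object is the essentially unique lift $\e_{ren} \in \sA_{ren}^+$ of $\e \in \sA^+$ (which exists by hypothesis (1)).

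\emph{The main obstacle} is upgrading this bifunctor to a genuinely coherent monoidal structure (i.e., an $A_\infty$-algebra structure in $\DGCat_{cont}$) compatibly with $\Psi$. I would address this operadically: for each $n \geq 0$, define a compactly generated DG category $\sA_{ren}^{(n)}$ with compact generators the iterated external products of objects of $\sA_{ren}^c$, equipped with the $t$-structure transported through the canonical functor $\Psi^{(n)}\colon \sA_{ren}^{(n)} \to \sA^{\otimes n}$. Iterating hypothesis (3) shows that the $n$-fold multiplication on $\sA^{\otimes n}$ lands in $\sA^+$ when restricted to $(\sA^+)^{\otimes n}$, so via $(\Psi^+)^{-1}$ it lifts to $\sA_{ren}^+$; iterating hypothesis (4) then provides the continuous extension $\sA_{ren}^{(n)} \to \sA_{ren}$ together with its coherences. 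Assembling these as a functor $\bDelta^{op} \to \DGCat_{cont}$ satisfying the Segal conditions (which reduce to the statement that compacts in $\sA_{ren}^{(n)}$ are Karoubi-generated by external products of compacts in $\sA_{ren}$) would produce, via the standard recognition machinery of \cite{higheralgebra} \S 4.1, the desired monoidal structure on $\sA_{ren}$, with $\Psi$ naturally a monoidal functor by the compatibility of the transports. The property that left/right multiplication by a compact preserves $\sA_{ren}^+$ is built into the construction.
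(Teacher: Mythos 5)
Your construction of the underlying bifunctor and of the unit is fine, and your uniqueness argument is essentially right. The genuine gap is at the step you yourself flag as the main obstacle: you assert that "iterating hypothesis (4) then provides the continuous extension $\sA_{ren}^{(n)} \to \sA_{ren}$ together with its coherences" and that the categories $\sA_{ren}^{(n)}$ can be "assembled as a functor $\bDelta^{op} \to \DGCat_{cont}$," but hypothesis (4) only supplies individual continuous endofunctors, not the associativity and higher coherence data, and nothing in your sketch explains how the full simplicial diagram (all face maps and all higher compatibilities) is actually produced. Transporting a coherent algebra structure along $\Psi^+$ is not automatic, because $\Psi$ is only an equivalence on eventually coconnective parts while the multiplication functors on $\sA_{ren}$ live on the whole category; one needs a statement guaranteeing that specifying such a functor (and natural transformations between such) on $\sA_{ren}^+$, or on $\sA_{ren}^c$, determines it uniquely on $\sA_{ren}$. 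That statement is exactly what makes "coherences come for free" (a property rather than extra structure), and it is absent from your proposal. A second, smaller overreach: you claim the $n$-fold multiplication lands in $\sA^+$ when restricted to $(\sA^+)^{\otimes n}$; hypothesis (3) only controls multiplication by $\Psi$ of \emph{compact} objects, so this is only true for the objects you actually need (iterated products of $\Psi(\sF_i)$ with $\sF_i \in \sA_{ren}^c$), and should be stated that way.

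For comparison, the paper avoids building coherences by hand. First it proves the key full-faithfulness statement your argument is missing: the restriction functor from continuous endofunctors of $\sA_{ren}$ that are left $t$-exact up to shift to DG endofunctors of $\sA_{ren}^+$ is fully faithful (via functors left Kan extended from $\sA_{ren}^c$), and this restriction is monoidal under composition. Then, instead of transporting the operadic structure directly, it introduces the honest monoidal subcategory $\sB^c \subset \sA$ Karoubi-generated by words $\Psi(\sF_1)\star\cdots\star\Psi(\sF_n)$ (including the empty word $\e$), sets $\sB = \Ind(\sB^c)$, and uses hypotheses (3)--(4) together with the full-faithfulness to lift the evident $\sB^c$-bimodule structure on $\sA^+ \simeq \sA_{ren}^+$ to a $\sB$-bimodule structure on $\sA_{ren}$ in $\DGCat_{cont}$. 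The ind-extension $\zeta:\sB \to \sA_{ren}$ of $\sB^c \subset \sA^+ \simeq \sA_{ren}^+$ is a colocalization and a map of $\sB$-bimodules, so $\Ker(\zeta)$ is a two-sided monoidal ideal and $\sA_{ren}$ inherits a unique monoidal structure making $\zeta$ monoidal; all coherence is inherited from $\sA$ through $\sB$, with no Segal-type assembly needed. If you want to salvage your operadic route, you would at minimum need to prove the full-faithfulness lemma above (or an analogue for the categories $\sA_{ren}^{(n)}$) so that the simplicial diagram you want to write down is determined by data on compact objects, where it can be read off from $\sA$.
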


\begin{proof}

\step 

We begin with a general observation constructions.

Let us denote by 
$\TwoEnd_{\DGCat_{cont}}^{>-\infty}(\sA_{ren}) \subset
\TwoEnd_{\DGCat_{cont}}(\sA_{ren})$ 
the subcategory 
of functors $F:\sA_{ren} \to \sA_{ren}$
that are left $t$-exact up to shift.
Then the restriction functor:

\begin{equation}\label{eq:restr-ff}
\TwoEnd_{\DGCat_{cont}}^{>-\infty}(\sA_{ren}) \to 
\TwoEnd_{\DGCat}(\sA_{ren}^+) 
\end{equation}

\noindent is fully-faithful. For this, define
$\TwoEnd_{\DGCat}^{LKE}(\sA_{ren}^+) \subset
\TwoEnd_{\DGCat}(\sA_{ren}^+)$ to be the
subcategory of functors left Kan extended
from their restrictions to $\sA_{ren}^c$.
Then \eqref{eq:restr-ff} clearly maps through
this subcategory. Now the restriction
functor $\TwoEnd_{\DGCat}^{LKE}(\sA_{ren}^+) \to 
\TwoHom_{\DGCat}(\sA_{ren}^c,\sA_{ren}^+)$
is fully-faithful, and so is its composition
with \eqref{eq:restr-ff}, so \eqref{eq:restr-ff} is
fully-faithful.

We remark that the essential image of \eqref{eq:restr-ff}
consists of those DG functors $F:\sA_{ren}^+ \to \sA_{ren}^+$
that are left Kan extended from $\sA_{ren}^c$ and
such that the resulting 
ind-extended functor $\sA_{ren} \to \sA_{ren}$
is left $t$-exact up to shift.

Finally, we remark that \eqref{eq:restr-ff} is manifestly 
a monoidal DG functor (between non-cocomplete
DG categories).

\step 

Next, we define an auxiliary category.

Let $\sB^c \subset \sA$ be the full 
subcategory Karoubi generated by objects of the
form $\Psi(\sF_1) \star \ldots \star \Psi(\sF_n)$
for $\sF_1,\ldots,\sF_n \in \sA_{ren}^c$. 
(We allow $n = 0$, i.e., $\e$ is one of our generators
of $\sB$.)

Clearly $\sB^c$ is an essentially small monoidal DG category;
let $\sB \coloneqq \Ind(\sB^c) \in \Alg(\DGCat_{cont})$.

Note that $\sB^c \subset \sA^+$ by assumption.
Define a continuous DG functor $\zeta:\sB \to \sA_{ren}$
by ind-extension from:

\[
\sB^c \subset \sA^+ \simeq \sA_{ren}^+ \subset \sA_{ren}.
\]

We remark that $\zeta$ is a colocalization functor,
i.e., it admits a fully-faithful left adjoint.
Namely, this left adjoint is the ind-extension of the 
fully-faithful functor $\Psi:\sA_{ren}^c \to \sB^c \subset \sA^+$.

\step 

We now construct a $\sB$-bimodule structure
on $\sA_{ren}$ in $\DGCat_{cont}$.

Let e.g. $\sB^{mon\mathendash op}$ denote $\sB$ with
its monoidal structure reversed. So we wish to 
construct a continuous monoidal DG functor
$\sB \otimes \sB^{mon\mathendash op} \to 
\TwoEnd_{\DGCat_{cont}}(\sA_{ren})$. This is equivalent
to giving a monoidal DG functor:

\[
(\sB \otimes \sB^{mon\mathendash op})^c = 
\sB^c \ol{\otimes} \sB^{c,mon\mathendash op} \to 
\TwoEnd_{\DGCat_{cont}}(\sA_{ren}).
\]

\noindent (We remind that $\ol{\otimes}$ indicates
the tensor product on the category of small DG categories.)

Note that $\sA^+ \simeq \sA_{ren}^+$ is
an $\sB^c$-bimodule (in $\DGCat$) by our assumption
\eqref{i:monoidal-3}.
Therefore, we obtain a monoidal functor:

\[
\sB^c \ol{\otimes} \sB^{c,mon\mathendash op} \to 
\TwoEnd_{\DGCat}(\sA_{ren}^+). 
\]

\noindent Moreover, by assumption \eqref{i:monoidal-4}, 
this functor maps
into the essential image of \eqref{eq:restr-ff}. Therefore,
it lifts canonically to a monoidal functor:

\[
\sB^c \ol{\otimes} \sB^{c,mon\mathendash op} \to 
\TwoEnd_{\DGCat_{cont}}^{>-\infty}(\sA_{ren}) \subset
\TwoEnd_{\DGCat_{cont}}(\sA_{ren})
\]

\noindent as desired.

\step 

Next, observe that our functor $\zeta$ from 
above is a morphism of $\sB$-bimodule categories 
(in $\DGCat_{cont}$)
Indeed, this results from the fact that the 
embedding $\sB^c \into \sA^+$ is a morphism
of $\sB^c$-bimodule categories (in $\DGCat$).

In particular, $\Ker(\zeta)$
is a two-sided monoidal ideal in $\sB$. 
As $\zeta$ was a colocalization DG functor,
this means that $\sA_{ren}$ admits a unique monoidal
structure such that $\zeta$ is monoidal.
This monoidal structure clearly has the desired properties.

\end{proof}

\begin{example}\label{e:monoidal-cpts-trun}

Note that the assumption \eqref{i:monoidal-4} is automatic
given the other assumptions (notably, \eqref{i:monoidal-3})
if compact objects in $\sA_{ren}^c$ are closed under
truncations.

\end{example}

\begin{example}\label{e:hecke-direct-constr}

By Example \ref{e:monoidal-cpts-trun},
Lemma \ref{l:monoidal-t-str} 
applies for $\sA_{ren} = \sH_{H,K}^w \to 
\sH_{H,K}^{w,naive} = \sA$.
In particular, it may be used to directly construct 
the monoidal structure on $\sH_{H,K}^w$ from that
of $\sH_{H,K}^{w,naive}$.

\end{example}

We will also need a variant of the above construction for module
categories.

\begin{lem}\label{l:module-t-str}

In the setting of Lemma \ref{l:monoidal-t-str}, suppose
we are additionally given: 

\begin{itemize}

\item $\sM \in \DGCat_{cont}$ a module category
(in $\DGCat_{cont}$) for $\sA$. 

\item $\sM_{ren} \in \DGCat_{cont}$ a compactly generated DG category.

\item $t$-structures on $\sM$ and $\sM_{ren}$ compatible with
filtered colimits and such that $\sM_{ren}^c$ (the subcategory
of compact objects) is contained in $\sM_{ren}^+$.

\item A $t$-exact functor $\psi:\sM_{ren} \to \sM$ commuting
with colimits and inducing an equivalence $\sM_{ren}^+ \isom \sM^+$.

\end{itemize}

Suppose that:

\begin{enumerate}

\item For every $\sF \in \sA_{ren}^c$, the functor
$\Psi(\sF) \star - : \sM \to \sM$ preserves $\sM^+$.

\item\label{i:module-2} For every $\sF \in \sA_{ren}^c$, the continuous
functor $\sM_{ren} \to \sM_{ren}$ defined
by ind-extension from:

\[
\sM_{ren}^c \xar{\Psi(\sF) \star \psi(-)} 
\sM^+ \simeq \sM_{ren}^+ \subset \sM_{ren}
\]

\noindent is left $t$-exact up to shift.

\item For every $\sG \in \sM_{ren}^c$,
the functor $-\star \psi(\sG):\sA \to \sM$ maps
$\sA^+$ to $\sM^+$.

\item\label{i:module-4} For every $\sG \in \sM_{ren}^c$, the continuous
functor $\sA_{ren} \to \sM_{ren}$ defined
by ind-extension from:

\[
\sA_{ren}^c \xar{\Psi(-) \star \psi(\sG)} 
\sM^+ \simeq \sM_{ren}^+ \subset \sM_{ren}
\]

\noindent is left $t$-exact up to shift.

\end{enumerate} 

Then there is a unique action of $\sA_{ren}$ on $\sM_{ren}$
such that:

\begin{itemize}

\item The functor $\psi:\sM_{ren} \to \sM$ is a morphism
of $\sA_{ren}$-module categories, where $\sA_{ren}$ acts
on $\sM$ by restriction along $\Psi:\sA_{ren} \to \sA$.

\item For every $\sF \in \sA_{ren}^c$, the functor
$\sF \star - :\sM_{ren} \to \sM_{ren}$ 
preserves $\sM_{ren}^+$.

\end{itemize}

\end{lem}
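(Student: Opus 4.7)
The plan is to follow the strategy of the proof of Lemma \ref{l:monoidal-t-str} with straightforward modifications for the module setting; no genuinely new ingredients are required.

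First, I would establish the module analogue of the fully-faithfulness observation from the opening step of that proof: letting $\TwoHom^{>-\infty}_{\DGCat_{cont}}(\sA_{ren},\sM_{ren})$ denote the subcategory of continuous functors that are left $t$-exact up to shift, the restriction functor
\[
\TwoHom^{>-\infty}_{\DGCat_{cont}}(\sA_{ren},\sM_{ren}) \to \TwoHom_{\DGCat}(\sA_{ren}^c,\sM_{ren}^+)
\]
is fully-faithful, by factoring it through DG functors $\sM_{ren}^+ \to \sM_{ren}^+$ that are left Kan extended from $\sA_{ren}^c$ and using the equivalence $\sM_{ren}^+ \simeq \sM^+$. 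The same holds for the $\bimod$ version involving left/right actions.

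Second, reintroduce the auxiliary monoidal DG category $\sB = \Ind(\sB^c)$ from the proof of Lemma \ref{l:monoidal-t-str}, together with its colocalization $\zeta:\sB \to \sA_{ren}$, where $\sA_{ren}$ carries the monoidal structure produced by that lemma. By hypothesis~(1) of the present lemma, every generator $\Psi(\sF_1)\star\cdots\star\Psi(\sF_n)$ of $\sB^c$ acts on $\sM \in \sA\mod$ by a functor preserving $\sM^+$; hence $\sM^+ \simeq \sM_{ren}^+$ acquires a canonical $\sB^c$-module structure in $\DGCat$, obtained by restricting the $\sA$-action on $\sM$ along $\sB^c \subset \sA$.

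Third, use the fully-faithfulness from the first step together with hypotheses~\eqref{i:module-2} and~\eqref{i:module-4} to lift this $\sB^c$-module structure on $\sM_{ren}^+$ to a continuous $\sB$-module structure on $\sM_{ren}$ in $\DGCat_{cont}$, for which $\psi:\sM_{ren}\to \sM$ is tautologically a morphism of $\sB$-module categories. Then show that this $\sB$-action descends along $\zeta$ to an $\sA_{ren}$-module structure. Since in the proof of Lemma~\ref{l:monoidal-t-str} the monoidal structure on $\sA_{ren}$ was obtained as the quotient of $\sB$ by the two-sided monoidal ideal $\Ker(\zeta)$, the descent amounts to checking that $\Ker(\zeta)$ acts by zero on $\sM_{ren}$. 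It suffices to verify this after applying $\psi$ and on compact generators of $\sM_{ren}$, where the $\sB$-action agrees with the $\sA$-action restricted along $\sB^c \subset \sA$, which itself factors through $\sA_{ren}^c$ via the identification $\Psi\zeta|_{\sB^c} \simeq \id_{\sB^c}$ arising from $\sA_{ren}^+ \simeq \sA^+$.

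The main technical obstacle will be the descent step: articulating the coherent compatibility between the $\sB$-action on $\sM_{ren}$ and the quotient monoidal structure on $\sA_{ren}$. However, this is entirely parallel to the analogous statement in the monoidal proof (where the descent of the $\sB$-bimodule structure on $\sA_{ren}$ was the central point), and the fully-faithfulness from the first step reduces everything to a verification at the level of $\sB^c$-actions on $\sM^+$, where the statement is tautological from the definitions of $\sB$ and $\zeta$. Uniqueness of the $\sA_{ren}$-action with the stated properties also follows from the fully-faithfulness in the first step, since the two required properties together pin down the restriction to $\sA_{ren}^c \otimes \sM_{ren}^+ \to \sM_{ren}^+$.
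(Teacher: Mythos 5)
Your setup (the fully-faithfulness of restriction to eventually coconnective objects, the auxiliary category $\sB = \Ind(\sB^c)$, and the lifting of the $\sB^c$-action on $\sM^+ \simeq \sM_{ren}^+$ to a continuous $\sB$-action on $\sM_{ren}$) matches the paper's proof. The divergence, and the gap, is in your descent step. The claim that $\Ker(\zeta)$ acts by zero on $\sM_{ren}$ is not a consequence of the hypotheses and is false in general: unwinding it on a compact generator $\sG \in \sM_{ren}^c$, it amounts to asking that the ind-extension of $\sA_{ren}^c \xar{\Psi(-)\star\psi(\sG)} \sM_{ren}^+$ agree with $\Psi(-)\star\psi(\sG)$ on \emph{all} eventually coconnective objects of $\sA_{ren}$ (take $X = \on{cofib}(\xi\zeta(b) \to b)$ for $b \in \sB^c$; then $X \star \sG$ is exactly the discrepancy between the ind-extended functor evaluated at $\zeta(b)$ and $b \star \psi(\sG)$). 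That agreement is the condition that the functor \emph{renormalizes} in the sense of \S \ref{ss:functors-ren}, which is strictly stronger than the ``left $t$-exact up to shift'' hypotheses \eqref{i:module-2} and \eqref{i:module-4} and can fail (Warning \ref{w:f_ren}, Counterexample \ref{ce:indcoh-qcoh}); it would hold under extra assumptions such as compact objects being closed under truncations (Example \ref{e:functor-ren-trun}), but the lemma is stated without them. Relatedly, your proposed verification ``after applying $\psi$ on compact generators'' is not legitimate: $\psi$ is not conservative, and the objects $X \star \sG$ for $X \in \Ker(\zeta)$ are filtered colimits of eventually coconnective objects with no uniform bound, so the equivalence $\sM_{ren}^+ \simeq \sM^+$ gives no control over them. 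Indeed, in the example above one checks that $\psi(X \star \sG) = 0$ always, while $X \star \sG$ itself need not vanish --- the failure lives exactly in $\Ker(\psi)$, which is what the $\psi$-test cannot see. So the $\sB$-action genuinely need not factor through $\zeta$, and an $\sA_{ren}$-action cannot be produced this way.

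The paper's proof avoids any factorization through $\zeta$. Instead it restricts the $\sB$-action along the fully-faithful left adjoint $\xi:\sA_{ren} \into \sB$, which is automatically lax monoidal, so one gets a priori only a lax action; it then proves the lax structure maps $\xi(\e_{\sA}) \star \sG \to \sG$ and $\xi(\sF_1)\star\xi(\sF_2)\star\sG \to \xi(\sF_1 \star \sF_2)\star\sG$ are isomorphisms. The point is that, after reducing by continuity to $\sF_1,\sF_2 \in \sA_{ren}^c$ and $\sG \in \sM_{ren}^c$, all terms are shown to lie in $\sM_{ren}^+$ (using hypotheses \eqref{i:module-2} and \eqref{i:module-4} and the eventual coconnectivity of $\e_{\sA}$), and \emph{only there} is it legitimate to check the isomorphism after applying $\psi$, since $\psi$ is an equivalence on eventually coconnective subcategories. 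If you want to salvage your route, you would have to either add the renormalization hypothesis just described (at which point the lemma would no longer apply in the generality it is used, e.g. to $\sA = D^*(H)$ acting in \S \ref{ss:str-functor}) or switch to the paper's $\xi$-based argument.
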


\begin{proof}

As in the proof of Lemma \ref{l:monoidal-t-str}, 
the restriction functor:

\begin{equation}\label{eq:restr-ff-2}
\TwoEnd_{\DGCat_{cont}}^{>-\infty}(\sM_{ren}) \to 
\TwoEnd_{\DGCat}(\sM_{ren}^+) 
\end{equation}

\noindent is fully-faithful, using
similar notation as in that argument.

We use the notation from the proof of Lemma \ref{l:monoidal-t-str}
freely below.
By assumption, the (non-cocomplete) monoidal DG category
$\sB^c$ is equipped with a monoidal DG functor to 
the right hand side of \eqref{eq:restr-ff-2} and
maps into the essential image of that functor by assumption,
so we obtain an induced action of $\sB$ on $\sM_{ren}$.
We again denote this action using the notation $\star$.

As in the proof of Lemma \ref{l:monoidal-t-str},
the monoidal functor $\zeta$ admits a fully-faithful
left adjoint $\xi:\sA_{ren} \into \sB$. 

We will use the following
observation. Let $\sG \in \sM_{ren}^c$. By construction,
the functor $\sA_{ren} \xar{\xi(-)\star \sG} \sM_{ren}$
is ind-extended from the composition:
 
\[
\sA_{ren}^c \to \sA^+ \xar{- \star \psi(\sG)} \sM^+ \simeq \sM_{ren}^+ \subset \sM_{ren}.
\]

\noindent Therefore, our assumptions imply that
this functor is left $t$-exact up to shift.

Now observe that $\xi$ is automatically left lax monoidal.
Therefore, it suffices to show:

\begin{enumerate}

\item\label{i:mod-unit} For $\sG \in \sM_{ren}$,
the natural map:

\[
\xi(\e_{\sA}) \star \sG \to \e_{\sB} \star \sG = \sG
\]

\noindent is an isomorphism.

\item\label{i:mod-assoc} For $\sF_1,\sF_2 \in \sA_{ren}$ and $\sG \in \sM_{ren}$,
the natural map:

\[
\xi(\sF_1) \star \xi(\sF_2) \star \sG \to 
\xi(\sF_1 \star \sF_2) \star \sG
\]

\noindent is an isomorphism.
 
\end{enumerate} 

As $\xi$ is a left adjoint, each of the
functors appearing above commutes with colimits in each
variable. Therefore, we may assume 
$\sG \in \sM_{ren}^c \subset \sM_{ren}^+$
in each of the above cases, and $\sF_1,\sF_2 \in \sA_{ren}^c$
in \eqref{i:mod-assoc}.

For \eqref{i:mod-unit}, note that 
$\xi(\e_{\sA}) \star \sG \in \sM_{ren}^+$ by the
observation above, so as the
same is true for $\sG$, it suffices to check that the map
is an isomorphism after applying $\psi$; this is clear.

For \eqref{i:mod-assoc}, the functors
$\xi(\sF_i) \star -:\sM_{ren} \to \sM_{ren}$ are
preserve $\sM_{ren}^+$ by construction, so again
the two terms we are comparing lie in $\sM_{ren}^+$
so it suffices to (trivially) observe that the 
relevant map becomes an isomorphism after applying $\psi$.
 
\end{proof}

\begin{example}\label{e:module-cpts-trun}

As in Example \ref{e:monoidal-cpts-trun}, assumption
\eqref{i:module-2} (resp. \eqref{i:module-4})
is automatic if compact objects in $\sM_{ren}$
(resp. $\sA_{ren}$) are closed under truncations.

\end{example}

\subsection{}\label{ss:can-renorm-tate}

Suppose $H$ is a Tate group indscheme and suppose 
$\sC \in \DGCat_{cont}$ is acted on naively by $H$. 
Suppose in addition that
$\sC$ is equipped with a $t$-structure.

\begin{defin}

The naive action of $H$ on $\sC$ \emph{canonically renormalizes}
(relative to the $t$-structure) if:

\begin{itemize}

\item For every compact open subgroup $K \subset H$, the induced
naive action of $K$ on $\sC$ canonically renormalizes
(in the sense of \S \ref{ss:can-renorm}).

\item For every compact open subgroup $K \subset H$,
the data:

\[
\begin{gathered}
\Psi:\sA_{ren} = \sH_{H,K}^w \to \sA = \sH_{H,K}^{w,naive} \\
\psi:\sM_{ren} = \sC^{K,w} \to \sM = \sC^{K,w,naive}
\end{gathered}
\]

\noindent satisfy the hypotheses of Lemma \ref{l:module-t-str}.
(Here $\sC^{K,w}$ is defined as in \S \ref{ss:can-renorm}.)

\item For every pair $K_1 \subset K_2 \subset H$ 
of embedded compact open subgroups of $H$, the morphism:

\[
\sC^{K_2,w} \underset{\Rep(K_2)}{\otimes} \Rep(K_1) \to
\sC^{K_1,w}
\]

\noindent of Lemma \ref{l:gps-canon-renorm} is an equivalence.

\end{itemize}

\end{defin}

\begin{rem}

Technically there is some room for confusion: if $H$
is itself a classical affine group scheme, then this
condition is a bit more stringent than the one
from \S \ref{ss:can-renorm}. The author hopes that this
will not cause any confusion.

\end{rem}

\begin{prop}\label{p:canon-renorm-tate}

Suppose $\sC$ is equipped with a $t$-structure and a 
naive action of $H$ that canonically renormalize. 

Define the category 
$\mathsf{Gen}(\sC)$ to consist of objects 
$\sD \in H\mod_{weak}$ equipped
with an isomorphism $\Oblv_{gen}(\sD) \simeq \sC \in
H\mod_{weak,naive}$ (c.f. \S \ref{ss:tate-forgetful})
and with the property that for any compact open subgroup
$K$, $\sD^{K,w}$ is compactly generated and
the induced functor:

\[
\sD^{K,w} \to \sD^{K,w,naive} = \sC^{K,w,naive}
\]

\noindent is fully-faithful on compact objects and
induces an isomorphism $\sD^{K,w,c} \isom \sC^{K,w,c}$
(the right hand side being defined by \S \ref{ss:can-renorm}).

Then the category $\mathsf{Gen}(\sC)$ is contractible,
i.e., equivalent to $\ast \in \Gpd \subset \Cat$.

\end{prop}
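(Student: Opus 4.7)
\medskip

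\noindent\emph{Proof proposal.} The strategy is to construct a canonical object $\sD \in \mathsf{Gen}(\sC)$ attached to the naive $H$-action (along with its canonical renormalization data), and then to show that every object of $\mathsf{Gen}(\sC)$ admits a canonical isomorphism to this $\sD$, giving both nonemptyness and contractibility.

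For the construction, fix an auxiliary compact open subgroup $K_0 \subset H$. By \S \ref{ss:naive-hecke}, the naive $H$-action on $\sC$ endows $\sC^{K_0,w,naive}$ with an $\sH_{H,K_0}^{w,naive}$-module structure. The canonical renormalization hypothesis for the $K_0$-action guarantees precisely what is needed to apply Lemma \ref{l:module-t-str} with $\sA = \sH_{H,K_0}^{w,naive}$, $\sA_{ren} = \sH_{H,K_0}^{w}$, $\sM = \sC^{K_0,w,naive}$, $\sM_{ren} = \sC^{K_0,w}$ (the last being the canonically renormalized genuine invariants of \S \ref{ss:can-renorm}). This produces a canonical $\sH_{H,K_0}^{w}$-module structure on $\sC^{K_0,w}$, and under the equivalence $\sH_{H,K_0}^{w}\mod \simeq H\mod_{weak}$ of Proposition-Construction \ref{pc:weak}, defines $\sD \in H\mod_{weak}$ with $\sD^{K_0,w} = \sC^{K_0,w}$.

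Next, I would verify $\sD \in \mathsf{Gen}(\sC)$. The identification $\Oblv_{gen}(\sD) \simeq \sC \in H\mod_{weak,naive}$ follows by computing $\Oblv_{gen}(\sD)$ via the naive Hecke bimodule $\IndCoh^*(H/K_0)$ and invoking Lemma \ref{l:naive-hecke-ff} (which identifies $H\mod_{weak,naive}$ with a full subcategory of $\sH_{H,K_0}^{w,naive}\mod$). For the compatibility with an arbitrary compact open $K \subset H$, Proposition-Construction \ref{pc:weak}(ii) gives
\[
\sD^{K,w} \;=\; \IndCoh_{ren}^{*}(K \backslash H/K_0) \underset{\sH_{H,K_0}^{w}}{\otimes} \sC^{K_0,w}.
\]
Choosing an auxiliary compact open $K' \subset K \cap K_0$, I would apply Lemma \ref{l:gps-canon-renorm}(iv) to the inclusions $K' \subset K_0$ and $K' \subset K$ (both acting on $H/K_0$, which is locally almost of finite type) to rewrite this as $\Rep(K) \otimes_{\Rep(K')} (\Rep(K') \otimes_{\Rep(K_0)} \sC^{K_0,w})$, and then invoke the third bullet in the definition of canonical renormalization (the base-change identity $\Rep(K') \otimes_{\Rep(K_0)} \sC^{K_0,w} \simeq \sC^{K',w}$ and similarly for $K' \subset K$) to conclude $\sD^{K,w} \simeq \sC^{K,w}$ as $\Rep(K)$-module categories, identifying compact objects correctly.

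For uniqueness, let $\sD' \in \mathsf{Gen}(\sC)$ be arbitrary. By definition we have $\sD'^{K_0,w,c} \simeq \sC^{K_0,w,c}$ canonically, so $\sD'^{K_0,w} \simeq \sC^{K_0,w}$. The $\sH_{H,K_0}^{w}$-action on $\sD'^{K_0,w}$ is continuous in each variable and thus determined by its restriction to compact objects; on compacts, it lies inside $\sH_{H,K_0}^{w,naive}\mod$ (by the fully-faithfulness hypothesis) and there it is forced by the given naive $H$-action and Lemma \ref{l:naive-hecke-ff}. The uniqueness clause of Lemma \ref{l:module-t-str} then identifies the $\sH_{H,K_0}^{w}$-module $\sD'^{K_0,w}$ with the one defining $\sD$, yielding the desired canonical isomorphism. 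The hard part of this proposal is the verification in Step~2 that the construction based on a single $K_0$ produces, at every other compact open $K$, precisely the canonically renormalized $\sC^{K,w}$ with its correct subcategory of compacts; all of the serious work is hidden in the combination of Lemma \ref{l:gps-canon-renorm}(iv) with the compatibility axiom in the definition of canonical renormalization, and checking that the two available constructions of $\sC^{K,w}$ (Hecke-theoretic vs. abstract) agree as module categories and not just as plain DG categories.
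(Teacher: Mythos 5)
Your construction of the candidate object and your uniqueness sketch follow the paper's own route: the paper also fixes one compact open subgroup, uses Lemma \ref{l:module-t-str} to put the canonical $\sH_{H,K_0}^w$-module structure on $\sC^{K_0,w}$, calls the resulting object $\iota_{K_0}(\sC) \in H\mod_{weak}$, and gets contractibility from Lemma \ref{l:module-t-str} together with Lemma \ref{l:naive-hecke-ff}. The genuine gap is in your Step 2, the verification that this single-$K_0$ object has the correct genuine invariants at every other compact open $K$ --- exactly the point you flag as ``the hard part'' but do not actually resolve. Two concrete problems: (a) the expression $\Rep(K) \otimes_{\Rep(K')} (\Rep(K') \otimes_{\Rep(K_0)} \sC^{K_0,w})$ is not well formed, since restriction goes $\Rep(K) \to \Rep(K')$ and so $\Rep(K)$ carries no $\Rep(K')$-module structure; the object actually available from Proposition-Construction \ref{pc:weak} is the Hecke bimodule $\IndCoh_{ren}^*(K\backslash H/K') \simeq \sH_{H,K}^w \otimes_{\Rep(K)} \Rep(K')$, not $\Rep(K)$. (b) Even after repairing this, the identification you need runs in the \emph{induction} direction ($K \supset K'$): you must identify $\IndCoh_{ren}^*(K\backslash H/K') \otimes_{\sH_{H,K'}^w} \sC^{K',w}$ with the \emph{abstractly} renormalized $\sC^{K,w}$, compacts included. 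Neither the third axiom of canonical renormalization (which only asserts $\sC^{K_2,w} \otimes_{\Rep(K_2)} \Rep(K_1) \isom \sC^{K_1,w}$ for \emph{nested} $K_1 \subset K_2$, i.e.\ the restriction direction) nor Lemma \ref{l:gps-canon-renorm} \eqref{i:gps-4} (which is a statement about $\IndCoh(X)$, not about an abstract $\sC$) gives this; and asserting it amounts to assuming that the Hecke-theoretic and abstract constructions of $\sC^{K,w}$ agree, which is essentially what is to be proved.

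The paper sidesteps the induction-direction comparison entirely, and you should too. It introduces, for each single $K$, the category $\mathsf{Gen}_K(\sC)$ of objects satisfying the condition \emph{only at} $K$; since $\mathsf{Gen}(\sC) \subset \mathsf{Gen}_K(\sC)$ is full, it suffices to show each $\mathsf{Gen}_K(\sC)$ is contractible (your uniqueness argument, via Lemmas \ref{l:module-t-str} and \ref{l:naive-hecke-ff}) and that $\mathsf{Gen}(\sC)$ is nonempty. For nonemptiness one only ever compares \emph{nested} subgroups: for $K' \subset K_0$ the third axiom gives $\iota_{K_0}(\sC)^{K',w} = \sC^{K_0,w} \otimes_{\Rep(K_0)} \Rep(K') \isom \sC^{K',w}$, and the uniqueness clause of Lemma \ref{l:module-t-str} upgrades this to $\iota_{K_0}(\sC) \simeq \iota_{K'}(\sC)$ in $H\mod_{weak}$. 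For an arbitrary compact open $K$, pass through the intersection $K \cap K_0$ (again compact open) to get $\iota_{K_0}(\sC) \simeq \iota_{K\cap K_0}(\sC) \simeq \iota_K(\sC)$; since $\iota_K(\sC)$ tautologically lies in $\mathsf{Gen}_K(\sC)$, one concludes $\iota_{K_0}(\sC) \in \bigcap_K \mathsf{Gen}_K(\sC) = \mathsf{Gen}(\sC)$ without ever computing $\iota_{K_0}(\sC)^{K,w}$ for $K \not\subset K_0$. Either adopt this device, or supply an independent proof of the induction-direction base change --- as written, your argument is missing that step.
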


\begin{proof}

Fix a compact open subgroup $K \subset H$.
Define $\mathsf{Gen}_K(\sC)$ 
to consist of $\sD \in H\mod_{weak}$ equipped with
an isomorphism $\Oblv_{gen}(\sD) \simeq \sC$
and satisfying the similar property as 
for $\mathsf{Gen}(\sC)$, but \emph{only for $K$}
(not for \emph{all} compact open subgroups).
Clearly $\mathsf{Gen}(\sC) \subset \mathsf{Gen}_K(\sC)$
is a full subcategory. 
Therefore,
it suffices to show that $\mathsf{Gen}_K(\sC)$ is
contractible and that $\mathsf{Gen}(\sC)$ is non-empty.

We need to check that $\mathsf{Gen}(\sC)$ is non-empty.
Let $\sC^{K,w}$ be defined as by canonical renormalization
for $K$. By Lemma \ref{l:module-t-str} (and by definition
of canonical renormalization for $H$),
there is a canonical $\sH_{H,K}^w$-module structure
on $\sC^{K,w}$. Let $\iota_K(\sC) \in H\mod_{weak}$
be the corresponding object with $\iota_K(\sC)^{K,w} = 
\sC^{K,w}$ (as $\sH_{H,K}^w$-modules). There is an evident
isomorphism $\Oblv_{gen}\iota_K(\sC) = \sC \in H\mod_{weak,naive}$.
This construction clearly 
defines an object of $\mathsf{Gen}_K(\sC)$, and contractibility
of the category is immediate from Lemmas \ref{l:module-t-str} 
and \ref{l:naive-hecke-ff}.

We claim moreover that the above construction 
defines an object of $\mathsf{Gen}(\sC)$.

For any $K^{\prime} \subset K$,
we have:

\[
\iota_K(\sC)^{K^{\prime},w} = 
\iota_K(\sC)^{K,w} \underset{\Rep(K)}{\otimes} \Rep(K^{\prime}) =
\sC^{K,w} \underset{\Rep(K)}{\otimes} \Rep(K^{\prime}) \to
\sC^{K^{\prime},w}
\]

\noindent and this functor satisfies the conclusions
of Lemma \ref{l:module-t-str} with respect to the
Hecke categories relative to $K^{\prime}$.
This gives an isomorphism 
$\iota_K(\sC) \simeq \iota_{K^{\prime}}(\sC) \in H\mod_{weak}$.

As the intersection of compact open subgroups is again
compact open, we see that for any (possibly not
nested) $K,K^{\prime} \subset H$
compact open subgroups, there exists an isomorphism 
$\iota_K(\sC) \iota_K(\sC) \simeq \iota_{K^{\prime}}(\sC)$.
This implies that:

\[
\iota_K(\sC) \in \bigcap_{K^{\prime}} \mathsf{Gen}_{K^{\prime}}(\sC) 
\eqqcolon \mathsf{Gen}(\sC)
\]

\noindent as desired.

\end{proof}

\begin{notation}

In the above setting, we follow our standard abuses
of notation in letting $\sC \in H\mod_{weak}$ denote
the canonical object constructed via Proposition 
\ref{p:canon-renorm-tate} (namely, the object defined
by the forgetful functor 
$\ast = \mathsf{Gen}(\sC) \to H\mod_{weak}$).

\end{notation}

We also need the following variant.

\begin{prop}\label{p:canon-renorm-a}

Suppose\footnote{The notation is potentially 
misleading: this category $\sA$ behaves more like the category
$\sA_{ren}$ from Lemma \ref{l:monoidal-t-str}.} 
$\sA \in \Alg(\DGCat_{cont})$ is
equipped with a $t$-structure
such that $\id:\sA \to \sA$ satisfies the hypotheses
for the functor ``$\Psi$" from Lemma 
\ref{l:monoidal-t-str}.\footnote{This is just a convenient way
to say $\sA$ is compactly generated with the action of
compact objects being given by functors that are left $t$-exact
up to shift and with unit being eventually coconnective.}

Suppose $\sC \in \DGCat_{cont}$ is equipped with a 
$t$-structure and an $\IndCoh^*(H) \otimes \sA$-module
structure such that: 

\begin{itemize}

\item The underlying naive $H$-action
canonically renormalizes. 

\item For any $K \subset H$ compact open, 
the evident $\sA$-action on $\sC^{K,w,naive}$
satisfies the hypotheses of Lemma \ref{l:module-t-str}
relative to 
$\psi:\sC^{K,w} \to \sC^{K,w,naive} \in \DGCat_{cont}$
(and $\id:\sA \to \sA$).

\end{itemize}

Then:

\begin{enumerate}

\item\label{i:a-1} For any compact open subgroup $K \subset H$,
the morphism:

\[
\sH_{H,K}^w \otimes \sA \to \sH_{H,K}^{w,naive} \otimes 
\sA
\]

\noindent satisfies the hypotheses of Lemma \ref{l:monoidal-t-str},
where both sides are equipped with the natural tensor
product $t$-structures.

Moreover, the corresponding action of 
$\sH_{H,K}^{w,naive} \otimes \sA$ on $\sC^{K,w,naive}$
satisfies the hypotheses of Lemma \ref{l:module-t-str}
(relative to the above functor and $\sC^{K,w} \to \sC^{K,w,naive}$).

\item\label{i:a-2}

Define the category 
$\mathsf{Gen}_{\sA}(\sC)$ to consist of objects\footnote{Here
$\sA$-modules in $H\mod_{weak}$ are defined
because $H\mod_{weak}$ is tensored over $\DGCat_{cont}$.} 
$\sD \in \sA\mod(H\mod_{weak})$ equipped
with an isomorphism $\Oblv_{gen}(\sD) \simeq \sC \in
\sA\mod(H\mod_{weak,naive})$ (c.f. \S \ref{ss:tate-forgetful})
and with the property that on forgetting the
$\sA$-action, $\sD$ defines an object
of $\mathsf{Gen}(\sC)$ (as in the notation of Proposition 
\ref{p:canon-renorm-tate}). 

Then the category $\mathsf{Gen}_{\sA}(\sC)$ is contractible,
i.e., equivalent to $\ast$.

\end{enumerate}

\end{prop}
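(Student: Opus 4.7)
The plan is to adapt the proof of Proposition \ref{p:canon-renorm-tate} almost verbatim, with the commuting $\sA$-action tracked throughout. The verifications in part \eqref{i:a-1} are mostly formal: one carries the hypotheses for $\sH_{H,K}^w$ (which hold by canonical renormalization for $K$) and those for $\sA$ through the tensor product. Compactness and compact generation of $\sH_{H,K}^w \otimes \sA$ and of the tensor product $t$-structure are immediate. For the $t$-exactness (up to shift) of the action of a compact object $\sF \boxtimes \sG \in (\sH_{H,K}^w \otimes \sA)^c$ on $\sH_{H,K}^{w,naive} \otimes \sA$, one applies Lemma \ref{l:cpt-tstr} to write this action as $(\Psi(\sF) \star -) \otimes (\sG \star -)$, so that the hypotheses on the two factors combine. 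Hypothesis \eqref{i:monoidal-4} of Lemma \ref{l:monoidal-t-str} likewise reduces to the factorwise case (and is in fact automatic by Example \ref{e:monoidal-cpts-trun} when compacts are closed under truncation). The module-theoretic hypotheses of Lemma \ref{l:module-t-str} for the action on $\sC^{K,w,naive}$ follow from the analogous factorwise statements, using that the two commuting actions of $\sH_{H,K}^{w,naive}$ and $\sA$ on $\sC^{K,w,naive}$ are packaged into the $\sH_{H,K}^{w,naive} \otimes \sA$-action by hypothesis.

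For part \eqref{i:a-2}, fix a compact open subgroup $K \subset H$. Applying part \eqref{i:a-1} and Lemma \ref{l:module-t-str} to $\sC^{K,w,naive}$ endowed with its $\sH_{H,K}^{w,naive} \otimes \sA$-action produces a canonical $\sH_{H,K}^w \otimes \sA$-action on the renormalized category $\sC^{K,w}$ (defined as in \S \ref{ss:can-renorm}). Under the equivalence $H\mod_{weak} \simeq \sH_{H,K}^w\mod$ of Proposition-Construction \ref{pc:weak} \eqref{i:pc-1}, this yields an object $\iota_K(\sC) \in \sA\mod(H\mod_{weak})$ whose image in $\sA\mod(H\mod_{weak,naive})$ is identified with $\sC$; on forgetting the $\sA$-action, this recovers the object $\iota_K(\sC) \in \mathsf{Gen}(\sC)$ constructed in the proof of Proposition \ref{p:canon-renorm-tate}. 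Hence $\mathsf{Gen}_{\sA}(\sC)$ is non-empty.

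To show $\mathsf{Gen}_{\sA}(\sC)$ is contractible, introduce the auxiliary subcategory $\mathsf{Gen}_{\sA,K}(\sC) \supset \mathsf{Gen}_{\sA}(\sC)$ defined by imposing the compactness/generation conditions only at $K$. The contractibility of $\mathsf{Gen}_{\sA,K}(\sC)$ follows from two ingredients used in tandem: the uniqueness clause of Lemma \ref{l:module-t-str} (which pins down the $\sH_{H,K}^w \otimes \sA$-action on $\sC^{K,w}$ once the $t$-structure is specified), together with an $\sA$-equivariant version of Lemma \ref{l:naive-hecke-ff} asserting that the functor
\[
\sA\mod(H\mod_{weak,naive}) \to (\sH_{H,K}^{w,naive} \otimes \sA)\mod
\]
is fully faithful. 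The latter is proved by the same argument as Lemma \ref{l:naive-hecke-ff}: one checks that the counit map is an isomorphism by the identity $\Vect \otimes_{\Rep_{naive}(K)} \sD^{K,w,naive} \isom \sD$ for any naive weak $K$-module $\sD$, which is compatible with any commuting $\sA$-action.

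Finally, independence of $K$ is handled exactly as in Proposition \ref{p:canon-renorm-tate}: for nested compact opens $K' \subset K$, the third axiom in the definition of canonical renormalization gives $\iota_K(\sC)^{K',w} \simeq \sC^{K',w}$ as $\sH_{H,K'}^w$-modules, and this identification is $\sA$-linear because the $\sA$-action commutes with the whole construction (the tensor product $\otimes_{\Rep(K)} \Rep(K')$ takes place on the $H$-side). For general compact open subgroups one passes to their intersection. The main obstacle in this plan is bookkeeping: verifying that all of the compatibilities constructed in Proposition \ref{p:canon-renorm-tate} --- including the isomorphisms $\iota_K(\sC) \simeq \iota_{K'}(\sC)$ --- upgrade canonically to isomorphisms in $\sA\mod(H\mod_{weak})$ rather than merely in $H\mod_{weak}$, which should however be automatic because at every step the $\sA$-action is carried along by formal tensor-product considerations.
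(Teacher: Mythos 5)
Your proposal is correct and follows essentially the same route as the paper, whose entire proof is that part (1) is immediate from Lemma \ref{l:cpt-tstr} \eqref{i:tstr-2} and part (2) follows by repeating the argument of Proposition \ref{p:canon-renorm-tate} with the $\sA$-action carried along. Your more detailed bookkeeping (the factorwise verification of the Lemma \ref{l:monoidal-t-str}/\ref{l:module-t-str} hypotheses and the $\sA$-equivariant upgrade of Lemma \ref{l:naive-hecke-ff}) is exactly the content the paper leaves implicit.
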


\begin{proof}

\eqref{i:a-1} is immediate 
from Lemma \ref{l:cpt-tstr} \eqref{i:tstr-2}.
Then \eqref{i:a-2} follows by the exact same argument
as in Proposition \ref{p:canon-renorm-tate}.

\end{proof}

\subsection{Preliminary remarks about $D$-modules}

Let $H$ be a Tate group indscheme and fix
a compact open subgroup $K_0$.

Then $K_0$ induces a $t$-structure on $D^*(H)$.
Indeed, we have:

\[
D^*(H) = 
\underset{K \subset K_0 \subset H \text{ compact open}}{\colim}
D^*(H/K)
\]

\noindent under $*$-pullback functors (which are defined
as each pullback here is smooth). 
As this colimit is filtered and these 
functors are all $t$-exact up to shift
(being smooth pullbacks), we obtain the claim.
Explicitly, this $t$-structure is normalized by the
fact that for each projection 
$\pi_K:H \to H/K$, the functor 
$\pi_K^{*,dR}[-\dim(K_0/K)]:D(H/K) \to D^*(H)$
is $t$-exact.

\begin{rem}

Note that the $t$-structures attached to different
compact open subgroups differ by shifts by 
a locally constant function on $H$, namely, their
relative dimension. For our present 
purposes, such differences are irrelevant,
so we do not emphasize the choice of $K_0$ in what follows.

\end{rem}

\subsection{}

We will use the following basic observation.

\begin{lem}\label{l:dmod-t-str}

The $t$-structure just constructed on $D^*(H)$ satisfies
the following properties.

\begin{itemize}

\item The $t$-structure is right complete.

\item Any compact $\sF \in D^*(H)$ 
is eventually coconnective.

\item Compact objects are closed under truncations. 

\item For any $\sF \in D^*(H)$ compact, the monoidal
operations $\sF \star -: D^*(H) \to D^*(H)$
and $-\star \sF: D^*(H) \to D^*(H)$ are left $t$-exact up 
to shift.

\end{itemize}

\end{lem}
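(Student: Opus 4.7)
My plan is to verify the four properties using the colimit presentation $D^*(H) = \colim_K D(H/K)$ (under smooth $*$-pullback functors, normalized by the shift $[-\dim(K_0/K)]$ to be $t$-exact) from the paragraph preceding the lemma.

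The first three properties are essentially formal consequences of this presentation. For right completeness: each $D(H/K)$ has a right complete $t$-structure compatible with filtered colimits, and the structural pullback functors are $t$-exact and continuous, so the resulting colimit $t$-structure on $D^*(H)$ is right complete. For compact objects being eventually coconnective: compacts in a filtered colimit in $\DGCat_{cont}$ under compact-preserving functors are retracts of images of compacts from some stage; since compacts in $D(H/K)$ (with $H/K$ an indscheme locally almost of finite type) are coherent and therefore eventually coconnective, and the $t$-exact pullback preserves this property, every $\sF \in D^*(H)^c$ is eventually coconnective. Closure of compacts under truncations is similar: truncations commute with the $t$-exact structural functors, and each $D(H/K)^c$ (coherent $D$-modules) is closed under truncations, so the same holds after passing to the colimit.

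The substantive step is the fourth. Given $\sF \in D^*(H)^c$, up to passing to a direct summand I would choose a compact open $K \subset K_0$ and a compact $\sF_0 \in D(H/K)$ with $\sF = \pi_K^{*,dR}[-\dim(K_0/K)](\sF_0)$. This realizes $\sF$ as right-$K$-equivariant (up to the fixed shift). The convolution $\sF \star \sG = m_*^{dR}(\sF \boxtimes \sG)$, where $m \colon H \times H \to H$ is multiplication, can then be analyzed by using that $\sF_0$ is supported on a quasi-compact closed subindscheme $T \subset H/K$ of bounded dimension. Using the right-$K$-equivariance to replace the pushforward along $m$ by the composition of a smooth $K$-torsor pullback and a pushforward whose restriction to the support of $\sF_0 \boxtimes \sG$ is ``finite'' in the relevant direction, I expect each constituent to be $t$-exact up to a shift depending only on $K$ and on the dimension of $T$. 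This would yield left $t$-exactness up to shift of $\sF \star -$; the argument for $-\star \sF$ is symmetric, exchanging the roles of left and right $K$-equivariance.

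The hard part will be making the convolution analysis rigorous within the $D^*$-formalism of \cite{dmod}, in particular verifying the factorization of $m_*^{dR}$ and extracting the relevant ``finiteness'' property from compactness of $\sF_0$. The shift bookkeeping combines the normalization shift $\dim(K_0/K)$ with an effective-dimension shift coming from the support of $\sF_0$, and must be tracked carefully to produce an honest ``left $t$-exact up to shift'' bound independent of $\sG$.
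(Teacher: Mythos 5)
Your treatment of the first three bullets matches the paper, which simply declares them evident from the colimit presentation; that part is fine. The issue is the fourth bullet, where your proposal stops exactly at the point where the actual argument has to happen: you acknowledge that "making the convolution analysis rigorous" is the hard part and leave it open. The missing idea is a precise factorization of the convolution through strong averaging. Writing $\sF = \pi_K^{*,dR}(\sF_0)$ with $\sF_0 \in D(H/K)$ coherent, the paper uses the identity
\[
\sF \star \sG = \sF_0 \overset{K}{\star} \Av_*^K(\sG),
\]
where $\Av_*^K$ is strong $K$-averaging on the left of $\sG$ and $\overset{K}{\star}$ is the relative convolution $D(H/K) \otimes D(K\backslash H) \to D^*(H)$. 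Then two separate observations finish the claim: $\Av_*^K$ is left $t$-exact up to shift because it is right adjoint to a functor that is $t$-exact up to shift, and the relative convolution with the fixed coherent $\sF_0$, which is supported on a finite type scheme, is bounded by standard cohomological estimates.

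Your sketch does not supply this mechanism, and the route you indicate has a concrete obstruction: to descend $\sF \boxtimes \sG$ along the $K$-torsor $H \times H \to H \overset{K}{\times} H$ (which is what would let you exploit the finite-type support of $\sF_0$), you need anti-diagonal $K$-equivariance, and $\sG$ is an arbitrary object of $D^*(H)$ with no such structure. Supplying it is precisely the averaging step $\Av_*^K(\sG)$, which is where the non-trivial (but controlled) loss of left $t$-exactness enters; your phrase "a pushforward whose restriction to the support of $\sF_0 \boxtimes \sG$ is finite in the relevant direction" glosses over this, and the relevant pushforward is in any case not finite — its amplitude is merely bounded by the finite-type support of $\sF_0$. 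Also note that the case for $- \star \sF$ is handled by the same identity with sides exchanged, not by a genuinely new argument. So the overall direction (exploit $K$-equivariance of $\sF$ plus finite-type support of $\sF_0$) is right, but without the displayed identity and the adjunction argument for $\Av_*^K$, the proof as proposed is incomplete.
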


\begin{proof}

The first three claims are evident from the construction.
For the last one, note that there is a compact
open subgroup $K \subset H$ and a coherent $D$-module
$\sF_0 \in D(H/K)$ such that $\sF = \pi_K^{*,dR}(\sF_0)$.
Then for $\sG \in D^*(H)$, we have:

\[
\sF \star \sG = \sF_0 \overset{K}{\star} \Av_*^K(\sG)
\]

\noindent where $\star$ is convolution on $D^*(H)$,
$\Av_*^K$ indicates (strong) $K$-averaging on the left,
and $-\overset{K}{\star}-:D(H/K) \otimes D(K/H) \to D^*(H)$
is the relative convolution. The functor $\Av_*^K$ is
left $t$-exact up to shift: 
it is right adjoint to a functor that is $t$-exact up to
shift. Then the claim is evident from the fact that
$\sF_0$ has support some finite type scheme and from standard
cohomological estimates.

\end{proof}

\begin{rem}\label{r:d*-unit}

The upshot is that $D^*(H)$ \emph{almost} satisfies
the hypotheses of the monoidal DG category
$\sA$ from Proposition \ref{p:canon-renorm-a}: its unit
object is not eventually coconnective, but $D^*(H)$ 
otherwise satisfies the evident non-unital analogue.

\end{rem}

\subsection{Main construction}\label{ss:str-functor}

We are now equipped to give the main construction.

To avoid confusion, we let $D^*(H)^{gen} \in H\mod_{weak}$
denote the object constructed in \S \ref{ss:str-ker-constr},
and we use $D^*(H)$ to indicate the underlying DG category
$\Oblv_{gen}(D^*(H)^{gen})$. 

First, note that there is a canonical monoidal
functor $\IndCoh^*(H) \to D^*(H)$; indeed,
this functor is constructed in \S \ref{ss:indcoh-dmod} with
the monoidal structure coming from Remark \ref{r:indcoh-dmod-lax}.

In particular, $D^*(H)$ is canonically a
$(\IndCoh^*(H),D^*(H))$-bimodule. 
The left $\IndCoh^*(H)$-module (i.e., naive weak $H$-module)
structure here is by construction to one arising from
realizing $D^*(H)$ as $\Oblv_{gen}(D^*(H)^{gen})$.

Next, observe that the left action action 
of $\IndCoh^*(H)$ on $D^*(H)$ canonically renormalizes
in the sense of \S \ref{ss:can-renorm-tate}, and
the corresponding object 
(via Proposition \ref{p:canon-renorm-tate}) of $H\mod_{weak}$ is
$D^*(H)^{gen}$.
Indeed, this is a routine verification by 
Lemma \ref{l:dmod-canon} and 
Examples \ref{e:monoidal-cpts-trun} and \ref{e:module-cpts-trun}.
The last axiom for canonical renormalization (on
varying the compact open subgroups) reduces
to Lemma \ref{l:gps-canon-renorm} \eqref{i:gps-4}.

Therefore, by Proposition \ref{p:canon-renorm-a}
and Lemma \ref{l:dmod-t-str} (c.f. Remark \ref{r:d*-unit}) 
we obtain an a priori non-unital\footnote{It is clear
that our discussion goes through in a non-unital setting,
but this also follows directly from the unital case by
freely adjoining a unit, i.e., applying 
Proposition \ref{p:canon-renorm-a} with 
$\sA = D^*(H) \times \Vect$.}
action of $D^*(H)$ on $D^*(H)^{gen} \in H\mod_{weak}$.

By \cite{higheralgebra} Proposition 5.4.3.16, it
is a property (not a structure) for $D^*(H)$ to act
unitally on $D^*(H)^{gen}$. We verify this explicitly
as follows.

Let $K_0 \subset H$ be a fixed compact open subgroup, which
we also use to normalize the $t$-structure on $D^*(H)$.
As $H\mod_{weak} \simeq \sH_{H,K_0}^w\mod$, we need
to verify that the induced (right) $D^*(H)$-action on
$D^*(H)^{K_0,w}$ is unital. 

Note that this 
is tautologically the case for $D^*(H)^{K_0,w,naive}$.
As $D^*(H)^{K_0,w,+} \isom D^*(H)^{K_0,w,naive,+}$ by construction,
it suffices to show that the unit object
$\delta_1 \in D^*(H)$ acts by a left $t$-exact functor
on $D^*(H)^{K_0,w}$. 

Recall that $\delta_1 = \colim_K \delta_K$ where
the colimit runs over compact open subgroups and the term
$\delta_K$ indicates the $\delta$ $D$-module on $H$ supported 
on $K$.\footnote{Note that this object of $D^*(H)$ 
is in cohomological degree $-\dim(K_0/K)$ if $K \subset K_0$.}
Each $\delta_K$ is compact, so 
(by the construction of Lemma \ref{l:module-t-str})
acts on $D^*(H)^{K_0,w}$ by a functor that
is left $t$-exact up to shift. In fact, these
functors are left $t$-exact as is: the induced functor
$D^*(H)^{K_0,w,naive}$ is $\Oblv\Av_*^K$, which is 
left $t$-exact.\footnote{Here, of course,
the averaging is taken on the right, i.e.,
it does not interact with the weak $K_0$-invariants.}

By the above description of $\delta_1$, it also acts
by a left $t$-exact functor, so our earlier remarks we are done.

This completes the construction of a (right) $D^*(H)$-action
on $D^*(H)^{gen} \in H\mod_{weak}$, and therefore
(as in \S \ref{ss:strong-strategy}), induces
a functor:

\[
H\mod = D^*(H)\mod \to H\mod_{weak}.
\]

\subsection{Invariants vs. coinvariants}

We now complete the promise from Remark \ref{r:inv-coinv-w/str},
comparing weak invariants and coinvariants for $\exp(\fh)$
in the polarizable case.

Recall the category $D^!(H) \coloneqq 
D^*(H)^{\vee} = \TwoHom_{\DGCat_{cont}}(D^*(H),\Vect)$
from \cite{dmod}. Clearly $D^!(H)$ is canonically a 
$(D^*(H),D^*(H))$-bimodule. 
As $H$ is placid (in the sense
of \emph{loc. cit}.), any left (resp. right) 
$H$-invariant dimension theory on $H$
defines an equivalence $D^*(H) \isom D^!(H)$ of
left (resp. right) $D^*(H)$-modules.

Note that invariant dimension theories do exist on $H$:
any choice of congruence subgroup defines one
(see \cite{dmod} Construction 6.12.6). In particular,
$D^!(H)$ is invertible as a bimodule. 

\begin{prop}\label{p:inv-coinv-w/str}

Let $H$ be a polarizable Tate group indscheme. Then there
is a canonical isomorphism of functors:

\[
D^!(H) \underset{D^*(H)}{\otimes} (-)_{\exp(\fh),w} \simeq 
(- \otimes \chi_{-Tate})^{\exp(\fh),w}:
H\mod_{weak} \to H\mod.
\]

\end{prop}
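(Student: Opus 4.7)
The plan is to prove Proposition \ref{p:inv-coinv-w/str} by first identifying the underlying DG-category-level isomorphism via Proposition \ref{p:chi} applied to each formal completion $H_K^{\wedge}$, and then upgrading it to an isomorphism of strong $H$-module categories by tracking the $D^*(H)$-actions on both sides through the representing kernels constructed in \S\ref{ss:str-functor}. Specifically, $(-)^{\exp(\fh),w}$ is corepresented by $D^*(H)^{gen} \in H\mod_{weak}$ together with the commuting right $D^*(H)$-action from \S\ref{ss:str-functor}, while $(-)_{\exp(\fh),w}$ admits an analogous ``left-adjoint kernel'' assembled as $\lim_K D(H/K)$ under left adjoints to the structural functors used to define $D^*(H)^{gen}$. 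The proposition is equivalent to producing a canonical isomorphism between this left-adjoint kernel, twisted by $D^!(H)$, and $D^*(H)^{gen} \otimes \chi_{-Tate}$ as right $D^*(H)$-modules in $H\mod_{weak}$.

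To construct the isomorphism, I would work one compact open subgroup $K \subset H$ at a time and then assemble. Each $H_K^{\wedge}$ is polarizable (by $K$ itself; see Example \ref{e:hc}), so Proposition \ref{p:chi} yields a canonical equivalence $\sC_{H_K^{\wedge},w} \simeq (\sC \otimes \chi_{-Tate,H_K^{\wedge}})^{H_K^{\wedge},w}$. Passing to the limit over $K$ produces the sought-after isomorphism on underlying DG categories. The appearance of $D^!(H) \otimes_{D^*(H)} -$ (rather than the identity) absorbs the fact that the individual modular characters $\chi_{-Tate,H_K^{\wedge}}$, as $K$ varies, differ from the restriction of $\chi_{-Tate,H}$ by determinantal twists of the form $\det(\fh/\fk)[\dim(\fh/\fk)]$, precisely as foreshadowed in the warning after Proposition \ref{p:chi-restr}. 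These twists encode exactly the data of an invariant dimension theory on $H$; while such a dimension theory exists (e.g., from any congruence subgroup) and trivializes $D^!(H)$ over $D^*(H)$, the bimodule $D^!(H)$ itself is canonical, and it is this canonicality that makes the statement intrinsic rather than dependent on a choice.

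The main obstacle is the verification of strong $H$-equivariance of the resulting isomorphism, as opposed to its mere existence at the level of underlying DG categories. Both sides carry their $D^*(H)$-actions through canonical renormalization (Proposition \ref{p:canon-renorm-a}) applied to the representing kernels, and the task is to show that the pointwise Proposition \ref{p:chi} identifications intertwine these actions once the $D^!(H)$ twist is incorporated. I would do this by reducing the equivariance check, via Proposition \ref{p:canon-renorm-tate} and the fact that $H\mod_{weak} \simeq \sH_{H,K}^w\mod$ for any polarization $K$, to a verification at the level of a single $(-)^{K,w}$; there the statement becomes an explicit isomorphism of $\sH_{H,K}^w$-bimodules relating the Nakayama functor $\vph_{\sH_{H,K}^w,*}$ (which produces $\chi_{-Tate}$ as in the proof of Corollary \ref{c:mod-char}) to convolution with the $K$-biequivariant restriction of $D^!(H)$. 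This final bimodule identity is essentially Serre duality for the smooth, ind-proper projection $H/K \to \mathrm{pt}$, adapted to the Tate setting, and can be verified directly from the construction of $D^!(H)$ in \cite{dmod}.
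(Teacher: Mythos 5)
Your plan diverges from the paper's and, as written, has a genuine gap at its central step. You propose to apply Proposition \ref{p:chi} separately to each formal completion $H_K^{\wedge}$ and then ``assemble over $K$,'' asserting that the bimodule $D^!(H)$ absorbs the discrepancies between the modular characters $\chi_{-Tate,H_K^{\wedge}}$ and the restriction of $\chi_{-Tate,H}$. That absorption is precisely what would need to be proved, and nothing in the proposal does it: the pointwise identifications for different $K$ fail to commute with the structural functors in $\lim_K \sC_{H_K^{\wedge},w}$ and $\colim_K(\sC\otimes\chi_{-Tate})^{H_K^{\wedge},w}$ exactly by the determinant twists of the Warning following Proposition \ref{p:chi-restr}, and you would also have to compare a limit with a colimit via the adjoint structural maps. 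Moreover the twist you write, $\det(\fh/\fk)[\dim(\fh/\fk)]$, is not defined ($\fh/\fk$ is infinite-dimensional), and the claim that ``these twists encode exactly the data of an invariant dimension theory'' is not correct as stated: a dimension theory is integer-valued data that identifies $D^*(H)$ with $D^!(H)$ in the $D$-module world, whereas the gluing discrepancies of the modular characters are graded determinant \emph{lines} (this is the Tate-extension phenomenon of \S\ref{s:sinf}); matching the two canonically is essentially the content you are trying to prove, not a formality. Your separate strong-equivariance step has a related problem: reducing to a single $(-)^{K,w}$ and invoking the Nakayama functor $\vph_{\sH_{H,K}^w,*}$ together with ``convolution with the $K$-biequivariant restriction of $D^!(H)$'' mixes the $\IndCoh$-level Hecke category with the $D$-module kernels; the intertwining with the $D^*(H)$-action involves the de Rham/infinitesimal structure and is not captured by the weak Hecke category alone.

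For contrast, the paper's proof avoids all of this by never localizing in $K$ and never confronting the determinant discrepancies. One first observes the functorial identities $\sC_{\exp(\fh),w} = \big(\sC\otimes\Oblv^{str\to w}(D^*(H))\big)_{H,w}$ and $\sC^{\exp(\fh),w} = \big(\sC\otimes\Oblv^{str\to w}(D^!(H))\big)^{H,w}$ in $H\mod$ (the commuting strong $D^*(H)$-action on the kernels makes the equivariance and functoriality automatic, using $\DGCat_{cont}$-linearity of $(-)^{H,w}$ in the polarizable case); then Proposition \ref{p:chi} is applied exactly once, for the group $H$ itself, and the invertible bimodule $D^!(H)$ is shuffled across the tensor product. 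If you want to salvage your route, the missing ingredient is a precise comparison of $\chi_{Tate,H_K^{\wedge}}$ with $\Res(\chi_{Tate,H})$ as $K$ varies, together with a proof that the resulting cocycle of determinant lines is exactly the one trivialized by $D^!(H)\otimes_{D^*(H)}(-)$; this is doable but amounts to a substantial argument, whereas recasting the two functors as $H$-(co)invariants against the $D$-module kernels makes the statement nearly formal.
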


\begin{proof}

In what follows, we use the symmetric monoidal structure
$-\otimes -$ on $H\mod_{weak}$ from \S \ref{ss:hmod-sym-mon}.

Let $\sC \in H\mod_{weak}$. We claim:

\[
\begin{gathered}
\sC_{\exp(\fh),w} = \Big(\sC \otimes 
\Oblv^{str\to w}\big(D^*(H)\big)\Big)_{H,w} \\
\sC^{\exp(\fh),w} = \Big(\sC \otimes 
\Oblv^{str\to w}\big(D^!(H)\big)\Big)^{H,w}
\end{gathered}
\]

\noindent as objects of\footnote{Here we are using the fact that
$\Oblv^{str\to w}(D^!(H)),\Oblv^{str\to w}(D^*(H)) \in 
D^*(H)\mod(H\mod_{weak})$, where this structure
arises from the bimodule structures
on $D^!(H)$ and $D^*(H)$.}
$H\mod$ functorially in $\sC$. Indeed, the first
identity is immediate, and the second identity
follows similarly the fact that $(-)^{H,w}$ is 
$\DGCat_{cont}$-linear for polarizable $H$.

Then the claim is straightforward:

\[
\begin{gathered} 
\sC_{\exp(\fh),w} = \Big(\sC \otimes 
\Oblv^{str\to w}\big(D^*(H)\big)\Big)_{H,w} 
\overset{Prop. \ref{p:chi}}{=} \\
\Big(\sC \otimes \chi_{-Tate} \otimes 
\Oblv^{str\to w}\big(D^*(H)\big)\Big)^{H,w} = \\
D^!(H)^{\otimes -1} \underset{D^*(H)}{\otimes} 
\Big(\sC \otimes \chi_{-Tate} \otimes 
\Oblv^{str\to w}\big(D^!(H)\big)\Big)^{H,w}  = \\
D^!(H)^{\otimes -1} \underset{D^*(H)}{\otimes} 
(\sC \otimes \chi_{-Tate})^{\exp(\fh),w}
\end{gathered} 
\]

\noindent for $D^!(H)^{\otimes -1} \coloneqq 
\TwoHom_{D^*(H)\mod}(D^!(H),D^*(H))$ the $D^*(H)$-bimodule
inverse to $D^!(H)$. This clearly gives the identity.

\end{proof}

\begin{example}\label{e:indcoh*-coreps}

Suppose $\sC = \IndCoh^*(H) \in H\mod_{weak}$
(i.e., the evident object that corepresents $\Oblv_{gen}$).
Then $\sC_{\exp(\fh),w} \simeq D^*(H) \in H\mod$. 
If one takes\footnote{Note that
$\IndCoh^!(H)$ is also the internal $\Hom$ 
in the symmetric monoidal category $H\mod_{weak}$ from
$\IndCoh^*(H)$ to the trivial object $\Vect$.}
$\IndCoh^!(H) \coloneqq \IndCoh^*(H) \otimes \chi_{-Tate}
\in H\mod_{weak}$, then Proposition \ref{p:inv-coinv-w/str}
says $\IndCoh^!(H)^{\exp(\fh),w} = D^!(H)$, as expected.

\end{example}

\section{Semi-infinite cohomology}\label{s:sinf}

\subsection{Construction of semi-infinite cohomology}

Let $H$ be a Tate group indscheme.

\begin{defin}

The \emph{absolute semi-infinite cohomology} functor:

\[
C^{\sinf}(\fh,-):\Vect_{\exp(\fh),w} \to \Vect 
\in H\mod
\]

\noindent is the counit map corresponding to the
adjunction constructed in \S \ref{s:strong}.

\end{defin}

The goal for this section is to show that for $H$
formally smooth, this functor
identifies (in a suitable sense) with the classical 
functor of semi-infinite cohomology
for Tate Lie algebras.

\begin{rem}\label{r:sinf-strong-eq}

As in indicated in the notation above, 
$C^{\sinf}(\fh,-)$ is strongly $H$-equivariant.
This is a non-obvious (if widely anticipated) 
property from the traditional construction of 
semi-infinite cohomology via Clifford algebras.

\end{rem}

\begin{rem}

The above functor is defined (and is strongly $H$-equivariant)
for any Tate group indscheme $H$. However, for the purposes
of relating this functor to classical constructions, we
may assume $H$ is polarizable; indeed, replacing $H$
by its formal completion along any compact open
subgroup manifestly does not change
$C^{\sinf}(\fh,-)$ as a morphism in $\DGCat_{cont}$.
Therefore, in the analysis of this section, $H$ is frequently
taken to be polarizable.

\end{rem}

\begin{rem}

The above construction (hence our comparison theorem) 
only applies for those
Tate Lie algebras $\fh \in \Pro\Vect^{\heart}$ 
arising as the Lie algebra of some formally smooth
Tate group indscheme. Equivalently, there must
exist $\fk \subset \fh$ a compact open Lie subalgebra
arising that arises as the Lie algebra of some affine
group scheme. Certainly this is the case whenever $\fh$ has
a pro-nilpotent compact open subalgebra, which covers
all examples of interest.

We anticipate (as indicated in the notation) 
that there is a theory of weak actions for the
``formal group" $\exp(\fh)$ for a general Tate Lie algebra
$\fh$. The argument given below for the comparison theorem
should then apply as is in that setup. However, as 
the applications we have in mind do not require such
a theory, we do not develop one in this text.

\end{rem}

\begin{rem}

We thank Gurbir Dhillon for insisting that we include this material
in the present text and for helpful discussions related to it.

\end{rem}

\subsection{}

Throughout this section, $H$ denotes a Tate group indscheme.

We maintain the conventions of \S \ref{ss:conv-tate-gp-indsch}:
all quotients (including classifying stacks) are 
Zariski sheafified.

\subsection{Central extensions}

We begin by discussing some general constructions relating
to central extensions.

\subsection{}\label{ss:bg_m-std}

We begin with the following construction.

First, note that there is a canonical action of
$\bB \bG_m$ on $\Vect$, i.e., an action 
of $\QCoh(\bB \bG_m)$ equipped with the convolution monoidal 
structure on $\Vect$. Indeed, this monoidal category is canonically
equivalent to $\bZ$-graded vector spaces with degree-wise.
Our monoidal functor $ \QCoh(\bB \bG_m) \to \Vect$
takes the $(-1)$-st degree component.\footnote{The sign
here makes normalizations for later constructions more convenient:
see Remark \ref{r:central-fmla}.}

Extend this construction to an action of $\bB \bG_m \times \bZ$ by
having the generator $1 \in \bZ$ act on $\Vect$
as $(-)[1]:\Vect \isom \Vect$.

Now for any $H$ a Tate group indscheme equipped with a homomorphism:

\[
(\vareps,\delta):H \to \bB \bG_m \times \bZ
\]

\noindent of groups,
we obtain an action of $H$ on $\Vect$
by restriction along the monoidal pushforward functor:\footnote{This
functor is defined by the formalism of \S \ref{s:indcoh}
because $H$ and $\bB \bG_m \times \bZ$ are weakly renormalizable
prestacks and this morphism is reasonable indschematic.}

\[
\IndCoh^*(H) \to \IndCoh^*(\bB \bG_m \times \bZ) =
\QCoh(\bB \bG_m \times \bZ).
\]

\begin{rem}\label{r:central-fmla}

Under the above construction, any $h \in H(k)$ defines
a skyscraper sheaf in $\IndCoh^*(H)$, so by extension, 
an automorphism of $\Vect$. By construction, this
automorphism is $\vareps(h) \otimes - [\delta(h)]$,
where $\vareps(h)$ is the $k$-line defined by $h$
and $\vareps$.

\end{rem}

\begin{prop}\label{p:central-extns}

The above construction gives an equivalence of groupoids:

\begin{equation}\label{eq:central-extn}
\TwoHom_{\mathsf{Gp}}(H,\bB \bG_m \times \bZ) \to 
\TwoHom_{\Alg(\DGCat_{cont})}(\IndCoh^*(H),\Vect).
\end{equation}

\end{prop}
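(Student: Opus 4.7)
The plan is to construct the inverse to \eqref{eq:central-extn} via a Tannakian-style argument, then verify both compositions are naturally equivalent to the identity.

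Given $F: \IndCoh^*(H) \to \Vect \in \Alg(\DGCat_{cont})$, I construct the associated homomorphism $\lambda(F): H \to \bB \bG_m \times \bZ$ as follows. For any test scheme $T \in \presup{>-\infty}{\AffSch}$ and any $T$-point $h: T \to H$, the graph $\Gamma_h: T \to H \times T$ is a closed embedding of eventually coconnective schemes, and $(\Gamma_h)_*\sO_T$ defines an object of $\IndCoh^*(H \times T) \simeq \IndCoh^*(H) \otimes \IndCoh^*(T)$ (using strictness of $H$ from Remark \ref{r:tate-good-properties} and Proposition \ref{p:strict-basics}). This object is invertible with respect to convolution on the $H$-factor, with inverse given by pushforward along the composition of $h$ with inversion on $H$. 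Applying $F \otimes \id$ followed by $\Psi$ to land in $\QCoh(T)$, we obtain an invertible object of $\QCoh(T)$, i.e.\ a $T$-point of $\bB\bG_m \times \bZ$; multiplicativity in $h$ assembles these into the desired homomorphism $\lambda(F)$.

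One composition of $\lambda$ with \eqref{eq:central-extn} is the identity by Remark \ref{r:central-fmla}. The main obstacle is the reverse composition: showing every monoidal functor $F$ is canonically recovered from its values on the invertible objects $(\Gamma_h)_*\sO_T$. I plan to handle this by reduction to the case of a classical affine group scheme, using that $H$ is a Tate group indscheme and can be analyzed through its compact open subgroups. For a compact open $K \subset H$, restriction of $F$ along the monoidal pushforward $\IndCoh^*(K) \to \IndCoh^*(H)$ gives a monoidal functor $\IndCoh^*(K) \to \Vect$, and patching such functors over a cofinal tower of compact opens (compatibly with the induced quotient $H \to H/K$, which is ind-lft) should recover $F$ on all of $\IndCoh^*(H)$.

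For $H$ a classical affine group scheme, $\IndCoh^*(H) = \QCoh(H)$ and the claim becomes the classical Tannakian statement that monoidal functors $\QCoh(H) \to \Vect$ with respect to convolution correspond precisely to invertible objects in $\QCoh(H)$ equipped with a compatible multiplicative structure, which by the standard dictionary are central extensions of $H$ by $\bG_m$ shifted by a locally constant $\bZ$-valued function on $H$. This amounts to a direct calculation via Hopf-algebra duality, and extension to the full groupoid-level equivalence requires the usual bookkeeping of higher coherence isomorphisms (automorphisms of homomorphisms in $\mathsf{Gp}$ matching automorphisms of monoidal functors, which is immediate once the underlying bijection on objects has been established).
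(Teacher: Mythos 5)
There is a genuine gap, and it sits exactly at the step you defer: recovering $F$ from its values on the graph objects $(\Gamma_h)_*\sO_T$. Your proposed reduction — restrict $F$ along $\IndCoh^*(K) \to \IndCoh^*(H)$ for compact open subgroups $K$ and ``patch over a cofinal tower of compact opens'' — cannot work, because the compact open subgroups of a Tate group indscheme do not exhaust $H$: for $H = G(K)$ their union is a proper subgroup (the bounded elements), and more to the point $\IndCoh^*(H)$ is the colimit of $\IndCoh^*(H_i)$ over closed subschemes $H_i$ that are \emph{not} subgroups and are not contained in (finitely many cosets of) any compact open $K$. So the subcategories $\IndCoh^*(K)$ do not generate $\IndCoh^*(H)$, and knowing $F$ on them (even together with its values on delta-type objects) does not determine $F$ without a further argument; that determination is precisely the content of the proposition, not a formality one can patch together. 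A second, smaller gap: you assert that applying $F \otimes \id$ and $\Psi$ to $(\Gamma_h)_*\sO_T$ yields an invertible object of $\QCoh(T)$, hence a $T$-point of $\bB\bG_m \times \bZ$. Invertibility is a priori only available in an $\IndCoh$-type category (relative convolution over $T$), and the identification of invertible objects there with $\bZ$-graded line bundles is itself a nontrivial lemma — in infinite type one must rule out more exotic invertibles; this is not something $\Psi$ gives you for free.

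For contrast, the paper avoids any reconstruction-from-special-objects step by dualizing: a continuous monoidal functor $F:\IndCoh^*(H) \to \Vect$ is \emph{equivalent data} to a single object $\sL \in \IndCoh^!(H) \coloneqq \IndCoh^*(H)^{\vee}$ equipped with isomorphisms $m^!(\sL) \isom \sL \boxtimes \sL$ and $e^!(\sL) \isom k$ (plus coherences), so no information is lost and no generation statement is needed. The remaining work is the key lemma that an invertible object of $\IndCoh^!(S)$, for $S$ an eventually coconnective qcqs scheme (and then for $H$ by passing to the colimit over its closed subschemes), lies in the image of $\Upsilon_S$ and is a $\bZ$-graded line bundle — proved by computing $!$-fibers at field-valued points of the Serre-dual coherent object and applying Nakayama. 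If you want to salvage your functor-of-points construction of the inverse map, you would still need both of these ingredients: the duality reformulation (or some substitute for the generation claim) and the classification of invertible objects; at that point you have essentially reproduced the paper's argument.
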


\begin{proof}

\step First, it is convenient to dualize (in the
sense of \cite{dgcat}). For $S$ a reasonable
indscheme, we let $\IndCoh^!(S) \in \DGCat_{cont}$ denote
the dual to $\IndCoh^*(S)$ (which exists
because $\IndCoh^*(S)$ is compactly generated). 
Note that this construction is covariant in $S$; we
denote pullback along a map $f:S \to T$ by
$f^!:\IndCoh^!(T) \to \IndCoh^!(S)$.\footnote{This construction
should not be confused with the one studied in 
the $\IndCoh^*$-setting of \S \ref{s:indcoh}
for a proper (or ind-proper) morphism. Because 
we only use this construction in the proof of
the present proposition, we abuse notation by using the
same notation to mean different things (in somewhat 
different contexts).} In particular, there
is a canonical object $\omega_S \in \IndCoh^!(S)$,
the $!$-pullback of $k \in \Vect = \IndCoh^!(\Spec(k))$ 
along the structure map $S \to \Spec(k)$.

There is a standard natural transformation 
$\Upsilon_S: \QCoh(-) \to \IndCoh^!(-)$
such that for any $S \in \IndSch_{reas}$, 
$\Upsilon_S(\sO_S) = \omega_S$.
Indeed, for 
$S \in \presup{>-\infty}\Sch_{qcqs}$, $\Upsilon_S$
is by definition dual to $\Psi_S:\IndCoh^*(S) \to \QCoh(S)$
(using the standard self-duality of $\QCoh(-)$ on qcqs schemes). 
In general, the construction
is obtained by right Kan extension from this one.

Note that $\Upsilon_S$ is fully-faithful
for any $S \in \IndSch_{reas}$. Indeed, by construction,
this reduces to $S \in \presup{>-\infty}\Sch_{qcqs}$,
and in that case it follows
because the dual functor $\Psi_S$ 
admits a fully-faithful left adjoint.

\step Now suppose that $S$ is \emph{strict}
in the sense of \S \ref{ss:strict}.
By functoriality, $\IndCoh^!(S)$ is
symmetric monoidal with tensor product 
$-\overset{!}{\otimes} -$ with unit object
$\omega_S$ (c.f. \cite{indcoh} \S 5.6).

We remark that the
(symmetric) monoidal
category $(\IndCoh^!(S),\overset{!}{\otimes})$ 
acts canonically on $\IndCoh^*(S)$ by duality.

In what follows, we say $\sL \in \IndCoh^!(S)$ is \emph{invertible}
if $\sL \overset{!} {\otimes} -:\IndCoh^!(S) \to \IndCoh^!(S)$
is an equivalence. 

\step Suppose that $S \in \presup{>-\infty}{\Sch}_{qcqs}$.
We suppose that $S$ is strict and 
$\sL \in \IndCoh^!(S)$ is invertible.

We claim that:

\begin{enumerate}

\item $\sL$ lies in the essential image of $\Upsilon_S$.

\item The object $\ol{\sL} \in \QCoh(S)$ with 
$\Upsilon_S(\ol{\sL}) = \sL$ (which is well-defined by the above)
is invertible in $\QCoh(S)$ and
therefore corresponds to a $\bZ$-graded\footnote{We
normalize this identification 
by having the suspension $(-)[1]$ correspond
to increasing the grading by $1$.}
line bundle
on $S$ (the $\bZ$-grading being locally constant).

\end{enumerate}

First, note that $\sL \in \IndCoh^!(S)$ is compact:
this follows from invertibility and compactness
of $\omega_S$ (we emphasize that $S$ is eventually coconnective).
We let $\bD \sL \in \Coh(S)$ denote the corresponding object
under the equivalence:

\[
(\IndCoh^!(S)^c)^{op} \simeq \IndCoh^*(S)^c = \Coh(S).
\]

For this, let $s:\Spec(K) \to S$ be a map with $K$ a field. 
Clearly $s^!(\sL)$ is invertible in 
$\IndCoh^!(\Spec(K)) = K\mod$. In particular, it corresponds to
a graded line. We then obtain:

\[
\begin{gathered}
s^!(\sL) = 
\langle K,s^!(\sL)\rangle = 
\langle s_*^{\IndCoh}(K),\sL\rangle =
\Hom_{\IndCoh^*(S)}(\bD\sL,s_*^{\IndCoh}(K)) =
s^*(\bD\sL)^{\vee}
\end{gathered}
\]

\noindent where the brackets indicate pairings between
evident dual categories, the upper-* fiber is
the usual quasi-coherent fiber of
$\bD \sL \in \Coh(S) \subset \QCoh(S)$,
and the dual indicates the dual
as a $K$-vector space. 

In particular, the $*$-fibers of $\bD \sL$ at field-valued 
points are concentrated in some single cohomological
degree and 1-dimensional there.
Now it follows from Nakayama's lemma that $\bD \sL$ is
perfect and a graded line bundle. 

Applying duality again,
this translates to saying that $\sL \in \Upsilon_S(\Perf(S))$,
and moreover, is $\Upsilon_S$ of a graded line bundle.

\step Next, we observe that the above immediately
generalizes to the case where $S \in \IndSch_{reas}$
can be expressed as a filtered colimit of strict schemes 
$S_i \in \presup{>-\infty}{\Sch}_{qcqs}$ under
almost finitely presented closed embeddings. 
Indeed, this case immediately reduces to the schematic
case considered above.

In particular, this applies for $S = H$ our Tate group indscheme,
as observed in Remark \ref{r:tate-good-properties}.

\step We now complete the argument.

By duality and strictness of $H$, 
a monoidal functor $\IndCoh^*(H) \to \Vect$
is equivalent to a comonoidal functor $\Vect \to \IndCoh^!(H)$,
i.e., an object $\sL \in \IndCoh^!(H)$ with isomorphisms
$m^!(\sL) \isom \sL \boxtimes \sL$, $e^!(\sL) = k$ equipped with
higher homotopical compatibilities (for $m:H \times H \to H$ the
multiplication and $e:\Spec(k) \to H$ the unit).

Observe that $\sL$ is invertible in $\IndCoh^!(H)$:
its inverse is the pullback of $\sL$ along the inversion
map $H \isom H$. 

Therefore, $\sL$ defines a $\bZ$-graded line bundle
on $H$, or equivalently, a map $H \to \bB \bG_m \times \bZ$.
The comonoidal structure above is equivalent to making
the map into a map of group prestacks. 
It is immediate to verify that this equivalence is the inverse to 
the functor \eqref{eq:central-extn}.

\end{proof}

\subsection{The Tate canonical extension}

We now construct a canonical central extension:

\[
1 \to \bG_m \to H_{Tate} \to H \to 1 
\]

\noindent of any polarizable Tate group indscheme $H$.

Take $\chi_{Tate} \in H\mod_{weak}$. Recall that
$H\mod_{weak}$ is naturally symmetric monoidal and
that $\chi_{Tate}$ is invertible for this monoidal
structure. 

Recall from Proposition \ref{p:chi-restr} 
that $\Oblv_{gen}(\chi_{Tate}) \in \DGCat_{cont}$ 
is a trivial gerbe, i.e., 
this DG category is \emph{non-canonically} isomorphic to $\Vect$
(the identification depends on a choice of
compact open subgroup $K$ of $H$). 
In particular, we have a \emph{canonical} isomorphism 
$\TwoEnd_{\DGCat_{cont}}(\Oblv_{gen}(\chi_{Tate})) = \Vect$.

As $H$ acts naively on $\Oblv_{gen}(\chi_{Tate})$,
this defines a canonical homomorphism 
$\IndCoh^*(H) \to \Vect$.
By Proposition \ref{p:central-extns},
we obtain a homomorphism map:

\[
(\vareps_{Tate},\delta_{Tate}):H \to \bB \bG_m \times \bZ.
\]

\noindent By definition, $H_{Tate}$ is the central
extension defined by the homomorphism $\vareps_{Tate}$.

\begin{rem}\label{r:vect-chi}

Define an object\footnote{By analogy with usual
representations, $\Vect_{\chi_{Tate}}$ is the 
character defined by the ``1-dimensional" representation
$\chi_{Tate}$.}
$\Vect_{\chi_{Tate}} \in H\mod_{weak}$ as:

\[
\Vect_{\chi_{Tate}} \coloneqq 
\chi_{Tate} \otimes \triv(\Oblv_{gen}(\chi_{Tate}^{\otimes -1})).
\]

\noindent Note that $\Vect_{\chi_{Tate}}$ maps canonically under 
$\Oblv_{gen}$ to $\Vect$, and the induced naive $H$-action
is the one constructed above. 
By Proposition \ref{p:chi-restr}, any
choice of compact open subgroup $K \subset H$
induces an isomorphism  
$\chi_{Tate} \simeq \Vect_{\chi_{Tate}} \in H\mod_{weak}$.

\end{rem}

\subsection{}

We now discuss basic properties of $H_{Tate}$.

\begin{prop-const}\label{pc:tate-cpt-open}

For any compact open subgroup $K$ of $H$, there is
a canonical splitting of $H_{Tate}$ over $K$.

\end{prop-const}

\begin{proof}

Immediate from Proposition \ref{p:chi-restr}.

\end{proof}

\begin{cor}

$H_{Tate}$ is a Tate group indscheme.

\end{cor}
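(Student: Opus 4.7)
The plan is to verify the two defining properties of a Tate group indscheme for $H_{Tate}$: reasonableness as a group indscheme, and the existence of a compact open subgroup. Both will be deduced from the fact that the projection $\pi : H_{Tate} \to H$ is a $\bG_m$-torsor (locally trivial in the Zariski topology, by Hilbert 90), combined with Proposition-Construction~\ref{pc:tate-cpt-open}.

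First I would observe that $H_{Tate}$ is reasonable as an indscheme. Indeed, by construction $H_{Tate}$ is the total space of the $\bG_m$-torsor on $H$ classified by $\vareps_{Tate} : H \to \bB \bG_m$; Zariski-locally on $H$ it is isomorphic to $H \times \bG_m$. Choosing a presentation $H = \colim_i H_i$ of $H$ as a filtered colimit of quasi-compact quasi-separated eventually coconnective schemes under almost finitely presented closed embeddings (with Zariski cover of each $H_i$ trivializing the torsor), the pullback $\pi^{-1}(H_i)$ is again such a scheme, and one obtains a presentation of $H_{Tate}$ exhibiting it as a reasonable indscheme. The group structure is inherited from the central extension.

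Next I would fix a compact open subgroup $K \subset H$ and produce a compact open subgroup of $H_{Tate}$. By Proposition-Construction~\ref{pc:tate-cpt-open} the extension splits canonically over $K$, yielding a closed group subscheme embedding $s : K \hookrightarrow H_{Tate}$ lifting the inclusion $K \subset H$ and such that $s(K)$ is a classical affine group scheme. It remains to check that $H_{Tate}/s(K)$ is an indscheme locally almost of finite type. But the quotient map $H_{Tate}/s(K) \to H/K$ is itself a $\bG_m$-torsor: the $\bG_m$-action on $H_{Tate}$ commutes with right multiplication by $s(K)$ (centrality), and the chosen splitting ensures this quotient torsor is well-defined. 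Since $H/K$ is an indscheme locally almost of finite type by hypothesis on $K$, and $\bG_m$-torsors over such are again indschemes locally almost of finite type (locally trivial in the Zariski topology, so one may pass to a Zariski cover and work with trivial $\bG_m$-bundles), the required property holds.

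The one place that requires genuine care \textemdash{} and thus the main obstacle \textemdash{} is the compatibility with the convention that all quotients are sheafified in the Zariski topology (cf.~\S\ref{ss:conv-tate-gp-indsch}). What must be checked is that the Zariski-sheafified quotient $H_{Tate}/s(K)$ really does coincide with the $\bG_m$-torsor over $H/K$ described above, and that the local triviality of $\pi$ over the indscheme $H$ translates into a Zariski-local presentation of $H_{Tate}/s(K)$ over $H/K$. This is a routine but slightly fiddly verification, which reduces to showing that for any affine test scheme $T \to H/K$, the pullback $H_{Tate}/s(K) \times_{H/K} T$ is representable and Zariski-locally of the form $\bG_m \times T$; this in turn reduces to Hilbert~90 and the corresponding local triviality assertion for $\pi$ itself.
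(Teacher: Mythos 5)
Your proposal is correct, and it follows the route the paper intends (the corollary is left without proof as an immediate consequence of Proposition-Construction \ref{pc:tate-cpt-open}): reasonableness of $H_{Tate}$ is inherited along the affine, flat map $\pi:H_{Tate}\to H$, and a compact open subgroup is produced from a compact open $K\subset H$ via the splitting. The only place you create extra work for yourself is the choice of $s(K)$ as the compact open subgroup, which forces you to identify the Zariski-sheafified quotient $H_{Tate}/s(K)$ as a $\bG_m$-torsor over $H/K$; this "fiddly" step disappears if you instead take the full preimage $\pi^{-1}(K)$, which is a classical affine group scheme (it is isomorphic to $\bG_m\times K$ by the splitting, and is in any case flat and affine over the classical affine $K$) and satisfies $H_{Tate}/\pi^{-1}(K)\simeq H/K$ on the nose, so the lafp condition is immediate from the hypothesis on $K$.
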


\begin{warning}

In general, for $K_1 \subset K_2 \subset H$ compact open
subgroups, the canonical splitting for $K_2$ may not
restrict to the canonical splitting for $K_1$
(although this is automatic if $K_1$ is pro-unipotent).

\end{warning}

\begin{prop-const}\label{pc:chi-tate}

Suppose $H$ has the property that 
$\delta_{Tate}$ is identically $0$. 

Let $\Res:H\mod_{weak} \to H_{Tate}\mod_{weak}$
denote the functor of restriction along $H_{Tate} \to H$,
and let $\chi_{Tate} \in H\mod_{weak}$ denote the
modular character for $H$.

Then $\Res(\chi_{Tate})$ is canonically trivialized,
i.e., there is a canonical isomorphism:

\[
\Res(\chi_{Tate}) \simeq \triv \Oblv_{gen}(\Res(\chi_{Tate})
\in H_{Tate}\mod_{weak}.
\]

\end{prop-const}

\begin{proof}

In the notation of Remark \ref{r:vect-chi}, it
suffices to construct an isomorphism 
$\Res(\Vect_{\chi_{Tate}}) \simeq \Vect \in H_{Tate}\mod_{weak}$.

Note that by standard cohomological estimates,
the underlying naive 
action of $H$ on $\Oblv_{gen}(\Vect_{\chi_{Tate}})$
canonically renormalizes in the sense of 
Proposition \ref{p:canon-renorm-tate},
and that $\Vect_{\chi_{Tate}}$
is obtained by this canonical renormalization
procedure. The same applies for $H_{Tate}$ in place of $H$.
Therefore, it suffices to give the construction
on underlying naive categories.

But here the result follows from Proposition \ref{p:central-extns}
and the evident trivialization of the composite homomorphism:

\[
H_{Tate} \to H \to \bB \bG_m \times \bZ.
\]

\end{proof}

\begin{rem}

By Theorem \ref{t:weak} for of $\bG_m$, it is easy to
see that $\Res(\chi_{Tate}) \in H_{Tate}\mod_{weak}$
is the modular character for $H_{Tate}$. Therefore,
the modular character of $H_{Tate}$ is canonically trivialized.

\end{rem}

\subsection{}

We now wish to formulate in a precise way the following idea:
\textit{for $\sC \in H\mod_{weak}$, 
$(\sC \otimes \Vect_{\chi_{Tate}})^{H,w}$ is the subcategory
of $\sC^{H_{Tate},w}$ of objects on which
$\bG_m$ acts by homotheties.}

There is a somewhat more satisfying formulation in the 
naive setting than the genuine one,
so we separate the two cases.

\subsection{}

Let $k(1) \in \Rep(\bG_m)$ denote the standard representation,
and for $n\in \bZ$, let $k(n)$ denote its $n$th tensor power.
We let $\Vect_{(n)} \subset \Rep(\bG_m)$ denote the 
image of $\Vect \xar{k \mapsto k(n)} \Rep(\bG_m)$,
i.e., the category of graded vector spaces of pure degree $n$.

Suppose $\sC \in H\mod_{weak}$ and restrict
$\sC$ to $H_{Tate}\mod_{weak}$; we now omit $\Res$ from
the notation.
As the central $\bG_m \subset H_{Tate}$ acts trivially on
$\sC$, there is a forgetful functor:

\[
\sC^{H_{Tate},w} \to \sC^{\bG_m,w} = 
\Rep(\bG_m) \otimes \Oblv_{gen}(\sC).
\]

\noindent We remark that this functor factors through
$\sC^{H_{Tate},w,naive}$, and that the corresponding
functor 
$\sC^{H_{Tate},w,naive} \to \Rep(\bG_m) \otimes \Oblv_{gen}(\sC)$ is 
conservative.

For $n \in \bZ$, define 
$\sC_{(n)}^{H_{Tate},w,naive} \subset \sC^{H_{Tate},w,naive}$
as the full subcategory: 

\[
\sC^{H_{Tate},w,naive} 
\underset{\Rep(\bG_m) \otimes \Oblv_{gen}(\sC)}{\times}
\Vect_{(n)} \otimes \Oblv_{gen}(\sC).
\]

In words: this is the full subcategory of 
$H_{Tate}$-equivariant objects where the central $\bG_m$ acts
by the $n$th power of its canonical character, this notion
being defined because $\bG_m$ acts trivially on $\sC$.

Note that: 

\begin{equation}\label{eq:central-gm}
\sC^{H,w,naive} \isom \sC_{(0)}^{H_{Tate},w}
\end{equation}

\noindent by semi-simplicity of $\Rep(\bG_m)$.

\begin{prop}\label{p:chi-tate-inv-naive}

Let $H$ be a Tate group indscheme with $\delta_{Tate}$ 
identically $0$.

Then for any $\sC \in H\mod_{weak}$, the canonical functor:

\[
(\sC \otimes \Vect_{\chi_{Tate}})^{H,w,naive} \to 
(\sC \otimes \Vect_{\chi_{Tate}})^{H_{Tate},w,naive} 
\overset{Prop.-Const. \ref{pc:chi-tate}}{\simeq}
\sC^{H_{Tate},w,naive} 
\]

\noindent is fully-faithful with essential image
$\sC_{(1)}^{H_{Tate},w,naive}$.

\end{prop}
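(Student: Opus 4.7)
My plan is to analyze both sides via the $\bG_m$-weight decomposition induced by the central $\bG_m \subset H_{Tate}$.

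First, observe that both sides admit canonical $\bG_m$-weight decompositions. Since $\sC \in H\mod_{weak}$, upon restriction to $H_{Tate}$, the central $\bG_m$ acts trivially (as $\bG_m \hookrightarrow H_{Tate} \to H$ is the zero map). Iterating invariants in the short exact sequence $1 \to \bG_m \to H_{Tate} \to H \to 1$ and using that $H$ acts trivially on $\Rep(\bG_m)$ (since $\bG_m$ is central), we obtain $\sC^{H_{Tate},w,naive} = \bigoplus_n \sC^{H,w,naive}_{(n)}$, and the summands are exactly the $\sC_{(n)}^{H_{Tate},w,naive}$ from the statement. The same decomposition holds for $(\sC \otimes \Vect_{\chi_{Tate}})^{H_{Tate},w,naive}$, since restriction to $H_{Tate}$ likewise kills the central $\bG_m$-action on $\sC \otimes \Vect_{\chi_{Tate}}$.

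Next, I would establish full-faithfulness. It suffices to show that for any $\sD \in H\mod_{weak,naive}$, the natural map $\sD^{H,w,naive} \to \Res(\sD)^{H_{Tate},w,naive}$ is fully-faithful, since the second map in the Proposition is an equivalence by Proposition-Construction \ref{pc:chi-tate}. This in turn follows from the general principle that the restriction functor along the quotient homomorphism $H_{Tate} \to H$ embeds $H\mod_{weak,naive}$ as the full subcategory of $H_{Tate}$-modules with trivial central $\bG_m$-action (a reflective subcategory, with reflector being the weight-zero projection), and naive invariants preserve full-faithfulness.

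The essential-image identification is the heart of the proof, and requires tracing through the canonical iso from Proposition-Construction \ref{pc:chi-tate}. That iso realizes $\Res(\Vect_{\chi_{Tate}}) \simeq \Vect$ via the tautological nullhomotopy of the composite $H_{Tate} \to H \xrightarrow{\vareps_{Tate}} \bB\bG_m$, coming from $H_{Tate}$ being the total space of the $\bG_m$-torsor classified by $\vareps_{Tate}$. Via Proposition \ref{p:central-extns}, two different nullhomotopies of the same character differ by a group homomorphism $H_{Tate} \to \bG_m$; restricted to the central $\bG_m \subset H_{Tate}$, such a homomorphism is an integer. The tautological nullhomotopy differs from the ``naively trivial'' one by the standard character of central $\bG_m$ (weight $+1$), because the central $\bG_m$ is canonically identified with the torsor's fiber over $1_H$ and the tautological section there is the identity map. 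Consequently, the canonical iso translates the weight-$0$ summand on the $(\sC \otimes \Vect_{\chi_{Tate}})$-side (which receives the image of $H$-invariants, by the second paragraph) to the weight-$1$ summand on the $\sC$-side.

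The main obstacle is precisely this weight-shift computation: making rigorous the assertion that the canonical nullhomotopy in Proposition-Construction \ref{pc:chi-tate} contributes the standard $\bG_m$-character, which demands careful bookkeeping using Proposition \ref{p:central-extns} and the explicit description of $H_{Tate}$ as the torsor classified by $\vareps_{Tate}$. Once this is verified, the fully-faithfulness and the computed essential image $\sC_{(1)}^{H_{Tate},w,naive}$ follow immediately.
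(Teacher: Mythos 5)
Your proposal is correct and follows essentially the same route as the paper: identify $H$-invariants with the weight-$0$ part of $H_{Tate}$-invariants (the paper's \eqref{eq:central-gm}, via semisimplicity of $\Rep(\bG_m)$), and then show that the trivialization of Proposition-Construction \ref{pc:chi-tate} restricted to the central $\bG_m$ is the standard character $k(1)$, so the identification shifts the central weight by one. Your nullhomotopy bookkeeping is just an expanded version of the paper's terse ``it follows from the construction that this object is $k(1)$.''
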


\begin{proof}

Consider the isomorphism 
$\Vect \isom \Vect_{\chi_{Tate}} \in H_{Tate}\mod_{weak}$
from Proposition-Construction \ref{pc:chi-tate}
restricted to $\bG_m$. 
Note that $\Vect_{\chi_{Tate}}|_{\bG_m}$ is canonically
isomorphic to $\Vect$ with its trivial action (as it was
obtained by restriction from $H$).
Therefore, this isomorphism is an equivalence:

\[
\Vect \isom \Vect_{\chi_{Tate}}|_{\bG_m} = \Vect \in \bG_m\mod_{weak}.
\]

Therefore, this isomorphism amounts to specifying an invertible
object of $\TwoEnd_{\bG_m\mod_{weak}}(\Vect) = \Rep(\bG_m)$.
It follows from the construction that this object
is $k(1)$.

We now obtain the result from \eqref{eq:central-gm}.

\end{proof}

\subsection{}\label{ss:tate-tw-inv}

We now explain how to adapt the above to the setting of
genuine actions.

Note that by Theorem \ref{t:weak} (for $\bG_m$),
$\Res:H\mod_{weak} \to H_{Tate}\mod_{weak}$
admits a right (and left) $\DGCat_{cont}$-linear adjoint
commuting with colimits. We abuse notation
somewhat in denoting this functor by $(-)^{\bG_m,w}$.

For any $n \in \bZ$, there is an adjunction map:

\[
\Vect_{\chi_{Tate}}^{\otimes n} \to (\Vect_{\chi_{Tate}}^{\otimes n})^{\bG_m,w} 
\overset{Prop.-Const. \ref{pc:chi-tate}}{\simeq}
\Vect^{\bG_m,w} \in H\mod_{weak}.
\]

\noindent The induced map:

\[
\underset{n \in \bZ}{\oplus}
\Vect_{\chi_{Tate}}^{\otimes n} 
\Vect^{\bG_m,w} \in H\mod_{weak}
\]

\noindent is an isomorphism.\footnote{The 
$\oplus$ denotes the coproduct in $H\mod_{weak}$.
Note that (even infinite) coproducts in $H\mod_{weak}$
coincide with products as the
same is true in $\DGCat_{cont}$.} 

On tensoring, for any $\sC \in H\mod_{weak}$, we obtain an
isomorphism:

\[
\underset{n \in \bZ}{\oplus}
(\sC \otimes \Vect_{\chi_{Tate}}^{\otimes n}) \isom  
\Res(\sC)^{\bG_m,w} \in H\mod_{weak}.
\]

\noindent Passing to invariants, we obtain:

\[
\underset{n \in \bZ}{\oplus}
(\sC \otimes \Vect_{\chi_{Tate}}^{\otimes n})^{H,w} \isom  
\sC^{H_{Tate},w}.
\]

We record these observations as the following 
analogue of Proposition \ref{p:chi-tate-inv-naive}.

\begin{prop}\label{p:chi-tate-inv-gen}

For $\sC \in H\mod_{weak}$, $\sC^{H_{Tate},w}$ is canonically
$\bZ$-graded with $(\sC \otimes \Vect_{\chi_{Tate}})^{H,w}$
as its degree $1$ component.

\end{prop}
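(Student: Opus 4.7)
The proposition is essentially a repackaging of the chain of identifications carried out in \S \ref{ss:tate-tw-inv}; my plan is to formalize those steps into a proof.

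First, I would note that by Theorem \ref{t:weak} applied to the central subgroup $\bG_m \subset H_{Tate}$, the restriction functor $\Res^{H_{Tate}}_{\bG_m}: H_{Tate}\mod_{weak} \to \bG_m\mod_{weak}$ admits a $\DGCat_{cont}$-linear right adjoint commuting with colimits, and similarly the restriction $\Res: H\mod_{weak} \to H_{Tate}\mod_{weak}$ has a $\bG_m$-invariants adjoint since the extension $1 \to \bG_m \to H_{Tate} \to H \to 1$ is split over compact opens. I would then factor invariants as
\[
\sC^{H_{Tate},w} \simeq \bigl(\Res(\sC)^{\bG_m,w}\bigr)^{H,w},
\]
where the outer invariants are taken with respect to the residual genuine $H$-action on the $\bG_m$-invariants.

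The key step is to identify $\Res(\sC)^{\bG_m,w}$ inside $H\mod_{weak}$. For $\sC = \triv(\Vect)$, the target is $\Vect^{\bG_m,w}$, which since $\Rep(\bG_m) = \bigoplus_{n \in \bZ} \Vect_{(n)}$ decomposes canonically as $\bigoplus_{n \in \bZ} (\text{degree } n \text{ character})$. Proposition-Construction \ref{pc:chi-tate} identifies the degree $n$ character on restriction to $H_{Tate}$ with $\Res(\Vect_{\chi_{Tate}}^{\otimes n})$; pulling this identification back across the $\Res$--$(-)^{\bG_m,w}$ adjunction gives the canonical isomorphism in $H\mod_{weak}$
\[
\bigoplus_{n \in \bZ} \Vect_{\chi_{Tate}}^{\otimes n} \isom \Vect^{\bG_m,w}.
\]
For general $\sC$, I would then tensor this identity on the left with $\sC$ over the symmetric monoidal structure on $H\mod_{weak}$ from \S \ref{ss:hmod-sym-mon}. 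The projection formula — i.e., the fact that $(-)^{\bG_m,w} \circ \Res$ commutes with tensoring by objects of $H\mod_{weak}$, which in turn follows because $\Res$ is symmetric monoidal and $(-)^{\bG_m,w}$ is $\DGCat_{cont}$-linear and hence $H\mod_{weak}$-linear after unwinding — yields
\[
\bigoplus_{n \in \bZ} \bigl(\sC \otimes \Vect_{\chi_{Tate}}^{\otimes n}\bigr) \isom \Res(\sC)^{\bG_m,w} \in H\mod_{weak}.
\]
Finally, applying the functor $(-)^{H,w}$ to this decomposition and commuting it with the direct sum (which is legitimate since $(-)^{H,w}$ is $\DGCat_{cont}$-linear, hence preserves small coproducts) produces the $\bZ$-grading
\[
\sC^{H_{Tate},w} \simeq \bigoplus_{n \in \bZ} \bigl(\sC \otimes \Vect_{\chi_{Tate}}^{\otimes n}\bigr)^{H,w},
\]
with the degree $1$ summand $(\sC \otimes \Vect_{\chi_{Tate}})^{H,w}$ as claimed.

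The only nontrivial point is the projection formula step: verifying that the $\DGCat_{cont}$-linear adjunction $\Res \dashv (-)^{\bG_m,w}$ is in fact linear over the symmetric monoidal category $H\mod_{weak}$, so that the computation for $\triv(\Vect)$ propagates to arbitrary $\sC$. I expect this to follow by rewriting the adjunction in terms of the Hecke presentation of \S \ref{s:weak-ind} and invoking rigidity of $\Rep(\bG_m)$, but it is the one place where homotopical care is needed rather than a direct appeal to prior results.
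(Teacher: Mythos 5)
Your proposal is correct and follows essentially the same route as the paper's own argument in \S\ref{ss:tate-tw-inv}: identify $\Vect^{\bG_m,w} \simeq \oplus_{n \in \bZ} \Vect_{\chi_{Tate}}^{\otimes n}$ in $H\mod_{weak}$ via the adjunction maps and Proposition-Construction \ref{pc:chi-tate}, tensor with $\sC$, and then pass to $H$-invariants, commuting them with the coproduct. The projection-formula point you flag is exactly what the paper takes for granted in the step ``on tensoring, \ldots{} we obtain,'' justified as you suggest by Theorem \ref{t:weak} for $\bG_m$ and dualizability of $\Vect$ over the rigid category $\Rep(\bG_m)$, so your extra care there is harmless but not a different argument.
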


\subsection{}

We remark briefly on another interpretation of the
above results. We allow ourselves to be slightly imprecise here
in speaking about $\bB \bG_m$-actions on categories
on equal footing with genuine actions of Tate group indschemes,
although this is not formally allowed in the theory developed
in \S \ref{s:weak-ind} (though one could suitably extend the
theory without difficulty).

By fiat, genuine $\bB \bG_m$-actions on $\sC \in \DGCat_{cont}$
are the same as naive ones, i.e., $\IndCoh^*(\bB \bG_m)$-actions.
As $\IndCoh^*(\bB \bG_m) = \QCoh(\bZ)$ with convolution on the
left corresponding to tensor products on the right,
such a datum is equivalent to a $\bZ$-grading 
$\sC = \oplus_{n \in \bZ} \sC_{(n)}$. Here
$\sC_{(0)} = \sC^{\bB \bG_m,w}$.

Note that the $\bB \bG_m$ on $\Vect$ constructed in 
\S \ref{ss:bg_m-std} has $\Vect = \Vect_{(-1)}$, i.e., it
is $\Vect$ graded in pure degree $-1$. For simplicity,
we denote this object by $\Vect_{(-1)} \in \bB \bG_m\mod_{weak}$.

We have a fiber sequence of groups:

\[
H_{Tate} \to H \xar{\vareps_{Tate}} \bB \bG_m
\]

\noindent as $\delta_{Tate}$ is assumed to be $0$.
By design, the pullback of $\Vect_{(-1)}$ along
$\vareps_{Tate}$ is $\Vect_{\chi_{Tate}}$.

Therefore, for $\sC$ with a genuine (or naive) action of $H$,
$\bB \bG_m$ acts on $\sC^{H_{Tate},w}$, i.e., we obtain
a grading $\sC^{H_{Tate},w} = 
\oplus_{n \in \bZ} \sC_{(n)}^{H_{Tate},w}$.
We then have:

\[
(\sC \otimes \Vect_{\chi_{Tate}})^{H,w} = 
\Big(\big(\oplus_{n \in \bZ} \sC_{(n)}^{H_{Tate},w}\big) \otimes 
\Vect_{(-1)} \Big)^{\bB \bG_m,w} =
\sC_{(1)}^{H_{Tate},w}
\]

\noindent as desired.

\subsection{Representations of Tate group indschemes}

The following result describes the major structures of
$\Rep(H)$.

\begin{prop}\label{p:rep-tate}

Let $H$ be polarizable.

\begin{enumerate}

\item\label{i:rep-1} $\Rep(H)$ is compactly generated.

\item\label{i:rep-2} Suppose $H$ is a classical indscheme.
There is a unique $t$-structure
on $\Rep(H)$ such that for any compact open
subgroup $K \subset H$, the (conservative) forgetful functor
$\Rep(H) \to \Rep(K)$ is $t$-exact.

\item\label{i:rep-3} Suppose that $H$ is of 
Harish-Chandra type (c.f. Example \ref{e:hc}) 
and formally smooth. Then $\Rep(H)^+$ is the bounded below
derived category of $\Rep(H)^{\heart}$.

\end{enumerate}

\end{prop}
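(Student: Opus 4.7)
The plan is to work through the Hecke-theoretic realization of $\Rep(H)$. Fix a polarization $K \subset H$; under the equivalence $H\mod_{weak}\simeq \sH_{H,K}^{w}\mod$ from Proposition-Construction \ref{pc:weak}, the trivial object $\triv(\Vect)$ corresponds to $\Rep(K) \in \sH_{H,K}^{w}\mod$, yielding
\[
\Rep(H) \;=\; \TwoHom_{\sH_{H,K}^{w}\mod}(\Rep(K), \Rep(K)).
\]
Since $\sH_{H,K}^w$ is rigid monoidal by Lemma \ref{l:pol-rigid}, $\Rep(K)$ is dualizable as an $\sH_{H,K}^w$-module, and formula \eqref{eq:vph} rewrites the above as $\Rep(K)^{\vee} \otimes_{\sH_{H,K}^w} \vph_{\sH_{H,K}^w,*}(\Rep(K))$. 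For part \eqref{i:rep-1}, I would then invoke the standard fact (\cite{dgcat}) that tensor products over a compactly generated rigid monoidal DG category preserve compact generation; since both factors here are compactly generated, so is $\Rep(H)$.

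For part \eqref{i:rep-2}, the first step is to verify conservativity of $\Res:\Rep(H) \to \Rep(K)$. Under the Hecke identification this is evaluation of an $\sH_{H,K}^w$-linear endofunctor $F$ of $\Rep(K)$ at the trivial representation $k$; the monoidal embedding $\Rep(K) \hookrightarrow \sH_{H,K}^w$ gives $F(V) = V \star F(k)$, forcing $F = 0$ whenever $F(k) = 0$. I would then define $\Rep(H)^{\leq 0} := \Res^{-1}(\Rep(K)^{\leq 0})$; closure of this subcategory under colimits and extensions is automatic from continuity and exactness of $\Res$, and the resulting $t$-structure exists by a standard presentable adjoint functor argument. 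That $\Rep(H)^{\geq 1} = \Res^{-1}(\Rep(K)^{\geq 1})$ amounts to right-$t$-exactness of $\ind_K^{H,w}$, which in turn reduces via base change to a vanishing statement for convolution kernels on $K\backslash H/K$ in positive degrees; this is where classicality of $H$ is essential. Independence of the choice of $K$ then reduces to $t$-exactness of $\Rep(K_2) \to \Rep(K_1)$ for nested compact open subgroups $K_1 \subset K_2$, which follows from \S \ref{ss:gp-sch-funct} applied to inverse-limit presentations of the $K_i$ as classical affine algebraic groups.

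For part \eqref{i:rep-3}, I would apply Proposition \ref{p:alg-vs-cats} to the forgetful functor $\Oblv:\Rep(H)^+ \to \Vect^+$, factored through $\Rep(K_0)^+$ for $K_0 = H^{red}$ the canonical polarization. This functor is $t$-exact by part \eqref{i:rep-2} and conservative on bounded below objects by reduction to the algebraic-group case. The main obstacle is showing that the associated convergent connective $\overset{\rightarrow}{\otimes}$-algebra is classical, i.e., lies in $\Pro\Vect^{\heart}$. My plan is to present $H$ as an extension of $K_0$ by a formal completion $\widehat{H}^u$ at the identity, which under the formal-smoothness hypothesis is modeled by the exponential of a pro-nilpotent Lie algebra $\fh^u$; the pro-representing object for $\Oblv$ then admits an explicit PBW-type description along the lines of Examples \ref{e:pro-algebras} and \ref{e:lie}, with underlying pro-vector space in cohomological degree zero. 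Formal smoothness is the key geometric input, ruling out the higher derived corrections that would otherwise appear in the Chevalley-Eilenberg-type computation of the algebra; pinning this down rigorously is the hardest step.
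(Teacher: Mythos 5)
Your part \eqref{i:rep-1} is correct and takes a genuinely different route from the paper: you realize $\Rep(H)$ as a relative tensor product over the rigid Hecke category $\sH_{H,K}^w$ and invoke compact generation of such tensor products, whereas the paper simply notes that $\Oblv:\Rep(H)\to\Rep(K)$ is conservative and, by ind-properness of $H/K$, admits a continuous left adjoint $\Av_!^w$, so that the objects $\Av_!^w(V)$ for $V \in \Rep(K)^c$ compactly generate; both arguments work. For part \eqref{i:rep-2}, your reduction is also the paper's: existence, uniqueness and $t$-exactness for all compact opens follow once the monad $\Oblv\Av_!^w$ on $\Rep(K)$ is right $t$-exact. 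But the step you defer is the entire content of \eqref{i:rep-2}, and the statement you propose to reduce it to \textemdash{} ``a vanishing statement for convolution kernels on $K\backslash H/K$ in positive degrees'' \textemdash{} would not suffice as stated: connectivity of the kernel does not make convolution right $t$-exact, since the monad is $V \mapsto \Gamma^{\IndCoh}(H/K,\sE_V \otimes \omega_{H/K})$ and pushforward along the proper, non-affine $H/K$ is only left $t$-exact in general (higher coherent cohomology is exactly the issue). The paper's mechanism is: write $H/K = \colim_i S_i$ with $S_i$ proper \emph{classical} schemes (this is where the polarization and classicality of $H$ enter), and apply Serre duality, $\Gamma^{\IndCoh}(S_i,\sE_V|_{S_i}\otimes\omega_{S_i}) \simeq \Gamma(S_i,\sE_V^{\vee}|_{S_i})^{\vee}$, which is connective precisely because coherent cohomology of a classical scheme is coconnective. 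Without this (or an equivalent) argument, \eqref{i:rep-2} is not established.

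For part \eqref{i:rep-3}, your plan \textemdash{} apply Proposition \ref{p:alg-vs-cats} and show the pro-representing $\overset{\rightarrow}{\otimes}$-algebra is classical via a PBW-type analysis for the pair $(\fh,K_0)$ \textemdash{} is workable in principle (it is close in spirit to Lemma \ref{l:hmod}, which the paper proves later \emph{using} this proposition), but you explicitly leave the decisive step open, and that step again reduces to showing the monad $\Oblv\Av_!^w$ for $K_0 = H^{red}$ is $t$-exact, so that the relevant induced pro-objects stay in the heart: right $t$-exactness is \eqref{i:rep-2}, and left $t$-exactness holds because, by formal smoothness and the Harish-Chandra hypothesis, $H/K_0$ is an ind-object of affine infinitesimal schemes, so $\Gamma^{\IndCoh}$ there has no higher cohomology. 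The paper avoids computing the algebra altogether: it treats $H = K_0$ directly (the objects $\Av_*^w(V)$, $V \in \Vect^{\heart}$, give enough injectives in $\Rep(K_0)^{\heart}$ with $\ul{\Hom}$ concentrated in degree $0$) and then transports ``bounded below derived category of the heart'' along the $t$-exact, conservative, monadic adjunction $(\Av_!^w,\Oblv)$ via the elementary d\'evissage of Lemma \ref{l:monad-der-ab}. I would either adopt that bootstrap, or, if you keep your route, actually supply the heart-exactness of the induced pro-objects and the $(\fh,K_0)$-PBW statement; as written, the hardest claims in both \eqref{i:rep-2} and \eqref{i:rep-3} are asserted rather than proved.
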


\begin{proof}

Let $K \subset H$ be a polarization.

For any $\sC \in H\mod_{weak}$, 
the forgetful functor $\sC^{H,w} \to \sC^{K,w}$ is conservative
and admits a continuous left adjoint 
$\Av_!^w:\sC^{K,w} \to \sC^{H,w}$. Indeed, the former property
is true for any compact open subgroup while the latter
is true by ind-properness of $H/K$.
Therefore, if $\sC^{K,w}$ is compactly generated,
then $\sC^{H,w}$ is compactly generated. 
Applying this for $\sC = \Vect$ gives \eqref{i:rep-1}.

Next, in the setting of \eqref{i:rep-2}, 
observe that it suffices to show 
$\Oblv\Av_!^w:\Rep(K) \to \Rep(K)$ 
is right $t$-exact. Indeed, we are reduced to showing
this by the monadicity of $\Oblv$ shown above. (We remark
that $t$-exactness of the restriction functor to some
compact open subgroup clearly implies the same for any compact
open subgroup.)

Because $H$ is classical,
the same is true of $H/K$, i.e., we can write 
$H/K = \colim_i S_i$ a filtered
colimit of classical proper $k$-schemes.

Suppose $V \in \Rep(K)^{\heart}$
is finite-dimensional. It suffices to show 
that for such $V$, $\Oblv\Av_!^w(V) \in \Rep(K)^{\leq 0}$,
or equivalently, that the underlying vector space of this
$K$-representation is in $\Vect^{\leq 0}$.
Let $\sE_V \in \QCoh(H/K)$
be the corresponding (naively $H$-equivariant) vector bundle.
Then:

\[
\Oblv\Av_!^w(V) = 
\Gamma^{\IndCoh}(H/K,\sE_V \otimes \omega_{H/K}) = 
\underset{i}{\colim} \, 
\Gamma^{\IndCoh}(S_i,\sE_V|_{S_i} \otimes \omega_{S_i}) =
\Gamma(S_i,\sE_V^{\vee}|_{S_i})^{\vee}.
\]

\noindent We have
$\Gamma(S_i,\sE_V^{\vee}|_{S_i}) \in \Vect^{\geq 0}$
as $S_i$ is classical, so we obtain the claim by dualizing.

Finally, in the setting of \eqref{i:rep-3}, we suppose
$K$ is chosen so $H$ is formally complete along it;
note that the above argument shows that $\Oblv\Av_!^w$
is $t$-exact by formal smoothness of $H/K$.

In the case $H = K$, the fact that
$\Rep(K)^+$ is the bounded below derived category of its
heart is standard. Any object $\Av_*^w(V)$ for 
$V \in \Vect^{\heart}$ is injective in $\Rep(K)^{\heart}$.
Moreover, any object of $\Rep(K)^{\heart}$ admits
an injective resolution by such objects. Finally, 
for $W \in \Rep(K)^{\heart}$ and $V$ as above,
$\ul{\Hom}_{\Rep(K)}(W,\Av_*^w(V)) = \ul{\Hom}_{\Vect}(W,V)$
is concentrated in cohomological degree $0$. These
observations imply the claim. 

In general, the argument follows by 
Lemma \ref{l:monad-der-ab} below
(or see a variant of this argument
in \cite{whit} Lemma A.18.1).

\end{proof}

We used the following result above.

\begin{lem}\label{l:monad-der-ab}

Suppose $\sC,\sD \in \DGCat$ are equipped with $t$-structures.
compatible with filtered colimits.
Let $G:\sC \to \sD \in \DGCat_{cont}$ 
be a conservative, $t$-exact functor with a $t$-exact left
adjoint $F$. 

Suppose $\sD^+$ is the bounded below derived category of
$\sD^{\heart}$. Then $\sC^+$ is the bounded below derived category
of $\sC^{\heart}$.

\end{lem}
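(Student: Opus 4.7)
The plan is to exhibit enough DG-injective objects in $\sC^{\heart}$ and then invoke the standard criterion identifying the bounded-below subcategory of a DG category with the bounded-below derived category of its heart. First, since $\sC$ and $\sD$ are presentable and $G$ is continuous, $G$ admits a right adjoint $G^R$ by the adjoint functor theorem; as $G$ is $t$-exact, $G^R$ is left $t$-exact, so $G^R(\sD^{\geq 0}) \subset \sC^{\geq 0}$. On hearts, $F^{\heart} \dashv G^{\heart}$ are mutually adjoint exact functors between abelian categories with $G^{\heart}$ conservative; it follows that $\sC^{\heart}$ is Grothendieck abelian (filtered colimits exist and are exact by compatibility of the $t$-structure with filtered colimits, reflected through the $t$-exact continuous conservative $G^{\heart}$ from the Grothendieck abelian category $\sD^{\heart}$).

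Next, I would construct DG-injectives in $\sC^{\heart}$ by transferring injectives from $\sD^{\heart}$ via $G^R$. The key claim is: for any injective $J \in \sD^{\heart}$, the object $G^R(J)$ lies in $\sC^{\heart}$ and satisfies $\ul{\Hom}_{\sC}(X, G^R(J)) = 0$ for all $X \in \sC^{>0}$. The hypothesis $\sD^+ \simeq D^+(\sD^{\heart})$ implies that $\ul{\Hom}_{\sD}(N, J) = 0$ for any $N \in \sD^{>0}$: the case $N \in \sD^{[1,m]}$ bounded follows by induction on $m$ using truncation triangles and the vanishing $\Ext^i_{\sD^{\heart}}(-, J) = 0$ for $i > 0$, and the general case follows by writing $N \simeq \colim_m \tau^{\leq m} N$ (right completeness of $\sD^+$) and converting to a limit of zero Hom-complexes. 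Granted this vanishing, adjunction and $t$-exactness of $G$ give $\ul{\Hom}_{\sC}(X, G^R(J)) = \ul{\Hom}_{\sD}(G(X), J) = 0$ for every $X \in \sC^{>0}$. Applying this with $X = \tau^{>0} G^R(J) \in \sC^{>0}$ and chasing the truncation triangle for $G^R(J) \in \sC^{\geq 0}$ then forces $\tau^{>0} G^R(J) = 0$, so $G^R(J) \in \sC^{\heart}$.

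Finally, I would show that these DG-injectives suffice and conclude. For $M \in \sC^{\heart}$, choose a monomorphism $G^{\heart}(M) \hookrightarrow J$ in $\sD^{\heart}$ with $J$ injective; by adjunction this gives a map $M \to G^R(J)$ in $\sC^{\heart}$ whose image under $G^{\heart}$, post-composed with the counit $G^{\heart} G^R(J) \to J$, recovers the chosen monomorphism. Exactness and conservativity of $G^{\heart}$ then force $M \to G^R(J)$ to be a monomorphism. With such DG-injectives in hand, the equivalence $D^+(\sC^{\heart}) \isom \sC^+$ follows by the same argument as in the $H=K$ case of the proof of Proposition \ref{p:rep-tate}\eqref{i:rep-3} (and \cite{whit} Lemma A.18.1): any $M \in \sC^{\heart}$ admits an injective resolution by objects of the form $G^R(J)$, Hom-complexes in $\sC^+$ between objects of $\sC^{\heart}$ and such DG-injectives are concentrated in degree zero, and the standard comparison functor is $t$-exact and essentially surjective onto $\sC^+$ by right completeness. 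The main obstacle in the argument is the vanishing $\ul{\Hom}_{\sD}(N, J) = 0$ for $N \in \sD^{>0}$ and $J$ injective, which is the only place one uses the hypothesis $\sD^+ \simeq D^+(\sD^{\heart})$ in an essential way; the rest of the proof is a routine execution of this input.
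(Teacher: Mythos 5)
Your overall strategy (transfer injectives from $\sD^{\heartsuit}$ along the right adjoint $G^R$ and then invoke the ``enough derived-injectives'' criterion) has a genuine gap at its central step. First, the vanishing claim is false as stated: for $N \in \sD^{>0}$ and $J \in \sD^{\heartsuit}$ injective, the full complex $\ul{\Hom}_{\sD}(N,J)$ does not vanish --- only its cohomologies in degrees $\geq 0$ do, while e.g. $H^{-1}\ul{\Hom}_{\sD}(N,J) \supset \Hom_{\sD^{\heartsuit}}(H^1 N, J)$ is typically nonzero (take $N = M[-1]$). Second, even with the corrected statement ($\ul{\Hom}_{\sD}(N,J) \in \Vect^{<0}$), your truncation chase does not force $\tau^{>0}G^R(J) = 0$: writing $Y = G^R(J)$, the long exact sequence for $\tau^{\leq 0}Y \to Y \to \tau^{>0}Y$ applied to $\Hom(\tau^{>0}Y,-)$ requires the vanishing of $\Hom_{\sC}(\tau^{>0}Y, \tau^{\leq 0}Y[1])$, i.e.\ of maps from a coconnective object to a connective one, which a $t$-structure never provides. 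More structurally, what you need about $G^R(J)$ (that it lies in $\sC^{\heartsuit}$, or that some truncation of it is derived-injective) amounts to controlling $\Ext^{\geq 2}_{\sC}$ between objects of $\sC^{\heartsuit}$; but comparing $\Ext_{\sC}$ with $\Ext_{\sC^{\heartsuit}}$ is exactly the content of the lemma, so as written the argument is circular. (A posteriori, once the lemma is known, $G^R(J)$ does lie in the heart by a hyper-Ext spectral sequence argument --- but that cannot be used as an ingredient.)

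For comparison, the paper avoids the right adjoint entirely and argues by dimension shifting with the \emph{left} adjoint: for $I \in \sC^{\heartsuit}$ injective it proves $\Ext^i_{\sC}(\sF,I) = 0$ for all $\sF \in \sC^{\heartsuit}$ and $i>0$ by induction on $i$, with base case $i=1$ automatic since $\Ext^1_{\sC} = \Ext^1_{\sC^{\heartsuit}}$. The induction step uses that the counit $FG(\sF) \to \sF$ is an epimorphism in $\sC^{\heartsuit}$ (it splits after applying the conservative, $t$-exact $G$), together with the adjunction identity $\Ext^{i+1}_{\sC}(FG(\sF),I) = \Ext^{i+1}_{\sD}(G(\sF),G(I))$, which vanishes because $G$ preserves injectives (its left adjoint $F$ is exact) and $\sD^+ \simeq D^+(\sD^{\heartsuit})$. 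If you want to salvage your write-up, the part worth keeping is the correct observation that $\Ext^n_{\sC}(M, G^R(J)) = \Ext^n_{\sD}(G(M),J) = 0$ for $M \in \sC^{\heartsuit}$ and $n \geq 1$; but turning that into the statement of the lemma still requires something like the paper's inductive comparison of $\Ext$'s, so the detour through $G^R$ does not buy a shortcut.
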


\begin{proof}

Let $I \in \sC^{\heart}$ be an injective object. We need to show
that for any $\sF \in \sC^{\heart}$, 
$\ul{\Hom}_{\sC}(\sF,I) \in \Vect^{\heart}$.
Clearly this complex is in degrees $\geq 0$. 
We will show by induction on $i>0$ that $H^i\ul{\Hom}_{\sC}(\sF,I)  = 
\Ext_{\sC}^i(\sF,I)$ vanishes for all $\sF$.
For $i = 1$, we have  
$\Ext_{\sC}^1(\sF,I) = \Ext_{\sC^{\heart}}^1(\sF,I) = 0$, giving the
base case. Suppose the result is true for $i \geq 1$, and we will deduce
it for $i+1$.

First, note that the counit map 
$FG(\sF) \to \sF \in \sC^{\heart}$ is an epimorphism.
Indeed, we can check this after applying the conservative, $t$-exact
functor $G$, and then the map splits.

Let $\sF_0$ be the kernel of this counit. We obtain an exact sequence:

\[
\Ext_{\sC}^i(\sF_0,I) \to \Ext_{\sC}^{i+1}(\sF,I) \to 
\Ext_{\sC}^{i+1}(FG(\sF),I) = \Ext_{\sD}^{i+1}(G(\sF),G(I)).
\]

\noindent The first term vanishes by induction. The last term vanishes
because $G:\sC^{\heart} \to \sD^{\heart}$ admits a $t$-exact left
adjoint so preserves injectives, and by assumption on $\sD$.
This gives the claim.

\end{proof}

Combining Propositions \ref{p:rep-tate}, 
\ref{p:chi-tate-inv-naive} and \ref{p:chi-tate-inv-gen},
we obtain:

\begin{cor}

For polarizable $H$, the categories
$\Rep_{\pm Tate}(H)$ are compactly generated.
If $H$ is classical, there is a unique compactly generated
$t$-structure on $\Rep_{\pm Tate}(H)$ for which the
forgetful functor to $\Rep(K)$ is $t$-exact
for any compact open subgroup $K \subset H$
(using Proposition \ref{p:chi-restr}).
The category $\Rep_{Tate}(H)^+$ (resp. $\Rep_{-Tate}(H)^+$) 
maps isomorphically onto
the subcategory of $\Rep(H_{Tate})^+$ consisting of
objects on which the central $\bG_m$ acts by (direct sums of
shifts of) its standard representation (resp. the inverse
to the standard representation).
If $H$ is additionally of Harish-Chandra type,
then $\Rep_{\pm Tate}(H)^+$ is the bounded below derived
category of $\Rep_{\pm Tate}(H)^{\heart}$.

\end{cor}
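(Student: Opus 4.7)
The plan is to deduce each claim from the corresponding property of $\Rep(H_{Tate})$, which is accessible via Proposition \ref{p:rep-tate} applied to $H_{Tate}$; note that $H_{Tate}$ is itself a polarizable Tate group indscheme by the corollary following Proposition-Construction \ref{pc:tate-cpt-open}, that $H_{Tate}$ is classical whenever $H$ is (it is a central $\bG_m$-extension of $H$), and that $H_{Tate}$ inherits the Harish-Chandra type and formal smoothness hypotheses from $H$. Applying Proposition \ref{p:chi-tate-inv-gen} with $\sC = \Vect$ and unwinding the definition $\Rep_{\pm Tate}(H) \coloneqq (\Vect_{\chi_{Tate}}^{\otimes \pm 1})^{H,w}$ gives a canonical $\bZ$-grading
\[
\Rep(H_{Tate}) \simeq \bigoplus_{n \in \bZ} (\Vect_{\chi_{Tate}}^{\otimes n})^{H,w} \in \DGCat_{cont},
\]
whose $n = \pm 1$ summands are precisely $\Rep_{\pm Tate}(H)$, and which encodes the action of the central $\bG_m \subset H_{Tate}$ by $\bZ$-weights.

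Compact generation of $\Rep_{\pm Tate}(H)$ is then immediate: $\Rep(H_{Tate})$ is compactly generated by Proposition \ref{p:rep-tate}\eqref{i:rep-1} applied to $H_{Tate}$, and direct summands of compactly generated DG categories are compactly generated. For the $t$-structure, I will assume $H$ is classical and apply Proposition \ref{p:rep-tate}\eqref{i:rep-2} to $H_{Tate}$: this produces the unique $t$-structure on $\Rep(H_{Tate})$ that is $t$-exact relative to every forgetful functor $\Rep(H_{Tate}) \to \Rep(K')$ for $K' \subset H_{Tate}$ compact open. Given $K \subset H$ compact open, its preimage $K_{Tate} \subset H_{Tate}$ is a central $\bG_m$-extension of $K$, which splits canonically by Proposition-Construction \ref{pc:tate-cpt-open}; hence $K_{Tate} \simeq K \times \bG_m$ and $\Rep(K_{Tate}) \simeq \bigoplus_n \Rep(K)$, the $t$-structure being given summand-wise. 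In particular the $t$-structure on $\Rep(K_{Tate})$ respects the $\bG_m$-grading, so by compatibility over each compact open subgroup the $t$-structure on $\Rep(H_{Tate})$ respects the $\bG_m$-weight decomposition and restricts to a (compactly generated) $t$-structure on each $\Rep_{\pm Tate}(H)$ for which the forgetful functor to $\Rep(K)$ supplied by Proposition \ref{p:chi-restr} is $t$-exact; uniqueness is clear since this functor is conservative.

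The identification of $\Rep_{\pm Tate}(H)^+$ with the subcategory of $\Rep(H_{Tate})^+$ on which the central $\bG_m$ acts by (shifts of) the standard character (resp.\ its inverse) now follows from the direct sum decomposition combined with the compatibility of $t$-structures above; as a cross-check, the analogous statement at the level of naive invariants is the content of Proposition \ref{p:chi-tate-inv-naive}. Finally, when $H$ is of Harish-Chandra type and formally smooth, so is $H_{Tate}$, and Proposition \ref{p:rep-tate}\eqref{i:rep-3} identifies $\Rep(H_{Tate})^+$ with the bounded below derived category of its heart; passing to the direct summand of weight $\pm 1$ (which is also a direct summand in the heart, by the $t$-exactness of the $\bG_m$-weight projection) yields the same statement for $\Rep_{\pm Tate}(H)^+$. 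The principal point requiring care is the compatibility of the $\bG_m$-weight decomposition with the $t$-structure on $\Rep(H_{Tate})$; this is precisely what Proposition-Construction \ref{pc:tate-cpt-open} secures, by reducing the question at each compact open to the evident splitting $\Rep(K_{Tate}) = \bigoplus_n \Rep(K)$.
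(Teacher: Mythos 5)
Your proof is correct and follows essentially the same route as the paper, whose entire argument is the phrase ``combining Propositions \ref{p:rep-tate}, \ref{p:chi-tate-inv-naive} and \ref{p:chi-tate-inv-gen}'': you spell out exactly this combination, transferring the conclusions of Proposition \ref{p:rep-tate} (applied to $H_{Tate}$) to the weight-$\pm 1$ summands via the $\bZ$-graded decomposition of $\Rep(H_{Tate})$ and the splittings over compact open subgroups from Proposition-Construction \ref{pc:tate-cpt-open} and Proposition \ref{p:chi-restr}. The one small attribution to tighten is polarizability of $H_{Tate}$, which is not stated in the corollary following Proposition-Construction \ref{pc:tate-cpt-open}; it holds because the preimage $K \times \bG_m$ of a polarization $K \subset H$ is a polarization of $H_{Tate}$, since $H_{Tate}/(K \times \bG_m) = H/K$ is ind-proper.
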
 

\subsection{Passage to Lie algebras}\label{ss:sinf-lie}

Let $H$ be a Tate group indscheme of 
Harish-Chandra type. We assume $H$ is polarizable
in what follows (although most of the discussion generalizes 
to the non-polarizable case by replacing $H$ with
its formal completion along some compact open subgroup).

Define $\fh\mod \coloneqq \Vect^{\exp(\fh),w}$.
Similarly, define $\fh_{Tate}\mod$
(resp. $\fh_{-Tate}\mod$) as
$(\Vect_{\chi_{Tate}})^{\exp(\fh),w}$
(resp. $(\Vect_{\chi_{-Tate}})^{\exp(\fh),w}$).\footnote{The
notation is potentially confusing. There is a central
extension $\fh_{Tate}$ around (the Lie algebra of $H_{Tate}$),
and we are in effect considering modules over it on which
the central element $1 \in k \subset \fh_{Tate}$ acts by
the identity (or minus the identity), of course in a suitable
derived categorical sense. 

This is a somewhat standard abuse, and we hope that it does
not cause confusion. To be clear: we will never consider
all modules over the Tate Lie algebra $\fh_{Tate}$.}

\begin{defin}

For $K \subset H$ a fixed compact open subgroup,
the \emph{relative semi-infinite cohomology} functor:

\[
C^{\sinf}(\fh,\fk;-):\fh_{-Tate}\mod \to \Vect \in H\mod
\]

\noindent corresponds to 
$C^{\sinf}(\fh,-):\Vect_{\exp(\fh),w} \to \Vect$ under the
equivalence:

\[
\begin{gathered}
\fh_{-Tate}\mod \coloneqq
(\Vect_{\chi_{-Tate}})^{\exp(\fh),w} 
\overset{Prop. \ref{p:inv-coinv-w/str}}{\simeq} \\
D^!(H) \underset{D^*(H)}{\otimes} 
(\Vect_{\chi_{-Tate}} \otimes \chi_{Tate})_{\exp(\fh),w} \simeq \\
\Vect_{\exp(\fh),w}
\end{gathered}
\]

\noindent using the choice of $K$ both to identify
$\Vect_{\chi_{Tate}}$ with $\chi_{Tate}$ via
Proposition \ref{p:chi-restr} and to
identify $D^!(H)$ with $D^*(H)$ via
\cite{dmod} Construction 6.12.6.

\end{defin}

Note that $H$ acts strongly on $\fh\mod$ 
by the construction of \S \ref{s:strong}.
For $K \subset H$ compact open, 
we have $\fh\mod^K = \Rep(H_K^{\wedge})$ by construction.
In particular, we have $\fh\mod = \colim_K \fh\mod^K$.
Moreover, for $H$ formally smooth,
$\fh\mod^K$ has a canonical $t$-structure by 
Proposition \ref{p:rep-tate}.

\subsection{}

We now suppose that $H$ is formally smooth.
In this case, its Lie algebra $\fh$ is naturally
a Tate Lie algebra in the sense of Example \ref{e:lie},
and we have two possibly conflicting definitions
of $\fh\mod$. However, we claim that they do not in fact
conflict. 

Below, we understand $\fh\mod$ in the sense defined
immediately above, i.e., as $\Vect^{\exp(\fh),w}$.

\begin{lem}\label{l:hmod}

\begin{enumerate}

\item For each pair $K_1 \subset K_2 \subset H$
of compact open subgroups, the (conservative) restriction
functor:

\[
\fh\mod^{K_2} \to \fh\mod^{K_1}
\]

\noindent is $t$-exact. In particular, the colimit
$\fh\mod$ over all such compact open subgroups
admits a canonical $t$-structure.

\item 

The forgetful functor 
$\fh\mod \coloneqq \Vect^{\exp(\fh),w} \to \Vect$
is $t$-exact and conservative on eventually coconnective
subcategories. The corresponding 
$\overset{\rightarrow}{\otimes}$-algebra (as defined
by Proposition \ref{p:alg-vs-cats}) is the
completed universal enveloping algebra of the
Tate Lie algebra $\fh$. Moreover, the compact
generators of $\fh\mod$ correspond to the
renormalization datum specified in 
Example \ref{e:lie}.

\end{enumerate}

\end{lem}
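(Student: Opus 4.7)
Under the standing hypothesis, each formal completion $H_{K_i}^{\wedge}$ is itself a polarizable formally smooth Tate group indscheme of Harish-Chandra type, with $K_i$ as a canonical compact open polarization. By Proposition \ref{p:rep-tate}, $\fh\mod^{K_i} = \Rep(H_{K_i}^{\wedge})$ carries a canonical $t$-structure for which the forgetful functor to $\Rep(K_i)$ is $t$-exact and conservative. For part (1), consider the commutative square
\[
\xymatrix{
\Rep(H_{K_2}^{\wedge}) \ar[r]\ar[d] & \Rep(H_{K_1}^{\wedge})\ar[d] \\
\Rep(K_2) \ar[r] & \Rep(K_1)
}
\]
whose bottom (group scheme restriction) and two verticals are $t$-exact. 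Since the right vertical is additionally conservative, a standard argument with the truncation triangle $\tau^{\leq 0}R(X) \to R(X) \to \tau^{>0}R(X)$ (for $R$ the top horizontal and $X \in \Rep(H_{K_2}^{\wedge})^{\leq 0}$, reducing to showing $V(\tau^{>0}R(X)) = 0$ by chasing cohomological degrees) shows the top horizontal $R$ is $t$-exact. The canonical $t$-structure on $\fh\mod = \colim_K \fh\mod^K$ then follows from \cite{whit} Lemma 5.4.3(1).

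For part (2), the forgetful $\Oblv : \fh\mod \to \Vect$, when precomposed with any structural functor $\iota_K : \fh\mod^K \to \fh\mod$, factors as $\Rep(H_K^{\wedge}) \to \Rep(K) \to \Vect$, a composition of $t$-exact conservative functors. Each $\iota_K$ is $t$-exact by construction of the colimit $t$-structure, so right $t$-exactness of $\Oblv$ is immediate; left $t$-exactness and conservativity on $\fh\mod^+$ will follow from the presentation $\fh\mod^+ \simeq \colim_K \fh\mod^{K,+}$ (the colimit being of presentable DG categories along $t$-exact structural functors), using closure of $\Vect^{\geq 0}$ under filtered colimits and the standard fact that a filtered colimit of conservative functors between compactly generated categories remains conservative on the appropriate eventually coconnective subcategories once all maps involved are $t$-exact.

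For the identification of the $\overset{\rightarrow}{\otimes}$-algebra, apply Proposition \ref{p:alg-vs-cats}: the convergent connective $\overset{\rightarrow}{\otimes}$-algebra $A$ attached to $(\fh\mod^+, \Oblv)$ has underlying pro-vector space $\lim_i \Oblv(\sF_i)$ for any cofinal pro-system $\{\sF_i\}$ corepresenting $\Oblv|_{\fh\mod^+}$ (\S \ref{ss:forget}). Choose a cofinal inverse system of compact open subalgebras $\fk_i \subset \fh$ realized as Lie algebras of prounipotent compact open subgroups $K_i \subset H$, and let $\sF_i \in \fh\mod^{K_i,\heart}$ be the left adjoint to the forgetful functor applied to the trivial representation of $K_i$, so that $\Oblv(\sF_i) = U(\fh) \otimes_{U(\fk_i)} k$ in the heart. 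Compatibility of these left adjoints with the structural functors of the colimit, extractable from Lemma \ref{l:induction}, assembles the images $\{\iota_{K_i}(\sF_i)\}_i$ into a pro-system in $\fh\mod$ corepresenting $\Oblv$; the associated pro-vector space $\lim_i U(\fh) \otimes_{U(\fk_i)} k$ is by definition the completed enveloping algebra of $\fh$, and the $\iota_{K_i}(\sF_i)$ match the compact generators prescribed in Example \ref{e:lie}. The main technical obstacle is verifying that the $\sF_i$, a priori constructed one $K_i$ at a time, actually assemble into a genuine pro-system in $\fh\mod$ corepresenting $\Oblv$ on $\fh\mod^+$; this amounts to compatibility of weak induction with the transition maps in the colimit, which is where Lemma \ref{l:induction} does the real work.
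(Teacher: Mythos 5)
Your part (1) is fine and is essentially the paper's argument (the paper simply quotes Proposition \ref{p:rep-tate}, which already gives $t$-exactness of the forgetful functor from $\Rep(H_{K_2}^{\wedge})$ to $\Rep(K_1)$, and then one uses conservativity and $t$-exactness of $\Rep(H_{K_1}^{\wedge}) \to \Rep(K_1)$ exactly as you do). The genuine gap is in part (2), at the conservativity of $\Oblv:\fh\mod \to \Vect$ on eventually coconnective objects. There is no ``standard fact'' that a filtered colimit of conservative functors remains conservative on eventually coconnective subcategories: indeed each $\fh\mod^{K}\to\Vect$ is conservative on the nose, yet $\Oblv:\fh\mod\to\Vect$ fails to be conservative on unbounded objects, so no purely formal argument with the colimit presentation can work; the content of the statement is precisely what singles out $\fh\mod^{+}$. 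Concretely, writing $V = \colim_K \Oblv\Av_*^{K}(V)$, the terms $\Oblv\,\Oblv\Av_*^{K}(V)\in\Vect$ may all be nonzero while the transition maps a priori annihilate everything in the colimit computing $\Oblv(V)$. The paper's proof supplies exactly the missing input: for $K_1\subset K_2$ and $V\in\fh\mod^{K_1,\geq 0}$ the counit $\Oblv\Av_*^{K_1\to K_2}(V)\to V$ is a \emph{monomorphism on $H^0$} (checked after applying the conservative $t$-exact forgetful functor to $\fk_2\mod^{K_1}$, where it is the inclusion of the maximal integrable piece), hence for $V\in\fh\mod^{\geq 0}$ the maps $H^0(\Oblv\Av_*^{K}(V))\to H^0(V)$ are monomorphisms with filtered colimit $H^0(V)$; exactness of filtered colimits then transports nonvanishing through $\Oblv$. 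Some statement of this kind (or an equivalent) must be proved; your proposal does not contain it.

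Two smaller points. First, the lemma asserts an identification of the corresponding $\overset{\rightarrow}{\otimes}$-\emph{algebra} with the completed enveloping algebra, not merely of the underlying pro-vector space $\lim_i U(\fh)\otimes_{U(\fk_i)}k$; the paper handles the multiplicative structure by comparing with the discrete Lie algebra $\fh^{disc}$ and using that $\ind_{\fk^{disc}}^{\fh^{disc}}(k)\to\ind_{\fk}^{\fh}(k)$ is an isomorphism in $\Vect$, and your sketch should say something at this point rather than declare the identification ``by definition.'' Second, Lemma \ref{l:induction} concerns weak induction for Tate group indschemes and its interaction with $\Oblv_{gen}$; it is not really the statement that assembles the induced modules $\ind_{\fk_i}^{\fh}(k)$ into a pro-system corepresenting $\Oblv$ on $\fh\mod^{+}$ --- what is needed is compatibility of the Lie-algebra induction functors with the structural functors of the colimit of strong invariants and with the averaging right adjoints, which is elementary but should be invoked as such.
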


\begin{proof}

The $t$-exactness of the various restriction functors
is clear from Proposition \ref{p:rep-tate}.

Moreover, for $K_1 \subset K_2 \subset H$ compact
open subgroups and for $V \in \fh\mod^{K_1,\geq 0}$, we claim
that the adjunction map $\Oblv \Av_*^{K_1 \to K_2}(V) \to V$
induces a monomorphism in $\fh\mod^{K_1,\heart}$
upon applying $H^0$. Indeed, we can test this after applying
the (conservative, $t$-exact) 
forgetful functor to $\fk_2\mod^K_1$, where it is evident.

It follows that for any $V \in \fh\mod^{\geq 0}$ and $K$
a congruence subgroup, the adjunction
map $\Oblv \Av_*^K(V) \to V$ gives a monomorphism on $H^0$.
As $V = \colim_K \Oblv \Av_*^K(V)$, this implies 
that $\Oblv:\fh\mod \to \Vect$ is conservative on 
eventually coconnective subcategories.

Now define an object: 

\[
\sP \coloneqq \underset{K}{\lim} \, 
\ind_{\fk}^{\fh}(k) \in \Pro(\fh\mod^{\heart})
\subset \Pro(\fh\mod^+)
\]

\noindent where the notation is understood as follows.
First, the limit is formed in the pro-category,
and is indexed by compact open subgroups $K \subset H$.
Then $k \in \fk\mod$ denotes the trivial representation
and $\ind_{\fk}^{\fh}:\fk\mod \to \fh\mod^K$
is the left adjoint to the forgetful functor.

Then $\sP$ pro-corepresents
the forgetful functor $\fh\mod^+ \to \Vect^+$.
Moreover, under the forgetful functor, $\sP$
maps an object of 
$\Pro(\Vect^{\heart})$. By Proposition \ref{p:alg-vs-cats},
$\fh\mod^+$ is the bounded below derived category of its
heart, and this heart is the category of discrete
modules for $\Oblv(\sP)$ with respect to its natural 
$\overset{\rightarrow}{\otimes}$-algebra structure.

One can identify $\Oblv(\sP)$ with its 
$\overset{\rightarrow}{\otimes}$-algebra structure
as follows. Let $\fh^{disc} \in \LieAlg(\Vect)$
denote the Lie algebra obtained by forgetting the topology
on $\fh$.\footnote{In other words, we pass to the inverse
limit of the pro-vector space underlying $\fh$ and
then apply $H^0$ if for some pathological reason there
are higher cohomology groups.}
We have a canonical map $\fh^{disc}\to \fh$ of Tate Lie algebras,
giving rise to a forgetful functor $\fh\mod \to \fh^{disc}\mod$.
By \cite{grbook}, the (non-topological) algebra
attached to $\fh^{disc}$ is the usual enveloping
algebra $U(\fh^{disc})$. Moreover, the natural map:

\[
\ind_{\fk^{disc}}^{\fh^{disc}}(k) \to \ind_{\fk}^{\fh}(k) \in \Vect
\]

\noindent is an isomorphism. This immediately implies the claim.

Finally, it is immediate from the
constructions to identify the compact generators.

\end{proof}

\begin{cor}\label{c:h-mod-der-cat}

Under the above hypotheses, 
$\fh\mod^+$ is the bounded below derived category
of $\fh\mod^{\heart}$.

\end{cor}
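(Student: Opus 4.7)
The plan is to deduce this directly from Proposition \ref{p:alg-vs-cats} via Lemma \ref{l:hmod}. By that lemma, the pair $(\fh\mod^+, \Oblv|_{\fh\mod^+}: \fh\mod^+ \to \Vect^+)$ satisfies the hypotheses of Proposition \ref{p:alg-vs-cats} (namely: $\Oblv$ is continuous, $t$-exact, and conservative on eventually coconnective subcategories; and the $t$-structure on $\fh\mod$ is compactly generated by eventually coconnective objects, as seen from the description of the compact generators recalled in Example \ref{e:lie}). Moreover, the lemma identifies the corresponding convergent, connective $\overset{\rightarrow}{\otimes}$-algebra as the completed universal enveloping algebra $U(\fh)$. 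By the classicality criterion at the end of Proposition \ref{p:alg-vs-cats}, the corollary therefore reduces to verifying $U(\fh) \in \Pro\Vect^{\heart}$.

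This classicality is essentially already contained in the proof of Lemma \ref{l:hmod}. Indeed, in that proof one constructs the pro-generator $\sP = \lim_K \ind_{\fk}^{\fh}(k) \in \Pro(\fh\mod)$ of the forgetful functor and observes that each $\ind_{\fk}^{\fh}(k)$ lies in $\fh\mod^{\heart}$. Since $\Oblv$ is $t$-exact, $\Oblv(\sP) \in \Pro(\Vect^{\heart})$. By \S \ref{ss:forget}, $\Oblv(\sP)$ is precisely the pro-vector space underlying the $\overset{\rightarrow}{\otimes}$-algebra $U(\fh)$, which therefore lies in $\Pro\Vect^{\heart}$ as required. Alternatively, one checks this concretely via PBW: each $\Oblv(\ind_{\fk}^{\fh}(k)) = U(\fh) \otimes_{U(\fk)} k$ is isomorphic to $\on{Sym}(\fh/\fk) \in \Vect^{\heart}$ after a choice of linear splitting $\fh \simeq \fk \oplus (\fh/\fk)$.

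Since all the substantive work has already been carried out in Lemma \ref{l:hmod}, there is no real obstacle here: the corollary amounts to making explicit the classicality statement implicit in that lemma's proof, combined with the second half of Proposition \ref{p:alg-vs-cats}.
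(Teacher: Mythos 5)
Your proposal is correct and follows the paper's own route: the paper's proof is simply ``Immediate from Lemma \ref{l:hmod} and Proposition \ref{p:alg-vs-cats},'' and your argument just makes explicit how the two combine, namely that Lemma \ref{l:hmod} identifies the corresponding $\overset{\rightarrow}{\otimes}$-algebra as the classical completed enveloping algebra $U(\fh)$ (classicality being visible from the pro-generator $\sP \in \Pro(\fh\mod^{\heart})$), so the classicality criterion at the end of Proposition \ref{p:alg-vs-cats} applies.
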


\begin{proof}

Immediate from Lemma \ref{l:hmod} and 
Proposition \ref{p:alg-vs-cats}.

\end{proof}

\subsection{Classical semi-infinite cohomology}

Let $H$ be a formally smooth polarizable Tate group indscheme.

We let $\fh_{Tate_{std}}$ denote the central extension 
$0 \to k \to \fh_{Tate_{std}} \to \fh \to 0$ of
Tate Lie algebras constructed e.g. in 
\cite{hitchin} \S 7.13. We abuse notation in
letting $\fh_{Tate_{std}}\mod$ 
denote not the category of representations as is, but
the analogue where we impose the requirement that
the central $1 \in k$ act by the identity. We remind
that $\fh_{Tate_{std}}$ is canonically split over
any Lie subalgebra $\fk_0 \subset \fh$ that is a lattice 
(in the usual sense of Tate vector spaces).

By Lemma 19.8.1 from \cite{fg2} and 
Corollary \ref{c:h-mod-der-cat} above, 
for $K \subset H$ a compact open subgroup, we have DG a
functor:

\[
C_{std,0}^{\sinf}(\fh,\fk;-):\fh_{-Tate_{std}}\mod^{+} \to 
\Vect
\] 

\noindent of \emph{standard semi-infinite cohomology}
(defined in terms of Clifford algebras and spin representations)
whose restriction to $\fh_{-Tate_{std}}\mod^{\geq -n}$
commutes with filtered colimits for any $n$.
We also let:

\[
C_{std}^{\sinf}(\fh,\fk;-):\fh_{-Tate_{std}}\mod \to 
\Vect 
\]

\noindent denote the functor obtained by
restricting $C_{std,0}^{\sinf}(\fh,\fk;-)$ to 
$\fh_{-Tate_{std}}\mod^c$ and then ind-extending.

\subsection{}

We will now show 
that the canonical natural transformation:

\[
\eta:C_{std}^{\sinf}(\fh,\fk;-)|_{\fh_{-Tate_{std}}\mod^{+}} \to 
C_{std,0}^{\sinf}(\fh,\fk;-)
\]

\noindent of functors $\fh_{-Tate_{std}}\mod^+ \to \Vect$
is an isomorphism. 
(Combined with Theorem \ref{t:sinf-comparison}
below, this means that $C^{\sinf}(\fh,\fk;-)|_{\fh_{-Tate}\mod}$
may be calculated using the standard semi-infinite complex.)

First, if $\fh = \fk$, this follows immediately
from the fact that compact objects in $\fk\mod$
are closed under truncations 
(c.f. Example \ref{e:functor-ren-trun}).

In general, it is standard that for 
$M \in \fh_{-Tate_{std}}\mod^{\heart}$,  
$C_{std,0}^{\sinf}(\fh,\fk;M)$ has a canonical
increasing filtration indexed by $\bZ^{\geq 0}$ 
with associated graded terms:

\begin{equation}\label{eq:sinf-gr}
\gr_i C_{std,0}^{\sinf}(\fh,\fk;M) =
C^{\dot}(\fk,\Lambda^i(\fh/\fk) \otimes M)[i]
\end{equation}

\noindent for $C^{\dot}(\fk,-)$ denoting the
cohomological Chevalley complex
(see e.g. the proof of Lemma 19.8.1 from \cite{fg2}). 
This is functorial in $M$, so the functor
$C_{std,0}^{\sinf}(\fh,\fk;-):\fh_{-Tate_{std}}\mod^+ \to
\Vect$ upgrades to a functor valued in $\Fil_{\geq 0}\Vect$
the (DG) category of $\bZ^{\geq 0}$-filtered
vector spaces. 

By construction, the functor $C_{std}^{\sinf}(\fh,\fk;-)$
also upgrades to $\Fil_{\geq 0}\Vect$ compatibly
with the above and the map $\eta$. To 
check $\eta$ is an isomorphism, it is enough to do
so on the associated graded level. As the associated
graded functors above factor through the forgetful functor
$\fh_{-Tate_{std}}\mod \to \fk\mod^+$ (c.f. 
\eqref{eq:sinf-gr}), we conclude as above. 

\subsection{}

For the remainder of this section, $H$ is 
a formally smooth polarizable Tate group indscheme
with $\delta_{Tate} = 0$.

We have the following comparison theorem.

\begin{thm}\label{t:sinf-comparison}

For $H$ and $K$ as above,
there is a canonical identification of
$\fh_{Tate}$ with $\fh_{Tate_{std}}$ as central
extensions of $\fh$. Moreover, under this identification,
there is a canonical isomorphism
$C^{\sinf}(\fh,\fk;-) \simeq C_{std}^{\sinf}(\fh,\fk;-)$
of functors $\fh_{-Tate}\mod \simeq \fh_{-Tate_{std}}\mod \to
\Vect$. This pair of identifications is uniquely characterized
by compatibility with the given splittings of these
central extensions over $\fk$ and with the isomorphisms
Lemmas \ref{l:sinf-shapiro-std} and \ref{l:sinf-shapiro} as formulated
and proved below.

\end{thm}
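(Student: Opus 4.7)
The plan is to establish the theorem in two stages: first identifying the two central extensions $\fh_{Tate}$ and $\fh_{Tate_{std}}$ of $\fh$ as extensions canonically split over $\fk$, and then matching the two semi-infinite cohomology functors on a compactly generating subcategory of $\fh_{-Tate}\mod$.

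For the first stage, both extensions come equipped with canonical $\fk$-splittings: the former arises by differentiating the group-level splitting of Proposition-Construction \ref{pc:tate-cpt-open}, while the latter is the classical splitting over a lattice. Any two central extensions of $\fh$ by $k$ split over $\fk$ are classified by the appropriate relative Lie algebra cohomology, so a comparison is determined by matching $2$-cocycles. I would compute the cocycle associated to $\fh_{Tate}$ by unwinding the definition of the character $(\vareps_{Tate},\delta_{Tate})$ produced via Proposition \ref{p:central-extns} from the $H$-action on $\Oblv_{gen}(\chi_{Tate})$, and then extract its infinitesimal behavior using the recalibration factor $\det(\fk_2/\fk_1)[\dim(\fk_2/\fk_1)]$ recorded in the Warning after Proposition \ref{p:chi-restr}. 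This factor precisely encodes the discrepancy between the $\fk_1$- and $\fk_2$-splittings of $\chi_{Tate}$ and, together with the functoriality of $\chi_{Tate}$ in the compact open subgroup, is expected to reproduce the classical Tate $2$-cocycle controlled by the action of $\fh$ on $\fh/\fk$.

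For the second stage, I would invoke the two Shapiro-type lemmas \ref{l:sinf-shapiro-std} and \ref{l:sinf-shapiro} as the decisive input. These identify the semi-infinite cohomology of an induced module $\ind_\fk^\fh(V)$ (appropriately twisted by $\Vect_{\chi_{-Tate}}$) with the underlying $\fk$-module $V$. Since $\fh_{-Tate}\mod$ is compactly generated by such induced modules from $\fk$-representations (Example \ref{e:lie} combined with Lemma \ref{l:hmod}), and since both $C^{\sinf}(\fh,\fk;-)$ and $C_{std}^{\sinf}(\fh,\fk;-)$ are continuous DG functors, it will suffice to match them on these generators. The two Shapiro lemmas, combined with the identification of central extensions from Stage 1, then yield the desired natural isomorphism, with uniqueness enforced by compatibility with the $\fk$-splittings.

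The main obstacle I expect is the first stage: translating the abstract definition of $\chi_{Tate}$, formulated in terms of the rigid-monoidal Serre duality functor $\vph_{\sH_{H,K}^w,*}$ on the Hecke category, into a concrete Lie-algebraic $2$-cocycle. This requires careful bookkeeping of how the canonical trivializations over varying compact open subgroups are related, and how the determinant twist intrinsic to Serre duality on $H/K$ contributes. A secondary subtlety, relevant to the second stage, is ensuring that the normalization of the Shapiro isomorphism in the new formalism matches that of its classical counterpart; this I would address by pinning down the comparison on a single canonical module, such as $\ind_\fk^\fh(k)$, and propagating the identification by functoriality in $\fk$-representations.
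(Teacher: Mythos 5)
There is a genuine gap, and it sits exactly where you flag your "main obstacle." Your Stage 1 proposes to identify $\fh_{Tate}$ with $\fh_{Tate_{std}}$ by unwinding $(\vareps_{Tate},\delta_{Tate})$ into an explicit Lie-algebraic $2$-cocycle and checking it agrees with the classical Tate cocycle, but you never carry this out — you only say the determinant factor from the Warning after Proposition \ref{p:chi-restr} "is expected to reproduce" the classical cocycle. That computation \emph{is} the content of the theorem, and the paper's proof is structured precisely to avoid it. The paper instead forms the Baer sum $\fh_{-Tate+Tate_{std}}$ (canonically split over $\fk$), uses the duality $\bD_{\fh,K}^{\sinf}$ to convert the functor $C_{std}^{\sinf}(\fh,\fk;-)$ into an object $K\in\fh_{-Tate+Tate_{std}}\mod$, and then uses the two Shapiro lemmas to compute $\Oblv(K)=k\in\fk\mod$. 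Since $\Oblv$ is $t$-exact and conservative, $K$ is a one-dimensional representation in the heart, and a one-dimensional representation of a central extension on which the central element acts by the identity is the \emph{same thing} as a splitting of that extension. This single observation produces the identification of central extensions and the identification of the two functors simultaneously, with no cocycle bookkeeping; uniqueness then follows from $\Aut_{\fh\mod}(k)\isom\Aut_{\fk\mod}(k)=k^{\times}$, which your sketch does not actually establish.

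Your Stage 2 also does not close as stated, even granting Stage 1. Knowing that both $C^{\sinf}(\fh,\fk;-)\circ\ind_{\fk}^{\fh_{-Tate}}$ and $C_{std}^{\sinf}(\fh,\fk;-)\circ\ind_{\fk}^{\fh_{-Tate_{std}}}$ are isomorphic to $C^{\dot}(\fk,-)$ identifies the two functors only after composition with induction, i.e.\ gives compatibility only with those morphisms between induced objects that come from $\fk\mod$. By adjunction, $\ul{\Hom}_{\fh_{-Tate}\mod}(\ind_{\fk}(V),\ind_{\fk}(W))$ is strictly larger than $\ul{\Hom}_{\fk\mod}(V,W)$, so "matching on compact generators" in your sense does not determine a natural isomorphism of continuous functors on all of $\fh_{-Tate}\mod$ — you would need coherence over the full subcategory of compact objects, which the Shapiro lemmas alone do not supply. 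The paper's dualization step is what converts the comparison of functors into a comparison of single objects, where the conservative $t$-exact forgetful functor to $\fk\mod$ does suffice; without that (or some substitute argument), your Stage 2 is incomplete.
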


We will prove this result in the remainder of this section.

\subsection{}

First, we review an important property of standard semi-infinite
infinite cohomology.

\begin{lem}\label{l:sinf-shapiro-std}

The composition:

\[
\fk\mod \xar{\ind_{\fk}^{\fh_{-Tate_{std}}}} 
\fh_{-Tate_{std}}\mod \xar{C_{std}^{\sinf}(\fh,\fk;-)}
\Vect 
\]

\noindent is isomorphic to the functor
$C^{\dot}(\fk,-) \coloneqq \ul{\Hom}_{\fk\mod}(k,-):\fk\mod \to \Vect$ of Lie algebra cohomology.

\end{lem}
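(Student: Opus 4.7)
The plan is to prove the lemma by reducing to compact generators and then using the standard filtration on $C_{std,0}^{\sinf}$ together with a Koszul resolution argument.

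First, both functors in the statement commute with filtered colimits: the composite on the left is $C_{std}^{\sinf}(\fh,\fk;-)$ (continuous by definition, being the ind-extension from compact objects) precomposed with a left adjoint, while the right-hand side $C^{\dot}(\fk,-) = \ul{\Hom}_{\fk\mod}(k,-)$ is continuous because the trivial module $k$ is compact in $\fk\mod$ by Example \ref{e:lie}. It therefore suffices to produce a natural isomorphism on a collection of compact generators of $\fk\mod$. Example \ref{e:lie} exhibits such generators as inflations of finite-dimensional representations of finite-dimensional quotients of $\fk$; in particular they lie in $\fk\mod^{\heart}$. For such $V$, PBW puts $M := \ind_{\fk}^{\fh_{-Tate_{std}}}(V)$ in $\fh_{-Tate_{std}}\mod^{\heart}$, hence in the bounded-below subcategory, so we may replace $C_{std}^{\sinf}(\fh,\fk;M)$ by $C_{std,0}^{\sinf}(\fh,\fk;M)$ using the isomorphism $\eta$ established just before the lemma.

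Next, apply the canonical $\bZ^{\geq 0}$-filtration \eqref{eq:sinf-gr} on $C_{std,0}^{\sinf}(\fh,\fk;M)$, whose $i$-th associated graded term is $C^{\dot}(\fk,\Lambda^i(\fh/\fk) \otimes M)[i]$. By a topological PBW, there is a canonical $\fk$-equivariant isomorphism $M \simeq \widehat{\Sym}(\fh/\fk) \otimes V$, where $\fk$ acts diagonally via its adjoint action on $\fh/\fk$ and its given action on $V$, and $\widehat{\Sym}$ denotes the completion induced by the Tate topology on $\fh$. Substituting, the total associated graded is $C^{\dot}\bigl(\fk,\Lambda^{\bullet}(\fh/\fk) \otimes \widehat{\Sym}(\fh/\fk) \otimes V\bigr)$. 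The filtration differential on associated graded is (up to sign) the topological Koszul differential on $\Lambda^{\bullet}(\fh/\fk) \otimes \widehat{\Sym}(\fh/\fk)$, which is a $\fk$-equivariant resolution of the trivial module $k$. The spectral sequence therefore collapses and yields $C^{\dot}(\fk,k \otimes V) = C^{\dot}(\fk,V)$ as the total complex, giving the desired natural isomorphism.

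The main obstacle is the third step: identifying the filtration differential on associated graded with the Koszul differential requires unpacking the Feigin construction of the semi-infinite complex via the Clifford algebra and its spin (vacuum) module with respect to the polarization $\fk \subset \fh$, and checking that the canonical splitting of the Tate central extension over $\fk$ used to define $\ind_{\fk}^{\fh_{-Tate_{std}}}$ agrees with the splitting coming from the Clifford picture. Convergence of the spectral sequence is subsidiary but not automatic, since the filtration is bounded below but the associated graded is not bounded above; in our case it follows from the $\fk$-equivariant acyclicity of the Koszul resolution together with the topological freeness of $\widehat{\Sym}(\fh/\fk)$. An alternative, and perhaps cleaner route, is to identify $C_{std}^{\sinf}(\fh,\fk;-)|^+$ with derived pairing against the vacuum Clifford module $\Vac_{\fk}$, whose restriction to $\fk$ is canonically trivial, and then invoke Frobenius reciprocity; this bypasses the spectral sequence entirely and would also streamline the uniqueness portion of Theorem \ref{t:sinf-comparison}.
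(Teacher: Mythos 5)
Your first paragraph reproduces the paper's own reduction: both sides commute with filtered colimits, so it suffices to treat bounded-below (compact) inputs, where $C_{std}^{\sinf}$ agrees with $C_{std,0}^{\sinf}$ by the isomorphism $\eta$ and the ind-extension construction. (One slip there: the compact generators of $\fk\mod$ supplied by Example \ref{e:lie} are restrictions along $U(\fk) \to U(\fk_i)$ of perfect $U(\fk_i)$-modules, e.g. $U(\fk_i)$ itself, not inflations of finite-dimensional representations; finite-dimensional modules do not generate already for $\fk$ abelian of dimension one. This is harmless for your reduction, since all you use is that the generators are bounded below.)

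The genuine gap is in your second and third paragraphs, i.e. precisely at the bounded-below computation $C_{std,0}^{\sinf}(\fh,\fk;\ind_{\fk}^{\fh_{-Tate_{std}}}(V)) \simeq C^{\dot}(\fk,V)$, which is the entire content of the lemma; the paper does not reprove this but cites \cite{hitchin} Remark 7.13.30 and then concludes by the construction of $C_{std}^{\sinf}$. Your sketch of a direct proof is not yet a proof. First, there is no canonical $\fk$-equivariant isomorphism $\ind_{\fk}^{\fh_{-Tate_{std}}}(V) \simeq \widehat{\Sym}(\fh/\fk) \otimes V$: PBW gives only a $\fk$-stable filtration whose associated graded is $\Sym(\fh/\fk) \otimes V$ (and no completion occurs, since $\fh/\fk$ is discrete and induced modules are discrete), so one must run the argument with the combined semi-infinite and PBW filtrations rather than by substituting such an isomorphism into \eqref{eq:sinf-gr}. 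Second, and more seriously, the identification of the resulting associated-graded differential with the Koszul differential, together with the check that the splitting of the Tate extension over $\fk$ used to define induction agrees with the one implicit in the Clifford/spin construction, is exactly the substance of the statement, and you explicitly set it aside as ``the main obstacle'' rather than carrying it out; the alternative route via pairing against the vacuum Clifford module is likewise only indicated. As written, the proposal must either complete that Clifford-algebra computation in detail or simply quote the reference, as the paper does.
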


\begin{proof}

By \cite{hitchin} Remark 7.13.30, there is a canonical
isomorphism of between the composite functor:

\[
\fk\mod^+ \xar{\ind_{\fk}^{\fh_{-Tate_{std}}}} 
\fh_{-Tate_{std}}\mod^+ 
\xar{C_{std,0}^{\sinf}(\fh,\fk;-)} \to \Vect.
\]

\noindent with $C^{\dot}(\fk,-)|_{\fk\mod^+}$. 
Now the result follows by construction of $C_{std}^{\sinf}$.

\end{proof}

\subsection{}

We now establish similar results for $C^{\sinf}(\fh,\fk;-)$.

First, observe that we have a duality functor:

\[
\bD_{\fh,K}^{\sinf}:\fh\mod^{\vee} \simeq \fh_{-Tate}\mod \in 
\DGCat_{cont}
\]

\noindent (depending on the choice of compact open 
subgroup $K$). Indeed, our choice of $K$ identifies 
$\fh_{-Tate}\mod \simeq \Vect_{\exp(\fh),w}$ 
(c.f. \S \ref{ss:sinf-lie}). 
Note that this category
is in fact dualizable as it is compactly generated,
and its dual is:

\[
\TwoHom_{\DGCat_{cont}}(\Vect_{\exp(\fh),w},\Vect) = 
\TwoHom_{\DGCat_{cont}}(\Vect,\Vect)^{\exp(\fh),w} = \fh\mod.
\]

Under this duality, the functor $C^{\sinf}(\fh,\fk;-)$
clearly 
corresponds to the trivial representation $k \in \fh\mod^{\heart}$.

We now have:

\begin{lem}\label{l:sinf-shapiro}

The composition:

\[
\fk\mod \xar{\ind_{\fk}^{\fh_{-Tate}}} 
\fh_{-Tate}\mod \xar{C^{\sinf}(\fh,\fk;-)}
\Vect 
\]

\noindent is canonically isomorphic to the functor
$C^{\dot}(\fk,-) \coloneqq \ul{\Hom}_{\fk\mod}(k,-):\fk\mod \to \Vect$ of Lie algebra cohomology.

\end{lem}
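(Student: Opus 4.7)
The plan is to reduce the identity to the analogous, essentially tautological, statement for the pair $(K,K)$ in place of $(H,K)$, by exploiting the adjunction $\ind_{\fk}^{\fh_{-Tate}}\dashv \Res$ together with the duality $\bD_{\fh,K}^{\sinf}$.

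Recall the observation preceding the lemma: $C^{\sinf}(\fh,\fk;-)$ corresponds under $\bD_{\fh,K}^{\sinf}:\fh\mod^{\vee}\simeq\fh_{-Tate}\mod$ to the trivial representation $k\in\fh\mod^{\heart}$, so $C^{\sinf}(\fh,\fk;M)\simeq \langle k,M\rangle$ for the canonical pairing implementing this duality. The first step is to show that the pairing is compatible with restriction $\Res:\fh\mod\to\fk\mod$ and induction $\ind_{\fk}^{\fh_{-Tate}}:\fk\mod\to\fh_{-Tate}\mod$, in the sense that there is a canonical isomorphism of functors $\fh\mod\otimes\fk\mod\to\Vect$:
\[
\langle -,\ind_{\fk}^{\fh_{-Tate}}(-)\rangle_{\fh,K}^{\sinf} \;\simeq\; \langle \Res(-),-\rangle_{\fk,K}^{\sinf},
\]
where the right-hand side denotes the analogous pairing for the pair $(K,K)$. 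This should follow from functoriality of the constructions of \S\ref{s:strong} with respect to the closed embedding $K\hookrightarrow H$: $\ind_{\fk}^{\fh_{-Tate}}$ is adjoint to $\Res$, and $\bD^{\sinf}$ is natural for pullback of strong and weak actions along this embedding, using Proposition-Construction \ref{pc:tate-cpt-open} to identify the canonical splitting of the Tate extension over $K$ and hence to compare the choice of invariant dimension theory on $H$ with the trivial one on $K$.

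The second step is to identify the pairing $\langle -,-\rangle_{\fk,K}^{\sinf}$ explicitly. Since $K$ is its own polarization (with ind-proper, indeed trivial, quotient $K/K$), the modular character $\chi_{Tate,K}$ is canonically trivial, so $\fk_{-Tate}\mod = \fk\mod$ canonically. In this case the counit $C^{\sinf}(\fk,\fk;-):\fk\mod\to\Vect$ is the counit of the adjunction between strong and weak $K$-actions on $\Vect$, which, because the Tate twist is trivial, is exactly the functor of derived $\fk$-invariants; equivalently, under $\bD_{\fk,K}^{\sinf}$ the trivial representation pairs with $V\in\fk\mod$ to give $\Hom_{\fk\mod}(k,V)=C^{\dot}(\fk,V)$. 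Combining with the first step and the fact that $\Res(k)=k$, one obtains the desired chain of isomorphisms $C^{\sinf}(\fh,\fk;\ind V)\simeq \langle k,\ind V\rangle_{\fh,K}^{\sinf}\simeq\langle k,V\rangle_{\fk,K}^{\sinf}\simeq C^{\dot}(\fk,V)$.

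The main obstacle will be establishing the compatibility in the first step with adequate naturality. The adjunction $\ind\dashv\Res$ is formal, but verifying that the duality $\bD_{\fh,K}^{\sinf}$ interacts compatibly with the closed embedding $K\hookrightarrow H$ requires careful tracking: one must check that the isomorphism in Proposition \ref{p:inv-coinv-w/str} relating $(-)_{\exp(\fh),w}$ to $(-\otimes\chi_{-Tate})^{\exp(\fh),w}$ (which underlies the duality via the $K$-dependent invariant dimension theory) restricts compatibly to the analogous isomorphism for $K$ under the functoriality developed in \S\ref{s:strong}. Once this naturality statement is in place, the remainder of the argument is a formal manipulation of adjoints.
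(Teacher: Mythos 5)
Your proposal is correct and follows essentially the same route as the paper: the paper's proof also notes that $C^{\sinf}(\fh,\fk;-)$ corresponds to the trivial representation $k \in \fh\mod$ under $\bD_{\fh,K}^{\sinf}$, observes that $\ind_{\fk}^{\fh_{-Tate}}$ is dual to $\Oblv:\fh\mod \to \fk\mod$ by construction of that duality, and concludes by duality exactly as in your chain of isomorphisms. The only difference is emphasis: the compatibility you flag as the ``main obstacle'' is treated in the paper as immediate from the construction of $\bD_{\fh,K}^{\sinf}$, so no separate naturality argument via Proposition \ref{p:inv-coinv-w/str} is spelled out.
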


\begin{proof}

The induction functor 
$\ind_{\fk}^{\fh_{-Tate}}$ is dual to $\Oblv:\fh\mod \to \fk\mod$
by construction of $\bD_{\fh,K}^{\sinf}$. 
We now obtain the result by duality.

\end{proof}

\subsection{}

We now prove Theorem \ref{t:sinf-comparison}.

Let $\fh_{-Tate+Tate_{std}}$ denote the Baer
sum central extension of $\fh_{-Tate}$ and $\fh_{Tate_{std}}$.
We maintain our abuse of notation regarding modules
over central extensions: the category
$\fh_{-Tate+Tate_{std}}\mod$ is set up so the element 
$1 \in k \subset \fh_{-Tate+Tate_{std}}$ acts by
the identity on any object of it.\footnote{This does not
of course characterize the category. One can work with
group indschemes and central extensions by $\bG_m$ as above
to give one quick definition. Alternatively, one can note
that the centrality means $\QCoh(\bA^1) = k\mod$ (regarding
$k$ as an abelian Lie algebra in this notation)
acts canonically on the category of 
\emph{all} $\fh_{-Tate+Tate_{std}}$-modules, and we are
taking the fiber of that category at $1 \in \bA^1(k)$.}
Note that this central extension is canonically split over
$\fk$; in particular, we have a forgetful functor
$\Oblv:\fh_{-Tate+Tate_{std}}\mod \to \fk\mod$.

The functor 
$C_{std}^{\sinf}(\fh,\fk;-):\fh_{-Tate_{std}}\mod \to \Vect$
defines by the duality $\bD_{\fh,K}^{\sinf}$
an object $K \in \fh_{-Tate+Tate_{std}}\mod$.\footnote{As
always, this notation abusively indicates that the
central element $1 \in k \subset \fh_{-Tate+Tate_{std}}$
acts by the identity on our modules, understood in the
appropriately derived sense.}

Combining Lemmas \ref{l:sinf-shapiro-std} and \ref{l:sinf-shapiro},
we find that $\Oblv(K) = k \in \fk\mod$, where
$k \in \fk\mod$ indicates the trivial module. 
In particular, as $\Oblv$ is $t$-exact and conservative,
$K$ lies in $\fh_{-Tate+Tate_{std}}\mod^{\heart}$
and corresponds to a 1-dimensional representation.

Now observe that giving a 1-dimensional representation
of a central extension $0 \to k \to \fh^{\flat} \to \fh \to 0$
(on which the central element acts by the identity)
is equivalent to splitting the central extension:
the induced map $\fh^{\flat} \to \fh \times k$ is an
isomorphism of central extensions of $\fh$.
Under this splitting, the given 1-dimensional representation
of $\fh^{\flat}$ maps to the trivial representation of
$\fh$.

Therefore, we obtain a trivialization of the
central extension $\fh_{-Tate+Tate_{std}}$ of $\fh$
such that $K$ maps to the trivial object $k \in \fh\mod$.
This is equivalent to giving an isomorphism 
$\fh_{-Tate} \simeq \fh_{-Tate_{std}}$ of central
extensions such that the functor

\[
\fh_{-Tate}\mod \simeq \fh_{-Tate_{std}}\mod 
\xar{C_{std}^{\sinf}(\fh,\fk;-)}
\Vect
\]

\noindent matches under the duality $\bD_{\fh,K}^{\sinf}$
to the trivial module $k \in \fh\mod$. 
This gives the desired isomorphism 
$C^{\sinf}(\fh,\fk;-) \simeq C_{std}^{\sinf}(\fh,\fk;-)$.

Uniqueness follows as the map:

\[
\Aut_{\fh\mod}(k) \to \Aut_{\fk\mod}(k)
\]

\noindent is an isomorphism (both sides are $k^{\times}$,
considered as group objects in $\Set \subset \Gpd$).

\section{Harish-Chandra data}\label{s:hc}

\subsection{}

Suppose $H$ is an algebraic group and $A \in \Alg$ is
equipped with an action of $H$, giving a weak action of
$H$ on $A\mod$. It is not difficult to see in this setup that
upgrading\footnote{Although the forgetful functor
$A\mod \to \Vect$ is weakly $H$-equivariant, the ``upgrade"
in question does not interact with the forgetful functor.
For example, $H$ acts strongly on $\fh\mod$, but the forgetful
functor $\fh\mod \to \Vect$ is only weakly equivariant.} 
this action to a strong one is equivalent to
specifying a \emph{Harish-Chandra} datum in a suitable
derived sense. 

We remind that this means we are given an $H$-equivariant
map of Lie algebras $i:\fh \to A$ satisfying a number
of ``compatibilities" (which are actually extra
data in a derived setting), most notably, that the corresponding
adjoint action of $\fh$ on $A$ coincides with the infinitesimal
action of $H$ on $A$. 

\subsection{}

The goal for this section is to develop such ideas in the
setting where $H$ is a Tate group indscheme and $A$ is
an $\overset{\rightarrow}{\otimes}$-algebra. 

There are a number of subtleties compared to the
finite-dimensional setting discussed above related to the ideas
developed so far in this text.

First, $A$ needs to be
equipped with a renormalization datum compatible
with the $H$-action in the sense of \S \ref{ss:coact-ren},
and with the Harish-Chandra data in a suitable sense.

Second, we need to upgrade the naive action of $H$ on 
$A\mod_{ren}$ to a genuine one. We do this using the
theory of canonical renormalization from 
\S \ref{ss:can-renorm-tate}. 

With that said, 
the theory we develop has no\footnote{Although the 
data is 1-categorical in nature, checking that
an apparent Harish-Chandra datum actually defines
one in our sense involves non-trivial homological algebra
(as we will see).} homotopical complexity
for $A$ and $H$ classical.\footnote{In fact, the theory
essentially requires $H$ to be classical from the start.
More precisely, we require $H$ to be formally smooth, which
forces $H$ to be classical under mild countability assumptions;
see \cite{indschemes}.} 

The main example to have in mind is $A = U(\fh)$, the 
completed enveloping algebra of $\fh$ (i.e., 
the $\overset{\rightarrow}{\otimes}$-algebra assigned to the
$t$-exact functor $\fh\mod \coloneqq \Vect^{\exp(\fh),w} \to \Vect$
via Proposition \ref{p:alg-vs-cats}). 

For the above to make sense, 
we need a key technical result, Theorem \ref{t:gen-hc},
that (in particular) says 
that the genuine weak action of $H$ on $\fh\mod$
comes from canonical renormalization.
We need to impose 
two hypotheses on the group indschemes $H$ to obtain this result:
that $H$ is polarizable,
and that it \emph{has a prounipotent tail}, i.e., 
there exists a prounipotent compact open subgroup in $H$.
Therefore, these hypotheses trail us throughout this section.
We remark that they are satisfied in the main example
of interest: when $H$ is the loop group of a reductive group
(or a central extension of such). 

\subsection{}

As this section is lengthy, we begin with a brief
guide to its structure.

In \S \ref{ss:gen-act-alg}-\ref{ss:gen-ff-pf},
we introduce the notion of genuine $H$-action on 
an $\overset{\rightarrow}{\otimes}$-algebra $A$; roughly,
this means there is a genuine $H$-action
on $A\mod_{ren}$ defined by canonical renormalization.

In \S \ref{ss:gen-constr}, we formulate Theorem \ref{t:gen-hc},
which was mentioned above. 
The proof occupies \S \ref{ss:gen-constr}-\ref{ss:gen-hc-pf}.

In \S \ref{ss:hc-defin}, we formulate our definition of
Harish-Chandra data, which relies on Theorem \ref{t:gen-hc}.

Finally, in \S \ref{ss:cl-hc-1}-\ref{ss:cl-hc-2}, we discuss
Harish-Chandra data explicitly in the case where
$A$ is classical. 

\subsection{Genuine actions and $\overset{\rightarrow}{\otimes}$-algebras}\label{ss:gen-act-alg}

In what follows, let $H$ 
be an ind-affine Tate group indscheme. (These hypotheses
will be strengthened in \S \ref{ss:hc-start}.)

Recall the notation $\Alg_{ren}^{\overset{\rightarrow}{\otimes},H\actson}$
from \S \ref{ss:h-acts-ren}: this is the category of 
renormalized $\overset{\rightarrow}{\otimes}$-algebras with
naive $H$-actions that are compatible with the renormalization.

\begin{defin}

The 1-full subcategory:

\[
\presup{\prime}\Alg_{gen}^{\overset{\rightarrow}{\otimes},H\actson}
\subset \Alg_{ren}^{\overset{\rightarrow}{\otimes},H\actson}
\]

\noindent of \emph{$\overset{\rightarrow}{\otimes}$-algebras with
nearly genuine $H$-actions} has objects 
$A \in \Alg_{ren}^{\overset{\rightarrow}{\otimes},H\actson}$
such that the naive action of $H$ on $A\mod_{ren}$ 
(as in \S \ref{ss:h-acts-ren}) canonically renormalizes
(in the sense of \S \ref{ss:can-renorm-tate})
with respect to the given $t$-structure
on $A\mod_{ren}$.

Morphisms $A_1 \to A_2$ 
in $\presup{\prime}\Alg_{gen}^{\overset{\rightarrow}{\otimes},H\actson}$
are morphisms in $\Alg_{ren}^{\overset{\rightarrow}{\otimes},H\actson}$
with the property that for every $K \subset H$ a compact open
subgroup, the functor:

\[
A_2\mod_{ren}^{K,w} \to A_1\mod_{ren}^{K,w} 
\]

\noindent obtained by ind-extension from:

\[
A_2\mod_{ren}^{K,w,c} \to A_2\mod_{ren}^{K,w,naive,+} 
\to A_1\mod_{ren}^{K,w,naive,+} \simeq A_1\mod_{ren}^{K,w,+} \subset 
A_1\mod_{ren}^{K,w}
\]

\noindent is $t$-exact (equivalently, left $t$-exact).

Finally, we define:

\[
\Alg_{gen}^{\overset{\rightarrow}{\otimes},H\actson}
\subset
\presup{\prime}\Alg_{gen}^{\overset{\rightarrow}{\otimes},H\actson}
\]

\noindent of \emph{$\overset{\rightarrow}{\otimes}$-algebras with
genuine $H$-actions}
as the full subcategory consisting of
objects $A \in \presup{\prime}\Alg_{gen}^{\overset{\rightarrow}{\otimes},H\actson}$ such that the unit morphism
$k \to A \in \Alg_{ren}^{\overset{\rightarrow}{\otimes},H\actson}$
is a morphism in the 1-full subcategory
$\presup{\prime}\Alg_{ren}^{\overset{\rightarrow}{\otimes},H\actson}$. (In other words, the forgetful functor
$A\mod_{ren} \to \Vect$ induces a $t$-exact
functor $A\mod_{ren}^{K,w} \to \Rep(K)$ for any 
compact open subgroup $K\subset H$.)

\end{defin}

\begin{rem}\label{r:algs-1-ff}

By construction, each of the restriction functors:

\[
\Alg_{gen}^{\overset{\rightarrow}{\otimes},H\actson} \to 
\Alg_{ren}^{\overset{\rightarrow}{\otimes},H\actson} \to 
\Alg^{\overset{\rightarrow}{\otimes},H\actson}
\]

\noindent is 1-fully-faithful; the former is in addition conservative.
(This is an abstract way of saying that a genuine action of
$H$ on $A$ is equivalent to specifying a naive action and a renormalization
datum for $A$ satisfying some properties, and that genuinely equivariant
morphisms are naively $H$-equivariant morphisms satisfying some
properties.)

\end{rem}

\begin{defin}

For $A_1,A_2 \in \Alg_{gen}^{\overset{\rightarrow}{\otimes},H\actson}$,
we say a morphism $f:A_1 \to A_2$ in
$\Alg_{gen}^{\overset{\rightarrow}{\otimes},H\actson}$
is a \emph{genuinely $H$-equivariant morphism}.
We refer to a morphisms in 
$\Alg^{\overset{\rightarrow}{\otimes},H\actson}$
as \emph{naively $H$-equivariant}.

\end{defin}

\subsection{}

There is an evident functor:

\[
\begin{gathered}
(\Alg_{gen}^{\overset{\rightarrow}{\otimes},H\actson})^{op} \to
H\mod_{weak} \\
A \mapsto A\mod_{ren}
\end{gathered}
\]

\noindent given by canonical renormalization.
Following our standard abuses for genuine $H$-actions,
we denote this functor $A \mapsto A\mod_{ren}$.
Moreover, as $k \in \Alg_{gen}^{\overset{\rightarrow}{\otimes},H\actson}$ is an initial object
(by fiat in our definition of genuine $H$-action),
this functor upgrades to
a functor to the overcategory $(H\mod_{weak})_{/\Vect}$:
for $A \in \Alg_{gen}^{\overset{\rightarrow}{\otimes},H\actson}$,
the structural map $A\mod_{ren} \to \Vect$ is the
forgetful functor.

Let $\Alg_{conv,gen}^{\overset{\rightarrow}{\otimes},H\actson}
\subset \Alg_{gen}^{\overset{\rightarrow}{\otimes},H\actson}$
be the full subcategory consisting of those
objects whose underlying $\overset{\rightarrow}{\otimes}$-algebra
is convergent.

\begin{thm}\label{t:gen-ff}

For $H$ polarizable, the functor:

\[
(\Alg_{conv,gen}^{\overset{\rightarrow}{\otimes},H\actson})^{op} \to
(H\mod_{weak})_{/\Vect}
\]

\noindent is 1-fully-faithful and conservative.

\end{thm}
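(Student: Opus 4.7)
The plan is to reduce both assertions to the non-equivariant $1$-fully-faithfulness of Proposition~\ref{p:alg-vs-cats} by systematically passing to eventually coconnective subcategories. Fix once and for all a polarization $K \subset H$, which exists by hypothesis; Proposition-Construction~\ref{pc:weak} then identifies $H\mod_{weak}$ with $\sH_{H,K}^w\mod$ via $\sC \mapsto \sC^{K,w}$. Polarizability enters here and at several later points through the rigid monoidal structure on $\sH_{H,K}^w$ guaranteed by Lemma~\ref{l:pol-rigid}, which allows us to track morphisms in $H\mod_{weak}$ through $K$-invariants with good duality behavior.

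For $1$-fully-faithfulness, fix $A_1,A_2 \in \Alg_{conv,gen}^{\overset{\rightarrow}{\otimes},H\actson}$ and a morphism $f:A_2\mod_{ren}\to A_1\mod_{ren}$ in $(H\mod_{weak})_{/\Vect}$. First I apply $\Oblv_{gen}$ and restrict to $+$-parts using the canonical equivalences $A_i\mod_{ren}^+ \simeq A_i\mod_{naive}^+$ from Proposition~\ref{p:gen-t-str}, producing a DG functor $f^+:A_2\mod_{naive}^+ \to A_1\mod_{naive}^+$ over $\Vect^+$; Proposition~\ref{p:alg-vs-cats} then extracts a unique underlying morphism $g:A_1 \to A_2$ of convergent connective $\overset{\rightarrow}{\otimes}$-algebras recovering $f^+$. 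Next, since $f$ is a morphism in $H\mod_{weak}$, its image in $H\mod_{weak,naive}$ under the forgetful functor of Remark~\ref{r:naive-to-gen} is $\IndCoh^*(H)$-linear. By the comonad/$\overset{\rightarrow}{\otimes}$-algebra dictionary of \S\ref{ss:comonad}, together with the identification of naive $H$-actions on $A\mod_{naive}$ with $\Fun(H)$-coactions on $A$, this $\IndCoh^*(H)$-linearity translates into $g$ respecting the $H$-coactions, so $g$ is naively $H$-equivariant. The fact that $f^+$ extends all the way to $f:A_2\mod_{ren} \to A_1\mod_{ren}$ is precisely the condition in \S\ref{ss:ren-morphism} that $g$ be a morphism of \emph{renormalized} algebras, placing $g$ in $\Alg_{ren}^{\overset{\rightarrow}{\otimes},H\actson}$.

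The main obstacle is the final upgrade: verifying $g \in \Alg_{gen}^{\overset{\rightarrow}{\otimes},H\actson}$, i.e., that for every compact open $K' \subset H$ the ind-extended functor $A_2\mod_{ren}^{K',w} \to A_1\mod_{ren}^{K',w}$ appearing in the definition is $t$-exact. The crucial observation is that this ind-extension coincides with $f^{K',w}$: both commute with colimits, and on any $\sF \in A_2\mod_{ren}^{K',w,c} \subset A_2\mod_{ren}^{K',w,+}$ both send $\sF$ to the same object, namely its image under the restriction functor $A_2\mod_{naive}^{K',w,+} \to A_1\mod_{naive}^{K',w,+}$ determined by $g$. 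It therefore suffices to show $f^{K',w}$ is $t$-exact. On $+$-parts this is the restriction functor between naive $K'$-invariants, which is $t$-exact because the forgetful functors to $A_i\mod_{naive}^+$ are $t$-exact (this is built into the definition of $\Alg_{gen}$); left $t$-exactness on all of $A_2\mod_{ren}^{K',w}$ then follows by writing any $\sF \in A_2\mod_{ren}^{K',w,\geq 0}$ as $\colim \tau^{\geq 0}\sF_i$ with $\sF_i$ compact, noting $\tau^{\geq 0}\sF_i \in A_2\mod_{ren}^{K',w,+}$, applying the $+$-parts statement, and using continuity of $f^{K',w}$. The delicacy here is that one must genuinely exploit the canonical renormalization structure on both sides, rather than just the underlying DG functor, to ensure that compact objects have the right cohomological behavior.

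For conservativity, suppose a morphism $g$ in $\Alg_{conv,gen}^{\overset{\rightarrow}{\otimes},H\actson}$ becomes an isomorphism under $A \mapsto A\mod_{ren}$. Applying $\Oblv_{gen}$ yields an equivalence of underlying DG categories over $\Vect$, which restricts to an equivalence $A_2\mod_{naive}^+ \simeq A_1\mod_{naive}^+$ over $\Vect^+$ via the canonical identification of $+$-parts. Proposition~\ref{p:alg-vs-cats} then forces $g$ to be an isomorphism of convergent $\overset{\rightarrow}{\otimes}$-algebras, and hence an isomorphism in $\Alg_{conv,gen}^{\overset{\rightarrow}{\otimes},H\actson}$ by the $1$-fully-faithfulness of the forgetful functors recorded in Remark~\ref{r:algs-1-ff} (the inverse morphism is automatically compatible with the renormalization data and genuine structure because restriction along an isomorphism preserves $t$-exactness properties on the nose).
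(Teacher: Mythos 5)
There is a genuine gap, and it is located at the very first move of your fully-faithfulness argument. The theorem asserts that the maps on $\Hom$-groupoids are \emph{fully faithful} (monomorphisms of groupoids), not that they are essentially surjective; accordingly, what has to be shown is that for two genuinely $H$-equivariant algebra morphisms $g_1,g_2:A_1\to A_2$, the space of identifications of the induced functors $A_2\mod_{ren}\to A_1\mod_{ren}$ in $(H\mod_{weak})_{/\Vect}$ agrees with the space of identifications upstairs. Instead you start from an \emph{arbitrary} morphism $f:A_2\mod_{ren}\to A_1\mod_{ren}$ in $(H\mod_{weak})_{/\Vect}$ and try to reconstruct an algebra map $g$ from it. This is an attempt at $1$-fullness, which is a stronger statement than the theorem and is not expected to hold: a morphism in $(H\mod_{weak})_{/\Vect}$ is merely a continuous equivariant functor compatible with the non-conservative projections to $\Vect$, and nothing forces it to be $t$-exact or even to preserve eventually coconnective objects. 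Consequently your ``restrict to $+$-parts to get $f^+$'' step is unjustified, and Proposition \ref{p:alg-vs-cats} cannot be applied to a general $f$; the same problem recurs when you identify the ind-extension determined by $g$ with $f^{K',w}$ on compact objects, since there is no reason $f^{K',w}$ agrees with anything on $A_2\mod_{ren}^{K',w,c}\subset A_2\mod_{ren}^{K',w,+}$ in terms of naive restriction.

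Separately, even for the direction that is actually needed, your proposal omits the ingredient that carries the real weight in the paper: the comparison between \emph{genuinely} $H$-equivariant natural transformations and \emph{naively} $H$-equivariant ones (Proposition \ref{p:gen-nat-trans}), applied to functors between categories obtained by canonical renormalization with the target functor left $t$-exact. The paper's proof places the question in the commutative square \eqref{eq:gen-ff}: the bottom arrow is an equivalence by the naive-level dictionary (Remark \ref{r:ren-cat}, i.e.\ Proposition \ref{p:alg-vs-cats} plus Theorem \ref{t:tens-ren}), the left arrow is fully faithful because genuine morphisms form a $1$-full subcategory, and the right arrow is fully faithful by Proposition \ref{p:gen-nat-trans}; fully-faithfulness of the top arrow follows. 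Polarizability is used precisely inside Proposition \ref{p:gen-nat-trans} (and Lemma \ref{l:av-!-gen/naive}), through the existence of $\Av_!^w$ along $\sC^{K,w}\to\sC^{H,w}$ for a polarization $K$ and the resulting monadicity, together with the comonadic totalization argument exploiting left $t$-exactness of $G$. Your only appeal to polarizability is a passing reference to rigidity of $\sH_{H,K}^w$, which does none of this work; without an argument of this kind the passage from naive to genuine equivariance data — the actual content of the theorem — is not addressed. The conservativity part of your proposal is essentially the paper's argument and is fine modulo minor care about why the inverse algebra map is again a morphism in $\Alg_{conv,gen}^{\overset{\rightarrow}{\otimes},H\actson}$ (the paper sidesteps this by quoting conservativity of $A\mapsto A\mod_{ren}$ from Remark \ref{r:ren-cat} together with conservativity of the forgetful functor in Remark \ref{r:algs-1-ff}).
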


We defer the proof to \S \ref{ss:gen-ff-pf}.

\begin{rem}

Although the definition of the category
$\Alg_{conv,gen}^{\overset{\rightarrow}{\otimes},H\actson}$
is weighty, this result gives a way to convert
algebraic data 
in $\Alg^{\overset{\rightarrow}{\otimes}}$ that may be quite
concrete to abstract categorical data 
involving genuine $H$-actions.

\end{rem}

\subsection{}

To prove Theorem \ref{t:gen-ff}, we will need the following result.

\begin{prop}\label{p:gen-nat-trans}

Let $H$ be a polarizable Tate group indscheme. 
Let $\sC,\sD \in H\mod_{weak}$ be equipped with 
$t$-structures compatible with the weak $H$-actions.
Suppose the genuine $H$-actions on each of 
$\sC$ and $\sD$ are obtained by
canonical renormalization using these $t$-structures 
and the underlying naive $H$-actions 
(c.f. \S \ref{ss:can-renorm-tate}).

Let $F,G:\sC \to \sD \in H\mod_{weak}$ be two genuinely
$H$-equivariant functors, and suppose
that $G$ is left $t$-exact
(at the level of its underlying functor 
$\sC \to \sD \in \DGCat_{cont}$). 

Then the natural map:

\[
\ul{\Hom}_{\Hom_{H\mod_{weak}}}(F,G) \to 
\ul{\Hom}_{\Hom_{H\mod_{weak,naive}}}(F,G) \in \Vect
\]

\noindent (induced by $\Oblv_{gen}$) is an equivalence. 
In other words, giving a genuinely $H$-equivariant 
natural transformation between
$F$ and $G$ is equivalent to giving a naively $H$-equivariant
natural transformation between them.

\end{prop}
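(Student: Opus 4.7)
The plan is to fix a polarization $K \subset H$ (which exists by the polarizability of $H$) and reduce to a comparison statement at the level of Hecke categories. Using the equivalence $H\mod_{weak} \simeq \sH_{H,K}^w\mod$ of Proposition-Construction \ref{pc:weak} and the fully-faithful embedding $H\mod_{weak,naive} \hookrightarrow \sH_{H,K}^{w,naive}\mod$ of Lemma \ref{l:naive-hecke-ff}, together with the observation that under these identifications $\Oblv_{gen}$ corresponds to composition with the canonical comparison functors $\psi_{\sC}$, $\psi_{\sD}$ of Proposition \ref{p:gen-t-str} \eqref{i:gen-t-3}, the claim reduces to showing that the restriction map
\[
\ul{\Hom}_{\sH_{H,K}^w\mod}(F^{K,w}, G^{K,w}) \to \ul{\Hom}_{\sH_{H,K}^{w,naive}\mod}(F^{K,w,naive}, G^{K,w,naive})
\]
is an equivalence.

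The key technical input I would isolate is the following: for any $A \in \sD^{K,w}$ and $B \in \sD^{K,w,+}$, the canonical map $\ul{\Hom}_{\sD^{K,w}}(A,B) \to \ul{\Hom}_{\sD^{K,w,naive}}(\psi_{\sD}(A), \psi_{\sD}(B))$ is an equivalence. I would prove this by writing $A = \colim_i A_i$ as a filtered colimit of compact objects and using continuity of $\psi_{\sD}$ to reduce to $A$ compact; canonical renormalization then places such $A$ inside $\sD^{K,w,+}$, and the statement becomes the fully-faithfulness of $\psi_{\sD}|_{\sD^{K,w,+}}$ from Proposition \ref{p:gen-t-str} \eqref{i:gen-t-3}. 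The left $t$-exactness of $G$ enters precisely here: it guarantees $G^{K,w}(\sF) \in \sD^{K,w,+}$ for every $\sF \in \sC^{K,w,+}$, so the lemma is applicable with $B = G^{K,w}(\sF)$ at each point where it arises.

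To pass from this pointwise comparison to the full statement, the plan is to realize both sides of the comparison as totalizations of cobar-type complexes encoding $\sH_{H,K}^w$- (resp.\ $\sH_{H,K}^{w,naive}$-) equivariance, using the bar resolutions of $\sC^{K,w}$ and $\sC^{K,w,naive}$ as module categories. In each cosimplicial degree one is then computing maps from a compact object of $\sC^{K,w}$ into a target of the form $(V_1 \star \cdots \star V_n) \star G^{K,w}(-)$; by the canonical renormalization axioms (c.f.\ Lemma \ref{l:monoidal-t-str}), the $\sH_{H,K}^w$-action is left $t$-exact up to shift on compact generators, so these targets remain bounded below in $\sD^{K,w}$, and the technical lemma produces a termwise equivalence with the corresponding cosimplicial terms on the naive side.

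The main obstacle will be the homotopy-coherent bookkeeping: one must verify that these termwise equivalences assemble into an equivalence of cosimplicial diagrams, not merely of individual objects. This requires tracking the monoidal comparison $\sH_{H,K}^w \to \sH_{H,K}^{w,naive}$ together with its compatibility with canonical renormalization. A possibly cleaner alternative would be to construct the inverse map explicitly: given a naive natural transformation $\alpha\colon F^{K,w,naive} \to G^{K,w,naive}$, its values on objects of $\psi_{\sC}(\sC^{K,w,c})$ land in $\sD^{K,w,naive,+} \simeq \sD^{K,w,+}$ (by left $t$-exactness of $G$), and hence lift canonically to morphisms in $\sD^{K,w}$; one then ind-extends to produce a genuinely equivariant natural transformation, and verifies this construction is inverse to $\Oblv_{gen}$ using the technical lemma above.
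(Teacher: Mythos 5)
Your pointwise ingredient is fine: the comparison $\ul{\Hom}_{\sD^{K,w}}(A,B) \isom \ul{\Hom}_{\sD^{K,w,naive}}(\psi_{\sD}(A),\psi_{\sD}(B))$ for $B$ bounded below, proved by writing $A$ as a filtered colimit of compacts and invoking Proposition \ref{p:gen-t-str} \eqref{i:gen-t-3}, is correct and is exactly where the left $t$-exactness of $G$ and the boundedness of compacts enter (in the paper as well). The gap is in the assembly step, and it is not mere bookkeeping. In your cobar/bar decomposition over the bar resolution of $\sC^{K,w}$ (resp.\ $\sC^{K,w,naive}$) as an $\sH_{H,K}^w$- (resp.\ $\sH_{H,K}^{w,naive}$-) module, the degree-$n$ term on the naive side is a space of natural transformations between continuous functors out of $(\sH_{H,K}^{w,naive})^{\otimes n} \otimes \sC^{K,w,naive}$. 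These categories are not compactly generated, and their natural-transformation spaces are \emph{not} computed by restriction along $\Psi^{\otimes n}\otimes\psi_{\sC}$ to the compacts of the renormalized categories: the image of $\psi$ generates under colimits, but restriction along such a functor is not fully faithful on natural-transformation spaces (that would require $\psi$ to be a localization or the target to be the ind-completion of the source, neither of which holds). So the claimed ``termwise equivalence'' is essentially the statement being proved, stripped of Hecke-linearity but with the same renormalized-versus-naive source mismatch, and your technical lemma does not reach it. Your ``cleaner alternative'' has the same problem in a different guise: lifting the values of a naive transformation on $\psi_{\sC}(\sC^{K,w,c})$ and ind-extending produces, at best, a transformation of the underlying functors $\sC^{K,w}\to\sD^{K,w}$; it does not produce the $\sH_{H,K}^w$-linearity nor the full $H\mod_{weak}$-coherence, which is the actual content of the proposition.

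The paper circumvents this by resolving the \emph{objects} rather than the module category. It works inside the inner Hom $\TwoHom(\sC,\sD)\in H\mod_{weak}$, and (i) resolves $G$ by the cobar construction of the comonad $\Av_*^{K,w}\Oblv$, whose convergence on the genuine side is checked termwise on compacts of $\sC^{K,w}$ (this is where compactness plus left $t$-exactness of $G$ is used, via Proposition \ref{p:gen-t-str} \eqref{i:gen-t-3}); since $\psi\Av_*^{w} \simeq \Av_*^{w,naive}$ (Proposition \ref{p:gen-t-str} \eqref{i:gen-t-4}) and naive invariants are always comonadic, mapping into the cobar terms reduces \emph{both} the genuine and the naive Hom spaces to the same non-equivariant Homs, which kills the problematic naive functor categories; and (ii) passes from $K$ to $H$ by the monadic bar resolution of $F$ along $\Av_!^w$ (which exists by polarizability), intertwined with the naive setting by Lemma \ref{l:av-!-gen/naive}. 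If you want to salvage your outline, you should replace the termwise comparison of module-category bar resolutions by this resolution of $F$ and $G$ themselves.
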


We will use the following lemma.

\begin{lem}\label{l:av-!-gen/naive}

Let $H$ be a Tate group indscheme and let $K \subset H$ be
a polarization of $H$. Then the forgetful functors:

\[
\begin{gathered} 
\sC^{H,w} \to \sC^{K,w} \\
\sC^{H,w,naive} \to \sC^{K,w,naive}
\end{gathered}
\]

\noindent admit left adjoints, denoted $\Av_!^w$ and
$\Av_!^{w,naive}$ respectively. Moreover,
the diagram:

\[
\xymatrix{
\sC^{K,w} \ar[r] \ar[d]^{\Av_!^w} & \sC^{K,w,naive} \ar[d]^{\Av_!^{w,naive}} \\
\sC^{H,w} \ar[r] & \sC^{H,w,naive}
}
\]

\noindent commutes (a priori, it commutes up to a natural
transformation).

\end{lem}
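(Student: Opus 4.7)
The plan is to construct both left adjoints using the polarization hypothesis (equivalently, ind-properness of $H/K$) and then to verify the commutativity of the square.

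For the genuine case, I would use the equivalence $H\mod_{weak} \simeq \sH^w_{H,K}\mod$ of Proposition-Construction~\ref{pc:weak}, under which $\sC^{H,w} = \TwoHom_{\sH^w_{H,K}\mod}(\Rep(K), \sC^{K,w})$ and the forgetful functor $\sC^{H,w} \to \sC^{K,w}$ is induced by restriction of linear structure along the monoidal pushforward $\pi_*^{\IndCoh_{ren}}: \Rep(K) \to \sH^w_{H,K}$, where $\pi: \bB K \to K \backslash H/K$ is the evident map (ind-proper by the polarization hypothesis). By Lemma~\ref{l:pol-rigid}, $\sH^w_{H,K}$ is rigid monoidal; invoking the material of \S\ref{ss:rigid}, $\Rep(K)$ is dualizable as an $\sH^w_{H,K}$-module, so that $\sC^{H,w} \simeq \Rep(K)^{\vee} \otimes_{\sH^w_{H,K}} \sC^{K,w}$. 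This tensor-product presentation makes the forgetful functor manifestly continuous, and the left adjoint $\Av_!^w$ is then obtained via the adjoint functor theorem for presentable categories (with an explicit formula coming from the unit of the rigid dualizing pair).

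For the naive case, the same geometric input applies: one replaces $\sH^w_{H,K}$ by $\sH^{w,naive}_{H,K} = \IndCoh^*(K\backslash H/K)$ and uses that ind-properness of $\pi$ produces $\pi^!$ as a continuous right adjoint to $\pi_*^{\IndCoh}$ (Lemma~\ref{l:ind-proper-upper-!}). Since $\sH^{w,naive}_{H,K}$ is typically not rigid, I would not follow the rigid-monoidal route but rather construct $\Av_!^{w,naive}$ by a direct Beck-Chevalley argument applied to the cosimplicial diagrams computing $\sC^{H,w,naive}$ and $\sC^{K,w,naive}$, using base change for $\pi_*^{\IndCoh}$ along flat morphisms (Lemma~\ref{l:indproper-flat}) to verify the necessary compatibilities.

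For the commutativity, both left adjoints encode the same ``ind-proper pushforward along $H/K$'' data, and the canonical renormalization procedure of \S\ref{ss:can-renorm-tate} is designed precisely to be compatible with this. I would check commutativity by evaluating on compact generators of $\sC^{K,w}$, where the identification $\sC^{K,w,c} \isom \sC^{K,w,naive,c}$ (Proposition~\ref{p:canon-renorm-tate}) makes the two left adjoints manifestly agree; continuity of all functors involved then extends this to an isomorphism of functors. Alternatively, passing to right adjoints and applying Proposition~\ref{p:gen-nat-trans} would reduce the commutativity to a statement at the naive level, where it is essentially tautological. The main obstacle will be the naive case itself: without rigidity available, constructing $\Av_!^{w,naive}$ concretely and verifying its adjunction properties requires careful handling of the Beck-Chevalley machinery on the relevant renormalized Hecke categories, whereas the commutativity is largely formal once both adjoints are in hand.
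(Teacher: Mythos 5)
Your construction of the two averaging functors is fine in outline and matches the paper's inputs: the genuine case rests on the polarization through rigidity of $\sH_{H,K}^w$ (Lemma \ref{l:pol-rigid}) together with \eqref{eq:inv-coinv-hecke}, and the naive case via Beck--Chevalley/ind-proper base change is exactly the route the paper mentions as an alternative. One caveat: invoking "the adjoint functor theorem" because the forgetful functor is "manifestly continuous" aims at the wrong property --- colimit-preservation is automatic for any left adjoint and is not what produces one; you need either limit-preservation of $\Oblv$ (which is not obvious and is essentially equivalent to what you want) or the explicit dual-bimodule formula you only gesture at parenthetically. That formula, not the AFT, is the real content of the existence step.

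The genuine gap is in the commutativity step. The lemma is asserted for an arbitrary $\sC \in H\mod_{weak}$, but your main argument appeals to Proposition \ref{p:canon-renorm-tate} (compact generation of $\sC^{K,w}$ and the identification $\sC^{K,w,c} \isom \sC^{K,w,naive,c}$) and your fallback appeals to Proposition \ref{p:gen-nat-trans}; both presuppose a $t$-structure and a canonically renormalized action, neither of which is among the hypotheses. Even in that restricted setting, saying the two left adjoints "manifestly agree" on compact generators is not an argument: the functors $\sC^{K,w} \to \sC^{K,w,naive}$ and $\sC^{H,w} \to \sC^{H,w,naive}$ are not fully faithful, and left adjoints are characterized by mapping-out properties, so commuting them past these comparison functors is precisely what has to be proved. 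Likewise "passing to right adjoints" is a mate-calculus slip: the square of forgetful functors commutes for trivial reasons, and this does not formally imply that the square of their left adjoints commutes. The paper avoids all of this with one trick: letting $\Phi$ denote the (non-continuous) right adjoint to $\Oblv_{gen}$, one has $(-)^{H,w,naive} = \Phi(-)^{H,w}$, and by passing to right adjoints in Lemma \ref{l:induction}\eqref{i:induction-comm} also $(-)^{K,w,naive} = \Phi(-)^{K,w}$; hence $\Av_!^{w,naive}$ for $\sC$ \emph{is} the genuine $\Av_!^w$ for the object $\Phi(\Oblv_{gen}(\sC)) \in H\mod_{weak}$, and the square in question becomes the naturality square of $\Av_!^w$ along the unit map $\sC \to \Phi(\Oblv_{gen}(\sC))$, which commutes with no further hypotheses on $\sC$. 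You should either adopt this device or carry out the Beck--Chevalley base-change comparison with \eqref{eq:inv-coinv-hecke} directly in the naive setting; as written, your commutativity argument does not apply in the stated generality.
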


\begin{proof}

The existence of $\Av_!^w$, as we have appealed to at
various points earlier in this text, follows from
\eqref{eq:inv-coinv-hecke} and the Beck-Chevalley formalism.

Let $\Phi:H\mod_{weak,naive} \to H\mod_{weak}$ denote
the (non-continuous) right adjoint to $\Oblv_{gen}$.
Clearly $\Phi(-)^{H,w} = (-)^{H,w,naive}$.
Moreover, passing to right adjoints in Lemma \ref{l:induction}
\eqref{i:induction-comm} it follows that
$\Phi(-)^{K,w} = (-)^{K,w,naive}$. Now the commutativity
of the diagram follows
by rewriting it as:

\[
\xymatrix{
\sC^{K,w} \ar[r] \ar[d]^{\Av_!^w} & \Phi(\Oblv_{gen}(\sC))^{K,w} \ar[d]^{\Av_!^w} \\
\sC^{H,w} \ar[r] & \Phi(\Oblv_{gen}(\sC))^{H,w}. 
}
\]

(Alternatively, the base-change follows directly by applying the
Beck-Chevalley formalism in the naive setting
and comparing with \eqref{eq:inv-coinv-hecke}.)

\end{proof}

\begin{proof}[Proof of Proposition \ref{p:gen-nat-trans}]

In what follows, let $K \subset H$ be a polarization.

\step Let $\TwoHom(\sC,\sD) \in H\mod_{weak}$ denote
the inner $\Hom$ object between $\sC$ and $\sD$ in 
the symmetric monoidal category $H\mod_{weak}$. 

Note that $\Oblv_{gen}\TwoHom(\sC,\sD)$ is the
category $\TwoHom(\sC,\sD) \coloneqq \TwoHom_{\DGCat_{cont}}(\sC,\sD)$ of continuous DG functors between $\sC$ and $\sD$.
Indeed, this follows from the fact that $\Oblv_{gen}$
admits a $\DGCat_{cont}$-continuous 
left adjoint $- \otimes \IndCoh^*(H)$
(for $\IndCoh^*(H) \in H\mod_{weak}$ as in 
Example \ref{e:indcoh*-coreps}).

Similarly, formation of inner Homs is intertwined by
the forgetful functor $H\mod_{weak} \to K\mod_{weak}$:
this follows from the existence of the left adjoint 
$\ind_K^{H,w}$ from Lemma \ref{l:induction} and the
(evident) version of the projection formula for this left adjoint.

Clearly $\TwoHom(\sC,\sD)^{H,w} = \TwoHom_{H\mod_{weak}}(\sC,\sD)$.
By the above, we just as well have
$\TwoHom(\sC,\sD)^{K,w} = \TwoHom_{K\mod_{weak}}(\sC,\sD)$. 
Finally, because $\Oblv^{gen}\TwoHom(\sC,\sD)$ is the
category of functors between $\sC$ and $\sD$, we have:

\[
\begin{gathered}
\TwoHom(\sC,\sD)^{H,w} = \TwoHom_{H\mod_{weak,naive}}(\sC,\sD) \\
\TwoHom(\sC,\sD)^{K,w} = \TwoHom_{K\mod_{weak,naive}}(\sC,\sD).
\end{gathered}
\]

\step\label{st:left-t-comonad}

Consider $G \in \TwoHom(\sC,\sD)^{K,w}$ as above
(though it lifts to $H$-invariants).

Let $\Oblv:\TwoHom(\sC,\sD)^{K,w} \to \TwoHom(\sC,\sD) \in 
\DGCat_{cont}$ be the forgetful functor, and let
$\Av_*^{K,w}$ denote its right adjoint.
We claim that the natural map 
$G \to \Tot((\Av_*^{K,w}\Oblv)^{\dot + 1} G) \in 
\TwoHom(\sC,\sD)^{K,w}$
is an equivalence. Here we emphasize that the totalization
is calculated in the DG category 
$\TwoHom(\sC,\sD)^{K,w}$.

Indeed, we have:

\begin{equation}\label{eq:cpt-homs}
\begin{gathered}
\TwoHom(\sC,\sD)^{K,w} = 
\TwoHom_{K\mod_{weak}}(\sC,\sD) = 
\TwoHom_{\Rep(K)\mod}(\sC^{K,w},\sD^{K,w}) = \\
\TwoHom_{\Rep(K)^c\mod(\DGCat)}(\sC^{K,w,c},\sD^{K,w})
\end{gathered}
\end{equation}

\noindent for $\sC^{K,w,c} \subset \sC^{K,w}$ the subcategory
of compact objects: we remind that as part of the
definition of $\sC$ being obtained from canonical renormalization, 
$\sC^{K,w}$ is compactly generated with compact objects
being eventually coconnective.

The advantage of the last expression in \eqref{eq:cpt-homs} is that
limits are manifestly computed termwise.
So for $\sF \in \sC^{K,w,c}$, we have:

\[
\Tot((\Av_*^{K,w}\Oblv)^{\dot + 1} G)(\sF) = 
\Tot((\Av_*^{K,w}\Oblv)^{\dot + 1} G(\sF)) \in \sD^{K,w}.
\]

\noindent By Proposition \ref{p:gen-t-str} \eqref{i:gen-t-3},
the natural map from $G(\sF)$ to this limit is an equivalence.

\step\label{st:into-g}

As an immediate consequence of Step \ref{st:left-t-comonad},
note that for any $\widetilde{F}:\sC \to \sD$ genuinely 
$K$-equivariant, the map:

\[
\ul{\Hom}_{\Hom_{K\mod_{weak}}}(\widetilde{F},G) \to 
\ul{\Hom}_{\Hom_{K\mod_{weak,naive}}}(\widetilde{F},G)
\]

\noindent is an isomorphism. 

\step 

We now complete the argument. We again 
view $\Hom(\sC,\sD) \in H\mod_{weak}$ and
$F,G$ as objects in $\Hom(\sC,\sD)^{H,w}$.

Note that the forgetful functor $\Oblv:\Hom(\sC,\sD)^{H,w} \to 
\Hom(\sC,\sD)^{K,w}$ is conservative (by \eqref{eq:inv-coinv-hecke})
and admits a 
left adjoint $\Av_!^w$ (c.f. Lemma \ref{l:av-!-gen/naive}).
Therefore, this forgetful functor is monadic. The same
applies in the naively equivariant setting.

We obtain that $F$ is a geometric realization:

\[
|(\Av_!\Oblv)^{\dot+1}(F)| \isom F
\in \Hom(\sC,\sD)^{H,w} = \Hom_{H\mod_{weak}}(\sC,\sD).
\]

Therefore, we have:

\[
\begin{gathered}
\ul{\Hom}_{\Hom_{H\mod_{weak}}(\sC,\sD)}(F,G) \isom 
\Tot \ul{\Hom}_{\Hom_{K\mod_{weak}}(\sC,\sD)}(\Oblv
(\Av_!\Oblv)^{\dot}(F),G) \isom \\
\Tot \ul{\Hom}_{\Hom_{K\mod_{weak,naive}}(\sC,\sD)}(\Oblv
(\Av_!\Oblv)^{\dot}(F),G).
\end{gathered}
\]

\noindent Here we are using Step \ref{st:into-g} in the
second isomorphism, and we are implicitly
using Lemma \ref{l:av-!-gen/naive}
to intertwine $\Av_!$ functors in the genuine and naive settings.
By the same logic, 
the last term above computes $\ul{\Hom}_{H\mod_{weak,naive}}(F,G)$,
giving the claim.

\end{proof}

\subsection{}\label{ss:gen-ff-pf}

As promised, we now prove the above theorem.

\begin{proof}[Proof of Theorem \ref{t:gen-ff}]

First, let us verify that the functor is conservative.
The composition of this functor with the
forgetful functors:

\[
(H\mod_{weak})_{/\Vect} \to H\mod_{weak} \xar{\Oblv_{gen}} 
\DGCat_{cont}
\]

\noindent send 
$A \in \Alg_{gen}^{\overset{\rightarrow}{\otimes},H\actson}$ to 
$A\mod_{ren}$. This functor is conservative by 
Remark \ref{r:ren-cat}, giving the claim.

Now for $A_1,A_2 \in \Alg_{gen}^{\overset{\rightarrow}{\otimes},H\actson}$, we have the following commutative
diagram:

\begin{equation}\label{eq:gen-ff}
\vcenter{
\xymatrix{
\Hom_{\Alg_{gen}^{\overset{\rightarrow}{\otimes},H\actson}}(A_1,A_2)
\ar[r] \ar[d] & 
\Hom_{(H\mod_{weak})_{/\Vect}}^{\prime}(A_2\mod_{ren},A_1\mod_{ren}) 
\ar[d] \\
\Hom_{\Alg_{ren}^{\overset{\rightarrow}{\otimes},H\actson}}(A_1,A_2) 
\ar[r] &
\Hom_{(H\mod_{weak,naive})_{/\Vect}}^{\prime}(A_2\mod_{ren},A_1\mod_{ren}).
}}
\end{equation}

\noindent Here the decoration $\prime$ on the bottom left term
indicates the subcategory of those functors that are $t$-exact
after applying $(-)^{K,w}$ for any compact open subgroup
$K$, while the similar notation on the bottom right term
indicates the subcategory of $t$-exact functors.
We wish to show that the top arrow in 
\eqref{eq:gen-ff} is fully-faithful.
We will do so by showing that the other three arrows are
fully-faithful.

The left arrow of \eqref{eq:gen-ff} is fully-faithful by definition
of genuine $H$-actions.

The right arrow of \eqref{eq:gen-ff} is fully-faithful by Proposition
\ref{p:gen-nat-trans}.

Finally, the bottom arrow of \eqref{eq:gen-ff} is 
an equivalence by Remark \ref{r:ren-cat}; indeed,
by \emph{loc. cit}. (or more precisely, by
Proposition \ref{p:alg-vs-cats} and Theorem \ref{t:tens-ren}), 
the functor:

\[
\begin{gathered}
(\Alg_{conv,ren}^{\overset{\rightarrow}{\otimes}})^{op} \to
(\DGCat_{cont})_{/\Vect} \\
A \mapsto A\mod_{ren}
\end{gathered}
\]

\noindent is symmetric monoidal and 
fully-faithful.\footnote{Explicitly, the
essential image of this functor
consists of those compactly generated
DG categories $\sC$ equipped with $F: \sC \to \Vect$
such that for the $t$-structure on $\sC$ defined by
having $\sC^{\leq 0}$ generated under colimits by
compact objects $\sF \in \sC^c$ such that 
$F(\sF) \in \Vect^{\leq 0}$, the functor $F$ is $t$-exact
and conservative on $\sC^+$, and such that compact objects
in $\sC$ are eventually coconnective.}

\end{proof}

\subsection{A construction of genuine $H$-actions}\label{ss:gen-constr}

We now formulate a key result that allows us to construct
many genuine $H$-actions on $\overset{\rightarrow}{\otimes}$-
algebras.

Suppose $A \in \Alg_{conv,gen}^{\overset{\rightarrow}
{\otimes},H\actson}$ be given. 
We have the corresponding object $A\mod_{ren} \in H\mod_{weak}$.

We define: 

\[
A\# U(\fh)\mod_{ren} \coloneqq
\Oblv^{str\to w}(A\mod_{ren}^{\exp(\fh),w}) \in H\mod_{weak}.
\]

\noindent (The notation will be justified in what follows.)
Note that this category has a canonical 
genuinely $H$-equivariant functor to $\Vect$:

\[
A\# U(\fh)\mod_{ren} =
\Oblv^{str\to w}(A\mod_{ren}^{\exp(\fh),w}) \to 
A\mod_{ren} \to \Vect
\]

\noindent where the first functor is the counit for the 
adjunction and the second arrow is the standard forgetful
functor for $A\mod_{ren}$.

\begin{thm}\label{t:gen-hc}

Suppose that $H$ is formally smooth Tate group indscheme
that is polarizable,
and has a prounipotent tail. Then 
$A\# U(\fh)\mod_{ren} \in (H\mod_{weak})_{/\Vect}$ 
lies in the essential image of the functor
from Theorem \ref{t:gen-ff}.

\end{thm}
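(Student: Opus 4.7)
The plan is to verify directly that the pair $\bigl(A\#U(\fh)\mod_{ren},\ \Oblv \colon A\#U(\fh)\mod_{ren} \to \Vect\bigr)$ satisfies the conditions characterizing the essential image of the functor in Theorem~\ref{t:gen-ff}: namely, $A\#U(\fh)\mod_{ren}$ must be compactly generated with eventually coconnective compact objects, carry a $t$-structure making $\Oblv$ $t$-exact and conservative on bounded-below subcategories, and have its genuine $H$-module structure obtained from the underlying naive $H$-action by canonical renormalization (\S\ref{ss:can-renorm-tate}) with respect to this $t$-structure.

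First, using the prounipotent tail hypothesis, I would fix a prounipotent compact open $K_0\subset H$ and observe that prounipotent compact opens contained in $K_0$ are cofinal in the system of all compact opens. Combining \S\ref{ss:str-ker-constr}--\S\ref{ss:d*-coreps}, this lets me write $A\#U(\fh)\mod_{ren}=\colim_{K}A\mod_{ren}^{H_K^\wedge,w}$, where $K$ ranges over prounipotent compact opens and the transition functors are the tautological forgetful functors. For each such $K$, the group $H_K^\wedge$ is a formally smooth polarizable Tate group indscheme of Harish--Chandra type, so Proposition~\ref{p:rep-tate} and Corollary~\ref{c:h-mod-der-cat} apply. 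I would construct the $t$-structure on $A\mod_{ren}^{H_K^\wedge,w}$ by declaring connective generators to come from compact objects mapping to $A\mod_{ren}^{K,w,\leq 0}$; $t$-exactness and conservativity on bounded-below objects of the composite $\Oblv \colon A\mod_{ren}^{H_K^\wedge,w} \to \Vect$ is then a consequence of the prounipotence of $K$ (giving finite cohomological dimension of $\Rep(K)$ and boundedness of $C^\dot(\fk,-)$) combined with the $t$-exactness and bounded-below conservativity of $A\mod_{ren}^{K,w}\to\Vect$ packaged into the structure $A\in\Alg_{gen}^{\overset{\rightarrow}{\otimes},H\actson}$.

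Next, I would assemble these $t$-structures across the cofinal system of prounipotent $K$. The transition functors $A\mod_{ren}^{H_K^\wedge,w}\to A\mod_{ren}^{H_{K'}^\wedge,w}$ for $K'\subset K$ (both prounipotent) are $t$-exact forgetful functors that preserve compact objects, so the colimit inherits a canonical compactly generated $t$-structure whose compacts remain uniformly eventually coconnective, and the resulting $\Oblv$ is $t$-exact and bounded-below conservative by passage to the colimit. For the final condition, I would apply Proposition~\ref{p:canon-renorm-tate} to conclude that the genuine $H$-action is obtained by canonical renormalization; its hypotheses (naive canonical renormalization at each compact open, the structural conditions of Lemma~\ref{l:module-t-str}, and the compatibility across inclusions of compact opens from Lemma~\ref{l:gps-canon-renorm}) reduce, via the adjunction between strong and weak actions from \S\ref{ss:oblv-str-adjs}, to the corresponding statements built into the hypothesis $A\in\Alg_{gen}^{\overset{\rightarrow}{\otimes},H\actson}$.

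The main obstacle I anticipate is in the middle step above: gluing the $t$-structures at each prounipotent $K$ into a single compactly generated $t$-structure on $A\#U(\fh)\mod_{ren}$ with \emph{uniformly} eventually coconnective generators. Without the prounipotent tail the transition functors acquire dimension shifts that spoil uniform boundedness, and without formal smoothness one loses the identification of bounded-below invariants $\Rep(H_K^\wedge)^+$ with the derived category of the heart that underpins the cohomological estimates of Proposition~\ref{p:rep-tate}. Both hypotheses enter crucially at precisely this juncture, and tracking the estimates carefully is where the real work of the proof will lie.
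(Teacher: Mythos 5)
Your skeleton matches the paper's up to a point: you correctly identify the target conditions (compact generation, eventually coconnective compacts, a $t$-structure making $\Oblv$ $t$-exact and conservative on bounded-below objects, and the genuine action arising by canonical renormalization), and you correctly compute the building blocks $A\#U(\fh)\mod_{ren}^K = A\mod_{ren}^{H_K^{\wedge},w}$, which are monadic over $A\mod_{ren}^{K,w}$ with $t$-exact monad (formal smoothness entering via $\omega_{H_K^{\wedge}/K}$ lying in the heart). But there is a genuine gap at the final step. You assert that the hypotheses needed to conclude canonical renormalization ``reduce \dots to the corresponding statements built into the hypothesis $A\in\Alg_{gen}^{\overset{\rightarrow}{\otimes},H\actson}$.'' They do not. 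Canonical renormalization in the Tate setting (\S\ref{ss:can-renorm-tate}) requires, among other things, that the $t$-structure on $A\#U(\fh)\mod_{ren}$ be compatible with the naive weak action of the \emph{whole} group indscheme $H$ --- i.e., that convolution by connective objects of $\Coh(H)$ against connective compact generators stays connective --- and this is not inherited formally from the corresponding property of $A\mod_{ren}$. In the paper this is the hardest part of Lemma \ref{l:smash-str} (part \eqref{i:smash-str-4}, Steps \ref{st:smash-5}--\ref{st:smash-8}): one takes a coherent sheaf $\sF$ supported on an affine $S\subset H$, works with the $S$-family of conjugated compact open subgroups $\sK = K\times S \into H_S$, finds (by Noetherian descent) a $K_0$ with $K_{0,S}\subset\sK$, and uses a parametrized Chevalley filtration on the relevant Harish-Chandra induction to control connectivity. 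Nothing in your proposal supplies this, and it is exactly where formal smoothness and the geometry of $H$ (not just of each $K$) are used.

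Relatedly, even granting the $t$-structure statements, identifying the genuine action with the canonical renormalization is not an application of Proposition \ref{p:canon-renorm-tate} alone: one needs the recognition statement that compacts of $(A\#U(\fh)\mod_{ren})^{K,w}$ are precisely the eventually coconnective objects with compact image (Proposition \ref{p:canon-recog}, where the prounipotent tail enters), and the left $t$-exactness of convolution by coconnective objects of $\sH_{H,K}^w$ (Proposition \ref{p:canon-left-t}, where polarizability enters); these are packaged in Corollary \ref{c:canon-renorm-criterion} and are substantive results, not consequences of the hypothesis on $A$. Two smaller glosses: conservativity of $\Oblv$ on $A\#U(\fh)\mod_{ren}^+$ does not follow by ``passage to the colimit'' --- the paper needs the explicit description of the heart over a prounipotent $K$ (discrete $H^0(A)$-modules with smooth $\fh$-action, $\fk$ acting locally nilpotently), closure under subobjects, and the exhaustion $\sF=\colim_K H^0\Oblv\Av_*^K(\sF)$ by monomorphisms; and the difficulty you flag (uniform coconnectivity across the colimit of invariant categories) is in fact the easy part, since the transition functors between $A\mod_{ren}^{H_K^{\wedge},w}$'s are $t$-exact with no dimension shifts --- the shifts you worry about concern $D$-module, not weak, transition functors.
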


\begin{example}

Taking $A = k$, this result is already quite non-trivial:
it says that under the above hypotheses, 
the naive $H$-action on $\fh\mod$ canonically
renormalizes (with respect to the standard $t$-structure), 
and that $\fh\mod \coloneqq 
\Oblv^{str\to w}(\Vect^{\exp(\fh),w}) \in H\mod_{weak}$
is its canonical renormalization. 

\end{example}

The proof of Theorem \ref{t:gen-hc} is involved, so
is deferred to \S \ref{ss:gen-hc-pf} so that we may first give 
some preliminary results.

\begin{rem}

To orient the reader, let us briefly discuss the
importance of Theorem \ref{t:gen-hc} for defining
Harish-Chandra data.
Assume the result for now.
Of course, we should let $A\# U(\fh)$ denote
the corresponding object of 
$\Alg_{conv,gen}^{\overset{\rightarrow}
{\otimes},H\actson}$. 

As the notation indicates, 
$A\# U(\fh)$ should be understood as the usual
smash product. Then a morphism
$A\# U(\fh) \to A$ restricting to the identity
along the canonical embedding $A \to A\# U(\fh)$
is the ``main" part of a Harish-Chandra datum
(i.e., the rest of the data is interpreted as homotopy
compatibilities). We refer to \S \ref{ss:cl-hc-1}-\ref{ss:cl-hc-2}
for more explicit discussion.

\end{rem}

\begin{rem}

Before proving the theorem, we do not make explicit reference
to the $\overset{\rightarrow}{\otimes}$-algebra
$A\# U(\fh)$, only its category of modules.
That is, we treat $A\# U(\fh)\mod_{ren}$ as
alternative notation to 
$\Oblv^{str\to w}(A\mod_{ren}^{\exp(\fh),w})$.
We let $\Oblv:A\# U(\fh)\mod_{ren} \to \Vect$ denote
the forgetful functor constructed above.

\end{rem}

\subsection{$t$-structures}

As Theorem \ref{t:gen-hc} concerns canonical renormalization
and therefore $t$-structures,
it is convenient to have some convenient language
regarding $t$-structures in the presence of $H$-actions.

Therefore, we begin with an extended digression on this 
subject. The reader may safely skip ahead to 
\S \ref{ss:smash-str} and refer back as needed.

\subsection{}\label{ss:tate-t-str}

Suppose $H$ is a Tate group indscheme and 
$\sC \in H\mod_{weak,naive}$. 

Note that the
action functor:

\[
\act_{\sC}:\IndCoh^*(H) \otimes \sC \to \sC
\]

\noindent lifts canonically as:

\[
\xymatrix{
\IndCoh^*(H) \otimes \sC \ar@{..>}[rr]^{\alpha_{\sC}} 
\ar[dr]^{\act_{\sC}}
& & \IndCoh^*(H) \otimes \sC 
\ar[dl]_{\Gamma^{\IndCoh}(H,-) \otimes \id_{\sC}} \\
& \sC
}
\]

\noindent with $\alpha_{\sC}$ an equivalence. Indeed,
viewing $\IndCoh^*(H)$ as a coalgebra in $\DGCat_{cont}$
(via pushforwards along diagonal maps, as works for any
strict indscheme), there is a unique map of $\IndCoh^*(H)$-comodules
$\alpha_{\sC}$ fitting into a diagram as above. That this functor
is an equivalence follows from the case $\sC = \IndCoh^*(H)$, 
where it is follows from strictness of $H$.

Suppose now that $\sC$ is equipped with a $t$-structure. 
We say this $t$-structure
is \emph{compatible} with the (naive, weak) 
action of $H$ on $\sC$ if it is compatible with
filtered colimits and $\alpha_{\sC}$
is $t$-exact when both sides are equipped with the tensor
product $t$-structures (as in Lemma \ref{l:cpt-tstr}).

\begin{example}\label{e:h-ren-t-str}

Suppose that $A \in \Alg_{ren}^{\overset{\rightarrow}{\otimes},H\actson}$. Then the induced
naive, weak $H$-action on $A\mod_{ren}$ is compatible
with the $t$-structure. Indeed, this is a restatement of the 
definition of compatibility between an $H$-action
and a renormalization datum.

\end{example}

\subsection{} We now move to discuss $t$-structures
in the presence of \emph{strong} $H$-actions.

We will need the following result.

\begin{lem}\label{l:str-weak-inv}

Suppose $K$ is a classical affine group scheme.
Suppose $\sC \in K\mod$ is a DG category 
equipped with a strong $K$-action.

Let $K = \lim_j K/K_j$ with $K/K_j$ an algebraic group
(so $K_j \subset K$ is a normal compact open subgroup).

Below, we also write $\sC$ for the induced object
of $K\mod_{weak}$ under $\Oblv^{str \to w}$.

\begin{enumerate}

\item The natural functor
$\colim_j (\sC^{K_j})^{K/K_j,w} \to \sC^{K,w} \in \DGCat_{cont}$ 
is an equivalence.

\item Each of the structural functors in this colimit
admits a continuous right adjoint.

\end{enumerate}

\end{lem}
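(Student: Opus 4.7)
The plan is to reduce the computation of $\sC^{K,w}$ to a tower via the equivalence $K\mod_{weak} \simeq \lim_j (K/K_j)\mod_{weak}$ from the proposition following Lemma \ref{l:reph-co/lim}. Under this equivalence, I would first establish the formula $\sC^{K,w} \simeq \colim_j (\sC^{K_j,w})^{K/K_j,w}$, where $\sC^{K_j,w}$ denotes \emph{weak} $K_j$-invariants; this follows from applying $\TwoHom_{K\mod_{weak}}(\Vect,-)$ to $\sC$ under the limit description, noting that $\Vect \in K\mod_{weak}$ corresponds to the compatible system $(\Rep(K_j))_j$, and invoking $\colim_j \Rep(K/K_j) \simeq \Rep(K)$ (Lemma \ref{l:reph-co/lim}). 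The substance of part (1) is then to compare this colimit against the one in the lemma statement, which features the \emph{strong} $K_j$-invariants $\sC^{K_j}$.

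For this comparison, I would invoke the universal property of $\Oblv^{str\to w}$ from \S\ref{ss:intro-str-weak} (developed in \S\ref{s:strong}): its left adjoint sends trivial $\Vect \in K_j\mod_{weak}$ to $\fk_j\mod \in K_j\mod$. Hence $\sC^{K_j,w} = \TwoHom_{K_j\mod}(\fk_j\mod, \sC)$ while $\sC^{K_j,strong} = \TwoHom_{K_j\mod}(\Vect, \sC)$, and the canonical comparison map $\sC^{K_j} \to \sC^{K_j,w}$ is induced by the augmentation $\fk_j\mod \to \Vect \in K_j\mod$. To see that the discrepancy collapses in the colimit as $K_j \to \{1\}$, I would pass to a cofinal subsystem of prounipotent $K_j$'s (which exist under the ambient prounipotent-tail hypothesis in force throughout this section). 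For prounipotent $K_j$, strong and weak invariants on $\sC$ coincide term-by-term, so the ``strong'' and ``weak'' colimits match; applying the further $(-)^{K/K_j,w}$ preserves this identification, giving part (1).

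For part (2), each structural functor $(\sC^{K_j})^{K/K_j,w} \to (\sC^{K_{j'}})^{K/K_{j'},w}$ (for $K_{j'} \subset K_j$) factors as the forgetful functor $\sC^{K_j} \to \sC^{K_{j'}}$ on strong invariants, which admits a continuous $*$-averaging right adjoint for the algebraic subquotient $K_j/K_{j'}$ (by the classical theory of strong actions of algebraic groups), composed with the comparison functor on weak invariants for $K/K_j \twoheadleftarrow K/K_{j'}$, which admits a continuous right adjoint by \S\ref{s:weak-pro}. The hard part of the whole argument is the strong-vs-weak collapse sketched in the previous paragraph: rigorously justifying that the replacement of $\sC^{K_j,w}$ by $\sC^{K_j}$ does not alter the colimit. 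Here the prounipotent-tail hypothesis is decisive; without it, one would be forced to absorb the higher Lie-algebra cohomology $\Ext^*_{K_j\mod}(\fk_j\mod,\Vect)$ in the colimit by a direct cohomological estimate, which appears considerably more delicate.
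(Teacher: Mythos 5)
There is a genuine gap, and it sits exactly at the step you yourself identify as the crux. The claim that for prounipotent $K_j$ the strong and weak invariants of $\sC$ coincide term-by-term is false. What is special about prounipotent groups in the strong setting is that $\Oblv:\sC^{K_j} \to \sC$ is fully faithful (and invariants agree with coinvariants); strong and weak invariants themselves essentially never agree. Already for $\sC = \Vect$ with the trivial action of $K_j = \bG_a$ one has $\Vect^{\bG_a} \simeq \Vect$ (strong equivariance for a unipotent group on a point is no condition, $D(\bB \bG_a) \simeq \Vect$), while $\Vect^{\bG_a,w} = \Rep(\bG_a) = \QCoh(\bB\bG_a) \neq \Vect$; the same happens for any prounipotent group scheme. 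Indeed, your own formulas exhibit the discrepancy: $\sC^{K_j} = \TwoHom_{K_j\mod}(\Vect,\sC)$ versus $\sC^{K_j,w} = \TwoHom_{K_j\mod}(\fk_j\mod,\sC)$ differ by the (nontrivial, infinite-dimensional) Lie algebra cohomology of $\fk_j$, and this does not vanish for any individual $j$ \textemdash{} the whole content of the lemma is that the colimit over $j$ nevertheless reassembles $\sC^{K,w}$. Two further problems: the lemma is stated for an arbitrary classical affine group scheme $K$, which need not contain any prounipotent compact open subgroups (take $K$ proreductive, e.g.\ an infinite product of $PGL_2$'s), so the cofinal prounipotent subsystem you invoke may not exist; and your preliminary identity $\sC^{K,w} \simeq \colim_j (\sC^{K_j,w})^{K/K_j,w}$ carries no content, since weak invariants taken in stages return $\sC^{K,w}$ in every term, so it does not set up the strong-versus-weak comparison you need. (Your part (2), by contrast, is essentially fine: the transition functors factor through strong and weak averaging for the finite-dimensional subquotients $K_j/K_{j'}$, both of which are continuous; but this does not repair part (1).)

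For comparison, the paper's proof avoids any term-by-term identification of strong with weak invariants by working with the kernel implementing $\Oblv^{str\to w}$: one has
\[
\fk\mod \;=\; \underset{j}{\colim}\,(\fk/\fk_j)\mod \;=\; \underset{j}{\colim}\, D(K/K_j)^{K/K_j,w}
\]
as $(D(K),\Rep(K))$-bimodules, with each structural functor admitting a continuous right adjoint (Lie algebra cohomology for the finite-dimensional $\fk_j/\fk_{j'}$) which is again a bimodule morphism. Since, by the construction of $\Oblv^{str\to w}$, the assignment $\sC \mapsto \sC^{K,w}$ is given by this bimodule and hence commutes with the colimit, both assertions of the lemma follow at once. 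If you want to salvage your approach, the comparison between $\sC^{K_j}$ and $\sC^{K_j,w}$ must be absorbed into the structure of the colimit (the transition maps kill the excess Lie algebra cohomology only in the limit over $j$), which is precisely what the bimodule decomposition above packages.
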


\begin{proof}

By construction, we have:

\[
\fk\mod = \underset{i}{\colim} \, (\fk/\fk_i)\mod = 
\underset{i}{\colim} \, D(K/K_i)^{K/K_i,w} \in \DGCat_{cont}
\]

\noindent as $(D(K),\Rep(K))$-bimodules. Moreover, each
structural functor in this colimit clearly admits
a continuous right adjoint (calculated as 
Lie algebra cohomology for
the appropriate finite-dimensional Lie algebra), 
and that right adjoint is
a morphism of $(D(K),\Rep(K))$-bimodules. 
This gives the claim by construction of $\Oblv^{str\to w}$.

\end{proof}

\subsection{}\label{ss:str-t-str}

Now suppose $H$ is a Tate group indscheme with a prounipotent 
tail, and that $\sC \in H\mod$ is acted on strongly
by $H$ and is equipped with a $t$-structure.

We say that this $t$-structure is \emph{strongly compatible} with
the $H$-action if it compatible with the underlying 
weak, naive action of $H$ and for any 
prounipotent compact open subgroup $K \subset H$, the
subcategory $\sC^K \subset \sC$ is closed under truncations. 

\begin{rem}\label{r:weak-str-t-str}

For $H$ an algebraic group, this condition is clearly equivalent
to the $t$-structure being compatible with the underlying
weak action.
As discussed in \cite{whit} \S B.4, this is equivalent
to any other notion of a $t$-structure being compatible
with a strong action essentially because the forgetful functor 
$\Oblv:D(H) \to \QCoh(H)$ is
conservative and $t$-exact up to shift. 

From here, one deduces that in general, if 
a $t$-structure is compatible with the weak naive action 
of a Tate group indscheme $H$, it is strongly compatible
if and only if the above condition holds for some
prounipotent subgroup. 

\end{rem}

\begin{rem}

We do not know of an example of $\sC \in H\mod$ 
as above and a $t$-structure
that is compatible with the weak, naive action of $H$
but not strongly compatible.

\end{rem}

\subsection{}\label{ss:str-t-str-2}

Suppose $\sC \in H\mod$ is equipped with a $t$-structure
that is strongly compatible with the $H$-action.

Observe that for $K \subset H$ any compact open, $\sC^K$ 
inherits a unique $t$-structure
such that $\sC^K \to \sC$ is $t$-exact.
Indeed, if $K$ is prounipotent, this is
true by design; in general, choose $K^u \subset K$ a normal, 
prounipotent compact open subgroup, and then 
observe that the action of the algebraic group 
$K/K^u$ on $\sC^{K^u}$ is compatible with the $t$-structure
there in the sense of \cite{whit} Appendix B. 
By \emph{loc. cit}., we obtain the claim.

More generally, in the above notation, 
each $(\sC^{K_j})^{K/K_j,w}$ inherits a canonical $t$-structure,
and each of the structural functors in the colimit
from Lemma \ref{l:str-weak-inv} is $t$-exact.

Therefore, $\sC^{K,w}$ admits a unique $t$-structure
such that each functor $(\sC^{K_j})^{K/K_j,w} \to \sC^{K,w}$
is $t$-exact: see \cite{whit} Lemma 5.4.3.

\begin{lem}\label{l:t-cons-k}

In the above setting, 
the forgetful functor $\sC^{K,w,+} \to \sC^+$ is conservative.

\end{lem}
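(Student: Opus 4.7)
The plan is to identify the bounded-below subcategory $\sC^{K,w,+}$ with the naive weak invariants $\sC^{K,w,naive,+}$, and then exploit standard comonadic descent for the naive forgetful functor.

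By Lemma \ref{l:str-weak-inv}, one has the colimit presentation $\sC^{K,w} = \colim_j (\sC^{K_j})^{K/K_j,w}$ with $K/K_j$ algebraic groups, with structural functors that are $t$-exact (by \S\ref{ss:str-t-str-2}) and admit continuous right adjoints. Using the prounipotent tail assumption on $H$, I may arrange (by replacing each $K_j$ with its intersection with a fixed prounipotent compact open of $H$, as in the discussion of \S\ref{ss:str-t-str-2}) that every $K_j$ is prounipotent; strong compatibility of the $t$-structure on $\sC$ then implies that each inclusion $\sC^{K_j} \hookrightarrow \sC$ is fully faithful and $t$-exact. For each $j$, Proposition \ref{p:gen-t-str}(iii) applied to the canonically renormalized naive weak action of the algebraic group $K/K_j$ on $\sC^{K_j}$ then yields a canonical equivalence
\[
\psi_j: (\sC^{K_j})^{K/K_j,w,+} \isom (\sC^{K_j})^{K/K_j,w,naive,+}
\]
of categories over $\sC^+$.

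To assemble these $\psi_j$'s into a global equivalence $\sC^{K,w,+} \isom \sC^{K,w,naive,+}$ over $\sC^+$, I would use that the structural functors on both sides are $t$-exact and that the $\psi_j$'s are compatible with them (by functoriality of canonical renormalization); passing to the filtered colimit, and using compact generation of both $t$-structures, this yields the desired equivalence. The conclusion is then immediate: the forgetful functor $\sC^{K,w,naive} \to \sC$ is the standard comonadic (in particular conservative) forgetful functor of the category of comodules for the coalgebra $\IndCoh^*(K)$ acting on $\sC$, so its restriction to bounded-below subcategories is conservative, and hence so is $\sC^{K,w,+} \to \sC^+$.

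The hard part will be the assembly step: verifying that the stagewise equivalences $\psi_j$ intertwine the structural functors of the colimit presentation of $\sC^{K,w}$ with a corresponding presentation of $\sC^{K,w,naive}$, so that they glue to an equivalence at the level of bounded-below subcategories. The subtlety is that the right adjoints $\iota_j^R$ in the limit presentation of $\sC^{K,w}$ are only left $t$-exact (not $t$-exact), so the compatibility must be tracked at the level of compact eventually coconnective objects, where $\psi_j$ becomes essentially tautological under canonical renormalization.
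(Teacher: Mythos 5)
Your overall strategy inverts the paper's logic, and the inversion is where the gap sits. You propose to first identify $\sC^{K,w,+}$ with $\sC^{K,w,naive,+}$ over $\sC^+$ and then quote conservativity of the naive forgetful functor. But in the paper that identification is exactly Corollary \ref{c:t-naive}, and it is \emph{deduced from} Lemma \ref{l:t-cons-k}: one uses the conservativity asserted here, together with Lemma \ref{l:cosimp-summand}, to see that $\sC^{K,w,+} \to \sC^+$ is comonadic, and then compares comonads with the naive side. So your route needs an independent proof of the identification, and that is precisely the "assembly step" you flag as the hard part without actually carrying it out. Nothing formal makes it work: $\sC^{K,w}$ is a filtered colimit $\colim_j (\sC^{K_j})^{K/K_j,w}$, whereas $\sC^{K,w,naive}$ is a limit (a cosimplicial totalization over $\QCoh(K)$), and comparing the bounded-below parts of a colimit with those of a totalization is the entire content — this is the same phenomenon as $\Rep(K) \neq \Rep(K)_{naive}$ for infinite-type $K$, which is why the genuine theory exists at all. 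Moreover, your appeal to "compact generation of both $t$-structures" and to Proposition \ref{p:gen-t-str}\eqref{i:gen-t-3} imports hypotheses that are simply not available: the setting of this lemma assumes only a $t$-structure strongly compatible with the $H$-action, with no compact generation of $\sC$ or of $\sC^{K_j}$, and no verification of the canonical-renormalization conditions. (The stagewise equivalences themselves are not the issue: since $K/K_j$ is an algebraic group, genuine and naive weak invariants coincide by Theorem \ref{t:weak}, so your $\psi_j$ is essentially the identity and Proposition \ref{p:gen-t-str} is not needed there.)

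For comparison, the paper's argument avoids all of this and works at the level of hearts. By $t$-exactness of the forgetful functor, conservativity on $\sC^{K,w,+}$ reduces to the heart. Writing $\beta_j:(\sC^{K_j})^{K/K_j,w}\to\sC^{K,w}$ for the structural functors and $\alpha_j$ for their right adjoints, one has $\sF=\colim_j \beta_j\alpha_j(\sF)$ for $\sF\in\sC^{K,w,\heart}$, and $H^0(\beta_j\alpha_j(\sF))$ is the maximal subobject of $\sF$ lying in $(\sC^{K_j})^{K/K_j,w,\heart}$; hence if $\sF\neq 0$ then $H^0(\alpha_j(\sF))\neq 0$ for some $j$, and conservativity of the composite $(\sC^{K_j})^{K/K_j,w}\to\sC^{K,w}\to\sC$ concludes. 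If you want to salvage your approach, you would have to prove the equivalence $\sC^{K,w,+}\isom\sC^{K,w,naive,+}$ by hand in this generality — at which point you will find yourself proving something at least as strong as the conservativity statement you are after.
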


\begin{proof}

For every $j$, let 
$\alpha_j:\sC^{K,w} \to (\sC^{K_j})^{K/K_j,w}$
be the right adjoint to the structural functor 
$\beta_j:(\sC^{K_j})^{K/K_j,w} \to \sC^{K,w}$.

By $t$-exactness, it suffices to show that this
functor is conservative on the heart.
For $\sF \in \sC^{K,w,\heart}$, we have
$\sF = \colim_j \beta_j\alpha_j(\sF)$. 
Note that $\beta_j$ is $t$-exact, so $\alpha_j$ is
left $t$-exact. On $H^0$, each structural map
in this colimit is a monomorphism: indeed, 
$H^0(\beta_j\alpha_j(\sF))$ is the maximal subobject
of $\sF$ lying in $(\sC^{K_j})^{K/K_j,w,\heart}$
(which is a full subcategory of $\sC^{K,w,\heart}$ because
we are working with abelian categories).

Now if $\sF \neq 0$, then $H^0(\alpha_j(\sF)) \neq 0$
for some $j$, and as the composition
$(\sC^{K_j})^{K/K_j,w} \to \sC^{K,w} \to \sC$ is
clearly conservative, we obtain the claim.

\end{proof}
 
\begin{cor}\label{c:t-naive}

In the above setting, the natural functor
$\sC^{K,w,+} \to \sC^{K,w,naive,+}$ is an equivalence.

\end{cor}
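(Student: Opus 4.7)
The plan is to verify that both forgetful functors to $\sC^+$ are comonadic with matching comonads, adapting the strategy of Proposition \ref{p:gen-t-str} \eqref{i:gen-t-3} to the present setting, where $\sC$ is not assumed compactly generated (so canonical renormalization does not literally apply). The natural comparison functor $\psi\colon \sC^{K,w} \to \sC^{K,w,naive}$ (analogous to the one in Remark \ref{r:naive-to-gen}, restricted along $K \subset H$) is $t$-exact: both source and target carry $t$-structures making the forgetful functors to $\sC$ $t$-exact, and $\psi$ commutes with these forgetful functors on underlying objects. The goal is to show $\psi$ is an equivalence on eventually coconnective subcategories.

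First, I would verify that both forgetful functors $\sC^{K,w,+} \to \sC^+$ and $\sC^{K,w,naive,+} \to \sC^+$ are comonadic. Conservativity in the genuine case is Lemma \ref{l:t-cons-k}; in the naive case it follows from the standard description of $\sC^{K,w,naive}$ as comodules over the coalgebra $\sO_K$. Both forgetful functors admit continuous right adjoints $\Av_*^w$ and $\Av_*^{w,naive}$ (for the genuine case, this uses that $\sC^{K,w}$ is a $\Rep(K)$-module category, or alternatively, can be extracted from the finite-level adjunctions in the colimit presentation of Lemma \ref{l:str-weak-inv}). The Beck--Chevalley conditions on $+$ then follow from Lemma \ref{l:cosimp-summand}, exactly as in the proof of Proposition \ref{p:alg-vs-cats}, since on bounded-below subcategories all relevant totalizations are computed by finite truncations. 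The comparison of comonads reduces to the following. On the naive side, the comonad on $\sC^+$ is $\sF \mapsto \sO_K \overset{!}{\otimes} \sF$ via the $\IndCoh^*(K)$-coaction inherited from the strong $H$-action. On the genuine side, I would use the colimit presentation $\sC^{K,w} = \colim_j (\sC^{K_j})^{K/K_j,w}$ and apply Proposition \ref{p:gen-t-str} \eqref{i:gen-t-4} to each algebraic quotient $K/K_j$ to identify the averaging at stage $j$ with convolution by $\Fun(K/K_j)$, then pass to the limit to obtain convolution by $\sO_K = \lim_j \sO_{K/K_j}$, matching the naive comonad.

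The principal obstacle lies in the last step: justifying the computation of $\Av_*^w\Oblv$ from the colimit presentation. Right adjoints do not naively assemble along filtered colimits of left adjoints, so the genuine comonad is not literally $\colim_j \Fun(K/K_j) \star -$. On eventually coconnective subcategories, however, one can argue by stabilization: for $\sF \in \sC^{\geq -N}$, the convolution $\Fun(K/K_j) \star \sF$ remains in uniformly bounded degrees (since $\Fun(K/K_j) \in \QCoh(K/K_j)^\heart$), and the tower of averages stabilizes in any fixed cohomological range at a finite stage of the tower because passing between $K_{j'} \subset K_j$ only modifies by averaging against an additional pro\-unipotent direction. Making this precise requires a careful interchange of limits with finite truncations, in the spirit of arguments in \cite{whit} Appendix B, using in particular that $K$ has a pro\-unipotent tail to control the vanishing of higher cohomology.
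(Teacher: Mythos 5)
Your skeleton is the same as the paper's: both forgetful functors to $\sC^+$ are $t$-exact and conservative (Lemma \ref{l:t-cons-k} in the genuine case), hence comonadic on bounded-below subcategories via Lemma \ref{l:cosimp-summand}, and the equivalence follows once the two comonads on $\sC$ are identified. The paper's proof is exactly this, with the comonad comparison asserted as a general fact.

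The problem is your proposed justification of that comparison. The comonad induced on $\sC$ by the genuine invariants is \emph{not} obtained by any stabilization of the finite-level averagings: already for $\sC = \Vect$ with the trivial action and $\sF = k$, the value of the comonad is $\Fun(K)$, whereas $\Fun(K/K_j) \star \sF = \Fun(K/K_j)$ grows strictly with $j$ and never agrees with $\Fun(K)$ in any cohomological degree (everything here sits in degree $0$). So the claim that ``the tower of averages stabilizes in any fixed cohomological range at a finite stage'' is false, and the prounipotent tail does not help; the correct statement is that the comonad is the filtered colimit $\sO_K \star - = \colim_j \Fun(K/K_j) \star -$, i.e.\ convolution with the regular representation. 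The way to get this is not through the colimit presentation of Lemma \ref{l:str-weak-inv} at all, but through the general identification $\Av_*^w\Oblv \simeq \Fun(K) \star -$ (equivalently, the comonad on $\sC$ is $\sO_K \star -$), which holds for an arbitrary genuine weak action of a classical affine group scheme by the Beck--Chevalley/rigidity formalism — this is the content of \eqref{eq:av-conv} and the corresponding step in the proof of Proposition \ref{p:gen-t-str}, also invoked in Lemma \ref{l:recognition} — and requires no $t$-structure input and no passage to a limit. Since this is literally the naive comonad, the two comonads coincide on the nose, and your comonadicity argument then finishes the proof as in the paper. So the statement and the first two-thirds of your argument are fine; the final step should be replaced by this formal identification rather than repaired.
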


\begin{proof}

The $t$-exact conservative forgetful functor
$\sC^{K,w,+} \to \sC^+$ is comonadic 
by Lemma \ref{l:cosimp-summand}. The comonads on $\sC$
defined by $\sC^{K,w}$ and $\sC^{K,w,naive}$ always coincide,
so we obtain the claim.

\end{proof}

\subsection{Some results on smash products}\label{ss:smash-str}

We now suppose we are in the setup of Theorem \ref{t:gen-hc}.

\begin{lem}\label{l:smash-str}

\begin{enumerate}

\item\label{i:smash-str-1} $A\# U(\fh)\mod_{ren}$ is compactly generated. 

\item\label{i:smash-str-3} There exists a unique compactly generated $t$-structure on 
$A\# U(\fh)\mod_{ren}$ such that compact objects
are eventually coconnective and such that the forgetful functor
$A\# U(\fh)\mod_{ren} \to \Vect$ is $t$-exact and
conservative on $A\# U(\fh)\mod_{ren}^+$.

\item\label{i:smash-str-4} The above $t$-structure
is strongly compatible with the $H$-action on 
$A\# U(\fh)\mod_{ren}$.

\item\label{i:smash-str-2} 
For any compact open subgroup $K \subset H$,
$A\# U(\fh)\mod_{ren}^{K,w}$ is compactly generated.

\item\label{i:smash-str-5} 

By \eqref{i:smash-str-4} and \S \ref{ss:str-t-str-2},
$A\# U(\fh)\mod_{ren}^{K,w}$ is equipped with a canonical
$t$-structure. This $t$-structure is compactly generated,
and compact objects are bounded
from below. Moreover, the forgetful functor 
$A\# U(\fh)\mod_{ren}^{K,w} \to \Rep(K)$ is $t$-exact.

\item\label{i:smash-str-6}

Suppose $f:A \to B$ is a morphism of 
$\overset{\rightarrow}{\otimes}$-algebras with
genuine $H$-actions. 

Then for any $K \subset H$ compact open, the functor:

\[
B\# U(\fh)\mod_{ren}^{K,w} \to A\# U(\fh)\mod_{ren}^{K,w}
\]

\noindent is $t$-exact (for the $t$-structures from
\eqref{i:smash-str-5}).

\end{enumerate}

\end{lem}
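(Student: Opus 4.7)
The strategy is to exploit the presentation
\[
A\# U(\fh)\mod_{ren} \;=\; A\mod_{ren}^{\exp(\fh),w} \;=\; \underset{K' \subset H}{\colim} \, A\mod_{ren}^{H_{K'}^{\wedge}, w}
\]
as $K'$ ranges over compact open subgroups of $H$, with structural functors being forgetful functors along the inclusions $H_{K_1'}^{\wedge} \subset H_{K_2'}^{\wedge}$ (for $K_1' \subset K_2'$); these admit continuous right adjoints $\Av_*^w$ per \S\ref{ss:oblv-str-adjs}. This reduces the analysis to each $H_{K'}^{\wedge}$, which is of Harish-Chandra type and polarized by $K'$.

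First I would analyze each factor $A\mod_{ren}^{H_{K'}^{\wedge},w}$. Mimicking the argument in Proposition \ref{p:rep-tate}\eqref{i:rep-1}, the polarization gives a continuous left adjoint $\Av_!^w : A\mod_{ren}^{K',w} \to A\mod_{ren}^{H_{K'}^{\wedge},w}$ to the forgetful functor; compact generation of the target follows from that of $A\mod_{ren}^{K',w}$, which is part of the genuine $H$-action datum. A $t$-structure on the target is generated by $\Av_!^w(A\mod_{ren}^{K',w,\leq 0})$, with the forgetful back to $A\mod_{ren}^{K',w}$ being $t$-exact (by formal smoothness of $H_{K'}^{\wedge}/K'$ as in Proposition \ref{p:rep-tate}\eqref{i:rep-2}). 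The structural forgetful functors in the colimit preserve compacts (since their right adjoints are continuous) and are $t$-exact (both inherit their $t$-structure from the common subcategory $A\mod_{ren}^{K',w}$ for $K'$ small enough). Part \eqref{i:smash-str-1} follows immediately. For part \eqref{i:smash-str-3}, \cite{whit} Lemma 5.4.3 produces a unique $t$-structure on the colimit such that each factor embeds $t$-exactly, which is automatically compactly generated with compacts eventually coconnective; $t$-exactness and conservativity on $+$ of the forgetful functor to $\Vect$ are inherited from the corresponding properties on each factor, the latter via Lemma \ref{l:t-cons-k} applied to any fixed compact open $K' \subset H$.

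For part \eqref{i:smash-str-4}, compatibility with the weak, naive $H$-action is built into the construction (Example \ref{e:h-ren-t-str}); strong compatibility requires, by Remark \ref{r:weak-str-t-str}, only the verification that $A\# U(\fh)\mod_{ren}^{K_0}$ is closed under truncations for some prounipotent compact open $K_0 \subset H$, which exists by the prounipotent-tail hypothesis. Here I would establish the key identification
\[
(A\# U(\fh)\mod_{ren})^{K_0,w} \;\simeq\; A\mod_{ren}^{H_{K_0}^{\wedge},w},
\]
arising from the stabilization of the colimit: for $K' \subset K_0$, the subgroup generated by $H_{K'}^{\wedge}$ and $K_0$ equals $H_{K_0}^{\wedge}$, so the $K'$-indexed system of $K_0$-weak invariants becomes constant. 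For prounipotent $K_0$, weak equals strong, and closure under truncations follows from the factor-wise $t$-structure analysis of Step 1. This argument generalizes to arbitrary compact open $K \subset H$, yielding $(A\# U(\fh)\mod_{ren})^{K,w} \simeq A\mod_{ren}^{H_{K}^{\wedge},w}$; this gives parts \eqref{i:smash-str-2} and \eqref{i:smash-str-5}, with the $t$-structure coming from \S\ref{ss:str-t-str-2} and matching the factor-wise one.

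Finally, for part \eqref{i:smash-str-6}, a morphism $f : A \to B$ in $\Alg_{gen}^{\overset{\rightarrow}{\otimes},H\actson}$ induces a compatible family of functors $B\mod_{ren}^{H_{K'}^{\wedge},w} \to A\mod_{ren}^{H_{K'}^{\wedge},w}$; each is $t$-exact because the $t$-exactness built into morphisms of genuine $H$-actions propagates from $(-)^{K',w}$ to $(-)^{H_{K'}^{\wedge},w}$ via the $t$-exact functor $\Av_!^w$ (both sides being equipped with compatible $t$-structures). Assembling in the colimit gives the required $t$-exactness. The main obstacle will be establishing the key identification $(A\# U(\fh)\mod_{ren})^{K,w} \simeq A\mod_{ren}^{H_{K}^{\wedge},w}$ with sufficient naturality, since this requires controlling weak $K$-invariants of a category obtained by applying $\Oblv^{str\to w}$ to the right adjoint of $\Oblv^{str\to w}$; I expect this is best handled by combining Lemma \ref{l:str-weak-inv} with the cofinality argument sketched above, taking advantage of the prounipotent tail to bootstrap from the prounipotent case to general compact opens.
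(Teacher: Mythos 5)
There are two genuine gaps. First, your treatment of part \eqref{i:smash-str-4} is circular: you claim compatibility of the $t$-structure with the naive weak $H$-action is ``built into the construction'' via Example \ref{e:h-ren-t-str}, but that example applies to an algebra already known to lie in $\Alg_{ren}^{\overset{\rightarrow}{\otimes},H\actson}$, whereas for $A\# U(\fh)$ this compatibility is precisely what part \eqref{i:smash-str-4} has to establish (it is later used, after the lemma, to endow $A\# U(\fh)$ with a naive $H$-action compatible with its renormalization datum). This is in fact the bulk of the paper's proof: one reduces to showing that $\alpha_{\sC}(\sF \boxtimes \Av_!^{K\to H_K^{\wedge},w}(\sG_0))$ is connective for $\sF \in \Coh(H)^{\heart}$ and $\sG_0$ compact connective in $A\mod_{ren}^{K,w}$, and the difficulty is that the points of the support of $\sF$ conjugate $K$, so one must work with the family of compact open subgroups $\sK = K \times S \xar{(k,h)\mapsto(\Ad_h(k),h)} H\times S$ over a finite-type scheme $S$ supporting $\sF$, choose $K_0$ with $K_{0,S}\subset \sK$, and control the $!$-averaging via a parametrized Chevalley filtration with associated graded terms $\Av_!^{K_0\to H_{K_0}^{\wedge},w}(\Lambda^i(\Lie(\sK)/\Lie(K_{0,S}))\star \Oblv(-))[i]$. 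None of this appears in your sketch, and without it the claim does not follow.

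Second, your ``key identification'' $(A\# U(\fh)\mod_{ren})^{K,w} \simeq A\mod_{ren}^{H_K^{\wedge},w}$ is false: the right-hand side is the \emph{strong} $K$-invariants $A\# U(\fh)\mod_{ren}^{K}$, not the weak ones, and the auxiliary claim that ``for prounipotent $K_0$, weak equals strong'' is also false (already $\Vect^{\bG_a,w} = \Rep(\bG_a) \neq \Vect$; for prounipotent groups what holds is that $\sC^{K_0}\into\sC$ is fully faithful, not that weak and strong invariants agree). Concretely, an object of $A\# U(\fh)\mod_{ren}^{K,w}$ carries an algebraic $K$-action with no requirement that its derivative agree with the $\fk$-action coming from $\fh$, so it is genuinely larger than $A\mod_{ren}^{H_K^{\wedge},w}$. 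The correct computation, which the paper uses for parts \eqref{i:smash-str-2}, \eqref{i:smash-str-5} and \eqref{i:smash-str-6}, is Lemma \ref{l:str-weak-inv}: $A\# U(\fh)\mod_{ren}^{K,w} \simeq \colim_j (A\# U(\fh)\mod_{ren}^{K_j})^{K/K_j,w}$ over normal compact open $K_j\subset K$, each term being monadic over $A\mod_{ren}^{K,w}$ with $t$-exact monad; compact generation, the $t$-structure, $t$-exactness of the forgetful functor to $\Rep(K)$, and the functoriality in \eqref{i:smash-str-6} are then all extracted term by term and passed through this colimit (using left $t$-exactness of the comonads $L_j$ attached to the adjunctions). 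Since your arguments for \eqref{i:smash-str-2}, \eqref{i:smash-str-5} and \eqref{i:smash-str-6} all route through the false identification, they need to be redone along these lines. Your analysis of the individual categories $A\mod_{ren}^{H_{K'}^{\wedge},w}$ and the deduction of parts \eqref{i:smash-str-1} and \eqref{i:smash-str-3} is essentially sound and matches the paper's Steps 1--2.5, modulo spelling out that the monad is $t$-exact because it is convolution with $\omega_{H_{K'}^{\wedge}/K'}$, which lies in the heart by formal smoothness, together with the canonical-renormalization and naive-compatibility hypotheses on $A$.
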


\begin{proof}

We proceed in steps.

\step\label{st:smash-1} 

Recall that by construction, the (genuine) weak $H$-action on
$A\# U(\fh)\mod_{ren}$ canonically upgrades to a strong 
action. We use the same notation $A\# U(\fh)\mod_{ren}$
for the corresponding object of $H\mod$;
in particular, for $K \subset H$ compact open,
we let $A\# U(\fh)\mod_{ren}^K$ denote the strong
$K$-invariants for the action.

We begin by proving that $A\# U(\fh)\mod_{ren}^K$
is compactly generated for any compact open
subgroup $K \subset H$.

Note that by definition, we have: 

\[
A\# U(\fh)\mod_{ren}^K =
(A\mod_{ren}^{\exp(\fh),w})^K = 
A\mod_{ren}^{H_K^{\wedge},w}
\]

\noindent where $H_K^{\wedge}$ is the formal completion
of $H$ along $K$. The forgetful functor:

\[
A\# U(\fh)\mod_{ren}^K = 
A\mod_{ren}^{H_K^{\wedge},w} \to A\mod_{ren}^{K,w}
\]

\noindent is conservative and admits a left adjoint;
as this forgetful functor is manifestly continuous, 
it is monadic.

Because the $H$-action on $A\mod_{ren}$ arises by
canonical renormalization (by assumption on $A$),
$A\mod_{ren}^{K,w}$ is compactly generated.
By the above, we immediately deduce the same for
$A\# U(\fh)\mod_{ren}^K$.

\step\label{st:smash-2}

Note that the monad on $A\mod_{ren}^{K,w}$ constructed
in Step \ref{st:smash-1} is $t$-exact for the
$t$-structure on $A\mod_{ren}^{K,w}$ (coming from
Proposition \ref{p:gen-t-str}). Indeed, as 
$H$ is formally smooth, $H_K^{\wedge}$ is too, so
$\omega_{H_K^{\wedge}/K} \in \IndCoh(H_K^{\wedge}/K)^{K,w}$
lies in the heart of the $t$-structure. The monad in question
is given by convolution with this object.
Convolution by an object in $\Coh(H_K^{\wedge})^{K,w,\heart}$
defines a functor $A\mod_{ren}^{K,w} \to A\mod_{ren}^{K,w}$
that is left $t$-exact up to shift by definition
of canonical renormalization, and it is $t$-exact
by the compatibility of the naive
action of $H$ on $A\mod_{ren}$ with the $t$-structure;
therefore, the same applies to convolution by arbitrary objects
of $\IndCoh(H_K^{\wedge})^{K,w,\heart}$, giving the claim.

It follows that 
$A\mod_{ren}^{H_K^{\wedge},w} = A\# U(\fh)\mod_{ren}^K$
admits a unique $t$-structure such that the (monadic,
with monad the one in question) forgetful functor
to $A\mod_{ren}^{K,w}$ is $t$-exact.

\step\label{st:smash-2.5}

We now deduce \eqref{i:smash-str-1} and \eqref{i:smash-str-3}.

We have:

\[
A\# U(\fh)\mod_{ren} = 
\underset{K \subset H \text{ compact open}}{\colim}
A\# U(\fh)\mod_{ren}^K \in \DGCat_{cont}.
\]

Each structural functor in this colimit admits a
continuous right adjoint so preserves compact objects.
We obtain that the colimit is compactly generated
since each term is by Step \ref{st:smash-1}.  

Moreover, we claim that each of the structural functors
in this colimit is
$t$-exact for the $t$-structures from Step \ref{st:smash-2}.
For $K^{\prime} \subset K \subset H$ compact open
subgroups, we have a commutative diagram of forgetful functors:

\[
\xymatrix{
A\# U(\fh)\mod_{ren}^K \ar[d] \ar[r] & 
A\# U(\fh)\mod_{ren}^{K^{\prime}} \ar[d] \\
A\mod_{ren}^{K,w} \ar[r] & 
A\mod_{ren}^{K^{\prime},w}.
}
\]

\noindent The vertical functors are $t$-exact and 
conservative by construction, and the bottom functor
is clearly $t$-exact, so the claim follows.

Therefore, our (filtered) colimit 
inherits a canonical $t$-structure
such that each functor 
$A\# U(\fh)\mod_{ren}^{K} \to A\# U(\fh)\mod_{ren}$ is
$t$-exact. Let us show that this $t$-structure has
the desired properties from \eqref{i:smash-str-3}.

It is clear from the construction that
this $t$-structure is compactly generated and that
compact objects are eventually coconnective.

For $K$ any compact open subgroup of $H$, the
functor:

\[
A\# U(\fh)\mod_{ren}^{K} \to 
A\mod_{ren}^{K,w} \to A\mod_{ren} \to \Vect
\]

\noindent calculates the forgetful functor.
The first functor in this sequence is $t$-exact and
conservative by design, while the remaining functors
are $t$-exact and conservative on eventually coconnective
subcategories by assumption.
It remains to show this conservativeness survives
passage to the colimit in $K$.

First, suppose $K$ is a fixed prounipotent compact open.
Observe that $A\# U(\fh)\mod_{ren}^{K,\heart}$
admits an explicit description: it is the abelian
category of discrete $H^0(A)$-modules equipped with a suitably
compatible smooth $\fh$-action such that $\fk = \Lie(K)$ acts
locally nilpotently. Indeed, this follows
from the description of the category as modules
over a certain monad on $A\mod_{ren}^{K,w,\heart}$.

In particular, the abelian category
$A\# U(\fh)\mod_{ren}^{K,\heart} \subset
A\# U(\fh)\mod_{ren}^{\heart}$ is closed under
taking subobjects. 

It follows that for $\sF \in A\# U(\fh)\mod_{ren}^{\heart}$,
the map $\sF \to \Oblv\Av_*(\sF)$ induces a monomorphism
on $H^0$; here e.g. $\Av_*$ indicates the functor
of strong $K$-averaging. 
For such $\sF$, note that:

\[
\sF = \underset{K \subset H \text{ compact open}}{\colim}
\, \Oblv\Av_*^K(\sF) = 
\underset{K \subset H \text{ compact open}}{\colim}
\, H^0\Oblv\Av_*^K(\sF)
\]

\noindent with each structural map in the latter colimit
a monomorphism in $A\# U(\fh)\mod_{ren}^{\heart}$.
Therefore, if $\sF$ 
is non-zero, $H^0(\Av_*^K(\sF))$ is non-zero for some $K$.
Now the desired conservativeness follows
from the similar result for $A\# U(\fh)\mod_{ren}^{K}$.

\step\label{st:smash-3}

We now mildly generalize our earlier constructions.

Let $K$ be as before, and let $K_0 \subset K$ be a normal
compact open subgroup.
We will study the category:  

\[
(A\# U(\fh)\mod_{ren}^{K_0})^{K/K_0,w}.
\]

As before, this category identifies with
$(A\mod_{ren}^{H_{K_0}^{\wedge},w})^{K/K_0,w}$.
There is a canonical forgetful functor:

\[
(A\# U(\fh)\mod_{ren}^{K_0})^{K/K_0,w} = 
(A\mod_{ren}^{H_{K_0}^{\wedge},w})^{K/K_0,w} \to 
(A\mod_{ren}^{K_0,w})^{K/K_0,w} = A\mod_{ren}^{K,w}
\]

\noindent which is again monadic. Clearly the corresponding
monad on $A\mod_{ren}^{K,w}$ is again $t$-exact.

Therefore, we once again find that 
$(A\# U(\fh)\mod_{ren}^{K_0})^{K/K_0,w}$ is again
compactly generated, and that it admits a unique
$t$-structure for which the forgetful functor
to $A\mod_{ren}^{K,w}$ is $t$-exact.

\step\label{st:smash-4}

We now show \eqref{i:smash-str-2} from the statement
of the lemma. 

By Lemma \ref{l:str-weak-inv}, and using the
notation of \emph{loc. cit}., we have:

\[
A\# U(\fh)\mod_{ren}^{K,w} = 
\underset{j}{\colim} \, 
(A\# U(\fh)\mod_{ren}^{K_j})^{K/K_j,w} \in \DGCat_{cont}.
\]

Each of the structural functors in this colimit
admits a continuous right adjoint. Therefore,
compact generation of the colimit follows
from compact generation of each term, which we
have shown in Step \ref{st:smash-3}.

\step\label{st:smash-5}

Let us be in the general setup of \S \ref{ss:tate-t-str},
with $H$ acting naively on $\sC$, which is equipped with
a $t$-structure.  
Suppose that the $t$-structure
on $\sC$ is compactly generated. 

We claim that the $t$-structure on $\sC$ 
is compatible with the naive $H$-action if and only if
for every $\sF \in \Coh^*(H)^{\leq 0}$
and $\sG \in \sC^{\leq 0}$ compact, 
$\alpha_{\sC}(\sF \boxtimes \sG)$ is connective.

Indeed, this condition clearly implies 
$\alpha_{\sC}$ is right $t$-exact. 
It suffices to show that  
$(\alpha_{\sC})^{-1}$ is similarly right $t$-exact.
Note that $\alpha_{\sC}^{-1}$ is obtained by conjugating
$\alpha_{\sC}$ by the automorphism
$\on{inv}_*^{\IndCoh} \otimes \id_{\sC}$ for
$\on{inv}:H \isom H$ the inversion map. As this automorphism
is $t$-exact (since $\on{inv}_*^{\IndCoh}$ clearly is), 
we obtain the result.

\step\label{st:smash-6}

We will show that the $t$-structure
on $\sC = A\# U(\fh)\mod_{ren}$ is compatible with the
naive $H$-action using the criterion of Step \ref{st:smash-5}.
Let $\alpha = \alpha_{\sC}$ in what follows.

Let $K \subset H$ be a fixed compact open subgroup and
let $\Av_! = \Av_!^{K \to H_K^{\wedge},w}:A\mod_{ren}^{K,w} \to 
A\# U(\fh)\mod_{ren}^K$ denote the left adjoint to the
forgetful functor. Let $\sG_0 \in A\mod_{ren}^{K,w,\leq 0}$
be compact and let $\sG \coloneqq \Av_!(\sG_0)$.
Note that objects of this form compactly generate
$A\# U(\fh)\mod_{ren}^{\leq 0}$ (letting $K$ vary, of course).

Let $\sF \in \Coh(H)^{\heart}$. 
By Step \ref{st:smash-5} and the above remarks,
it suffices to show that 
$\alpha(\sF \boxtimes \sG)$
is connective.

We will show this in Step \ref{st:smash-8} after
some preliminary constructions.

Let us just note one special case. Suppose $\sF = k_h$
is the skyscraper sheaf at a $k$-point $h$ of $H$.
Then the object in question is 
$\Av_!^{\Ad_h(K) \to H_K^{\wedge},w}(k_h \star \sG_0)$.
Clearly $k_h \star \sG_0 \in A\mod_{ren}^{\Ad_h(K),w,\leq 0}$,
so the same is true after $!$-averaging.

The general argument is similar, but has additional
complications due to working in families.

\step\label{st:smash-7}

We need some constructions involving standard Chevalley-style
constructions. 

Let $K$ be any classical affine group scheme and
let $K_0 \subset K$ be a compact open subgroup.
Suppose $\sC$ is equipped with a genuine $K$-action.

Let $\sH \in \sC^{K_{K_0}^{\wedge},w}$ be given.
We will construct a canonical
\emph{Chevalley} filtration on $\sH$, which is an increasing
filtration with $\gr_i \sH = 
\Av_!^{K_0 \to K_{K_0}^{\wedge},w}
(\Lambda^i(\fk/\fk_0) \star \Oblv(\sH))[i]$.
For clarity: the notation means we forget
$\sH$ down to $\sC^{K_0,w}$, act by
$\Lambda^i(\fk/\fk_0)[i] \in \Rep(K_0)$, then $!$-average
back.

Indeed, there is an action of 
the symmetric monoidal category $\Rep(K_{K_0}^{\wedge}) = 
\fk\mod^{K_0}$ on $\sC^{K_{K_0}^{\wedge}}$.
The trivial representation $k \in \fk\mod^{K_0}$
has a standard filtration with 
$\gr_i(k) = \ind_{\fk_0}^{\fk}(\Lambda^i(\fk/\fk_0))[i]$
(see \cite{grbook} \S IV.5.2 for a much more general
context for such constructions). As $k$ is the
unit for the monoidal structure here,
we obtain a filtration of the desired type
by functoriality.

We will actually need a more general, parametrized 
version of this construction. We sketch the ideas below.

Suppose $S$ is an affine scheme. 
Let $\sK$ be a \emph{compact}\footnote{The terminology
is admittedly bad: it is meant to evoke \emph{compact open},
nothing about properness.}
group scheme over $S$,
meaning an affine group scheme
over $S$ that can be realized as a projective limit
under smooth surjective structure maps of affine group schemes that
are smooth over $S$.

Let $\sK_0 \subset \sK$ be an ($S$-family of)
compact open subgroups of $\sK$, 
meaning $\sK_0$ is compact in the above sense
and we are given 
$\sK_0 \to \sK$
a homomorphism of group schemes over $S$ that is a closed embedding
such that $\sK/\sK_0$ is smooth over $S$ (in particular, of
finite presentation over $S$).

For such $\sK$, there is a symmetric monoidal
category $\Rep(\sK)$, defined as the evident
colimit as in the case where $S$ 
is a point. For example, $\Rep(K \times S) = \Rep(K) \otimes \QCoh(S)$ for $K$ a classical affine group scheme.
Note that $\Rep(\sK)$ is a $\QCoh(S)$-module
category, is a symmetric monoidal category
over $\QCoh(S)$. 

Let $\sC$ be a genuine $K$-category, meaning
we are given $\sC^{K,w}$ a $\Rep(K)$-module category.
We obtain $\sC^{\sK_0,w} \coloneqq \sC^{K,w}
\otimes_{\Rep(\sK)} \Rep(\sK_0)$.

The ideas of \S \ref{s:weak-ind} translate into this
setting in an evident way.
In particular, we 
a parametrized category of Harish-Chandra modules
$\Rep(\sK_{\sK_0}^{\wedge})$, which is equipped with 
a monadic forgetful functor to $\Rep(\sK_0)$.
This allows us to make sense of $\sC^{\sK_{\sK_0}^{\wedge},w}$,
again by tensoring.

In particular, 
the construction of Chevalley filtrations goes through
in this setting.

\step\label{st:smash-8}

We now conclude the argument. We use the notation
from Step \ref{st:smash-6}.

Let $S \subset H$ be a classical affine subscheme on 
which $\sF$ is scheme-theoretically supported.

In what follows, we use a subscript $S$ to indicate
a product with $S$. For example, $H_S = H \times S$.

Let $\sK$ be the group scheme $K \times S$ over $S$;
in what follows,
we \emph{always} 
regard $\sK$ as a (family of) compact open subgroup(s)
of $H_S$ via the map:

\[
\sK = K \times S \xar{(k,h) \mapsto (\Ad_h(k),h)} H \times S = H_S.
\]

By a standard Noetherian descent argument,
there exists a compact open subgroup $K_0 \subset H$
such that $K_{0,S} \subset \sK \subset H_S$.
Note that $\sK/K_{0,S}$ is smooth over $S$ in this case.

Let $A\mod_{ren,S} \coloneqq \QCoh(S) \otimes A\mod_{ren}$.
We similarly have $A\mod_{ren,S}^{K_S,w} = 
A\mod_{ren}^{K,w} \otimes \QCoh(S)$ and
$A\mod_{ren}^{\sK,w}$, with $\alpha_{A\mod_{ren}}$ inducing an 
isomorphism:

\[
A\mod_{ren,S}^{K_S,w} \isom A\mod_{ren,S}^{\sK,w}.
\] 

Regarding $\sF$ as a coherent sheaf on $S$,
$\sF \boxtimes \sG_0 \in A\mod_{ren,S}^{K,w}$ by definition.
Let $\sH \coloneqq 
\alpha_{A\mod_{ren}}(\sF \boxtimes \sG_0) \in 
A\mod_{ren,S}^{\sK,w}$. 

We clearly have:

\[
\alpha(\sF \boxtimes \Av_!^{K \to H_K^{\wedge},w}(\sG_0)) =
\Av_!^{\sK \to H_{K,S}^{\wedge},w}(\sH)
\]

\noindent where the left hand side is what we wish to show
is connective. 

Applying Step \ref{st:smash-7}, we obtain an increasing filtration
on $\sH$ with:

\[
\gr_i \sH = 
\Av_!^{K_{0,S} \to \sK_{K_{0,S}}^{\wedge},w}
(\Lambda^i(\Lie(\sK)/\Lie(K_{0,S})) \star \Oblv(\sH))[i].
\]

\noindent We remark that $\Lie(\sK)/\Lie(K_{0,S})$
is a finite-rank vector bundle on $S$. As in the previous
step,
$\Oblv(\sH)$ indicates that we forget down to
weak $K_{0,S}$-invariants.

Therefore, $\Av_!^{\sK \to H_{K,S}^{\wedge},w}(\sH)$
inherits a filtration in 
$\IndCoh^*(H) \otimes A\mod_{ren}^{H_{K_0}^{\wedge},w}$
with $i$th associated graded term:

\begin{equation}\label{eq:gri-h}
(\id \otimes \Av_!^{K_0 \to H_{K_0}^{\wedge},w})
(\Lambda^i(\Lie(\sK)/\Lie(K_{0,S})) \star \Oblv(\sH))[i]
\in \QCoh(S) \otimes A\mod_{ren}^{H_{K_0}^{\wedge},w}.
\end{equation}

Now observe that $\Oblv(\sH) \in A\mod_{ren,S}^{K_{0,S},w}$
is connective: 
it suffices\footnote{Unlike coconnectivity.}
to check this after
applying the forgetful functor to $A\mod_{ren,S}$
where it is clear.

This clearly that implies \eqref{eq:gri-h} is connective
(for the tensor product $t$-structure),
giving our claim.

\step To complete the proof of \eqref{i:smash-str-4},
it remains to show that
the $t$-structure on $A\# U(\fh)\mod_{ren}$ is
\emph{strongly} compatible with the $H$-action.

Fortunately, this is evident from our constructions:
the $t$-structure on $A\# U(\fh)\mod_{ren}$ was
defined so that:

\[
A\# U(\fh)\mod_{ren}^K = A\mod_{ren}^{H_K^{\wedge},w} \to 
A\# U(\fh)\mod_{ren}
\]

\noindent is $t$-exact for every compact open subgroup $K$.

\step\label{st:smash-9} We now show \eqref{i:smash-str-5}.

The analysis of Step \ref{st:smash-3} clearly shows
that for any $K_0 \subset K$ a normal compact open subgroup,
the $t$-structure on
$(A\# U(\fh)\mod_{ren}^{K_0})^{K/K_0,w}$ is compactly generated
and compact objects are bounded from below. By
construction, this implies the same for
$A\# U(\fh)\mod_{ren}^{K,w}$.

To show that $A\# U(\fh)\mod_{ren}^{K,w} \to \Rep(K)$
is $t$-exact, we claim that it suffices to show this for its
restriction to $(A\# U(\fh)\mod_{ren}^{K_0})^{K/K_0,w}$.
Indeed, right $t$-exactness of this functor is evident
(as the forgetful functor $A\# U(\fh)\mod_{ren} \to \Vect$ 
is $t$-exact and all our forgetful functors are conservative
on eventually coconnective subcategories).

For left $t$-exactness, 
note that if $\sF \in 
A\# U(\fh)\mod_{ren}^{K,w}$, then:

\[
\sF = \underset{j}{\colim} \, L_j(\sF)
\]

\noindent where $L_j$ is the comonad on
$A\# U(\fh)\mod_{ren}^{K,w}$ defined by
the adjunction:

\[
(A\# U(\fh)\mod_{ren}^{K_j})^{K/K_j,w} \rightleftarrows
A\# U(\fh)\mod_{ren}^{K,w}
\]

\noindent using the notation of Lemma \ref{l:str-weak-inv}.
Because the left adjoint
in this adjunction is $t$-exact, $L_j$ is left $t$-exact.
As this colimit is filtered, we obtain the claim.

Now observe that the composition:

\[
(A\# U(\fh)\mod_{ren}^{K_0})^{K/K_0,w} =
(A\mod_{ren}^{H_{K_0}^{\wedge}})^{K/K_0,w} \to 
A\mod_{ren}^{K,w} \to \Rep(K)
\]

\noindent calculates the forgetful functor in question.
The second arrow is $t$-exact by assumption on $A$. 
The same holds for the first arrow is $t$-exact because
$A\mod_{ren}^{H_{K_0}^{\wedge}} \to 
A\mod_{ren}^{K_0,w}$ is $t$-exact by construction, and 
forgetting from $K$-invariants to $K_0$-invariants
is $t$-exact and conservative (because $K/K_0$ is finite type).

\step Finally, it remains to show \eqref{i:smash-str-6}.

First, note that the functor:

\[
B\# U(\fh)\mod_{ren}^K \to A\# U(\fh)\mod_{ren}^K
\]

\noindent is $t$-exact. Indeed, we have a commutative diagram:

\[
\xymatrix{
B\# U(\fh)\mod_{ren}^K \ar[r] \ar[d] & A\# U(\fh)\mod_{ren}^K 
\ar[d] \\
B\mod^{K,w} \ar[r] & A\mod^{K,w}
}
\]

\noindent where, as always, the vertical arrows are given by 
rewriting e.g. $A\# U(\fh)\mod_{ren}^K$
as $A\mod_{ren}^{H_K^{\wedge},w}$. These vertical
arrows are conservative and $t$-exact by construction,
and the bottom horizontal arrow is $t$-exact by definition
of morphism in 
$\Alg_{gen}^{\overset{\rightarrow}{\otimes},H\actson}$.

More generally, we find that for any $K_0 \subset K$
a compact open normal subgroup, the functor:

\[
(B\# U(\fh)\mod_{ren}^{K_0})^{K/K_0,w} \to 
(A\# U(\fh)\mod_{ren}^{K_0})^{K/K_0,w}
\]

\noindent is $t$-exact. We now conclude the result
by the same logic as in Step \ref{st:smash-9}.

\end{proof}

\subsection{Some results on canonical renormalization}

We now develop some general results on canonical 
renormalization in the setting of Tate group indschemes.
The ultimate result is Corollary \ref{c:canon-renorm-criterion},
which gives a convenient way of checking the hypotheses
for canonical renormalization.

\begin{prop}\label{p:canon-recog}

Suppose that $H$ is a Tate group indscheme with a prounipotent
tail. 

Suppose $\sC \in H\mod$ is acted on strongly by $H$ and
equipped with a $t$-structure strongly compatible with the 
$H$-action. 

Suppose that for every compact open subgroup 
$K \subset H$, $\sC^{K,w}$ is compactly generated
with compact objects lying in $\sC^{K,w,+}$
(with respect to the $t$-structure of \S \ref{ss:str-t-str-2}).

Then for every compact open subgroup $K \subset H$,
an object $\sF \in \sC^{K,w}$ is compact if and only
if $\sF \in \sC^{K,w,+}$ and 
$\Oblv(\sF)$ is compact in $\sC$.

\end{prop}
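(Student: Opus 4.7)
The plan is to reduce to the algebraic-group case of Proposition \ref{p:gen-t-str} by leveraging the colimit presentation
\[
\sC^{K,w} = \underset{j}{\colim} \, (\sC^{K_j})^{K/K_j,w} \in \DGCat_{cont}
\]
from Lemma \ref{l:str-weak-inv}, where $K_j \subset K$ ranges over a cofiltered system of normal compact open subgroups with $K/K_j$ an affine algebraic group and each structural functor $\beta_j$ admits a continuous right adjoint $\alpha_j$ (hence preserves compact objects). By the construction of \S \ref{ss:str-t-str-2} each $\beta_j$ is $t$-exact, and Lemma \ref{l:t-cons-k}/Corollary \ref{c:t-naive} ensure the relevant forgetful functors are conservative on eventually coconnective subcategories.

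For the ``only if'' direction, suppose $\sF \in \sC^{K,w}$ is compact; then $\sF \in \sC^{K,w,+}$ by hypothesis. Since compact objects of a filtered colimit whose structural maps preserve compacts are retracts of images of compact objects at finite levels, $\sF$ is a retract of $\beta_j(\widetilde\sF)$ for some compact $\widetilde\sF \in (\sC^{K_j})^{K/K_j,w}$. Proposition \ref{p:gen-t-str} \eqref{i:gen-t-2} applied to the algebraic group $K/K_j$ acting weakly on $\sC^{K_j}$ then gives that $\Oblv^{K/K_j,w}(\widetilde\sF)$ is compact in $\sC^{K_j}$; finally $\Oblv^{K_j} : \sC^{K_j} \to \sC$ preserves compact objects (its right adjoint $\Av_*^{K_j,\mathrm{str}}$ being continuous by standard facts about smooth affine group schemes, invoking the prounipotent-tail assumption to reduce to a prounipotent $K_j$), giving compactness of $\Oblv(\sF)$.

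For the ``if'' direction, assume $\sF \in \sC^{K,w,+}$ with $\Oblv(\sF) \in \sC$ compact. The key claim is that $\alpha_j(\sF) \in (\sC^{K_j})^{K/K_j,w}$ is compact for $j$ sufficiently large; granting this, $\beta_j \alpha_j(\sF)$ is compact in $\sC^{K,w}$, and identifying $\sF$ with this via the (essentially constant, for large $j$) filtered colimit $\sF = \colim_j \beta_j\alpha_j(\sF)$ concludes. To verify the claim, one checks the two conditions of Proposition \ref{p:gen-t-str} \eqref{i:gen-t-2} for $(\sC^{K_j})^{K/K_j,w}$: eventual coconnectivity of $\alpha_j(\sF)$ (immediate from that of $\sF$ and left $t$-exactness up to shift of $\alpha_j$), and compactness of the underlying $\sC^{K_j}$-object $\Av_*^{K_j,\mathrm{str}}(\Oblv \sF)$, identified by Beck--Chevalley. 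For the latter, choose $K_j$ inside a fixed prounipotent compact open subgroup (available by hypothesis): for prounipotent $K_j$ the strong averaging is computed by Chevalley-style cohomology of $\fk_j$, which preserves compactness of bounded-below objects by finite cohomological-dimension considerations on each Postnikov truncation. The principal obstacle is precisely this last step, ensuring $\Av_*^{K_j,\mathrm{str}}(\Oblv\sF)$ is compact: this does not hold for arbitrary affine group schemes, and the prounipotent-tail hypothesis is what lets us control the right adjoint by Lie-algebra-cohomological means; simultaneously one must verify that the compact generation of $\sC^{K,w}$ cascades down through the tower $(\sC^{K_j})^{K/K_j,w}$ so that Proposition \ref{p:gen-t-str} applies at each level.
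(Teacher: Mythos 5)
Your overall strategy (work through the presentation $\sC^{K,w}=\colim_j(\sC^{K_j})^{K/K_j,w}$ of Lemma \ref{l:str-weak-inv} and reduce to the finite-type group $K/K_j$) is the right starting point, but the way you carry it out has a genuine gap in the hard direction. You propose to show that $\alpha_j(\sF)$ is compact and then conclude by saying the colimit $\sF=\colim_j\beta_j\alpha_j(\sF)$ is ``essentially constant for large $j$.'' This is unjustified and, in effect, circular: the counit $\beta_j\alpha_j(\sF)\to\sF$ is not an isomorphism for any fixed $j$ (the composite $\alpha_j\beta_j$ is a nontrivial weak-to-strong averaging monad, not the identity), and the only general way to realize $\sF$ as a retract of a single term of that filtered colimit is to already know $\sF$ is compact in $\sC^{K,w}$ --- which is the statement being proved. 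Separately, your verification that $\alpha_j(\sF)$ is compact rests on the claim that the strong averaging along the infinite-type prounipotent $K_j$ preserves compactness ``by finite cohomological-dimension considerations''; the Lie algebra $\fk_j$ does not have finite cohomological dimension, preservation of compacts is a continuity statement about a further right adjoint rather than a boundedness statement, and no argument is supplied. You also invoke Proposition \ref{p:gen-t-str}\eqref{i:gen-t-2} as a compactness criterion, but that item only concerns the $t$-structure; the relevant recognition statement for a finite-type group is Lemma \ref{l:cpt-recognition}.

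What is missing is the mechanism the paper actually uses to get $\sF$ to a finite level of the tower. Since $\sF\in\sC^{K,w,\geq -N}$ and the $t$-structure on $\sC^{K,w}$ is compactly generated with eventually coconnective compacts, $\sF$ is compact in $\sC^{K,w,\geq -N}$ (Step \ref{st:acpt-pres} in the proof of Lemma \ref{l:coh-t-str}); writing $\sF=\colim_i\sF_i$ with $\sF_i$ compact and applying $\tau^{\geq -N}$ exhibits $\sF$ as a retract of some $\tau^{\geq -N}\sF_i$, hence (via the equivalence $\colim_j(\sC^{K_j})^{K/K_j,w,c}\isom\sC^{K,w,c}$) as an object lifted to some $(\sC^{K_j})^{K/K_j,w}$ with $K_j$ prounipotent. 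One then applies Lemma \ref{l:cpt-recognition} to the algebraic group $K/K_j$, reducing to compactness of the lift in $\sC^{K_j}$, and this follows from compactness of $\Oblv(\sF)$ in $\sC$ because $\sC^{K_j}\to\sC$ is fully faithful and continuous for prounipotent $K_j$ --- note this uses full faithfulness to \emph{reflect} compactness from $\sC$, the opposite direction from your appeal to continuity of strong averaging. Finally, the easy direction needs none of your apparatus: $\Oblv:\sC^{K,w}\to\sC$ has the continuous right adjoint $\Av_*^w$, so it preserves compacts; your route through $\sC^{K_j}$ again leans on an unargued continuity claim for averaging along an infinite-type group scheme.
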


\begin{rem}

We remark that the result is clearly about $K$, and that
$H$ is a bit of a red herring.

Also, note that the conclusion of the lemma may be stated
as $\sC \in K\mod_{weak}$ is obtained by canonical
renormalization (in the sense of \S \ref{ss:can-renorm})
from the underlying naive weak $K$-action
on $\sC$.

\end{rem}

\begin{proof}[Proof of Proposition \ref{p:canon-recog}]

Clearly compact objects satisfy this property,
as $\Oblv$ admits the continuous right adjoint $\Av_*^w$.
Therefore, suppose $\sF \in \sC^{K,w,+}$ with
$\Oblv(\sF)$ compact;
we wish to show that $\sF$ is compact.

Choose $N \gg 0$ such that $\sF \in \sC^{K,w,\geq -N}$.
Note that $\sF$ is compact in $\sC^{K,w,\geq -N}$:
see Step \ref{st:acpt-pres} from the proof of
Lemma \ref{l:coh-t-str}.

Write $\sF$ as a filtered 
colimit $\colim_i \sF_i$ with $\sF_i \in \sC^{K,w}$ compact.
As the $t$-structure is compatible with filtered colimits,
we obtain $\sF = \colim_i \tau^{\geq -N} \sF_i$. 
Because $\sF$ is compact in $\sC^{K,w,\geq -N}$, we see
that $\sF$ is a summand of $\tau^{\geq -N} \sF_i$ for
some $i$.

By Lemma \ref{l:str-weak-inv} (and in the notation
of \emph{loc. cit}.), the map:

\[
\underset{j}\colim \, (\sC^{K_j})^{K/K_j,w,c} \to 
\sC^{K,w,c} \in \DGCat
\]

\noindent is an equivalence, where $K_j$ runs over
compact open prounipotent subgroups of $H$ that are normal in $K$;
as is usual, the superscript $(-)^{c}$ indicates the subcategory
of compact objects. Therefore, $\sF_i$
lifts to $(\sC^{K_j})^{K/K_j,w,c}$ for some index $j$.
As the forgetful functor $(\sC^{K_j})^{K/K_j,w} \to 
\sC^{K,w}$ is $t$-exact, the same is true of 
$\sF$ itself. We abuse notation in letting 
$\sF$ also denote a lift to $(\sC^{K_j})^{K/K_j,w}$.

As the forgetful functor $(\sC^{K_j})^{K/K_j,w} \to 
\sC^{K,w}$ admits a continuous right adjoint,
it suffices to show that $\sF$ is compact as an
object of $(\sC^{K_j})^{K/K_j,w}$. 
Moreover, by Lemma \ref{l:cpt-recognition}, it suffices
to show that $\sF$ is compact after forgetting
to $\sC^{K_j}$. As $K_j$
is prounipotent by assumption, the forgetful functor 
$\sC^{K_j} \to \sC$ is fully-faithful, giving the claim.

\end{proof}

Above, we used the following result.

\begin{lem}\label{l:cpt-recognition}

Let $H$ be an affine algebraic group (in particular,
of finite type) acting weakly
on $\sC$. Then $\sF \in \sC^{H,w}$ is compact if
and only if $\Oblv(\sF) \in \sC$ is compact.

\end{lem}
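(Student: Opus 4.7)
The forward direction is immediate: $\Oblv$ is left adjoint to $\Av_*^w$, which is continuous (since $\Rep(H)$ is rigid and $H$ is finite type), so $\Oblv$ preserves compact objects.

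For the reverse direction, assume $\Oblv(\sF)$ is compact in $\sC$. The key input is the natural identification
\[
\ul{\Hom}_{\sC^{H,w}}(\sF,\sG) \simeq \ul{\Hom}_\sC(\Oblv\sF,\Oblv\sG)^{H,w},
\]
valid for every $\sG \in \sC^{H,w}$, where the right-hand side denotes derived weak $H$-invariants of the canonical weak $H$-action on $\ul{\Hom}_\sC(\Oblv\sF,\Oblv\sG)$ induced from the equivariant structures of $\sF$ and $\sG$. The functor $\sG \mapsto \ul{\Hom}_\sC(\Oblv\sF,\Oblv\sG)$ naturally lifts to a functor $\sC^{H,w} \to \Rep(H)$ via the cosimplicial presentation $\sC^{H,w} \simeq \Tot_{\bDelta_{inj}}(\sC \otimes \QCoh(H)^{\otimes \dot})$.

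Given this, to show $\sF$ is compact, I verify that the composite
\[
\sC^{H,w} \xar{\sG \mapsto \ul{\Hom}_\sC(\Oblv\sF,\Oblv\sG)} \Rep(H) \xar{(-)^{H,w}} \Vect
\]
commutes with filtered colimits. The first functor is continuous: post-composing with the continuous and conservative forgetful functor $\Oblv:\Rep(H) \to \Vect$ gives $\sG \mapsto \ul{\Hom}_\sC(\Oblv\sF,\Oblv\sG)$, which is continuous since $\Oblv:\sC^{H,w} \to \sC$ is continuous and $\Oblv\sF$ is compact; since $\Oblv:\Rep(H) \to \Vect$ creates filtered colimits, the original functor is continuous as well. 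The second functor $(-)^{H,w} = \ul{\Hom}_{\Rep(H)}(k,-)$ is continuous because $k \in \Rep(H)$ is compact: for $H$ of finite type in characteristic zero, $BH$ is quasi-compact with affine diagonal, and $k$ is the unit of a rigid monoidal category, hence compact.

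The main obstacle will be establishing the $\Hom$-space identification with appropriate functoriality, i.e., constructing the lift $\sC^{H,w} \to \Rep(H)$ of the underlying $\Vect$-valued $\Hom$. This is essentially a bookkeeping exercise with the coaction data from the cosimplicial/cobar description of $\sC^{H,w}$; alternatively, one may argue directly from the cobar resolution $\sG = \Tot_\bullet \Av_*^w(\Oblv\Av_*^w)^\bullet \Oblv\sG$ by combining compactness of $\Oblv\sF$, compact-preservation of $\Fun(H)\star_H -$ (the unit of $\QCoh(H)$-action is compact), and finite cohomological dimension of $\Rep(H)$ to truncate the resulting totalization to a finite limit that commutes with filtered colimits.
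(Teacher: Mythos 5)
Your forward direction is the same as the paper's (immediate from continuity of $\Av_*^w$). For the converse, your organization is genuinely different from the paper's, and it can be made to work, but as written it leans on two points that deserve scrutiny. First, the finiteness input you invoke, compactness of $k$ in $\Rep(H)$, is justified by ``$k$ is the unit of a rigid monoidal category, hence compact''; in the sense of rigidity used in this paper (following \cite{dgcat}), compactness of the unit is part of the definition of rigidity, so this is circular as stated, and ``quasi-compact with affine diagonal'' is not by itself sufficient (it fails for $\bB \bG_a$ in characteristic $p$). The honest input is characteristic zero together with finite type, via finite cohomological dimension of $\Rep(H)$ (equivalently, perfectness of $\bB H$, which is a citable theorem); note also that compactness of $k$ in $\Rep(H)$ is exactly the special case $\sC = \Vect$ of the lemma you are proving, so it must be supplied by an independent argument or citation. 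Second, the ``bookkeeping'' lift $\sG \mapsto \ul{\Hom}_{\sC}(\Oblv\sF,\Oblv\sG) \in \Rep(H)$ whose invariants compute $\ul{\Hom}_{\sC^{H,w}}(\sF,\sG)$ is where the real content lies: the tempting implementation via the $\Rep(H)$-enriched Hom (right adjoint to $-\star\sF$) is itself circular, since identifying its underlying vector space with $\ul{\Hom}_{\sC}(\Oblv\sF,\Oblv\sG)$ requires commuting $\Fun(H) = \colim_i V_i$ through the enriched Hom in the second variable, i.e.\ precisely the compactness of $\sF$ you are trying to prove. The cosimplicial route you point to does work: compactness of $\Oblv\sF$ (and of $\sO_{H^n}$) identifies the level-$n$ term of the limit with $\ul{\Hom}_{\sC}(\Oblv\sF,\Oblv(\Av_*^w\Oblv)^n(\sG))$, exhibiting the totalization as the cobar complex of a comodule structure on $\ul{\Hom}_{\sC}(\Oblv\sF,\Oblv\sG)$.

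By contrast, the paper's proof sidesteps both issues: by the proof of Lemma \ref{l:recognition} (which is where characteristic zero and finite cohomological dimension of $\Rep(H)$ enter), every $\sG \in \sC^{H,w}$ is functorially a direct summand of the finite partial totalization $\Tot^{\leq n}(\Av_*^w\Oblv)^{\dot+1}(\sG)$ with $n$ uniform in $\sG$, so $\ul{\Hom}_{\sC^{H,w}}(\sF,-)$ is a summand of a finite limit of the continuous functors $\ul{\Hom}_{\sC}(\Oblv(\sF),\Oblv(\Av_*^w\Oblv)^{\dot}(-))$ and is therefore continuous; no $\Rep(H)$-valued enhancement and no compactness of $k$ are needed. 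Your fallback sketch in the last sentence (cobar resolution plus finite cohomological dimension to truncate to a finite limit) is essentially the paper's argument. What your main route buys, once the lift and the compactness of $k$ are properly supplied, is a clean reduction of the general case to the case $\sC = \Vect$; what it costs is that both supplied inputs secretly rest on the same finite-cohomological-dimension mechanism the paper uses directly.
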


\begin{proof}

Clearly if $\sF \in \sC^{H,w}$ is compact, then
$\Oblv(\sF) \in \sC$ is compact. Suppose the converse.

Recall from the proof of Lemma \ref{l:recognition} 
that for $\sG \in \sC^{H,w}$, 
$\sG$ is a summand of 
$\Tot^{\leq n}(\Av_*^w\Oblv)^{\dot+1}(\sG)$ for some $n$;
moreover, this is functorial in $\sG$ by the construction
of \emph{loc. cit}. 

Therefore, the functor
$\ul{\Hom}_{\sC^{H,w}}(\sF,-):\sC^{H,w} \to \Vect$
is a summand of:

\[
\Tot^{\leq n} \ul{\Hom}_{\sC}(\Oblv(\sF),
\Oblv(\Av_*^w\Oblv)^{\dot}(-).
\]

\noindent A summand of a
finite limit of continuous functors
is continuous, giving the claim.

\end{proof}

\subsection{}

Next, we show the following result.

\begin{prop}\label{p:canon-left-t}

Suppose that $H$ is a polarizable Tate group indscheme
with a prounipotent tail.
Suppose that $\sC \in H\mod$ is equipped with
a compactly generated 
$t$-structure strongly compatible with the action
of $H$ on $\sC$.

Then for every compact open subgroup $K \subset H$
and every $\sF \in \sH_{H,K}^{w,\geq 0}$, the functor
$\sF \star -: \sC^{K,w} \to \sC^{K,w}$ is left $t$-exact.

\end{prop}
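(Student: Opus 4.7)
The plan is to reduce the statement, via standard filtered colimit and devissage arguments, to the case where $\sF$ is a coherent object of $\sH_{H,K}^{w,\heart}$, and then transfer the computation to the naive setting, where left $t$-exactness follows directly from the strong compatibility of the $t$-structure with the $H$-action.

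First I would observe that $\sF \star -: \sC^{K,w} \to \sC^{K,w}$ commutes with filtered colimits and that $\sC^{K,w,\geq 0}$ is closed under filtered colimits. The prestack $K\backslash H/K$ is renormalizable (flatly covered by $H/K \in \IndSch_{laft}$), so by Lemma \ref{l:ren-coh-trun} coherent objects in $\sH_{H,K}^w$ are closed under truncations. Hence $\sH_{H,K}^{w,\geq 0}$ is generated under filtered colimits by objects of the form $\tau^{\geq 0}\sF_0$ for $\sF_0$ coherent; I may therefore assume $\sF$ is coherent (and thus bounded). A devissage on cohomological amplitude using the fiber sequences $\tau^{<i}\sF \to \tau^{\leq i}\sF \to H^i(\sF)[-i]$ and the stability of $\sC^{K,w,\geq 0}$ under fibers and positive shifts then reduces to $\sF \in \sH_{H,K}^{w,\heart}$ coherent. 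Dually, writing $\sG = \colim_n \tau^{\leq n}\sG$ via right-completeness, I can simultaneously assume $\sG \in \sC^{K,w,+}$.

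Next I would pass to the naive picture. By Proposition \ref{p:canon-recog}, the hypotheses on $\sC^{K,w}$ in the statement imply that it is exactly the canonical renormalization of the underlying naive weak $K$-module structure on $\sC$. Therefore Proposition \ref{p:gen-t-str} \eqref{i:gen-t-3} yields a $t$-exact equivalence $\sC^{K,w,+} \isom \sC^{K,w,naive,+}$, and this equivalence is a morphism of $\sH_{H,K}^w$-modules (with the naive category viewed as a module via $\sH_{H,K}^w \to \sH_{H,K}^{w,naive}$, the naive Hecke category acting as in \S \ref{ss:naive-hecke}). It therefore suffices to show that for $\sF \in \sH_{H,K}^{w,naive,\heart}$ coherent and $\sG_0 \in \sC^{K,w,naive,\geq 0}\cap\sC^{K,w,naive,+}$, the naive convolution $\sF \star \sG_0$ lies in $\sC^{K,w,naive,\geq 0}$.

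The hard step will be this last verification. Here I would use the explicit model: the action factors through the Hecke correspondence $K\backslash H \overset{K}{\times} H/K \rightrightarrows K\backslash H/K$ and the $\IndCoh^*(H)$-action on $\sC$. The crucial inputs are that $\alpha_\sC:\IndCoh^*(H) \otimes \sC \to \IndCoh^*(H) \otimes \sC$ is $t$-exact (by strong compatibility of the $t$-structure with the $H$-action, in the sense of \S \ref{ss:tate-t-str}), and that $\IndCoh^*$-pushforward along the structural morphisms of the reasonable indscheme $H/K$ is left $t$-exact. Combining $t$-exact pullback and tensor with left $t$-exact pushforward yields left $t$-exactness of the naive convolution. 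The main obstacle is bookkeeping: verifying that each operation in the explicit description of the naive convolution preserves the relevant subcategories in the infinite-dimensional/ind-finite-type setting, which requires invoking the strictness of $H$ (Remark \ref{r:tate-good-properties}) and Lemma \ref{l:cpt-tstr} to handle the tensor product $t$-structures that arise along the way.
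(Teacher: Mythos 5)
There is a genuine gap at the pivot of your argument, namely the sentence ``It therefore suffices to show that \dots the naive convolution $\sF \star \sG_0$ lies in $\sC^{K,w,naive,\geq 0}$.'' The comparison functor $\psi:\sC^{K,w} \to \sC^{K,w,naive}$ is $t$-exact and an equivalence only on eventually coconnective subcategories; it is \emph{not} conservative. So even granting that $\psi$ intertwines the genuine Hecke action with the naive one, coconnectivity of $\sF \star \psi(\sG)$ in the naive category tells you nothing about the genuine convolution $\sF \star \sG$ unless you already know that $\sF \star \sG$ is bounded below: a priori the genuine convolution (which is defined by ind-extension from compacts) could have cohomology in arbitrarily negative degrees that $\psi$ kills. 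This is exactly the phenomenon of Warning \ref{w:f_ren} and Counterexample \ref{ce:indcoh-qcoh}: a renormalized functor whose naive shadow is left $t$-exact may itself fail to be. Establishing the missing boundedness is the actual content of the proposition, and it is where the hypotheses you never use enter: the paper first shows (Step 1 of its proof) that the statement is \emph{equivalent} to the weaker claim that coherent coconnective kernels act by functors left $t$-exact up to shift, and then proves that weaker claim by a chain of reductions — for $K$ a polarization via Serre duality on the ind-proper $H/K$, producing an explicit left adjoint whose right $t$-exactness up to shift is easy; then for $K$ normal in a polarization via $t$-exact conservative $*$-averaging; then for general $K$ via the finite-totalization trick of Lemma \ref{l:recognition} and averaging from a smaller subgroup. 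Polarizability (and, through Lemma \ref{l:t-cons-k} and the construction of the $t$-structure on $\sC^{K,w}$, the prounipotent tail) is indispensable there, and your proposal has no substitute for it.

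Two smaller points. Your appeal to Proposition \ref{p:canon-recog} is not licensed by the hypotheses of the statement: that proposition assumes $\sC^{K,w}$ is compactly generated with eventually coconnective compact objects, which Proposition \ref{p:canon-left-t} does not assume; the identification $\sC^{K,w,+} \isom \sC^{K,w,naive,+}$ should instead be quoted from Corollary \ref{c:t-naive}, which only needs strong compatibility. Your final paragraph, on the other hand, is essentially correct and coincides with the naive computation in Step 1 of the paper's proof (factoring through $\IndCoh(H/K) \otimes \sC$, using $t$-exactness of $\alpha_{\sC}$, Lemma \ref{l:cpt-tstr}, and left $t$-exactness of $\Gamma^{\IndCoh}(H/K,-)$); but in the paper that computation is only the final step, applied after the boundedness below of the genuine convolution has been secured.
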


\begin{proof}

\step\label{st:conv-bounds}
 
First, we claim that the conclusion of the
for a compact open subgroup $K$ is equivalent
to asking that $\sF \in \sH_{H,K}^w$ compact acts
on $\sC^{K,w}$ by a functor that is left $t$-exact up to shift.
This property is clearly weaker than that in the
statement of the proposition, so suppose it is satisfied.

We remind
that $\Coh(K\backslash H/K) \subset \sH_{H,K}^w$ is the
subcategory of compact objects and is closed under truncations.
Therefore, it suffices to show that 
for $\sF \in \Coh(K\backslash H/K)^{\geq 0}$,
the functor $\sF \star -:\sC^{K,w} \to \sC^{K,w}$ is left
$t$-exact. Indeed, any object of $\sH_{H,K}^{w,\geq 0}$ is
a filtered colimit of such coconnective compact objects.
Fix $\sF \in \Coh(K\backslash H/K)^{\geq 0}$ in what follows.

Under our assumption, 
$\sF \star -$ maps $\sC^{K,w,\geq 0}$ into
$\sC^{K,w,+}$. 
Therefore, by Lemma \ref{l:t-cons-k},
it suffices to show that the composition:

\[
\sC^{K,w} \xar{\sF \star -} \sC^{K,w} \xar{\Oblv} \sC
\]

\noindent is left $t$-exact.
Moreover, we can clearly replace $\sC^{K,w}$ with
$\sC^{K,w,naive}$ here.
Then the corresponding functor may be calculated as the composition:

\[
\begin{gathered}
\sC^{K,w,naive} \xar{\sF \boxtimes -}
\IndCoh(H/K) \otimes \sC^{K,w,naive} \to 
(\IndCoh(H) \otimes \sC)^{K,w,naive} 
\xar{\overset{\alpha_{\sC}}{\simeq}} \\
\IndCoh(H/K) \otimes \sC 
\xar{\Gamma^{\IndCoh}(H/K,-)} \sC.
\end{gathered}
\]

\noindent There are some things to explain in the above
manipulations: we are regarding $\sF \in \IndCoh(H/K)$
by forgetting the left $K$-equivariance;
in the third term, the $K$-equivariance is taken for the
diagonal $K$-action mixing the given action on $\sC$
and the right action of $H$ on $\sC$; and the implicit 
commuting of weak $K$-equivariance with the tensor product
by $\sC$ in the fourth term is justified by 
the fact that $\sC$ is assumed compactly generated
and therefore is dualizable (or one may use
Lemma \ref{l:torsor-coinv}). 
The first functor is left $t$-exact by assumption
on $\sF$; the second is clearly $t$-exact; the
third is $t$-exact because the naive $H$-action
is compatible with $t$-structures; and the fourth
by Lemma \ref{l:cpt-tstr} \eqref{i:tstr-2}, using that
$\Gamma^{\IndCoh}(H/K,-)$ is left $t$-exact.
This gives the claim.

\step\label{st:pol-t-exact}

Next, we check the above hypothesis in the case
where $K$ is a polarization of $H$.
In fact, a little more generally, we will show that if 
$K$ is a polarization and $K_0 \subset H$ is any other
compact open subgroup, then for any
$\sF \in \Coh(K_0\backslash H/K)$, the
functor: 

\[
\sF \star -:\sC^{K_0,w} \to \sC^{K,w}
\]

\noindent is left $t$-exact up to shift. 

Let $\sG \in \Coh(K\backslash H/K_0)$ be obtained by
applying Serre duality on $H/K$ 
to $\sF$ (considered with its natural $K_0$-equivariant structure) 
and then pulling back along 
the inversion map $K\backslash H/K_0 \isom K_0\backslash H/K$.

As is standard, 
$\sG \star -:\sC^{K,w} \to \sC^{K_0,w}$ is left adjoint
to $\sF \star -$ (by ind-properness of $H/K$). 
Therefore, it suffices to show
$\sG \star -$ is right $t$-exact up to shift.

This is straightforward: it suffices to show
the composite with $\sC^{K_0,w} \to \sC$ 
is right $t$-exact up to shift by Lemma \ref{l:t-cons-k},
and this follows by a similar (in fact, simpler)
argument to Step \ref{st:conv-bounds},
using that $\sG$ is supported on a finite type subscheme
of $H/K$.

\step 

Next, suppose $K$ is a compact open subgroup of
$H$ that admits an embedding $K \subset K_{pol} \subset H$
with $K_{pol}$ a polarization of $H$ and 
$K$ normal in $K_{pol}$ (so $K_{pol}/K$ is an affine
algebraic group). 
We will prove the result for $K$ in this case.

Let $\sF \in \sH_{H,K}^{w,\geq 0}$ and 
$\sG \in \sC^{K,w,\geq 0}$ be given. We need to show
that $\sF \star \sG \in \sC^{K,w,\geq 0}$.

As the functor $\Av_*^w:\sC^{K,w} \to \sC^{K_{pol},w}$ 
of averaging from $K$ to $K_{pol}$ is conservative and 
$t$-exact (by the normality assumption), 
it suffices to show that 
$\Av_*^w(\sF \star \sG) \in \sC^{K_{pol},w,\geq 0}$.

This term may clearly be calculated by averaging
$\sF \in \sH_{H,K}^w$ on the left 
to obtain ${\widetilde{\sF} \in 
\IndCoh_{ren}(K_{pol}\backslash H/K)^{\geq 0}}$, and then 
convolving with $\widetilde{\sF}$.
By the previous step, that object is eventually coconnective,
and by Step \ref{st:conv-bounds} it is honestly
coconnective.

\step 

Finally, we prove the claim for 
$K$ a general compact open subgroup of $H$.

By the previous step, there exists a compact open
subgroup $K_0 \subset K$ for which the
conclusion of the proposition holds. 

We again let
$\sF \in \sH_{H,K}^{w,\geq 0}$ and 
$\sG \in \sC^{K,w,\geq 0}$ denote given objects,
and we aim to show that their convolution is eventually
coconnective.

By the proof of Lemma \ref{l:recognition}, 
$\sG$ is a direct summand of 
$\Tot^{\leq n}(\Av_*^w\Oblv)^{\dot+1}(\sG)$ for some
$n$; here our functors denote the adjoint pair
$\Oblv:\sC^{K,w} \rightleftarrows \sC^{K_0,w}:\Av_*^w$.
Each term in this finite limit lies in
$\Av_*^w(\sC^{K_0,w,\geq 0})$, so we may assume 
$\sG = \Av_*^w(\sG_0)$ for $\sG_0 \in \sC^{K_0,w,\geq 0}$.

It suffices to check that
$\Oblv(\sF \star \sG) = \Oblv(\sF \star \Av_*^w(\sG_0)) \in 
\sC^{K_0,w}$ is eventually coconnective,
since $\Oblv:\sC^{K,w} \to \sC^{K_0,w}$ is conservative.

But the above object may be calculated by
mapping $\sF$ along the functor $\sH_{H,K}^w \to \sH_{H,K_0}^w$
of forgetting equivariance on both sides and then 
convolving with $\sG_0$; by assumption on $K_0$, this
object is coconnective as desired.

\end{proof}

\begin{rem}

The careful reader will see that we never really used the
hypothesis that the $H$-action on $\sC$ is strong.
Here are the actually relevant hypotheses.
First, we need a genuine $H$-action 
on $\sC$ and a $t$-structure on $\sC$ naively compatible with
the $H$-action. In addition, for every compact open subgroup $K \subset H$,
we need a $t$-structure on $\sC^{K,w}$ for which 
$\sC^{K,w} \to \sC^{K,w,naive}$ is $t$-exact and an equivalence on
eventually coconnective subcategories. Finally, we need that
for $K_1 \subset K_2$ compact open subgroups,
$\sC^{K_2,w} \to \sC^{K_1,w}$ is $t$-exact. 

\end{rem}

\begin{cor}\label{c:canon-renorm-criterion}

Suppose $H$ is polarizable with a prounipotent tail.
Suppose $\sC \in H\mod$ is equipped with a $t$-structure
strongly compatible with the weak $H$-action.
Suppose that for every $K \subset H$ compact open,
$\sC^{K,w}$ is compactly generated by objects
lying in $\sC^{K,w,+} \cap \sC^{K,w,\leq 0}$. 

Then the naive weak action of $H$ on $\sC$
canonically renormalizes, and 
$\Oblv^{str\to w}(\sC) \in H\mod_{weak}$ 
is its canonical renormalization.

\end{cor}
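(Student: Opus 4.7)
The plan is to verify the three conditions in the definition of canonical renormalization from \S\ref{ss:can-renorm-tate} and then identify the object of $H\mod_{weak}$ produced by Proposition \ref{p:canon-renorm-tate} with $\Oblv^{str\to w}(\sC)$.

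First I will handle the conditions intrinsic to a single compact open $K \subset H$. Using $\sC = \colim_K \sC^{K,w}$ from Lemma \ref{l:tate-oblv-co/lims} (a colimit under forgetful functors admitting continuous right adjoints), together with the hypothesis and the $t$-exactness of $\sC^{K,w} \to \sC$ established in \S\ref{ss:str-t-str-2}, the category $\sC$ is itself compactly generated by eventually coconnective compact objects of the form $\Oblv(\sF)$ for $\sF \in \sC^{K,w,+} \cap \sC^{K,w,\leq 0}$ compact. This is precisely what is required for the naive $K$-action on $\sC$ to canonically renormalize in the sense of \S\ref{ss:can-renorm}. For the hypotheses of Lemma \ref{l:module-t-str} applied to the pair $(\sH_{H,K}^w, \sC^{K,w})$, the essential input is Proposition \ref{p:canon-left-t}: compact objects in $\sH_{H,K}^w$ are bounded below, so their convolution action on $\sC^{K,w}$ is left $t$-exact up to shift, and symmetrically for the action of compact objects of $\sC^{K,w}$ on $\sH_{H,K}^w$.

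The delicate condition is the change-of-group compatibility $\sC^{K_2,w} \otimes_{\Rep(K_2)} \Rep(K_1) \isom \sC^{K_1,w}$ for compact opens $K_1 \subset K_2 \subset H$. Fully-faithfulness is Lemma \ref{l:gps-canon-renorm}\eqref{i:gps-3}, but essential surjectivity requires a reduction to the algebraic group case \eqref{i:gps-5} of \emph{loc. cit.}: using the prounipotent tail assumption and Lemma \ref{l:str-weak-inv}, I will choose a cofinal system of prounipotent compact opens $K_{2,j}$ that are normal in $K_2$ and contained in $K_1$, then write $\sC^{K_i,w} = \colim_j (\sC^{K_{2,j}})^{K_i/K_{2,j},w}$ and (by Lemma \ref{l:reph-co/lim}) $\Rep(K_i) = \colim_j \Rep(K_i/K_{2,j})$. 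After commuting the relative tensor product with the filtered colimits, the claim reduces termwise to the algebraic group quotients $K_1/K_{2,j} \subset K_2/K_{2,j}$, where Lemma \ref{l:gps-canon-renorm}\eqref{i:gps-5} applies. I expect this colimit-tensor manipulation to be the main technical obstacle.

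Finally, having verified canonical renormalization, Proposition \ref{p:canon-renorm-tate} singles out the canonical renormalization as the unique object of the contractible groupoid $\mathsf{Gen}(\sC)$. It suffices to check that $\Oblv^{str\to w}(\sC) \in \mathsf{Gen}(\sC)$: its weak $K$-invariants form the category $\sC^{K,w}$ of \S\ref{ss:str-t-str-2}, and by Proposition \ref{p:canon-recog} its compact objects are precisely the eventually coconnective objects of $\sC^{K,w}$ whose image in $\sC$ is compact. Via the equivalence $\sC^{K,w,+} \simeq \sC^{K,w,naive,+}$ of Corollary \ref{c:t-naive}, these match the subcategory $\sC^{K,w,c}$ prescribed by the canonical renormalization construction for $K$, completing the identification.
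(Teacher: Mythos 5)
Your proposal is correct, and it rests on the same two pillars as the paper's (very terse) proof: Proposition \ref{p:canon-recog} pins down the compact objects of $\sC^{K,w}$, hence gives both the canonical renormalization of the naive $K$-actions and the fact that $\Oblv^{str\to w}(\sC)$ has the prescribed invariants, while Proposition \ref{p:canon-left-t} supplies the hypotheses of Lemma \ref{l:module-t-str} (here you should also cite Example \ref{e:module-cpts-trun} applied to $\sA_{ren}=\sH_{H,K}^w$ for hypothesis \eqref{i:module-4}: compacts of $\sC^{K,w}$ need not be closed under truncations, but those of $\sH_{H,K}^w$ are, and your ``symmetrically'' remark should really refer to the functors $\sA \to \sM$, $a \mapsto a\star\psi(\sG)$, rather than to an action of module compacts on the Hecke category). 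The one place you genuinely diverge is the change-of-group condition: you re-prove $\Rep(K_1)\otimes_{\Rep(K_2)}\sC^{K_2,w}\isom\sC^{K_1,w}$ by a filtered-colimit reduction to algebraic groups via Lemmas \ref{l:str-weak-inv} and \ref{l:reph-co/lim}. This can be made to work (though termwise it is cleaner to invoke Theorem \ref{t:weak} together with \S\ref{ss:gp-sch-funct} rather than Lemma \ref{l:gps-canon-renorm} \eqref{i:gps-5}, whose formulation presupposes renormalization hypotheses for the quotient groups acting on $\sC^{K_{2,j}}$ that you would otherwise need to verify), but it is more labor than is needed: once Proposition \ref{p:canon-recog} identifies the canonically renormalized $K$-invariants with the genuine weak invariants of $\Oblv^{str\to w}(\sC)\in H\mod_{weak}$, the change-of-group equivalences are automatic, since they hold for \emph{every} object of $H\mod_{weak}$ by construction (the Hecke bimodule identity $\IndCoh_{ren}^*(K_1\backslash H/K_2)\simeq\Rep(K_1)\otimes_{\Rep(K_2)}\sH_{H,K_2}^w$ established in the proof of Proposition-Construction \ref{pc:weak}). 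Your colimit manipulation buys a self-contained verification that avoids appealing to the internal structure of $H\mod_{weak}$, at the price of the cofinality and colimit-versus-bar-construction checks you flag as the main obstacle.
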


\begin{proof}

Immediate from the definition of canonical renormalization
and from Proposition \ref{p:canon-recog} and
Proposition \ref{p:canon-left-t} (and Example 
\ref{e:module-cpts-trun} as applied to $\sA_{ren} = \sH_{H,K}^w$).

\end{proof}

\subsection{Conclusion}\label{ss:gen-hc-pf}

We now combine the various ingredients 
above to conclude the proof of Theorem \ref{t:gen-hc}.

By Lemma \ref{l:smash-str} \eqref{i:smash-str-1},
$A\# U(\fh)\mod_{ren}$ is compactly generated.
Moreover, this category has a canonical 
compactly generated $t$-structure
by Lemma \ref{l:smash-str} \eqref{i:smash-str-3}.
The forgetful functor 
$A\# U(\fh)\mod_{ren} \to \Vect$ from 
\S \ref{ss:gen-constr} is $t$-exact and conservative
on eventually coconnective objects
by Lemma \ref{l:smash-str} \eqref{i:smash-str-4}.
Therefore, by Remark \ref{r:ren-cat}, there
is a corresponding 
connective $\overset{\rightarrow}{\otimes}$-algebra
$A\# U(\fh) \in \Alg_{ren}^{\overset{\rightarrow}{\otimes}}$.

Moreover, the naive action of $H$ on 
$A\# U(\fh)\mod_{ren}$, its naive compatibility with 
the $t$-structure (Lemma \ref{l:smash-str} \eqref{i:smash-str-4}),
and the naive $H$-equivariance of the
forgetful functor $A\# U(\fh)\mod_{ren} \to \Vect$
define a naive $H$-action on $A\# U(\fh)$ compatible 
with its renormalization datum. Indeed, this
follows from Remark \ref{r:ren-cat} (c.f. the
end of the proof of Theorem \ref{t:gen-ff}).
This upgrades $A\# U(\fh)$ to an object
of $\Alg_{ren}^{\overset{\rightarrow}{\otimes},H\actson}$.
We claim that this action is genuine.

First, we need to show that the genuine $H$-action
on $A\# U(\fh)\mod_{ren}$ is obtained by canonical
renormalization.
We do this by applying Corollary \ref{c:canon-renorm-criterion}
to $\sC = A\# U(\fh)\mod_{ren}$. We check that the various
conditions from that corollary are satisfied.

By construction,
the naive weak $H$-action on $A\# U(\fh)\mod_{ren}$ upgrades to a 
strong action. 

By Lemma \ref{l:smash-str} \eqref{i:smash-str-2} and
\eqref{i:smash-str-5},
$A\# U(\fh)\mod_{ren}^{K,w}$ is compactly generated,
and its $t$-structure is as well, 
and compact objects are eventually coconnective.

Therefore, the corollary applies, and
we find that the $H$-action on $A\# U(\fh)$ is
nearly genuine (in the sense of \S \ref{ss:gen-act-alg}).
It is genuine by Lemma \ref{l:smash-str} \eqref{i:smash-str-5}.

\subsection{Harish-Chandra data}\label{ss:hc-start}

In the remainder of this section, we suppose that 
$H$ is a formally smooth 
polarizable ind-affine Tate group indscheme with 
prounipotent tail. In particular, Theorem \ref{t:gen-hc}
applies.

\subsection{}\label{ss:comonad-over}

The reader may prefer to skip this material
and refer back to it as needed.

Let $\sC$ be a category, and suppose
$T: \sC \to \sC$ is a comonad.
Let $\sF \in \sC$ be a fixed object.
We claim that $T$ canonically upgrades to a comonad
on the overcategory $\sC_{/\sF}$.

For $\sG \in \sC_{/\sF}$, we have the map
$T(\sG) \to \sG \to \sF$ where the first map is the
counit for $T$ and the second map is the structure
map for $\sG$; this makes $T(\sG)$ into an object of
$\sC_{/\sF}$. We denote this functor by
$T_{/\sF}:\sC_{/\sF} \to \sC_{/\sF}$.

We claim $T_{/\sF}$ has a natural comonad structure.
Consider $T\comod_{/\sF}$, the
category of $T$-comodules $\sG$ in $\sC$ equipped with a
map $\alpha:\sG \to \sF \in \sC$ (with no hypotheses on
how $\alpha$ interacts with the comodule structure).
The forgetful functor $T\comod_{/\sF} \to \sC_{/\sF}$
is obviously conservative; we claim that it is actually
comonadic. Indeed, $\sC_{/\sF} \to \sC$ clearly commutes with 
contractible limits,\footnote{See \cite{htt} Proposition 4.4.2.9 for
a complete proof.}, so the claim follows from Barr-Beck.
It is clear the underlying endofunctor of this comonad on
$\sC_{/\sF}$ is given by $T_{/\sF}$.

Note that by construction, the data of
a $T_{/\sF}$-comodule structure on $\sG \in \sC_{/\sF}$ is
equivalent to a $T$-comodule structure on the 
underlying object $\sG \in \sC$.

\subsection{}\label{ss:hc-defin}

We apply the above for the comonad 
$\Oblv^{str \to w} \circ (-)^{\exp(\fh),w}:H\mod_{weak} \to 
H\mod_{weak}$ and $\Vect \in H\mod_{weak}$.
This defines a comonad on $H\mod_{weak,/\Vect}$.

By Theorem \ref{t:gen-hc} and 
Lemma \ref{l:smash-str} \eqref{i:smash-str-6}, this
comonad preserves the 1-full subcategory:

\[
(\Alg_{conv,gen}^{\overset{\rightarrow}{\otimes},H\actson})^{op} 
\overset{Thm. \ref{t:gen-ff}}{\subset} 
(H\mod_{weak})_{/\Vect}.
\] 

\noindent This induces a monad
on $\Alg_{conv,gen}^{\overset{\rightarrow}{\otimes},H\actson}$,
which we denote by $A \mapsto A\# U(\fh)$.

We can now make the following definition.

\begin{defin}

A \emph{Harish-Chandra datum} for 
$A \in \Alg_{conv,gen}^{\overset{\rightarrow}{\otimes},H\actson}$
is a structure of module for the above monad.

\end{defin}

\begin{rem}\label{r:hc-map}

By definition, a Harish-Chandra datum gives rise to an ``action" map
$A \# U(\fh) \to A$.

\end{rem}

\subsection{}\label{ss:hc-to-strong}

We now make the following observation.

\begin{lem}

The functor $\Oblv^{str \to w}:H\mod \to H\mod_{weak}$
is comonadic.

\end{lem}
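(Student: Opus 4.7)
The plan is to verify the three hypotheses of the dual Barr-Beck theorem (\cite{higheralgebra} Corollary 4.7.3.16): a functor $F: \sC \to \sD$ is comonadic if and only if (i) $F$ admits a right adjoint, (ii) $F$ is conservative, and (iii) $F$ preserves totalizations of $F$-split cosimplicial objects. I will apply this with $F = \Oblv^{str\to w}$.

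Conditions (i) and (iii) come essentially for free from \S \ref{ss:oblv-str-adjs}, which establishes that $\Oblv^{str\to w}$ admits \emph{both} a right adjoint (with underlying DG category $\sC \mapsto \sC^{\exp(\fh),w}$) and a left adjoint (with underlying DG category $\sC \mapsto \sC_{\exp(\fh),w}$). The right adjoint gives (i) directly. For (iii), the existence of the left adjoint means $\Oblv^{str\to w}$ is itself a right adjoint and hence commutes with all limits in $H\mod_{weak}$; in particular, it preserves every cosimplicial totalization, split or not, so (iii) is automatic.

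The substance of the argument is conservativity (ii). Here I would factor $\Oblv^{str\to w}$ as follows: by design (see the opening of \S \ref{s:strong}), the composite
\[
H\mod \xar{\Oblv^{str\to w}} H\mod_{weak} \xar{\Oblv_{gen}} \DGCat_{cont}
\]
agrees with the tautological forgetful functor sending a category with strong $H$-action to its underlying DG category. Since $H\mod = D^*(H)\mod(\DGCat_{cont})$ is by definition a category of modules over an algebra in $\DGCat_{cont}$, this tautological forgetful functor is conservative: any $D^*(H)$-linear functor that is an equivalence of underlying DG categories has an inverse that is automatically $D^*(H)$-linear. Conservativity of the composition immediately implies conservativity of the first functor $\Oblv^{str\to w}$.

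The main point to be careful about is the apparent tension with the fact (emphasized throughout \S \ref{s:weak-ind}) that $\Oblv_{gen}:H\mod_{weak} \to \DGCat_{cont}$ is itself \emph{not} conservative. This is however not an obstacle: conservativity of a composition $g \circ f$ implies conservativity of $f$ regardless of whether $g$ is conservative, because $g$ in any case preserves isomorphisms. Informally, although $\Oblv_{gen}$ may ``forget" structural differences between weak $H$-module categories that are not detectable at the underlying DG level, no such information is lost on the essential image of $\Oblv^{str\to w}$, and that is all that is needed.
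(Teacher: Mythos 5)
Your proof is correct and follows essentially the same route as the paper: conservativity is checked via the same factorization through the forgetful functor of $D^*(H)\mod$, and preservation of the relevant totalizations is deduced from $\Oblv^{str\to w}$ being a right adjoint (the paper cites the left adjoint supplied by Proposition \ref{p:inv-coinv-w/str} under the standing polarizability hypothesis, which is the same mechanism as your appeal to \S \ref{ss:oblv-str-adjs}). The only difference is presentational: the paper leaves the Barr--Beck bookkeeping implicit, while you spell out the three hypotheses.
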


\begin{proof}

This functor is conservative as 
the composition:

\[
H\mod \to H\mod_{weak} \xar{\Oblv_{gen}} \DGCat_{cont}
\]

\noindent computes the forgetful functor for
$H\mod$, which is conservative.

To conclude, we simply note that
as $H$ is polarizable, 
$\Oblv^{str \to w}$ admits a left
adjoint by Proposition \ref{p:inv-coinv-w/str} and 
therefore commutes with all limits.

\end{proof}

Therefore, by the discussion of \S \ref{ss:comonad-over},
a Harish-Chandra datum for
$A \in \Alg_{conv,gen}^{\overset{\rightarrow}{\otimes},H\actson}$
is equivalent to upgrading the genuine $H$-action on
$A\mod_{ren}$ to a strong $H$-action with the 
property\footnote{This encodes the fact that $A\mod_{ren}$
is a comodule in the 1-full subcategory 
$(\Alg_{conv,gen}^{\overset{\rightarrow}{\otimes},H\actson})^{op} 
{\subset} 
(H\mod_{weak})_{/\Vect}$, i.e., it has to do with the
fact that this is a 1-full subcategory and not an 
actual subcategory.}
that the coaction functor $A\mod_{ren} \to 
\Oblv^{str \to w}(A\mod_{ren}^{\exp(\fh),w}) = 
A\# U(\fh)\mod_{ren}$ come from a genuinely $H$-equivariant morphism
$A\# U(\fh) \to A$.

More precisely, let 
$\Alg_{HC}^{\overset{\rightarrow}{\otimes},H\actson}$
denote the category of 
$A \in \Alg_{conv,gen}^{\overset{\rightarrow}{\otimes},H\actson}$
equipped with a Harish-Chandra datum (i.e., the category
of modules for the monad $A \mapsto A\# U(\fh)$).
Then we have:

\begin{lem}\label{l:hc-cat}

The above functor:

\[
(\Alg_{HC}^{\overset{\rightarrow}{\otimes},H\actson})^{op} 
\xar{ A \mapsto A\mod_{ren}}
H\mod \underset{H\mod_{weak}}{\times} H\mod_{weak,/\Vect}
\]

\noindent is 1-fully-faithful.

\end{lem}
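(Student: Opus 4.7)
The proof is a formal consequence of Theorem \ref{t:gen-ff} combined with the comonadicity of $\Oblv^{str\to w}$ established in \S\ref{ss:hc-to-strong}. The plan is to realize both source and target as categories of comodules over compatible comonads on categories related by the 1-fully-faithful functor of Theorem \ref{t:gen-ff}, and then to deduce 1-fully-faithfulness on comodule categories formally.

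First I would identify the target. Writing $T \coloneqq \Oblv^{str\to w}\circ(-)^{\exp(\fh),w}$ for the comonad on $H\mod_{weak}$, the construction of \S\ref{ss:comonad-over} yields an induced comonad $T_{/\Vect}$ on $H\mod_{weak,/\Vect}$ whose category of comodules is canonically equivalent to $H\mod \underset{H\mod_{weak}}{\times} H\mod_{weak,/\Vect}$; this uses the comonadicity of $\Oblv^{str\to w}$ from the lemma of \S\ref{ss:hc-to-strong}. Second, by definition in \S\ref{ss:hc-defin}, $(\Alg_{HC}^{\overset{\rightarrow}{\otimes},H\actson})^{op}$ is the category of comodules on $(\Alg_{conv,gen}^{\overset{\rightarrow}{\otimes},H\actson})^{op}$ for the comonad obtained by restricting $T_{/\Vect}$ along the embedding provided by Theorem \ref{t:gen-ff}. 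The restriction is well-defined thanks to Theorem \ref{t:gen-hc} (ensuring preservation of objects) together with Lemma \ref{l:smash-str} \eqref{i:smash-str-6} (ensuring preservation on morphisms, and indeed on Hom-DG-categories).

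Having identified both sides, the functor of the lemma is exactly the functor on comodule categories induced by the 1-fully-faithful functor of Theorem \ref{t:gen-ff} and the intertwining of its comonads. For any pair of objects $(A_1, \alpha_1), (A_2, \alpha_2)$ in $(\Alg_{HC}^{\overset{\rightarrow}{\otimes},H\actson})^{op}$, the Hom-DG-category between them in the comodule category is computed as the totalization of the cobar diagram of Hom-DG-categories in $(\Alg_{conv,gen}^{\overset{\rightarrow}{\otimes},H\actson})^{op}$; similarly for the target. Theorem \ref{t:gen-ff} ensures that the underlying map of Hom-DG-categories at each cosimplicial level is fully-faithful, and fully-faithfulness is preserved under totalizations, so the induced map on comodules is 1-fully-faithful.

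The main subtlety, and really the only ingredient not already contained in the pieces above, is to check that the restricted comonad is intertwined with the ambient comonad not merely on objects but at the level of Hom-DG-categories \textemdash{} in particular, that for a genuinely $H$-equivariant morphism $f: A_1 \to A_2$, the induced morphism $A_1 \# U(\fh) \to A_2 \# U(\fh)$ is again genuinely $H$-equivariant with its coaction maps compatibly lifted. This is ensured by the construction of $A \mapsto A\# U(\fh)$ via $A \mapsto A\mod_{ren}^{\exp(\fh),w}$, whose functoriality was established together with Lemma \ref{l:smash-str} \eqref{i:smash-str-6}, so once this verification is pinned down the remaining argument is purely formal.
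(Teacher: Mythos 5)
Your proposal is correct and is essentially the argument the paper has in mind: the paper states Lemma \ref{l:hc-cat} without proof, treating it as a formal consequence of the comonadicity of $\Oblv^{str\to w}$ (\S \ref{ss:hc-to-strong}), the overcategory comonad construction of \S \ref{ss:comonad-over}, the definition of the monad $A \mapsto A\# U(\fh)$ in \S \ref{ss:hc-defin} (which already invokes Theorem \ref{t:gen-hc} and Lemma \ref{l:smash-str} \eqref{i:smash-str-6} to restrict the comonad to the 1-full subcategory), and the 1-fully-faithfulness of Theorem \ref{t:gen-ff}. Your identification of the fiber product with $T_{/\Vect}$-comodules and the deduction via cobar totalizations and stability of fully-faithful maps under limits correctly fills in exactly these details.
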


\subsection{The classical case}\label{ss:cl-hc-1}

Let $A$ be a classical, renormalized $\overset{\rightarrow}{\otimes}$-algebra
equipped with a genuine $H$-action. As in Theorem \ref{t:gen-hc},
we can form the smash product $A \# U(\fh)$.
We wish to explicitly describe the category 
$A \# U(\fh)\mod_{ren}^{\heart} (= A \# U(\fh)\mod_{naive}^{\heart})$. 

Suppose $V$ is an object of this abelian category.
The canonical map $A \to A \# U(\fh)$ (coming as the unit for the
monad structure) makes $V$ into a (discrete) $H^0(A)$-module.
Also, the fact that the unit map $k \to A$ is $H$-equivariant
gives a map $k \# U(\fh) = U(\fh) \to A \# U(\fh)$, so 
$V$ also acquires an $\fh$-module structure. 

To describe the compatibility between these two actions, we need 
the following digression.
Any $\xi \in \fh$ defines a \emph{derivation}
$\delta_{\xi}:A \to A$. In detail, $\xi$ defines a homomorphism
$\Fun(H) \to k[\vareps]/\vareps^2$ extending the augmentation on
$\Fun(H)$, so we obtain a map:

\[
A \xar{\coact} A \overset{!}{\otimes} \Fun(H) \to 
A \overset{!}{\otimes} k[\vareps]/\vareps^2 \in 
\Alg^{\overset{\rightarrow}{\otimes}}
\]

\noindent giving $\id_A$ mod $\vareps$. 
If we write the underlying map of pro-vector spaces
as $\id_A \times \delta_{\xi} \vareps$,
the map $\delta_{\xi}:A \to A \in \Pro\Vect^{\heart}$ 
is by definition our derivation.

Then we claim that the difference between the two maps:

\begin{equation}\label{eq:xi-act-bracket}
\begin{gathered}
A \overset{\rightarrow}{\otimes} V \xar{\act} V \xar{(\xi \cdot -)}
V, \\
A \overset{\rightarrow}{\otimes} V \xar{\id \overset{\rightarrow}{\otimes} (\xi \cdot -)}
A \overset{\rightarrow}{\otimes} V \xar{\act} V
\end{gathered} 
\end{equation}

\noindent is:

\begin{equation}\label{eq:delta-xi}
A \overset{\rightarrow}{\otimes} V \xar{\delta_{\xi} \overset{\rightarrow}{\otimes} \id_V} 
A \overset{\rightarrow}{\otimes} V \xar{\act} V.
\end{equation}

\noindent More symbolically:\footnote{This 
latter formula is only sufficient when
$A$ is a \emph{topological vector 
space}. By definition, this means that there
is a (non-derived) topological vector space $A^{\sim}$
with a complete, separated, linear topology such that $A$
is the associated pro-vector space,
i.e., $A = \lim A^{\sim}/U \in \Pro\Vect$
where $U$ runs over open subspaces of $A^{\sim}$.
In this case, it is typically sufficient
to work with elements of $A$.

Note that $\fh$ is necessarily 
a topological vector space, justifying working directly with its elements in
some of these formulae.}

\[
\xi \cdot f \cdot v - f \cdot \xi \cdot v = \delta_{\xi}(f) \cdot v, 
\hspace{.5cm} 
\xi \in \fh, f \in A, v \in V.
\]

Indeed, for $K \subset H$ compact open, we have the canonical
equivalence:

\[
A \# U(\fh)\mod_{ren}^{K,\heart} = A\mod_{ren}^{H_K^{\wedge},w,\heart} = 
A\mod_{naive}^{H_K,w,naive,\heart}.
\] 

\noindent This verifies the above identity for 
$V$ strongly $K$-equivariant. Every object in 
$A \# U(\fh)\mod_{ren}^{K,\heart} $ is a filtered colimit
of objects strongly equivariant for some congruence subgroup, so 
we obtain the claim in general.

In addition, this same logic implies the converse. That is,
we have the following result.

\begin{prop}\label{p:smash-mods}

For $V \in \Vect^{\heart}$, lifting $V$ to an object of
$A \# U(\fh)\mod_{ren}^{\heart}$ is equivalent via the above
constructions to specifying an action of $A$ on $V$ and
an action of $\fh$ on $V$ such that the difference
between the two maps in \eqref{eq:xi-act-bracket}
is \eqref{eq:delta-xi} for any $\xi \in \fh$.

\end{prop}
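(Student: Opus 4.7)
The forward direction of the equivalence is essentially established in the discussion immediately preceding the proposition, so the real content lies in the converse. The plan is to exploit the description of $A\# U(\fh)\mod_{ren}^{\heart}$ obtained in Step 2.5 of the proof of Lemma \ref{l:smash-str}: every object $\sF$ of $A\# U(\fh)\mod_{ren}^{\heart}$ is a filtered union of subobjects $\sF^K \in A\# U(\fh)\mod_{ren}^{K,\heart}$ as $K$ ranges over prounipotent compact open subgroups of $H$, and $A\# U(\fh)\mod_{ren}^{K,\heart}$ is explicitly the abelian category of discrete $H^0(A)$-modules equipped with a smooth $\fh$-action satisfying the Leibniz relation and such that $\fk = \on{Lie}(K)$ acts locally nilpotently.

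Given $V \in \Vect^{\heart}$ equipped with compatible $A$- and $\fh$-actions satisfying the Leibniz compatibility, I would define, for each prounipotent compact open subgroup $K \subset H$, the subspace $V^K \subset V$ as the union of all $A$-$\fh$-submodules on which $\fk$ acts locally nilpotently. By the explicit description above, each $V^K$ then underlies a canonical object of $A\# U(\fh)\mod_{ren}^{K,\heart}$, and the filtered colimit $\colim_K V^K$ computed in the Grothendieck abelian category $A\# U(\fh)\mod_{ren}^{\heart}$ produces the desired lift, provided that $V = \bigcup_K V^K$.

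The main task is verifying this exhaustion: for every $v \in V$, produce a prounipotent compact open $K \subset H$ and an $A$-$\fh$-stable submodule of $V$ containing $v$ on which $\fk$ acts locally nilpotently. The submodule would be constructed by iteratively applying $A$ and $\fh$ to $v$, combining three continuity inputs: (a) smoothness of the $\fh$-action on $V$, so some compact open subalgebra of $\fh$ annihilates $v$; (b) continuity of the $H$-action on $A$, so for each $f \in A$ some compact open subalgebra of $\fh$ kills $f$ under the derivations $\delta_{\xi}$; (c) the existence of arbitrarily small prounipotent compact open subalgebras, from the prounipotent tail hypothesis. The Leibniz relation $\xi \cdot f \cdot v - f \cdot \xi \cdot v = \delta_{\xi}(f) \cdot v$ propagates these nilpotency estimates through the $A$-action: iterated applications $\xi_1 \cdots \xi_n \cdot (f \cdot v)$ expand into sums of terms of the form $\delta_{\xi_{i_1}} \cdots \delta_{\xi_{i_s}}(f) \cdot \xi_{j_1} \cdots \xi_{j_{n-s}} \cdot v$, each of which vanishes for $n$ large once $K$ is shrunk appropriately.

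The main obstacle is this propagation argument, which must handle arbitrary elements of the $A$-$\fh$-submodule generated by $v$ uniformly rather than one at a time. The combinatorics resemble a filtered PBW argument in the classical smash product, and the essential input is precisely the joint continuity of multiplication in $A \overset{!}{\otimes} U(\fh)$ encoded by the $\overset{\rightarrow}{\otimes}$-monoidal structure; once this is in place the remaining verifications (that $V^K$ is $A$-$\fh$-stable and satisfies the locally nilpotent condition) are formal.
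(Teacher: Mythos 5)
Your overall architecture is the same as the paper's (very terse) treatment: the forward direction is contained in the discussion preceding the proposition, and the converse is to be obtained by exhibiting $V$ as a filtered union of $A$-$\fh$-stable subspaces lying in $A\# U(\fh)\mod_{ren}^{K,\heart}$ for varying prounipotent compact open $K$, using the explicit description of that heart from the proof of Lemma \ref{l:smash-str}. So the route is the intended one; the problem is that the exhaustion $V = \bigcup_K V^K$ is the \emph{entire} content of the converse, and your argument for it both remains unfinished (you flag the propagation step as ``the main obstacle'') and rests on a false input.

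Concretely, your input (b) --- that for each $f \in A$ some compact open subalgebra $\fk \subset \fh$ kills $f$ under the derivations $\delta_{\xi}$ --- fails in the principal example $A = U(\widehat{\fg}_{\kappa})$ with the adjoint $G(K)$-action: for $f = e \otimes t^{-1}$ one has $\delta_{h \otimes t^N}(f) = 2\,e\otimes t^{N-1} \neq 0$ for every $N$, so no compact open subalgebra stabilizes $f$ infinitesimally, and the terms $\delta_{\xi_{i_1}}\cdots\delta_{\xi_{i_s}}(f)\cdot \xi_{j_1}\cdots\xi_{j_{n-s}}\cdot v$ in your expansion never become literally zero by shrinking $K$. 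The mechanism that actually kills them is different: continuity of the coaction on $A$ only says that iterated derivations along a \emph{fixed} compact open $\fk$ eventually land in any prescribed open subspace of $A$, and this must be combined with the discreteness of the $A$-action on $V$ --- the fact that the span of the vectors $\xi_{j_1}\cdots\xi_{j_r}\cdot u\cdot v$ with $r$ bounded and $\xi_j \in \fk$ is annihilated by a fixed open subspace of $A$ --- an input your sketch never invokes, and which itself requires a uniformity argument. Likewise, the local nilpotence of $\fk$ on $U(\fh)\cdot v$ (needed before the $A$-action even enters) is not automatic: it uses prounipotence of $K$ to get local nilpotence of $\ad(\fk)$ on $\fh/\fk$, together with a PBW-filtration argument, and requires choosing $\fk$ inside the open annihilator of $v$. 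Until these points are supplied, the converse direction --- the only part of the proposition not already contained in the preceding discussion --- is not established, and as written the vanishing claim based on (b) is simply incorrect.
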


We then obtain:

\begin{cor}\label{c:smash-cl-maps}

Suppose $B \in \Alg^{\overset{\rightarrow}{\otimes}}$ is classical.
Then specifying a map $A \# U(\fh) \to B \in 
\Alg^{\overset{\rightarrow}{\otimes}}$ is equivalent to 
giving maps $\vph:A \to B \in \Alg^{\overset{\rightarrow}{\otimes}}$
and $i:\fh \to B$ compatible with brackets\footnote{If 
$B$ is a topological vector space, then in the language of 
\cite{beilinson-top-alg}, we would say $i$ is a map of
topological Lie algebras.}
such for any $\xi \in \fh$,
the difference between the two maps:

\[
\begin{gathered}
A \xar{\vph} B \xar{i(\xi) \cdot -} B \\
A \xar{\vph} B \xar{- \cdot i(\xi)} B 
\end{gathered}
\]

\noindent is the composition:

\[
A \xar{\delta_{\xi}} A \xar{\vph} B.
\]

\end{cor}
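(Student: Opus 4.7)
The plan is to combine Proposition \ref{p:alg-vs-cats}, which converts $\overset{\rightarrow}{\otimes}$-algebra maps into functors of module categories, with Proposition \ref{p:smash-mods}, which describes the heart of $A\# U(\fh)\mod$ explicitly. Since both $A\# U(\fh)$ (by Theorem \ref{t:gen-hc} and Lemma \ref{l:smash-str}) and $B$ (being classical) are convergent and connective, Proposition \ref{p:alg-vs-cats} identifies the groupoid of $\overset{\rightarrow}{\otimes}$-algebra maps $A\# U(\fh)\to B$ with the groupoid of DG functors $G: B\mod_{naive}^+ \to A\# U(\fh)\mod_{naive}^+$ over $\Vect^+$. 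Any such $G$ is automatically $t$-exact: by Lemma \ref{l:smash-str}\eqref{i:smash-str-3} the forgetful functor on the right is $t$-exact and $t$-conservative on $+$, and postcomposition with it recovers the $t$-exact forgetful functor for $B\mod_{naive}^+$.

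Since $B$ is classical, Proposition \ref{p:alg-vs-cats} further identifies $B\mod_{naive}^+$ with the bounded-below derived category of the abelian category $B\mod_{naive}^\heart$, so $G$ is determined by its restriction $G^\heart$ to hearts, which is an exact, $\Vect^\heart$-linear, colimit-preserving functor to $A\# U(\fh)\mod_{naive}^\heart = A\# U(\fh)\mod_{ren}^\heart$. By Proposition \ref{p:smash-mods}, objects of the target are precisely vector spaces equipped with compatible $A$- and smooth $\fh$-actions satisfying the derivation identity \eqref{eq:xi-act-bracket}--\eqref{eq:delta-xi}. Therefore giving $G^\heart$ amounts to a functorial, cocontinuous assignment of such a pair of actions to each $V \in B\mod_{naive}^\heart$.

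Such a functorial assignment will decouple into its two components: the functorial $A$-action corresponds (via Proposition \ref{p:alg-vs-cats} applied to the classical $A$) to a map $\vph: A\to B$, and the functorial $\fh$-action corresponds to a Lie algebra map $i: \fh \to B$ in $\Pro\Vect^\heart$ (equivalently, a $\overset{\rightarrow}{\otimes}$-algebra map $U(\fh)\to B$, with $U(\fh)$ classical by Example \ref{e:lie}). The main obstacle I anticipate is the final step, namely translating the pointwise derivation relation, asserted for every discrete $B$-module $V$, into the stated identity of pro-maps $A \to B$. I plan to handle this via the pro-representability of \S\ref{ss:forget}: the forgetful functor $B\mod_{naive}\to\Vect$ is pro-represented by $B$ itself, so natural transformations between compositions of it with the $A$- and $\fh$-actions become equalities of pro-linear maps into $B$, which will yield the universal derivation relation in the statement.
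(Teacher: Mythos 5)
Your proposal is correct and is essentially the paper's own argument fleshed out: the paper's proof is just "immediate from Proposition \ref{p:alg-vs-cats} (and \cite{higheralgebra} Theorem 1.3.3.2)", i.e.\ pass to module categories over $\Vect^+$, use the universal property of the bounded-below derived category of the heart (the Lurie citation, which justifies your "determined by $G^{\heart}$" step), and then unwind via Proposition \ref{p:smash-mods}. Your final pro-representability/Yoneda step for converting the functorial $A$- and $\fh$-actions into $\vph$ and $i$ and the derivation identity is exactly the 1-categorical unwinding the paper leaves implicit.
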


\begin{proof}

Immediate from Proposition \ref{p:alg-vs-cats}
(and \cite{higheralgebra} Theorem 1.3.3.2). 

\end{proof}

\subsection{}\label{ss:cl-hc-2}

In the above setting, we now show:

\begin{lem}\label{l:smash-cl}

Suppose that for any compact open subgroup $K \subset H$,
$K$ is the spectrum of a countably generated ring.
Then $A \# U(\fh)$ is classical.

\end{lem}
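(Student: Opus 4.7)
The plan is to apply the converse direction of Proposition~\ref{p:alg-vs-cats}: to show $A\# U(\fh) \in \Pro\Vect^{\heart}$, it suffices to verify that $A\#U(\fh)\mod_{naive}^+$ (which coincides with $A\#U(\fh)\mod_{ren}^+$ under canonical renormalization) is the bounded below derived category of its heart, with the forgetful functor to $\Vect^+$ being the derived functor of its heart restriction. The key auxiliary object is a canonical morphism $\iota: U(\fh) \to A\#U(\fh)$ in $\Alg_{conv,gen}^{\overset{\rightarrow}{\otimes},H\actson}$, obtained by applying the monad $B \mapsto B\# U(\fh)$ of \S\ref{ss:hc-defin} to the $H$-equivariant unit map $k \to A$. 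Here we use that $k\#U(\fh) = U(\fh)$, which follows from $k\#U(\fh)\mod_{ren} = \Oblv^{str\to w}(\Vect^{\exp(\fh),w}) = \fh\mod$ combined with Lemma~\ref{l:hmod}.

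Restriction along $\iota$ produces a continuous forgetful functor $F_1: A\#U(\fh)\mod_{ren} \to \fh\mod$ which is $t$-exact on $+$ subcategories and conservative on eventually coconnective objects. I will construct a $t$-exact left adjoint $\ind: \fh\mod \to A\#U(\fh)\mod_{ren}$ sending a heart object $V$ to $A \otimes V$, with $A$ acting on the first factor and $\fh$ acting diagonally via its action on $V$ and via the derivations $\delta_\xi$ on $A$; the bracket compatibility of Proposition~\ref{p:smash-mods} is satisfied by construction. That $\ind$ preserves the heart is immediate from classicality of $A$: since $k$ is a field and $A \in \Pro\Vect^{\heart}$, the functor $F_A = \ul{\Hom}_{\Pro\Vect}(A,-)$ is $t$-exact (each $\Hom_k(A_i,-)$ is exact), so by adjunction $A \overset{\rightarrow}{\otimes} -: \Vect \to \Vect$ is $t$-exact too. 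The $t$-exactness of $\ind$ on the entire category then follows from the $t$-exactness of $F_1$ by a standard adjunction argument. Given Corollary~\ref{c:h-mod-der-cat}---that $\fh\mod^+$ is already the bounded below derived category of its heart---Lemma~\ref{l:monad-der-ab} applied to the adjoint pair $(\ind, F_1)$ yields the first required property. The second follows by writing the forgetful functor to $\Vect^+$ as $F_2 \circ F_1$, with $F_2: \fh\mod \to \Vect$ a derived functor by Proposition~\ref{p:alg-vs-cats} applied to the classical algebra $U(\fh)$, and observing that any $t$-exact functor between bounded below derived categories of abelian categories is automatically the derived functor of its heart restriction.

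The main obstacle will be verifying the $t$-exactness of $F_1$ globally on $A\#U(\fh)\mod_{ren}$ (not merely on $+$ subcategories) and propagating this through the colimit presentation $A\#U(\fh)\mod_{ren} = \colim_K A\#U(\fh)\mod_{ren}^K$ from Lemma~\ref{l:str-weak-inv}. This is precisely where the countability hypothesis on compact open subgroups is essential: writing each $K$ as a countable inverse limit of its algebraic quotients ensures that the relevant filtered colimits avoid higher $\lim^1$-type pathologies, and that compatibility between the naive $\overset{\rightarrow}{\otimes}$-algebra structure (essentially $A \overset{!}{\otimes} U(\fh)$ with smash product multiplication) and the renormalization data passes through to the bounded below derived category in the required sense.
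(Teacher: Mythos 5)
The key step in your plan fails: the induction functor $\ind:\fh\mod \to A\# U(\fh)\mod_{ren}$, $V \mapsto A \otimes V$, does not exist in the form you need once $A$ is an honestly topological $\overset{\rightarrow}{\otimes}$-algebra (the case of interest, e.g.\ $A = U(\widehat{\fg}_{crit})$ or $\fZ$). For $A \in \Pro\Vect^{\heart}$ and $V \in \Vect$, the object $A \overset{\rightarrow}{\otimes} V$ is a pro-vector space, not a discrete one, so ``$A\otimes V$ with $A$ acting on the first factor'' is not an object of $A\#U(\fh)\mod_{ren}^{\heart}$; your assertion that ``$A \overset{\rightarrow}{\otimes} - :\Vect \to \Vect$ is $t$-exact'' is a category error, since this functor lands in $\Pro\Vect$. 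Worse, the left adjoint to $F_1$ need not exist at all: already in the degenerate case of trivial $H$, your $F_1$ is $\Oblv:A\mod_{ren}\to\Vect$, which does not preserve infinite products for topological $A$ (e.g.\ $A = k[[x]]$, where the product of the discrete modules $k[x]/x^n$ in $A\mod$ is the torsion part of the vector-space product), hence admits no left adjoint. Consequently Lemma \ref{l:monad-der-ab} cannot be applied along the adjunction you propose, and the whole reduction to Corollary \ref{c:h-mod-der-cat} collapses.

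The paper's proof runs the adjunction in the opposite direction, which is where the finiteness actually lives: for each compact open $K \subset H$ one has $A\#U(\fh)\mod_{ren}^{K,+} = A\mod_{ren}^{H_K^{\wedge},w,+}$, and the restriction functor from this category to $A\mod_{ren}^{K,w,+}$ (forgetting the $\fh$-action, \emph{keeping} the $A$-module structure) is monadic with $t$-exact monad — convolution with $\omega_{H_K^{\wedge}/K}$, which only involves the ind-finite-dimensional quotient $\fh/\fk$ and so stays within discrete objects. Lemma \ref{l:monad-der-ab} applies there; then $A\mod_{ren}^{K,w,+}$ is handled by comonadic descent (with $t$-exact comonad) to $A\mod_{ren}^+$, which is the bounded below derived category of its heart because $A$ is classical. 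Finally, the passage from the fixed-$K$ statement to $A\#U(\fh)\mod_{ren}^+$ itself is exactly where the countability hypothesis enters, via \cite{whit} Lemma 5.4.3 applied to the colimit over compact open subgroups — the step you flagged as ``the main obstacle'' and left unaddressed. If you want to salvage your route through $\fh\mod$, you would have to replace the free module $A\otimes V$ by a genuinely different construction, since no such discrete free object exists.
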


\begin{proof}

By Proposition \ref{p:alg-vs-cats}, it suffices to show
$A \# U(\fh)\mod_{ren}^+$ is the bounded derived category of its
underlying abelian category. By \cite{whit} Lemma 5.4.3
and our countability assumption, it suffices to show
that for any compact open subgroup $K \subset H$,
$A \# U(\fh)\mod_{ren}^{K,+} = A \mod_{ren}^{H_K^{\wedge},w,+}$
is the bounded derived category of its underlying abelian category.
This category admits a monadic, $t$-exact restriction functor to 
$A\mod_{ren}^{K,w,+}$, and the corresponding monad is $t$-exact;
so Lemma \ref{l:monad-der-ab}
reduces to showing that $A\mod_{ren}^{K,w,+}$ is the bounded
derived category of its heart. As this category is comonadic
over $A\mod_{ren}^+$ with $t$-exact comonad, that claim follows
from the similar one for $A\mod_{ren}^+$.

\end{proof}

\begin{rem}

It seems likely that 
the above result is true without the countability hypothesis.

\end{rem}

By this lemma (and Yoneda), Corollary \ref{c:smash-cl-maps} gives a complete
description of $A \# U(\fh)$. In particular, using the 1-categorical
nature of our setup, we find the following result.

\begin{cor}\label{c:hc-cl}

Under the above assumptions,
a Harish-Chandra datum for $A$ (in the sense of \S \ref{ss:hc-defin})
is equivalent to specifying an $H$-equivariant map $i:\fh \to A$ 
compatible with brackets such that for $\xi \in \fh$,
$[i(\xi),-] = \delta_{\xi}$ as maps $A \to A \in \Pro\Vect^{\heart}$,
and such that the induced (naively $H$-equivariant) map:

\[
A \# U(\fh) \to A
\]

\noindent is genuinely $H$-equivariant.

\end{cor}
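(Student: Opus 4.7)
The approach is to unwind the definition of Harish-Chandra datum from \S \ref{ss:hc-defin} using the 1-categorical character of the problem that comes from classicality of $A$.

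By Lemma \ref{l:smash-cl}, $A\#U(\fh)$ is classical, and iterating the lemma (after checking it preserves the monad) gives the classicality of every iterate $A\#U(\fh)\#U(\fh)$, etc. By Proposition \ref{p:alg-vs-cats}, the mapping groupoids in $\Alg^{\overset{\rightarrow}{\otimes}}$ between such objects are discrete. Consequently, a Harish-Chandra datum on $A$ \textemdash{} i.e., a module structure for the monad $A\mapsto A\#U(\fh)$ on $\Alg_{conv,gen}^{\overset{\rightarrow}{\otimes},H\actson}$ \textemdash{} is pinned down by its underlying action map $\mu:A\#U(\fh)\to A$ together with the \emph{properties} of unitality and associativity, with no higher coherence data surviving.

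Next I would apply Corollary \ref{c:smash-cl-maps} (with $B = A$) to rewrite $\mu$, at the level of underlying $\overset{\rightarrow}{\otimes}$-algebras, as a pair $(\vph:A\to A, i:\fh\to A)$ subject to the bracket identity $[i(\xi),\vph(-)] = \vph\circ\delta_\xi$. The monad unit axiom \textemdash{} that the composite $A\to A\#U(\fh)\xar{\mu} A$ equals $\id_A$ \textemdash{} forces $\vph = \id_A$, so the identity becomes the stated condition $[i(\xi),-] = \delta_\xi$. Tracking naive $H$-equivariance functorially through Corollary \ref{c:smash-cl-maps} shows that $\mu$ lies in $\Alg^{\overset{\rightarrow}{\otimes},H\actson}$ precisely when $i$ is $H$-equivariant for the adjoint action of $H$ on $\fh$ and the given action on $A$. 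The associativity axiom, read through Corollary \ref{c:smash-cl-maps} applied to the double smash product, reduces to the Lie bracket compatibility of $i$ already imposed, and imposes no further condition.

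Finally, by Remark \ref{r:algs-1-ff}, promoting a morphism in $\Alg^{\overset{\rightarrow}{\otimes},H\actson}$ to one in $\Alg_{conv,gen}^{\overset{\rightarrow}{\otimes},H\actson}$ is the \emph{property} of being genuinely $H$-equivariant \textemdash{} precisely the extra clause in the statement. The main obstacle is the bookkeeping in the second step: one must carefully translate the monad multiplication $A\#U(\fh)\#U(\fh)\to A\#U(\fh)$ through Corollary \ref{c:smash-cl-maps} to confirm that associativity of $\mu$ really does reduce to the bracket compatibility of $i$ and nothing more. Classicality ensures this is a 1-categorical bookkeeping exercise rather than a genuine obstacle, but it is the place in the argument where a hidden condition is most likely to slip through if one is not careful.
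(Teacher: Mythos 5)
Your proposal is correct and follows essentially the same route the paper takes: the paper's own justification is just the remark that Lemma \ref{l:smash-cl} plus Yoneda and Corollary \ref{c:smash-cl-maps} completely describe $A \# U(\fh)$, so the 1-categorical nature of the situation reduces the monad-module structure to the stated data, with genuineness entering as a property via the 1-full-faithfulness of Remark \ref{r:algs-1-ff}. Your unwinding (unit axiom forcing $\vph = \id_A$, associativity checked on generators through the double smash product, discreteness of the relevant mapping spaces) is exactly the bookkeeping the paper leaves implicit; note only that classicality of the iterates is not strictly needed, since connectivity of $T^nA$ already makes maps into the classical target $A$ discrete.
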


\begin{rem}

The last condition in the corollary can be difficult to check in practice.
There is one simple case though: if compact objects in 
$A\mod_{ren}$ are closed under truncations, then 
for any $B \in \Alg_{conv,gen}^{\overset{\rightarrow}{\otimes},H\actson}$,
any naively $H$-equivariant morphism $B \to A$ is genuinely $H$-equivariant.
Indeed, this follows from the definition of canonical renormalization
and from the discussion of \S \ref{ss:functors-ren}.

\end{rem}

We record the following consequence of the
above discussion for later reference.

\begin{cor}\label{c:cl-hc-maps}

Suppose $A,B \in \Alg_{HC}^{\overset{\rightarrow}{\otimes},H\actson}$
are classical $\overset{\rightarrow}{\otimes}$-algebras equipped
with genuine $H$-actions. Then giving a morphism 
$f:A \to B \in \Alg_{HC}^{\overset{\rightarrow}{\otimes},H\actson}$
is equivalent to giving a genuinely $H$-equivariant 
morphism $f: A \to B \in \Alg_{conv,gen}^{\overset{\rightarrow}{\otimes},H\actson}$
such that the diagram:

\[
\xymatrix{
& \fh \ar[dl] \ar[dr] & \\
A \ar[rr]^f & & B
}
\]

\noindent commutes in $\Pro\Vect^{\heart}$, where the diagonal
morphisms encode the Harish-Chandra data as above.

\end{cor}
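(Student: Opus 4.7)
The plan is to unwind the monadic definition of $\Alg_{HC}^{\overset{\rightarrow}{\otimes},H\actson}$ and reduce the homotopy-coherent filling data to a strict $1$-categorical identity using classicality of $A$, $B$, and $A\# U(\fh)$. By definition of modules over a monad, a morphism $f: A \to B$ in $\Alg_{HC}^{\overset{\rightarrow}{\otimes},H\actson}$ is a morphism in $\Alg_{conv,gen}^{\overset{\rightarrow}{\otimes},H\actson}$ together with a homotopy filling the HC compatibility square
\[
\xymatrix{
A\# U(\fh) \ar[r]^-{T(f)} \ar[d]_-{\alpha_A} & B\# U(\fh) \ar[d]^-{\alpha_B} \\
A \ar[r]^-{f} & B,
}
\]
where $\alpha_A, \alpha_B$ are the Harish-Chandra action maps of Remark \ref{r:hc-map}. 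The ``only if'' direction is immediate: restricting such a homotopy along $\fh \subset k\# U(\fh) \subset A\# U(\fh)$ produces the triangle commutativity $f\circ i_A = i_B$ in $\Pro\Vect^{\heart}$.

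For the converse, the first step is to promote the existence of the filling from a structure to a property. By Lemma \ref{l:smash-cl}, the algebra $A\# U(\fh)$ is classical; together with classicality of $B$ and the last assertion of Proposition \ref{p:alg-vs-cats}, this implies that the mapping space $\Hom_{\Alg^{\overset{\rightarrow}{\otimes}}}(A\# U(\fh), B)$ is $0$-truncated, and the same conclusion passes to the genuinely $H$-equivariant mapping space via the $1$-full-faithfulness of Remark \ref{r:algs-1-ff}. It therefore suffices to check the strict equality of $f\circ\alpha_A$ and $\alpha_B\circ T(f)$ as maps $A\# U(\fh) \to B$.

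By Corollary \ref{c:smash-cl-maps}, such a map is determined $1$-categorically by a pair $(\vph: A\to B,\ i:\fh\to B)$ satisfying the Leibniz-type compatibility; I can then compute each composite. Using that $T(f) = f\# \id_{U(\fh)}$, and that $\alpha_A$ and $\alpha_B$ restrict to the identity along the natural units of the smash product, one finds $\alpha_B\circ T(f) \leftrightarrow (f, i_B)$ and $f\circ\alpha_A \leftrightarrow (f, f\circ i_A)$. Equality of these two pairs is exactly the triangle condition. The main obstacle will be the justification that the relevant mapping space is $0$-truncated in the classical setting; without this, one would be forced to produce and verify additional higher coherences for the monad structure, and the reduction to Lemma \ref{l:smash-cl} (whose proof rests on Lemma \ref{l:monad-der-ab}) is essential.
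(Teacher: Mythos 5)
Your proposal is correct and is essentially the paper's own (implicit) argument: the corollary is recorded there as a direct consequence of Lemma \ref{l:smash-cl}, Corollary \ref{c:smash-cl-maps} and the ``1-categorical nature'' of the classical setting, which is exactly the reduction you carry out (homotopy filling becomes a property, and strict equality of the two composites $A\# U(\fh) \to B$ is checked on the pairs of Corollary \ref{c:smash-cl-maps}, yielding the triangle condition). One small precision: the $0$-truncatedness of the genuinely $H$-equivariant mapping space does not follow from Remark \ref{r:algs-1-ff} together with the non-equivariant statement alone; rather, the naive comodule mapping space is itself a limit of mapping spaces into the classical (coconnective) algebras $B \overset{!}{\otimes} \Fun(H)^{\overset{!}{\otimes} n}$, hence $0$-truncated, and Remark \ref{r:algs-1-ff} then embeds the genuine maps into it (the same connectivity/coconnectivity argument also handles the maps out of the iterates $T^n A$ needed for the higher coherences).
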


\section{Application to the critical level}\label{s:critical}

\subsection{} In this section, we prove Theorem \ref{t:opers}, providing
a large class of symmetries for Kac-Moody representations
at critical level. We also show a compatibility result, 
Theorem \ref{t:whit-linear}, with our previous work \cite{whit}.
The arguments are straightforward applications of the methods developed
in \S \ref{s:hc}. 

\subsection{}

Let us describe the contents of this section in more detail.

Let $G$ be a split reductive group over $k$. Let\footnote{We abuse
notation in letting $K$ denote both Laurent series and compact open
subgroups of $G(K)$. This abuse should never cause confusion, and we
prefer it to various alternatives.} 
$K \coloneqq k((t))$.

Recall that for any $\Ad$-invariant symmetric bilinear form
$\kappa$ on $\fg$, we have the corresponding Kac-Moody central extension
$0 \to k \to \widehat{\fg}_{\kappa} \to \fg((t))$. As we will
discuss in \S \ref{ss:central-extns}, $\kappa$ defines a \emph{twisted}
notion of strong $G(K)$-actions; we denote the corresponding category by
$G(K)\mod_{\kappa}$. The theory is barely different from the untwisted
one. A basic object is $\widehat{\fg}_{\kappa}\mod \in G(K)\mod_{\kappa}$.

Let $crit \coloneqq -\frac{1}{2} \kappa_{\fg}$, for $\kappa_{\fg}$
the Killing form on $\fg$.

Let $\Op_{\ld{G}}$ denote the indscheme of $\ld{G}$-opers on 
the punctured disc; we take as the definition of
opers what are called \emph{marked} opers
in \cite{bezrukavnikov-travkin} \S A.
(which is a slight modification of the definition in 
\cite{hitchin}).

Our goal for this section is to construct a coaction of 
$\IndCoh^*(\Op_{\ld{G}})$ on $\widehat{\fg}_{crit}\mod \in G(K)\mod_{crit}$
via the Feigin-Frenkel isomorphism. In other words, we wish to show
that in a suitable sense, $\widehat{\fg}_{crit}\mod$ is tensored
over its center compatibly with the (critical level) strong $G(K)$-action
on it. This result appears as Theorem \ref{t:opers}. 

\subsection{Central extensions}

We begin by generalizing some material from \S \ref{s:strong}
in the presence of central extensions and twisted $D$-modules.

We outline the main ideas and leave the verification that certain
constructions generalize to the reader.

\subsection{}\label{ss:tw-start}

Fix $c \in k$ once and for all; we refer to $c$ as the \emph{twisting
parameter}. Let $\bB \bG_m$ denote the Zariski sheafified version
of the classifying space.

Observe that $\AffSch_{/\bB \bG_m}^{cl}$ of
classical affine schemes equipped with a line bundle embeds
as a full subcategory of $\Pro(\AffSch_{f.t.,/\bB \bG_m})$,
the pro-category of (classically) finite type classical affine schemes
with a line bundle; this follows by standard Noetherian approximation
(specifically, \cite{egaiv} Theorem 8.5.2). 
Then the procedure from \cite{dmod} \S 2 produces functors:

\[
\xymatrix{
D_c^*:\PreStk_{/\bB\bG_m} \to \DGCat_{cont} \\
D_c^!:\PreStk_{/\bB\bG_m}^{op} \to \DGCat_{cont}.
}
\]

\noindent These functors are given by suitable Kan extensions from
the finite type setup (c.f. \emph{loc. cit}.), and for 
$S$ affine, finite type, and equipped with a line bundle
$\sL$, they each assign to $S$ the category of $(\sL,c)$-twisted
$D$-modules on $S$ (as defined e.g. in \cite{crystals} \S 5).

\begin{rem}

We generally omit the line bundle from the notation since it can usually
be taken for granted.

\end{rem}

As in \S \ref{ss:indcoh-dmod}, there is a canonical natural transformation:

\[
\IndCoh^* \to D_c^* \in \TwoHom(\IndSch_{reas,/\bB\bG_m},\DGCat_{cont})
\]

\noindent defined by a formal extension process from the finite type case.

\subsection{}\label{ss:tw-finish}

Because $\bB \bG_m$ is a commutative group, there is a canonical
symmetric monoidal structure on $\PreStk_{/\bB\bG_m}$
for which the forgetful functor to $\PreStk$ is symmetric monoidal
for the Cartesian monoidal product.

Explicitly, for $(S,\sL_S)$ and $(T,\sL_T)$ in $\PreStk_{/\bB\bG_m}$,
$S \times T$ is equipped with the line bundle $\sL_S \boxtimes \sL_T$.

Then each of the functors $D_c^*$ and $D_c^!$ are naturally
lax symmetric monoidal for this symmetric monoidal structure,
meaning that we have \emph{external products} in either setup.
Indeed, this lax symmetric monoidal structure arises from the
finite type setup by Kan extension.
As in Remark \ref{r:indcoh-dmod-lax}, the natural transformation
$\IndCoh^* \to D_c^*$ canonically upgrades (via our same extension procedure)
to a natural transformation of lax symmetric monoidal functors.

\subsection{}\label{ss:central-extns}

Now let $H$ be a Tate group indscheme and let
$\lambda:H \to \bB \bG_m$ be a homomorphism; equivalently, we have
a central extension:

\[
1 \to \bG_m \to H^{\prime} \to H \to 1.
\]

\noindent We assume that there exists a compact open subgroup $K \subset H$
on which $\lambda$ is trivial as a homomorphism. Then
note that $H^{\prime}$ is also a Tate group indscheme. If $K$ can be taken
to be a polarization of $H$, then $H^{\prime}$ is polarizable.

We obtain the category $D_c^*(H)$ of twisted $D$-modules on $H$
for the underlying line bundle defined by $\lambda$.
Because $(H,\lambda)$ is an algebra object in $\PreStk_{/\bB\bG_m}$
(for the symmetric monoidal structure described above),
$D_c^*(H)$ is canonically a monoidal DG category, 
i.e., an algebra object in $\DGCat_{cont}$.

We let $H\mod_c$ denote the category of modules for $D_c^*(H)$ in
$\DGCat_{cont}$, and we refer to these as DG categories
equipped with \emph{(strong) $c$-twisted $H$-actions}.

\subsection{}\label{ss:tw-invariants}

Before proceeding, we will need the following digression on
twisted invariants and coinvariants.

Suppose $\sC \in H\mod_{weak}$. As in \S \ref{ss:tate-tw-inv},
we have a certain 
full subcategory $\sC_{(1)}^{H^{\prime},w} \subset \sC^{H_{Tate},w}$

Now suppose that $\lambda$ factors as 
$H \xar{\widehat{\lambda}} \bB \bG_a^{\wedge} \xar{\exp} \bB \bG_m$.
In this case, we can define a new homomorphism:

\[
\lambda^c \coloneqq \exp(c \cdot \widehat{\lambda}):H \to 
\bB \bG_m.
\]

\noindent We let $\sC^{H,w,\chi_c}$ denote the corresponding category
of \emph{twisted invariants}, i.e., we
take $\sC^{H,w,\chi_c} \coloneqq 
\sC_{(1)}^{H_c^{\prime},w}$
for $H_c^{\prime}$ the central extension defined
by $\lambda^c$.

This construction can also be understood as follows.
We obtain a homomorphism 
$\IndCoh^*(H) \to \IndCoh(\bB \bG_a^{\wedge}) \simeq \QCoh(\bA^1)$
where the right hand side is equipped with its usual tensor product
structure (as opposed to convolution). Our twisting parameter
$c \in k$ defines a homomorphism $\QCoh(\bA^1) \to \Vect$ (taking the
fiber at\footnote{The minus sign here matches the sign conventions
of \S \ref{s:sinf}.} $-c$), so an object $\chi_c \in H\mod_{weak,naive}$.
It is easy to see that this naive weak 
$H$-action canonically renormalizes, defining $\chi_c \in H\mod_{weak}$.
Then by Proposition \ref{p:chi-tate-inv-gen},
$\sC^{H,w,\chi_c} = (\sC \otimes \chi_c)^{H,w}$.

Similarly, we have a twisted coinvariants functor $\sC_{H,w,\chi_c}$, defined
by tensoring with $\chi_c$ and taking coinvariants.

\subsection{}

Combining the material of \S \ref{ss:tw-start}-\ref{ss:tw-finish} 
with the methods of \S \ref{s:strong},
we obtain a forgetful functor:

\[
\Oblv^{str \to w}:H\mod_c \to H\mod_{weak}.
\]

\noindent This functor admits left and right adjoints, which we
denote by $(-)_{\exp(\fh),w,\chi_c}$ and $(-)^{\exp(\fh),w,\chi_c}$.

There are explicit formulae for these functors, similar
to \S \ref{ss:oblv-str-adjs}. Before giving them,
suppose that $K \subset H$ is a compact open subgroup
on which $\lambda$ is trivial. In this case, $\lambda|_{H_K^{\wedge}}$
clearly factors through $\bB \bG_m^{\wedge} \overset{\log}{\isom} 
\bB \bG_a^{\wedge}$, so the discussion of \S \ref{ss:tw-invariants}
applies.

Now we have:

\[
\begin{gathered}
\sC^{\exp(\fh),w} \coloneqq
\underset{\lambda \text{ trivial on K}}{\underset{K \subset H \text{ compact open}}{\colim}}
\sC^{H_K^{\wedge},w,\chi_c} \in \DGCat_{cont} \\
\sC_{\exp(\fh),w} \coloneqq
\underset{\lambda \text{ trivial on K}}{\underset{K \subset H \text{ compact open}}{\lim}}
\sC_{H_K^{\wedge},w,\chi_c} \in \DGCat_{cont}.
\end{gathered}
\]

\noindent under the obvious structural functors.

Proposition \ref{p:inv-coinv-w/str} has
an immediate counterpart in this setting:
simply change $(\exp(\fh),w)$-invariants
and coinvariants in \emph{loc. cit}.
to the corresponding $c$-twisted versions.

\subsection{}

Now suppose that $H$ is formally smooth.
Let $\fh^{\prime}$ denote the Lie algebra
of $H^{\prime}$, considered as a 
central extension of the Tate Lie algebra $\fh$ by $k$.
Let $\fh_c^{\prime}$ denote the central extension
obtained by Baer-scaling by our twisting
parameter $c \in k$.

We let $\fh_c^{\prime}\mod \coloneqq 
\Vect^{\exp(\fh),w,c}$. 

Note that the notation is abusive: this category
should be understood not as modules of the
abstract Tate Lie algebra $\fh_c^{\prime}$,
but as modules over the central extension, i.e.,
modules on which $1 \in k \subset \fh_c^{\prime}$
acts by the identity (in a suitable derived
sense).

An evident 
version of Proposition \ref{l:hmod} applies;
this shows that $\fh_c^{\prime}\mod$ has a
natural $t$-structure with the expected heart,
and is the ``renormalized"
DG category of representations 
considered in \cite{dmod-aff-flag} \S 23.

\begin{rem}

By Proposition \ref{p:inv-coinv-w/str} (or its appropriate
version here), $\fh_c^{\prime}\mod$ is dualizable with dual
$\fh_{-c-Tate}^{\prime}\mod$, where the notation indicates
the Baer sum of the inverse central extension to 
$\fh_c^{\prime}$ and $\fh_{-Tate}$. 
By Theorem \ref{t:sinf-comparison},
the pairing:

\[
\fh_c^{\prime}\mod \otimes \fh_{-c-Tate}^{\prime}\mod \to \Vect 
\]

\noindent is given on eventually coconnective objects by tensoring
and applying semi-infinite cohomology.
Note that the equivalence 
$(\fh_{-c-Tate}^{\prime}\mod)^{\vee} \simeq \fh_c^{\prime}\mod$
is of categories acted on by $H$ strongly with $c$-twist.\footnote{A priori,
$(\fh_{-c-Tate}^{\prime}\mod)^{\vee}$ may look like it is
acted on strongly with twist $c+Tate$. But since the Tate extension is
by definition integral, we can canonically identify $H\mod_{c+Tate}$
with $H\mod_c$ (though not compatibly with the forgetful functors
to $H\mod_{weak}$!).}

\end{rem}

\subsection{Critical level}

We now apply the above to $H = G(K)$ the loop group. Let 
$G(O) \subset G(K)$ be the subgroup, which is a compact open subgroup.

Let $\lambda:G(K) \to \bB \bG_m$ be the map
defining the Tate central extension of
$G(K)$, as in \S \ref{s:sinf}. 
Let $c = -\frac{1}{2}$ above. 
Apply Theorem \ref{t:sinf-comparison}
relative to the compact open subgroup
$K = G(O) \subset G(K)$ 
and \cite{chiral} \S 2.7.5, the corresponding
central extension is
the critical level Kac-Moody extension. Note
that this Tate central extension is canonically trivialized over $G(O)$.

We let $G(K)\mod_{crit}$ denote $H\mod_c$ and
$\widehat{\fg}_{crit}\mod$ denote $\fh_c^{\prime}\mod$ in this setting.
We let $U(\widehat{\fg}_{crit})$ denote the \emph{twisted} enveloping
algebra, i.e., the $\overset{\rightarrow}{\otimes}$-algebra
defined by $\Oblv:\widehat{\fg}_{crit}\mod^+ \to \Vect$
(in \cite{hitchin}, it is usually denoted
$\ol{U}^{\prime}(\fg \otimes K)$).

We let $\bV_{crit} \in \widehat{\fg}_{crit}\mod^{\heart}$ denote
the vacuum representation $\ind_{\fg[[t]]}^{\widehat{\fg}_{crit}}(k)$.
More generally, for $n \geq 0$, we let 
$\bV_{crit,n} = \ind_{t^n\fg[[t]]}^{\widehat{\fg}_{crit}}(k) \in 
\widehat{\fg}_{crit}\mod^{\heart}$. By construction, these objects
compactly generated $\widehat{\fg}_{crit}\mod$.

\begin{rem}

The above clearly applies as is to define
$G(K)\mod_{\kappa}$, the category of categories with 
level $\kappa$ (strong) $G(K)$-actions, 
$\kappa$ any scalar multiple of
the Killing form. 
A simple modification allows for arbitrary
levels $\kappa$ (for possibly non-simple $G$): 
see \cite{whit} \S 1.29-30. 

\end{rem}

\subsection{The center}\label{ss:center-intro}

Define $\fZ \in \ComAlg(\Pro\Vect^{\heart},\overset{!}{\otimes})$ as
the \emph{non-derived} center of $U(\widehat{\fg}_{crit})$.

More precisely, $\fZ$ is the pro-vector space corresponding
to the the topological vector space 
$H^0(U(\widehat{\fg}_{crit})^{G(K)})$
for the adjoint action of $G(K)$ on 
$U(\widehat{\fg}_{crit})$;\footnote{See \cite{hitchin} 
Theorem 3.7.7 (ii). In fact,
$\fZ$ is the non-derived invariants of
$U(\widehat{\fg}_{crit})$ with respect to 
\emph{any} subgroup of $G(K)$ containing a compact open
subgroup; see \cite{hitchin} 3.7.11. We will not need this fact
here.} the topology is the subspace topology.
It is straightforward to see that the multiplication on
$\fZ$ is commutative, and therefore its evident
$\overset{\rightarrow}{\otimes}$-algebra structure
extends canonically to a commutative 
$\overset{!}{\otimes}$-algebra structure
(c.f. \cite{beilinson-top-alg} \S 1.5).

In what follows, we treat $\fZ$ and $U(\fg_{crit})$
interchangeably as pro-vector spaces and as topological
vector spaces, following our custom from \S 

\subsection{}\label{ss:z-str}

We now recall the finer structure of $\fZ$.

Define $I_n \subset \fZ$ as the ideal 
$\fZ \cap U(\fg_{crit})\cdot t^n\fg[[t]] \subset 
U(\fg_{crit})$. By construction of the topology
of $\fZ$, this ideal is open in $\fZ$, and the ideals
$I_n$ as $n$ varies provide a neighborhood basis of $0$.
We let $\fz_n = \fZ/I_n$.\footnote{The notation
follows \cite{hitchin}, which denotes
our $\fz_0$ by $\fz$.}

We let $\sZ_n = \Spec(\fz_n)$ and define
$\sZ = \colim_n \sZ_n \in \IndSch$ (so 
$\sZ = \Spf(\fZ)$).

By \cite{hitchin} \S 3.7.9-10,
$\sZ$ is\footnote{Mildly non-canonically: there are
various choices needed to obtain these coordinates.}
isomorphic to $\prod_{i=1}^r \bA^1(K)$ where $r$ is the
rank of $\fg$. Moreover, \emph{loc. cit}. constructs 
such an isomorphism such that $\sZ_n \subset \sZ$ 
corresponds to the closed subscheme
$\prod_{i=1}^r t^{-d_i}\bA^1(O) \subset \prod_{i=1}^r \bA^1(K)$
for $t$ our coordinate on the formal disc and
$d_1,\ldots,d_r$ the degrees of the invariant polynomials
on $\fg$.

In particular, $\sZ$ is a reasonable indscheme.
Therefore, $\fZ\mod_{ren} \coloneqq \IndCoh^*(\sZ)$ defines a renormalization
datum for $\fZ$; the forgetful functor to $\Vect$
is $\Gamma^{\IndCoh}(\sZ,-)$. (See also Example
\ref{e:pro-algebras}.) In what follows, 
we always consider $\sZ$ as equipped with this renormalization
datum.

\begin{rem}

Using the isomorphism above, one can construct an 
equivalence $\QCoh(\sZ) \simeq \IndCoh^*(\sZ)$:
this is the unique morphism of $\QCoh(\sZ)$-module categories
sending $\sO_{\sZ}$ to the $*$-pullback of
$\omega_{\prod_{i=1}^r \bA^1(K)/\bA^1(O)} \in 
\IndCoh(\prod_{i=1}^r \bA^1(K)/\bA^1(O))$ along the evident
projection (using the additive structure on $\bA^1$ to form
the quotient).
However, this construction is highly non-canonical and does
not behave well with respect to the changes of 
coordinates (which are non-linear).

\end{rem}

\subsection{}

Being the center of $U(\fg_{crit})$, $\fZ$ acts on
$U(\fg_{crit})$. 

More precisely, 
we can consider $\fZ$ as a commutative algebra object
in the symmetric monoidal category
$(\Alg^{\overset{\rightarrow}{\otimes}}, \overset{!}{\otimes})$;
then $U(\fg_{crit})$ is a module for it in the usual
sense of monoidal categories.
For example, the action map 
$\act:\fZ \overset{!}{\otimes} U(\fg_{crit}) \to U(\fg_{crit})
\in \Alg^{\overset{\rightarrow}{\otimes}}$
sends $z \otimes \xi \mapsto z\cdot \xi$.

\begin{lem}\label{l:act-ren}

The morphism $\act$ is compatible with renormalization data,
i.e., it upgrades (necessarily uniquely) to a morphism
in $\Alg_{ren}^{\overset{\rightarrow}{\otimes}}$.

\end{lem}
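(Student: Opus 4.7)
The plan is to verify that $\act$ promotes to a morphism in $\Alg_{ren}^{\overset{\rightarrow}{\otimes}}$; since passing to $\Alg_{ren}^{\overset{\rightarrow}{\otimes}}$ is a property rather than structure, uniqueness of the lift is automatic if it exists. By the definition in \S \ref{ss:ren-morphism}, it suffices to check that the $t$-exact restriction functor
\[
F = \Oblv: \widehat{\fg}_{crit}\mod^+ \simeq U(\widehat{\fg}_{crit})\mod_{naive}^+ \to (\fZ \overset{!}{\otimes} U(\widehat{\fg}_{crit}))\mod_{naive}^+ \subset (\fZ \overset{!}{\otimes} U(\widehat{\fg}_{crit}))\mod_{ren}
\]
renormalizes in the sense of \S \ref{ss:functors-ren}.

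First I would invoke Theorem \ref{t:tens-ren} to identify the target canonically with $\IndCoh^*(\sZ) \otimes \widehat{\fg}_{crit}\mod$, endowed with the tensor product $t$-structure of Lemma \ref{l:cpt-tstr}; the renormalization datum on $\fZ$ used here is $\fZ\mod_{ren} = \IndCoh^*(\sZ)$ from Example \ref{e:pro-algebras} (c.f. \S \ref{ss:z-str}), and the datum on $U(\widehat{\fg}_{crit})$ is $\widehat{\fg}_{crit}\mod$ from Example \ref{e:lie}. Under this identification, $F$ sends a Kac--Moody module $M$ to $M$ itself, viewed as carrying its tautological $U(\widehat{\fg}_{crit})$-action together with the $\fZ$-action coming from the center. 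The functor $F$ is manifestly $t$-exact, and $F|_{\widehat{\fg}_{crit}\mod^{\geq 0}}$ commutes with filtered colimits, since restriction along an algebra map preserves underlying objects and hence colimits thereof.

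I then plan to apply Example \ref{e:functor-ren-left-separated}, so it suffices to show the tensor product $t$-structure on $\IndCoh^*(\sZ) \otimes \widehat{\fg}_{crit}\mod$ is left separated. By Lemma \ref{l:cpt-tstr}, this $t$-structure is compactly generated by external products $\sF \boxtimes M$ with $\sF \in \Coh(\sZ) \cap \IndCoh^*(\sZ)^{\leq 0}$ and $M$ a compact object of $\widehat{\fg}_{crit}\mod$ lying in $\widehat{\fg}_{crit}\mod^{\leq 0}$. Any such $\sF$ is cohomologically bounded by Lemma \ref{l:coh-t-str}, and since the compact generators $\bV_{crit,n}$ of $\widehat{\fg}_{crit}\mod$ are concentrated in degree zero, every compact $M$ is likewise bounded. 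Consequently each generator $\sF \boxtimes M$ lies in some interval $[-N,0]$; for any $X \in \bigcap_m (\IndCoh^*(\sZ) \otimes \widehat{\fg}_{crit}\mod)^{\geq m}$, the Hom-complex $\ul{\Hom}(\sF \boxtimes M, X)$ then lies in $\Vect^{\geq m}$ for every $m$ and so vanishes, whence compact generation forces $X = 0$.

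The main obstacle is bookkeeping: verifying carefully that the functor $F$ as constructed from $\act$ agrees, under the equivalence of Theorem \ref{t:tens-ren}, with the restriction functor just described (which boils down to the lax symmetric monoidal structure on $A \mapsto A\mod_{ren}$ recorded at the end of \S \ref{ss:alg-tensor-ren}), and that the cohomological boundedness of compact generators in both tensor factors genuinely controls the Hom-shift in the left separatedness argument. Once these are in hand, Example \ref{e:functor-ren-left-separated} combined with \eqref{eq:ren-trun} delivers the required renormalization.
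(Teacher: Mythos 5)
Your reduction of the statement to the claim that the restriction functor $F:\widehat{\fg}_{crit}\mod^+ \to (\fZ \overset{!}{\otimes} U(\widehat{\fg}_{crit}))\mod_{ren}$ renormalizes, and your identification of the target with $\IndCoh^*(\sZ)\otimes\widehat{\fg}_{crit}\mod$ via Theorem \ref{t:tens-ren}, match the paper's setup. But the key step fails. First, what you actually prove is the wrong-sided condition: your argument shows that an object lying in $\bigcap_m (\IndCoh^*(\sZ)\otimes\widehat{\fg}_{crit}\mod)^{\geq m}$ vanishes, which is essentially right completeness of a compactly generated $t$-structure and holds for free. What Example \ref{e:functor-ren-left-separated} requires is the other side: from $\tau^{\geq -n}F_{ren}\simeq\tau^{\geq -n}F$ for all $n$ (the shifted versions of \eqref{eq:ren-trun}) one can only conclude $F_{ren}|_{+}\simeq F$ if objects with $\tau^{\geq -n}(-)=0$ for all $n$ — i.e.\ objects of $\bigcap_n \sC^{\leq -n}$ — are zero. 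Second, and fatally, that condition is false for the target: left separatedness of $\widehat{\fg}_{crit}\mod$ is equivalent to conservativity of $\Oblv:\widehat{\fg}_{crit}\mod\to\Vect$ on all of $\widehat{\fg}_{crit}\mod$ (not just on $+$), and the paper emphasizes at the start of \S\ref{s:ren} that this forgetful functor is \emph{not} conservative; boxing a nonzero infinitely connective object with any nonzero coherent sheaf on $\sZ$ produces a nonzero infinitely connective object of the tensor product. So the route through Example \ref{e:functor-ren-left-separated} cannot work — the failure of left separatedness of the renormalized Kac--Moody category is exactly the phenomenon that makes renormalization statements like this one nontrivial.

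The paper instead argues directly: it computes the pro-left adjoint of the coaction functor on the compact generators $\fz_n\otimes\bV_{crit,m}$ of the connective part of $(\fZ\overset{!}{\otimes}U(\widehat{\fg}_{crit}))\mod_{ren}$, identifying it with $\lim_{r\geq n,m}\fz_n\otimes_{\fz_r}\bV_{crit,m}$ via Koszul and Chevalley complexes, using that $\Ker(\fz_r\to\fz_n)$ is generated by a regular sequence (\S\ref{ss:z-str}); since these pro-objects are pro-(connective and compact) in $\widehat{\fg}_{crit}\mod$, the ind-extended functor is left $t$-exact, which together with \eqref{eq:ren-trun} gives the renormalization property. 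To repair your proof you would need an argument of this kind (or some other genuine input about $\widehat{\fg}_{crit}\mod$, e.g.\ closure of compacts under truncations, which is not available here); the separatedness shortcut is not an option.
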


\begin{proof}

Let:

\[
\coact_{naive}:\widehat{\fg}_{crit}\mod_{naive}^+ \to 
\fZ \overset{!}{\otimes} U(\fg_{crit}) \mod_{naive}^+ 
\]

\noindent denote the standard forgetful functor.\footnote{The notation is
motivated by Remark \ref{r:z-coact-1}.}
Define:

\[
\coact_{naive}^L:\fZ \overset{!}{\otimes} U(\fg_{crit}) \mod_{naive}^+ \to 
\Pro(\widehat{\fg}_{crit}\mod_{naive}^+)
\]

\noindent as its pro-left adjoint. 

Let $n,m \geq 0$.
We claim that: 

\begin{equation}\label{eq:oblv-naive-l}
\coact_{naive}^L(\fz_n \otimes \bV_{crit,m}) = 
\underset{r \geq n,m}{\lim} \, 
\fz_n \underset{\fz_r}{\otimes} \bV_{crit,m} \in 
\Pro(\widehat{\fg}_{crit}\mod_{naive}^+).
\end{equation}

\noindent Here the limit is formed in this pro-category,
and the terms make sense because the action of
$\fZ$ on $\bV_{crit,m}$ factors
through $\fz_m$ by definition. We emphasize that this
is a derived tensor product, i.e., we view 
$\fz_n \otimes \bV_{crit,m}$ as an 
$\fz_r$-bimodule in $\fg_{crit} \mod_{naive}$ 
and then form its Hochschild homology
in this category; because the kernel of 
$\fz_r \to \fz_n$ is generated by a regular sequence
(by \S \ref{ss:z-str}), this tensor product does honestly
lie in $\widehat{\fg}_{crit}\mod_{naive}^+$.
We note that this reasoning also shows that this 
object actually lies in 
$\widehat{\fg}_{crit}\mod^c \subset \widehat{\fg}_{crit}\mod^+$.

Indeed, there is a canonical map from the
left hand side of \eqref{eq:oblv-naive-l} to the
right hand side. It suffices to show this map is
an isomorphism when evaluated on objects in the heart
of the $t$-structure. We can explicitly calculate both
sides using Koszul and Chevalley complexes, giving the claim.

Now because the objects $\fz_n \otimes \bV_{crit,m}$
compactly generate 
$\fZ \overset{!}{\otimes} U(\fg_{crit}) \mod_{ren}^{\leq 0}$
by definition (see \S \ref{ss:alg-tensor-ren}), 
the fact that $\coact_{naive}^L(\fz_n \otimes \bV_{crit,m})$
is pro-(compact and connective) immediately implies
that $\coact_{naive}$ 
renormalizes (i.e., is left $t$-exact), 
i.e., it gives the conclusion of the lemma.

\end{proof}

Therefore, we find that 
$U(\fg_{crit}) \in \Alg_{ren}^{\overset{\rightarrow}{\otimes}}$
is canonically a module for $\fZ \in 
\Alg_{ren}^{\overset{\rightarrow}{\otimes}}$.

\begin{rem}\label{r:z-coact-1}

Applying the symmetric monoidal functor 
$\Alg_{ren}^{\overset{\rightarrow}{\otimes},op} 
\xar{A \mapsto A\mod_{ren}} \DGCat_{cont}$, we find that
$\widehat{\fg}_{crit}\mod$ is canonically a comodule for 
$\fZ\mod_{ren} = \IndCoh^*(\sZ)$.\footnote{In geometric terms,
note that diagonal pushforward equips $\IndCoh^*(\sZ)$ with
a canonical coalgebra structure in $\DGCat_{cont}$,
using the fact that $\sZ$ is a strict indscheme.} 

\end{rem}

\subsection{}

Next, we include $G(K)$-actions. 

For $H$ an
ind-affine group indscheme the category 
$\Alg^{\overset{\rightarrow}{\otimes},H\actson}$
from \S \ref{ss:gp-setting}. Note that this category is
canonically a module category for the (symmetric) monoidal category
$(\Alg^{\overset{\rightarrow}{\otimes}},\overset{!}{\otimes})$:
this is a general feature for co/module categories in
monoidal categories. The same discussion applies verbatim 
in the setting of renormalized 
$\overset{\rightarrow}{\otimes}$-algebras.

Now take $H = G(K)$. Recall that $G(K)$ has an adjoint action on 
$U(\widehat{\fg}_{crit})$ as a renormalized
$\overset{\rightarrow}{\otimes}$-algebra 
encoding the naive, weak $G(K)$-action
on $\widehat{\fg}_{crit}\mod$. 

The action of 
$\fZ \in \ComAlg(\Alg^{\overset{\rightarrow}{\otimes}},\overset{!}{\otimes})$
on $U(\widehat{\fg}_{crit}) \in \Alg^{\overset{\rightarrow}{\otimes}}$ 
above clearly upgrades to an action in the symmetric monoidal
category $\Alg^{\overset{\rightarrow}{\otimes},G(K)\actson}$.
Moreover, tracing the 
definitions, Lemma \ref{l:act-ren} immediately implies
that this action upgrades to an action of 
$\fZ \in \ComAlg(\Alg_{ren}^{\overset{\rightarrow}{\otimes}},\overset{!}{\otimes})$
on $U(\widehat{\fg}_{crit}) \in \Alg_{ren}^{\overset{\rightarrow}{\otimes}}$.

\begin{rem}\label{r:z-coact-2}

In parallel to Remark \ref{r:z-coact-1}, the above discussion
implies that $\IndCoh^*(\sZ)$ coacts on 
$\widehat{\fg}\mod_{crit} \in G(K)\mod_{weak,naive}$,
where the latter category is considered as a module category
for $(\DGCat_{cont},\otimes)$.

\end{rem}

\subsection{}

We now extend the discussion to the setting of genuine actions.

Suppose now that $H$ is an ind-affine Tate group indscheme.
Recall from \S \ref{ss:gen-act-alg} that we have the
1-full subcategory 
$\Alg_{gen}^{\overset{\rightarrow}{\otimes},H\actson}$
of $\Alg_{ren}^{\overset{\rightarrow}{\otimes},H\actson}$.
It is immediate from the definitions and
Lemma \ref{l:cpt-tstr} that this 1-full subcategory is closed
under the 
$(\Alg_{ren}^{\overset{\rightarrow}{\otimes}},\overset{!}{\otimes})$-action.
Therefore, there is a unique action of
$(\Alg_{ren}^{\overset{\rightarrow}{\otimes}},\overset{!}{\otimes})$
on $\Alg_{gen}^{\overset{\rightarrow}{\otimes},H\actson}$
compatible with the embedding into $\Alg_{ren}^{\overset{\rightarrow}{\otimes},H\actson}$.

We remark that the functor 
$\Alg_{gen}^{\overset{\rightarrow}{\otimes},H\actson,op} 
\xar{A\mapsto A\mod_{ren}} H\mod_{weak}$
is $(\Alg_{ren}^{\overset{\rightarrow}{\otimes}},\overset{!}{\otimes})$-linear,
where $(\Alg_{ren}^{\overset{\rightarrow}{\otimes}},\overset{!}{\otimes})$
acts on $H\mod_{weak}$ through its canonical symmetric monoidal functor to
$\DGCat_{cont}$.

\subsection{}

Recall from Theorem \ref{t:gen-hc} that the $G(K)$-action on 
$U(\widehat{\fg}_{crit}) \in \Alg_{ren}^{\overset{\rightarrow}{\otimes},G(K)\actson}$ 
is genuine.

We now have the following upgraded version of Lemma \ref{l:act-ren}.

\begin{lem}\label{l:act-gen}

The morphism $\act:\fZ \overset{!}{\otimes} U(\fg_{crit}) \to U(\fg_{crit})
\in \Alg_{ren}^{\overset{\rightarrow}{\otimes},H\actson}$
is genuinely $H$-equivariant, i.e., it is a morphism 
in $\Alg_{gen}^{\overset{\rightarrow}{\otimes},H\actson}$.

\end{lem}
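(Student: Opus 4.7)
The strategy is to unwind the definition of a morphism in $\Alg_{gen}^{\overset{\rightarrow}{\otimes},G(K)\actson}$ and reduce to the naive/underlying computation already carried out in Lemma~\ref{l:act-ren}. Concretely, by Section~\ref{ss:gen-act-alg} we must show that for every compact open subgroup $K\subset G(K)$, the functor obtained by ind-extending
\[
U(\widehat{\fg}_{crit})\mod_{ren}^{K,w,c} \hookrightarrow U(\widehat{\fg}_{crit})\mod_{ren}^{K,w,+} \simeq U(\widehat{\fg}_{crit})\mod_{ren}^{K,w,naive,+} \xrightarrow{\coact} (\fZ \overset{!}{\otimes} U(\widehat{\fg}_{crit}))\mod_{ren}^{K,w}
\]
is $t$-exact. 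Since the middle equivalence is $t$-exact (Proposition~\ref{p:gen-t-str}) and $\coact$ on $+$-parts agrees with the naive coaction studied in Lemma~\ref{l:act-ren}, the result will follow from a $t$-structure analysis of the target.

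The first step is to identify the target. By Section~\ref{ss:center-intro}, $\fZ$ arises from the non-derived $G(K)$-invariants in $U(\widehat{\fg}_{crit})$, so the genuine $G(K)$-action on $\fZ$ is trivial; in particular, $\fZ \in \Alg_{gen}^{\overset{\rightarrow}{\otimes},G(K)\actson}$ is the image of $\fZ$ under $\Res:\Alg_{ren}^{\overset{\rightarrow}{\otimes}} \to \Alg_{gen}^{\overset{\rightarrow}{\otimes},G(K)\actson}$. Combining Theorem~\ref{t:tens-ren} with the observation that $\Rep(K)$ is rigid (so that tensoring over $\DGCat_{cont}$ commutes with weak $K$-invariants for module categories on which $K$ acts trivially), we obtain a canonical equivalence
\[
(\fZ \overset{!}{\otimes} U(\widehat{\fg}_{crit}))\mod_{ren}^{K,w} \simeq \IndCoh^*(\sZ) \otimes U(\widehat{\fg}_{crit})\mod_{ren}^{K,w}
\]
which is $t$-exact when the right hand side is equipped with the tensor product $t$-structure of Lemma~\ref{l:cpt-tstr}. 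Under this identification, the functor in question becomes the $K$-equivariant coaction, i.e.\ the ind-extension from compacts of the functor sending $\sF \mapsto \coact(\sF) \in \IndCoh^*(\sZ) \otimes U(\widehat{\fg}_{crit})\mod_{ren}^{K,w,+}$.

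Next, verify $t$-exactness. Right $t$-exactness is automatic from the ind-extension construction since compact objects of $U(\widehat{\fg}_{crit})\mod_{ren}^{K,w}$ are eventually coconnective and the naive coaction is $t$-exact. For left $t$-exactness, note that by Lemma~\ref{l:cpt-tstr}\eqref{i:tstr-2} the tensor product $t$-structure on $\IndCoh^*(\sZ) \otimes U(\widehat{\fg}_{crit})\mod_{ren}^{K,w}$ is controlled by pairing with compact generators of $\IndCoh^*(\sZ)$; equivalently, by pushing forward to $\IndCoh^*(\sZ_n) \otimes U(\widehat{\fg}_{crit})\mod_{ren}^{K,w}$ for all $n$. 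The Koszul computation of $\coact_{naive}^L(\fz_n \otimes \bV_{crit,m})$ in the proof of Lemma~\ref{l:act-ren} gives pro-(compact and connective) objects, and this computation is $G(K)$-equivariant since $\fz_n$ is $G(K)$-invariant. Applying $(-)^{K,w}$ and using that the action of $\Rep(K)$ on $U(\widehat{\fg}_{crit})\mod_{ren}^{K,w}$ is $t$-exact in each variable (Proposition~\ref{p:gen-t-str}\eqref{i:gen-t-2.5}), we deduce that the coaction on a compact generator $\Av_!^{K_0\to K,w}(\bV_{crit,m})$ lies in the connective part of the tensor product $t$-structure; passage to arbitrary connective objects is then a routine colimit argument using compatibility of the $t$-structure with filtered colimits.

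The main obstacle is the identification step: the target category $(\fZ \overset{!}{\otimes} U(\widehat{\fg}_{crit}))\mod_{ren}^{K,w}$ is a priori defined through renormalization of a $\overset{!}{\otimes}$-product of $\overset{\rightarrow}{\otimes}$-algebras with a genuine $G(K)$-action, and matching its $t$-structure with the tensor product $t$-structure of Lemma~\ref{l:cpt-tstr} (via the triviality of the $G(K)$-action on $\fZ$) is where all the categorical work is concentrated. Once this identification is in place, the $t$-exactness follows by directly transporting the Koszul-type bound of Lemma~\ref{l:act-ren} from the naive to the genuine setting.
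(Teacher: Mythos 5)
Your overall strategy is the paper's: unwind the definition of a morphism in $\Alg_{gen}^{\overset{\rightarrow}{\otimes},G(K)\actson}$, identify $(\fZ \overset{!}{\otimes} U(\widehat{\fg}_{crit}))\mod_{ren}^{K,w}$ with $\IndCoh^*(\sZ) \otimes \widehat{\fg}_{crit}\mod^{K,w}$ equipped with the tensor product $t$-structure (the paper does this by dualizability of $\IndCoh^*(\sZ)$), and transport the Koszul computation of Lemma \ref{l:act-ren} to the $K$-equivariant setting. However, the paragraph that is supposed to establish left $t$-exactness does not establish it. What you actually conclude --- that the coaction sends the (connective) compact generators $\Av_!^{K_0 \to K,w}(\bV_{crit,m})$ into the connective part, and then pass to all connective objects by filtered colimits --- is a proof of right $t$-exactness, which you had already dismissed as automatic. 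Left $t$-exactness is the assertion that \emph{coconnective} objects map to coconnective objects; it is precisely the direction that can fail for an ind-extension (Warning \ref{w:f_ren}), and it is not detected by where the connective generators go.

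The missing step is the adjunction argument that converts the Koszul computation into left $t$-exactness. One must show that the pro-left adjoint of the naive $K$-equivariant coaction sends the connective compact generators of the \emph{target} to pro-(connective compact) objects of the \emph{source}: $\coact^{K,w,+,L}(\fz_n \boxtimes \bV_{crit,m}) \simeq \lim_{r \geq n,m} \, \fz_n \otimes_{\fz_r} \bV_{crit,m}$, each term being connective and compact in $\widehat{\fg}_{crit}\mod^{K,w}$. Then, for coconnective $\sF$, writing $\sF$ as a filtered colimit of compacts and using compactness of $\fz_n \boxtimes \bV_{crit,m}$ and of the terms $\fz_n \otimes_{\fz_r} \bV_{crit,m}$, one gets $\ul{\Hom}(\fz_n \boxtimes \bV_{crit,m}, \coact^{K,w}(\sF)) \simeq \colim_r \ul{\Hom}(\fz_n \otimes_{\fz_r} \bV_{crit,m}, \sF) \in \Vect^{\geq 0}$, which yields coconnectivity of $\coact^{K,w}(\sF)$. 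This is how the paper argues, after first reducing to $K$ prounipotent and contained in $G(O)$ (restriction to a smaller compact open subgroup being $t$-exact and conservative), so that the objects $\fz_n \boxtimes \bV_{crit,m}$ themselves are the compact connective generators; your replacement generators $\Av_!^{K_0\to K,w}(\bV_{crit,m})$ would additionally require a compatibility of $\Av_!$ with the coaction/pro-left adjoint that you do not supply. With these two repairs --- the reduction to prounipotent $K$ and the pro-left-adjoint adjunction step --- your argument becomes the paper's proof.
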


\begin{proof}

As in Remark \ref{r:z-coact-2}, $\widehat{\fg}_{crit}\mod$ has commuting\footnote{In
homotopically precise terms, we should say
this category is an $(\IndCoh^*(G(K)),\IndCoh^!(\sZ))$-bimodule,
where $\IndCoh^!(\sZ) \coloneqq \IndCoh^*(\sZ)^{\vee}$ as usual.}
$\IndCoh^*(G(K))$-module and $\fZ\mod_{ren}$-comodule structures.

For $K \subset G(K)$ a compact open subgroup, we need to show 
that the coaction functor:\footnote{We can commute the weak invariants
with the tensor product as $\IndCoh^*(\sZ)$ is dualizable.}

\[
\coact^{K,w,naive}:\widehat{\fg}_{crit}\mod^{K,w,naive} \to 
(\IndCoh^*(\sZ) \otimes \widehat{\fg}_{crit}\mod )^{K,w,naive} = 
\IndCoh^*(\sZ) \otimes \widehat{\fg}_{crit}\mod^{K,w,naive}
\]

\noindent renormalizes to a left $t$-exact functor:

\[
\coact^{K,w}:\widehat{\fg}_{crit}\mod^{K,w} \to 
\IndCoh^*(\sZ) \otimes \widehat{\fg}_{crit}\mod^{K,w}
\]

\noindent (where the right hand side is equipped with the usual
tensor product $t$-structure). 

As restriction from genuine weak $K$-invariants to invariants
for a small compact open subgroup above is $t$-exact and conservative,
we can assume for simplicity that $K$ is prounipotent and contained
in $G(O)$.

Now the argument is essentially the same as in Lemma \ref{l:act-ren}.
Consider
$\fz_n \boxtimes \bV_{crit,m} \in 
\IndCoh^*(\sZ) \otimes \widehat{\fg}_{crit}\mod^{K,w,naive,+}$.
Note that these objects compactly generate
$\big(\IndCoh^*(\sZ) \otimes \widehat{\fg}_{crit}\mod^{K,w}\big)^{\leq 0}$
as $K$ is prounipotent. Moreover, the pro-left adjoint (on eventually
coconnective subcategories): 

\[
\coact^{K,w,+,L}:
(\IndCoh^*(\sZ) \otimes \widehat{\fg}_{crit}\mod^{K,w,naive})^+
\Pro(\widehat{\fg}_{crit}\mod^{K,w,naive,+})
\]

\noindent sends $\fz_n \boxtimes \bV_{crit,m}$ to 
$\underset{r \geq n,m}{\lim} \, 
\fz_n \underset{\fz_r}{\otimes} \bV_{crit,m}$
(the limit being formed in the pro-category), which is again
seen using Koszul/Chevalley resolutions. As 
each
$\fz_n \underset{\fz_r}{\otimes} \bV_{crit,m}$ 
is connective and compact in $\widehat{\fg}_{crit}\mod^{K,w}$,
we obtain the claim. 

\end{proof}

The above result implies that
 $\fZ \in \ComAlg(\Alg_{ren}^{\overset{\rightarrow}{\otimes}},\overset{!}{\otimes})$
on $U(\widehat{\fg}_{crit}) \in \Alg_{gen}^{\overset{\rightarrow}{\otimes}}$.

\begin{rem}\label{r:z-coact-3}

As with Remarks \ref{r:z-coact-1} and \ref{r:z-coact-2}, 
the above discussion
implies that $\IndCoh^*(\sZ)$ coacts on 
$\widehat{\fg}\mod_{crit} \in G(K)\mod_{weak}$.

\end{rem}

\subsection{}\label{ss:hc-tensoring}

We now include Harish-Chandra data to extend to strong actions.

Suppose $H$ is a Tate group indscheme satisfying the hypotheses
of Theorem \ref{t:gen-hc}. 

Recall that in \S \ref{ss:hc-defin}, we defined a 
monad $A \mapsto A \# U(\fh)$ on 
$\Alg_{conv,gen}^{\overset{\rightarrow}{\otimes},H\actson}$.
We claim that this monad canonically upgrades to a 
$(\Alg_{conv,ren}^{\overset{\rightarrow}{\otimes}},\overset{!}{\otimes})$-linear monad, i.e., it canonically
lifts along the forgetful map:

\[
\Alg\Big(\TwoEnd_{\Alg_{conv,ren}^{\overset{\rightarrow}{\otimes}}\mod}
(\Alg_{conv,gen}^{\overset{\rightarrow}{\otimes},H\actson})\Big) 
\to \Alg\Big(\TwoEnd(\Alg_{conv,gen}^{\overset{\rightarrow}{\otimes},H\actson})\Big) =
\{\text{monads on }\Alg_{conv,gen}^{\overset{\rightarrow}{\otimes},H\actson}\}.
\]

Indeed, this follows immediately from the constructions
and the observation that the
functor $(-)^{\exp(\fh),w}$ is $\DGCat_{cont}$-linear
for $H$, as is clear from 
Proposition \ref{p:inv-coinv-w/str}.

\subsection{}

Let $\Alg_{HC,crit}^{\overset{\rightarrow}{\otimes},G(K)\actson}$
be defined as the category of $\overset{\rightarrow}{\otimes}$-algebras
with genuine $G(K)$-actions and critical level Harish-Chandra data
(as in \S \ref{ss:hc-to-strong}). 
By \S \ref{ss:hc-tensoring}, 
$\Alg_{HC,crit}^{\overset{\rightarrow}{\otimes},G(K)\actson}$ is 
canonically a module category for 
$(\Alg_{conv,ren}^{\overset{\rightarrow}{\otimes}},\overset{!}{\otimes})$.

We claim that our earlier constructions upgrade to an action
of $\fZ \in 
\ComAlg(\Alg_{conv,ren}^{\overset{\rightarrow}{\otimes}},\overset{!}{\otimes})$
on $U(\widehat{\fg}_{crit}) \in \Alg_{HC,crit}^{\overset{\rightarrow}{\otimes},G(K)\actson}$
(where $U(\widehat{\fg}_{crit})$ is equipped with its
tautological critical level Harish-Chandra datum).

Indeed, as all of the $\overset{\rightarrow}{\otimes}$-algebras
here are classical, by Corollary 
\ref{c:cl-hc-maps}, this amounts to 
the evident commutativity
of the diagram:

\[
\xymatrix{
& \widehat{\fg}_{crit} \ar[dl]_{\xi\mapsto 1 \otimes \xi} \ar[dr] & \\
\fZ \overset{!}{\otimes} U(\widehat{\fg}_{crit}) 
\ar[rr]^{\act} & &
U(\widehat{\fg}_{crit}).
}
\]

Therefore, we obtain:

\begin{thm}\label{t:opers}

There is a canonical coaction of $\IndCoh^*(\sZ)$ on 
$\widehat{\fg}_{crit}\mod$ considered as an object
of the ($\DGCat_{cont}$-enriched category) $G(K)\mod_{crit}$. 

Recalling the Feigin-Frenkel isomorphism $\sZ \simeq \Op_{\ld{G}}$ 
(see \cite{hitchin} \S 3.7 for this form of the isomorphism),
we in particular obtain a coaction of $\IndCoh^*(\Op_{\ld{G}})$
on $\widehat{\fg}_{crit}\mod \in G(K)\mod_{crit}$.
 
\end{thm}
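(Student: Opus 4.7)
The plan is to assemble the structural results developed just above the theorem statement. First, I would invoke Lemma \ref{l:act-gen}: the multiplication map $\act\colon \fZ \overset{!}{\otimes} U(\widehat{\fg}_{crit}) \to U(\widehat{\fg}_{crit})$ exhibits $U(\widehat{\fg}_{crit})$ as a module for the commutative algebra $\fZ$ in the symmetric monoidal category $(\Alg_{gen}^{\overset{\rightarrow}{\otimes},G(K)\actson}, \overset{!}{\otimes})$. By the observation of \S \ref{ss:hc-tensoring}, the smash-product monad $A \mapsto A \# U(\fg((t)))$ upgrades canonically to a $(\Alg_{conv,ren}^{\overset{\rightarrow}{\otimes}}, \overset{!}{\otimes})$-linear monad, so $\Alg_{HC,crit}^{\overset{\rightarrow}{\otimes},G(K)\actson}$ is itself a module category for $(\Alg_{conv,ren}^{\overset{\rightarrow}{\otimes}}, \overset{!}{\otimes})$.

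Second, I would promote the $\fZ$-action on $U(\widehat{\fg}_{crit})$ from $\Alg_{gen}^{\overset{\rightarrow}{\otimes},G(K)\actson}$ to $\Alg_{HC,crit}^{\overset{\rightarrow}{\otimes},G(K)\actson}$. Since $\fZ$ and $U(\widehat{\fg}_{crit})$ are both classical, Corollary \ref{c:cl-hc-maps} reduces this to a single diagrammatic check: that the composition $\widehat{\fg}_{crit} \xar{\xi \mapsto 1 \otimes \xi} \fZ \overset{!}{\otimes} U(\widehat{\fg}_{crit}) \xar{\act} U(\widehat{\fg}_{crit})$ coincides with the tautological embedding $\widehat{\fg}_{crit} \to U(\widehat{\fg}_{crit})$ in $\Pro\Vect^{\heart}$. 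This commutativity is immediate from the very definition of $\fZ$: its elements are central in $U(\widehat{\fg}_{crit})$, so both compositions send $\xi$ to $1 \cdot \xi = \xi$.

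Third, I would apply the 1-fully-faithful, $(\Alg_{conv,ren}^{\overset{\rightarrow}{\otimes}}, \overset{!}{\otimes})$-linear functor $A \mapsto A\mod_{ren}$ of Lemma \ref{l:hc-cat}. This transports the $\fZ$-module structure on $U(\widehat{\fg}_{crit})$ into a coaction of $\fZ\mod_{ren} = \IndCoh^*(\sZ)$ (using the identification from \S \ref{ss:center-intro}) on $\widehat{\fg}_{crit}\mod$ regarded as an object of $G(K)\mod_{crit}$. Composing with the Feigin--Frenkel isomorphism $\sZ \simeq \Op_{\ld{G}}$ supplies the second assertion.

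The only genuinely substantive input is the renormalization-compatibility of $\act$ proved in Lemmas \ref{l:act-ren} and \ref{l:act-gen}; those rest in turn on the fact that each ideal $I_n \subset \fZ$ is generated by a regular sequence (by the explicit product-of-disks description of $\sZ$ in \S \ref{ss:z-str}), which is what makes the computation of the pro-left adjoint to $\coact^{K,w,naive}$ via Koszul/Chevalley resolutions valid and which shows that the answer lies in $\Pro(\widehat{\fg}_{crit}\mod^{K,w,c,+})$. With those ingredients in hand, the present theorem is a direct assembly: all remaining steps are formal manipulations of symmetric monoidal structures on categories of topological algebras and the transport of algebraic data through the 1-fully-faithful embeddings of \S \ref{s:hc}.
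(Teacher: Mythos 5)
Your proposal is correct and follows essentially the same route as the paper: Lemma \ref{l:act-gen} plus the $(\Alg_{conv,ren}^{\overset{\rightarrow}{\otimes}},\overset{!}{\otimes})$-linearity of the smash-product monad from \S \ref{ss:hc-tensoring}, the reduction via Corollary \ref{c:cl-hc-maps} to the evident commutative triangle (which is really unitality of the $\fZ$-action rather than centrality, a cosmetic point), and then transport through the linear, 1-fully-faithful functor $A \mapsto A\mod_{ren}$ followed by Feigin--Frenkel. This matches the paper's own assembly, including its identification of Lemmas \ref{l:act-ren} and \ref{l:act-gen} (resting on the regular-sequence/Koszul computation of the pro-left adjoint) as the only substantive inputs.
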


\subsection{A compatibility}

We now establish a compatibility with a related result from \cite{whit}.
We use the notation from \emph{loc. cit}. without further mention.

\begin{thm}\label{t:whit-linear}

The equivalence:

\[
\Whit(\widehat{\fg}_{crit}\mod) \isom \IndCoh^*(\Op_{\ld{G}})
\]

\noindent from \cite{whit} Corollary 7.8.1 canonically upgrades
to an equivalence of $\IndCoh^*(\Op_{\ld{G}})$-comodules,
where the comodule structure on the left hand side 
comes from Theorem \ref{t:opers} and the comodule structure on the
right hand side is the tautological one.

\end{thm}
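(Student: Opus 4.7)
The plan is to exploit the $\DGCat_{cont}$-enriched nature of the Whittaker construction together with rigidity of $\IndCoh^*(\Op_{\ld{G}})$. Since the Whittaker functor $\Whit\colon G(K)\mod_{crit} \to \DGCat_{cont}$ is defined via suitably twisted invariants for a strong $N(K)$-action---an operation that commutes with tensoring by a fixed DG category---it is automatically $\DGCat_{cont}$-enriched. Therefore, for any coalgebra $\sA \in \ComCoalg(\DGCat_{cont})$ and any $\sA$-comodule structure on $\sC \in G(K)\mod_{crit}$, the image $\Whit(\sC)$ inherits a canonical $\sA$-comodule structure in $\DGCat_{cont}$. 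Applying this to the coaction from Theorem \ref{t:opers} and then transporting along Feigin-Frenkel, we equip $\Whit(\widehat{\fg}_{crit}\mod)$ with a canonical $\IndCoh^*(\Op_{\ld{G}})$-comodule structure.

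What remains is to match this, under the equivalence of \cite{whit} Cor.~7.8.1, with the tautological coaction on $\IndCoh^*(\Op_{\ld{G}})$. I plan to reduce this to a comparison of underlying module structures using rigidity. By \cite{hitchin} \S 3.7.9--10, $\Op_{\ld{G}}$ is non-canonically isomorphic to $\prod_{i=1}^{r} \bA^1(K)$, and so by Propositions \ref{p:placid-strict} and \ref{p:strict-basics} it is a strict reasonable indscheme. It follows that $(\IndCoh^*(\Op_{\ld{G}}),\overset{!}{\otimes})$ is rigid symmetric monoidal (cf.~\cite{dgcat}). Consequently, $\IndCoh^*(\Op_{\ld{G}})$-comodule structures are equivalent to $\IndCoh^*(\Op_{\ld{G}})$-module structures via self-duality, and two such structures on $\IndCoh^*(\Op_{\ld{G}})$ that agree on the action of a compact generator agree canonically as structured module categories.

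We are thereby reduced to verifying that, on a distinguished generator arising from $\bV_{crit}$, the constructed coaction agrees with the tautological one. The Whittaker coinvariants of $\bV_{crit}$ correspond under the equivalence of \cite{whit} to a canonical generator of $\IndCoh^*(\Op_{\ld{G}})$, and the $\fZ$-action on this object---being part of the data we transport via $\Whit$---must coincide, under Feigin-Frenkel, with the tautological $\Fun(\Op_{\ld{G}})$-action. This agreement is essentially built into the proof of \cite{whit} Cor.~7.8.1, since the equivalence there is established via the central endomorphism algebra of $\bV_{crit}$ and its Feigin-Frenkel identification.

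The main obstacle lies in this final matching step: one must extract from the proof of \cite{whit} Cor.~7.8.1 enough naturality to promote the pointwise agreement of the two coactions on a generator into a structured (coherent) equivalence of comodules. The rigidity argument is essential here, as it sidesteps verifying all higher coherence data by hand, replacing such a verification with a check of endomorphism algebras on a single compact generator---precisely the level at which the construction in \cite{whit} operates. The genuine technical work is therefore a careful bookkeeping of the generator, its endomorphisms, and the Feigin-Frenkel map throughout the Kostant--Whittaker reduction procedure of \cite{whit}.
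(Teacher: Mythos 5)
Your first step is fine and matches the paper: since $(-)^{\exp(\fh),w,\chi_c}$-type constructions and the Whittaker functor are $\DGCat_{cont}$-linear, the coaction of Theorem \ref{t:opers} transports to give a canonical $\IndCoh^*(\Op_{\ld{G}})$-comodule structure on $\Whit(\widehat{\fg}_{crit}\mod)$. The gap is in everything after that. First, your rigidity input is not available as stated: $\Op_{\ld{G}}$ is of ind-infinite type, so $\IndCoh^*$ and $\IndCoh^!$ are not canonically identified (no Serre self-duality), $\IndCoh^*(\Op_{\ld{G}})$ does not carry a natural $\overset{!}{\otimes}$-symmetric monoidal structure, and rigidity of the relevant monoidal category is exactly the kind of statement that fails or at least requires proof in infinite type (the unit $\omega$ is not coherent there; the paper's remark that $\QCoh(\sZ)\simeq\IndCoh^*(\sZ)$ only via a highly non-canonical choice of coordinates is a warning against transporting such structure). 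Second, and more seriously, the principle you lean on --- that two (co)module structures on the same category which ``agree on the action of a compact generator'' agree canonically as structured module categories --- is not a theorem: a module structure is coherent data, agreement on a generator is not even a well-posed condition without already having a module functor in hand, and saying the agreement is ``essentially built into the proof of \cite{whit} Cor.~7.8.1'' is precisely the missing content, not a proof of it. Upgrading a pre-existing equivalence to a comodule equivalence is extra \emph{data}, and your argument never actually produces it.

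The paper's route avoids this comparison problem altogether, and it is worth seeing how. Rather than taking the equivalence of \cite{whit} Cor.~7.8.1 and trying to decorate it, one builds a comodule morphism from scratch: the coaction composed with Drinfeld--Sokolov reduction gives a functor $\Psi^{enh}:\Whit(\widehat{\fg}_{crit}\mod)\to\IndCoh^*(\Op_{\ld{G}})$ which is a morphism of comodules by construction. Being an equivalence is then a \emph{property}, checked by $t$-structure arguments: compact objects in $\Whit(\widehat{\fg}_{crit}\mod)$ are closed under truncations at critical level, $\Psi^{enh}$ is $t$-exact, and under the dictionary of Proposition \ref{p:alg-vs-cats} it corresponds to the algebra map $\fZ\to\sW_{crit}$, which is the Feigin--Frenkel isomorphism; one then identifies $\Psi^{enh}$ with the equivalence of \cite{whit} a posteriori. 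If you want to salvage your approach, you would have to replace the ``generator check'' by an honest Morita-type argument internal to $\IndCoh^!(\Op_{\ld{G}})$-module categories (internal endomorphism algebras of a generating family, compared as algebra objects), which is substantially more work than your sketch acknowledges and still requires the structural inputs about $\IndCoh$ in infinite type that you assumed.
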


\begin{rem}

Despite the appearance of the Langlands dual group in the statement,
this is essentially notational: we are just choosing to write
$\Op_{\ld{G}}$ instead of $\sZ$.

\end{rem}

\begin{proof}[Proof of Theorem \ref{t:whit-linear}]

In \cite{whit} \S 5, we showed that
$\Whit(\widehat{\fg}_{crit}\mod)$ is compactly generated,
and we equipped it with a certain canonical 
$t$-structure for which compact objects are bounded.
By \emph{loc. cit}. Corollary 7.8.1, compact objects
are even closed under truncation functors in 
$\Whit(\widehat{\fg}_{crit}\mod)$ (this is special to critical
level). 

Moreover, \emph{loc. cit}. shows that the natural functor 
$\Psi:\Whit(\widehat{\fg}_{crit}\mod)
\to \Vect$ (induced by Drinfeld-Sokolov reduction)
is $t$-exact, and $\Psi|_{\Whit(\widehat{\fg}_{crit}\mod)^+}$
is shown to be conservative.

By Proposition \ref{p:alg-vs-cats}, there is a canonical
convergent, connective 
$\overset{\rightarrow}{\otimes}$-algebra $\sW_{crit}$
with $\Whit(\widehat{\fg}_{crit}\mod)^+ \isom 
\sW_{crit}\mod^+$. By \cite{whit} \S 5, this
algebra identifies with the usual critical level
affine $\sW$-algebra (and in particular, it is classical);
this isomorphism is canonical, and has to do with the
description of the functor $\Psi$ via Drinfeld-Sokolov
reduction.

Now by Theorem \ref{t:opers} and functoriality,
$\Whit(\widehat{\fg}_{crit}\mod)$
is canonically a $\IndCoh^*(\Op_{\ld{G}})$-comodule.
It follows that the functor $\Psi$ factors through a
unique morphism:

\[
\xymatrix{
\Whit(\widehat{\fg}_{crit}\mod) \ar@{..>}[rr]^{\Psi^{enh}} \ar[dr]^{\Psi}
& &
\IndCoh^*(\Op_{\ld{G}}) \ar[dl]^{\Gamma^{\IndCoh}(\Op_{\ld{G}},-)}
\\
& \Vect
}
\]

\noindent with $\Psi^{enh}$ a morphism of
$\IndCoh^*(\Op_{\ld{G}})$-comodule categories. 
Note that $\Psi^{enh}$ is $t$-exact: 
as compact objects
in $\Whit(\widehat{\fg}_{crit}\mod)$ are closed under
truncations, this follows as $\Psi$ and
$\Gamma^{\IndCoh}(\Op_{\ld{G}},-)$ are $t$-exact and
conservative on eventually coconnective subcategories.

Therefore, there is a canonical map $\alpha:\fZ \to \sW_{crit}$
corresponding to the functor $\Psi^{enh}$.
By construction, $\alpha$ is the standard map arising
by functoriality of Drinfeld-Sokolov reduction.
As in \cite{feigin-frenkel-duality},
$\alpha$ is an isomorphism. 
Moreover, the equivalence
$\fZ \simeq \Fun(\Op_{\ld{G}})$ from \cite{hitchin} \S 3.7
corresponds to the identification $\alpha:\fZ \isom \sW_{crit}$
and the identification $\sW_{crit} = \Fun(\Op_{\ld{G}})$
from Feigin-Frenkel duality \cite{feigin-frenkel-duality}.

It follows that $\Psi^{enh}$
is an equivalence on bounded below subcategories.
As compact objects on both sides are exactly almost compact
objects, $\Psi^{enh}$ is actually an equivalence.
Moreover, by the above discussion, $\Psi^{enh}$ is
canonically isomorphic to the equivalence \cite{whit} Corollary 7.8.1.

As $\Psi^{enh}$ was a morphism of 
$\IndCoh^*(\Op_{\ld{G}})$-comodules
by construction, we obtain the claim.

\end{proof}

\bibliography{bibtex}{}
\bibliographystyle{alphanum}

\end{document}